\definecolor{darkgreen}{rgb}{0,0.75,0}
\definecolor{darkred}{rgb}{0.75,0,0}
\definecolor{darkmagenta}{rgb}{0.5,0,0.5}
\newtheorem{theorem}{Theorem}[chapter]
\newtheorem{lemma}[theorem]{Lemma}
\newtheorem{prop}[theorem]{Proposition}
\newtheorem{corollary}[theorem]{Corollary}
\newtheorem{problem}[theorem]{Problem}
\theoremstyle{definition}
\newtheorem{definition}[theorem]{Definition}
\newtheorem{example}[theorem]{Example}
\theoremstyle{remark}
\newtheorem{remark}[theorem]{Remark}
\numberwithin{section}{chapter}
\numberwithin{equation}{chapter}
\newcommand{\R}{\mathbb{R}}
\newcommand{\mr}[1]{{\tt \href{http://www.ams.org/mathscinet-getitem?mr=#1}{MR#1}}}
\newcommand\norm[1]{\left\lVert#1\right\rVert} 
\newcommand{\set}[1]{\left\{ #1 \right\}}
\newcommand{\Set}[2]{\left\{ #1 \, \left| \; #2 \right. \right\}}
\newcommand{\Sett}[2]{\left\{ #1 \; : \; #2 \right\}}
\newcommand{\abs}[1]{{\left\vert\kern-0.25ex #1     \kern-0.25ex\right\vert}}
\newcommand{\EE}{\mathbb{E}}
\newcommand{\N}{\mathbb{N}}
\newcommand{\E}{\mathcal{E}}
\newcommand{\Z}{\mathbb{Z}}
\newcommand{\one}{\mathbf{1}}
\newcommand{\nint}[2]{\llbracket #1, #2 \rrbracket}
\newcommand{\floor}[1]{\left\lfloor {#1} \right\rfloor}
\newcommand{\gr}{\operatorname{grad}}
\def\Xint#1{\mathchoice
{\XXint\displaystyle\textstyle{#1}}%
{\XXint\textstyle\scriptstyle{#1}}%
{\XXint\scriptstyle\scriptscriptstyle{#1}}%
{\XXint\scriptscriptstyle\scriptscriptstyle{#1}}%
\!\int}
\def\XXint#1#2#3{{\setbox0=\hbox{$#1{#2#3}{\int}$ }
\vcenter{\hbox{$#2#3$ }}\kern-.6\wd0}}
\def\dashint{\Xint-}
\newcommand{\BMO}{\operatorname{BMO}}
\newcommand{\supp}{\operatorname{supp}}
\newcommand{\tphi}{\tilde{\phi}}
\newcommand{\tmu}{\tilde{\mu}}
\newcommand{\PP}{\mathbb{P}}
\newcommand{\spec}{\operatorname{Spectrum}}
\begin{document}

\frontmatter

\title[Random walks on metric  measure spaces]{Harnack inequalities and Gaussian estimates for random walks on metric  measure spaces}

\author[M. Murugan]{Mathav Murugan}
\address{Center for Applied Mathematics, Cornell University, Ithaca, NY 14853, USA.}
\email{mkm233@cornell.edu}

\author[L. Saloff-Coste]{Laurent Saloff-Coste}
\address{Department of Mathematics, Cornell University, Ithaca, NY 14853, USA.}
\email{lsc@math.cornell.edu} \thanks{Both the authors were partially supported by NSF grants DMS 1004771 and DMS 1404435.}
\date{\today}
\date{\today}

\subjclass[2010]{ 60J05; 58J65; 60J35; 58J35.}

\keywords{Random walks; Gaussian estimates; parabolic Harnack inequality; Poincar\'{e} inequality.}


\begin{abstract}
We characterize Gaussian estimates for transition probability of a discrete time Markov chain in terms of geometric properties of the underlying state space.
In particular, we show that the following are equivalent:
\begin{enumerate}
\item Two sided Gaussian bounds on heat kernel
 \item A scale invariant Parabolic Harnack inequality
 \item Volume doubling property  and a scale invariant Poincar\'{e} inequality.
\end{enumerate}
The underlying state space is a metric measure space, a setting that includes both manifolds and graphs as special cases. 
An important feature of our work is that our techniques are robust to small perturbations of the underlying space and the Markov kernel.
In particular, we show the stability of the above properties under quasi-isometries.
We discuss various applications and examples. 
\end{abstract}

\maketitle

\tableofcontents


\mainmatter

\chapter{Introduction} \label{ch-intro}
The goal of this work is to characterize Gaussian estimates for Markov chains and parabolic Harnack inequality for a corresponding discrete time version of heat equation by two geometric properties
on the state space
\begin{enumerate}[1.]
 \item Large scale volume doubling property
 \item Poincar\'{e} inequality.
\end{enumerate}
A precise statement of this characterization is given in Theorem \ref{t-main0}.
The Gaussian estimates mentioned are upper and lower bounds for the iterated transition probability kernel.
The  parabolic Harnack inequality is a regularity estimate for non-negative solutions of the discrete time heat equation given by $u(k+1,x) = [P u(k,\cdot) ](x)$, where $P$ is the Markov operator corresponding to the given Markov chain.

The hardest and most useful implication in the characterization is that the conjunction of the volume doubling property and Poincar\'{e} inequality
implies the two sided Gaussian estimates and parabolic Harnack inequality.
The volume doubling property  and Poincar\'{e} inequality are concrete properties the validity of which can be verified given the geometric data on the space.
Also, an important consequence of this characterization is the stability of Gaussian estimates and parabolic Harnack inequality under quasi-isometric transformation of the underlying space.

An analogous characterization is well-known for diffusions on Riemannian manifolds \cite{Gri91,Sal92}(or more generally local Dirichlet spaces \cite{Stu96}) and for discrete time Markov chains on graphs \cite{Del99}.
We extend the characterization of Gaussian estimates for Markov chain to a large family of state spaces that includes both graphs and Riemannian manifolds.
Various applications of Gaussian estimates and Harnack inequalities are discussed.

Another motivation comes from the work of Hebisch and Saloff-Coste \cite{HS93} on random walks on groups. By the main results of \cite{HS93}, we know that many natural translation-invariant
Markov chains on groups (discrete and continuous groups) of polynomial
volume growth satisfy two-sided Gaussian estimates. However the arguments in \cite{HS93} for proving Gaussian lower bounds are specific to the case of translation-invariant Markov chains as the authors of \cite{HS93} note
``We want to emphasize that a number of key points of the argument presented below are specific to the case of translation invariant Markov chains''.
To this end they conjecture   ``We have no doubt that, if $G$ has polynomial volume growth a corresponding Gaussian lower bound holds for (non transition-invariant) Markov chains as well.
However, we have not been able to prove this result. We hope to come back to this question in the future.'' \cite[Remark 2]{HS93}. Our work validates their conjecture.

A remarkable feature of our work is that the arguments we develop are robust under small perturbations of the Markov kernel and the geometry of the underlying state space.
In particular, we show that parabolic Harnack inequality and Gaussian estimates for heat kernel for symmetric Markov chains is stable under quasi-isometric change of the state space
and small changes in the Markov kernel. We do not rely on symmetries of the space (like group structure or transitivity) or on algebraic properties of the kernel (like translation invariance).
As a consequence, the main results are new even when the state space is  $\R^n$.

Heat kernel estimates and Harnack inequalities have been subjects of extensive research for more than fifty years.
To place our results in a historical context, we will describe precisely the characterization of Gaussian estimates of heat kernel and parabolic Harnack inequality in the context
of diffusions over manifolds developed in \cite{Gri91,Sal92}. We will also  mention several related works, applications and other historical remarks.

\section{Diffusions on Riemannian manifolds}
For the purpose of the introduction, we describe our results in the restricted setting of weighted Riemannian manifolds.
Let $(M,g)$ be a complete  Riemannian manifold equipped with the Riemannian measure $\nu(dy)$.
A weighted Riemannian manifold $(M,g,\mu)$ is a Riemannian manifold $(M,g)$ equipped with a measure $\mu(dy) = \sigma(y) \nu(dy)$, where $0<\sigma \in \mathcal{C}^\infty(M)$ is the \emph{weight},
and the associated \emph{weighted Laplacian} is given by  $\Delta = - \sigma^{-1} \operatorname{div}\left( \sigma \operatorname{grad} \right)$\footnote{The negative sign is to ensure that $\Delta$ has non-negative spectrum. Note that $\Delta$
depends on the Riemannian metric $g$ and the weight $\sigma$.}.
We might sometimes consider a Riemannian manifold $(M,g)$ as a weighted Riemannian manifold equipped with Riemannian measure and Laplace-Beltrami operator.
We denote the   balls centered at $x$ and radius $r$ by $B(x,r):= \Sett{y}{d(x,y) \le r}$ and the volume of the balls by
$V(x,r) := \mu \left( B(x,r)\right)$. We denote the open balls by  $B(x,r)^\circ:= \Sett{y}{d(x,y) < r}$

The \emph{heat kernel} of the weighted Riemannian manifold $(M,g,\mu)$ is the fundamental solution of a parabolic partial differential equation, the heat diffusion equation
\begin{equation}\label{e-heat}
 \left(\frac{\partial }{\partial t} + \Delta \right) u =0.
\end{equation}
That is heat kernel is a  function $(t,x,y) \mapsto p(t,x,y)$ defined on $(0,\infty)\times M \times M$ such that for each $y \in M$, $(t,x) \mapsto p(t,x,y)$  is a solution of \eqref{e-heat}
and for any $\phi \in \mathcal{C}_c^\infty(M)$, $u(t,x) = \int_M p(t,x,y) \phi(y) \mu(dy)$ tends to $\phi(x)$ as $t$ tends to $0$. In other words, the heat kernel allows
us to solve the Cauchy initial value problem for \eqref{e-heat}.
Equivalently, we may view $p(t,x,y) \mu(dy)$ as the distribution at time $t$ of a stochastic  process $\left( X_t \right)_{t >0}$
started at $x$ (the diffusion driven by $\Delta$ on M). These two viewpoints are related by the formula
\begin{equation} \label{e-view}
 u(t,x)= \int_M p(t,x,y) \, \mu(dy)=  \EE_x (u_0(X_t))
\end{equation}
where $u$ is the solution of  Cauchy initial value problem for \eqref{e-heat} with initial value condition $u_0$.

The most classical example of heat kernel is the Gauss-Weierstrass kernel on $\R^n$  equipped with the Lebesgue measure, the  Laplacian $\Delta= -\sum_{i=1}^n \frac{\partial^2 }{\partial x_i^2}$ and the heat kernel is
given by
\[
p(t,x,y) = \left(\frac{1}{4 \pi t} \right)^{n/2} \exp \left(- \frac{d(x,y)^2}{4t} \right),\hspace{2mm} t >0 ,\hspace{2mm} x,y \in \R^n.
\]
We will present a well-known geometric characterization of those weighted Riemannian manifolds on which the heat kernel satisfies \emph{two-sided Gaussian bounds}, 
that is having the property that there exists positive reals $c_1,c_2,C_1,C_2$ such that
\[
 \frac{c_1}{V(x,\sqrt{t})} \exp \left( - \frac{d(x,y)^2}{c_2 t} \right) \le p(t,x,y) \le  \frac{c_1}{V(x,\sqrt{t})} \exp \left( - \frac{d(x,y)^2}{c_2 t} \right)
\]
for all $t> 0$ and for all $x,y \in M$.

Next, we describe Harnack inequalities for $(M,g,\mu)$ equipped with the weighted Laplacian $\Delta$.
We say that $(M,g,\mu)$ satisfies \emph{elliptic Harnack inequality} if there exists $C>0$ such that any non-negative harmonic function $u$  in a ball $B(x,r)$ (that is $u$ satisfies $\Delta u \equiv 0$ in $B(x,r)^\circ$)
satisfies the inequality
\begin{equation} \label{e-ehi-i}
 \sup_{B(x,r/2)} u \le C  \inf_{B(x,r/2)} u.
\end{equation}
The constant $C \in (0,\infty)$ is independent $x$, $r$ and $u$. An important consequence of the elliptic Harnack inequality in $\R^n$ for the Laplacian $\Delta=-\sum_{i=1}^n \frac{\partial^2}{\partial x_i^2}$ is that
global positive harmonic functions must be constant (Liouville property).

J. Moser \cite{Mos61} proved elliptic Harnack inequality \eqref{e-ehi-i} for divergence form operators of the type
\begin{equation} \label{e-uei}
 \mathcal{L} = -\sum_{i,j=1}^n \frac{\partial}{\partial x_i} a_{i,j}\frac{\partial}{\partial x_j}
\end{equation}
where $a_{i,j}$ are bounded measurable real functions on $\R^n$ satisfying $a_{i,j}=a_{j,i}$ and the uniform ellipticity condition:
\[
 \forall x \in \R^n, \hspace{2mm} \forall \xi \in \R^n,\hspace{3mm} \lambda \norm{\xi}^2 \le \sum_{i,j} a_{i,j}(x) \xi_i \xi_j \le \Lambda \norm{\xi}^2
\]
for two constants  $0<\lambda \le \Lambda < \infty$. This elliptic Harnack inequality implies the crucial H\"{o}lder continuity for solutions\footnote{by solutions we mean weak solutions.} of the associated elliptic equation $\mathcal{L}u \equiv 0$, a result
proved earlier by E. de Giorgi \cite{DeG57} and J. Nash \cite{Nas58} (See also \cite{Mos60}).

An important motivation behind the H\"{o}lder continuity of solutions obtained by de Giorgi, Nash and Moser \cite{DeG57, Nas58, Mos60}
was to solve one of the famous Hilbert problems. Hilbert's nineteenth problem asks whether the minimizers of Dirichlet integrals
\[
 E(u) = \int_\Omega F(\nabla u(x) ) \, dx
\]
are always smooth, if $F$ is smooth and strictly convex, where $\Omega \subset \R^n$ is bounded.
E. de Giorgi and J. Nash independently answered Hilbert's question in the affirmative.
We refer the interested reader to \cite[Theorem 14.4.1]{Jos13} for a detailed exposition
of the smoothness of the minimizers of Dirichlet integrals using H\"{o}lder regularity estimates of de Giorgi and Nash.

It is a long standing open problem to characterize (in geometric terms) those weighted manifolds that satisfy elliptic Harnack inequality.
A related open question is to determine whether or not elliptic Harnack inequality is preserved under quasi-isometries.
However several examples of Riemannian manifolds that satisfy elliptic Harnack inequality are known. For instance, Cheng and Yau \cite{CY75}
proved that there exists a constant depending only $n=\operatorname{dim}(M)$ such that for any positive solution $u$ of $\Delta u = 0$ in $B(x,r)^\circ$ on a
Riemannian manifold $(M,g)$ with Ricci curvature bounded from below by $-K$ for some $K \ge 0$ satisfies
\begin{equation}\label{e-elgrad}
 \abs{\nabla \ln(u)} \le C (r^{-1} + K) \hspace{2mm} \mbox{ in $B(x,r/2)$.}
\end{equation}
When $K=0$, integrating the  gradient estimate along minimal paths we  immediately obtain the elliptic Harnack inequality \eqref{e-ehi-i} for Riemannian
manifolds with non-negative Ricci curvature.

We now describe the parabolic version of \eqref{e-ehi-i}. For any $x \in M$, $s \in \R$, $r >0$, let $Q=Q(x,s,r)$ be the cylinder
\[
 Q(x,s,r)= (s-r^2,s) \times B(x,r)^\circ.
\]
Let $Q_+$ and $Q_-$ be respectively the upper and lower sub-cylinders
\[
 Q_+ = (s- (1/4)r^2,s)\times B(x,r/2)^\circ, \hspace{3mm} Q_-= (s-(3/4)r^2,s-(1/2)r^2)\times B(x,r/2)^\circ.
\]
We say that $(M,g,\mu)$ satisfies \emph{parabolic Harnack inequality} if there exists a constant $C$ such that
for all $x \in M$, $s \in \R$, $r>0$ and for all positive solutions of  $\left(\frac{\partial}{\partial t} - \Delta \right)u =0$ in $Q=Q(x,s,r)$, we have
\begin{equation}\label{e-phi-i}
 \sup_{Q_-} u \le C \inf_{Q_+} u.
\end{equation}
The constants $1/4,3/4,1/2$ appearing in the definition of $Q_+,Q_-$ are essentially arbitrary choices.
The main difference between elliptic and parabolic Harnack inequalities is that the cylinders $Q_+$ and $Q_-$ are disjoint in \eqref{e-phi-i} whereas
in the elliptic case \eqref{e-ehi-i} the infimum and supremum are taken over the same ball.

J. Moser attributes the first parabolic Harnack inequality to Hadamard and Pini for operators with constant coefficients on $\R^n$.
In \cite{Mos64}, J. Moser proved the parabolic Harnack inequality for uniformly elliptic operators in divergence form as given by \eqref{e-uei}.
As in the elliptic case, the parabolic Harnack inequality \eqref{e-phi-i} implies H\"{o}lder continuity of the corresponding solutions.
This H\"{o}lder continuity was first obtained by J. Nash \cite{Nas58} in the parabolic setting and Moser's parabolic Harnack inequality gives an alternative proof of H\"{o}lder continuity.
For a proof of Harnack inequality using the ideas of Nash, we refer the reader to the work of Fabes and Stroock \cite{FS86}

The gradient estimates \eqref{e-elgrad} of Cheng and Yau was generalized to the parabolic case by P. Li and S.T. Yau in \cite{LY86}.
The parabolic gradient estimates in \cite{LY86} implies that complete Riemannian manifolds with non-negative Ricci curvature satisfy parabolic Harnack inequality \eqref{e-phi-i}.

In contrast to elliptic Harnack inequality, there is a satisfactory description of weighted Riemannian manifolds that satisfy the parabolic Harnack inequality as described below.
\begin{theorem}\label{t-grisal} (\cite{Gri91,Sal92})
Let $(M,g,\mu)$ be a weighted, non-compact, complete Riemannian manifold equipped with the weighted Laplacian $\Delta$ and let  $p(t,x,y)$ denote the corresponding heat kernel. The following three properties are equivalent:
 \begin{enumerate}[(a)]
  \item The parabolic Harnack inequality: there exists a constant $C_H>0$ such that, for any ball $B=B(x,r)$, $x \in M$, $r >0$ and for any smooth positive solution
  $u$ of $\left(\frac{\partial}{\partial t} + \Delta \right)u =0$ in the cylinder $(s-r^2,s)\times B(x,r)^\circ$, we have
  \[
   \sup_{Q_-} u \le C_H \inf_{Q_+} u
  \]
with
\[
  Q_+ = (s- (1/4)r^2,s)\times B(x,r/2)^\circ, \hspace{3mm} Q_-= (s-(3/4)r^2,s-(1/2)r^2)\times B(x,r/2)^\circ.
\]
\item Two sided Gaussian estimates of the heat kernel: there exists positive reals $c_1,c_2,C_1,C_2$ such that
\[
 \frac{c_1}{V(x,\sqrt{t})} \exp \left( - \frac{d(x,y)^2}{c_2 t} \right) \le p(t,x,y) \le  \frac{c_1}{V(x,\sqrt{t})} \exp \left( - \frac{d(x,y)^2}{c_2 t} \right)
\]
for all $t> 0$ and for all $x,y \in M$.
\item The conjunction of
\begin{itemize}
 \item The volume doubling property: there exists $C_D>0$ such that for all $x \in M$, for all $r>0$ we have
 \[
 V(x,2r) \le C_D V(x,r).
 \]
\item The Poincar\'{e} inequality: there exists $C_P>0$,$\kappa \ge 1$ such that for any ball $B=B(x,r)$, $x \in M$, $r >0$ and for all $f \in C^\infty(M)$, we have
\begin{equation}\label{e-opoin}
 \int_B \abs{f-f_B}^2 \,d\mu \le C_P r^2 \int_{\kappa B} \abs{\operatorname{grad} f}^2 \, d\mu,
\end{equation}
where $\kappa B= B(x,\kappa r)$ and $f_B= \frac{1}{\mu(B)} \int_B f \, d\mu$.
\end{itemize}
 \end{enumerate}
\end{theorem}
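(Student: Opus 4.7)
The natural plan is to prove the cyclic chain $(a) \Rightarrow (b) \Rightarrow (c) \Rightarrow (a)$. The pivotal implication is $(c) \Rightarrow (a)$, which requires running Moser's parabolic iteration in the abstract setting of a doubling measure carrying a scale-invariant Poincar\'e inequality; the other two directions are comparatively soft and are obtained either by applying the parabolic Harnack inequality directly to the heat kernel itself or by testing the Gaussian bounds against characteristic functions of balls.

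For $(a) \Rightarrow (b)$, the idea is to apply PHI to the nonnegative solution $(t,x) \mapsto p(t,x,y_0)$ of the heat equation on the parabolic cylinders $(t/2, t) \times B(y_0, \sqrt{t})^\circ$. This yields a near-diagonal lower bound $p(t,x,y) \ge c/V(x,\sqrt{t})$ whenever $d(x,y) \le \sqrt{t}$, and a matching near-diagonal upper bound through a standard semigroup/Nash comparison argument. To upgrade to full Gaussian bounds, the lower bound follows from a chaining argument: connect $x$ to $y$ by $N \asymp d(x,y)^2/t$ intermediate balls and iterate the near-diagonal lower bound across time steps of size $t/N$ via Chapman--Kolmogorov. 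The Gaussian upper bound is obtained from the Davies--Gaffney integrated maximum principle combined with the on-diagonal upper bound; the exponential factor $\exp(-d(x,y)^2/c_2 t)$ arises on optimizing the Davies perturbation parameter.

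For $(b) \Rightarrow (c)$, volume doubling follows by integrating the lower Gaussian bound of $p(t,x,\cdot)$ against $\one_{B(x, 2\sqrt{t})}$: since $\int_M p(t,x,y) \mu(dy) \le 1$, one extracts $V(x, 2\sqrt{t}) \le C V(x, \sqrt{t})$, i.e.\ doubling after the substitution $r = \sqrt{t}$. The Poincar\'e inequality follows from the spectral estimate
\[
\int_M |f - e^{-t\Delta} f|^2 \, d\mu \le t \int_M \abs{\operatorname{grad} f}^2 \, d\mu
\]
together with the observation that, for $t \asymp r^2$, the heat kernel $p(t,x,\cdot)$ is essentially comparable to the normalized indicator of a ball; replacing $f_B$ by $e^{-t \Delta} f$ in the oscillation costs only a bounded factor under the averages, and a chaining argument passes the gradient term onto a $\kappa$-dilate.

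The heart of the theorem is $(c) \Rightarrow (a)$. From volume doubling and \eqref{e-opoin} I would first upgrade to a family of scale-invariant Sobolev (equivalently Nash) inequalities on every ball, using a Whitney-type covering and a telescoping representation of $f - f_B$ as a weighted sum of oscillations on subballs; this is the chaining step that eliminates the parameter $\kappa$. With Sobolev in hand, Moser's parabolic iteration applied to $u^p$ for positive sub- and supersolutions produces, on every parabolic cylinder, $L^\infty$ bounds for subsolutions and reciprocal bounds for supersolutions, each in terms of $L^p$ norms on a slightly larger cylinder. Crossing between the two requires the $\log u$-lemma: applying the Poincar\'e inequality to $\log u$ along the flow of the heat equation places $\log u$ in a parabolic $\BMO$ class, after which a John--Nirenberg argument shows that $u^{p_0}$ and $u^{-p_0}$ have comparable averages for some small $p_0 > 0$; concatenating this with the two Moser statements yields PHI. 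The main obstacle is precisely this John--Nirenberg step: without Euclidean structure, the decomposition into good and bad sets must be performed via a Calder\'on--Zygmund stopping-time construction on doubling balls, and every chaining through $\kappa$-dilates must be tracked carefully using the doubling constant to keep the final estimates scale-invariant.
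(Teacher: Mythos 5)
The paper does not prove Theorem~\ref{t-grisal} itself; it quotes it from \cite{Gri91,Sal92} and, in the ensuing discussion, summarizes precisely the proof architecture you describe: (c)$\Rightarrow$(a) by deriving a scale-invariant Sobolev inequality from doubling plus Poincar\'e (the main technical contribution of \cite{Sal92}) and then running Moser's parabolic iteration together with the $\log u$/John--Nirenberg crossover; (a)$\Rightarrow$(b) by applying the Harnack inequality to the heat kernel itself together with chaining and a Davies--Gaffney-type integrated maximum principle, as in Aronson; and (b)/(a)$\Rightarrow$(c) via the Kusuoka--Stroock spectral argument for Poincar\'e and direct integration of the Gaussian bounds for doubling. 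Your outline is a faithful sketch of that standard route, so it matches the approach the paper attributes to the cited references.
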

\begin{example} We present examples of complete, non-compact, weighted Riemannian manifolds satisfying parabolic Harnack inequality and Gaussian bounds on the heat kernel.
We refer the reader to \cite[Section 3.3]{Sal10} for a more extensive list of examples.
\begin{itemize}
 \item Complete Riemannian manifolds with non-negative Ricci curvature.
  The parabolic Harnack inequality was first obtained in this case by Li and Yau using a gradient estimate \cite{LY86}.
  The volume doubling property follows from Bishop-Gromov inequality \cite[Theorem III.4.5.]{Cha06} and the Poincar\'{e} inequality follows from the work of P. Buser \cite{Bus82} (See \cite[Theorem 5.6.5]{Sal02} for a different proof).
  \item Convex domains and complement of convex domains in Euclidean space. We refer the reader to the monograph \cite{GS12} for this and other examples in this spirit.
  \item Connected Lie groups with polynomial volume growth. By a theorem of Y. Guiv'arch, we know that Lie groups with polynomial volume growth satisfies volume doubling property.
   Moreover, Lie groups with polynomial volume growth satisfy Poincar\'{e} inequality\cite[Theorem 5.6.1]{Sal02}. Examples include nilpotent Lie groups like Euclidean spaces and Heisenberg group.
   See also \cite[Theorem VIII.2.9]{VSC92}.
   Moreover volume doubling property and Poincar\'{e} inequality holds for subelliptic `sum of squares' operators satisfying the H\"{o}rmander condition \cite[Chapter V and VIII]{VSC92} under the
     Carnot-Carath\'{e}odory  metric. See also \cite[Section 5.6.1]{Sal02}.
   \item The Euclidean space $\R^n$, with $n \ge 2$ and weight $(1 + \abs{x}^2)^{\alpha/2}$, $\alpha \in (-\infty,\infty)$ satisfies parabolic Harnack inequality if and only if
   $\alpha > -n$. It satisfies the elliptic Harnack inequality for all $\alpha \in \R$.  These examples are from \cite{GS05}.
 \item Any  complete, weighted Riemannian manifold with bounded geometry that is quasi-isometric to a complete, weighted Riemannian manifold satisfying parabolic Harnack inequality.
 We say a weighted Riemannian manifold $(M,g,\mu)$ with weight $\sigma$ has \emph{bounded geometry} if (a) There exists $K \ge 0$ such that $\operatorname{Ric} \ge - Kg$ (b) There exists $C_1 >1$ such that
 $\sigma(x)/\sigma(y) \in (C_1^{-1} ,C_1)$ for all $x,y \in M$ with $d(x,y) \le 1$ (c) There exists $C_2>1$ such that $ C_2^{-1} \le V(x,1) \le C_2$ for all $x \in M$.
 This illustrates the stability of parabolic Harnack inequality and  two-sided Gaussian estimates under quasi-isometry \cite{CS95,Kan85}.
\end{itemize}
\end{example}
The primary goal of our work is to extend Theorem \ref{t-grisal} in the context of discrete time Markov chains on a large class of spaces that include both weighted Riemannian manifolds and graphs.
As mentioned before the hardest and most useful part of the Theorem \ref{t-grisal} is (c) implies (a) and (b).
The implication (c) implies (a) was proved independently by Grigor'yan \cite{Gri91} and Saloff-Coste \cite{Sal92}.
Both \cite{Gri91} and \cite{Sal92} observed that volume doubling is necessary to obtain (a).
In \cite{Sal92}, Saloff-Coste proved that Poincar\'{e} inequality is also a necessary condition to prove (a) using an argument due to Kusuoka and Stroock \cite{KS87}.

The proof of (c) implies (a) in \cite{Sal92} is an adaptation of Moser's iteration method.
Moser's iteration method relies on Poincar\'{e} inequality and a Sobolev inequality.
The main contribution of \cite{Sal92}
is to obtain a Sobolev inequality using volume doubling and Poincar\'{e} inequality (See also \cite[Chapter 5]{Sal02}, \cite{Sal95}).
A. Grigor'yan \cite{Gri91} carried out a different  iteration argument that relied on an equivalent Faber-Krahn inequality instead of a Sobolev inequality to prove (c) implies (a).
Using the methods of \cite{Sal92}, K.T. Sturm \cite{Stu96}  generalized the above equivalence to diffusions on strongly local Dirichlet spaces.
More recently in \cite{HS01}, Hebsich and Saloff-Coste developed an alternate approach to prove Gaussian bounds and parabolic Harnack inequality using (a).
This approach relies on an elliptic H\"{o}lder regularity estimate and Gaussian upper bounds to prove parabolic Harnack inequality. We will use the approach  outlined in \cite{HS01} in our work.

Aronson \cite{Aro67} was the first to use parabolic Harnack inequality to obtain Gaussian bounds on the heat kernel in the context of divergence form uniformly elliptic  operators in $\R^n$ as given in \eqref{e-uei}.
Although in Aronson's work, the parabolic Harnack inequality was used only to obtain Gaussian lower bounds, both Gaussian upper and lower bounds can be easily obtained using
parabolic Harnack inequality. Conversely Nash's approach aimed at deriving Harnack inequality from two-sided Gaussian bounds on the heat kernel was further developed by
Krylov and Safonov \cite{KS80} and by  Fabes and Stroock \cite{FS86}.

\section{Random walks on graphs}
T. Delmotte extended Theorem \ref{t-grisal} for discrete time Markov chains on graphs, which we now describe. To precisely describe the result, we will introduce some notions concerning
symmetric Markov chains.
Let $(M,d,\mu)$ be a \emph{metric measure space} by which we mean a metric space $(M,d)$ equipped with a Borel measure $\mu$. Recall that we denote closed ball by $B(x,r)$ and their measure by $V(x,r)=\mu(B(x,r))$.
We require $V(x,r) \in (0,\infty)$ for all $x \in M$ and for all $r \in (0,\infty)$.

Let $(X_n)_{n \in \N}$ be a Markov chain with state space $M$ and let $P$ be the corresponding Markov operator.
Further, we assume that $P$ has a kernel $p_1:M \times M \to \R$ with respect to the measure $\mu$, that is for each $x \in M$, we have  $p_1(x,\cdot) \in L^1(M,\mu)$ satisfying
\begin{equation} \label{e-fund}
 P f(x) = \EE_x f(X_1) = \int_M p_1(x,y) f(y) \, \mu(dy)
\end{equation}
for all $f \in L^\infty(M)$. Here $\EE_x$ denotes that the Markov chain starts at $X_0=x$.
The equation \eqref{e-fund} represents the basic relation between the Markov chain $(X_n)_{n \in \N}$, corresponding Markov operator $P$ and its kernel $p_1$ with respect to $\mu$.
We will assume that our Markov chain is stochastically complete that is $P \one = \one$ or equivalently $\int_M p_1(x,y) \mu (dy) = 1$ for all $x \in M$.

We further assume that the kernel $p_1$ satisfies $p_1(x,\cdot) \in L^\infty(M,\mu)$ for all $x \in M$ and that $p_1$ is symmetric
\begin{equation} \label{e-sym}
 p_1(x,y) = p_1(y,x)
\end{equation}
for $\mu \times \mu$-almost every $(x,y) \in M\times M$. Under the symmetry assumption \eqref{e-sym} and the assumption $p_1(x,\cdot) \in L^\infty(M,\mu)$ for all $x \in M$, we define the iterated Markov kernel as
for the Markov chain as
\[
 p_{k+1}(x,y):=  \int_M p_k(x,z) p_1(y,z) \, \mu(dz)
\]
for all $x,y \in M$ and for all $k \in \N^*$.
It is easy to check  that $p_k(x,y)\mu(dy)$ is the distribution of $X_k$ given that the random walk starts at $X_0=x$ (See Lemma \ref{l-kernel}).
The function $(k,x,y) \mapsto p_k(x,y)$ is called the `heat kernel' for the symmetric Markov chain $(X_n)_{n \in \N}$ driven by $P$ on $(M,d,\mu)$.

Next, we introduce the Laplacian and heat equation for discrete time Markov chains.
The Laplace operator $\Delta_P$ corresponding to the random walk driven by $P$ is
\[
 \Delta_P = I - P.
\]
The corresponding discrete time heat equation is
\begin{equation} \label{e-disheat}
 \partial_k u + \Delta_P u_k  \equiv 0
\end{equation}
for all $k \in \N$, where $\partial_k u ( \cdot) = u(k+1,\cdot) - u(k,\cdot)$ denotes the  difference operator and $u_k(\cdot) = u(k,\cdot)$.
Note that \eqref{e-disheat} can be rewritten as $u_{k+1}= P u_k$. Therefore the `solution' to the heat equation \eqref{e-disheat} can be
written as
\[
 u(k,x)= P^k u_0(x) = \int_M p_k(x,y) u_0(y) \, \mu(dy) = \EE_x u_0(X_k)
\]
for all $x \in M$ and for all $k \in \N^*$ where $u_0$ is the initial value.
Note that the above equation is analogous to its continuous time counterpart \eqref{e-view}.


To describe the work of T. Delmotte, we consider a given graph as a metric measure space $(M,d,\mu)$ where $M$ is the vertex set of the graph, $d$ is the graph distance metric and $\mu$ is a measure on the set of vertices such that each vertex has positive measure.
In this context $p_1(x,y)=p_1(y,x)$ for all $x,y \in M$ is sometimes called the \emph{conductance}. We denote integer intervals by $\nint{a}{b} = \Sett{k \in \Z}{a \le k \le b}$ for any $a,b \in \Z$.
The following theorem of T. Delmotte is  the analogue of Theorem \ref{t-grisal} for Markov chains on graphs.
\begin{theorem}\label{t-Del} (\cite{Del99}) Let $(M,d,\mu)$ be an infinite graph equipped with a measure $\mu$ on the set of vertices $M$.
Consider a Markov chain on $M$ with a symmetric kernel $p_k$ with respect to  $\mu$.
Further we assume that there exists $\alpha>0$ such that \footnote{ The upper bound in \eqref{e-delas} was not explicitly stated in \cite{Del99}. However the upper bound must hold due to the volume doubling property. Moreover the statement
of Poincar\'{e} inequality and parabolic Harnack inequality is slightly different but equivalent to the ones presented in \cite{Del99}.}
\begin{equation} \label{e-delas}
\alpha \frac{\one_{B(x,1)}(y)}{V(x,1)} \le p_1(x,y) \le \alpha^{-1}  \frac{\one_{B(x,1)}(y)}{V(x,1)}
\end{equation}
for all $x,y \in M$. Then the following properties are equivalent:
\begin{enumerate}[(a)]
 \item The parabolic Harnack inequality: there exists $\eta \in (0,1)$,$C_H>1,R_H>0$ such that for all balls $B(x,r)$, $x \in M$, $r > R_H$ and
 for all non-negative functions $u: \N \times M \to \R$ that satisfies
 $\partial_k u + \Delta u_k \equiv 0$ in $\nint{0}{\lfloor 4\eta^2 r^2 \rfloor} \times B(x,r)$, we have
 \[
  \sup_{Q_\ominus} u \le C_H \inf_{Q_\oplus} u
 \]
where
\begin{align*}
 Q_\ominus &= \nint{\lceil (\eta^2/2) r^2 \rceil}{\lfloor \eta^2 r^2 \rfloor} \times B(x,(\eta/2) r), \\
 Q_\oplus &= \nint{ \lceil2 \eta^2 r^2 \rceil}{\lfloor 4 \eta^2 r^2 \rfloor} \times B(x,(\eta/2) r)
 \end{align*}
 \item Two sided Gaussian bounds on the heat kernel: there exists $C_1,C_2>0$ such that
for all $x,y \in M$ and for all $n \in \mathbb{N}^*$ satisfying $n \ge 2$, we have
\begin{equation}\label{e-guei} p_n(x,y) \le \frac{C_1}{V(x,\sqrt{n})} \exp \left( - \frac{d(x,y)^2}{C_2n} \right)
\end{equation}
Further there exists $c_1,c_2,c_3>0$ such that for all $x,y \in M$ satisfying $d(x,y)\le c_3 n$ and for all $n \in \mathbb{N}^*$ satisfying $n \ge 2$
\begin{equation}\label{e-glei} p_n(x,y) \ge \frac{c_1}{V(x,\sqrt{n})} \exp \left( -\frac{ d(x,y)^2}{c_2n }\right)
\end{equation}
\item The conjunction of
\begin{itemize}
 \item The volume doubling property: there exists $C_D>0$ such that for all $x \in M$, for all $r>0$ we have
 \[
 V(x,2r) \le C_D V(x,r)
 \]
\item The Poincar\'{e} inequality: there exists $C_P>0$,$\kappa \ge 1$ such that for any ball $B=B(x,r)$ that satisfies $x \in M$, $r >1$ and for all $f \in L^2(M,\mu)$, we have
\[
 \int_B \abs{f-f_B}^2 \,d\mu \le C_P r^2 \int_{\kappa B} \left( \frac{1}{V(y,1)} \int_{B(y,1)} \abs{f(z) -f(y)}^2 \, \mu(dz)\right) \, \mu(dy),
\]
where $\kappa B= B(x,\kappa r)$, $f_B= \frac{1}{\mu(B)} \int_B f \, d\mu$.
\end{itemize}
\end{enumerate}
\end{theorem}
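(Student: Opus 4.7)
The plan is to establish the implications cyclically, $(c) \Rightarrow (a) \Rightarrow (b) \Rightarrow (c)$, mirroring the proof of Theorem \ref{t-grisal} but adapted to discrete time. The assumption \eqref{e-delas} is the bridge between the Markov chain and the geometry: it forces $p_1(x,y)$ to vanish for $d(x,y)>1$ (so the associated Dirichlet form $\mathcal{E}(f,f)=\tfrac{1}{2}\sum_{x,y}(f(x)-f(y))^{2}p_1(x,y)\mu(x)$ is comparable to the discrete energy on the right-hand side of the Poincar\'{e} inequality) and it gives two-sided control of conductances. The threshold $R_H$ in (a) absorbs a small-scale artifact of discrete time: for $r$ of order $1$ the sub-cylinders $Q_\oplus,Q_\ominus$ may collapse, so PHI is only meaningful at large scales.

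For the central implication $(c)\Rightarrow(a)$ I would run a discrete-time Moser iteration. First I would upgrade volume doubling and the Poincar\'{e} inequality to a scale-invariant Sobolev-type inequality on balls by a Whitney-type covering argument as in \cite{Sal92,Sal02}. Next comes the discrete Caccioppoli estimate: testing the equation $u_{k+1}-u_{k}=(P-I)u_{k}$ against $\eta^{2}u_{k}$ for a space cutoff $\eta$, applying summation by parts, and using \eqref{e-delas} to handle the discrete chain-rule error, produces a space-time energy bound. Combined with the Sobolev inequality this yields a reverse-H\"{o}lder estimate whose iteration gives the $L^{2}$ mean-value bound for non-negative subsolutions; a dual iteration on negative powers yields the symmetric lower bound for supersolutions. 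To merge these into PHI one invokes a Bombieri-Giusti type abstract lemma, which requires a John-Nirenberg oscillation estimate for $\log u$; this second use of the Poincar\'{e} inequality, now applied to $\log u$, is the technical heart of the proof. The principal obstacle is the bookkeeping of boundary effects in rough space-time cylinders: cutoffs cannot be smooth, integer time indices must be tracked, and \eqref{e-delas} must be invoked carefully to compensate for the lack of a true chain rule.

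For $(a)\Rightarrow(b)$ I would follow Aronson. Applying PHI to $p_{n}(x,\cdot)$ near the diagonal yields the on-diagonal bound $p_{n}(x,x) \le C/V(x,\sqrt{n})$, and off-diagonal Gaussian decay \eqref{e-guei} follows by Davies' perturbation method: conjugating $P$ by an exponential weight $e^{\psi}$ with $|\psi(x)-\psi(y)|\le\lambda$ for $d(x,y)\le 1$ produces a perturbed operator whose $L^{2}$ norm grows like $e^{C\lambda^{2}n}$, and optimizing in $\lambda$ gives the Gaussian factor $\exp(-d(x,y)^{2}/C_{2}n)$. The lower bound \eqref{e-glei} follows by the classical chaining argument: PHI produces a near-diagonal lower bound $p_{n}(x,y)\ge c/V(x,\sqrt{n})$ for $d(x,y)\le c_{3}\sqrt{n}$, and this propagates to $d(x,y)\le c_{3}n$ by iterating PHI along a chain of $\lceil d(x,y)^{2}/n\rceil$ space-time cylinders of comparable size, each contributing a constant multiplicative loss.

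Finally $(b)\Rightarrow(c)$ is the easiest direction. Volume doubling follows from $p_{2n}(x,x)=\int p_{n}(x,y)^{2}\,\mu(dy)$ and the on-diagonal equivalence $p_{n}(x,x)\asymp 1/V(x,\sqrt{n})$ by comparing the bounds at scales $\sqrt{n}$ and $\sqrt{4n}$; small scales are handled by \eqref{e-delas}. For the Poincar\'{e} inequality, following Kusuoka-Stroock \cite{KS87}, one integrates the Gaussian lower bound against $(f-f_{B})^{2}$ localized to $B$ to extract a spectral gap of order $r^{-2}$ for the Dirichlet form on $B(x,r)$, which is equivalent to the Poincar\'{e} inequality in (c).
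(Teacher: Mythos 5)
Your proposal follows the classical cycle $(c)\Rightarrow(a)\Rightarrow(b)\Rightarrow(c)$ with a direct discrete-time parabolic Moser iteration at its core, and this is a viable route for the graph case \textit{because} on a discrete space the strong Sobolev inequality \eqref{e.sob} is available; but it is a genuinely different path from the one the present paper takes. The paper derives Theorem \ref{t-Del} as a special case of Theorem \ref{t-main1}, whose proof deliberately avoids parabolic Moser iteration: it runs $(c)\Rightarrow(b)$ first, using the (weaker) Sobolev inequality \eqref{e-Sob} to get an $L^{1}\to L^{\infty}$ mean-value inequality and hence Gaussian \emph{upper} bounds, then gets Gaussian \emph{lower} bounds from elliptic Harnack plus upper bounds following Hebisch--Saloff-Coste \cite{HS01}, and finally deduces the parabolic Harnack inequality $(b)\Rightarrow(a)$ via a discrete balayage formula rather than Moser's iteration. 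The paper also proves $(a)\Rightarrow(c)$ directly (Proposition \ref{p-phi-vd}) rather than $(b)\Rightarrow(c)$. What the paper's route buys is uniformity across graphs and continuous spaces, where \eqref{e.sob} fails and direct parabolic Moser iteration is blocked; what your route buys, in the graph setting, is closeness to the classical Grigor'yan--Saloff-Coste arguments and no need for the elliptic-Harnack-plus-upper-bounds machinery. One caution: you describe the discrete-time difficulties in the parabolic Caccioppoli step as ``bookkeeping,'' but the chain-rule error $(\partial_{k}u)^{2}$ is only absorbable given a laziness lower bound of the form $p_{1}(x,x)\ge\alpha/V(x,1)$ (which \eqref{e-delas} does provide here, and which the paper abstracts as condition (d) of Definition \ref{d-compat}); without that, the step fails, which is precisely why Delmotte's original proof detoured through continuous time. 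Also, in your $(b)\Rightarrow(c)$ the volume-doubling argument is cleaner if run through the Gaussian lower bound as in Lemma \ref{l-gevd} (integrate $p_{n}(x,\cdot)$ over $B(x,2\sqrt{n})$ and use $\int p_{n}=1$) than through comparing on-diagonal values at nearby scales, which requires an extra comparison between $p_{2n}(x,x)$ and $p_{8n}(x,x)$ that monotonicity alone does not supply.
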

Delmotte's strategy to prove Theorem \ref{t-Del} is to use Moser's  iteration  method as developed in \cite{Sal92,Sal95} to prove a continuous time parabolic Harnack inequality.
The next step is to prove
estimates on the corresponding continuous time kernel obtained using parabolic Harnack inequality. Then a comparison between
discrete and continuous time kernels provides Gaussian bounds on $p_k$ which in turn yields parabolic Harnack inequality for the discrete time heat equation \eqref{e-disheat}.
The comparison argument is tricky because the continuous time heat kernel has non-Gaussian behavior as discovered by Pang \cite{Pan93} and E.B. Davies \cite{Dav93}.
The discrete nature of space and time causes numerous other difficulties during Moser iteration that were overcome successfully by Delmotte.

\section{Main results}
Next, we state a version of our main result in a restricted setting. Recall that a weighted Riemannian manifold $(M,g,\mu)$ is a Riemannian manifold $(M,g)$ equipped
with a measure $\mu$ such that $\mu(dy)= \sigma(y) \nu(dy)$, where $\nu$ is the Riemannian measure and  $\sigma \in \mathcal{C}^\infty(M)$ is the  \emph{weight function}.
\begin{theorem} \label{t-main0}
 Let $(M,g,\mu)$ be a complete non-compact, weighted Riemannian manifold such that there exists $K \ge 0$ such that $\operatorname{Ric} \ge - Kg$.
  Furthermore there exists $C_1 \ge 1$ such that the weight function $\sigma$ satisfies
  $C_1^{-1} \le \sigma(x)/\sigma(y) \le C_1$ for all $x,y \in M$ with $d(x,y) \le 1$.
  Consider a Markov chain on $M$ with a symmetric kernel $p_k$ with respect to  $\mu$.
Further we assume that there exists $C_0>1,h>0,h'\ge h$ such that
\begin{equation} \label{e-repi}
C_0^{-1}\frac{\one_{B(x,h)}(y)}{V(x,h)} \le p_1(x,y) \le C_0 \frac{\one_{B(x,h')}(y)}{V(x,h')}
\end{equation}
for all $x \in M$ and for $\mu$-almost every $y \in M$. Then the following properties are equivalent:
\begin{enumerate}[(a)]
 \item The parabolic Harnack inequality: there exists $\eta \in (0,1)$,$C_H>1,R_H>0$ such that for all balls $B(x,r)$, $x \in M$, $r > R_H$ and
 for all non-negative functions $u: \N \times M \to \R$ that satisfies
 $\partial_k u + \Delta u_k \equiv 0$ in $\nint{0}{\lfloor 4\eta^2 r^2 \rfloor} \times B(x,r)$, we have
 \[
  \sup_{Q_\ominus} u \le C_H \inf_{Q_\oplus} u
 \]
where
\begin{align*}
 Q_\ominus &= \nint{\lceil (\eta^2/2) r^2 \rceil}{\lfloor \eta^2 r^2 \rfloor} \times B(x,(\eta/2) r), \\
 Q_\oplus &= \nint{ \lceil2 \eta^2 r^2 \rceil}{\lfloor 4 \eta^2 r^2 \rfloor} \times B(x,(\eta/2) r)
 \end{align*}
 \item Two sided Gaussian bounds on the heat kernel: there exists $C_1,C_2>0$ such that
for all $x,y \in M$ and for all $n \in \mathbb{N}^*$ satisfying $n \ge 2$, we have
\begin{equation}\label{e-guei2} p_n(x,y) \le \frac{C_1}{V(x,\sqrt{n})} \exp \left( - \frac{d(x,y)^2}{C_2n} \right)
\end{equation}
Further there exists $c_1,c_2,c_3>0$ such that for all $x,y \in M$ satisfying $d(x,y)\le c_3 n$ and for all $n \in \mathbb{N}^*$ satisfying $n \ge 2$
\begin{equation}\label{e-glei2} p_n(x,y) \ge \frac{c_1}{V(x,\sqrt{n})} \exp \left( -\frac{ d(x,y)^2}{c_2n }\right)
\end{equation}
\item The conjunction of
\begin{itemize}
 \item The volume doubling property: there exists $C_D>0$ such that for all $x \in M$, for all $r>0$ we have
 \[
 V(x,2r) \le C_D V(x,r)
 \]
\item The Poincar\'{e} inequality: there exists $C_P>0$,$\kappa \ge 1$ such that for any ball $B=B(x,r)$, $x \in M$, $r >1$ and for all $f \in L^2(M,\mu)$, we have
\begin{equation} \label{e-npoin}
 \int_B \abs{f-f_B}^2 \,d\mu \le C_P r^2 \int_{\kappa B} \left( \frac{1}{V(y,1)} \int_{B(y,1)} \abs{f(z) -f(y)}^2 \, \mu(dz)\right) \, \mu(dy),
\end{equation}
where $\kappa B= B(x,\kappa r)$, $f_B= \frac{1}{\mu(B)} \int_B f \, d\mu$.
\end{itemize}
\end{enumerate}
\end{theorem}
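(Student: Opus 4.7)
The plan is to prove the cyclic equivalence (a) $\Leftrightarrow$ (b) $\Leftrightarrow$ (c), with (c) $\Rightarrow$ (b) being the heart of the matter and (b) $\Rightarrow$ (a) a comparatively direct consequence via Aronson-style arguments. The easy implications (a) $\Rightarrow$ (c) and (b) $\Rightarrow$ (c) are handled together: the on-diagonal bound $p_{2n}(x,x) \asymp V(x,\sqrt{n})^{-1}$ (supplied directly by \eqref{e-guei2}--\eqref{e-glei2}, or under (a) by PHI applied to $y \mapsto p_n(x,y)$ together with stochastic completeness) yields volume doubling by comparing scales, while the Poincar\'e inequality \eqref{e-npoin} is recovered via a Kusuoka--Stroock test-function argument applied to $P^n f$ for $f$ supported in a ball.

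For (c) $\Rightarrow$ (b), I would follow the Hebisch--Saloff-Coste strategy outlined in the introduction. The kernel bounds \eqref{e-repi}, together with the slowly-varying weight $\sigma$ and the Ricci lower bound, ensure that the Dirichlet form $\mathcal{E}(f,f) = \frac{1}{2}\iint (f(x)-f(y))^2 p_1(x,y)\,\mu(dx)\mu(dy)$ is comparable to the $L^2$-oscillation of $f$ at scale $h$. The Poincar\'e inequality \eqref{e-npoin} then combines with volume doubling to yield a Nash inequality for $\mathcal{E}$; Nash's method applied to the discrete-time semigroup gives the on-diagonal upper bound $p_n(x,x) \lesssim V(x,\sqrt{n})^{-1}$, and a Davies perturbation with an appropriately regularized exponential weight upgrades this to the Gaussian upper bound \eqref{e-guei2}.

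The lower bound \eqref{e-glei2} is obtained in two steps. First, a Moser-type iteration using \eqref{e-npoin} and the Sobolev inequality derived from Nash yields H\"older continuity for discrete caloric functions, and thus a near-diagonal lower bound $p_n(x,y) \gtrsim V(x,\sqrt{n})^{-1}$ whenever $d(x,y) \le \epsilon\sqrt{n}$. Second, for $d(x,y) \le c_3 n$, a chaining argument along $k \asymp d(x,y)^2/n$ intermediate waypoints via the Chapman--Kolmogorov factorization produces the Gaussian factor $\exp(-d(x,y)^2/(c_2 n))$. Finally, (b) $\Rightarrow$ (a) follows from a classical Aronson-type argument: the two-sided bounds together with volume doubling control $u(k,y) = \int p_k(y,z) u_0(z)\,\mu(dz)$ from above on $Q_\ominus$ and from below on $Q_\oplus$ by comparable averages of the initial data.

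The principal obstacles are (i) adapting continuous-time, local Dirichlet-space techniques to a discrete-time, non-local (at scale $h$) semigroup while preserving the robustness promised by the perturbation latitude of \eqref{e-repi}: cutoff functions no longer commute neatly with $P$, so the Moser iteration must track non-local boundary terms carefully, as in Delmotte's work; and (ii) running the chaining step in integer time, which forces a delicate optimization of the chain length and is what restricts the range of \eqref{e-glei2} to $d(x,y) \le c_3 n$ rather than the full Euclidean regime. These points account for the non-triviality of extending Delmotte's graph-case argument \cite{Del99} to the present mixed continuous/discrete setting.
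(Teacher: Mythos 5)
Your high-level architecture is right---the cycle (c)$\Rightarrow$(b)$\Rightarrow$(a)$\Rightarrow$(c), Kusuoka--Stroock for the PHI$\Rightarrow$PI direction, Nash-type inequalities from \ref{doub-inf}+\ref{poin-mms} for the on-diagonal upper bound, and chaining for the off-diagonal lower bound are all the correct ideas, and the paper does use them. However, there are two genuine gaps in the middle step.

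First, and most seriously, your route to the Gaussian lower bound \eqref{e-glei2} will not go through as written. You propose ``a Moser-type iteration using \eqref{e-npoin} and the Sobolev inequality derived from Nash yields H\"older continuity for discrete caloric functions, and thus a near-diagonal lower bound.'' The paper argues explicitly (introduction, around \eqref{e-sobi}) that the strong Sobolev inequality \eqref{e.sob} that one would need for parabolic Moser iteration is \emph{false} for a discrete-time Markov chain on a continuous space: since $\E(f,f)\le 2\|f\|_2^2$, the inequality \eqref{e.sob} would force $L^2(B)\subset L^{2\delta/(\delta-2)}(B)$ for every ball, which can only hold on a discrete space. Only the weaker Sobolev inequality \eqref{e-Sob} is available, in which an application of $P_B$ appears on the left side, and the paper states that this is ``too weak to run Moser's iteration directly to prove parabolic Harnack inequality.'' The paper therefore avoids the parabolic Moser route altogether: it runs \emph{elliptic} Moser iteration to get the elliptic Harnack inequality (Theorem \ref{t-ehi}) and a H\"older estimate for \emph{harmonic} (not caloric) functions (Proposition \ref{p-holder}), then follows the Hebisch--Saloff-Coste strategy in Chapter \ref{ch-glb}: near-diagonal regularity of $p^B_k(x,\cdot)$ is obtained from the elliptic H\"older estimate applied to the Dirichlet Green's function $G^B$, combined with the spectral gap (Proposition \ref{p-spec}), an on-diagonal lower bound for $p^B_k$ (Lemma \ref{l-balldlb}), and a delicate decomposition \eqref{e-cl3}. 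You cite the Hebisch--Saloff-Coste strategy but then describe something closer to a classical Saloff-Coste/Delmotte parabolic Moser argument, which is precisely the thing that fails here.

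Second, you do not address the discrepancy between the hypothesis of the theorem and the compatibility condition the machinery actually needs. The assumption \eqref{e-repi} only yields \emph{weak} $(h,h')$-compatibility in the sense of Definition \ref{d-compat}; it does not guarantee condition (d), namely $p_2\ge\alpha p_1$ a.e., which the paper uses crucially in the Caccioppoli inequality (Lemma \ref{l-caccio}), the discrete integral maximum principle (Proposition \ref{p-imp}), and the comparison $\mathcal{E}\le C\,\mathcal{E}_*$ (Lemma \ref{l-comp-dir}(b)) that makes the Nash iteration close. Without (d), one cannot even compare $\mathcal{E}_*(f,f)=\|f\|_2^2-\|Pf\|_2^2$, which governs the decay of $\|P^k f\|_2$, with $\mathcal{E}(f,f)$, which is what the Poincar\'e inequality controls. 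The paper's actual proof of Theorem \ref{t-main0} is a short reduction: pass to a large enough power $p_l$ which \emph{is} $(h,lh')$-compatible (Lemma \ref{l-wscompat}), apply the general Theorem \ref{t-main1} to $p_l$, and transfer the Gaussian estimates back to $p_1$ (Lemma \ref{l-gausscompare}). Any serious attempt at this theorem must either carry out this reduction or explain why \eqref{e-repi} already implies condition (d) on a Riemannian manifold, which it does not when $h'>h$.

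Finally, a smaller remark: the paper's off-diagonal upper bound is proved via the discrete integral maximum principle with a regularized Gaussian weight, applied to the lazy kernel $h_k=P_L^k p_2$, followed by a nontrivial polynomial-identity comparison between $P_L^k$ and $P^k$ (Lemma \ref{l-pol}). A Davies perturbation is in the same spirit but, absent laziness, the discrete product rule produces a second-order error term that is not automatically absorbed; the $P_L$ trick is what controls it. Your proposal doesn't mention the laziness device.
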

The Poincar\'{e} inequalities presented in Theorem \ref{t-grisal} and Theorem \ref{t-main0} are related.
We will show that the Poincar\'{e} inequality \eqref{e-opoin} implies \eqref{e-npoin} (See Proposition \ref{p-rmtfae}). A partial converse of the previous statement hold as well.
\begin{example}
 Consider a  complete, non-compact Riemannian manifold $(M,g)$ with non-negative Ricci curvature whose unit balls have a uniform positive volume lower bound.
 Define a Markov kernel $p(x,y) = \frac{\one_{B(x,1)} (y)}{V(x,1)}$ for all $x,y \in M$. Although $p$ is a Markov kernel with respect to the Riemannian measure $\nu$, $p(x,y) \neq p(y,x)$ in general.
 However $q_1(x,y)=\frac{p(x,y)}{V(y,1)}$ is a symmetric Markov kernel with respect to $\mu(dx) = V(x,1) \nu(dx)$ where $V$ denotes the volume with respect to $\nu$.
 By the remark preceding this example, $(M,g,\mu)$ satisfy the Poincar\'{e} inequalities \eqref{e-npoin} and \eqref{e-opoin}.
 Moreover $(M,g,\mu)$ satisfies volume doubling property. Hence the iterated kernel $q_n$ satisfies two-sided Gaussian bounds and the corresponding Laplacian satisfies the parabolic Harnack inequality.
  Similarly many other examples known in the diffusion case can be extended to the discrete-Markov chain case due to Proposition \ref{p-rmtfae}.
\end{example}
The role of Theorem \ref{t-main0} is only to illustrate  our main result  without introducing additional definitions.
We provide an unified approach to study random walks on both discrete and continuous spaces.
 We prove Theorem \ref{t-main0} as a corollary of a general result  that also gives an alternate proof of Theorem \ref{t-Del}.

Given the previous works on characterization of parabolic Harnack inequality and Gaussian bounds \cite{Gri91, Sal92,  Stu96, Del99, HS01} our results should not be surprising.
However we encounter new difficulties  that had to be resolved here and which were not present in previous works.
Recall that Moser's iteration method for  Harnack inequalities relies on repeated application of a Sobolev inequality\cite{Sal92,  Stu96, Del99}.
Grigor'yan's iteration method in \cite{Gri91} uses an equivalent Faber-Krahn inequality that is equivalent to the Sobolev inequality \cite{BCLS95}.

The Sobolev inequalities in the previous settings are of the form
\begin{equation} \label{e-sobi}
 \norm{f}_2^{2 \delta/ (\delta-2)} \le \frac{C r^2}{V_\mu(x,r)^{2/\delta}} \left( \mathcal{E}(f,f) + r^{-2} \norm{f}_2^2 \right)
\end{equation}
for all `nice' functions $f$ supported in $B(x,r)$. However for discrete time Markov chains, the Dirichlet form satisfies the inequality
 $\E(f,f) = \langle (I-P)f,f \rangle \le 2 \norm{f}_2^2$.
 This along with \eqref{e-sobi} implies that $L^2(B(x,r)) \subseteq L^{2 \delta/ (\delta-2)} (B(x,r))$ for all balls $B(x,r)$ which can happen only
 if the space is discrete. Hence for discrete time Markov chains on Riemannian manifolds the Sobolev inequality \eqref{e-sobi} cannot possibly be true.
 We prove and rely on a weaker form of the Sobolev inequality \eqref{e-sobi} which seems to be too weak to run Moser's iteration directly to prove parabolic Harnack inequality (See Theorem \ref{t-Sob}).
 Instead we use Moser's iteration to prove a version of the mean value inequality which in turn gives Gaussian upper bounds.
We adapt a method of \cite{HS01} which uses elliptic Harnack inequality and Gaussian upper bounds to prove Gaussian lower bounds (See Chapter \ref{ch-glb}).
Another difficulty that is new to our setting is explained in the beginning of Section \ref{s-dimp}.

In the context of diffusions on complete Riemannian manifolds the  Sobolev inequality \eqref{e-sobi} is equivalent to the conjunction of volume doubling property
and Gaussian upper bounds on the heat kernel \cite[Theorem 5.5.6]{Sal02}. In the previous statement, we may replace Sobolev inequalities with a similar but equivalent set of functional inequalities
called Faber-Krahn inequalities both in the context of diffusions on Riemannian manifolds \cite{Gri94} and for random walks on graphs \cite[Theorem 1.1]{CG98}.
We extend the above equivalences for random walks on a large class of metric measure spaces (Theorem \ref{t-main2}).

\section{Guide for the monograph}
This monograph is organized as follows.
In Chapter \ref{ch-mg}, we present the setting of quasi-geodesic spaces satisfying certain doubling hypotheses, study its basic properties and develop techniques that would let us compare discrete and continuous spaces.

In Chapter \ref{ch-pi}, we introduce Poincar\'{e} inequalities and discuss various examples and non-examples of spaces satisfying Poincar\'{e} inequality.
We study how these new Poincar\'{e} inequalities on metric measure spaces compare with the previously studied Poincar\'{e} inequalities on graphs and Riemannian manifolds.
Then we show that Poincar\'{e} inequality is stable under quasi-isometric transformation of quasi-geodesic spaces.

In Chapter \ref{ch-markov}, we introduce various hypotheses on the Markov chain, Dirichlet forms and study their basic properties.
In Chapter \ref{ch-sob}, we introduce and prove a Sobolev inequality under the assumptions of large scale volume doubling and Poincar\'{e} inequality.
In Chapter \ref{ch-ehi}, we use Sobolev inequality and Poincar\'{e} inequality to run the Moser iteration argument to prove elliptic Harnack inequality.

Chapter \ref{ch-gub} is devoted to the  proof of Gaussian upper bounds using Sobolev inequality.
In addition, we show that Sobolev inequality is equivalent to the conjunction of Gaussian upper bounds on the heat kernel  and large scale volume doubling property.
In Chapter \ref{ch-glb} we prove  Gaussian lower bounds using elliptic Harnack inequality and Gaussian upper bounds. This completes the proof that large scale volume doubling property
and Poincar\'{e} inequality implies two sided Gaussian bound on the heat kernel.

In Chapter \ref{ch-phi}, we prove parabolic Harnack inequality using Gaussian bounds.
Moreover, we prove large scale volume doubling property and Poincar\'{e} inequality using parabolic Harnack inequality, and thereby completing the proof of the characterization parabolic Harnack inequality and Gaussian bounds.
In Chapter \ref{ch-apply}, we mention various applications of Gaussian estimates and Harnack inequalities.
In Appendix  \ref{a-example}, we collect various examples and supplement them with figures and discussions.

\chapter{Metric Geometry} \label{ch-mg}

Let $(M,d,\mu)$ be a locally compact metric measure space where $\mu$ is a Radon measure with full support.
Let $\mathcal{B}(M)$ denote the Borel $\sigma$-algebra on $(M,d)$.
Let $B(x,r):= \{ y \in M : d(x,y) \le r \}$ denote the closed ball in $M$ for metric $d$ with center $x$ and radius $r>0$.
Let $V(x,r):= \mu(B(x,r))$ denote the volume of the closed ball centered at $x$ of radius $r$. Since $M$ is a Radon measure with full support, we have that
$V(x,r)$ is finite and positive for all $x \in M$ and for all $r >0$.
In this section, we introduce some assumptions on the metric $d$ and measure $\mu$ and study some consequences of those assumptions.

\section{Quasi-geodesic spaces}
The main assumption  on the metric $d$ of the metric measure space $(M,d,\mu)$ is that of quasi-geodesicity.
In Riemannian geometry, the distance between two points of a manifold is defined as the infimum of lengths of curves joining them.
Such a relation between distance and length of curves is observed more generally in length spaces.
\begin{definition}
 Let $(M,d)$ be a metric space. For $x,y \in M$, a
\emph{path} from $x$ to $y$ is a continuous map $\gamma:[0,1] \to M$ such that $\gamma(0)=x$ and $\gamma(1)=y$.
We define the \emph{length} $L(\gamma) \in [0,\infty]$ of a path $\gamma$ is the supremum
\[
 L(\gamma)= \sup_{P[0,1]} \sum_{i} d( \gamma(t_{i-1}),\gamma(t_i)).
\]
taken over all partition $0=t_0 < t_1 < \ldots < t_k = 1$ of $[0,1]$.
\end{definition}
The length of a path is a non-negative real number or $+\infty$.
\begin{definition}
 The \emph{inner metric}
or \emph{length metric} associated with metric space $(M,d)$ is the function $d_i:M \times M \to [0,\infty]$ defined by
\[
 d_i(x,y) = \inf_\gamma L(\gamma)
\]
where the infimum is taken over all paths $\gamma$ from $x$ to $y$. $(M,d)$ is called a \emph{length space}
if $d_i=d$. A metric for which $d=d_i$ is called an intrinsic metric.
\end{definition}
\begin{remark} \label{r-nograph}All Riemannian manifolds equipped with Riemannian distance are length spaces.
Since infimum of an empty set is $+\infty$, for points $x,y$ in different connected components of a metric space $(M,d)$, we have $d_i(x,y) = +\infty$.
Hence graphs with natural combinatorial metric are not length spaces because distinct vertices belong to different connected components under the metric topology.
See  \cite[Chapter 1]{Gro07} or \cite[Chapter 2]{BBI01}  for a comprehensive introduction of length spaces.

\end{remark}

One of the goals of this work is to provide an unified approach to the study of random walks in continuous spaces like Riemannian manifolds and discrete spaces like graphs.
In view of  Remark \ref{r-nograph}, we would like to consider spaces more general than length spaces to include disconnected metric spaces like graphs.
Quasi-geodesic spaces provides a natural setting to include both length spaces and graphs as special cases. Quasi-geodesic spaces are equipped with a weak notion of geodesics called chains.
We recall the following definition of chain and various notions of geodesicity as presented by Tessera in \cite{Tes08}.
\begin{definition}
Consider a metric space $(M,d)$ and $b>0$. For $x,y \in M$, a \emph{$b$-chain} between from $x$ to $y$,  is a sequence
$\gamma:x=x_0,x_2,\ldots,x_m=y$ in $M$ such that for every $0 \le i < m$, $d(x_i,x_{i+1}) \le b$.
We define the \emph{length} $l(\gamma)$ of a $b$-chain $\gamma: x_0,x_1,\ldots,x_m$ by setting
\[
 l(\gamma)=\sum_{i=0}^{m-1} d(x_i,x_{i+1}).
\]
Define a new distance function $d_b:M \times M \to [0,\infty]$ as
\begin{equation}\label{e-db}
 d_b(x,y) = \inf_{\gamma} l(\gamma)
\end{equation}
where $\gamma$ runs over every $b$-chain from $x$ to $y$. We say a metric space $(M,d)$ is
\begin{itemize}
 \item $b$-geodesic  if $d(x,y)=d_b(x,y)$ for all $x,y \in M$.
 \item quasi-$b$-geodesic  if there exists $C>0$ such that $d_b(x,y) \le C d(x,y)$ for all $x,y \in M$.
 \item quasi-geodesic if there exists $b>0$ such that $(M,d)$ is quasi-$b$-geodesic.
\end{itemize}
\end{definition}
\begin{remark}  We collect some simple consequences of the definitions.
 \begin{itemize}
  \item Any $b$-geodesic space is quasi-$b$-geodesic. Moreover $b$-quasi-geodesic space is $b_1$-quasi-geodesic for all $b_1 \ge b$.
  \item Any length space is $b$-geodesic for all $b>0$.
  \item  Graphs with natural combinatorial metric are $b$-geodesic if and only if $b \ge 1$. If $b<1$, then $d_b(x,y)=+\infty$ for distinct vertices $x$ and $y$.
 \end{itemize}
\end{remark}
The following lemma guarantees that quasi-geodesic spaces are endowed with sufficiently short chains at many length scales.
\begin{lemma}[Chain lemma] \label{l-chain}
Let $(M,d)$ be a quasi-$b$-geodesic space for some $b>0$. Then there exists $C_1\ge1$ such that for all $b_1 \ge b$ and for all $x,y \in M$,
there exists a $b_1$-chain $x=x_0,x_1,\ldots,x_m=y$  with $m \le \left\lceil\frac{C_1 d(x,y)}{b_1} \right\rceil$.
\end{lemma}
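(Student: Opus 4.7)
The plan is to use the quasi-$b$-geodesic property to start with a near-minimizing $b$-chain, then ``thin'' it greedily to a $b_1$-chain whose length is controlled by $d(x,y)/b_1$. By assumption there is a constant $C \ge 1$ such that $d_b(x,y) \le C d(x,y)$ for all $x,y \in M$ (note $C \ge 1$ because $d_b \ge d$ always, by the triangle inequality applied to any $b$-chain). The case $x=y$ is trivial with $m = 0$, so assume $x \ne y$.

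First, I pick a $b$-chain $x = z_0, z_1, \ldots, z_n = y$ with total length $L := \sum_{i=0}^{n-1} d(z_i, z_{i+1}) \le 2 d_b(x,y) \le 2C d(x,y)$. Since $b_1 \ge b$, this is also a $b_1$-chain. I then run a greedy sub-selection: set $i_0 = 0$ and, given $i_k < n$, let $i_{k+1}$ be the largest index $j \in \{i_k+1,\ldots,n\}$ with $d(z_{i_k}, z_j) \le b_1$. Such $j$ exists because $d(z_{i_k}, z_{i_k+1}) \le b \le b_1$, so the procedure terminates in finitely many steps with $i_m = n$. Writing $y_k := z_{i_k}$, the sequence $x = y_0, y_1, \ldots, y_m = y$ is a $b_1$-chain.

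The main work is bounding $m$. For every $k \le m-2$ the index $i_{k+1}$ is strictly less than $n$, so the maximality defining $i_{k+1}$ forces $d(z_{i_k}, z_j) > b_1$ for every $j > i_{k+1}$; in particular $d(y_k, y_{k+2}) > b_1$. Summing and using the triangle inequality,
\[
(m-1)\, b_1 \;<\; \sum_{k=0}^{m-2} d(y_k, y_{k+2}) \;\le\; 2\sum_{k=0}^{m-1} d(y_k, y_{k+1}) \;\le\; 2\sum_{l=0}^{n-1} d(z_l, z_{l+1}) \;\le\; 4C\, d(x,y),
\]
where the second inequality uses that each $d(y_j, y_{j+1})$ appears at most twice when we expand $d(y_k, y_{k+2}) \le d(y_k, y_{k+1}) + d(y_{k+1}, y_{k+2})$, and the third applies the triangle inequality to the segment $z_{i_k}, z_{i_k+1}, \ldots, z_{i_{k+1}}$ of the $b$-chain. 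Since $m$ is an integer, $m - 1 < 4C\, d(x,y)/b_1$ yields $m \le \lceil 4C d(x,y)/b_1 \rceil$, proving the lemma with $C_1 = 4C \ge 1$.

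The only delicate point is the ``skip one'' in the lower bound: one cannot directly argue $d(y_k, y_{k+1}) \gtrsim b_1$, because when $b_1$ is close to $b$ the greedy segments could consist of many small increments whose pairwise endpoint distance is much less than $b_1$. Comparing $y_k$ to $y_{k+2}$ (which must be more than $b_1$ apart by the maximality rule defining $i_{k+1}$) circumvents this at the cost of a factor of two and lets the proof work uniformly over all $b_1 \ge b$.
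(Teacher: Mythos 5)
Your proof is correct and takes essentially the same route as the paper: extract a near-minimal $b$-chain of total length $O(d(x,y))$, thin it by the greedy ``jump as far as $b_1$ allows'' rule, observe that the maximality of each jump forces $d(y_k,y_{k+2})>b_1$, and sum these inequalities together with the triangle inequality to bound $m$. The only cosmetic differences are that you carry an explicit factor $2$ to account for the infimum in $d_b$ possibly not being attained (yielding $C_1=4C$ instead of the paper's $C_1=2C+1$), and you restrict the greedy search to indices $j>i_k$ from the start, which the paper leaves implicit.
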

\begin{proof}
 Since $(M,d)$ is quasi-$b$-geodesic,
 there exists $C>0$ such that for all $x,y \in M$, there exists a $b$-chain $x=y_0,y_1,\ldots,y_n=y$ satisfying $\sum_{i=0}^{n-1} d(y_i,y_{i+1}) \le  C d(x,y)$.
 We define a smaller $b_1$-chain $x_0,x_1,\ldots,x_m$ where  $x_k=y_{i_k}$.
 We choose $i_0=0$ and define $i_k$ successively by
 \[
 i_k= \max \{ 1 \le j \le n : d(y_{i_{k-1}},y_{j}) \le b_1 \}
\]
for $k \ge 1$. Define $m=\min \{j : y_{i_j}=y \}$. By the definition of $i_k$ we have that
\begin{equation*}
 d(x_i,x_{i+1}) + d(x_{i+1},x_{i+2}) \ge d(x_i,x_{i+2}) > b_1
\end{equation*}
for all $i=0,1,\ldots,m-2$. Therefore we have
\begin{equation*}
\sum_{i=0}^{m-1} d(x_i,x_{i+1})  > \frac{b_1}{2} (m-1).
\end{equation*}
By triangle inequality, we have
$\sum_{i=0}^{m-1} d(x_i,x_{i+1}) \le \sum_{i=0}^{n-1} d(y_i,y_{i+1}) \le C d(x,y)$.
Therefore
\[
 m \le 1+ \frac{2 C}{b_1} d(x,y).
\]
Hence the choice $C_1= 2C+1$ satisfies the desired conclusion.
\end{proof}
\section{Doubling hypothesis}
The main assumption that we recall below on the Radon measure $\mu$ is the doubling property.
For a metric measure space $(M,d,\mu)$, we denote volume of balls by $V(x,r)=\mu(B(x,r))$.
\begin{definition} We define the following doubling hypothesis:
\begin{itemize}
\item[$(VD)_{\operatorname{loc}}$] We say a space $(M,d,\mu)$ satisfies the local volume doubling property $(VD)_{\operatorname{loc}}$, if for all $r >0$, there exists $C_r$ such that
\begin{equation*} \label{doub-loc}
V(x,2 r)  \le C_{r} V(x,r)\tag*{$(VD)_{\operatorname{loc}}$}
\end{equation*}
for all $x \in M$.
\item[$(VD)_{\infty}$] We say a space $(M,d,\mu)$ satisfies the large scale doubling property $(VD)_{\infty}$, if  there exists positive reals $C_{r_0},r_0$ such that
\begin{equation*} \label{doub-inf}
V(x,2 r)  \le C_{r_0} V(x,r)\tag*{$(VD)_{\infty}$}
\end{equation*}
for all $x \in M$ and $r \ge r_0$.
\item[$(VD)$] We say a space $(M,d,\mu)$ satisfies the global volume doubling property $(VD)$, if  there exists a constant $C_{D}>0$ such that
\begin{equation*} \label{doub-glob}
V(x,2 r)  \le C_{D} V(x,r)\tag*{$(VD)$}
\end{equation*}
for all $x \in M$ and $r >0$.
\end{itemize}
\end{definition}
\begin{remark}
 The property \ref{doub-glob} implies \ref{doub-inf} and \ref{doub-loc}. The property \ref{doub-loc} is a condition local in $r$ but uniform in $x \in M$
 while \ref{doub-inf} and \ref{doub-glob} are uniform in both $x$ and $r$.
The property \ref{doub-loc} is a very weak property of bounded geometry introduced in \cite{CS95}.
Since $C_r$ depends on $r$, the local volume doubling property does impose too much constraint on volume growth as $r \to \infty$.
However, we will see in Lemma \ref{l-doub-prop} that large scale doubling can be used to control volume of large balls.
\end{remark}
\begin{example}
We describe some examples satisfying the above hypothesis on volume growth. Every connected graph with bounded degree and equipped with the counting measure satisfies \ref{doub-loc}.
By Bishop-Gromov inequality \cite[Theorem III.4.5.]{Cha06},  Riemannian manifolds with Ricci curvature bounded from below satisfy \ref{doub-loc} and  Riemannian manifolds with non-negative Ricci curvature satisfy \ref{doub-glob}.
\end{example}
We collect some basic properties of spaces
satisfying the above doubling hypothesis \ref{doub-loc} and \ref{doub-inf}.
\begin{lemma}(\cite[Lemma 2.1]{CS95}) \label{l-vloc}
 If $(M,d,\mu)$ satisfies \ref{doub-loc}, then for all $r_1,r_2 >0$, there exists $C_{r_1,r_2}$ such that
 \begin{equation}\label{e-volc} \leavevmode
  V(x,r_2) \le C_{r_1,r_2} V(x,r_1)
 \end{equation}
for all $x \in M$. In particular, for all $x,y \in M$, such that $d(x,y) \le R$, we have
\[
 V(x,r) \le C_{r,R+r} V(y,r)
\]
\end{lemma}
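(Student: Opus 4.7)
The plan is to prove both assertions by straightforward iteration of \ref{doub-loc}, combined with the triangle inequality for the second part.

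First I would handle the first inequality. If $r_2 \le r_1$, monotonicity of $\mu$ gives $V(x,r_2)\le V(x,r_1)$, so we may take $C_{r_1,r_2}=1$. If $r_2 > r_1$, choose the smallest integer $n$ with $2^n r_1 \ge r_2$, namely $n=\lceil \log_2(r_2/r_1)\rceil$. Applying \ref{doub-loc} successively at the radii $r_1, 2r_1, 4r_1,\ldots, 2^{n-1} r_1$ (each of which yields a constant $C_{2^i r_1}$ depending only on that radius, not on $x$), I get
\[
V(x,r_2)\;\le\;V(x,2^n r_1)\;\le\;\Bigl(\prod_{i=0}^{n-1} C_{2^i r_1}\Bigr)\,V(x,r_1).
\]
Setting $C_{r_1,r_2}=\prod_{i=0}^{n-1} C_{2^i r_1}$, which depends only on $r_1$ and $r_2$, establishes \eqref{e-volc}. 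Note crucially that the iteration is allowed because \ref{doub-loc} provides a constant $C_r$ that is uniform in $x\in M$ at each fixed radius $r$; only the $r$-dependence is unrestricted.

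For the second claim, suppose $d(x,y)\le R$. By the triangle inequality, any point $z\in B(x,r)$ satisfies $d(y,z)\le d(y,x)+d(x,z)\le R+r$, so $B(x,r)\subseteq B(y,R+r)$ and therefore $V(x,r)\le V(y,R+r)$. Applying the first part of the lemma at the point $y$ with radii $r$ and $R+r$ gives $V(y,R+r)\le C_{r,R+r}V(y,r)$, which chains to the desired estimate $V(x,r)\le C_{r,R+r}V(y,r)$.

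The argument has no real obstacle; the only thing to keep straight is that the \ref{doub-loc} hypothesis supplies a doubling constant for each fixed radius $r$ that is uniform in the base point $x$, so finitely many applications at dyadic radii $2^i r_1$ produce a constant depending only on $r_1$ and $r_2$, not on $x$.
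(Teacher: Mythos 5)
Your proof is correct and matches the paper's own argument: iterate \ref{doub-loc} over the dyadic radii $2^i r_1$ to get \eqref{e-volc}, then deduce the second claim from the containment $B(x,r)\subseteq B(y,R+r)$ together with \eqref{e-volc}. No issues.
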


\begin{proof}
Let $k$ be the smallest integer such that $2^k r_1 \ge r_2$. By repeated application of \ref{doub-loc}, the choice
\[
 C_{r_1,r_2}= \prod_{i=0}^{k-1} C_{2^i r_1}
\]
satisfies, \eqref{e-volc} where the constant $C_{2^i r_1}$ is  from \ref{doub-loc}. The second part follows from $B(x,r) \subseteq B(y,R+r)$ and \eqref{e-volc}.
\end{proof}
The large scale doubling property \ref{doub-inf} along with \ref{doub-loc} implies a polynomial volume growth upper bound.
\begin{lemma} \label{l-doub-prop}
 Let $(M,d,\mu)$ be a metric measure space satisfying \ref{doub-loc} and \ref{doub-inf}. Then for all $b >0$, there exists $C_b > 0$ such that
\begin{equation} \label{e-vd}
V(x,2 r)  \le C_{b} V(x,r)
\end{equation}
for all $x \in M$ and $r \ge b$. Moreover this $C_b$ satisfies
\begin{equation}
 \label{e-vd1} \frac{V(x,r)}{V(x,s)} \le C_b \left( \frac{r}{s} \right)^\delta
\end{equation}
for all  $x \in M$, for all $b \le s < r$ and for all $\delta \ge \log_2 C_b$.
Furthermore
\begin{equation}
 \label{e-vd2}  \frac{V(x,r)}{V(y,s)} \le C_b^2 \left( \frac{r}{s} \right)^\delta
\end{equation}
holds for all $b \le s \le r$, for all $x \in M$, for all $y \in B(x,r)$ and for all $\delta \ge \log_2 C_b$ .
\end{lemma}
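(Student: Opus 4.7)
The plan proceeds in three steps corresponding to the three claims \eqref{e-vd}, \eqref{e-vd1}, and \eqref{e-vd2}.

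\textbf{Step 1 (Uniform doubling for $r\ge b$).} Fix $b>0$ and let $r_0,C_{r_0}$ be the constants from \ref{doub-inf}. The case $b\ge r_0$ is immediate: for every $r\ge b\ge r_0$, \ref{doub-inf} gives $V(x,2r)\le C_{r_0}V(x,r)$, so $C_b=C_{r_0}$ works. The interesting case is $b<r_0$. For $r\in[b,r_0]$ the inclusion $B(x,2r)\subseteq B(x,2r_0)$ combined with Lemma \ref{l-vloc} applied to the radii $b\le 2r_0$ gives
\[
V(x,2r)\le V(x,2r_0)\le C_{b,2r_0}\,V(x,b)\le C_{b,2r_0}\,V(x,r),
\]
using $V(x,b)\le V(x,r)$ since $b\le r$. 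For $r>r_0$, \ref{doub-inf} applies directly. Hence we may take
\[
C_b:=\max\bigl(C_{r_0},\,C_{b,2r_0}\bigr),
\]
which yields \eqref{e-vd}.

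\textbf{Step 2 (Polynomial growth at a fixed center).} Iterating the inequality from Step 1, for every $k\ge 0$ and every $\rho\ge b$,
\[
V(x,2^k\rho)\le C_b^{\,k}\,V(x,\rho).
\]
Given $b\le s<r$, let $k$ be the smallest nonnegative integer with $2^k s\ge r$; then $k\le \log_2(r/s)+1$. Applying the iterated bound with $\rho=s$,
\[
\frac{V(x,r)}{V(x,s)}\le \frac{V(x,2^k s)}{V(x,s)}\le C_b^{\,k}\le C_b^{\,\log_2(r/s)+1}=C_b\,(r/s)^{\log_2 C_b}.
\]
For every $\delta\ge\log_2 C_b$ we have $(r/s)^{\log_2 C_b}\le(r/s)^{\delta}$ since $r/s\ge 1$, so \eqref{e-vd1} follows.

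\textbf{Step 3 (Comparison of balls at different centers).} For $y\in B(x,r)$ and any $z\in B(x,r)$, the triangle inequality gives $d(y,z)\le d(y,x)+d(x,z)\le 2r$, so $B(x,r)\subseteq B(y,2r)$. Applying \eqref{e-vd1} at the center $y$ with radii $b\le s\le 2r$ (note $s\le r\le 2r$ and $b\le s$, so the hypotheses of Step 2 are satisfied),
\[
\frac{V(x,r)}{V(y,s)}\le\frac{V(y,2r)}{V(y,s)}\le C_b\,(2r/s)^{\delta_0}=C_b\cdot 2^{\delta_0}(r/s)^{\delta_0},
\]
where $\delta_0=\log_2 C_b$, so $2^{\delta_0}=C_b$ and the right side equals $C_b^{2}(r/s)^{\delta_0}$. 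For any $\delta\ge\delta_0$, monotonicity of $(r/s)^{\delta}$ in $\delta$ (since $r/s\ge 1$) upgrades this to $C_b^{2}(r/s)^{\delta}$, proving \eqref{e-vd2}.

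The only real subtlety is in Step 1, where the constant from \ref{doub-inf} alone is insufficient when $b<r_0$; this is resolved by using the already-established Lemma \ref{l-vloc} to absorb the finite range $[b,r_0]$ into a single constant. Steps 2 and 3 are routine iteration and triangle-inequality arguments, with the only bookkeeping point being the identity $C_b^{\log_2(r/s)}=(r/s)^{\log_2 C_b}$ used to convert iterated doubling into polynomial growth.
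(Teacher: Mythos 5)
Your proof is correct and follows essentially the same route as the paper's: handle the small-radius range $[b,r_0]$ using Lemma \ref{l-vloc}, iterate the doubling inequality to get polynomial growth, and use the triangle-inequality inclusion $B(x,r)\subseteq B(y,2r)$ for the two-center comparison. The only cosmetic differences are that in Step 1 you invoke $C_{b,2r_0}$ in a single application of Lemma \ref{l-vloc} where the paper chains $C_{r_0}C_{b,r_0}$, and in Step 3 you apply \eqref{e-vd1} directly at radius $2r$ whereas the paper first doubles $V(y,2r)\le C_bV(y,r)$ and then applies \eqref{e-vd1}; both yield the identical constant $C_b^2$.
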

\begin{proof}
 Let $r_0,C_{r_0}$ be constants from \ref{doub-inf}. There is nothing to prove if $r_0 \le b$. Assume $r_0 > b$ and  let $r$ be such that $b \le r < r_0$.
Then by Lemma \ref{l-vloc} and \ref{doub-inf}
\[
 V(x,2r) \le V(x,2r_0) \le C_{r_0} V(x,r_0) \le C_{r_0} C_{b,r_0} V(x,b) \le C_{r_0 } C_{b,r_0} V(x,r).
\]
 The case $r \ge r_0$ follows from \ref{doub-inf} which concludes the proof of \eqref{e-vd}. \\
 Let $b \le s < r$ , $k= \log_2 (r/s)$ and $\delta \ge \log_2 C_D$.
 Then from \eqref{e-vd}, we get \eqref{e-vd1},
 \[
  \frac{V(x,r)}{V(x,s)} \le   \frac{V(x,2^{\lceil k \rceil} s)}{V(x,s)} \le C_b^{k+1} \le C_b \left( \frac{r}{s} \right)^\delta.
 \]
To obtain \eqref{e-vd2} from \eqref{e-vd1}, note that
\[
 \frac{V(x,r)}{V(y,s)} \le \frac{V(y,2r)}{V(y,s)} \le C_b  \frac{V(y,r)}{V(y,s)} \le  C_b^2 \left( \frac{r}{s} \right)^\delta.
\]
\end{proof}
The equation \eqref{e-vd1} implies a polynomial upper bound on the volume growth.
In quasi-geodesic spaces, we can reverse the inequality \eqref{e-vd1} and obtain a polynomial lower bound for all radii small enough compared to the diameter.
The property stated in following lemma is often called the reverse volume doubling property.
It was known for graphs and Riemannian manifolds and our proof follows similar ideas.
\begin{lemma}\label{l-rvd}
 Let $(M,d,\mu)$ be a quasi-$b$-geodesic space with the measure $\mu$ satisfying \ref{doub-loc} and \ref{doub-inf}. Then there exists $c ,\gamma>0$ such that
 \begin{equation} \label{e-rvd}
  \frac{V(x,r)}{V(x,s)} \ge c \left( \frac{r}{s} \right)^\gamma
 \end{equation}
for all $x \in M$ and for all $b \le s \le r \le \operatorname{diam}(M)$, where $\operatorname{diam}(M)= \sup\{ d(x,y): x,y \in M \}$ denotes the diameter of $(M,d,\mu)$.
\end{lemma}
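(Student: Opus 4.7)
The plan is to establish a one-step volume increment inequality
\[
V(x, 2s) \ge (1+\eta) V(x, s)
\]
for a fixed $\eta > 0$ and for all $s$ in a suitable range, then iterate it. The key geometric input is the chain lemma (Lemma \ref{l-chain}), which produces a point lying in a thin annulus around $x$; combined with the forward doubling bound \eqref{e-vd1}, this converts the measure of a small ball at that point into a multiplicative gain over $V(x, s)$.

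First I fix $x \in M$ and set $D_x := \sup_{y \in M} d(x,y)$, noting $D_x \ge \operatorname{diam}(M)/2$ by the triangle inequality. Assume $s \ge 4b$ and $2s \le D_x$. Pick $z \in M$ with $d(x,z) > 7s/4$ and apply Lemma \ref{l-chain} with $b_1 = s/4 \ge b$ to produce a $b_1$-chain $x = x_0, x_1, \ldots, x_m = z$. Since $|d(x, x_{i+1}) - d(x, x_i)| \le d(x_i, x_{i+1}) \le s/4$, letting $y := x_i$ for the first index $i$ with $d(x, x_i) > 5s/4$ gives $d(x, y) \in (5s/4, 3s/2]$.

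The triangle inequality then forces $B(y, s/4) \subseteq B(x, 2s) \setminus B(x, s)$, so by additivity of $\mu$,
\[
V(x, 2s) \ge V(x, s) + V(y, s/4).
\]
On the other hand $B(x, s) \subseteq B(y, 5s/2)$, so \eqref{e-vd1} (applied with $s/4 \ge b$ as the inner radius) yields $V(x, s) \le V(y, 5s/2) \le C_b\, 10^\delta V(y, s/4)$, hence $V(y, s/4) \ge (C_b\, 10^\delta)^{-1} V(x, s)$. Combining, $V(x, 2s) \ge (1 + \eta) V(x, s)$ with $\eta = (C_b\, 10^\delta)^{-1}$.

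Iterating the increment bound $k$ times, as long as $2^k s \le D_x$, gives $V(x, 2^k s) \ge (1+\eta)^k V(x, s)$. Since $D_x \ge \operatorname{diam}(M)/2 \ge r/2$ for any $r \le \operatorname{diam}(M)$, one may take $k = \lfloor \log_2(r/s) \rfloor - O(1)$ and use $V(x, r) \ge V(x, 2^k s)$ to obtain \eqref{e-rvd} with $\gamma = \log_2(1+\eta)$. The residual cases $s \in [b, 4b)$ and $r$ so close to $\operatorname{diam}(M)$ that the iteration runs one or two steps short are absorbed into the final constant $c$: the former by using Lemma \ref{l-vloc} to compare $V(x, s)$ with $V(x, 4b)$, the latter by the trivial inequality $V(x, r)/V(x, s) \ge 1$ together with the fact that $r/s$ is then bounded. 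The main obstacle is the geometric construction of $y$: one must exploit quasi-geodesicity via the chain lemma to secure a point at a controlled intermediate distance, and then apply the forward doubling inequality in the ``wrong'' direction to extract the multiplicative improvement in the annulus $B(x, 2s) \setminus B(x, s)$.
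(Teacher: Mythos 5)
Your proof is correct and follows essentially the same strategy as the paper's: use quasi-geodesicity (the chain lemma) to locate a point at a controlled intermediate distance from $x$, observe that a small ball around it sits in an annulus around $x$, invoke the forward doubling bound \eqref{e-vd1} to convert that ball's measure into a fixed multiplicative gain over $V(x,s)$, and iterate. The only differences are cosmetic — you iterate with base $2$ and a $(s/4)$-chain and localize via $D_x$, whereas the paper iterates with base $4$ and an $s$-chain after restricting to $r\le\operatorname{diam}(M)/5$.
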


\begin{proof}
We first consider the case $b \le s \le r \le \frac{\operatorname{diam}(M)}{5}$.
 Let $x \in M$ and let $z \in M$ be chosen such that $d(x,z) \ge (3/7) \operatorname{diam}(M)$. Let $x=x_0,x_1,\ldots,x_m=z$ be a $s$-chain with minimal number of points $m$.
 Therefore there exists $3 \le k \le m$ such that
 $2s <d(x_k,x)\le 3s$. \\
 Since $d(x_k,s)>2s$, we have $B(x_k , s) \cap B(x,s) = \emptyset$. By Lemma \ref{l-doub-prop}, there exists $\epsilon>0$ such that
 \[
  V(x,3s) \le V(x_k,6 s) \le \epsilon^{-1} V(x_k,s)
 \]
Therefore we obtain
\begin{equation} \label{e-rvd1}
  V(x,4s) \ge V(x,s) +V(x_k,s) \ge (1+\epsilon)V(x,s)
\end{equation}
for all $x \in M$ and $b \le s \le \operatorname{diam}(M)/5$. Define $k= \log_4(r/s)$ and $\gamma= \log_4(1+\epsilon)$. Then by \eqref{e-rvd1}
\[
 \frac{V(x,r) }{V(x,s)} \ge \frac{V(x, 4^{\lfloor k \rfloor} s )}{V(x,s)} \ge (1+ \epsilon)^{k-1}= (1+\epsilon)^{-1}  \left( \frac{r}{s}\right)^\gamma
\]
for all $x \in M$ and $b \le s < r \le \operatorname{diam}(M)/5$.

The other cases follow from Lemma \ref{l-vloc} and by choosing \[c= \min( (1+\epsilon)^{-1} 5^{-\gamma}, C_{\operatorname{diam}(M)/5,\operatorname{diam}(M)}.\]
\end{proof}

\section{Quasi-isometry}
One of the goals of this work is to develop arguments which are robust to small perturbations in the geometry of the underlying space;
for example addition of few edges in a graph or small changes in the metric of a Riemannian manifold. We study properties that depends mainly
on the large scale geometry of the underlying space.
In this spirit, the concept of quasi-isometry was introduced by Kanai in \cite{Kan85} and in the more restricted setting of groups by Gromov in \cite{Gro81}.
Informally, two metric spaces are quasi-isometric if they have the same large scale geometry. Here is a precise definition:
 \begin{definition} \label{d-qi-ms}
  A map $\phi:(M_1,d_1) \to (M_2,d_2)$, between metric spaces is called a quasi-isometry if the following conditions are satisfied:
  \begin{itemize}
   \item[(i)] There exists $a \ge 1$ and $b \ge 0$ such that
   \[
    a^{-1} d_1(x_1,x_2) - b \le d_2( \phi(x_1), \phi(x_2) ) \le a d_1(x_1,x_2) + b
   \]
   for all $x_1,x_2 \in M_1$.
   \item[(ii)] There exists  $\epsilon>0$, such that for all $y \in M_2$ there exists $x \in M_1$ such that $d_2(\phi(x),y) < \epsilon$.
    \end{itemize}
We say metric spaces $(M_1,d_1)$ and $(M_2,d_2)$ are quasi-isometric if there exists a quasi-isometry $\phi:(M_1,d_1) \to (M_2,d_2)$.
 \end{definition}
 \begin{remark} Quasi-isometry is an equivalence relation among metric spaces.
  Quasi-isometry is also called as \emph{rough-isometry} or \emph{coarse quasi-isometry}. Property (i) of Definition \ref{d-qi-ms} above is called \emph{roughly bi-Lipschitz} and (ii)
  is called \emph{roughly surjective}.
 \end{remark}
 We remark that a quasi-isometry is not necessarily a continuous map. Moreover,  quasi-isometry is not necessarily injective and not necessarily surjective.
However, we can construct a  \emph{quasi-inverse} $\phi^{-}: (M_2,d_2) \to (M_1,d_1)$ as $\phi^{-}(y) =x$
where $x \in M_1$ is chosen so that $d_2( \phi(x),y) < \epsilon$ where $\epsilon$ is given by the above definition.

 We now describe some well-known examples of quasi-isometry. The space $\mathbb{R}^d$ with Euclidean metric and $\mathbb{Z}^d$ with standard graph metric (same as $L^1$ metric)
 are quasi-isometric. Consider a finitely generated group $\Gamma$ with a finite system of generator $A$.
 For an element $\gamma \neq 1$, let $\abs{\gamma}_A$ denote the smallest positive integer
 $k$ such that a product of $k$ elements of $A \cup A^{-1}$, and put $\abs{1}_A = 0$. This $\abs{\cdot}_A$ is called the \emph{word norm} of $\Gamma$ and defines a \emph{word metric}
 $d_A(\gamma_1,\gamma_2) = \abs{\gamma_1^{-1} \gamma_2}_A$. In other words, $d_A$ is the graph metric in the Cayley graph of $\Gamma$ corresponding to the symmetric generating set
 $A \cup A^{-1}$. Assume two finite generating sets $A$ and $B$ of a group $\Gamma$ which induces metric $d_A$ and $d_B$ respectively. Then $(\Gamma,d_A)$ and $(\Gamma,d_B)$ are quasi-isometric
 (See \cite[Proposition 1.15]{Roe03}).
 Therefore every finitely generated group defines a unique word metric space up to quasi-isometry and
 we may often view a finitely generated group up as a metric space without explicitly specifying the generating set.
  A large class of examples of quasi-isometry is given by the \v{S}varc-Milnor theorem.
 We refer the reader to \cite[Theorem 1.18]{Roe03} for a proof and original references.
 \begin{theorem} (\v{S}varc-Milnor theorem) \label{t-sm}
  Suppose that $(M,d)$ is a length space and $\Gamma$ is a finitely generated group equipped with a word metric acting properly and cocompactly by isometries on $M$.
  Then $\Gamma$ is quasi-isometric to $(M,d)$. The map $\gamma \mapsto \gamma.x_0$ is a quasi-isometry for each fixed base point $x_0 \in M$.
 \end{theorem}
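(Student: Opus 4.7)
The plan is to follow the classical argument, which proceeds by building a finite generating set out of geometric data coming from the action and then using the length-space hypothesis to bound word length by distance.

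Fix a base point $x_0 \in M$ and write $\phi(\gamma) = \gamma \cdot x_0$. First I would use cocompactness to choose a compact set $K \subset M$ with $\Gamma \cdot K = M$, and pick $R > 0$ so that $K \subset B(x_0, R)$. This immediately yields the roughly surjective property of Definition~\ref{d-qi-ms}(ii): every $y \in M$ can be written as $\gamma \cdot k$ with $k \in K$, and by $\Gamma$-invariance of the metric $d(y, \phi(\gamma)) = d(k, x_0) \le R$.

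Next I would produce a finite generating set adapted to $x_0$. Set
\[
 S = \Sett{\gamma \in \Gamma}{\gamma \neq 1,\ d(x_0, \gamma \cdot x_0) \le 3R}.
\]
Properness of the action (the set of $\gamma$ moving $\overline{B}(x_0,3R)$ into itself is finite) gives $\abs{S} < \infty$. The upper bound in Definition~\ref{d-qi-ms}(i) then follows by a standard $\Gamma$-invariance and triangle-inequality argument: writing $\gamma_1^{-1}\gamma_2 = s_1 \cdots s_n$ with $s_i \in S$ and $n = \abs{\gamma_1^{-1}\gamma_2}_S$, one gets $d(\phi(\gamma_1), \phi(\gamma_2)) = d(x_0, \gamma_1^{-1}\gamma_2 \cdot x_0) \le 3R\,\abs{\gamma_1^{-1}\gamma_2}_S$.

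The heart of the argument, and the main obstacle, is the matching lower bound, i.e.\ bounding $\abs{\gamma}_S$ linearly by $d(x_0, \gamma \cdot x_0)$; this is also what forces $S$ to generate $\Gamma$. Here the length-space hypothesis is essential. Given $\gamma \in \Gamma$, pick a path in $M$ from $x_0$ to $\gamma \cdot x_0$ of length at most $d(x_0, \gamma \cdot x_0) + 1$, and subdivide it into $n \le \lceil (d(x_0,\gamma\cdot x_0)+1)/R \rceil$ consecutive pieces each of length $\le R$. Using rough surjectivity, approximate each subdivision point $y_i$ by some $\gamma_i \cdot x_0$ with $d(y_i, \gamma_i \cdot x_0) \le R$, taking $\gamma_0 = 1$ and $\gamma_n = \gamma$. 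Then consecutive translates satisfy
\[
 d(x_0, \gamma_i^{-1}\gamma_{i+1} \cdot x_0) = d(\gamma_i \cdot x_0, \gamma_{i+1} \cdot x_0) \le 3R,
\]
so $\gamma_i^{-1}\gamma_{i+1} \in S \cup \{1\}$. This displays $\gamma$ as a word of length at most $n$ in $S$, proving both that $S$ generates and that $\abs{\gamma}_S \le R^{-1} d(x_0, \gamma\cdot x_0) + O(1)$. Finally, since any two word metrics on a finitely generated group are bi-Lipschitz equivalent (cf.\ \cite[Proposition 1.15]{Roe03}), the estimate passes from the specific word metric $d_S$ to the given one, completing the verification of Definition~\ref{d-qi-ms}(i).
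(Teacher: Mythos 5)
The paper does not prove Theorem~\ref{t-sm}; it states it and refers the reader to \cite[Theorem 1.18]{Roe03}, so there is no in-house argument to compare against. Your proof is the standard Švarc\textendash Milnor argument, which is the one Roe gives: cocompactness supplies the rough surjectivity and the radius $R$; a finite ``fellow-traveler'' set $S$ is extracted via properness; the upper bound $d(\phi(\gamma_1),\phi(\gamma_2)) \le 3R\,\abs{\gamma_1^{-1}\gamma_2}_S$ is a telescoping triangle-inequality estimate; and the length-space hypothesis lets you chain nearby orbit points along a short path, which simultaneously proves that $S$ generates and gives the matching lower bound. Passing from $d_S$ to the given word metric via bi-Lipschitz equivalence of word metrics on a fixed finitely generated group is exactly the observation the paper records just above the theorem.

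One point you should make explicit: to get $\abs{S}<\infty$ you apply properness of the action to the closed ball $\overline{B}(x_0,3R)$, but properness is a statement about compact sets, and in a bare length space closed balls need not be compact. This is harmless in practice because the Švarc\textendash Milnor setup tacitly assumes $M$ is proper (e.g.\ a complete, locally compact length space, hence a proper geodesic space by Hopf\textendash Rinow), but your parenthetical ``the set of $\gamma$ moving $\overline{B}(x_0,3R)$ into itself is finite'' glosses over it. Either add the properness-of-$M$ hypothesis, or replace the ball by a compact set built from finitely many $\Gamma$-translates of $K$ covering $\overline{B}(x_0,3R)$, which is how one usually closes this gap when stating the result for length spaces rather than geodesic ones.
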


 Note that the quasi-isometry between $\mathbb{Z}^d$ and $\mathbb{R}^d$ is a special case of Theorem \ref{t-sm}.
 We will give a general construction of \emph{net} which approximates a quasi-geodesic space using a graph with combinatorial metric in next subsection.

The notion of quasi-isometry was extended to metric measure spaces by Couhlon and Saloff-Coste in \cite{CS95} which they called ``isometry at infinity''. Let
 $(M_i,d_i,\mu_i)$, $i=1,2$ be two metric measure spaces. Define
 \[
  V_i(y,r) = \mu_i \left( \{ z \in M_i: d_i(y,z) \le r \} \right).
 \]
 \begin{definition} \label{d-qi-mms}
  A map $\phi:(M_1,d_1,\mu_1) \to (M_2,d_2,\mu_2)$, between metric measure  spaces is called a \emph{quasi-isometry} if the following conditions are satisfied:
  \begin{itemize}
   \item[(i)] $\phi:(M_1,d_1) \to (M_2,d_2)$ is a quasi-isometry between metric spaces;
   \item[(ii)] There exists a constant $C>0$  such that
   \[
    C^{-1} V_1(x,1) \le V_2(\phi(x),1) \le C V_1(x,1)
   \]
   for all $x \in M_1$.
  \end{itemize}
  We say metric measure spaces $(M_1,d_1,\mu_1)$ and $(M_2,d_2,\mu_2)$ are quasi-isometric if there exists a quasi-isometry $\phi:(M_1,d_1,\mu_1) \to (M_2,d_2,\mu_2)$.

 \end{definition}
 \begin{remark}
  Quasi-isometry is an equivalence relation for metric measure spaces satisfying  local volume doubling property \ref{doub-loc}.
  The notion of large scale equivalence as defined in Definition 5.5 of
  \cite{Tes08} is more general.
  That is every quasi-isometry is a large scale equivalence.
  However a map between quasi-geodesic metric measure spaces satisfying \ref{doub-loc} is a quasi-isometry if and
  only if it is large scale equivalence. See \cite[Remark 5.7]{Tes08}.
 \end{remark}

The arguments in this work implies that the long term behavior of natural random walks depends mainly on the large scale geometry of the quasi-geodesic space.
Other important examples of properties invariant under quasi-isometries are large scale doubling and Poincar\'{e} inequality.
(See Proposition \ref{p-qivd} and Proposition \ref{p-robustmms}). We conclude this subsection by proving that
the large scale doubling property is preserved by quasi-isometries for metric measure spaces satisfying \ref{doub-loc}.
It is due to Couhlon and
Saloff-Coste in \cite{CS95}. We need the following definition:

\begin{definition}\label{d-maxsep}
 Let $(M,d)$ be a metric space with $X \subseteq M$ and let $R>0$. Then a subset $Y$ of $X$ is \emph{$R$-separated} if $d(y_1,y_2) > R$ whenever $y_1$ and $y_2$ are distinct points of $Y$,
and a $R$-separated subset  $Y$ of $X$ is called maximal if it is maximal among all $R$-separated subsets of $X$ with respect to the partial order of inclusion.
\end{definition}
The existence of maximal $R$-separated subsets follows from Zorn's lemma.

The following lemma compares volume of balls between quasi-isometric metric measure spaces.
\begin{lemma} \label{l-qi-vc} (\cite[Proposition 2.2]{CS95})
 Let $\Phi:(M_1,d_1,\mu_1)$ and $(M_2,d_2,\mu_2)$ be a  quasi-isometry between metric measure spaces satisfying \ref{doub-loc}.
Then for all $h >0$,  there exists $C_h>0$ such that
\[
 C_h^{-1} V_1(x,C_h^{-1}r) \le V_2(\Phi(x),r) \le C_h V_1(x,C_h r)
\]
for all $x \in M_1$ and for all $r \ge h$.
 \end{lemma}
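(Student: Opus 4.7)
The plan is to prove the upper bound by a covering-by-small-balls argument, and then deduce the lower bound from the upper bound applied to a quasi-inverse $\Phi^{-}$. Throughout, let $a,b,\epsilon$ be the constants from Definition \ref{d-qi-mms} (item (i) and the roughly surjective condition) and let $C$ be the constant from item (ii).

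First I would fix a separation radius $R$ large enough that separated sets transport well under $\Phi$. Concretely, choosing $R = a + 2\epsilon + b + 1$ works: if $Y \subseteq B_2(\Phi(x), r)$ is a maximal $R$-separated subset in $M_2$, and for each $y \in Y$ one picks $y' \in M_1$ with $d_2(\Phi(y'), y) < \epsilon$ (possible by the roughly surjective property), then for distinct $y_1, y_2 \in Y$ the lower bi-Lipschitz bound gives
\[
d_1(y'_1, y'_2) \ge a^{-1}\bigl(d_2(y_1,y_2) - 2\epsilon - b\bigr) > a^{-1}(R - 2\epsilon - b) > 1.
\]
Hence the balls $B_1(y', 1/2)$ are pairwise disjoint. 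Moreover each $y'$ satisfies $d_1(y', x) \le a(r + \epsilon + b)$, so these disjoint small balls all sit inside $B_1(x, a r + a(\epsilon+b) + 1/2)$, which for $r \ge h$ is contained in $B_1(x, C_h' r)$ for some $C_h' = C_h'(a,b,\epsilon,h)$.

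The covering step is: by maximality of $Y$, the balls $B_2(y, R)$ cover $B_2(\Phi(x), r)$, so $V_2(\Phi(x), r) \le \sum_{y \in Y} V_2(y, R)$. I would now chain four applications of local doubling and the quasi-isometry condition (ii): Lemma \ref{l-vloc} in $M_2$ gives $V_2(y, R) \le K_1 V_2(y, 1) \le K_2 V_2(\Phi(y'), 1)$ (using $d_2(y, \Phi(y')) < \epsilon$); condition (ii) gives $V_2(\Phi(y'), 1) \le C \, V_1(y', 1)$; and Lemma \ref{l-vloc} in $M_1$ gives $V_1(y', 1) \le K_3 V_1(y', 1/2)$. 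Summing over $y \in Y$ and using the disjoint inclusion $\bigsqcup_y B_1(y', 1/2) \subseteq B_1(x, C_h' r)$, I obtain
\[
V_2(\Phi(x), r) \le K_1 K_2 K_3 C \cdot V_1(x, C_h' r),
\]
which is the upper bound after enlarging $C_h$ to absorb all the constants.

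For the lower bound, I would invoke the quasi-inverse $\Phi^{-}: M_2 \to M_1$, which is also a quasi-isometry between metric measure spaces satisfying $(VD)_{\operatorname{loc}}$. Applying the upper bound already proved, with $\Phi^{-}$ in place of $\Phi$ and $\Phi(x)$ in place of $x$, yields $V_1(\Phi^{-}(\Phi(x)), r) \le C_h V_2(\Phi(x), C_h r)$. Since $d_1(\Phi^{-}(\Phi(x)), x)$ is bounded by a constant depending only on $a,b,\epsilon$, Lemma \ref{l-vloc} in $M_1$ shows $V_1(x, r)$ is comparable to $V_1(\Phi^{-}(\Phi(x)), r)$, which after rescaling $r \mapsto r/C_h$ yields the desired lower bound. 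The main obstacle is not conceptual but bookkeeping: selecting the separation radius $R$ so that transport preserves separation while simultaneously absorbing all additive constants (the $+\epsilon$, $+b$, $+1/2$ terms) into the multiplicative factor $C_h$, which is exactly where the hypothesis $r \ge h > 0$ gets used.
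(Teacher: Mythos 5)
Your proof is correct; the structure and constants are all sound. It takes the mirror-image route of the paper, however. The paper places the maximal $R$-separated set $Y$ in $M_1$, inside $B_1(x,r)$, pushes it forward through $\Phi$ (using the bi-Lipschitz lower bound to preserve separation), and thereby proves the \emph{lower} bound $C_h^{-1} V_1(x,C_h^{-1}r) \le V_2(\Phi(x),r)$ directly; the upper bound then follows by applying that direct argument to a quasi-inverse $\Phi^{-}$. You instead place the separated set in $M_2$, inside $B_2(\Phi(x),r)$, and pull it back to $M_1$ using the roughly-surjective property, proving the \emph{upper} bound directly and recovering the lower bound from $\Phi^{-}$. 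The two arguments are completely symmetric and invoke the same toolkit (Vitali-style maximal separated set, disjointness of half-radius balls, Lemma \ref{l-vloc} on each side, and Definition \ref{d-qi-mms}(ii) once per point of $Y$). The paper's choice is marginally cleaner in the direct half because pushing forward avoids selecting preimages and so avoids the extra separation bookkeeping with the additive errors $\epsilon$ and $b$ that you carry out; on the other hand your version keeps the rough-surjectivity hypothesis visibly at work in the direct argument rather than hiding it inside the existence of the quasi-inverse. Either ordering is fine, since both halves must eventually pass through $\Phi^{-}$.
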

\begin{proof} We denote balls and volumes by $B_i$ and $V_i$ respectively for $i=1,2$.
 Let $R\ge h$ such that $aR-b = R' >0$ where $a,b$ is from Definition \ref{d-qi-ms}. Let $Y$ be a
 maximal $R$-separated subset of $B(x,r)$.
 Thus
 $B(x,r) \subseteq \cup_{y \in Y} B(y,R)$. Hence
 \begin{equation} \label{qiv0}
   V_1(x,r) \le \sum_{y \in Y} V_1(y,R)
 \end{equation}
By Lemma \ref{l-vloc} and Definition \ref{d-qi-mms}, we have
\begin{equation} \label{qiv1}
  V_1(y,R) \le C_{1,R} V_1(y,1) \le C_{1,R} C V_2(\Phi(y),1).
\end{equation}
for all $y \in Y$.
The balls $\{B(y,R/2)\}_{y \in Y}$ are pairwise disjoint and hence the balls
$B(\Phi(x_i),R'/2)$ are pairwise disjoint. By Lemma \ref{l-vloc}
\begin{equation} \label{qiv2}
 V_2(\Phi(x_i),h) \le C_{h,R'} V_2(\Phi(x_i),R'/2)
\end{equation}
Combining \ref{qiv0},\ref{qiv1} and \ref{qiv2}
\begin{align}
 \nonumber V_1(x,r) &\le \sum_{y \in Y} C_{1,R} C C_{1,R'} V_2 (\Phi(y), R'/2) \\
 & \le   C_{1,R} C C_{1,R'} V_2 (\Phi(x), ar + b +R'/2) \label{qiv3}
\end{align}
The last step follows from the definition of quasi-isometry, triangle inequality and that $B(\Phi(x_i),R'/2)$ are pairwise disjoint.
We can choose $C_2$ large enough so that, $ ar+b +R'/2 \le C_2r$ for all $r \in [h,\infty)$.
Hence by Lemma \ref{l-doub-prop}, we have the desired lower bound on $V_2$ for all $r \ge R$ and by Lemma \ref{l-vloc} for all $r \ge h$.
Similar argument applied to quasi-inverse $\Phi^{-1}$ yields
\[
 V_2(\Phi(x),r) \le C V_1 (\Phi^{-1} \circ \Phi (x) , Cr).
\]
The conclusion follows from the fact that $d_1(\Phi^{-1} \circ \Phi (x) ,x)$ is bounded uniformly for all $x \in M_1$.
\end{proof}
For metric measure spaces satisfying \ref{doub-loc}, the condition \ref{doub-inf} is preserved by quasi-isometries.
This is the content of the following lemma.
\begin{prop} \label{p-qivd} (\cite[Proposition 2.3]{CS95})
Let $(M_1,d_1, \mu_1)$ and $(M_2,d_2,\mu_2)$ be quasi-isometric spaces satisfying \ref{doub-loc}. Then
$(M_1,d_1,\mu_1)$ satisfies \ref{doub-inf} if and only if  $(M_2,d_2,\mu_2)$ satisfies \ref{doub-inf}.
\end{prop}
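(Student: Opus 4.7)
The argument is symmetric in $(M_1,d_1,\mu_1)$ and $(M_2,d_2,\mu_2)$ because a quasi-isometry admits a quasi-inverse; so the plan is to assume $(M_1,d_1,\mu_1)$ satisfies $(VD)_\infty$ and deduce the same for $(M_2,d_2,\mu_2)$. The basic idea is to push the volume-comparison inequalities of Lemma \ref{l-qi-vc} back and forth across the quasi-isometry $\Phi\colon M_1 \to M_2$, combined with the polynomial control of volumes at all large scales that $(VD)_\infty$ and $(VD)_{\operatorname{loc}}$ jointly afford through Lemma \ref{l-doub-prop}.

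Fix $y \in M_2$. By rough surjectivity of $\Phi$ there exists $x \in M_1$ with $d_2(\Phi(x),y) < \epsilon$, so for any $r \ge 2\epsilon$ we have the sandwich
\[
V_2(\Phi(x), r/2) \;\le\; V_2(y,r) \;\le\; V_2(y,2r) \;\le\; V_2(\Phi(x), 4r).
\]
Now I would apply Lemma \ref{l-qi-vc} with some fixed $h > 0$: the upper bound yields $V_2(\Phi(x),4r) \le C_h\, V_1(x, 4 C_h r)$, and the lower bound yields $V_2(\Phi(x),r/2) \ge C_h^{-1}\, V_1(x, C_h^{-1} r / 2)$, provided $r/2 \ge h$. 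Combining these with the sandwich above reduces the desired doubling inequality for $V_2$ at $y$ to a volume comparison in $M_1$ between the balls $B_1(x, 4 C_h r)$ and $B_1(x, C_h^{-1} r/2)$.

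At this stage Lemma \ref{l-doub-prop} takes over. Since $(M_1,d_1,\mu_1)$ satisfies $(VD)_{\operatorname{loc}}$ and $(VD)_\infty$, there exist $C, \delta > 0$ and $b > 0$ such that
\[
\frac{V_1(x,R)}{V_1(x,s)} \;\le\; C \left( \frac{R}{s} \right)^\delta \qquad \text{whenever } b \le s \le R.
\]
Applied with $R = 4 C_h r$ and $s = C_h^{-1} r / 2$, this ratio is bounded by a constant $C' = C\,(8 C_h^2)^\delta$ independent of $x$ and $r$, provided $r$ is larger than some fixed threshold $r_1$ (depending on $b$, $h$, $\epsilon$, $C_h$). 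Chaining the estimates gives
\[
V_2(y,2r) \;\le\; C_h\, V_1(x, 4 C_h r) \;\le\; C_h C'\, V_1(x, C_h^{-1} r/2) \;\le\; C_h^{2} C'\, V_2(\Phi(x), r/2) \;\le\; C_h^{2} C'\, V_2(y,r),
\]
so $(VD)_\infty$ holds on $M_2$ with threshold $r_1$ and constant $C_h^{2} C'$.

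There isn't really a serious obstacle — the proof is a careful bookkeeping exercise — but the one thing to watch is to make sure every invocation of Lemma \ref{l-qi-vc} is at a radius above the allowed threshold $h$, and every invocation of the polynomial bound from Lemma \ref{l-doub-prop} is above the corresponding threshold $b$. Both conditions are secured by taking $r$ larger than a single constant $r_1$ depending only on $h$, $b$, $\epsilon$ and the quasi-isometry constants $a,b$. Then running the same argument with $\Phi$ replaced by its quasi-inverse $\Phi^-$ yields the converse implication.
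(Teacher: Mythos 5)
Your proof is correct and rests on the same two ingredients as the paper's: Lemma \ref{l-qi-vc} to transfer volumes across the quasi-isometry, and the polynomial volume control \eqref{e-vd1} from Lemma \ref{l-doub-prop} to bound the resulting ratio. The only cosmetic difference is that the paper takes the quasi-isometry in the direction $\Phi\colon M_2 \to M_1$, which makes the comparison immediate at the point $y \in M_2$ of interest and lets it skip your rough-surjectivity sandwich step entirely.
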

\begin{proof}
Let $\Phi:M_2 \to M_1$ be a quasi-isometry.
Using Lemma \ref{l-qi-vc}, there exists $C>0$ such that
 \[
  C^{-1} V_2(x,C^{-1}r) \le V_1(\Phi(x),r) \le C V_2(x,Cr)
\]
for all $x \in M_2$ and $ r \ge 1$. Hence by \eqref{e-vd1}, we have
\[
\frac{ V_2(x,2r)}{V_2(x,r)} \le C^2 \frac{ V_1( \Phi(x), 2C r) }{V_1(\Phi(x),C^{-1} r)} \le C^2 C_D (2 C^2)^\delta
\]
for all $r \ge \max(C,1)$.
\end{proof}
\section{Approximating quasi-geodesic spaces by graphs}
One might think of $\mathbb{Z}^d$ as a graph approximation or discretization of $\mathbb{R}^d$.
More generally,  we  can approximate quasi-geodesic spaces by graphs.
By  \cite[Proposition 6.2]{Tes08}, a metric space is quasi-isometric to a graph if and only if it is quasi-geodesic.
Therefore quasi-geodesic spaces form a natural class of metric spaces that can be roughly approximated by graphs.  \\
We begin by recalling some standard definitions and notation from graph theory.
We restrict ourselves to simple graphs.
A \emph{graph} $G$ is a pair $G=(V,E)$ where $V$ is a set (finite or infinite) called the \emph{vertices} of $G$ and $E$ is a subset of
$\mathcal{P}_2(V)$ (\emph{i.e.},two-element subsets of $V$) called the \emph{edges} of $G$.
A graph $(V,E)$ is \emph{countable} (resp. \emph{infinite}) if $V$ is a countable (resp. infinite) set.
We say that $p$ is a neighbor of $q$ (or in short $p \sim q$),  if $\{x,y\} \in E$.
The \emph{degree} of $p$ is the number of neighbors of $p$, that is $\operatorname{deg}(p)= \abs{ \{ q \in V: p \sim q\}}$.
A graph $(V,E)$ is said to have \emph{bounded degree} if $\sup_{v \in V} \deg(v) < \infty$.

A finite sequence $(p_0,p_1,\ldots,p_l)$ of points in $V$ is called a \emph{path} from $p_0$ to $p_l$ of \emph{length} $l$, if each $p_k$ is a neighbor of $p_{k-1}$.
A graph $G=(V,E)$ is said to be \emph{connected} if for all $p,q \in V$, there exists a path from $p$ to $q$.
For points $p,q \in V$ of a graph $G=(V,E)$,
let $d_G(p,q)$ denote the minimum of the lengths of paths from $p$ to $q$ with $d_G(p,q)= + \infty$ if there exists no path from $p$ to $q$.
This makes $(V,d_G)$ an extended metric space. The graph $(V,E)$ is connected if and only if $(V,d_G)$ is a metric space.
The extended metric $d_G$ is called \emph{graph metric} or \emph{combinatorial metric}  of $G$.
Notice that we can recover a  graph $(V,E)$ from its (extended) metric space structure $(V,d_G)$ and vice-versa.

Using the above identification, we view a connected graph as a metric space. We would like to view a connected graph as a metric measure space.
This motivates the definition of weighted graph. A \emph{weight} $m: V \to (0,\infty)$ on a graph $(V,E)$ is a positive function on $V$.
With a slight abuse of notation, $m$ induces a measure on $V$ (also denoted by $m$) as
\[
 m(A)= \sum_{v \in A} m(v)
\]
for each $A \subseteq V$. A \emph{weighted graph} is a graph $(V,E)$ endowed with a weight $m$.
By the above, we will identify a weighted graph $G=(V,E)$ with weight $m$ as a (possibly extended) metric measure space $(V,d_G,m)$.

The definition of $\epsilon$-net is due to Kanai in the setting of Riemannian manifolds (See \cite{Kan85}) and was extended in \cite{CS95} for weighted Riemannian manifolds.
\begin{definition}\label{d-net}
 A $\epsilon$-net of a metric measure space $(M,d,\mu)$ is a weighted graph $G=(V,E)$ with weight $m$ described as follows: The vertices $V$ is a maximal $\epsilon$-separated subset of $M$.
 The edges $E$ are defined by $\{x,y\} \in E$  if and only if $0< d(x,y) \le 3 \epsilon$.
 The weight $m$ is defined as $m(x)= \mu\left(B(x,\epsilon)\right)$.
 Let $d_G$ denote the graph metric of $G$.
 We often alternatively view the $\epsilon$-net as (extended) metric measure space $(V,d_G,m)$ defined by
  the corresponding weighted graph.
\end{definition}
The above definition does not guarantee $\epsilon$-net to be a connected graph. However it is connected and countable in many situations as described in the lemma below.
We collect the basic properties of nets in Proposition \ref{p-net} which builds on the ideas of \cite{Kan85}, \cite{Kan86b} and \cite{CS95}.
\begin{prop} \label{p-net}
 Let $(M,d,\mu)$ be a quasi-$b$-geodesic metric measure space satisfying \ref{doub-loc} and let $\epsilon \ge b$.
 Let $G=(X,E)$ be an $\epsilon$-net of $(M,d,\mu)$ with weight $m$ and let $(X,d_G,m)$ denote the corresponding extended metric measure space.
 Then we have the following:
 \begin{itemize}
\item [(a)] The collection of balls $I=\{ B(x, \epsilon/2) : x \in X\}$ is pairwise disjoint
and the collection $J=\{ B(x, \epsilon) : x \in X \}$ covers $M$ where $B(.,.)$ denotes closed metric ball in $(M,d)$.

\item[(b)] Bounded degree property: The graph $(X,E)$ is of bounded degree, that is  $\sup_{p \in X} \operatorname{deg}(p) < \infty$.

\item[(c)]  $(X,d_G,m)$ satisfies \ref{doub-loc}.

\item[(d)]  There exists $A >0$  such that
\begin{equation} \label{e-kan-est}
 \frac{1}{3 \epsilon} d(x,y) \le d_G(x,y) \le A d(x,y) + A
\end{equation}
for all $x,y \in X$. Therefore $G$ is a connected graph and $(X,d_G,m)$ is a metric measure space.

\item[(e)] The metric measure spaces $(M,d,\mu)$ and $(X,d_G,m)$ are quasi-isometric.

\item[(f)] $X$ is a countable set. Moreover if $\operatorname{diameter}(M,d)=\infty$, then $X$ is an infinite set.

\item[(g)] If $(M,d,\mu)$ satisfies \ref{doub-inf}, then so does $(X,d_G,m)$.

\item[(h)] Finite overlap property: Define
\[
 N_p(\delta)= \abs{ \{ x \in X: d(x,p) \le \delta \}}.
\]
for each $\delta>0$ and  $p \in M$.
Then $\sup_{p \in M} N_p (\delta) < \infty$.

 \end{itemize}
 \end{prop}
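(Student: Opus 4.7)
The plan is to address the eight parts in the order (a), (b), (h), (d), (c), (f), (e), (g), since each subsequent claim builds on earlier ones. Part (a) is immediate from the definitions: the disjointness of $\{B(x,\epsilon/2)\}_{x\in X}$ is a triangle-inequality consequence of $\epsilon$-separation, and the covering $M=\bigcup_{x\in X}B(x,\epsilon)$ is forced by maximality of $X$, since any $p\in M$ at distance greater than $\epsilon$ from $X$ could be added. Parts (b) and (h) are then pure packing arguments. For (h), the balls $\{B(x,\epsilon/2):x\in X,\ d(x,p)\le\delta\}$ are disjoint by (a) and contained in $B(p,\delta+\epsilon/2)$, and Lemma \ref{l-vloc} makes each $\mu(B(x,\epsilon/2))$ comparable to $\mu(B(p,\delta+\epsilon/2))$ up to a constant depending only on $\epsilon$ and $\delta$, bounding $N_p(\delta)$. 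Taking $p\in X$ and $\delta=3\epsilon$ specializes this to the bounded-degree estimate (b).

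Part (d) is the heart of the argument. The lower bound $d(x,y)\le 3\epsilon\,d_G(x,y)$ is obtained by applying the triangle inequality along any graph geodesic, since consecutive vertices are within $d$-distance $3\epsilon$. For the upper bound I would invoke the Chain Lemma \ref{l-chain} with $b_1=\epsilon\ge b$: it produces an $\epsilon$-chain $x=p_0,\ldots,p_m=y$ in $M$ with $m\le \lceil C_1 d(x,y)/\epsilon\rceil$. Using the covering in (a), pick $x_i\in X$ with $d(p_i,x_i)\le\epsilon$, choosing $x_0=x$ and $x_m=y$. A three-step triangle inequality gives $d(x_i,x_{i+1})\le 3\epsilon$, so consecutive $x_i$ are either equal or $G$-neighbours; collapsing repeats yields a $G$-path of length at most $m$, proving both the affine upper bound in \eqref{e-kan-est} and connectedness of $G$. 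The remaining items now follow quickly: for (c), bounded degree and (d) imply $|B_G(x,2r)|\le \Delta^{2r}$ and $d(x,y)\le 6\epsilon r$ for $y\in B_G(x,2r)$, so Lemma \ref{l-vloc} makes every weight $m(y)=\mu(B(y,\epsilon))$ comparable to $m(x)$ by a constant depending only on $r$, and $m(B_G(x,r))\ge m(x)$ closes the doubling estimate. For (f), the disjoint positive-measure balls of (a) force countability via the Radon property, and if $\operatorname{diam}(M)=\infty$ a finite $X$ would make $M$ bounded. For (e), (d) gives roughly bi-Lipschitz and (a) gives roughly surjective, while the measure condition in Definition \ref{d-qi-mms}(ii) holds because $B_G(x,1)$ consists of $x$ and its boundedly-many neighbours, each of weight comparable to $m(x)=\mu(B(x,\epsilon))$, which is in turn comparable to $V(x,1)$ by Lemma \ref{l-vloc}. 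Part (g) is then immediate from (e) and Proposition \ref{p-qivd}.

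The main obstacle I anticipate is the upper bound in (d): turning a chain in $M$ into a genuine path in $G$ requires that the discretization step $p_i\mapsto x_i$ produce consecutive $X$-points within $G$-distance one, and this is precisely why the Chain Lemma must be applied with $b_1=\epsilon$ rather than with the unconstrained $b$. This is where the standing hypothesis $\epsilon\ge b$ is used, and once that subtlety is handled every remaining part reduces to standard volume packing via Lemma \ref{l-vloc} and Lemma \ref{l-doub-prop}.
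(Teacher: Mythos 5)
Your proposal is correct and follows essentially the same route as the paper's proof: (a) is by separation and maximality, (b) and (h) are volume-packing arguments via Lemma \ref{l-vloc}, the key part (d) uses the Chain Lemma at scale $\epsilon$ followed by the projection to the net, (c) combines bounded degree with the weight-comparison estimate, and (e), (g) are wired together as you describe. The only cosmetic differences are that you derive (b) as a special case of (h) rather than separately, and for countability in (f) you invoke $\sigma$-finiteness of $\mu$ (the "Radon property" alone is not quite the right justification — what actually makes it work is that $V(p,n)<\infty$ gives $\sigma$-finiteness, combined with the lower bound on $V(x,\epsilon/2)$ from Lemma \ref{l-vloc}; equivalently one can just cite (h) directly), whereas the paper observes that a connected graph of bounded degree is automatically countable.
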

 \begin{proof} We denote the volume of balls in $(M,d,\mu)$ and $(X,d_G,m)$ by $V_M$ and $V_G$ respectively.

(a) The collection $I$ is pairwise disjoint because $X$ is $\epsilon$-separated. The collection $J$ covers $M$ due to the maximality of $X$. \\
(b) Let $d(p)$ denote the degree of a vertex $p$. Since $I$ is a disjoint collection, using Lemma \ref{l-vloc}
\begin{align*}
 V_M(p,4\epsilon) &\ge \sum_{q \in V, q \sim p} V_M (q , \epsilon/2) \\
 & \ge C_{\epsilon/2, 7 \epsilon}^{-1} \sum_{q \in V, q \sim p} V_M(q , 7 \epsilon) \ge d(p) V_M (p,4 \epsilon) C_{\epsilon/2, 7 \epsilon}^{-1}.
\end{align*}
This yields $d(p) \le  C_{\epsilon/2, 7\epsilon}$ for all $p \in X$. \\
(c) Let $x,y \in X$ with $x \sim y$. By Lemma \ref{l-vloc}, we obtain
\[
 \frac{m(y)}{m(x)} \le \frac{V(x,4\epsilon)}{V(x,\epsilon)} \le C_{\epsilon,4\epsilon}.
\]
Hence we have the uniform estimate
\begin{equation} \label{e-Cm}
 C_m=\sup_{x,y \in X, x \sim y}  \frac{m(y)}{m(x)} < \infty.
\end{equation}

By the above inequality and (b), we have
\begin{equation} \label{e-VG}
 m(x) \le V_G(x,r) \le m(x) C_m^r \left(\sup_{x \in X} \operatorname{deg}(x) \right)^r
\end{equation}
for all $x \in X$ and $r >0$. This along with (b) yields \ref{doub-loc}. \\
(d) Let $x,y \in X$. 
By triangle inequality we
have $d(x,y) \le 3 \epsilon d_G (x,y)$. By Lemma \ref{l-chain}, there exists $C_1 \ge 1$ and  an $\epsilon$-chain $x=x_0,x_1,\ldots,x_k=y$ in $(M,d)$ such that
$k \le  \lceil (C_1 d(x,y))/\epsilon \rceil$.
Since $J$ covers $M$, for each $x_i \in M$ we can choose $y_i \in X$ such that $d(x_i,y_i) \le \epsilon$ for $i=0,\ldots,k$.
Note that $x_0=y_0$ and $x_k=y_k$. By triangle inequality $d(y_i,y_{i+1}) \le 3 \epsilon$ or equivalently $y_i \sim y_{i+1}$ or $y_i=y_{i+1}$ for all $i=0,1,\ldots,k-1$.
Therefore
\[
d_G(x,y)=d_G(y_0,y_k) \le k \le C_1\left( \frac{d(x,y)}{\epsilon} + 1\right).
\]
This implies \eqref{e-kan-est} which implies the remaining conclusions.

(e) It follows from (d) that the inclusion map $\Phi:(X,d_G) \to (M,d)$ is a quasi-isometry of metric spaces.
Substituting $m(x)=V_M(x,\epsilon)$ and $r=1$ in \eqref{e-VG} and  using (b), \eqref{e-Cm} and Lemma \ref{l-vloc}, there exists $C>0$ such that
\[
 C^{-1}V_G(x,1)\le V_M(x,1) \le  C V_G(x,1).
\]
This proves that $\Phi$ is a quasi-isometry between the metric measure spaces $(M,d,\mu)$ and $(X,d_G,m)$. \\
(f) It follows from (b) and (d) that $G$ is a connected graph with bounded degree. Hence $X$ is countable.
 By \eqref{e-kan-est}, we have
$\operatorname{diameter}(X,d_G) \ge \operatorname{diameter}(M,d)/3 \epsilon$. Therefore if $\operatorname{diameter}(M,d)= \infty$, we have that $G=(X,E)$ is a connected graph with infinite diameter. Hence $X$ is infinite.
\\
(g) It follows from (c),(e) and Proposition \ref{p-qivd}. \\
(h) The proof is similar to (b). Using (a) and Lemma \ref{l-vloc}, we have
\begin{align*}
 V(p,\delta + \epsilon) &\ge \sum_{x \in X: d(x,p) < \delta} V(x, \epsilon/2) \\
 &\ge C_{\epsilon/2, 2 \delta + \epsilon}^{-1} \sum_{x \in X: d(x,p) < \delta} V(x,2\delta +\epsilon) \\
 &\ge N_p(\delta)  C_{\epsilon/2, 2 \delta + \epsilon}^{-1} V(p,\delta+ \epsilon).
\end{align*}
This yields the uniform bound $N_p(\delta)  \le C_{\epsilon/2, 2 \delta + \epsilon}$.
\end{proof}

The bounded degree property and the estimate \eqref{e-Cm} are true for all weighted graphs $(X,d,m)$ satisfying \ref{doub-loc} as shown below.
\begin{lemma}
 \label{l-g-vloc}
 Let $(X,d,m)$ be a metric measure space  satisfying \ref{doub-loc} that corresponds to a weighted graph $G=(X,E)$ with weight $m$.
 Then $G$ is of bounded degree
and
\begin{equation}
 \label{e-Cm1} \sup_{x,y \in X: x \sim y } \frac{m(y)}{m(x)} = C_m < \infty
\end{equation}
\end{lemma}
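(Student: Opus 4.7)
The plan is to exploit $(VD)_{\mathrm{loc}}$ at the smallest meaningful scale, namely radii $r < 1$, where the graph-metric ball $B(x,1/2)$ collapses to the singleton $\{x\}$. Both claims will then follow by comparing the weight of $\{x\}$ with the weight of the full one-neighborhood of $x$.

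First I would observe that since $d$ is a graph metric taking integer values, $B(x,1/2) = \{x\}$ and $B(x,1) = \{x\} \cup \{y \in X : y \sim x\}$. Therefore
\[
V(x,1/2) = m(x), \qquad V(x,1) = m(x) + \sum_{y \sim x} m(y).
\]
Applying $(VD)_{\mathrm{loc}}$ with $r = 1/2$ yields a constant $C := C_{1/2}$ such that $V(x,1) \le C\, m(x)$ for every $x \in X$. In particular, for every neighbor $y$ of $x$ we have $m(y) \le V(x,1) \le C\, m(x)$. By symmetry (swapping the roles of $x$ and $y$, which is legitimate because the constant $C$ is uniform in the base point), $m(x) \le C\, m(y)$ as well. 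This gives \eqref{e-Cm1} with $C_m \le C$.

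Next, to bound the degree, I would insert the reverse estimate $m(y) \ge C^{-1} m(x)$ (just obtained) into the decomposition of $V(x,1)$:
\[
C\, m(x) \;\ge\; V(x,1) \;=\; m(x) + \sum_{y \sim x} m(y) \;\ge\; m(x)\bigl(1 + C^{-1}\deg(x)\bigr).
\]
Rearranging yields $\deg(x) \le C(C-1)$ uniformly in $x$, proving that $G$ has bounded degree.

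There is essentially no obstacle here: the entire argument rests on the fact that the graph metric forces $B(x,1/2)$ to be trivial, so $(VD)_{\mathrm{loc}}$ at $r = 1/2$ directly compares $m(x)$ to the combined weight of $x$ and its neighbors. The only care needed is to ensure that the inequalities in the two directions (bounding $m(y)/m(x)$ from above and $m(y)$ from below in terms of $m(x)$) are applied with the same uniform constant, which is automatic since $(VD)_{\mathrm{loc}}$ is uniform in the base point.
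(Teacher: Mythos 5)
Your proof is correct and follows exactly the same route as the paper: apply $(VD)_{\mathrm{loc}}$ at $r = 1/2$ to get $V(x,1) \le C\, m(x) = C V(x,1/2)$, deduce $m(y) \le C\, m(x)$ for neighbors $y \sim x$, and then bound the degree by inserting the reverse estimate into the sum defining $V(x,1)$. The paper states the resulting degree bound as $\deg(x) \le C^2$ rather than your slightly sharper $C(C-1)$, but the argument is identical.
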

\begin{proof}
 By \ref{doub-loc}, there exists $C>0$ such that
 \[
   m(y) \le V(x,1) \le C V(x,1/2) = C m(x)
 \]
 for all $x,y \in X$ with $x \sim y$.
The above inequality shows that  $C_m \le C$ and $\sup_{x \in X} \operatorname{deg}(x) \le C^2$
\end{proof}

\chapter{Poincar\'{e} inequalities} \label{ch-pi}
Poincar\'{e} inequalities and its many variants are functional inequalities that have been extensively studied.
Many results in classical theory of Sobolev spaces, H\"{o}lder regularity estimates for solutions of elliptic and parabolic partial differential equations,
properties of harmonic functions, Harnack inequalities can be generalized to spaces satisfying volume doubling and a Poincar\'{e} inequality. See the introduction in \cite{HK00} for a survey and references.

Roughly speaking Poincar\'{e} inequalities control the variance of a function on a smaller ball by its Dirichlet energy (integral of the square of gradient) on a larger ball.
We start by reviewing Poincar\'{e} inequalities on weighted Riemannian manifolds.
Recall that a \emph{weighted  Riemannian manifold} $(M,g,\mu)$ is a Riemannian manifold $(M,g)$ equipped with a measure $\mu$ having a
smooth positive density $w$ with respect to the Riemannian measure induced by the metric $g$. The above function $w$ with $0<w \in \mathcal{C}^\infty(M)$ is called a \emph{weight}.
Recall that the \emph{gradient} $\gr f$ of a function $f \in C^\infty(M)$ is defined as the vector field satisfying $g( \gr f, Y)= Yf$ for all vector fields $Y$.
The \emph{length of the gradient} is denoted by $\abs{\gr f} = \sqrt{ g(\gr f, \gr f)}$.
We denote the Riemannian distance function by $d$, which makes $(M,d)$ a length space.
In a context when distance function is important, we will denote the weighted Riemannian manifold $(M,g,\mu)$ as a metric measure space $(M,d,\mu)$.
As before for $(M,d,\mu)$,  we denote closed ball and their volumes by $B(x,r)$ and $V(x,r)$ respectively.
\begin{definition}
 We say that a complete weighted Riemannian manifold $(M,g)$ with measure $\mu$ satisfies
 a Poincar\'{e} inequality \ref{poin-rm} if there exists $C_1>0$, $C_2 \ge 1$ such that for all $f \in C^\infty (M)$, for all $x \in M$ and for all $r>0$,
\begin{equation*}
 \label{poin-rm} \tag*{$(P)_{Rm}$} \int_{B(x,r)} \abs{f(y)-f_{B(x,r)}}^2 \mu(dy) \le C_1 r^2 \int_{B(x,C_2 r)} \abs{\gr f(y)}^2 \mu(dy)
\end{equation*}
where $f_{B(x,r)}$ denote the $\mu$-average of $f$ in $B(x,r)$
\[
 f_B = \frac{1}{V(x,r)} \int_{B(x,r)} f(y) \mu(dy).
\]
\end{definition}

The above inequality is sometimes called a weak, local, scale-invariant or $L^2$ Poincar\'{e} inequality but we will refrain from using such adjectives.
The word \emph{local} means that we are interested in average and integrals \emph{around some point $x$}.
This is in contrast with \emph{global} Poincar\'{e} inequality in which average and integrals are over the whole space $M$.
The Poincar\'{e} inequality is \emph{scale-invariant} or \emph{uniform} to emphasize the fact the the constants $C_1$ and $C_2$ is independent of $x$ or $r$.
For $1\le p < \infty$, we might replace \ref{poin-rm} with the  \emph{$L^p$  Poincar\'{e} inequality}
\[
\int_{B(x,r)} \abs{f(y)-f_{B(x,r)}}^p \mu(dy) \le C_1 r^p \int_{B(x,C_2 r)} \abs{\gr f(y)}^p \mu(dy).
\]
instead of $L^2$ version presented above.
For spaces satisfying global doubling property, one can always take $C_2=1$ in \ref{poin-rm}.
This is due to D. Jerison by a Whitney decomposition argument \cite{Jer86} (see also \cite[ Section 5.3.2]{Sal02}).
The Poincar\'{e} inequality with $C_2=1$ is called \emph{strong} Poincar\'{e} inequality as opposed to the \emph{weak} inequality \ref{poin-rm}.

\section{Gradient and Poincar\'{e} inequality at a given scale}
To generalize the Poincar\'{e} inequality \ref{poin-rm} to metric measure spaces, we must find a suitable definition for ``length of gradient''.
We will consider a class of random walks that spreads over different distances. Therefore we define length of gradient over different scales for a metric measure space.
We use the following definition due to \cite{Tes08} for length of gradient at a scale $h$ for a function $f:M \to \mathbb{R}$ with $f \in L^\infty(M,\mu)$ (denoted by $\abs{\nabla f}_h$).
\begin{definition} \label{d-grad}
Let $(M,d,\mu)$ be  a metric measure space. For any function $f \in L^\infty_{\operatorname{loc}}(M,\mu)$, the length of gradient at a scale $h$ for  $f$ is defined as the function
\begin{equation}\label{e-gradh}
 \abs{\nabla f}_h(x) = \left( \frac{1}{V(x,h)} \int_{B(x,h)} \abs{f(y) -f(x)}^2  \mu(dy)\right)^{1/2}.
\end{equation}
for all $x \in M$.
\end{definition}

\begin{remark}
 Our definition of  $\abs{\nabla f}_h$ coincides with $\abs{\nabla f}_{h,2}$ in the notation of Tessera \cite{Tes08}.
\end{remark}
Now that we are armed with length of gradient, we define the corresponding Poincar\'{e} inequality.
\begin{definition}
 We say that a metric measure space $(M,d,\mu)$ satisfies
 a Poincar\'{e} inequality at scale $h$,  if there exists $C_1>0$, $C_2 \ge 1,r_0 > 0$ such that for all $f \in L_{\operatorname{loc}}^\infty (M,\mu)$, for all $x \in M$ and for all $r\ge r_0$.
\begin{equation*}
 \label{poin-mms} \tag*{$(P)_{h}$} \int_{B(x,r)} \abs{f(y)-f_{B(x,r)}}^2 \mu(dy) \le C_1 r^2 \int_{B(x,C_2 r)} \left(\abs{\nabla f}_h(y)\right)^2 \mu(dy) \hypertarget{poin-mms}{}
\end{equation*}
where $f_{B(x,r)}$ denote the $\mu$-average of $f$ in $B(x,r)$
\[
 f_B = \frac{1}{V(x,r)} \int_{B(x,r)} f(y) \mu(dy).
\]
We will denote the above inequality by $P_h(r_0,C_1,C_2)$ or simply $(P_h)$.
\end{definition}
The rest of the chapter is devoted to the study of various properties and examples of the above Poincar\'{e} inequality \ref{poin-mms}.
In particular, we will show that for a weighted Riemannian manifold the Poincar\'{e} inequality
at scale $h$ \ref{poin-mms}, generalizes the Poincar\'{e} inequality \ref{poin-rm} under some mild hypothesis.
One of the main results that we will see in this chapter is that Poincar\'{e} inequality \ref{poin-mms} is preserved under quasi-isometries.

The following simple fact will be frequently used in rest of this chapter. Let $(M,d,\mu)$ be a metric measure space and let $A \subset M$ with $0<\mu(A)< \infty$. Then for every function $f \in L^\infty(A)$
\begin{equation} \label{e-mean}
 \inf_{\alpha \in \mathbb{R}} \int_A \abs{f(y)-\alpha}^2 \mu(dy) = \int_A \abs{f(y)-f_A}^2 \mu(dy)
\end{equation}
where $f_A$ is the $\mu$-average of $f$ in A,
\[
 f_A= \frac{1}{\mu(A)} \int_A f \,d\mu.
\]
In other words, mean minimizes squared error.

A Poincar\'{e} inequality at scale $h$ implies a Poincar\'{e} inequality at all larger scales $h'$ with $h'\ge h$.
\begin{lemma} \label{l-largesc}
 Let $(M,d,\mu)$ be a metric measure space satisfying \ref{doub-loc} and Poincar\'{e} inequality \ref{poin-mms} at scale $h$. Then for all $h' \ge h$, $(M,d,\mu)$ satisfies \hyperlink{poin-mms}{$(P)_{h'}$}.
\end{lemma}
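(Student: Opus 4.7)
The proof is very short once one has the right pointwise comparison between $|\nabla f|_h$ and $|\nabla f|_{h'}$. My plan is to show that local doubling forces the pointwise domination
\[
|\nabla f|_h(x)^2 \;\le\; C_{h,h'}\,|\nabla f|_{h'}(x)^2 \qquad \text{for all } x \in M,
\]
and then simply feed this into the existing inequality $(P)_h$.

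To obtain this pointwise bound, I would start from the defining identity
\[
V(x,h)\,|\nabla f|_h(x)^2 \;=\; \int_{B(x,h)} |f(y)-f(x)|^2\,\mu(dy).
\]
Since $h'\ge h$, we have $B(x,h)\subseteq B(x,h')$, so the nonnegative integrand satisfies
\[
\int_{B(x,h)} |f(y)-f(x)|^2\,\mu(dy) \;\le\; \int_{B(x,h')} |f(y)-f(x)|^2\,\mu(dy) \;=\; V(x,h')\,|\nabla f|_{h'}(x)^2.
\]
Dividing by $V(x,h)$ gives $|\nabla f|_h(x)^2 \le \tfrac{V(x,h')}{V(x,h)}\,|\nabla f|_{h'}(x)^2$. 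By Lemma \ref{l-vloc} applied to the pair $(h,h')$, the ratio $V(x,h')/V(x,h)$ is bounded by a constant $C_{h,h'}$ that is independent of $x\in M$. This yields the desired pointwise inequality.

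Integrating this bound over $B(x,C_2 r)$ and plugging into the hypothesis $P_h(r_0,C_1,C_2)$ gives
\[
\int_{B(x,r)} |f-f_{B(x,r)}|^2\,d\mu \;\le\; C_1 r^2 \int_{B(x,C_2 r)} |\nabla f|_h^2\,d\mu \;\le\; C_{h,h'}\,C_1\, r^2 \int_{B(x,C_2 r)} |\nabla f|_{h'}^2\,d\mu,
\]
valid for all $r\ge r_0$ and all $f\in L^\infty_{\mathrm{loc}}(M,\mu)$. This is precisely $P_{h'}(r_0,\,C_{h,h'}C_1,\,C_2)$, completing the proof. There is no real obstacle here: the only nontrivial ingredient is the uniform comparability of $V(x,h)$ and $V(x,h')$ furnished by \ref{doub-loc} via Lemma \ref{l-vloc}.
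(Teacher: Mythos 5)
Your proof is correct and takes essentially the same approach as the paper: both establish the pointwise bound $|\nabla f|_h^2 \le C_{h,h'}|\nabla f|_{h'}^2$ by enlarging $B(x,h)$ to $B(x,h')$ and invoking Lemma \ref{l-vloc} (i.e.\ \ref{doub-loc}) to compare $V(x,h)$ and $V(x,h')$, then feed this into $(P)_h$. The only cosmetic difference is that the paper writes $\int |\nabla f|_h^2$ as a double integral before making the comparison, whereas you isolate the gradient comparison as a standalone pointwise inequality first.
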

\begin{proof}
 Assume $P_h(r_0,C_1,C_2)$. Then for all functions $f \in L^\infty_{\operatorname{loc}}$ and for all balls $B(x,r)$ with $r \ge r_0$ and $x \in M$, we have
 \begin{align*}
 \lefteqn{ \int_{B(x,r)} \abs{f - f_{B(x,r}}^2 \, d \mu } \\
 &\le C_1 r^2 \int_{B(x,C_2 r)} \abs{ \nabla_h f}^2 \, d\mu \\
  & =  C_1 r^2 \int_{B(x,C_2 r)}  \int_{B(x,C_2 r + h')}  \abs{f(y)- f(z)}^2 \frac{\one_{d(x,y) \le h}}{V(y,h)} \,dz \,dy \\
    & \le  C_{h,h'} C_1 r^2 \int_{B(x,C_2 r)}  \int_{B(x,C_2 r + h')}  \abs{f(y)- f(z)}^2 \frac{\one_{d(x,y) \le h'}}{V(y,h')} \,dz \,dy
 \end{align*}
which is $P_{h'}(r_0,C_1 C_{h,h'}, C_2)$. In the last line above, we used Lemma \ref{l-vloc}.
\end{proof}
\begin{remark} \label{r-pscale} A question now arises: At what scales $h$ does a Poincar\'{e} inequality $(P)_h$ hold ?
We have a satisfactory answer for length spaces and graphs.
If a graph satisfies Poincar\'{e} inequality at some scale, it satisfies Poincar\'{e} inequality at all scales $h \ge 1$ (See Corollary \ref{c-scalebig1}).
Moreover, a graph does not satisfy Poincar\'{e} inequality for scales smaller than 1 because the gradient at scales smaller than $1$ is identically zero.
If a length space  satisfies Poincar\'{e} inequality at some scale, then it satisfies Poincar\'{e} inequality at all positive scales (See Corollary \ref{c-allscales}).
We will see in Proposition \ref{p-pscale} that if $(P)_h$ is satisfied at  for some $h>0$ then $(P)_h$ is true for all $h > b$.
We analyze an example which is neither a graph nor a length space (See Example \ref{x-bl}) to show that $h>b$ is the best possible bound.
\end{remark}

We now show that the constant $r_0$ in $P_h(r_0,C_1,C_2)$ is flexible.
\begin{lemma} \label{l-pflex}
 Assume the Poincar\'{e} inequality $P_h(r_0,C_1,C_2)$ holds for a metric measure space $(M,d,\mu)$. Then for every $r_1>0$ and there exists constants
 $C_1',C_2'$ such that the Poincar\'{e} inequality $P_h(r_1,C_1',C_2')$ holds.
\end{lemma}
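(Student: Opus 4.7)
The plan is to split into two cases according to whether $r_1 \ge r_0$ or $r_1 < r_0$. The first case is trivial: if $r_1 \ge r_0$, then any ball with radius $r \ge r_1$ automatically has $r \ge r_0$, so the desired inequality $P_h(r_1,C_1,C_2)$ holds with the same constants as the hypothesis.

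The interesting case is $r_1 < r_0$, and here I would further split according to whether $r \ge r_0$ or $r_1 \le r < r_0$. When $r \ge r_0$ we apply $P_h(r_0,C_1,C_2)$ directly. When $r_1 \le r < r_0$, the key trick is to enlarge the ball in which we measure variance, using the fact recorded in \eqref{e-mean} that the $\mu$-average minimizes the mean squared error. Concretely, for such $r$ I would write
\[
\int_{B(x,r)}\abs{f-f_{B(x,r)}}^2\,d\mu \;\le\; \int_{B(x,r)}\abs{f-f_{B(x,r_0)}}^2\,d\mu \;\le\; \int_{B(x,r_0)}\abs{f-f_{B(x,r_0)}}^2\,d\mu,
\]
where the first inequality is \eqref{e-mean} applied on $B(x,r)$ with $\alpha = f_{B(x,r_0)}$, and the second is by monotonicity of the integral (since $B(x,r)\subseteq B(x,r_0)$). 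Then applying $P_h(r_0,C_1,C_2)$ to the last integral gives a bound by $C_1 r_0^2 \int_{B(x,C_2 r_0)} \abs{\nabla f}_h^2\, d\mu$.

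Now I use $r_1 \le r$ to replace $r_0$ by a multiple of $r$: since $r_0 \le (r_0/r_1)\, r$, one has $r_0^2 \le (r_0/r_1)^2 r^2$ and $C_2 r_0 \le (C_2 r_0/r_1)\, r$, so
\[
\int_{B(x,r)}\abs{f-f_{B(x,r)}}^2\,d\mu \;\le\; C_1\left(\tfrac{r_0}{r_1}\right)^2 r^2 \int_{B(x,\,(C_2 r_0/r_1)\,r)}\abs{\nabla f}_h^2\,d\mu.
\]
Combining with the $r \ge r_0$ case, the choice $C_1' = C_1 \max(1,(r_0/r_1)^2)$ and $C_2' = C_2 \max(1, r_0/r_1)$ yields $P_h(r_1, C_1', C_2')$.

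The argument involves no real obstacle; the only subtlety is recognizing that one should not try to \emph{shrink} the Dirichlet side of the inequality (which would require something like a reverse Poincar\'{e} estimate) but instead \emph{enlarge} the variance side via \eqref{e-mean}, paying only a harmless constant factor $(r_0/r_1)^2$ in front. Notably, no doubling or geometric hypothesis on $(M,d,\mu)$ is needed for this flexibility lemma.
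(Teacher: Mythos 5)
Your proof is correct and takes essentially the same approach as the paper: for $r_1\le r<r_0$, enlarge the variance side using the mean-minimizing property \eqref{e-mean} together with $B(x,r)\subseteq B(x,r_0)$, apply $P_h(r_0,C_1,C_2)$, and then absorb the ratio $r_0/r_1$ into the constants. The only superficial difference is that you spell out the trivial cases explicitly and take a max in the final constants; the paper simply reports $C_1'=C_1(r_0/r_1)^2$, $C_2'=C_2(r_0/r_1)$.
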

\begin{proof}
 The non-trivial case to check is $r_1 < r_0$. Assume $B(x,r)$ with $ r_1 \le r < r_0$. Then for all functions $f \in L^\infty_{\operatorname{loc}}(M)$,  by \eqref{e-mean} we have
 \begin{align*}
  \int_{B(x,r)} \abs{f - f_{B(x,r)}}^2 \,d\mu  &\le  \int_{B(x,r)} \abs{f - f_{B(x,r_0)}}^2 \,d\mu  \\
  & \le   \int_{B(x,r_0)} \abs{f - f_{B(x,r_0)}}^2 \,d\mu.
 \end{align*}
Combining the above inequality with $P_h(r_0,C_1,C_2)$ yields
\begin{equation*}
\int_{B(x,r)} \abs{f(y) - f_{B(x,r)}}^2 \,dy \le  C_1 r_0^2 \int_{B(x,C_2 r_0)} \abs{\nabla f}_h^2 \, d\mu.
\end{equation*}
Hence we can choose $C_1'=C_1 (r_0/r_1)^2$ and $C_2'= C_2 (r_0/r_1)$.
\end{proof}

\section{Robustness under quasi-isometry}
Since quasi-isometry between metric measure spaces satisfying \ref{doub-loc} is an equivalence relation, we may expect that a quasi-isometry preserves
certain invariants of such spaces. For instance, we saw in Proposition \ref{p-qivd} that quasi-isometry preserves the large scale doubling property.
In this section, we shall see that quasi-isometry preserves Poincar\'{e} inequality \ref{poin-mms}.
The approach for proving robustness of functional inequalities can traced back to the seminal works of Kanai \cite{Kan85, Kan86a, Kan86b} and further developments by Couhlon and Saloff-Coste \cite{CS95}.

The idea is to show that a functional inequality on the metric measure space is equivalent to a similar functional inequality on its net.
Since quasi-isometry is an equivalence relation, it suffices to show that the functional inequality on graphs is preserved under quasi-isometries.
To compare functional inequalities back and forth between a metric measure space and its net, we need to be able to transfer functions on metric measure space to functions on its net and vice-versa.
We start by developing those tools.

As before, let $(M,d,\mu)$ be a quasi-$b$-geodesic metric measure space satisfying \ref{doub-loc} and let  $(X,d_G,m)$ be its $\epsilon$-net for some fixed $\epsilon \ge b$.
By Proposition \ref{p-net}, we have that $(X,d_G,m)$ is a metric measure space satisfying \ref{doub-loc}.
Moreover the graph corresponding to $(X,d_G,m)$ is connected, countable with bounded degree. Let $D_X = \sup_{x \in X} \operatorname{deg}(x) < \infty$ be the maximum degree.
We will denote closed balls in  $(M,d,\mu)$ and  $(X,d_G,\mu)$ by $B$ and $B_G$ respectively. Similarly, we denote their corresponding volumes by $V$ and $V_G$ respectively.

Given a function $g \in L^\infty_{\operatorname{loc}}(M,\mu)$, we a define a function $\tilde{g} : X \to \mathbb{R}$ on its net as
\begin{equation} \label{e-mms2net}
 \tilde{g}(x) = \frac{1}{V(x,\epsilon)} \int_{B(x,\epsilon)} g \,d\mu.
\end{equation}
for all $x \in M$.
Conversely, given a function $f:X \to \mathbb{R}$ on the net, we define $\hat{f}: M \to \mathbb{R}$ as
\begin{equation} \label{e-net2mms}
\hat{f}= \sum_{x \in X} f(x) \theta_x
\end{equation}
where $\theta_x : M \to \mathbb{R}$ is defined by
\begin{equation} \label{e-punity}
  \theta_x(p)= \frac{\one_{B(x,\epsilon)}(p)}{\sum_{y \in X} \one_{B(y, \epsilon)} (p)}.
\end{equation}
The sum in \eqref{e-net2mms} and denominator of \eqref{e-punity} is a finite sum due to the finite overlap property of Proposition \ref{p-net}(h).
Moreover, there exists a constant $c_X>0$ such that $\{ \theta_x \}_{x\in X}$ is a partition of unity ($\sum_{x \in X} \theta_x \equiv 1$) satisfying
\begin{equation} \label{e-pu1}
 c_X \one_{B(x,\epsilon)} \le \theta_x \le \one_{B(x,\epsilon)}
\end{equation}
for all $x \in X$.The above properties of the partition of unity $\theta_x$ can be verified using Proposition \ref{p-net}.

We will now compare norms and gradients for the transfer of functions  between metric measure space and its net. For a metric measure space $(M,d,\mu)$ and  for all $ f \in L^\infty_{\operatorname{loc}}(A)$
where $A \subset M$, we denote by
\[
 \norm{f}_{p,A}= \left( \int_A \abs{f}^p \,d\mu \right)^{1/p}.
\]
We adapt the same notation for its net by considering it as a metric measure space.
\begin{definition}\label{d-disg}
For a function $f:X \to \mathbb{R}$ on a graph $(X,E)$,
we define the \emph{discrete gradient} of $f$ at $x$ as
\[
 \delta f(x) = \left( \sum_{y \sim x} \abs{f(y)-f(x)}^2 \right)^{1/2}.
\]
\end{definition}
This definition of discrete gradient was used to define Poincar\'{e} inequality for graphs in \cite{CS95}.
We now show that our definition of $\abs{\nabla f}_1$ is comparable to $\delta f$.
\begin{lemma} \label{l-discgrad}
 Let $(X,d_G,m)$ be a weighted graph satisfying \ref{doub-loc}. Then there exists $C>0$ such that
 \[
  C^{-1} \abs{\nabla f}_1(x) \le \delta f(x) \le C  \abs{\nabla f}_1(x)
 \]
for all functions $f:X \to \mathbb{R}$ and for all $x \in X$.
\end{lemma}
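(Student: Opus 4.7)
The plan is to unravel both quantities as sums over the neighbors of $x$ and then use Lemma~\ref{l-g-vloc} to compare them pointwise. First, I would observe that in the graph metric $d_G$, the closed unit ball is $B(x,1) = \{x\} \cup \{y \in X : y \sim x\}$, so
\[
|\nabla f|_1(x)^2 = \frac{1}{V_G(x,1)} \sum_{y \sim x} m(y) \, |f(y)-f(x)|^2,
\]
since the $y=x$ term vanishes. By definition $\delta f(x)^2 = \sum_{y \sim x} |f(y)-f(x)|^2$, so matching the two expressions amounts to understanding the weight ratios $m(y)/V_G(x,1)$ for $y \sim x$.

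Next I would invoke Lemma~\ref{l-g-vloc}, which guarantees under \ref{doub-loc} that the graph has bounded degree $D_X := \sup_x \deg(x) < \infty$ and that $C_m := \sup_{x \sim y} m(y)/m(x) < \infty$. Using these:
\begin{align*}
V_G(x,1) &= m(x) + \sum_{y \sim x} m(y) \le (1 + D_X C_m)\, m(x), \\
V_G(x,1) &\ge m(x),
\end{align*}
and for every neighbor $y \sim x$, $C_m^{-1} m(x) \le m(y) \le C_m \, m(x)$. Combining these gives the two-sided bound
\[
\frac{1}{C_m(1+D_X C_m)} \le \frac{m(y)}{V_G(x,1)} \le C_m \qquad \text{for all } y \sim x.
\]

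With this comparison in hand, the rest is a one-line estimate: substituting the upper bound on $m(y)/V_G(x,1)$ yields $|\nabla f|_1(x)^2 \le C_m \, \delta f(x)^2$, and substituting the lower bound gives $|\nabla f|_1(x)^2 \ge (C_m(1+D_X C_m))^{-1} \, \delta f(x)^2$. Setting $C = \sqrt{C_m(1+D_X C_m)}$ delivers the desired two-sided inequality. There is no real obstacle here — the only point to check carefully is that the constants $D_X$ and $C_m$ truly depend only on the doubling constant via Lemma~\ref{l-g-vloc}, so the comparison constant $C$ is uniform in $x$ and $f$, as required.
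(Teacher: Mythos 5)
Your proof is correct and mirrors the paper's: both expand $\abs{\nabla f}_1(x)^2$ as $\bigl(m(x)+\sum_{y\sim x}m(y)\bigr)^{-1}\sum_{y\sim x}\abs{f(y)-f(x)}^2 m(y)$ and then compare it to $\delta f(x)^2$ via the bounded-degree and bounded weight-ratio conclusions of Lemma~\ref{l-g-vloc}. The paper leaves the final comparison implicit ("the conclusion immediately follows"), whereas you spell out the constants, but the argument is the same.
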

\begin{proof}
We write the gradient as
\[
  \left( \abs{\nabla f}_1(x) \right)^2 = \frac{1}{ m(x) + \sum_{y \in X: y \sim x} m(y) } \sum_{y \in X: y \sim x} \abs{f(y) - f(x)}^2 m(y).
 \]
The conclusion immediately follows from Lemma \ref{l-g-vloc}.
\end{proof}
\begin{remark}
 Therefore our Poincar\'{e} inequality $(P)_1$ generalizes the Poincar\'{e} inequality for graphs considered by Delmotte \cite{Del97,Del99}.
 Using the above lemma, our definition of $(P)_1$ for graphs is equivalent to the $L^2$ version of $(P)$ for graphs in \cite{CS95}.
\end{remark}

The next lemma compares gradient of a function on net and with its  metric measure space version.
\begin{lemma}  \label{l-grad-comp-mms}
Let $(M,d,\mu)$ be a  quasi-$b$-geodesic metric measure space  satisfying \ref{doub-loc} and let $(X,d,m)$ be its $\epsilon$-net for some $\epsilon \ge b$.
For all $h>0$, there exists positive reals $C,C'$ such that for all $x \in M$, for all $r \ge 1$,  and for all
functions $f:X \to \mathbb{R}$, we have
\[
  \norm{ \abs{\nabla \hat{f}}_h }^2_{2, B(x,r)}  \le C \norm{\delta f}_{2, B_G(\bar{x}, C' r )}^2
\]
where $\bar{x} \in X$ is such that $d(x,\bar{x}) \le \epsilon$ and  $\hat{f}:M\to \mathbb{R}$ is defined as in \eqref{e-net2mms}.
\end{lemma}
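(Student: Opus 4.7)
The plan is to establish a pointwise bound on $\abs{\nabla \hat f}_h(p)^2$ in terms of a sum of $(\delta f(z))^2$ over net points $z$ lying in a metric ball of bounded radius around $p$, and then to integrate in $p \in B(x,r)$ and swap the (locally finite) sum with the integral.

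Fix $p \in M$ and $y \in B(p,h)$, and choose $\bar p \in X$ with $d(\bar p, p) \le \epsilon$ (Proposition~\ref{p-net}(a)). Using $\sum_{x \in X} \theta_x \equiv 1$, I would write
\[
\hat f(y) - \hat f(p) = \sum_{x \in S_p} (f(x) - f(\bar p))(\theta_x(y) - \theta_x(p)),
\]
where $S_p := X \cap B(p, h + \epsilon)$ contains every $x$ for which either $\theta_x(y)$ or $\theta_x(p)$ is nonzero. By the finite overlap property (Proposition~\ref{p-net}(h)), $\abs{S_p}$ is bounded by a constant $D = D(h,\epsilon)$. For each $x \in S_p$, $d(x, \bar p) \le h + 2\epsilon$, so by the chain lemma (Lemma~\ref{l-chain}) together with \eqref{e-kan-est} there is a graph path $\bar p = z_0, z_1, \dots, z_L = x$ of length $L \le L_0 = L_0(h, \epsilon)$, all of whose vertices lie in $X \cap B(p, R_0)$ for some $R_0 = R_0(h, \epsilon)$. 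A telescoping Cauchy--Schwarz together with $\abs{f(z_{i+1}) - f(z_i)}^2 \le (\delta f(z_i))^2$ then gives
\[
\abs{f(x) - f(\bar p)}^2 \le L_0 \sum_{z \in X \cap B(p, R_0)} (\delta f(z))^2.
\]
Applying Cauchy--Schwarz over $x \in S_p$ and using $\abs{\theta_x(y) - \theta_x(p)} \le 1$ yields the pointwise bound
\[
\abs{\hat f(y) - \hat f(p)}^2 \le C_0 \sum_{z \in X \cap B(p, R_0)} (\delta f(z))^2,
\]
whose right-hand side does not depend on $y$; integrating over $y \in B(p,h)$ and dividing by $V(p,h)$ gives the same estimate for $\abs{\nabla \hat f}_h^2(p)$.

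Integrating over $p \in B(x,r)$ and exchanging sum and integral,
\[
\int_{B(x,r)} \abs{\nabla \hat f}_h^2 \, d\mu \le C_0 \sum_{z \in X} (\delta f(z))^2 \, \mu\bigl(B(x,r) \cap B(z, R_0)\bigr).
\]
The local doubling property \ref{doub-loc} (via Lemma~\ref{l-vloc}) gives $\mu(B(z, R_0)) \le C_{\epsilon, R_0} V(z, \epsilon) = C_{\epsilon, R_0} m(z)$. For the summand to be nonzero one needs $d(z, x) \le r + R_0$, hence $d(z, \bar x) \le r + R_0 + \epsilon$, which by \eqref{e-kan-est} gives $d_G(z, \bar x) \le A(r + R_0 + \epsilon) + A \le C' r$ for an appropriate $C' = C'(h, \epsilon, A)$, once $r \ge 1$. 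Thus the right-hand side is controlled by $C \norm{\delta f}_{2, B_G(\bar x, C' r)}^2$, as required.

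The only thing to monitor is the uniformity of the geometric constants $D$, $L_0$, $R_0$, and $C'$; this is supplied by combining the finite overlap property (Proposition~\ref{p-net}(h)), the bounded degree of $G$ (Proposition~\ref{p-net}(b)), and local doubling (Lemma~\ref{l-vloc}). Note that only \ref{doub-loc} is needed; the large-scale doubling hypothesis \ref{doub-inf} is not required.
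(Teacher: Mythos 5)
Your proposal is correct and follows essentially the same approach as the paper's proof: both exploit $\sum_{x\in X}\theta_x\equiv 1$ to rewrite $\hat f(y)-\hat f(\cdot)$ as a sum of increments $f(x)-f(\bar p)$ with coefficients $\theta_x(y)-\theta_x(\cdot)$, both bound these increments by telescoping along short graph paths and applying Cauchy--Schwarz, and both close the argument via the finite overlap property, bounded degree, local doubling, and the comparison of metric balls to graph balls supplied by \eqref{e-kan-est}. The only difference is organizational: you extract a $y$-independent pointwise bound on $\abs{\nabla \hat f}_h^2(p)$ and then integrate in $p$, swapping sum and integral at the end, whereas the paper first covers $B(x,r)$ by $\epsilon$-balls around net points and works with the double integral directly; this is purely a matter of bookkeeping and does not change the constants' dependence. (One small point: when you write $\abs{f(x)-f(\bar p)}^2 \le L_0 \sum_{z\in X\cap B(p,R_0)}(\delta f(z))^2$, you are implicitly using that the chosen graph path has distinct vertices; taking the path to be a graph geodesic justifies this, and the paper does the same thing implicitly.)
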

\begin{proof}
 Using Lemma \ref{l-vloc}, Proposition \ref{p-net} (a) and \eqref{e-kan-est}, there exists $C_1 >0$ such that
 \begin{align}
 \lefteqn{ \int_{B(x,r)}\int_M \abs{\hat{f}(y)- \hat{f}(z) }^2 \frac{1_{d(y,z)\le h}}{V(y,h)}\,dz\,dy} \label{e-gcm1} \\
\nonumber & \le   \sum_{s \in B_G(\bar{x},C_1 r)} \int_{B(s,\epsilon)} \int_M \abs{\hat{f}(y)- \hat{f}(z) }^2 \frac{1_{d(y,z)\le h}}{V(y,h)}\,dz\,dy
 \end{align}
 for all $x \in M$ and $r \ge 1$.
For all $s \in X$, $y \in B(s,\epsilon)$ and $z \in B(y,h)$, we have
\begin{align*}
 \hat{f}(y) - \hat{f}(z) &=\sum_{t \in X} f(t) (\theta_t(y) - \theta_t(z))  = \sum_{t \in X} (f(t)-f(s)) ( \theta_t (y) - \theta_t(z) ) \\
 &= \sum_{t \in X, d(s,t) \le 2 \epsilon+h} (f(t)-f(s)) ( \theta_t (y) - \theta_t(z) )
\end{align*}
 For the last line, if $d(s,t)> 2 \epsilon+h$, then by triangle inequality $d(t,y) >h+ \epsilon$, $d(t,z) > \epsilon$ and therefore
$\theta_t(y)=\theta_t(z)=0$ whenever $d(s,t)> 2 \epsilon+h$.
Let $D_X < \infty$ be the maximum degree of the net from Proposition \ref{p-net}(b) and $n_0 = A(h+ 2 \epsilon)+ A+h$ where $A$ is from \eqref{e-kan-est}.
Since $\abs{B_G(s,n_0)} \le 2 N^{n_0}$,  we have
\begin{equation} \label{e-gcm2}
 \abs{\hat{f}(p_1)- \hat{f}(p_2)} \le 2 \sum_{t \in B_G(s,n_0)} \abs{f(t)-f(s)} \le 4 D_X^{n_0} \sup_{t \in B_G(s,n_0)} \abs{f(t)-f(s)}
\end{equation}
Let $p_0,p_1,\ldots,p_{d_G(s,t)}$ be a path from $s$ to $t$.
For all $t \in B_G(s,n_0)$,  by Cauchy-Schwarz inequality we have
\begin{equation} \label{e-gcm3}
 \abs{f(t)-f(s)}^2\le \left( \sum_{i=0}^{d_G(t,s)-1}( f(p_i) - f(p_{i+1}) ) \right)^2 \le n_0 \sum_{p \in B_G(s,n_0)} \abs{\delta f(p)}^2.
\end{equation}
Combining \eqref{e-gcm1},\eqref{e-gcm2} and \eqref{e-gcm3}
 \begin{align}
\norm{ \abs{\nabla \hat{f}}_h }^2_{2, B(x,r)}  \nonumber & \le   \sum_{s \in B_G(\bar{x},C_1 r)}   4 N^{2n_0} n_0 \sum_{t \in B_G(s,n_0)} \abs{\delta f(t)}^2  m(s) \\
\nonumber & \le   \sum_{s \in B_G(\bar{x},C_1 r)}  C_m^{n_0} 4 N^{2n_0} n_0 \sum_{t \in B_G(s,n_0)} \abs{\delta f(t)}^2  m(t) \\
\nonumber & \le  8 C_m^{n_0}  D_X^{3n_0} n_0  \sum_{s \in B_G(\bar{x},C_3 r)} \abs{\delta f(t)}^2  m(t)
 \end{align}
 for all $x \in M$ and all $r \ge 1$.
The second line follows from  \eqref{e-Cm} and the last line from $\abs{B(t,n_0)} \le 2 D_X^{n_0}$.
\end{proof}

The following proposition shows that Poincar\'{e} inequalities can be transferred between a metric measure space and its net.
\begin{prop}\label{p-poin-netmms}
 Let $(M,d,\mu)$ be a $b$-quasi-geodesic space satisfying \ref{doub-loc} and let $(X,d,m)$ be its $\epsilon$-net for some $\epsilon \ge b$.
 Then for all $h \ge 5 \epsilon$,  $(X,d_G,m)$ satisfies $(P)_1$ if and only if
 $(M,d,\mu)$ satisfies $(P)_h$.
\end{prop}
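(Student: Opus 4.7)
The plan is to prove both implications by transferring functions between $(M,d,\mu)$ and $(X,d_G,m)$ using the two operators $g\mapsto \tilde{g}$ of \eqref{e-mms2net} and $f\mapsto \hat{f}$ of \eqref{e-net2mms}, and then invoking the Poincar\'e inequality on the other space. In both directions, I would split the relevant Poincar\'e-type deviation into a \emph{small-scale} oscillation produced by the transfer (controllable by $|\nabla g|_h$ on scale $\le 2\epsilon$) plus a \emph{large-scale} deviation handled by the hypothesis on the other space. The only place where the exact scale condition $h\ge 5\epsilon$ is needed is in comparing the discrete gradient of $\tilde g$ on the net to $|\nabla g|_h$ on $M$.

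For the forward direction ($(P)_1$ on $X$ $\Rightarrow$ $(P)_h$ on $M$), fix a ball $B(x_0,r)$, choose $\bar{x}_0\in X$ with $d(x_0,\bar{x}_0)\le \epsilon$, choose $r'=O(r)$ via \eqref{e-kan-est}, and set $c=\tilde{g}_{B_G(\bar{x}_0,r')}$. By \eqref{e-mean},
\begin{equation*}
\int_{B(x_0,r)} |g-g_{B(x_0,r)}|^2\,d\mu \;\le\; 2\int_{B(x_0,r)} |g-\hat{\tilde{g}}|^2\,d\mu + 2\int_{B(x_0,r)} |\hat{\tilde{g}}-c|^2\,d\mu.
\end{equation*}
For the first term, $\hat{\tilde{g}}(p)$ is a convex combination (via $\theta_y$) of averages of $g$ over balls $B(y,\epsilon)$ with $d(p,y)\le\epsilon$; by Cauchy--Schwarz and \ref{doub-loc}, $|g(p)-\hat{\tilde{g}}(p)|^2\le C|\nabla g|_h^2(p)$ provided $h\ge 2\epsilon$. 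For the second term, Jensen's inequality together with \eqref{e-pu1} gives $\int_{B(x_0,r)}|\hat{\tilde g}-c|^2\,d\mu\le \sum_{y\in B_G(\bar x_0,r')}|\tilde g(y)-c|^2 m(y)$; now $(P)_1$ on $X$ and Lemma~\ref{l-discgrad} bound this by $C(r')^2 \sum \delta\tilde g(y)^2 m(y)$. The pointwise estimate
\begin{equation*}
\delta \tilde g(y)^2 m(y) \;\le\; C\int_{B(y,\epsilon)} |\nabla g|_h^2\,d\mu
\end{equation*}
is obtained by writing each $\tilde g(z)-\tilde g(y)$ with $z\sim y$ as a double integral over $B(z,\epsilon)\times B(y,\epsilon)$ and applying Cauchy--Schwarz, using that $d(p,q)\le 5\epsilon\le h$ for $p\in B(z,\epsilon)$, $q\in B(y,\epsilon)$ (this is precisely where $h\ge 5\epsilon$ is sharp). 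Summing over $y$ and invoking the finite overlap property Proposition~\ref{p-net}(h) consolidates the right-hand side into $\int_{B(x_0,Cr)}|\nabla g|_h^2\,d\mu$. Using Lemma~\ref{l-pflex} to adjust $r_0$ concludes the forward implication.

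The converse direction ($(P)_h$ on $M$ $\Rightarrow$ $(P)_1$ on $X$) proceeds in parallel but is technically easier: given $f$ on $X$ and a net-ball $B_G(\bar x_0,r)$, apply $(P)_h$ to $\hat f$ on the slightly enlarged metric ball $B(\bar x_0, Ar)$, and then use Lemma~\ref{l-grad-comp-mms} to majorize the right-hand side by a multiple of $\sum_{y\in B_G(\bar x_0,C'r)}\delta f(y)^2 m(y)$; Lemma~\ref{l-discgrad} identifies this with the gradient $|\nabla f|_1$. For the left-hand side, the lower bound \eqref{e-pu1} ($\theta_y\ge c_X\mathbf{1}_{B(y,\epsilon)}$) together with Proposition~\ref{p-net}(a) gives, for $c=f_{B_G(\bar x_0,r)}$ and $p\in B(y,\epsilon/2)$, that $|\hat f(p)-c|$ equals $|f(y)-c|$ up to a correction controlled again by $|\nabla g|_h$-type oscillation on the $M$-side (which can be absorbed into the right-hand side); comparing the pairwise-disjoint half-balls $B(y,\epsilon/2)$ from Proposition~\ref{p-net}(a) yields
\begin{equation*}
\sum_{y\in B_G(\bar x_0,r)} |f(y)-c|^2 m(y) \;\le\; C\int_{B(\bar x_0, Ar)} |\hat f-c|^2\,d\mu + (\text{absorbable error}).
\end{equation*}
The main obstacle is the forward direction, specifically the $\delta\tilde g$--versus--$|\nabla g|_h$ estimate: the scale constraint $h\ge 5\epsilon$ is dictated by the triangle inequality $d(p,q)\le d(p,z)+d(z,y)+d(y,q)\le \epsilon+3\epsilon+\epsilon$ and cannot be relaxed within this averaging scheme. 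All remaining constants are absorbed using Lemmas~\ref{l-vloc}, \ref{l-doub-prop}, \ref{l-pflex}, \ref{l-largesc} and the finite overlap/bounded degree properties of the net.
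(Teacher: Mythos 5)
Your plan follows essentially the same route as the paper's proof: transfer via $g\mapsto\tilde g$ and $f\mapsto\hat f$, split the Poincar\'e deviation into a small-scale transfer error (controlled by Jensen and the triangle inequality $d(p,q)\le\epsilon+3\epsilon+\epsilon=5\epsilon$) plus a large-scale piece handled by the Poincar\'e inequality on the other space, and consolidate with Lemmas \ref{l-discgrad}, \ref{l-grad-comp-mms} and the finite-overlap and bounded-degree properties of the net. The only cosmetic divergence is in the forward direction, where you work pointwise with the composed function $\hat{\tilde g}$ (writing $g-g_B$ as $(g-\hat{\tilde g})+(\hat{\tilde g}-c)$), whereas the paper instead covers $B(x,r)$ by the net balls $B(p,\epsilon)$ and uses $\tilde g(p)$ directly as the local reference before summing; both yield the same estimates via the same Jensen and doubling arguments, and both hinge on the same $\delta\tilde g$--versus--$|\nabla g|_{5\epsilon}$ comparison that forces $h\ge 5\epsilon$.
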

\begin{remark}
In general, we do not know if the inequality $h \ge 5\epsilon$ in the above statement is required. We believe that $h > b$ is sufficient but we are unable to prove this.
\end{remark}

\begin{proof}[Proof of Proposition \ref{p-poin-netmms}]
 Suppose $(X,d_G,m)$ satisfies $P_1(r_0,C_1',C_2')$. 
 
 Let $g \in L^\infty_{\operatorname{loc}}$  and let $\tilde{g}:X \to \mathbb{R}$ be defined as \eqref{e-mms2net}.
Let $x \in M$ and $r \ge r_0$ be arbitrary. Let $\bar{x} \in X$ be such that $d(x,\bar{x}) \le \epsilon$. There exists $C_1>0$ such that, we have
\begin{align}
\label{e-nms1}  \lefteqn{\int_{B(x,r)} \abs{g(y) - g_{B(x,r)}}^2 dy}\\
\nonumber  & \le    \int_{B(x,r)} \abs{g(y) - \alpha}^2 dy \le \sum_{p \in B_G(\bar{x},C_1r)} \int_{B(p,\epsilon)}  \abs{g(y) - \alpha}^2 dy \\
\nonumber    &\le 2 \sum_{p \in B_G(\bar{x}, C_1r )} \left( \int_{B(p,\epsilon)}  \abs{g(y) - \tilde{g}(p)}^2 dy + m(p) \abs{ \tilde{g}(p) - \alpha}^2 \right)
 \end{align}
for all $\alpha \in \mathbb{R}$ and all functions $g$.
The second line above follows from \eqref{e-mean},  Proposition \ref{p-net} (a) and \eqref{e-kan-est}. The last line follows from $(a+b)^2 \le 2(a^2 + b^2)$.
The first term above is bounded using Jensen's inequality as
\begin{equation*}
 \int_{B(p,\epsilon)}  \abs{g(y) - \tilde{g}(p)}^2 dy \le \frac{1}{V(p,\epsilon)}  \int_{B(p,\epsilon)}  \int_{B(p,\epsilon)}  \abs{g(y) - g(z)}^2 \,dz\,dy
\end{equation*}
Hence by Lemma \ref{l-vloc}, we have
\begin{align}
\nonumber  I_1 &=  \sum_{p \in B_G(\bar{x}, C_1r )}  \int_{B(p,\epsilon)}  \abs{g(y) - \tilde{g}(p)}^2 dy \\
\nonumber &\le  C_{\epsilon, 6\epsilon} \sum_{p \in B_G(\bar{x}, C_1r )}   \int_{B(p,\epsilon)}  \int_{B(p,\epsilon)}  \abs{g(y) - g(z)}^2 \frac{1_{d(y,z)\le 2\epsilon}}{V(y,5 \epsilon)} \,dz\,dy \\
 \label{e-nms2}& \le  C_2 \norm{\abs{\nabla g}_{5 \epsilon}}^2_{2,B(x,C_{3} r)}
\end{align}
for some $C_2,C_3$ large enough.
We used Lemma \ref{l-vloc} and triangle inequality in second line above and Proposition \ref{p-net}(h) and \eqref{e-kan-est} in the last line.
Choose $\alpha = \tilde{g}_{B_G(\bar{x}, C_1r )}$ in \eqref{e-nms1},
 so as to apply $P_1(r_0,C'_1,C_2')$ on $(X,d,m)$ to  bound the second term in \eqref{e-nms1} as
\begin{equation}\label{e-nms3}
 I_2 =  \sum_{p \in B_G(\bar{x}, C_1r )}  m(p) \abs{ \tilde{g}(p) - \alpha}^2    \le  C_4 r^2 \norm{\delta \tilde{g}}^2_{2,B_G(\bar{x},C_5 r)}
\end{equation}
For all $p,q \in X$  satisfying $p \sim q$,  by Jensen's inequality and triangle inequality we have
\begin{align*}
 \abs{\tilde{g}(p)-\tilde{g}(q)}^2 &\le \frac{1}{m(p)m(q) } \int_{B(p,\epsilon)} \int_{B(q,\epsilon)} \abs{g(y)-g(z)}^2\,dz\,dy \\
 & \le  \frac{1}{m(p)m(q) } \int_{B(p,\epsilon)} \int_{B(q,\epsilon)} \abs{g(y)-g(z)}^2 1_{d(y,z)\le 5 \epsilon}\,dz\,dy
\end{align*}
Hence for all $p \in X$,
\begin{align}
\nonumber \lefteqn{\abs{\delta \tilde{g}(p)}^2 m(p) } \\
\nonumber &\le  \sum_{q \in X, q\sim p} \frac{1}{V(q,\epsilon)}  \int_{B(p,4\epsilon)} \int_{B(p,4\epsilon)} \abs{g(y)-g(z)}^2 1_{d(y,z)\le 5 \epsilon}\,dz\,dy \\
\nonumber &\le  C_{ \epsilon, 9 \epsilon } \sum_{q \in X, q\sim p}  \int_{B(p,4\epsilon)} \int_{B(y,4\epsilon)} \abs{g(y)-g(z)}^2 \frac{1_{d(y,z)\le 5 \epsilon}}{V(y,5 \epsilon)} \,dz\,dy \\
 \label{e-nms4} &\le   C_{ \epsilon , 9\epsilon} D_X \int_{B(p,4\epsilon)} \int_{B(p,4\epsilon)} \abs{g(y)-g(z)}^2 \frac{1_{d(y,z)\le 5 \epsilon}}{V(y,5 \epsilon)} \,dz\,dy
\end{align}
The third line follows from Lemma \ref{l-vloc} and the last line from  bounded degree property of Proposition \ref{p-net}(b).
Combining \eqref{e-nms3}, \eqref{e-nms4} along with   \eqref{e-kan-est} and Proposition \ref{p-net}(h) gives
\begin{equation}
 \label{e-nms5} I_2 \le C_6 r^2  \norm{\abs{\nabla g}_{5 \epsilon}}^2_{2,B(x,C_{7} r)}.
\end{equation}
Combining \eqref{e-nms1},\eqref{e-nms2} and \eqref{e-nms5} yields Poincar\'{e} inequality $(P)_{5\epsilon}$ for  $(M,d,\mu)$.
By Lemma \ref{l-largesc}, we get $(P)_h$ for all $h \ge 5 \epsilon$.

Conversely, suppose that $(M,d,\mu)$ satisfies $P_h(r_1,C_3',C_4')$ for some $h \ge 5 \epsilon$.
Let $f: X \to \mathbb{R}$ be an arbitrary function and define $\hat{f}:M \to \mathbb{R}$ as in \eqref{e-net2mms}.
Denote $B_G(p,r)$ be an arbitrary ball in $(X,d,m)$ where $r \ge r_1$.
Then using \eqref{e-mean}, \ref{doub-loc} and the inequality $(a+b)^2 \le 2(a^2 +b^2)$ we have
\begin{align}
\label{e-nms6} \lefteqn{\sum_{q \in B_G(p,r)} \abs{f(q)-f_{B_G(x,r)}}^2 m(q)}\\
\nonumber &\le \sum_{q \in B_G(p,n)} \abs{f(q)-\alpha}^2 m(q) \le C_{\epsilon/2} \sum_{q \in B_G(x,n)} \int_{B(q,\epsilon/2)}  \abs{f(q)-\alpha}^2 \,d\mu \\
\nonumber & \le 2 C_{\epsilon/2} \sum_{q \in B_G(p,n)} \int_{B(q,\epsilon/2)}  \left(\abs{f(q)-\hat{f}(y)}^2  +  \abs{\hat{f}(y)-\alpha}^2 \right) \,dy
\end{align}
for all $\alpha \in \mathbb{R}$.
 Using Proposition \ref{p-net}(a) and \eqref{e-kan-est},there exists positive reals $C_8,C_{11},C_{12}$ such that for all  $r\ge \min(1,r_1/C_8)$ and all functions $f$,  we have
\begin{align}
\nonumber J_2 &=\sum_{q \in B_G(p,r)}  \int_{B(q,\epsilon/2)} \abs{\hat{f}(y)-\alpha}^2\,dy
\nonumber  \le  \int_{B(p,C_8r)} \abs{\hat{f}(y)-\alpha}^2\,dy  \\
& \le  C_9 r^2 \norm{\abs{\nabla \hat{f}}_h}^2_{2,B(p,C_{10} r)}
\label{e-nms7}  \le  C_{11} r^2 \norm{\delta f}^2_{2, B_G(p, C_{12} r)}.
\end{align}
In the second step above, we fix  $\alpha=\hat{f}_{B(p,C_2 r)}$ and apply Poincar\'{e} inequality \ref{poin-mms} and in the last line we apply Lemma \ref{l-grad-comp-mms}.
Let $q \in X$ and $y \in B(q,\epsilon/2)$.
Since $\hat{f}(y) = \sum_{t \in X: d_G(t,q) \le 1} \theta_t(y) f(t) $, we have
\begin{align*}
 \abs{f(q)- \hat{f}(y)} & =  \abs{\sum_{t \in X: d(t,q) \le 1} \theta_t(y) (f(q)- f(t)) }
   \le  \sum_{t \in X: d(t,q) \le 1} \abs{(f(q)- f(t)) } \\
 & \le   \delta f(q) \sqrt{D_X}.
\end{align*}
The last line follows from Cauchy-Schwarz inequality and maximum degree $D_X$ from Proposition \ref{p-net}(b). Using
this estimate, we have
\begin{align}
\nonumber J_1 &= \sum_{q \in B_G(p,r)} \int_{B(q,\epsilon/2)} \abs{f(y)- \hat{f}(y)}^2 \,dy
\nonumber \le  D_X C_{\epsilon/2} \sum_{y \in B_G(p,r)} \abs{\delta f(q)}^2 m(q) \\
\label{e-nms8} & \le  D_X C_{\epsilon/2} \norm{\delta f}^2_{2, B_G(p,r)}.
\end{align}
Thus $(P)_1$ for $(X,d,m)$ follows from \eqref{e-nms6}, \eqref{e-nms7} and \eqref{e-nms8} along with Lemma \ref{l-discgrad}.
\end{proof}
 We now show that Poincar\'{e} inequality $(P)_1$ is preserved under quasi-isometry for graphs.

Let $(X,d,m)$ be a weighted graph. Then for the closed balls in the graph, we have $B(x,r)= B(x, \lfloor r \rfloor)$.
Hence by Lemmas \ref{l-pflex}  and \ref{l-discgrad}, we have  the following
equivalent definition of $(P)_1$: A weighted graph $(X,d,m)$ satisfies $(P)_1$,
if there exists $C_1>0$, $C_2 \ge 1$ such that for all $f :X \to \mathbb{R}$, for all $x \in X$ and for all $n \in \mathbb{N}^*$.
\begin{equation} \label{e-altgrad}
\sum_{y \in B(x,n)} \abs{f(y)-f_{B(x,n)}}^2 \mu(dy) \le C_1 n^2 \sum_{B(x,C_2 n)} \abs{\delta f(y)}^2 m(y)
\end{equation}
where $f_{B(x,n)}$ is the average of $f$ in $B(x,n)$ with respect to measure $m$. We will use the alternate definition for the proposition below.
\begin{prop} [\cite{CS95}, Proposition 4.2] \label{p-graphrobust}
Let $(X_1,d_1,m_1)$ and $(X_2,d_2,m_2)$ be quasi-isometric weighted graphs that satisfy \ref{doub-loc}.
 Then $(X_1,d_1,m_1)$ satisfies $(P)_1$ if and only if  $(X_2,d_2,m_2)$ satisfies $(P)_1$.
\end{prop}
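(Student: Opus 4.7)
The plan is to assume, by symmetry, that $(X_1,d_1,m_1)$ satisfies $(P)_1$ and deduce it for $(X_2,d_2,m_2)$ by lifting test functions from $X_2$ to $X_1$ along the quasi-isometry, applying $(P)_1$ on $X_1$, and then pulling the resulting inequality back. Let $\Phi:X_2\to X_1$ be a quasi-isometry with constants $a\ge 1$, $b\ge 0$ and roughness $\epsilon>0$, and let $\Phi^{-}:X_1\to X_2$ be a quasi-inverse as described after Definition \ref{d-qi-ms}. By Lemma \ref{l-qi-vc} the volumes $V_1(\Phi^{-}(y),r)$ and $V_2(y,r)$ are comparable for all $r\ge 1$, and by Lemma \ref{l-g-vloc} both graphs have bounded degree with bounded measure ratio \eqref{e-Cm1} across any edge.

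Given $f:X_2\to\mathbb{R}$, define the lift $\hat{f}:X_1\to\mathbb{R}$ by $\hat{f}(x)=f(\Phi^{-}(x))$. First I would establish an $L^2$-norm comparison: the fibers of $\Phi^{-}$ have uniformly bounded cardinality, because $\{x\in X_1:\Phi^{-}(x)=y\}$ maps under $\Phi$ into $B_2(y,\epsilon)$, whose $\Phi$-preimage lies in a $X_1$-ball of bounded radius by the rough Lipschitz bound, and the size of such a ball is uniformly bounded by bounded degree. Combined with Lemma \ref{l-qi-vc}, this yields, for some constants $c,C,C'>0$ and all $n\ge 1$,
\[
 c\!\!\sum_{y\in B_2(y_0,n)}\!\!|f(y)|^2 m_2(y)\le\!\!\sum_{x\in B_1(\Phi^{-}(y_0),Cn)}\!\!|\hat{f}(x)|^2 m_1(x)\le C\!\!\sum_{y\in B_2(y_0,C'n)}\!\!|f(y)|^2 m_2(y),
\]
and the same estimate with $f-\alpha$ replacing $f$ for any constant $\alpha\in\mathbb{R}$. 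Next I would compare discrete gradients: whenever $x\sim_1 x'$ in $X_1$, the rough Lipschitz bound gives $d_2(\Phi^{-}(x),\Phi^{-}(x'))\le a+b+2\epsilon=:N$, so these two points are joined by an $X_2$-edge-path of length at most $N$. Cauchy--Schwarz along such paths, then summing over $X_1$-edges in $B_1(\Phi^{-}(y_0),Cn)$ and exchanging the order of summation, bounds $\sum_{B_1(\Phi^{-}(y_0),Cn)}|\delta_1\hat{f}|^2 m_1$ by a constant multiple of $\sum_{B_2(y_0,C''n)}|\delta_2 f|^2 m_2$; bounded degree of both graphs, finite fiber size of $\Phi^{-}$, and \eqref{e-Cm1} absorb all combinatorial factors.

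Finally I would apply $(P)_1$ on $X_1$ to the function $\hat{f}$ in the ball $B_1(\Phi^{-}(y_0),Cn)$ for $n$ larger than the scale constant from $(P)_1$ on $X_1$; by \eqref{e-mean}, replacing the $X_1$-average of $\hat{f}$ by the $X_2$-average of $f$ on $B_2(y_0,n)$ only decreases the LHS of the norm comparison. Combining this with the two comparison estimates of the previous paragraph yields $(P)_1$ on $X_2$ for all sufficiently large scales, and Lemma \ref{l-pflex} absorbs the dependence on the scale constant $r_0$. The main obstacle is the combinatorial bookkeeping: the quasi-inverse $\Phi^{-}$ is neither injective nor edge-preserving, so norms and gradients only correspond after averaging over bounded neighborhoods, and one must check that every constant produced by this process depends only on $a,b,\epsilon$ and on the doubling constants of the two graphs, a verification carried out using the chain arguments of Lemma \ref{l-chain} together with the bounded-degree consequence of \ref{doub-loc} from Lemma \ref{l-g-vloc}.
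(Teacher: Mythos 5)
Your $L^2$-norm comparison is false on the left, and that breaks the whole argument. You set $\hat f = f\circ\Phi^{-}$ with $\Phi^{-}:X_1\to X_2$ a quasi-inverse, but $\Phi^{-}$ is in general far from surjective: its image is only coarsely dense in $X_2$, and the rough-surjectivity constant $\epsilon$ is typically $\ge 1$, so the image can miss entire infinite subsets of vertices. Take $f$ supported on $X_2\setminus\Phi^{-}(X_1)$. Then $\hat f\equiv 0$, so the right side of your claimed lower bound
$c\sum_{B_2(y_0,n)}|f|^2\,m_2\le\sum_{B_1(\cdot,Cn)}|\hat f|^2\,m_1$
vanishes while the left side does not. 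Your bounded-fiber argument for $\Phi^{-}$ is correct (that gives the \emph{upper} bound, handling non-injectivity), but non-surjectivity is a separate issue, and nothing in your sketch addresses it. Since the left norm comparison is the opening move of the chain $\|f-f_{B_2}\|^2\lesssim\|\hat f-(\hat f)_{B_1}\|^2\lesssim n^2\|\delta_1\hat f\|^2\lesssim n^2\|\delta_2 f\|^2$, the proof does not start. Your gradient comparison step (\emph{i.e.}, $\|\delta_1\hat f\|^2\lesssim\|\delta_2 f\|^2$ via bounded-length paths in $X_2$ joining $\Phi^{-}(x)$ to $\Phi^{-}(x')$ for $x\sim x'$, plus Cauchy--Schwarz) is sound; the hole is purely on the $L^2$-norm side.

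The fix, and it is exactly what the paper's proof of Proposition~\ref{p-graphrobust} does, is to pre-smooth $f$ before pulling back. Set $f_k(y)=\dashint_{B_2(y,k)}f\,dm_2$ with $k$ chosen so that the balls of radius $k$ around the image of the $X_1\!\to\!X_2$ map cover all of $X_2$, and apply $(P)_1$ on $X_1$ to $f_k\circ\Phi$ (in the paper's notation, where $\Phi:X_1\to X_2$) rather than to the raw pullback of $f$. The smoothed pullback genuinely retains quantitative information about all of $f$. The price for smoothing is a new error term, which is estimate \eqref{e-rpg8} in the paper: one must show $\|f-f_{B_2}\|^2\le C\|\delta_2 f\|^2+C\|f_k\circ\Phi-(\text{avg})\|^2_{2,B_1}$, where the term $S_1=\|f-f_k\circ\Phi\circ\Phi^{-1}\|^2$ quantifies the smoothing loss and is bounded by $\|\delta_2 f\|^2$ using Jensen's inequality plus a Cauchy--Schwarz chaining over the bounded-length gap from $y$ to $\Phi\circ\Phi^{-1}(y)$. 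Your proposal implicitly assumes this $S_1$ term vanishes, and that is precisely the gap.
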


\begin{proof}
 We denote the balls, volume of balls and gradient of $(X_i,d_i,m_i)$  by $B_i,V_i,\delta_i$ respectively for $i=1,2$.

 Assume that $(X_1,d_1,m_1)$ satisfies $(P)_1$.
 Let $\Phi:X_1 \to X_2$ be a quasi-isometry with $ \cup_{x \in X_1} B_2( \Phi(x), k) = X_2$ for some $k \in \mathbb{N}^*$.
 Let $f:X_2 \to \mathbb{R}$ and let $f_k(x)$ denote the average of $f$ in $B_2(x,k)$ with respect to measure $m_2$.
 Applying $(P)_1$ to the function $f_k \circ \Phi :X_1 \to \mathbb{R}$, we have
 \begin{equation}
  \label{e-rpg1}  \norm{f_k \circ \Phi - (f_k \circ \Phi)_{B_1(x,n)}}^2_{2, B_1(x,n)} \le C_1 n^2  \norm{\delta_1 (f_k \circ \Phi)}^2_{2, B_1 (x, C_1'n)}
 \end{equation}
For all $y \in X_1$, we have
\begin{align}
\label{e-rpg2} \abs{\delta_1( f_k \circ \Phi)(y)}^2 m_1(y) &\le C_2  \abs{\delta_1( f_k \circ \Phi)(y)}^2 m_2 (\Phi(y)) \\
\nonumber & \le  C_2 D_{X_1} \sup_{w_1 \in X_1 : w_1 \sim y} \abs{f_k(\Phi(w_1))- f_k(\Phi(y))}^2 m_2(\Phi(y))
\end{align}
The first line follows from the quasi-isometry condition $ m_1(y) \le C' m_2 (\Phi(y))$ and the second line from bounded degree property of Lemma \ref{l-g-vloc}.
Since $\Phi$ is a quasi-isometry, there exists $l >0$ such that $B_2(\Phi(y),l) \subseteq \Phi(B_1(y,1))$ for all $y \in X_1$.
An application of Cauchy-Schwarz inequality along the minimal path $\Phi(w_1)=p_0,p_1, \ldots, p_s= \Phi(y)$ gives
\begin{align}
  \abs{f_k(\Phi(w_1))- f_k(\Phi(y))}^2 &\le l \sum_{i=0}^{s-1} \abs{f_k(\Phi(p_i))- f_k(\Phi(p_{i+1}))}^2 \nonumber\\
 \label{e-rpg3} &\le l \sum_{z \in B_2(\Phi(y),l)} \abs{\delta_2 f_k (z)}^2
\end{align}
for all $y,w_1 \in X_1$ such that $y \sim w_1$.
Combining \eqref{e-rpg2}, \eqref{e-rpg3} and \eqref{e-Cm1} of Lemma \ref{l-g-vloc}, we obtain
\begin{equation}
 \label{e-rpg4} \abs{\delta_1( f_k \circ \Phi)(y)}^2 m_1(y) \le C_2 D_{X_1} l C_m^l \sum_{z \in B_2(\Phi(y),l)} \abs{\delta_2 f_k (z)}^2 m_2(z)
\end{equation}
Since $\Phi$ is a quasi-isometry, there exists $C_2'>0$ such that
\[
\cup_{z \in \Phi(B_1(x,C_1'n))} B_2(z,l) \subseteq B_2(\Phi(x),C_2'n)
\]
 for all $x \in X_1$ and $n \in \mathbb{N}^*$.
Combining this with \eqref{e-rpg4} and Lemma \ref{l-g-vloc} gives
\begin{equation} \label{e-rpg5}
 \norm{\delta_1 (f_k \circ \Phi)}^2_{2,B_1(x,C_1'n)} \le C_3  \norm{\delta_2 f_k}^2_{2, B_2(\Phi(x),C_2'n)}
\end{equation}
for all $n \in \mathbb{N}^*$, for all $x \in M$ and for all functions $f$.
We write,
\begin{align*}
 \abs{\delta_2f_k(z)}^2 &= \sum_{y \in X_2:y \sim z} \abs{f_k(z)-f_k(y)}^2 \\
 &\le 2 \sum_{y \sim z} \left( \frac{1}{V_2(z,k)} \sum_{t \in B_2(z,k)} \abs{f(t)-f(z)}^2 m_2(t) \right. \\
 &  \hspace{35pt} \left. +\frac{1}{V_2(y,k)} \sum_{s\in B_2(y,k)} \abs{f(s)-f(z)}^2 m_2(s) \right) \\
 & \le  \frac{2D_{X_2}}{V_2(z,k)} \sum_{t \in B_2(z,k)} \abs{f(t)-f(z)}^2 m_2(t) \\
 & \hspace{4mm} + \frac{C_4}{V_2(z,k)} \sum_{s \in B_2(z,k+1)} \abs{f(s)-f(z)}^2 m_2(s).
\end{align*}
The second and third lines above follow from $(a+b)^2\le2(a^2+b^2)$ along with Jensen's inequality. The last two lines follow from
Lemmas \ref{l-g-vloc} and \ref{l-vloc} to compare $V_2(z,k)\le V_2(y,k+1) \le C_{4} V_2(y,k)$. By Lemma \ref{l-g-vloc}, we have $m_2(t) \le C_3' V_2(z,k)$ for all
$z \in X_2$ and for all $t \in B_2(z,k+1)$. It follows that
\[
\abs{ \delta_2 f_k(z)}^2  \le C_5 \sum_{t \in B_2(z,k+1)} \abs{f(t)-f(z)}^2
\]
An application of Cauchy-Schwarz inequality similar to \eqref{e-rpg3} yields
\[
\abs{ \delta_2 f_k(z)}^2 \le C_5 (k+1) D{X_2}^{k+1} \sum_{y \in B_2(z,k+1)} \abs{\delta_2 f(y)}^2
\]
Finally by Lemma \ref{l-g-vloc},
\begin{equation}
 \label{e-rpg6} \norm{\delta_2 f_k(z)}^2_{2,B_2(\Phi(x),C_1'n)} \le C_6  \norm{\delta_2 f}^2_{2,B_2(\Phi(z),C_4'n)}
\end{equation}
Combining \eqref{e-rpg1}, \eqref{e-rpg5} and \eqref{e-rpg6}, we have
\begin{equation}
 \label{e-rpg7} \norm{f_k \circ \Phi - (f_k \circ \Phi)_{B_1(x,n)}}^2_{2,B_1(x,n)} \le C_6 n^2  \norm{\delta_2 f}^2_{2,B_2(\Phi(z),C_4'n)}
\end{equation}
Suppose we prove that
\begin{align}
 \label{e-rpg8} \norm{f -f_{B_2(\Phi(x),n)}}^2_{2, B_2(\Phi(x),n)} &\le  C_8 \norm{ \delta_2 f}^2_{2,B_2(\Phi(x),C_5'n)}\\
 \nonumber &  + C_9 \norm{f_k\circ \Phi - (f_k\circ \Phi)_{B_1(x,C_5'n)}}^2_{2, B_1(x,C_5' n)}.
\end{align}
Then \eqref{e-rpg7} and \eqref{e-rpg8} gives
\begin{equation}
\label{e-rpg9}    \norm{f -f_{B_2(\Phi(x),n)}}^2_{2,B_2(\Phi(x),n)} \le C_{10} n^2  \norm{ \delta_2 f(z)}^2_{2,B_2(\Phi(x), C_6'n)}.
\end{equation}
for all $x \in M_1$ and for all $n \in \mathbb{N}^*$.
Thus we obtain Poincar\'{e} inequality for all balls centered in the image of $\Phi$.
Let $y \in M_2$. Then there exists $\bar{y} \in M_1$ such that $y \in B_2(\Phi(\bar{y}),k)$. It follows from \eqref{e-mean} that
\begin{align*}
  \norm{f-f_{B_2(y,n)}}^2_{2,B_2(y,n)}&\le  \norm{f-f_{B_2(\Phi(\bar{y}),n+k)}}^2_{2,B_2(y,n)} \\
 & \le \norm{f-f_{B_2(\Phi(\bar{y}),n+k)}}^2_{2,B_2(\Phi(\bar{y}), n+k)}
\end{align*}
Hence by \eqref{e-rpg9}, we have $(P)_1$ for $(X_2,d_2,m_2)$. \\
It remains to show \eqref{e-rpg8}. Let $\Phi^{-1}:M_2 \to M_1$ denote the quasi-inverse such that
$\Phi^{-1}(m_2) \in M_1$ is such that $d_2(m_2, (\Phi \circ \Phi^{-1})(m_2)) \le  k$.
We have by \eqref{e-mean} and $(a+b)^2 \le 2(a^2 +b^2)$  that
\[
\norm{f -f_{B_2(\Phi(x),n)}}^2_{2, B_2(\Phi(x),n)} \le \norm{f -\alpha}^2_{2, B_2(\Phi(x),n)} \le 2S_1 +2S_2
\]
where
\[
 S_1= \norm{f - f_{k} \circ \Phi \circ \Phi^{-1} }^2_{2,  B_2(\Phi(x),n)}
\]
and
\[
 S_2 =  \norm{ f_{k} \circ \Phi \circ \Phi^{-1} -\alpha}^2_{2,  B_2(\Phi(x),n)}
\]
for all $\alpha \in \mathbb{R}$.
Let $\bar{z}=\Phi \circ \Phi^{-1} (z)$, then $d_2(z,\bar{z}) \le k$. We bound $S_1$ as
\begin{align*}
S_1 &=  \sum_{z \in B_2(\Phi(x),n)} \abs{f(z)- f_{k}  (\bar{z})}^2 m_2(z) \\
& \le   \sum_{z \in B_2(\Phi(x),n)} \left( \frac{1}{V_2(\bar{z},k)} \sum_{t \in B_2(\bar{z},k)} \abs{f(z)- f  (t)}^2 m_2(t) \right) m_2(z) \\
& \le  C_{11} \sum_{z \in B_2(\Phi(x),n)} \sum_{t \in B_2(z,2k)} \abs{\delta_2 f(t)}^2 m_2(z)\\
& \le  C_{12} \norm{\delta_2f}^2_{2,B_2(\Phi(x),C_7'n)}.
\end{align*}
The second line follows from Jensen's inequality. The third line follows from  $d_2(z,\bar{z}) \le k$ and an application of Cauchy-Schwarz inequality similar to \eqref{e-rpg3}.
The last two lines follows from bounded degree property and \eqref{e-Cm1} of Lemma \ref{l-g-vloc}.
For the second term $S_2$, we have
\begin{align*}
 S_2 & \le  C_{13} \sum_{z \in B_2(\Phi(x),n)} \abs{f_{k} \circ \Phi \circ \Phi^{-1} (z) -  \alpha }^2 m_1(\Phi^{-1}(z)) \\
 & \le  C_{14}   \norm{f_{k} \circ \Phi  - \alpha}^2_{2,B_1(x,C_8'n)}
\end{align*}
We use the fact that $\Phi$ and $\Phi^{-1}$ are quasi-isometries.
Indeed, for $C_8'$ big enough $\Phi^{-1}(B_2(\Phi(x),n)) \subset B_1(x,C_8'n)$, since $\Phi^{-1}$ is a
quasi-isometry with \[d_2(x, \Phi\circ \Phi^{-1}(x)) \le k.\] Moreover $\abs{ \{ z \in X_2: \Phi^{-1}(z) = w \}}$ is uniformly bounded over all $w \in X_1$.
Choose $C_5' = \max(C_7',C_8')$. The bounds on $S_1$ and $S_2$ along with the choice $\alpha=(f_k\circ \Phi)_{B_1(x,C_5'n)}$  concludes the proof of \eqref{e-rpg8}.
\end{proof}
\begin{corollary} \label{c-scalebig1}
 Let $(X,d,m)$ be a weighted graph satisfying \ref{doub-loc} and let $h \ge 1$. Then  $(X,d,m)$ satisfies $(P)_1$ if and only if $(X,d,m)$ satisfies $(P)_h$.
\end{corollary}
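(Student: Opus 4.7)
The forward direction $(P)_1 \Rightarrow (P)_h$ is immediate from Lemma \ref{l-largesc} (applied with $h' = h$), so the content lies entirely in the converse $(P)_h \Rightarrow (P)_1$. My plan for the converse is to establish a pointwise comparison between $\abs{\nabla f}_h$ and the discrete gradient $\delta f$ on a slightly enlarged ball, then feed this back into $(P)_h$.

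Fix $x \in X$ and $y \in B(x,h)$, so that $k := d(x,y) \le \lfloor h \rfloor$. Pick a shortest path $x=p_0,p_1,\ldots,p_k=y$ in the graph; each $p_i$ lies in $B(x,h)$ and each consecutive pair is adjacent. By Cauchy--Schwarz,
\[
\abs{f(y)-f(x)}^2 \;\le\; k\sum_{i=0}^{k-1}\abs{f(p_{i+1})-f(p_i)}^2 \;\le\; h\sum_{p \in B(x,h)} \abs{\delta f(p)}^2.
\]
Averaging over $y \in B(x,h)$ against $m/V(x,h)$ gives the pointwise estimate
\[
\abs{\nabla f}_h(x)^2 \;\le\; h \sum_{p \in B(x,h)} \abs{\delta f(p)}^2.
\]

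Next, I would integrate this over $B(x,C_2 r)$ and interchange summations: by Fubini,
\[
\int_{B(x,C_2 r)}\abs{\nabla f}_h^2\,dm \;\le\; h\sum_{p \in B(x,C_2 r + h)}\abs{\delta f(p)}^2 V(p,h).
\]
Since $(X,d,m)$ is a weighted graph satisfying \ref{doub-loc}, Lemma \ref{l-g-vloc} (bounded degree and bounded ratio of weights on neighbors) yields $V(p,h) \le C_h\, m(p)$ for every $p$, and Lemma \ref{l-discgrad} gives $\delta f \asymp \abs{\nabla f}_1$. Combining these,
\[
\int_{B(x,C_2 r)}\abs{\nabla f}_h^2\,dm \;\le\; C'_h \int_{B(x,C_2 r + h)}\abs{\nabla f}_1^2\,dm.
\]
Feeding this bound into $(P)_h$ produces a Poincar\'{e}-type inequality at scale $1$ with the enlarged dilation radius $C_2 r + h$. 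Choosing a radius threshold $r_0' \ge h$ ensures $C_2 r + h \le (C_2+1)r$, so the result is $P_1(r_0', C_1', C_2+1)$.

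The only delicate point is the shift from $C_2 r$ to $C_2 r + h$, and the corresponding change of radius threshold, but this is harmless: Lemma \ref{l-pflex} lets us shrink $r_0'$ back to any desired value, so the output is a genuine $(P)_1$. There is no deeper obstacle; the argument reduces to the standard telescoping plus Cauchy--Schwarz, once one exploits the fact that graphs are $1$-geodesic and therefore admit honest shortest paths inside $h$-balls.
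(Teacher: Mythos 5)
Your proof is correct, and it takes a genuinely different and more elementary route than the paper's. The paper first reduces to the integer case $k = \lfloor h \rfloor$, introduces the $k$-fuzz graph $(X,d_k,m)$ (in which $(P)_k$ on $(X,d,m)$ becomes $(P)_1$), observes that the $k$-fuzz is quasi-isometric to the original graph, and then cites Proposition \ref{p-graphrobust} on robustness of $(P)_1$ under quasi-isometry of weighted graphs. You instead establish the pointwise comparison $\abs{\nabla f}_h^2 \lesssim_h \sum_{p \in B(\cdot,h)} \abs{\delta f(p)}^2$ directly, by telescoping $f(y)-f(x)$ along a shortest graph path inside $B(x,h)$ (which exists because the graph is $1$-geodesic) and applying Cauchy--Schwarz, and then sum over a ball using Lemma \ref{l-vloc} (or \ref{l-g-vloc}) to control $V(p,h)/m(p)$. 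The key distinction: the paper's argument leverages the quasi-isometry machinery developed for the general metric-measure setting, which is overkill here but fits the chapter's architecture; yours is self-contained and shorter, at the cost of being specific to graphs (it relies on honest shortest paths, unavailable for general quasi-geodesic spaces). Both routes are valid; yours is arguably the cleaner proof for this particular corollary. One minor point: it would be cleaner to cite Lemma \ref{l-vloc} rather than \ref{l-g-vloc} for $V(p,h)\le C_h m(p)$, since $V(p,1/2)=m(p)$ in a graph and \ref{l-vloc} gives $V(p,h)\le C_{1/2,h}V(p,1/2)$ directly, though the bounded-degree and weight-ratio bounds of \ref{l-g-vloc} also suffice.
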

\begin{proof}
 By Lemma \ref{l-largesc}, $(P)_1$ implies $(P)_h$.\\
 Conversely, assume $(X,d,m)$ satisfies $(P)_h$.  Fix $k=\lfloor h \rfloor$.
Since $\abs{\nabla f}_h= \abs{\nabla f}_k$ for all functions $f:X \to \mathbb{R}$, $(X,d,m)$ satisfies $(P)_k$.
\emph{$k$-fuzz} of a weighted graph is defined as the weighted graph $(X,d_k,m)$ where the edges are defined by
 $d_k(x,y)=1$ if and only if $1 \le d(x,y) \le k$ for $x,y \in X$. It is straightforward to verify that the $k$-fuzz $(X,d_k,m)$ satisfies \ref{doub-loc} and is quasi-isometric to $(X,d,m)$.
Since $(X,d,m)$ satisfies $(P)_k$, the $k$-fuzz $(X,d_k,m)$ satisfies $(P)_1$.
 Hence by Proposition \ref{p-graphrobust}, $(X,d,m)$ satisfies $(P)_1$.
\end{proof}
As outlined at the start, the robustness of Poincar\'{e} inequality on graphs in Proposition \ref{p-graphrobust} can be transferred to arbitrary quasi-geodesic spaces using Proposition \ref{p-poin-netmms}.
\begin{prop} \label{p-robustmms}
For $i=1,2$, let $(M_i,d_i,\mu_i)$ be quasi-$b_i$-geodesic spaces satisfying \ref{doub-loc}. Assume $(M_1,d_1,\mu_1)$ and $(M_2,d_2,\mu_2)$ are quasi-isometric.
Let $h_1 \ge 5 b_1$ and  for all $h_2 \ge 5 b_2$.
Then $(M_1,d_1,\mu_1)$ satisfies $(P)_{h_1}$ if and only if $(M_2,d_2,\mu_2)$ satisfies $(P)_{h_2}$.
\end{prop}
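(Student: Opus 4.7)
The plan is to reduce the statement to the graph case (Proposition \ref{p-graphrobust}) by passing through nets, using Proposition \ref{p-poin-netmms} as the bridge between Poincar\'e inequalities on a quasi-geodesic space and Poincar\'e inequalities on its net. The hypothesis $h_i \ge 5 b_i$ is there precisely so that the constraint $h \ge 5\epsilon$ of Proposition \ref{p-poin-netmms} can be met while simultaneously satisfying $\epsilon \ge b$, which is required for the net construction of Proposition \ref{p-net}.

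First, for $i=1,2$, I would set $\epsilon_i := b_i$ (any value in $[b_i, h_i/5]$ would do, and this interval is non-empty by assumption) and construct an $\epsilon_i$-net $(X_i, d_{G_i}, m_i)$ of $(M_i, d_i, \mu_i)$ as in Definition \ref{d-net}. By Proposition \ref{p-net}, each $(X_i, d_{G_i}, m_i)$ is a connected, bounded-degree weighted graph (viewed as a metric measure space) which satisfies \ref{doub-loc} and, by part (e), is quasi-isometric to $(M_i, d_i, \mu_i)$.

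Second, I would argue that $(X_1, d_{G_1}, m_1)$ is quasi-isometric to $(X_2, d_{G_2}, m_2)$. Indeed, we have the chain
\[
 (X_1, d_{G_1}, m_1) \;\sim\; (M_1, d_1, \mu_1) \;\sim\; (M_2, d_2, \mu_2) \;\sim\; (X_2, d_{G_2}, m_2),
\]
and quasi-isometry is an equivalence relation among metric measure spaces satisfying \ref{doub-loc} (see the remark following Definition \ref{d-qi-mms}), so transitivity closes the loop.

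Third, I would chain the three equivalences. Since $\epsilon_i \ge b_i$ and $h_i \ge 5 \epsilon_i$, Proposition \ref{p-poin-netmms} applied to each $M_i$ gives
\[
 (M_i, d_i, \mu_i) \text{ satisfies } (P)_{h_i} \;\iff\; (X_i, d_{G_i}, m_i) \text{ satisfies } (P)_1.
\]
Since $(X_1, d_{G_1}, m_1)$ and $(X_2, d_{G_2}, m_2)$ are quasi-isometric weighted graphs satisfying \ref{doub-loc}, Proposition \ref{p-graphrobust} gives
\[
 (X_1, d_{G_1}, m_1) \text{ satisfies } (P)_1 \;\iff\; (X_2, d_{G_2}, m_2) \text{ satisfies } (P)_1.
\]
Concatenating these three biconditionals yields the desired equivalence between $(P)_{h_1}$ on $M_1$ and $(P)_{h_2}$ on $M_2$. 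There is no genuinely new technical difficulty here beyond careful bookkeeping of constants and verifying that each invoked result applies --- all the substantive work has already been carried out in Propositions \ref{p-net}, \ref{p-poin-netmms}, and \ref{p-graphrobust}. The only place where the hypothesis of the proposition is actively used is in the choice $\epsilon_i = b_i$, which is simultaneously compatible with Proposition \ref{p-net} (requiring $\epsilon_i \ge b_i$) and with Proposition \ref{p-poin-netmms} (requiring $h_i \ge 5\epsilon_i$).
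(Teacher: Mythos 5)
Your proposal is correct and takes essentially the same route as the paper: the paper's proof is the one-line remark that the result follows directly from Propositions \ref{p-poin-netmms} and \ref{p-graphrobust}, and your argument is precisely the careful unpacking of that chain of equivalences (choosing $\epsilon_i = b_i$, passing to nets via Proposition \ref{p-net}, and using quasi-isometry transitivity to connect the two nets).
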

\begin{proof}
  It is a direct consequence of Propositions \ref{p-poin-netmms} and \ref{p-graphrobust}.
\end{proof}
The above Proposition along with the fact that length space is $b$-geodesic for all $b>0$ gives the following.
\begin{corollary} \label{c-allscales}
 Let $(M,d,\mu)$ be a length space satisfying \ref{doub-loc}. Then for every $h_1,h_2>0$, $(M,d,\mu)$ satisfies $(P)_{h_1}$ if and only if $(M,d,\mu)$ satisfies $(P)_{h_2}$.
\end{corollary}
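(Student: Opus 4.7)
The plan is to derive this as an immediate consequence of Proposition \ref{p-robustmms}, exploiting the flexibility that a length space carries in the choice of geodesic scale parameter $b$. The key observation is the bullet point recorded just after the definition of quasi-geodesicity: every length space is $b$-geodesic for \emph{all} $b > 0$, hence in particular quasi-$b$-geodesic for all $b > 0$.

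Concretely, fix arbitrary $h_1, h_2 > 0$, set $(M_1, d_1, \mu_1) = (M_2, d_2, \mu_2) = (M, d, \mu)$, and take the identity map as the quasi-isometry between them (it clearly satisfies Definition \ref{d-qi-mms} since $(M, d, \mu)$ satisfies \ref{doub-loc}). Choose $b_1 := h_1/5$ and $b_2 := h_2/5$. Because $(M, d)$ is a length space, it is $b_i$-geodesic, hence quasi-$b_i$-geodesic, for each $i = 1, 2$. The scale constraints $h_1 \ge 5 b_1$ and $h_2 \ge 5 b_2$ required by Proposition \ref{p-robustmms} then hold by construction.

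Applying Proposition \ref{p-robustmms} to this setup yields the equivalence $(P)_{h_1} \iff (P)_{h_2}$, which is exactly the conclusion of the corollary. There is essentially no obstacle here: the only subtlety is the bookkeeping remark that, since $b$ may be chosen arbitrarily small in a length space, the lower bound $h \ge 5b$ that obstructed sharpness of Proposition \ref{p-robustmms} in the general quasi-geodesic setting becomes vacuous. In particular, the restriction ``$h$ at least $5$ times the geodesic scale'' collapses, so any two positive scales are comparable for the purposes of the Poincaré inequality on length spaces.
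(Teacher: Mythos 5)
Your proposal is correct and is essentially the same argument the paper uses: the paper derives the corollary directly from Proposition \ref{p-robustmms} together with the observation that a length space is $b$-geodesic for every $b>0$, which is precisely the role of your choices $b_i = h_i/5$. The only difference is that you spell out the bookkeeping (identity map as quasi-isometry, the scales $b_1,b_2$) that the paper leaves implicit.
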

\section{Poincar\'{e} inequalities in Riemannian manifolds}
In this section, we see the relationship between various Poincar\'{e} inequalities on a weighted Riemannian manifold. We start by introducing some Poincar\'{e} inequalities from \cite{CS95}.
\begin{definition}
 We say that a complete weighted Riemannian manifold $(M,g)$ with measure $\mu$ satisfies
\ref{poin-inf} if there exists $r_0>0$, $C_1>0$, $C_2 \ge 1$ such that for all $f \in C^\infty (M)$, for all $x \in M$ and for all $r\ge r_0$, we have
\begin{equation*}
 \label{poin-inf} \tag*{$(P)_{\infty}$} \int_{B(x,r)} \abs{f(y)-f_{B(x,r)}}^2 \mu(dy) \le C_{r_0} r^2 \int_{B(x,C_2 r)} \abs{\gr f(y)}^2 \mu(dy)
\end{equation*}
where $f_{B(x,r)}$ denote the average of $f$ in $B(x,r)$ with respect to $\mu$.
 We say that a complete weighted Riemannian manifold $(M,g)$ with measure $\mu$ satisfies
 \ref{poin-loc} if there exists $C_1>0$, $C_2 \ge 1$ such that for all $f \in C^\infty (M)$, for all $x \in M$ and for all $r\ge0$, we have
\begin{equation*}
 \label{poin-loc} \tag*{$(P)_{\operatorname{loc}}$} \int_{B(x,r)} \abs{f(y)-f_{B(x,r)}}^2 \mu(dy) \le C_{r} \int_{B(x,C_2 r)} \abs{\gr f(y)}^2 \mu(dy)
\end{equation*}
where $f_{B(x,r)}$ denote the average of $f$ in $B(x,r)$ with respect to $\mu$.
\end{definition}
It is clear that \ref{poin-rm} implies \ref{poin-inf} and \ref{poin-loc}.
The inequality \ref{poin-loc} is a weak assumption. For instance, manifolds with a lower bound on Ricci curvature satisfy \ref{poin-loc}.
Inequality \ref{poin-inf} is a large scale version of \ref{poin-rm}.
\begin{prop} (\cite[Proposition 6.10]{CS95}) \label{p-robrm} Let $(M,g,\mu)$ be a weighted Riemannian manifold satisfying \ref{doub-loc} and \ref{poin-loc} and let $(X,d,m)$ be its
$\epsilon$-net for some $\epsilon>0$. Then $(M,g)$ with measure $\mu$ satisfies \ref{poin-inf} if and only if $(X,d,m)$ satisfies $(P)_1$.
\end{prop}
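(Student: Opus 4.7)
The plan is to mirror the proof of Proposition \ref{p-poin-netmms}, transferring functions between $(M,g,\mu)$ and its net $(X,d,m)$, with the Riemannian gradient $\abs{\gr f}$ playing the role of the metric-measure gradient $\abs{\nabla f}_h$. The new ingredient is the hypothesis \ref{poin-loc}, which allows us to translate between the Riemannian gradient and oscillations at the fixed scale $\epsilon$.

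For the direction ``$(X,d,m)$ satisfies $(P)_1$ implies $(M,g,\mu)$ satisfies \ref{poin-inf}'', given $f \in C^\infty(M)$ I define $\tilde{f}(y) = V(y,\epsilon)^{-1} \int_{B(y,\epsilon)} f\,d\mu$ for $y \in X$, as in \eqref{e-mms2net}. The scheme of the first half of the proof of Proposition \ref{p-poin-netmms} then applies almost verbatim: decompose $\int_{B(x,r)} \abs{f - f_{B(x,r)}}^2 d\mu$ over the cells $\{B(y,\epsilon)\}_{y \in B_G(\bar{x},Cr)}$ and split into a local term and a discrete term. The local variance $\int_{B(y,\epsilon)} \abs{f - \tilde{f}(y)}^2 d\mu$ is bounded by $\int_{B(y,\epsilon)} \abs{f - f_{B(y,\epsilon)}}^2 d\mu$ via \eqref{e-mean}, then by \ref{poin-loc} as $\le C \int_{B(y, C_2 \epsilon)} \abs{\gr f}^2 d\mu$. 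Applying $(P)_1$ to $\tilde{f}$ handles the discrete term, bounded by $C n^2 \sum_{y} \abs{\delta \tilde{f}(y)}^2 m(y)$; each edge difference $\abs{\tilde{f}(z) - \tilde{f}(y)}^2$ for $y \sim z$ is then bounded via Jensen's inequality together with \ref{poin-loc} on a common enclosing ball $B(y, C_3 \epsilon)$, yielding $\le C V(y,\epsilon)^{-1} \int_{B(y, C_3 \epsilon)} \abs{\gr f}^2 d\mu$. Summing via the finite-overlap property of Proposition \ref{p-net}(h) yields \ref{poin-inf}.

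For the reverse direction ``$(M,g,\mu)$ satisfies \ref{poin-inf} implies $(X,d,m)$ satisfies $(P)_1$'', given $f : X \to \mathbb{R}$ I construct a smoothed extension $\hat{f} = \sum_x f(x) \psi_x$, where $\{\psi_x\}_{x \in X}$ is a smooth partition of unity on $M$ with $\psi_x$ supported in $B(x, 2\epsilon)$ and $\abs{\gr \psi_x} \le C/\epsilon$ pointwise (such smooth cutoffs can be constructed on any weighted Riemannian manifold satisfying \ref{doub-loc}, e.g.\ by mollifying $\rho(d(x,\cdot)/\epsilon)$ for a smooth $\rho$). Since $\sum_x \psi_x \equiv 1$, I rewrite $\gr \hat{f}(p) = \sum_x (f(x) - f(x_0))\, \gr \psi_x(p)$ for a net point $x_0 = x_0(p)$ chosen near $p$; as only a uniformly bounded number of the $\psi_x$ are nonzero at $p$ by the finite-overlap property, this gives the pointwise bound $\abs{\gr \hat{f}(p)}^2 \le C \epsilon^{-2} \abs{\delta f(x_0)}^2$. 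Applying \ref{poin-inf} to $\hat{f}$ on a manifold ball $B(\bar{p}, \epsilon n)$ with $n$ large and then converting the resulting integrals back to discrete sums on $X$ (using that $\mu(\{q : x_0(q) = y\}) \le m(y)$, and using \ref{poin-loc} on each cell $B(y,\epsilon)$ to absorb the error between $\hat{f}$ and $f$) yields $(P)_1$ on the net.

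The main obstacle is the construction of the smooth partition of unity $\{\psi_x\}_{x \in X}$ with uniform gradient control: the indicator-based $\theta_x$ of \eqref{e-punity} used in the proof of Proposition \ref{p-poin-netmms} is not differentiable and so is unsuitable here. Once such smooth cutoffs are in hand, the remainder of the argument is bookkeeping closely parallel to the computations in Proposition \ref{p-poin-netmms}, with \ref{poin-loc} systematically replacing the Jensen-plus-doubling trick that compared $\abs{\nabla f}_h$ with small-scale oscillations in that proof.
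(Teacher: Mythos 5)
The paper does not actually prove this proposition; it is cited directly to \cite[Proposition 6.10]{CS95}, so there is no in-paper argument to compare against. Evaluating your proposal on its own terms: the overall scheme is sound and correctly recognizes that Proposition~\ref{p-poin-netmms} is the right template, with \ref{poin-loc} serving as the bridge between the Riemannian gradient and $\epsilon$-scale oscillations. The forward direction is essentially right. On the reverse direction, the worry you flag about the cutoffs is real but is usually dispatched more cheaply than you suggest: one takes the Lipschitz partition of unity obtained by normalizing $\max(0,1-d(x,\cdot)/2\epsilon)$ over the net (which the finite-overlap property Proposition~\ref{p-net}(h) makes well-defined, with $\abs{\gr\psi_x}\le C/\epsilon$ a.e.), and then appeals to the standard fact that \ref{poin-inf} for $C^\infty$ functions extends to locally Lipschitz functions by density in $W^{1,2}_{\operatorname{loc}}$ — no mollification of $d(x,\cdot)$ is needed. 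One smaller imprecision: in the reverse direction you invoke \ref{poin-loc} ``to absorb the error between $\hat f$ and $f$,'' but $f$ lives on $X$, not $M$, so \ref{poin-loc} has nothing to apply to there; the cell-by-cell error $\abs{\hat f(p)-f(x_0(p))}=\abs{\sum_x(f(x)-f(x_0(p)))\psi_x(p)}$ is already directly bounded by $C\,\delta f(x_0(p))$ via finite overlap and Cauchy–Schwarz, exactly as in your pointwise gradient bound, with no appeal to \ref{poin-loc}. With these two adjustments your outline matches the Couhlon–Saloff-Coste argument.
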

Propositions \ref{p-robrm} and \ref{p-poin-netmms} along with Corollary \ref{c-allscales} gives the following
\begin{prop} \label{p-rmtfae}
 Let $(M,g,\mu)$ be a weighted Riemannian manifold with Riemannian distance $d$. Denote by $(X,d_G,m)$ be an $\epsilon$-net of $(M,d,\mu)$ for some $\epsilon>0$.
 Assume $(M,d,\mu)$ satisfies \ref{doub-loc} and \ref{poin-loc}.
 Then the following are equivalent:
 \begin{itemize}
  \item[(a)] $(M,d,\mu)$ satisfies \ref{poin-inf}.
  \item[(b)] $(M,d,\mu)$ satisfies \ref{poin-mms} for some $h>0$.
  \item[(c)] $(M,d,\mu)$ satisfies \ref{poin-mms}  for all $h>0$.
  \item[(d)] $(X,d_G,m)$ satisfies \hyperlink{poin-mms}{$(P)_1$}.
  \item[(e)] $(X,d_G,m)$ satisfies \ref{poin-mms}  for some $h \ge 1$.
 \end{itemize}
\end{prop}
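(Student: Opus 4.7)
The plan is to establish the five-way equivalence by chaining together the three tools explicitly cited in the statement (Proposition \ref{p-robrm}, Proposition \ref{p-poin-netmms}, Corollary \ref{c-allscales}), with Corollary \ref{c-scalebig1} taking care of the graph side. No new estimates are required; the work is essentially verifying that the hypotheses of each previous result hold under the standing assumptions.

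First, I would dispose of the equivalence (a) $\Leftrightarrow$ (d), which is exactly the content of Proposition \ref{p-robrm}: the hypotheses \ref{doub-loc} and \ref{poin-loc} on $(M,g,\mu)$ are assumed, so the proposition applies directly. Next, by Proposition \ref{p-net}(c) the $\epsilon$-net $(X,d_G,m)$ is a weighted graph satisfying \ref{doub-loc}, so Corollary \ref{c-scalebig1} yields (d) $\Leftrightarrow$ (e) at once.

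It remains to link (b) and (c) with (d). Here I use two facts about the underlying Riemannian manifold: it is a length space (hence quasi-$b$-geodesic for every $b>0$), and it satisfies \ref{doub-loc}. Corollary \ref{c-allscales} then gives (b) $\Leftrightarrow$ (c) immediately, since on a length space satisfying \ref{doub-loc} the scales are all equivalent. For (c) $\Leftrightarrow$ (d), I apply Proposition \ref{p-poin-netmms} with any choice of $h \ge 5\epsilon$ (permissible because $M$ is quasi-$b$-geodesic for arbitrarily small $b$, in particular for some $b \le \epsilon$): that proposition gives $(P)_h$ on $(M,d,\mu)$ for $h \ge 5\epsilon$ if and only if $(X,d_G,m)$ satisfies $(P)_1$. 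Combining this with Corollary \ref{c-allscales} — which upgrades $(P)_h$ at a single scale to $(P)_h$ at every positive scale — closes the loop in both directions.

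The only thing requiring a moment of care is the choice of $b$ when invoking Proposition \ref{p-poin-netmms}, since that proposition requires $\epsilon \ge b$ where $(M,d)$ is quasi-$b$-geodesic. For a length space this is automatic: one can pick $b = \epsilon$ (or any smaller positive number). Beyond this bookkeeping, there is no substantive obstacle, and the proof reduces to assembling the cycle (a) $\Leftrightarrow$ (d) $\Leftrightarrow$ (e) and (c) $\Leftrightarrow$ (d), together with the trivial (c) $\Rightarrow$ (b) and the length-space upgrade (b) $\Rightarrow$ (c).
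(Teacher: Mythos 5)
Your proof is correct and matches the paper's intended argument, which is stated without detail as a corollary of Propositions~\ref{p-robrm}, \ref{p-poin-netmms}, and Corollary~\ref{c-allscales}. You correctly identify that Corollary~\ref{c-scalebig1} (not cited by the paper but clearly the right tool) is needed to close (d)~$\Leftrightarrow$~(e), and your handling of the quasi-geodesic parameter $b$ when invoking Proposition~\ref{p-poin-netmms} is exactly the right bookkeeping.
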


\section{Poincar\'{e} inequality: Examples and Non-examples}
A large class of examples for $(P)_h$ can be obtained from Proposition \ref{p-robustmms} and \ref{p-rmtfae}.
For instance, Buser proved \ref{poin-rm} for Riemannian manifolds with non-negative Ricci curvature.
Therefore by Proposition \ref{p-rmtfae}, Riemannian manifolds with non-negative Ricci curvature satisfy $(P)_h$ for all positive scales $h$. The following example is from \cite{GS05}.
\begin{example}\label{x-grisal}[Euclidean space with radial weights]
 Consider $\mathbb{R}^n$ with standard Riemannian metric $g$, Euclidean distance $d$ and measure $d\mu_\alpha(x)=(1+\abs{x}^2)^{\alpha/2} \,dx$.
 It is easy to verify that $(\mathbb{R}^n,d,\mu_\alpha)$ satisfies \ref{doub-loc} and \ref{poin-loc}.
 Moreover $(\mathbb{R}^n,d,\mu_\alpha)$ satisfies \ref{doub-inf} if and only if $\alpha > -n$.
 If $n \ge 2$, then $(\mathbb{R}^n,d,\mu_\alpha)$ satisfies \ref{poin-inf} and therefore $(P)_h$ for all values of $\alpha \in \mathbb{R}$ and $h>0$ (See Remark 3.13 in \cite{GS05}).
However, $(\mathbb{R},d,\mu_\alpha)$ does not satisfy \ref{poin-inf} for $\alpha \ge 1$. It can be easily seen using the test function $f_\alpha(x)=  \int_0^x (1+t^2)^{-\alpha/2} \, dt$.
By \cite[Theorem 7.1(i)]{GS05},  $(\mathbb{R},d,\mu_\alpha)$ satisfies \ref{poin-inf} if $-1<\alpha <1$.
Due to an unpublished result of Grigor'yan and Saloff-Coste, $(\mathbb{R},d,\mu_\alpha)$ satisfies \ref{poin-inf} if and only if $\alpha < 1$.
\end{example}
\begin{example}\label{x-bl}
We describe an example of quasi-geodesic space which is neither a graph, nor a length space. Consider the `Broken line'  $BL\subset \mathbb{R}$
\[
BL= \bigcup_{n \in \mathbb{Z}} [n-1/4,n+1/4]
\]
It is quasi-$b$-geodesic if and only if $b \ge 1/2$. We equip it with the Euclidean distance $d$ and restriction of Lebesgue measure $\mu$ on $BL$.
 $(P)_h$ is not true for $(BL,d,\mu)$ if $h \le 1/2$. It can be seen using the test function $f:BL \to \mathbb{R}$ defined by $f(x)= (-1)^{\lfloor x+1/4\rfloor}$.
 However, it can be shown that for $(BL,d,\mu)$ satisfies $(P)_h$ for all $h>1/2$.
\end{example}
 \begin{example}[Hyperbolic space]\label{x-hyperb}
  Consider the Hyperbolic $n$-space $\mathbb{H}^n$  with Riemannian distance $d_H$ and Riemannian measure $\mu$.  $(\mathbb{H}^n,d_H,\mu)$ satisfies \ref{doub-loc} and \ref{poin-loc}.
  However $(\mathbb{H}^n,d,\mu)$ does not satisfy \ref{doub-loc} because the volume of balls grows exponentially. Further $(\mathbb{H}^n,d_H,\mu)$ does not satisfy
  the Poincar\'{e} inequality \ref{poin-inf}.

  Another example in  a similar spirit is the infinite $d$-regular tree $\mathbb{T}_d$ equipped with graph distance metric and counting measure. It is easy to very that if $d \ge 3$,  $\mathbb{T}_d$ does not satisfy
  \ref{doub-inf} and does not satisfy \ref{poin-mms} for all $h >0$.
 \end{example}
 Examples \ref{x-grisal} and \ref{x-hyperb} illustrates all the four possibilities that can occur with properties \ref{doub-inf} and \ref{poin-inf}.  It is summarized in the table below.
\begin{table}
    \begin{tabular}[c]{ | l | l | p{7cm} |}
    \hline
    \ref{doub-inf} & \ref{poin-inf} & Examples \\ \hline
    True & True &  $(\mathbb{R}^n, d,\mu_\alpha)$ with $n\ge2$ and $\alpha > -n$ or $n=1$ and $\alpha \in (-1,1)$  \\ \hline
    True & False & $(\mathbb{R},d,\mu_\alpha)$ with $\alpha \ge 1$ \\ \hline
    False & True & $(\mathbb{R}^n,d,\mu_\alpha)$ with $\alpha \le -n$  \\  \hline
    False & False & $(\mathbb{H}^n,d_H,\mu)$ \\   \hline
    \end{tabular}
     \caption{Examples of spaces in relation to the properties \ref{doub-inf} and \ref{poin-inf}}
     \label{table:example}
\end{table}


\chapter{Markov kernel, Semigroup and Dirichlet forms} \label{ch-markov}
In this chapter, we consider Markov chains on metric measure space $(M,d,\mu)$. Let $\mathcal{B}$ denote the Borel $\sigma$-algebra on $(M,d)$.
Our work concerns long term behavior of a natural family of Markov chains on the state space $M$.
We will recall some standard definitions and facts about discrete time Markov chains.

 A \emph{Markov transition  function} is a map $\mathcal{P}:M \times \mathcal{B}:[0,\infty)$ such that $x \mapsto \mathcal(x,A)$ is $\mathcal{B}$-measurable function on $M$ for all $A \in \mathcal{B}$
 and $A \mapsto \mathcal{P}(x,A)$ is a probability measure on $(M,\mathcal{B})$ for all $x \in M$. A Markov transition  function $\mathcal{P}$ on $(M,\mathcal{B})$ is  $\mu$-\emph{symmetric} if
 \begin{equation} \label{e-symmetry} \int_M \int_M u_1(x) u_2(y) \mathcal{P}(x,dy) \mu(dx) = \int_M \int_M u_1(x) u_2(y) \mathcal{P}(x,dy) \mu(dx) \end{equation} for all measurable functions $u_1,u_2:M \to [0,\infty)$.
\begin{remark}
For the rest of this work, we assume that the our Markov transition function is $\mu$-symmetric with respect to some measure $\mu$.
\end{remark}
 Associated with a $\mu$-symmetric Markov transition  function $\mathcal{P}$ is a \emph{Markov operator} $P$, which is a linear operator defined by
 \begin{equation}  \label{e-markop}
  Pf(x) = \int_M f(y) \mathcal{P}(x,dy)
 \end{equation}
on the set of bounded measurable functions. The operator $P$ extends as a contraction operator on $L^p(M)=L^p(M,\mu)$ for all $p \in [1,\infty]$.
With a slight abuse of notation, we denote this extension again by $P:L^p(M) \to L^p(M)$ for each $1 \le p \le \infty$.
Moreover $P$ is positivity preserving, \emph{i.e.} if $f \ge 0$ then $Pf \ge 0$.

The $n$-th iteration $P^n$ of the operator $P$ is just the operator associated with kernel $\mathcal{P}^n$ defined inductively by
\[ \mathcal{P}^n(x,A) := \int_M \mathcal{P}^{n-1}(z,A)\mathcal{P}(x,dz) \] for all $x \in M$, for all measurable sets $A \in \mathcal{B}$ and $\mathcal{P}^1:= \mathcal{P}$.
We now have the \emph{Markov semigroup} of linear operators $\left( P^n \right)_{n \in \mathbb{N}_0}$ where $P^0$ is the identity operator on $L^2(M)$.
The Chapman-Kolmogorov equation is given by
\begin{equation} \label{e-ck}
 \mathcal{P}^{m+n}(x,A)= \int_M \mathcal{P}^n(z,A) \mathcal{P}^m(x, dz)
\end{equation}
for all $A \in \mathcal{B}$ and for all $m,n \in \N^*$.
By Fubini's theorem, \eqref{e-ck}  implies the semigroup property $P^{m+n} f= P^m P^n f$ for all $m,n\in \mathbb{N}_0$ and $f \in L^1(M)$.

The operator $\Delta:=I-P$ is the \emph{Laplacian} which generates the \emph{Dirichlet form}
\[
 \mathcal{E}(f,f) = \langle f, \Delta f\rangle_{L^2(M)} = \frac{1}{2} \int_M \int_M \left(f(x) - f(y) \right)^2 \mathcal{P}(x,dy)\, \mu(dx).
\]
on $L^2(M)$ with full domain $\mathcal{D}(\mathcal{E}) = L^2(M)$.

For every Markov transition function $\mathcal{P}$ on $(M,\mathcal{B})$ there exists a \emph{Markov chain} $(X_n, \mathbb{P}_x)_{n \in \mathbb{N}_0, x \in M}$ on
some path space $(\Omega, \mathcal{F})$ such that
\[
 \mathcal{P}(x,A) = \mathbb{P}_x (X_1 \in A).
\]
(one can always choose the canonical path space $\Omega= M^{\otimes \mathbb{N}_0}, \mathcal{F}= \mathcal{B}^{\otimes \mathbb{N}_0}$ and
$X_n(\omega)= \omega_n$ for $\omega=(\omega_0,\omega_1,\ldots)$.)
The transition function $\mathcal{P}^n$ is then given by $\mathcal{P}^n(x,A)= \mathbb{P}_x (X_n \in A)$ and the operator $P^n$ by $P^nf(x) = \mathbb{E}_x f(X_n)$.
The $\mu$-symmetry of $\mathcal{P}$ is equivalent to the \emph{$\mu$-reversibility} of the Markov chain:
\[
 \mathbb{P}_\mu (X_0 \in A , X_1 \in B) = \mathbb{P}_\mu(X_1 \in A , X_0 \in B)
\]
where $\mathbb{P}_\mu$ is a measure (not necessarily a probability measure) defined by $\mathbb{P}_\mu(\cdot):= \int_M \mathbb{P}_x(\cdot) \mu(dx)$.

 If $\mathcal{P}(x,\cdot) \ll \mu$ for all $x \in M$, we denote its kernel by $p:M \times M \to [0,\infty)$, that is \[
                                                                                                                      \mathcal{P}(x,A) = \int_A p(x,y) \mu(dy)
                                                                                                                     \]
for all $x \in M$ and for all $A \in \mathcal{B}$.
 The kernel $p$ is called a \emph{Markov kernel} with respect to $\mu$. The kernel $p(x,\cdot)$ is the Radon-Nikodym derivative of $\mathcal{P}(x,\cdot)$ with respect to $\mu$, that is
 $\mathcal{P}(x,A) = \int_A p(x,y) \mu(dy)$ for all $x \in M$ and all $A \in \mathcal{B}$. The $\mu$-symmetry of $\mathcal{P}$ implies
 symmetry of kernel, that is $p(x,y)=p(y,x)$ for all $\mu \times \mu$  almost every $(x,y) \in M \times M$. By definition, we have
 $p(x,\cdot) \in L^1(M,\mu)$ for all $x \in M$. However, we  assume that $p(x, \cdot) \in L^\infty(M,\mu)$ for all $x \in M$. Under the assumption $p(x,\cdot) \in L^1 \cap L^\infty$, we define iteratively
 \begin{equation} \label{e-kernel}
  p_{k+1}(x,y) :=  \left[ P p_k(x,\cdot)\right](y) = \int_M p_{k}(x,z) p_1(y,z ) \mu(dz)
 \end{equation}
where $p_1 :=p$ and $k \in \N^*$.   The function $p_k$ for $k \in \N^*$ is called the \emph{heat kernel}.
We will show some basic properties of heat kernel defined in \eqref{e-kernel}.
\begin{lemma}\label{l-kernel}
 Let $(M,d,\mu)$ be a metric measure space and let $\mathcal{P}$ be a $\mu$-symmetric Markov transition function satisfying
 $\mathcal P(x,\cdot) \ll \mu$ for all $x \in M$. Let $p_1(x,\cdot) = \frac{d \mathcal P(x ,\cdot)}{d \mu}$ denote the corresponding Markov kernel.
 Assume further that $p_1(x,\cdot) \in L^\infty(M,\mu)$ for all $x \in M$. The the kernel $p_k$ defined in \eqref{e-kernel} satisfies
 \begin{enumerate}
  \item[(a)] $p_k(x,\cdot) =   \frac{d \mathcal P^k(x ,\cdot)}{d \mu}$ for all $k \in \N^*$. That is $\mathcal{P}^k(x,A) = \int_A p_k(x,z) \mu(dz)$ for all $x \in M$, for all $k \in \N^*$ and for all $A \in \mathcal{B}$.
  \item[(b)] $p_k(x,y) = p_k(y,x) \in [0,\infty)$ for all $x,y \in M$ and for all $k \ge 2$.
  \item[(c)] $p_{k+l}(x,y)= P^k\left( p_l(x,\cdot)\right)(y)$ for all $x,y \in M$ and for all $k,l \in \N^*$.
 \end{enumerate}
\end{lemma}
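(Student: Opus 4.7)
The plan is to prove parts (a), (b), and (c) by induction on $k$, working directly from the definition \eqref{e-kernel} and using Chapman--Kolmogorov \eqref{e-ck}, Fubini's theorem, and the $\mu$-symmetry of $\mathcal{P}$. The only technical delicacy throughout is that $p_1$ is symmetric only $\mu \times \mu$-almost everywhere, so one must consistently appeal to Fubini to absorb null sets before concluding pointwise identities.

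For part (a), I proceed by induction on $k$. The case $k=1$ is the hypothesis on $p_1$. For the inductive step, assuming $\mathcal{P}^k(x,dz) = p_k(x,z)\,\mu(dz)$, the Chapman--Kolmogorov equation \eqref{e-ck} yields
\[ \mathcal{P}^{k+1}(x,A) = \int_M \mathcal{P}(z,A) \, \mathcal{P}^k(x,dz) = \int_M \int_A p_1(z,y) \, \mu(dy) \, p_k(x,z) \, \mu(dz). \]
Fubini's theorem, applicable by nonnegativity, permits exchanging the order of integration. The $\mu$-symmetry of $\mathcal{P}$ gives $p_1(z,y) = p_1(y,z)$ for $\mu \times \mu$-a.e.\ $(z,y)$, so for $\mu$-a.e.\ $y$ the inner integral coincides with $p_{k+1}(x,y)$ as defined by \eqref{e-kernel}; this identifies $p_{k+1}(x,\cdot)$ as the Radon--Nikodym density of $\mathcal{P}^{k+1}(x,\cdot)$ with respect to $\mu$.

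For part (b), nonnegativity is immediate since $p_k$ is obtained by iterated integration of nonnegative factors (and finiteness is ensured by the standing assumption $p_1(x,\cdot) \in L^\infty$). For pointwise symmetry when $k \ge 2$, I unroll \eqref{e-kernel} to express
\[ p_k(x,y) = \int_{M^{k-1}} p_1(x,z_1)\, p_1(z_1,z_2) \cdots p_1(z_{k-2},z_{k-1}) \, p_1(y,z_{k-1}) \, \mu(dz_1) \cdots \mu(dz_{k-1}). \]
By a.e.\ symmetry of $p_1$ together with Fubini, each factor $p_1(a,b)$ inside the integral may be replaced by $p_1(b,a)$ without altering the value. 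Applying the change of variables $z_i \leftrightarrow z_{k-i}$ then shows that interchanging $x$ and $y$ leaves the iterated integral invariant, so $p_k(x,y) = p_k(y,x)$ for every $x,y \in M$.

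For part (c), the base case $k=1$ is the defining relation: $[P\,p_l(x,\cdot)](y) = \int_M p_l(x,z)\, p_1(y,z)\, \mu(dz) = p_{l+1}(x,y)$ from \eqref{e-kernel}. The inductive step reads $P^{k+1}(p_l(x,\cdot))(y) = P\bigl(P^k p_l(x,\cdot)\bigr)(y) = P(p_{k+l}(x,\cdot))(y) = p_{k+l+1}(x,y)$ by the inductive hypothesis and the base case. The main obstacle in the lemma as a whole is thus only the null-set bookkeeping outlined above, and no deeper analytic issue is involved.
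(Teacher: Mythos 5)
Your proof is correct, and it reorganizes the argument in a genuinely different way from the paper. The paper establishes (b) \emph{first} and then uses (b) twice in the proof of (a) (once to write $p_k(z,w)=p_k(w,z)$, once to write $p_{k+1}(w,x)=p_{k+1}(x,w)$), whereas your proof of (a) uses \eqref{e-ck} in the form $\mathcal{P}^{k+1}(x,A)=\int_M \mathcal{P}(z,A)\,\mathcal{P}^k(x,dz)$ together with the $\mu\times\mu$-a.e.\ symmetry of $p_1$ alone, so (a) comes out independently of (b). That is a cleaner logical architecture. For (b), the paper runs an induction, using symmetry of $p_{k-1}$ to rewrite $p_{k+1}$ in a form that is visibly symmetric under $x\leftrightarrow y$; you instead unroll \eqref{e-kernel} to a $(k-1)$-fold integral, absorb the a.e.\ symmetry of each factor via Fubini, and apply the substitution $z_i\leftrightarrow z_{k-i}$. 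Both work. Part (c) is identical in both treatments.

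One place where you are glossing over a real step is the parenthetical claim that finiteness of $p_k(x,y)$ for $k\ge 2$ ``is ensured by the standing assumption $p_1(x,\cdot)\in L^\infty$.'' This is not quite immediate: the hypotheses give you an $L^\infty$ bound on $p_1(y,\cdot)$ for each fixed $y$, but you still need $L^1$ control of the remaining factors. The paper handles this by first proving $\|p_k(x,\cdot)\|_1 = 1$ by induction (Fubini plus a.e.\ symmetry of $p_1$), and then bounding $p_{k+1}(x,y)\le \|p_k(x,\cdot)\|_1\,\|p_1(y,\cdot)\|_\infty<\infty$. In your unrolled-integral framework the analogue is to pull out $\|p_1(y,\cdot)\|_\infty$ and then integrate the remaining chain one variable at a time, each time using that $\int_M p_1(z,w)\,\mu(dz)=1$ for a.e.\ $w$ by symmetry. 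Either way this needs to be stated, since finiteness is part of claim (b).
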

\begin{proof}
 Since $p_1(x,\cdot) \ge 0$ $\mu$-almost everywhere for all $x \in M$, by induction we have that $p_k(x,y) \in [0, +\infty]$ for all $x,y \in M$ and for all $k \ge 2$.
Therefore by induction on $k$, we have
 \begin{align*}
  \int_M p_{k+1} (x,y) \, dy &= \int_M \int_M p_k(x,z) p_1(y,z) \, dz \, dy = \int_M p_k(x,z) \int_M p_1(z,y) \, dy \, dz \\
  &= \int_M p_k(x,z)  \, dz = 1.
 \end{align*}
In the first line above we used Fubini's theorem and that $p_1(y,z)=p_1(z,y)$ $\mu\times \mu$-almost everywhere.
Since $\norm{p_k(x,\cdot)}_1=1$ for all $x \in M$ and for all $k \in \N^*$, we have
\[
 p_{k+1}(x,y) = \norm{ p_{k}(x,\cdot) p_1(y,\cdot )}_1 \le \norm{ p_{k}(x,\cdot) }_1 \norm{p_1(y,\cdot )}_\infty \le \norm{p_1(y,\cdot )}_\infty < \infty
\]
for all $k \in \N^*$ and for all $x,y \in M$.

First we show (b) by induction. The result is obvious for $k=2$. If $k \ge 2$, we have
\begin{align*}
 p_{k+1}(x,y) &= \int_{M} p_k(x,z) p_1(y,z) \, dz =\int_{M} p_k(z,x) p_1(y,z) \, dz \\
 & = \int_M \int_M p_{k-1}(z,w) p_1(x,w) p _1(y,z) \, dw \, dz.
\end{align*}
In the first line above, we used the induction hypothesis. By the above formula for $p_{k+1}(x,y)$ it is clear that
$p_{k+1}(x,y)= p_{k+1}(y,x)$ for all $x,y \in M$.

Now we verify (a) by induction. For $k \in \N^*$, we have
\begin{align*}
 \mathcal{P}^{k+1}(x,A) &= \int_M \mathcal{P}^k(z,A) \mathcal P(x,dz) = \int_M \left( \int_A p_k(z,w) \,dw \right) p_1(x,z) \,dz\\
 &=  \int_A  \int_M p_k(z,w) p_1(x,z) \,dz  \,dw = \int_A  \int_M p_k(w,z) p_1(x,z) \,dz  \,dw \\
 &=  \int_A   p_{k+1}(w,x)   \,dw =  \int_A   p_{k+1}(x,w)   \,dw
\end{align*}
for all $A \in \mathcal{B}$.
In the first line above, we used induction hypothesis, the second line follows from Fubini's theorem, (b) and the $\mu\times \mu$-a.e.\ symmetry of $p_1$. The last line again follows from (b).

By definition of $p_k$ \eqref{e-kernel}, we have
\[
 p_{k+1}(x,y) = P \left( p_{k}(x,\cdot) \right) (y)
\]
for all $x ,y \in M$. Therefore (c) follows from repeated application of the above equality.
\end{proof}

\begin{remark}
In light of (iii) above, we may alternatively define $p_k(x, \cdot)$ as the Radon-Nikodym derivative $\frac{d \mathcal{P}^k}{d \mu}$.
However this alternate definition for $p_k(x,y)$ makes sense only for $\mu$-almost every $y \in M$ (for a fixed value of $x$).
Nevertheless, since $p_1(y,\cdot) \in L^\infty$ and $p_{k-1}(x,\cdot) \in L^1$, it is clear that for all $k \ge 2$, the function $(x,y) \mapsto p_k(x,y)$ defined in \eqref{e-kernel}
is well-defined for all $x \in M$ and for all $y \in M$.
Hence for $k \ge 2$, $p_k:M \times M \to \R_{\ge 0}$ is a genuine function (as opposed to $p_k(x,\cdot)$ just being in $L^1$).
For $k\ge 2$, $p_k$ is a genuine function on $M \times M$ but $p_1(x,\cdot) \in L^1 \cap L^\infty$ for all $x \in M$.
\end{remark}
Many questions concerning the long term behavior of the Markov chain can be answered if we know $p_k$. Therefore estimates on $p_k(x,y)$ for all $x,y \in M$ and for all $k \in \N^*$
is of importance. Based on the remarks above on $p_k$, any bound on $p_k(x,\cdot)$ must be understood in the $\mu$-almost everywhere sense for $k =1$ and in a point-wise sense for $k\ge 2$.
The estimates on heat kernel gives both qualitative (e.g. recurrence/transience, Liouville property) and
quantitative (e.g. estimates on Green's function, H\"{o}lder regularity) information on the long term behavior of the Markov chain. See Chapter \ref{ch-apply} for  applications of Gaussian estimates on the heat kernel.
 \begin{example} \label{x-ballwalk}
 Let $(M,d,\mu)$ satisfy \ref{doub-loc} and let $h>0$. Consider the natural \emph{ball walk} with Markov kernel $k$ with respect to $\mu$ defined as $k(x,y)= \frac{\one_{B(x,h)}(y)}{V(x,h)}$.
 The corresponding Markov transition function $\mathcal{K}$ is not necessarily $\mu$-symmetric because $k(x,y) \neq k(y,x)$ in general. Consider the measure $\mu' \ll \mu$ with
 $\frac{d \mu'}{d\mu}(x)= V(x,h)$. The Markov kernel of $\mathcal{K}$ with respect to $\mu'$ is $p(x,y)= \frac{\one_{B(x,h)}(y)}{V(x,h)V(y,h)}$. Hence $\mathcal{K}$ is $\mu'$-symmetric.
 Such ball walks on compact Riemannian manifolds were studied in \cite{LM10}.
 \end{example}
 A Markov chain $(X_n,\mathbb{P}_x)_{n \in \mathbb{N}_0,x \in M}$ is said to be \emph{lazy} if $\inf_{x \in M} \mathbb{P}_x(X_1=x) >0$.
 \begin{example}\label{x-lazy}
 Consider a metric measure space $(M,d,\mu)$ with a $\mu$-symmetric Markov transition function $\mathcal{P}$.
Define the Markov transition function  \[\mathcal{P}_L(x,A):= \frac{1}{2}(\mathcal{P}(x,A)+ \delta_x(A))\] where $\delta_x(A) = \one_A(x)$ denotes the Dirac measure at $x$.
Note that $\mathcal{P}_L$ $\mu$-symmetric and corresponds to a lazy Markov chain. Assume $\mathcal{P}$ has a kernel $p$ with respect to $\mu$.
Then $\mathcal{P}_L$ has a kernel with respect to $\mu$ if and only if $\delta_x \ll \mu$ for all $x \in M$.
If $P$ is the Markov operator corresponding to $\mathcal{P}$, then $P_L=(I+P)/2$ is the Markov operator corresponding to $\mathcal{P}_L$, where $I$ is the identity operator on $L^p(M)$.
Hence the corresponding Laplacian operators
$\Delta$ and $\Delta_L$ are related by $\Delta_L= \Delta/2$.
 \end{example}

Some basic properties of a symmetric Markov kernel are listed without proof in the lemma below.
\begin{lemma}[Folklore] \label{l-mop}
 Let $\mathcal{P}$ denote a $\mu$-symmetric Markov transition function over a metric measure space $(M,d,\mu)$ and let $P$ be the corresponding Markov operator.
 Then $P$ is a contraction on all $L^p(M,\mu)$, that is
 \begin{equation}
\norm{P f}_{p} \le \norm{f}_p  \label{e-contr}
 \end{equation}
 for all $p \in [1,\infty]$ and for all $ f \in L^p(M)$.
 A consequence of \eqref{e-contr} is the inequality
  \begin{equation}
\E(f,f) = \langle(I-P)f,f\rangle \le2 \norm{f}_2^2  \label{e-2contr}
 \end{equation}
 for all $ f \in L^2(M)$.
 Moreover $P$ is self-adjoint on $L^2(M)$, that is
 \begin{equation}
\langle f, Pg \rangle =  \langle Pf, g \rangle \label{e-selfadj}
 \end{equation}
for all $f ,g \in L^2(M,\mu)$ where $\langle f_1,f_2 \rangle = \int_M f_1 f_2 \,d\mu$ denotes the inner product on $L^2(M,\mu)$.
\end{lemma}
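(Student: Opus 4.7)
The plan is to prove the three claims in the natural logical order: first self-adjointness \eqref{e-selfadj}, which is essentially a restatement of $\mu$-symmetry; then the $L^p$ contractions \eqref{e-contr}; and finally the energy bound \eqref{e-2contr} as a one-line corollary. For \eqref{e-selfadj}, I would unwind the definition \eqref{e-markop} to write
\[
\langle Pf, g \rangle = \int_M g(x) \left( \int_M f(y)\, \mathcal{P}(x,dy) \right) \mu(dx),
\]
and then apply the symmetry hypothesis \eqref{e-symmetry} to the pair $u_1 = g$, $u_2 = f$ (taking the right-hand side of \eqref{e-symmetry} as the intended transposed form $\int\int u_1(y) u_2(x)\, \mathcal{P}(x,dy)\,\mu(dx)$). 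To avoid integrability issues, I would first verify the identity for bounded $f,g$ of finite measure support, where Fubini applies without fuss, and then extend to all of $L^2(M)$ by density once the $L^2$ boundedness of $P$ is available.

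For the $L^p$ contraction \eqref{e-contr} my plan is to dispatch the two endpoints directly and then interpolate. The case $p=\infty$ is immediate: $|Pf(x)| \le \int_M |f(y)|\,\mathcal{P}(x,dy) \le \|f\|_\infty$ since $\mathcal{P}(x,M)=1$. For $p=1$, applying \eqref{e-symmetry} to $u_1 = \mathbf{1}$ and $u_2 = |f|$ (or equivalently Fubini combined with the symmetry rewrite) gives $\|Pf\|_1 \le \int\int |f(y)|\,\mathcal{P}(x,dy)\,\mu(dx) = \int |f(y)|\,\mu(dy) = \|f\|_1$. For $p \in (1,\infty)$, the cleanest route is Jensen's inequality applied to the probability measure $\mathcal{P}(x,\cdot)$:
\[
|Pf(x)|^p \le \bigl(P|f|(x)\bigr)^p \le P(|f|^p)(x),
\]
followed by integration and the $p=1$ bound applied to the nonnegative function $|f|^p$. (Riesz--Thorin interpolation between the $L^1$ and $L^\infty$ bounds is an alternative that avoids Jensen.)

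Finally, \eqref{e-2contr} follows by expanding
\[
\mathcal{E}(f,f) = \langle (I-P)f, f \rangle = \|f\|_2^2 - \langle Pf, f \rangle,
\]
and then controlling the cross term by Cauchy--Schwarz together with the $L^2$ contraction: $|\langle Pf, f\rangle| \le \|Pf\|_2 \|f\|_2 \le \|f\|_2^2$, so $\mathcal{E}(f,f) \le 2\|f\|_2^2$. I do not expect any step to be a real obstacle — the content of the lemma is just symmetry plus Jensen plus Cauchy--Schwarz — but the one place to be careful is the passage from bounded compactly supported functions to general $L^2$ functions in \eqref{e-selfadj}; this has to be done after \eqref{e-contr} (with $p=2$) is in hand, so in a fully rigorous write-up the order would be: prove \eqref{e-selfadj} on a dense subclass, prove \eqref{e-contr} for $p \in \{1,\infty\}$ and then all $p$ by Jensen, then extend \eqref{e-selfadj} to all of $L^2$ by continuity, and finally deduce \eqref{e-2contr}.
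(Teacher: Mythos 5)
Your proposal is correct and is the standard proof of this folklore fact; the paper itself offers no proof, stating the lemma with the remark that its properties are ``listed without proof.'' So there is nothing to compare against on the paper's side, and your argument --- $L^\infty$ from stochasticity, $L^1$ from $\mu$-symmetry, intermediate $p$ by Jensen applied to the probability measure $\mathcal{P}(x,\cdot)$, self-adjointness from symmetry first on a dense class and then extended by $L^2$-continuity, and \eqref{e-2contr} by Cauchy--Schwarz plus the $L^2$ contraction --- is exactly the right one, with the logical ordering handled correctly. One small but worthwhile observation you made implicitly: the paper's display \eqref{e-symmetry} has a typo (both sides of the equation are literally identical), and the intended reading is the transposed form in which $u_1$ and $u_2$ swap roles between the two sides; your proof uses that corrected form, as it must.
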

We list some elementary properties of a symmetric Markov kernel below.
\begin{lemma}[Folklore]\label{l-mker}
 Let $\mathcal{P}$ denote a $\mu$-symmetric Markov transition function over a metric measure space $(M,d,\mu)$ and let $p$ be the corresponding Markov kernel.
 Then for all $x \in M$, the function
 \begin{equation}
n \mapsto p_{2n}(x,x)  \label{e-noninc}
 \end{equation}
 is non-increasing. Moreover we have
 \begin{equation}
  p_{2n}(x,y) \le p_{2n}(x,x)^{1/2} p_{2n}(y,y)^{1/2} \label{e-kcauchy}
 \end{equation}
 for all $x,y \in M$ and for all $n \in \mathbb{N}^*$.
\end{lemma}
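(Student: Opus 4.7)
The plan is to express the iterated kernel values $p_{2n}(x,y)$ as an $L^2$ inner product, from which both claims drop out: Cauchy--Schwarz gives the product bound \eqref{e-kcauchy}, and the $L^2$-contraction property \eqref{e-contr} of $P$ gives the monotonicity \eqref{e-noninc}.

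First I would verify that $p_n(x,\cdot) \in L^2(M,\mu)$ for every $x \in M$ and every $n \in \N^*$. For $n=1$ this follows from $p_1(x,\cdot) \in L^1 \cap L^\infty$ by interpolation, since $\norm{p_1(x,\cdot)}_2^2 \le \norm{p_1(x,\cdot)}_1 \norm{p_1(x,\cdot)}_\infty < \infty$. For $n \ge 2$, Lemma \ref{l-kernel}(c) gives $p_n(x,\cdot) = P^{n-1} p_1(x,\cdot)$, which is in $L^2$ by the contraction property \eqref{e-contr}.

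Next, combining Lemma \ref{l-kernel}(b) and (c) with the definition \eqref{e-kernel}, I would establish the key identity
\[
p_{2n}(x,y) \;=\; \int_M p_n(x,z)\, p_n(y,z)\, \mu(dz) \;=\; \langle p_n(x,\cdot),\, p_n(y,\cdot) \rangle
\]
for all $x,y \in M$ and $n \in \N^*$. In particular, $p_{2n}(x,x) = \norm{p_n(x,\cdot)}_2^2$. Applying the Cauchy--Schwarz inequality to the inner product then yields
\[
p_{2n}(x,y) \;\le\; \norm{p_n(x,\cdot)}_2\, \norm{p_n(y,\cdot)}_2 \;=\; p_{2n}(x,x)^{1/2} p_{2n}(y,y)^{1/2},
\]
which is \eqref{e-kcauchy}.

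Finally, for monotonicity, using the identity above together with Lemma \ref{l-kernel}(c) I would write $p_{n+1}(x,\cdot) = P p_n(x,\cdot)$, and then by the $L^2$-contraction \eqref{e-contr} of $P$,
\[
p_{2(n+1)}(x,x) \;=\; \norm{p_{n+1}(x,\cdot)}_2^2 \;=\; \norm{P p_n(x,\cdot)}_2^2 \;\le\; \norm{p_n(x,\cdot)}_2^2 \;=\; p_{2n}(x,x),
\]
proving \eqref{e-noninc}. There is no real obstacle here; the only point requiring minor care is the integrability/measurability justification in Step~1 (so that the $L^2$ identification is legitimate for every $x$ pointwise, rather than merely almost everywhere), which is handled by the $L^1 \cap L^\infty$ hypothesis on $p_1(x,\cdot)$ and the fact that $p_k$ is a genuine function for $k \ge 2$ as noted in the remark following Lemma \ref{l-kernel}.
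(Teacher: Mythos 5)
Your proof is correct and follows essentially the same route as the paper: write $p_{2n}(x,y) = \langle p_n(x,\cdot), p_n(y,\cdot)\rangle$, then get monotonicity of $p_{2n}(x,x) = \norm{p_n(x,\cdot)}_2^2$ from the $L^2$-contraction \eqref{e-contr} and the product bound \eqref{e-kcauchy} from Cauchy--Schwarz. The only difference is that you spell out the integrability justification, which the paper leaves implicit.
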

\begin{proof}
 Note that the first claim follows from \eqref{e-contr} by
 \[
p_{2n+2}(x,x) =  \norm{ p_{n+1}(x,.)}_2^2 = \norm{P p_n(x,.)}_2^2 \le  \norm{ p_{n}(x,.)}_2^2 = p_{2n}(x,x)  .
 \]
 For \eqref{e-kcauchy}, we simply use Cauchy-Schwarz inequality to obtain
 \[
  p_{2n}(x,y)= \langle p_n(x,.), p_n(y,.) \rangle \le \norm{p_n(x,.)}_2 \norm{p_n(y,.)}_2 .
 \]

\end{proof}

 \section{Assumptions on the Markov chain}
 We introduce the main assumptions on the Markov chain in the following definition.
\begin{definition} \label{d-compat}
 For $h>0$, a Markov transition function $\mathcal{P}$ on $(M,\mathcal{B})$ is said to be \emph{$(h,h')$-compatible} with $(M,d,\mu)$ if
 \begin{itemize}
  \item[(a)] $\mathcal{P}$ is $\mu$-symmetric.
  \item[(b)]  There exists a  kernel $p_1$ such that $\mathcal{P}(x,A)= \int_A p_1(x,y) \mu(dy)$ for all $x \in M$ and for all $A \in \mathcal{B}$.
  By (a), we have $p_1(x,y)=p_1(y,x)$ for all $\mu \times \mu$-almost every $(x,y) \in M \times M$.
  \item[(c)] There exists reals $c_1,C_1 >0$ and $h' \ge h $ such that
  \begin{equation}\label{e-compat}
  \frac{c_1}{V(x,h)} \one_{B(x,h)}(y) \le p_1(x,y) \le \frac{C_1}{V(x,h')} \one_{B(x,h')}(y)
  \end{equation}
for all $x \in M$ and for $\mu$-almost every $y \in M$.
 \item[(d)] There exists $\alpha>0$ such that
 \begin{equation} \label{e-da}
  p_2(x,y)\ge \alpha p_1(x,y)
 \end{equation}
for all $x \in M$ and for $\mu$-almost every $y \in M$, where $p_2$ is defined by \eqref{e-kernel}.
 \end{itemize}
The corresponding Markov kernel $p_1$ is said to be \emph{$(h,h')$-compatible}  with  $(M,d,\mu)$.
If a Markov transition function $\mathcal{P}$ satisfies (a),(b),(c) above we say that $\mathcal{P}$ (respectively $p_1$) is \emph{weakly $(h,h')$-compatible} with $(M,d,\mu)$.

Similarly, we  say the corresponding Markov operator $P$ is (weakly) $(h,h')$-compatible with $(M,d,\mu)$ if the Markov transition function $\mathcal{P}$ is (weakly) $(h,h')$-compatible with $(M,d,\mu)$.
 \end{definition}

\begin{remark}
\begin{itemize}
\item[(i)]Let $(M,d,\mu)$ satisfy \ref{doub-loc} and $h_1 \ge h_2 >0$. If a Markov kernel $p_1$ is $(h_1,h')$-compatible with $(M,d,\mu)$ then $p_1$ is $(h_2,h')$-compatible with $(M,d,\mu)$.
\item[(ii)]The condition (d) in Definition \ref{d-compat} may seem unnatural, but is important for certain technical reasons.
The proofs on Caccioppoli inequality (Lemma \ref{l-caccio}) and discrete time integrated maximum principle (Proposition \ref{p-imp}) and
relies crucially on laziness of walks. Condition (d) enables us to compare the behavior of a given random walk with its lazy version as presented in Example \ref{x-lazy}.
\item[(iii)] There are several examples for which (d) is satisfied.
For instance, a Markov kernel on weighted graphs satisfying \ref{doub-loc} is weakly $(h,h')$-compatible if and only if it is $(h,h')$-compatible.
Consider a Markov kernel $p$ weakly $(h,h)$-compatible with a length space $(M,d,\mu)$ satisfying \ref{doub-loc}, then $p$ is $(h,h)$ compatible.
\item[(iv)] Lemmas \ref{l-wscompat} and \ref{l-gausscompare} show that the assumption (d) is not restrictive for obtaining Gaussian estimates.
\item[(v)] The condition \eqref{e-compat} is an analog of the uniform ellipticity condition \eqref{e-uei}.
\end{itemize}
\end{remark}
We record some important consequences of Condition (d) in Definition \ref{d-compat}.
\begin{lemma}\label{l-con-d}
 Let $(M,d,\mu)$ be a metric measure space and let $P$ be Markov operator that is $(h,h')$-compatible with $(M,d,\mu)$.
Then the corresponding kernel $p_k$ satisfies
\begin{equation}\label{e-pcomp}
 p_{k+1}(x,y) \ge \alpha p_{k}(x,y)
\end{equation}
for all $x,y \in M$ and for all $k \ge 2$ where $\alpha$ is same as in \eqref{e-da}.
Moreover the operator $(P - (\alpha/2)I)^2$ is positivity preserving, that is if $f:M \to \R$ satisfies $f \ge 0$, then $(P - (\alpha/2)I)^2f \ge 0$.
\end{lemma}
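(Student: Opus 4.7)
The plan is to handle the two assertions separately: \eqref{e-pcomp} by induction on $k$, and the positivity preservation of $(P-(\alpha/2)I)^2$ by a direct operator-polynomial manipulation. Both arguments simply pull the single a.e.\ inequality $p_2(x,y) \ge \alpha p_1(x,y)$ supplied by condition (d) through the linear convolutions that build higher iterates.

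For \eqref{e-pcomp} I would induct on $k \ge 2$. For the base case $k=2$, the defining recurrence \eqref{e-kernel} gives
\[
p_3(x,y) = \int_M p_2(x,z)\, p_1(y,z)\, \mu(dz) \ge \alpha \int_M p_1(x,z)\, p_1(y,z)\, \mu(dz) = \alpha\, p_2(x,y),
\]
where the inequality applies condition (d) (which holds for every $x$ and $\mu$-a.e.\ $z$) under the integral against the non-negative weight $p_1(y,z)\,\mu(dz)$. For the inductive step, assuming $p_k(x,z) \ge \alpha p_{k-1}(x,z)$ for all $x,z \in M$, the same formula yields
\[
p_{k+1}(x,y) = \int_M p_k(x,z)\, p_1(y,z)\,\mu(dz) \ge \alpha \int_M p_{k-1}(x,z)\, p_1(y,z)\,\mu(dz) = \alpha\, p_k(x,y),
\]
propagating the inequality pointwise for all $x,y \in M$. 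Lemma \ref{l-kernel}(b) ensures that $p_k$ is a genuine pointwise-defined non-negative function for $k \ge 2$, so the pointwise conclusion is meaningful.

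For the second assertion I would expand
\[
(P - (\alpha/2) I)^2 = P^2 - \alpha P + (\alpha^2/4) I,
\]
and observe that on any $f \ge 0$,
\[
(P^2 - \alpha P) f(x) = \int_M \bigl(p_2(x,y) - \alpha p_1(x,y)\bigr) f(y)\,\mu(dy) \ge 0
\]
$\mu$-a.e.\ by condition (d), while $(\alpha^2/4) f \ge 0$ trivially. Adding the two non-negative quantities gives $(P-(\alpha/2)I)^2 f \ge 0$.

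I do not foresee any real obstacle. The only subtlety worth tracking is whether the various inequalities hold pointwise or merely a.e., which is precisely why the statement of \eqref{e-pcomp} restricts to $k \ge 2$: in that regime Lemma \ref{l-kernel} makes $p_k$ an honest non-negative function on $M\times M$, so integrating an a.e.\ inequality against a non-negative measure yields a pointwise inequality for the resulting iterated kernels.
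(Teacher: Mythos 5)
Your proof is correct and takes essentially the same approach as the paper: both pull the nonnegativity of $p_2 - \alpha p_1$ from condition (d) through the Markov operator, you doing so step-by-step by induction while the paper applies $P^k$ in one shot via Lemma \ref{l-kernel}(c), and your argument for the positivity of $(P-(\alpha/2)I)^2$ is identical to the paper's. No gaps.
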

\begin{proof}
 Since $P^k$ is a Markov operator, by \eqref{e-da} and Lemma \ref{l-kernel}(c) we have
 \[
 p_{k+2}(x,y)- \alpha p_{k+1}(x,y)= P^k\left[p_2(x,\cdot)- \alpha p_1(x,\cdot)\right](y)   \ge 0
 \]
for all $k \in \N^*$ and for all $x,y \in M$. This proves \eqref{e-pcomp}.

By \eqref{e-da} and $f \ge 0$, we have
\begin{align*}
 (P - (\alpha/2)I)^2f (x) &= (P^2-\alpha P)f(x) + (\alpha/2)^2 f(x) \\ &\ge  (P^2-\alpha P)f(x) = \int_M f(y) (p_2(x,y)- \alpha p_1(x,y)) \, dy \ge 0
\end{align*}
for all $x \in M$.
\end{proof}

The following lemma shows that a large enough convolution power of a weakly compatible kernel is compatible under some mild conditions.
\begin{lemma}\label{l-wscompat}
Let $(M,d,\mu)$ be a quasi-$b$-geodesic space satisfying \ref{doub-loc} and let $p_1$ be a Markov kernel weakly $(h,h')$-compatible with $(M,d,\mu)$ for some $h >b$.
Then there exists  $k \in \mathbb{N}^*$ for which $p_l$ is $(h,lh')$-compatible with $(M,d,\mu)$ for all $l \in \N^*$ such that $l \ge k$.
\end{lemma}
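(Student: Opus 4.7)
I verify each of the four conditions of Definition \ref{d-compat} for $p_l$ at scales $(h,lh')$, assuming $l$ is large. Conditions (a) and (b) are immediate from Lemma \ref{l-kernel}: for $l\ge 2$, $p_l$ is symmetric and is the density of $\mathcal{P}^l(x,\cdot)$ with respect to $\mu$. For the upper bound in (c), an induction on $l$ shows that $\supp p_l(x,\cdot)\subseteq B(x,lh')$; writing $p_l(x,y)=\int p_{l-1}(x,z)p_1(z,y)\,\mu(dz)\le C_1/V(y,h')$ (by symmetry of $p_1$ and its weak-compatibility upper bound) and using Lemma \ref{l-vloc} to compare $V(y,h')$ with $V(x,lh')$ for $y\in B(x,lh')$ gives $p_l(x,y)\le C'/V(x,lh')$.

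For the lower bound in (c), the hypothesis $h>b$ plays a crucial role. Fix $\delta\in(0,(h-b)/2)$ so that $b_1:=h-2\delta>b$, and let $y\in B(x,h)$. By Lemma \ref{l-chain}, there is a $b_1$-chain $x=w_0,\dots,w_m=y$ of uniformly bounded length $m\le M:=\lceil C_1h/b_1\rceil$. For $l\ge M$, I construct an $l$-step integration by ``loitering'' near $x$: the intermediate points $u_1,\dots,u_{l-m}$ are integrated over $B(x,\delta)$, while $u_{l-m+j}$ ranges over $B(w_j,\delta)$ for $j=1,\dots,m-1$. The triangle inequality together with $b_1=h-2\delta$ ensures $d(u_i,u_{i+1})\le 2\delta+b_1=h$ at every step, so the weak-compatibility lower bound $p_1(u_i,u_{i+1})\ge c_1/V(u_i,h)$ applies throughout. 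Bounding $V(u_i,h)\le C\,V(x,h)$ and $V(w_j,\delta)\ge c\,V(x,h)$ via Lemma \ref{l-vloc} (uniformly, since $m\le M$) yields $p_l(x,y)\ge c'/V(x,h)$.

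Condition (d), which asks that $p_{2l}\ge\alpha\,p_l$ pointwise, is the main obstacle. My first step is to extend the chain construction to all $y\in B(x,h')$: the chain-length bound is still uniform, $m\le M':=\lceil C_1h'/b_1\rceil$, so for $l\ge M'$ I obtain $p_l(x,y)\ge c''/V(x,h)$ on $B(x,h')$. Combined with $p_1(x,y)\le C_1/V(x,h')\le C_1/V(x,h)$, this yields the pointwise comparison $p_l\ge\alpha_1 p_1$ on $M\times M$ (the inequality is trivial outside $B(x,h')$ where $p_1$ vanishes). Convolving with $p_k$ (equivalently, applying $P^k$ on the $y$-variable) then gives $p_{l+k}(x,y)\ge\alpha_1 p_{k+1}(x,y)$ for every $k\ge 0$; specializing to $k=l-1$ yields $p_{2l-1}\ge\alpha_1 p_l$, and one further application of $P$ gives $p_{2l}\ge\alpha_1 p_{l+1}$. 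To close the argument I must also show $p_{l+1}\ge\alpha_2 p_l$ pointwise: the same chain construction applied to $p_{l+1}$ produces a lower bound on $B(x,h')$ comparable to the upper bound $p_l(x,y)\le C'/V(x,lh')\le C'/V(x,h)$, and on the outer annulus $B(x,lh')\setminus B(x,h')$ one uses the symmetry of $p_l$ (for $l\ge 2$) and Lemma \ref{l-vloc} to transfer the comparison. Combining yields $p_{2l}\ge\alpha\,p_l$ with $\alpha=\alpha_1\alpha_2$. Finally, $k$ is chosen as the maximum of $M$, $M'$, and the thresholds arising in this final matching; the most delicate point is the last comparison on the outer annulus, where the chain-based lower bound must be reconciled with the pointwise upper bound on $p_l$ across the entire support.
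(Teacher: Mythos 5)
The proof of conditions (a), (b), and (c) follows the same line as the paper: (a) and (b) are immediate from Lemma \ref{l-kernel}, the upper bound in (c) comes from $L^\infty$-contractivity plus a volume comparison on the ball of radius $lh'$, and the lower bound comes from a chain argument (loitering near $x$ to stretch the chain to exactly $l$ steps). That part is fine.

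The gap is in condition (d). Your plan runs $p_l \ge \alpha_1 p_1$, convolves to get $p_{2l}\ge\alpha_1 p_{l+1}$, and then tries to close by proving $p_{l+1}\ge\alpha_2 p_l$ \emph{pointwise}. That last step is the problem. On $B(x,h')$ both $p_l(x,\cdot)$ and $p_{l+1}(x,\cdot)$ are pinned to $\asymp 1/V(x,h)$ by the chain lower bound and the contraction upper bound, so the comparison holds there. But $p_l(x,\cdot)$ is supported on $B(x,lh')$, and on the outer annulus the chain from $x$ to $y$ has length $\lceil C_1 d(x,y)/(h-b)\rceil$, which for $d(x,y)$ near $lh'$ typically exceeds $l+1$; the chain-based lower bound on $p_{l+1}(x,y)$ is simply unavailable there, and there is no upper bound on $p_l(x,y)$ that decays to compensate. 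Appealing to symmetry of $p_l$ and Lemma \ref{l-vloc} does not produce the needed inequality; indeed, a pointwise bound $p_{l+1}\ge\alpha p_l$ across the full support is essentially the conclusion of Lemma \ref{l-con-d}, which itself \emph{presupposes} condition (d). So this part of your argument is circular in spirit, and the step on the annulus does not go through.

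The fix --- which is what the paper does --- is simpler than what you attempted. Run the chain argument for $p_{l+1}$ as well as $p_l$ (the construction works for every index $\ge k$), so that $\min(p_l(x,y),p_{l+1}(x,y))\ge c_{1,l}/V(x,h)$ on $B(x,h')$. Since $p_1(x,\cdot)$ is \emph{supported} on $B(x,h')$ with $p_1\le C_1/V(x,h')\le C_1/V(x,h)$, this gives $p_{l+1}\ge\alpha_l\,p_1$ on all of $M$ --- the inequality is trivial off $B(x,h')$ because $p_1$ vanishes there. Then positivity of $P^{l-1}$ gives
\[
 p_{2l}(x,y) = \left(P^{l-1}p_{l+1}(x,\cdot)\right)(y) \ge \alpha_l\left(P^{l-1}p_1(x,\cdot)\right)(y) = \alpha_l\,p_l(x,y),
\]
which is exactly condition (d) for $p_l$. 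The key point you should take away is that the right object to compare $p_{l+1}$ against is $p_1$ (small support, so the lower bound only needs to cover $B(x,h')$), not $p_l$ (large support, where no such bound is available).
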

\begin{proof}
Properties (a) and (b) of Definition \ref{d-compat} follows directly from the weak compatibility of $p_1$.
It only remains to check properties (c) and (d).
Assume that $p_1$ satisfies \eqref{e-compat}. Let $x,y \in M$ with $d(x,y) \le h'$.
By Lemma \ref{l-chain}, there exists even number $k \in \mathbb{N}^*$ such that for all $l \ge k \ge 2$, there exists a $b$-chain $x_0,x_1,\ldots,x_{l}$ with $x_0=x$, $x_{l}=y$. Define $h_1=(h-b)/2$.
By Chapman-Kolmogorov equation
\begin{align}
\nonumber \lefteqn{p_{l}(x,y)} \\
\nonumber & \ge  \int_{B(x_{l-1},h_1)} \ldots  \int_{B(x_1,h_1)}  p(x,y_1) p(y_1,y_2)\ldots p(y_{l-1},y) \,dy_1 dy_2 \ldots dy_{l-1}\\
\nonumber & \ge  \int_{B(x_{l-1},h_1)} \ldots \int_{B(x_1, h_1)} \frac{c_1^{l-1}}{V(x,h) V(y_1,h) \ldots V(y_{l-1},h)} \,dy_1 dy_2 \ldots dy_{l-1} \\
\nonumber & \ge  \int_{B(x_{l-1},h_1)} \ldots \int_{B(x_1, h_1)} \frac{c_1^{l-1} C_{h,2h}^{2-l} }{V(x,h) V(x_1,h) \ldots V(x_{l-1},h)} \,dy_1 dy_2 \ldots dy_{l-1} \\
\label{e-ws0}& \ge  \frac{c_1^{l-1} C_{h,2h}^{2-l} }{V(x,h) }
\end{align}
The third line above follows weakly $(h,h')$-compatible condition \eqref{e-compat} and the fourth line follows from Lemma \ref{l-vloc}.
Combining with the fact that $p$ is weakly $(h,h')$-compatible along with Lemma \ref{l-vloc} gives the following lower bound: For all $l \ge k$ and $l \in \N^*$, there exists $c_{1,l}>0$ such that
\begin{equation} \label{e-ws1}
 \min(p_l(x,y), p_{l+1}(x,y)) \ge \frac{c_{1,l}}{V(x,'h)} \one_{B(x,h')}(y)
\end{equation}
for all $x,y \in M$.
Hence by \eqref{e-ws1} and \eqref{e-compat} we get $p_{l+1} \ge \alpha_l p_1$ for some $\alpha_l >0$.
Since $P$ is positivity preserving, we have
\[
 p_{2l }(x,y) = \left( P^{l-1} p_{l+1}(x,.) \right)(y) \ge \alpha_l \left( P^{l-1} p_1(x,.) \right)(y)=\alpha_l p_l(x,y)
\]
which is condition (d) of Definition \ref{d-compat}.
Note that \eqref{e-ws1} implies that $p_l$ satisfies the lower bound in condition (c) of Definition \ref{d-compat}.

Now we turn to the corresponding upper bound for $p_l$.
Since $P$ is a contraction on $L^\infty$, there exists $C_1>0$ such that $p_m(x,y) \le C_1/V(x,h)$ for all $x,y \in M$ and all $m \in \N^*$.
By triangle inequality  $p_m(x,y)=0$ if $d(x,y) > m h'$ for all $m \in \N^*$ and for all $x,y \in M$. Hence by Lemma \ref{l-vloc} we have the desired conclusion.
\end{proof}
\begin{remark} We now justify the condition $h>b$ in the above lemma. It is to avoid pathological examples of the following kind: Consider a ball walk of Example \ref{x-ballwalk} with $h \le 1/2$ on
 Broken line space $(BL,d,\mu)$ from Example \ref{x-bl}. It is easy to check that such a random walk never leaves a connected component.
 Similarly, the ball walk of Example \ref{x-ballwalk} with $h<1$ on a graph always stays at one point.
\end{remark}

\section{Gaussian estimates}
The main property of a Markov kernel that we are interested in are Gaussian estimates for its iterated kernel $p_n$.

\begin{definition} \label{d-ge}
A $\mu$-symmetric Markov kernel  $p$ on $(M,d,\mu)$ is said to satisfy Gaussian upper bound \ref{gue} if
there exists $C_1,C_2>0$ such that
\begin{equation*}
 \label{gue} \tag*{$(GUE)$} p_n(x,y) \le \frac{C_1}{V(x,\sqrt{n})} \exp \left( - d(x,y)^2/C_2n \right)
\end{equation*}
for all $x,y \in M$ and for all $n \in \mathbb{N}^*$ satisfying $n \ge 2$.

Similarly, a $\mu$-symmetric Markov kernel  $p$ on a metric measure space $(M,d,\mu)$ is said to satisfy Gaussian lower bound \ref{gle} if
there exists $c_1,c_2,c_3>0$ such that
\begin{equation*}
 \label{gle} \tag*{$(GLE)$} p_n(x,y) \ge \frac{c_1}{V(x,\sqrt{n})} \exp \left( - d(x,y)^2/c_2n \right)
\end{equation*}
for all $x,y \in M$ satisfying $d(x,y)\le c_3 n$ and for all $n \in \mathbb{N}^*$ satisfying $n \ge 2$.

A $\mu$-symmetric Markov kernel  $p$ on a metric measure space $(M,d,\mu)$ is said to satisfy two sided Gaussian bound \hypertarget{ge}{$(GE)$}  if it satisfies \ref{gue} and \ref{gle}.
\end{definition}

The condition $d(x,y) \le c_3 n$ in \ref{gle} is needed because $p_n(x,y)$ vanishes for compatible kernels if $d(x,y) \ge c n$ for some constant $c >0$.
In many situations, the above Gaussian estimates are equivalent to the following (a priori weaker) estimates which are easier to prove.
We require the estimates in Definition \ref{d-ge} to hold only for large enough $n$ in the definition below.
\begin{definition}\label{d-ge-inf}
A $\mu$-symmetric Markov kernel  $p$ on $(M,d,\mu)$ is said to satisfy Gaussian upper bound \ref{gue-inf} if
there exists $C_1,C_2,n_0>0$ such that
\begin{equation*}
 \label{gue-inf} \tag*{$(GUE)_\infty$} p_n(x,y) \le \frac{C_1}{V(x,\sqrt{n})} \exp \left( - d(x,y)^2/C_2n \right)
\end{equation*}
for all $x,y \in M$ and for all $n \in \mathbb{N}^*$ such that $n \ge n_0$.

The conditions \hypertarget{gle-inf}{$(GLE)_\infty$} and \hypertarget{ge-inf}{$(GE)_\infty$} are defined analogously.
\end{definition}
Under mild conditions, we show that \hyperlink{ge-inf}{$(GE)_\infty$} implies \hyperlink{ge}{$(GE)$}.
\begin{lemma}\label{l-gaussinf}
 Let $(M,d,\mu)$ be a quasi-$b$-geodesic space satisfying \ref{doub-loc} and
 let $p_1$ be a Markov kernel weakly $(h,h')$-compatible with $(M,d,\mu)$ for some $h >b$. The following hold:
 \begin{itemize}
 \item[(a)] If $p_1$ satisfies \ref{gue-inf}, then $p_1$ satisfies \ref{gue}.
 \item[(b)]  If $p_1$ satisfies \hyperlink{gle-inf}{$(GLE)_\infty$} , then $p_1$ satisfies \ref{gle}.
 \item[(c)]  If $p_1$ satisfies \hyperlink{ge-inf}{$(GE)_\infty$} , then $p_1$ satisfies \hyperlink{ge}{$(GE)$} .
 \end{itemize}
\end{lemma}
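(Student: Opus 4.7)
Each of the three statements follows from the same observation: the respective hypothesis already gives the desired estimate for $n \ge n_0$, so it suffices to treat the finite range $n \in \{2,3,\dots,\lfloor n_0 \rfloor\}$ and then take worst-case constants over this finite set. Part (c) is immediate from (a) and (b), so the real work is in (a) and (b).

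For part (a), fix $n$ in the finite range $\{2,\dots,\lfloor n_0 \rfloor\}$. I would use two elementary a priori facts. First, iterating the contraction of $P$ on $L^\infty$ and appealing to the $\mu\times\mu$-a.e.\ symmetry of $p_1$ gives $p_n(x,y) \le \|p_1(\cdot,y)\|_\infty = \|p_1(y,\cdot)\|_\infty \le C/V(y,h')$ from \eqref{e-compat}. Second, iterated application of the triangle inequality (together with $\supp p_1(z,\cdot) \subseteq B(z,h')$) yields $p_n(x,y) = 0$ whenever $d(x,y) > nh'$. On the support $B(x,nh')$, Lemma~\ref{l-vloc} gives $V(y,h') \ge c V(x,\sqrt{n})$ with a constant $c$ depending on $n, h, h'$ but bounded below uniformly over this finite range of $n$. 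Since moreover $d(x,y)^2/n \le (nh')^2/n \le n_0 (h')^2$ on the support, $\exp(-d(x,y)^2/(C_2 n))$ is bounded below by a positive constant. Absorbing and taking a maximum over the finite set $n \in \{2,\dots,\lfloor n_0\rfloor\}$ yields \ref{gue} uniformly.

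For part (b), the main task is to show that for each $n$ in $\{2,\dots,\lfloor n_0 \rfloor\}$ there exist $c_n, c_3' > 0$ such that $p_n(x,y) \ge c_n/V(x,h)$ whenever $d(x,y) \le c_3' n$; this is a direct adaptation of the chain argument in the proof of Lemma~\ref{l-wscompat}. With $\tilde C_1$ denoting the constant from Lemma~\ref{l-chain}, I would set $c_3' = b/(2\tilde C_1)$, so that the hypothesis $d(x,y) \le c_3' n$ produces a $b$-chain $x = x_0, x_1, \dots, x_m = y$ with $m \le n$; I then pad to length exactly $n$ by repeating $x_m = y$. Applying Chapman--Kolmogorov and integrating each intermediate variable over the thin ball $B(x_i,(h-b)/2)$, the weak compatibility lower bound in \eqref{e-compat} applies at each of the $n$ steps (here the hypothesis $h > b$ is essential, to ensure consecutive points of the padded chain are within distance $h$), and Lemma~\ref{l-vloc} lets one replace each $V(z_i,h)$ with $V(x_i,h)$ at a uniformly bounded cost. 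The resulting product bound gives $p_n(x,y) \ge c_n / V(x,h)$. To match the Gaussian form, I invoke Lemma~\ref{l-vloc} once more to compare $V(x,h)$ and $V(x,\sqrt n)$ with a constant depending only on $n_0, h$, and absorb the exponential factor $\exp(-d(x,y)^2/(c_2 n)) \le 1$. Shrinking $c_3$ to $\min(c_3, b/(2\tilde C_1))$ and taking the worst constants over the finite range then yields \ref{gle} for all $n \ge 2$.

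The only mildly delicate step is the chain construction in (b), but no new obstacle appears beyond what is handled in the proof of Lemma~\ref{l-wscompat}; notably, no parity issue arises because padding by repeating the endpoint $y$ is always legal and the compatibility condition (d) of Definition~\ref{d-compat} is not needed here.
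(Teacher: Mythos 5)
Your proposal is correct and follows the same strategy as the paper's proof: treat $n \ge n_0$ by the hypothesis and treat the finite range $2 \le n < n_0$ by elementary estimates from compatibility and volume comparisons, taking worst-case constants. Part (a) is essentially identical (the paper writes $C_1\one_{B(x,n_0 h')}(y)/V(x,h')$ where you use $V(y,h')$; these are interchangeable by Lemma~\ref{l-vloc}). The only minor difference is in (b): the paper restricts to the narrow range $d(x,y) \le (b/n_0)n$ (so $d(x,y) < b$ for $n < n_0$) and then integrates every intermediate variable over the single ball $B(x,h_1)$ with $h_1 = \min(h/2, h-b)$, whereas you invoke the Chain Lemma~\ref{l-chain} with padding to handle the wider range $d(x,y) \le c_3' n$ and integrate over balls $B(x_i,(h-b)/2)$ centered at the chain points. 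Both are correct; the paper's version is slightly leaner (no chain construction or padding needed, since $(GLE)$ only requires \emph{some} $c_3$), while yours establishes the lower bound on a slightly larger region of $(x,y)$—unnecessary for the statement, but valid. Your observation that condition (d) of Definition~\ref{d-compat} is not needed here is correct and matches the paper, which only assumes weak compatibility.
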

\begin{proof}
Note that $p_1$ satisfies \eqref{e-compat}.
\begin{itemize}
\item[(a)] The Gaussian upper estimate for $p_n$ where $n \ge n_0$ follows from \ref{gue-inf}. If $n < n_0$, we simply use that $P$ is a contraction in $L^\infty$ along with \eqref{e-compat} to obtain
    \begin{align*}
    p_n(x,y) &\le \frac{C_1 \one_{B(x,n_0h')}(y)}{V(x,h')} \\
    &\le \frac{C_2}{V(x,\sqrt{n})} \exp \left( - \frac{d(x,y)^2}{C_2 n } \right)
    \end{align*}
    for all $x,y \in M$ and for all $n < n_0$. The first line above follows from triangle inequality, $\norm{P}_{L^\infty \to L^\infty}=1$ and \eqref{e-compat}. The second line follows from Lemma \ref{l-vloc}.
\item[(b)]
The Gaussian lower bounds for $p_n$ where $n \ge n_0$ follows from $(GLE)_\infty$.
Let $h_1= \min(h/2,h-b)$. Using ideas similar to the proof of Lemma \ref{l-wscompat} (see \eqref{e-ws0}), there exists $c_2,c_3,c_4>0$ such that
\begin{align}
\nonumber \lefteqn{p_n(x,y)} \\
 \nonumber & \ge \int_{B(x,h_1)} \ldots  \int_{B(x,h_1)}  p(x,y_1) p(y_1,y_2)\ldots p(y_{n-1},y) \,dy_1 dy_2 \ldots dy_{n-1}\\
\nonumber & \ge  \frac{c_2 c_3^n \one_{B(x,b)}(y)}{V(x,h) } \ge \frac{c_4}{V(x,\sqrt{n})} \exp \left( - \frac{d(x,y)^2}{c_4n} \right)
\end{align}
for all $n < n_0$ and for all $x,y \in M$ such that $d(x,y) \le (b/n_0) n$.
\item[(c)] It is a direct consequence of (a) and (b).
\end{itemize}
\end{proof}

\begin{lemma}\label{l-gausscompare}
 Let $(M,d,\mu)$ be a quasi-$b$-geodesic space satisfying \ref{doub-loc} and let $p$ be a Markov kernel weakly $(h,h')$-compatible with $(M,d,\mu)$ for some $h >b$.
For some $k \in \mathbb{N}^*$, if $p_k$ satisfies $(GE)_\infty$ then  $p$ satisfies $(GE)$.
\end{lemma}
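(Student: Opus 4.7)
The plan is to first pass to a sufficiently large iterate $p_K$ of $p$ so that Lemmas \ref{l-wscompat} and \ref{l-gaussinf} apply to $p_K$, and then to propagate the resulting Gaussian bounds from the progression $\{Km : m \geq 2\}$ to all times via a Chapman--Kolmogorov interpolation.

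First, I would choose $K = jk$ with $j$ large enough that $K$ exceeds the threshold of Lemma \ref{l-wscompat}; then $p_K$ is $(h, Kh')$-compatible, and in particular weakly $(h, Kh')$-compatible with $h > b$. The hypothesis $(GE)_\infty$ for $p_k$ transfers to $p_K$ via the identity $(p_K)_m = (p_k)_{jm}$, using \ref{doub-loc} and Lemma \ref{l-vloc} to compare $V(x, \sqrt{jm})$ with $V(x, \sqrt{m})$ up to a multiplicative constant depending only on $j$ and the doubling constant, and rescaling the exponential constant by $j$. Applying Lemma \ref{l-gaussinf} to $p_K$ then upgrades $(GE)_\infty$ to $(GE)$ for $p_K$, so that $p_{Km}(x,y)$ satisfies two-sided Gaussian bounds for every $m \geq 2$.

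Next, for every $N \geq 2K$, write $N = Kn + r$ with $n = \lfloor N/K \rfloor \geq 2$ and $0 \leq r < K$, so that $Kn \leq N \leq (3/2)Kn$, and use the Chapman--Kolmogorov identity
\[
p_N(x,y) = \int_M p_r(x,w)\, p_{Kn}(w,y)\, \mu(dw),
\]
which expresses $p_N(x,y)$ as an average of $p_{Kn}(w,y)$ over $w \in \mathrm{supp}\bigl(p_r(x,\cdot)\bigr) \subseteq B(x, Kh')$. The triangle inequality gives $|d(w,y) - d(x,y)| \leq Kh'$; the doubling property gives $V(w, \sqrt{Kn}) \asymp V(x, \sqrt{Kn}) \asymp V(x, \sqrt{N})$; and the elementary inequalities $(a - b)^2 \geq \tfrac{1}{2} a^2 - b^2$ and $(a + b)^2 \leq 2a^2 + 2b^2$ absorb the shift $Kh'$ in the exponent into a bounded multiplicative constant (bounded because $n \geq 2$ forces $(Kh')^2/(Kn) \leq K(h')^2/2$). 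Consequently the Gaussian upper bound on $p_{Kn}(w,y)$ transfers to $p_N(x,y)$ with the same form. For the lower bound, the condition $d(w,y) \leq c_3 Kn$ is ensured by imposing $d(x,y) \leq c_3' N$ with a slightly smaller $c_3'$, once $N$ exceeds a constant depending only on $K$, $h'$, $c_3$; then the Gaussian lower bound on $p_{Kn}(w,y)$ holds uniformly on the support of $p_r(x,\cdot)$, and integration against the probability density $p_r(x,\cdot)$ yields the corresponding lower bound on $p_N(x,y)$.

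For the finitely many remaining small values of $N$, both bounds are direct. The upper bound follows from the uniform estimate $\|p_N(x,\cdot)\|_\infty \leq C/V(x,h)$ (a consequence of weak compatibility and the fact that $P$ is an $L^\infty$-contraction), together with the support inclusion $\mathrm{supp}\bigl(p_N(x,\cdot)\bigr) \subseteq B(x, Nh')$ and Lemma \ref{l-vloc}. The lower bound is obtained by iterating weak compatibility along a short $b$-chain exactly as in the derivation of \eqref{e-ws0} in the proof of Lemma \ref{l-wscompat}, which produces $p_N(x,y) \geq c/V(x,h)$ on a suitable neighborhood $d(x,y) \leq c'' N$; this is the step that uses $h > b$ and quasi-$b$-geodesicity. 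The main step deserving care is the Chapman--Kolmogorov interpolation, where one must verify that the Gaussian constants survive the passage from $p_{Kn}$ to $p_{Kn+r}$ without depending on $N$; this rests on the two-sided comparison $Kn \asymp N$ (from $n \geq 2$) and on the fact that the fixed additive correction $Kh'$ in the distance introduces only a bounded multiplicative factor in the exponential.
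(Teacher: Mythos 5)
Your argument is correct in its essential mechanism --- the Chapman--Kolmogorov interpolation between multiples of $k$ and arbitrary times $N$, with the triangle inequality and volume doubling absorbing the bounded distance shift --- and this is exactly the mechanism the paper uses. But your route is longer than it needs to be. The paper begins by invoking Lemma~\ref{l-gaussinf} for $p$ itself (which is weakly $(h,h')$-compatible with $h>b$ by hypothesis), thereby reducing the claim to proving $(GUE)_\infty$ and $(GLE)_\infty$ for $p$; these in turn follow directly from $(GE)_\infty$ for $p_k$ via $p_n(x,y)\le \sup_{z\in B(y,kh')}p_{k\lfloor n/k\rfloor}(x,z)$ and the matching $\inf$, which is the same interpolation you write. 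Your detour --- first transferring $(GE)_\infty$ from $p_k$ to $p_K=p_{jk}$, then invoking Lemma~\ref{l-wscompat} to make $p_K$ compatible so that Lemma~\ref{l-gaussinf} can be applied to $p_K$, and only then interpolating back to $p$ with a separate hand-made argument at small times --- duplicates work: the small-time upper and lower bounds you carry out in the last step are precisely the content of Lemma~\ref{l-gaussinf} applied to $p$, and the transfer to $p_K$ buys nothing because the interpolation works equally well starting from $p_k$.

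Two points of imprecision are worth flagging. First, you cite Lemma~\ref{l-vloc} and \ref{doub-loc} for comparisons such as $V(x,\sqrt{jm})\asymp V(x,\sqrt{m})$ uniformly in $m$; under \ref{doub-loc} alone the constant $C_{r_1,r_2}$ of Lemma~\ref{l-vloc} depends on $r_1,r_2$ individually, not just their ratio, so this is not uniform. What is actually needed is \ref{doub-inf} (via Lemma~\ref{l-doub-prop}), which is available here because $(GLE)_\infty$ for $p_k$ forces it by the argument of Lemma~\ref{l-gevd}; the paper's own proof implicitly relies on \eqref{e-vd1} in the same way. Second, the constraint for the lower bound of $(p_K)_n=p_{Kn}$ is $d(w,y)\le c_3\,n$, not $d(w,y)\le c_3\,Kn$ as you write --- $(GLE)$ is stated in terms of the kernel's own time parameter. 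Consequently the resulting constant $c_3'$ for $p$ is smaller than $c_3$ by a factor on the order of $1/K$, not merely ``slightly smaller.'' Neither affects the validity of the proof, only the bookkeeping of constants.
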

\begin{proof}
By Lemma \ref{l-gaussinf} it suffices to show that $p$ satisfies $(GUE)_\infty$ and $(GLE)_\infty$.

Suppose $p=p_1$ satisfies \eqref{e-compat}.
 For $n \ge k$, there exists $A= kh'>0$ such that
 \begin{equation}\label{e-gc1}
 p_n(x,y) \le \sup_{z \in B(y,A)} p_{k \lfloor n/k \rfloor}(x,z)
  \end{equation} for all $x,y \in M$ and for all $n \in \N^*$ with $n \ge k$. This follows from Chapman-Kolmogorov equation along with the fact that  the support of $p_l(\cdot,y)$ is contained in $B(y,kh')$ for all $l \le k$.
  Since $p_k$ satisfies $(GUE)_\infty$, there exists $C_1,C_2>0$ and $n_0>0$ such that
  \begin{equation}
   p_{ mk} (x,y) \le \frac{C_1}{V(x,\sqrt{m})} \exp \left( -\frac{d(x,y)^2}{C_2 m} \right) \label{e-gc2}
  \end{equation}
for all $x,y \in M$ and for all $m \in \N^*$ satisfying $m \ge n_0$.
By \eqref{e-gc1},\eqref{e-gc2} and   \eqref{e-vd1}, there exists $C_3,C_4>0$ and $n_1>0$ such that
\begin{equation}
 p_n(x,y) \le \frac{C_3}{V(x,\sqrt{n})} \sup_{z \in B(y,A)} \exp \left(- \frac{d(x,z)^2}{C_4 n} \right) \label{e-gc3}
\end{equation}
for all $x,y \in M$ and for all $n \in \N^*$ satisfying $n \ge n_1$.
For every $z \in B(y,A)$, we have
\begin{equation}
 \label{e-gc4} d(x,y)^2 \le (d(x,z)+A)^2 \le 2 (d(x,y)^2 + A^2).
\end{equation}
By \eqref{e-gc3} and \eqref{e-gc4}, we have that $p$ satisfies $(GUE)_\infty$.

It remains to show that $p$ satisfies $(GLE)_\infty$. The proof is similar to above. As in \eqref{e-gc1}, we have the complementary inequality,
\begin{equation}
 \label{e-gc5} p_n(x,y) \ge \inf_{z \in B(y,A)} p_{k \lfloor n/k \rfloor}(x,z)
\end{equation} for all $x,y \in M$ and for all $n \in \N^*$ with $n \ge k$.
Since $p_k$ satisfies $(GLE)_\infty$, there exists $c_1,c_2,c_3,n_2>0$ such that
\begin{equation}
 p_{mk}(x,y) \ge \frac{c_1}{V(x,\sqrt{m})} \exp \left( -\frac{d(x,y)^2}{c_2 m} \right) \label{e-gc6}
\end{equation}
for all $x,y \in M$ and for all $m \in \N^*$ satisfying $m \ge n_2$ and $d(x,y) \le c_3 m$.
By \eqref{e-gc5}, \eqref{e-gc6}, there exists $c_4,c_5>0$ and $n_3>0$ such that
\begin{equation} \label{e-gc7}
 p_n(x,y) \ge \frac{c_1}{V(x,\sqrt{n})} \inf_{z \in B(y,A)}\exp \left( -\frac{d(x,y)^2}{c_4 n} \right)
\end{equation}
for all $x,y \in M$ and for all $n \in \N^*$ satisfying $n \ge n_2$ and $d(x,y) \le c_5 n$.
By interchanging $y$ and $z$ in \eqref{e-gc4} along with \eqref{e-gc7} yields $(GLE)_\infty$ for the kernel $p$.
\end{proof}
We describe two examples that does not fall under the framework given by Definition \ref{d-compat} but nevertheless the methods developed in this work still applies.
\begin{example}[Random walk with jumps supported in an annulus]\label{x-annulus} 

We consider a measured, complete, \emph{length} space $(M,d,\mu)$ satisfying $\operatorname{diam}(M)= + \infty$ and \ref{doub-loc}  . 
Let $P$ be a $\mu$-symmetric Markov operator whose kernel $p(x,y)$ satisfies the following estimate: there exists $C_1>0$ and $h>0,h_1>0,h_2>0$ such that
\begin{equation} \label{e-ann}
C_1^{-1} \frac{\one_{B(x,2h) \setminus B(x,h)} (y)}{V(x,h)} \le p(x,y) \le C_1 \frac{\one_{B(x,h_2) \setminus B(x,h_1)} (y)}{V(x,h)}
\end{equation}
for all $x \in M$ and for $\mu$-almost every $y \in M$.

In this case, it is easy to verify that the density $p_2$ is weakly $(h/5,2h_2)$-compatible with $(M,d,\mu)$.
Note that for all $x \in M$, there exists $z \in M$ such that $d(x,z)= 3h/2$. Note that by Lemma \ref{l-vloc} and \eqref{e-ann}, there exists $C_2 >0$ such that
for all $x,y \in M$ with $d(x,y) \le h/5$
\[
p_2(x,y)  \ge \int_{B(z,h/4)} p_1(x,w) p_1(y,w) \, \mu(dw) \ge \frac{C_2^{-1}}{V(x,h/5)} 
\]
and for all $x,y \in M$ with $d(x,y) \le 2h_2$ we have
\[
p_2(x,y) \le \int_{B(x,2h_2)} p_1(x,w) p_1(y,w) \, \mu(dw) \le \frac{C_2}{V(x,2h_2)}.
\]
Therefore $p_2$ is weakly $(h/5,2h_2)$ compatible with $(M,d,\mu)$.

For example, it is clear that the application to Liouville property will not be affected if we replace the operator $P$ by $P^2$.
If the underlying space satisfies volume doubling and Poincar\'{e} inequality we can use our main results to
obtain Gaussian estimates \hyperlink{ge-inf}{$(GE)_\infty$}  provided $(M,d,\mu)$ satisfies \ref{doub-inf} and \ref{poin-mms}.
To prove the above statement, we simply note by Theorem \ref{t-main0}, Lemma \ref{l-wscompat} and Lemma \ref{l-gausscompare} that $p_2$ satisfies \hyperlink{ge}{$(GE)$} and by a similar argument $p_3$ satifies \hyperlink{ge}{$(GE)$}.
\end{example}
\begin{example} \label{x-twoballs}
 We describe another example similar to Example \ref{x-annulus}. Consider $\R^n$ equipped with Euclidean distance $d$ and Lebesgue measure $\mu$. Let $e$ denote an arbitrary unit vector in $\R^n$.
 Consider the $\mu$-symmetric random walk with the kernel
 \[
  p(x,y)=  \frac{ \one_{B(x+2 e, 1) \cup B(x-2e,1)}(y)}{2V(x,1)}.
 \]
Although $p$ is not compatible with $(\R^n,d,\mu)$, similar to Example \ref{x-annulus} one can check that $(\R^n,d,\mu)$ satisfies that $p_2$ and $p_3$ are $(1/3,9)$-compatible with $(\R^n,d,\mu)$ and that
the kernel $p_k$ satisfies \hyperlink{ge-inf}{$(GE)_\infty$}.
\end{example}

\section{Comparison of Dirichlet forms}
Let $(M,d,\mu)$ be a metric measure space  with a $\mu$-symmetric Markov operator $P$ and corresponding kernel $p$.
Recall that we defined the Dirichlet form $\mathcal{E}(f,g)= \langle f , \Delta g \rangle$ for $f,g \in L^2(M)$.
We define another Dirichlet form $\mathcal{E}_*$ which is the Dirichlet form corresponding to the Markov operator $P^2$, that is
\[
 \mathcal{E}_*(f,g)= \langle f, (I - P^2)g\rangle =\norm{f}_2^2 - \norm{Pf}_2^2.
\]
for all $f , g \in L^2(M)$.
\begin{remark}
Functional inequalities involving the Dirichlet form (for instance Nash, Sobolev, log Sobolev, Poincar\'{e} inequalities) can be transferred to an inequality
concerning the Markov semigroup, which in turn sheds light on asymptotic behavior of Markov chains.
For a continuous time Markov semigroup $(P_t)_{t \ge 0}$ a crucial identity to carry out this is $ \frac{ d \norm{P_t f}_2^2}{dt} =   - 2 \mathcal{E}(P_tf, P_tf)$ (for instance \cite[Theorems 4.2.5 and 6.3.1]{BGL14})
By the above definition, we have a similar identity for discrete time Markov semigroup:
\[
\partial_k \norm{P^kf}_2^2:= \norm{P^{k+1}f}_2^2- \norm{P^kf}_2^2 = - \mathcal{E}_*(P^kf,P^kf).
\]
for all $f \in L^2(M)$. This is the main reason why we sometimes prefer $\E_*$ instead of $\E$.
\end{remark}
The above remark motivates us to compare the Dirichlet forms $\mathcal{E}$ and $\mathcal{E}_*$.
\begin{lemma} \label{l-comp-dir}
Consider a $\mu$-symmetric Markov chain on $(M,d,\mu)$ with Markov operator $P$ and Dirichlet forms $\mathcal{E}$ and $\mathcal{E}_*$ defined as above. We have the following:
 \begin{itemize}
   \item[(a)]  $\mathcal{E}_*(f,f) \le 2 \mathcal{E}(f,f)$ for all $f \in L^2(M)$.
  \item[(b)] Assume further that $P$ has a strongly $(h,h')$-compatible kernel $p$ with respect to $(M,d,\mu)$.
  Then there exists a constant $C >0$ such that $\mathcal{E}(f,f) \le C \mathcal{E}_* (f,f)$ for all $f \in L^2(M)$.
 \end{itemize}
\end{lemma}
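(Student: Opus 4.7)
The plan for (a) is to observe that the difference $2\mathcal{E}(f,f) - \mathcal{E}_*(f,f)$ telescopes nicely. Using the definitions $\mathcal{E}(f,f) = \|f\|_2^2 - \langle f, Pf\rangle$ and $\mathcal{E}_*(f,f) = \|f\|_2^2 - \|Pf\|_2^2$ together with the self-adjointness of $P$ on $L^2(M)$ provided by Lemma \ref{l-mop}, this difference equals
\[
 \|f\|_2^2 - 2\langle f, Pf \rangle + \|Pf\|_2^2 \; = \; \|(I-P)f\|_2^2,
\]
which is manifestly non-negative. In particular this step uses only self-adjointness of $P$, so the bound $\mathcal{E}_*(f,f) \le 2\mathcal{E}(f,f)$ holds without invoking compatibility of the kernel.

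For (b), the strategy is to bypass spectral analysis entirely by using bilinear-form representations of $\mathcal{E}$ and $\mathcal{E}_*$. Writing out $\mathcal{E}(f,f) = \int f \cdot (I-P)f \, d\mu$, using $\mu$-symmetry of $p_1$, and applying Fubini with the stochasticity $\int p_1(x,y)\,d\mu(y) = 1$ yields the standard identity
\[
 \mathcal{E}(f,f) \; = \; \tfrac{1}{2} \iint_{M \times M} \bigl(f(x)-f(y)\bigr)^2 \, p_1(x,y) \, d\mu(x)\, d\mu(y).
\]
By Lemma \ref{l-kernel}, the operator $P^2$ is itself a $\mu$-symmetric Markov operator whose kernel is precisely $p_2$, and $p_2$ is symmetric and stochastic. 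Applying the identical computation to $P^2$ therefore gives
\[
 \mathcal{E}_*(f,f) \; = \; \tfrac{1}{2}\iint_{M \times M} \bigl(f(x)-f(y)\bigr)^2 \, p_2(x,y) \, d\mu(x)\, d\mu(y).
\]
The strong compatibility hypothesis, specifically condition (d) of Definition \ref{d-compat}, supplies a constant $\alpha > 0$ with $p_2(x,y) \ge \alpha\, p_1(x,y)$ pointwise $\mu\times\mu$-almost everywhere. Since $(f(x)-f(y))^2 \ge 0$, integrating this inequality against the squared-difference term yields $\mathcal{E}_*(f,f) \ge \alpha \, \mathcal{E}(f,f)$, so $C = 1/\alpha$ is the desired constant.

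I do not anticipate a serious obstacle: the lemma is essentially an unpacking of definitions once the two kernel representations of the Dirichlet forms are in hand. The one point worth care is that the bilinear representation of $\mathcal{E}_*$ relies on $p_2$ being a genuine symmetric stochastic kernel, which is exactly what Lemma \ref{l-kernel} provides. Note the asymmetry between the two bounds: (a) needs only self-adjointness of $P$, whereas (b) genuinely uses the pointwise domination $p_2 \ge \alpha p_1$ coming from condition (d) of Definition \ref{d-compat}, which is precisely the feature distinguishing strong from weak compatibility.
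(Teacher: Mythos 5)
Your proposal is correct and follows essentially the same route as the paper: in part (a) your identity $2\mathcal{E}(f,f) - \mathcal{E}_*(f,f) = \norm{(I-P)f}_2^2$ is just the completed-square form of the paper's inequality $\langle Pf,f\rangle \le \tfrac{1}{2}\left(\norm{Pf}_2^2 + \norm{f}_2^2\right)$, and in part (b) the paper also passes to the bilinear kernel representations of $\mathcal{E}$ and $\mathcal{E}_*$ and applies the pointwise domination $p_2 \ge \alpha p_1$ from condition (d) of Definition \ref{d-compat}. The one point you add explicitly—that the representation for $\mathcal{E}_*$ uses $P^2$ being a $\mu$-symmetric stochastic Markov operator with kernel $p_2$ via Lemma \ref{l-kernel}—is left implicit in the paper and is worth noting.
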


\begin{proof}
\begin{itemize}
 \item[(a)]
 Note that
\[
 \langle Pf ,f\rangle \le  \frac{1}{2} \left( \langle Pf,Pf \rangle + \langle f,f\rangle\right) = \frac{1}{2} \left( \langle P^2f, f \rangle + \langle f,f\rangle  \right)
\]
Hence
\[
 \mathcal{E}(f,f)= \langle f,f \rangle_M - \langle Pf, f \rangle_M \ge  \langle f,f \rangle - \frac{1}{2} \left( \langle P^2f, f \rangle + \langle f,f\rangle  \right) = \frac{1}{2} \mathcal{E}_*(f,f)
\]
\item[(b)] The conclusion follows from Property (d) of Definition \ref{d-compat} by observing that
\begin{align}
 \label{e-dirform} \mathcal{E}(f,f)&= \frac{1}{2} \int_M\int_M (f(x) - f(y))^2 p(x,y)\, dx\, dy \\
  \label{e-dirform2} \mathcal{E}_*(f,f)&= \frac{1}{2} \int_M\int_M (f(x) - f(y))^2 p_2(x,y) \,dx \,dy
\end{align}
\end{itemize}
\end{proof}
\begin{remark}
 The inequality $\mathcal{E}(f,f) \le C \mathcal{E}_* (f,f)$ is not true in general.
 Consider nearest neighbor (simple) random walk on a finite bipartite graph. Let $f$ be a function on the graph that assigns +1 to one partition and -1 to other. It is easy to check that $Pf = -f$ and therefore
 $2 \norm{f}_2^2=\mathcal{E}(f,f) \le C \mathcal{E}_* (f,f)=0$ fails.
\end{remark}
\section{Markov chains killed on exiting a ball} \label{s-kill}
To obtain lower bounds on the heat kernel, we consider the corresponding Markov process killed on exiting a ball $B$ (See Chapter \ref{ch-glb}).
Moreover functional inequalities like Nash and Sobolev inequalities that we will encounter are local to balls.
Motivated by these considerations, we introduce Markov chains killed on exiting a ball and their corresponding Markov operator and kernel.
Let $(X_n)_{n \in \N}$  be a Markov chain on $(M,d,\mu)$ driven by a $\mu$-symmetric Markov operator $P$ with kernel $p_1$ with respect to $\mu$.
The corresponding Markov chain $(X^B_n)_{n \in \N}$ that is killed on exiting a ball $B$ has state space $B \cup \set{ \partial_B}$ where $\partial_B$ is
the absorbing \emph{cemetery state}. The Markov chain $(X^B_n)_{n \in \N}$ killed on exiting $B$  is defined as
\[
  X^B_n = \begin{cases}
           X_n & \mbox{if $n < \zeta$} \\
         \partial_B &\mbox{if $n \ge \zeta$}
          \end{cases}
\]
where $\zeta$ is the \emph{lifetime} of the process defined by
\[
 \zeta= \min \Sett{k}{X_k \notin B}.
\]
For the killed Markov chain, we consider  functions $f:B \cup \mathcal{\partial_B} \to \R$ with the `Dirichlet' boundary condition $f(\mathcal{\partial_B})=0$.
Therefore, we can define corresponding quantities like Markov kernel and Markov operator just by restriction to $B$.
Define the \emph{restricted kernel} $p_B: B \times B \to \mathbb{R}$, as a restriction of $p_1$ on $B \times B$.
We endow $B$ with the measure $\mu_B$ which is the restriction of $\mu$ to all Borel subsets of $B$. We denote by $L^2(B)=L^2(B, \mu_B)$.
We define the Markov operator $P_B$ with kernel $p_B$ with respect to $\mu_B$ as
\begin{equation}\label{e-defPB}
 P_Bf(x) := \int_B f(y)p_1(x,y) \,\mu(dy)= \int_B p_B(x,y) f(y) \mu_B(dy).\end{equation}
Define the corresponding Dirichlet forms
\begin{equation}\label{e-defEB} \mathcal{E}^B(f,f):= \langle f , (I-P_B)f\rangle_{L^2(B)},\hspace{8mm} \mathcal{E}_*^B(f,f):= \langle f , (I-P_B^2)f\rangle_{L^2(B)}\end{equation}
for all $f \in L^2(B)$.
Similar to \eqref{e-kernel}, we define the  kernel $p_k^B(x,y)$ iteratively as
\begin{equation}\label{e-defpB}
 p^B_{k+1}(x,y) := \left[P_B p^B_k(x,\cdot) \right](y) = \int_B p^B_k(x,z) p^B_1(y,z) \, \mu(dz)
\end{equation}
for all $k \in \N^*$ and for all $x,y \in B$. It is easy to check that the proof of Lemma \ref{l-kernel} (b),(c) applies to the kernel $p_B$.
As before,  the function $(x,y) \mapsto p^B_k(x,y)$ is well-defined for all $k \ge 2$. Further $p_B(x,\cdot) \in L^1(B)$ for all $x \in M$.
It is easy to see that
\begin{equation}
 \label{e-pbcomp} p_k^B(x,y) \le p_k(x,y)
\end{equation}
for all $x,y \in M$ and for all $k \ge 2$.

The operator $P_B$ is positivity preserving, that is $f \ge 0$ implies $P_B f\ge 0$. However unlike $P$, the operator $P_B$ is not necessarily conservative, that is
$P_B \one \neq \one$ in general. Analogous  to \eqref{e-contr}, we have that $P_B$ is a contraction on all $L^p(B)$ for all $1\le p \le +\infty$.
We also define the corresponding `Dirichlet Laplacian' $\Delta_{P_B}:= I - P_B$.

We will compare Dirichlet forms on balls with Dirichlet forms on $M$ below.
\begin{lemma}
 \label{l-dircomp-ball}
 Let $f \in L^2(B)$ and let $\tilde{f} \in L^2(M)$ denote an extension of $f$ defined by
 \begin{equation}
  \tilde{f} = \begin{cases} f & \mbox{ in $B$} \\ 0 & \mbox{ in $B^\complement$}.\end{cases} \label{e-extend}
 \end{equation}
 Then
 \begin{itemize}
  \item[(a)] $\E^B(f,f) = \E(\tilde{f},\tilde{f})$.
  \item[(b)] $\E^B_*(f,f) \ge \E_*(\tilde f,\tilde f)$.
 \end{itemize}
\end{lemma}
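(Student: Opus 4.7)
The plan is to prove both parts by direct computation, exploiting the formulas \eqref{e-dirform}, \eqref{e-dirform2} for $\mathcal{E},\mathcal{E}_*$ together with the definitions \eqref{e-defPB}, \eqref{e-defEB} of $P_B, \mathcal{E}^B, \mathcal{E}^B_*$.

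For part (a), I would use the identity \eqref{e-dirform} and split the domain of integration $M \times M$ into the four pieces $B \times B$, $B \times B^\complement$, $B^\complement \times B$, and $B^\complement \times B^\complement$. Since $\tilde f$ vanishes on $B^\complement$, the integrand $(\tilde f(x)-\tilde f(y))^2 p_1(x,y)$ vanishes on $B^\complement \times B^\complement$, equals $(f(x)-f(y))^2 p_1(x,y)$ on $B \times B$, and reduces to $f(x)^2 p_1(x,y)$ (resp.\ $f(y)^2 p_1(x,y)$) on the mixed pieces. Using symmetry of $p_1$ this gives
\[
 \E(\tilde f,\tilde f) = \frac{1}{2}\int_B\int_B (f(x)-f(y))^2 p_1(x,y)\,dy\,dx + \int_B\int_{B^\complement} f(x)^2 p_1(x,y)\,dy\,dx.
\]
On the other hand, from the definition $\E^B(f,f) = \norm{f}_{L^2(B)}^2 - \langle f, P_B f\rangle_{L^2(B)}$. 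Using conservativeness $\int_M p_1(x,y)\,dy = 1$ to rewrite $\norm{f}_{L^2(B)}^2 = \int_B f(x)^2\int_M p_1(x,y)\,dy\,dx$ and splitting the inner integral into $B$ and $B^\complement$, then symmetrizing in $(x,y)$ the $B \times B$ piece, yields exactly the same expression. This gives (a).

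For part (b), the key observation is that for $x \in B$,
\[
 P_B f(x) = \int_B f(y) p_1(x,y)\,\mu(dy) = \int_M \tilde f(y) p_1(x,y)\,\mu(dy) = P\tilde f(x),
\]
since $\tilde f \equiv 0$ off $B$. Together with $\norm{\tilde f}_{L^2(M)}^2 = \norm{f}_{L^2(B)}^2$ and the obvious inequality $\int_B (P\tilde f)^2\,d\mu \le \int_M (P\tilde f)^2\,d\mu$, we get
\[
 \E^B_*(f,f) = \norm{f}_{L^2(B)}^2 - \norm{P_B f}_{L^2(B)}^2 \ge \norm{\tilde f}_{L^2(M)}^2 - \norm{P\tilde f}_{L^2(M)}^2 = \E_*(\tilde f,\tilde f).
\]

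Neither part presents a real obstacle; the only subtle point is the careful bookkeeping of the boundary contributions in (a). Note also that the inequality in (b) is generally strict because $P\tilde f$ need not vanish on $B^\complement$: mass can leak out of $B$ in one step, and it is exactly this loss that distinguishes $\E^B_*$ from the restriction of $\E_*$ to extended functions.
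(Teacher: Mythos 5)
Your proof is correct. Part~(b) is essentially the paper's own argument: you identify $P_B f = P\tilde f$ on $B$ (equivalently $P_B f = \one_B P\tilde f$), use $\norm{f}_{L^2(B)} = \norm{\tilde f}_{L^2(M)}$, and drop the nonnegative quantity $\int_{B^\complement} (P\tilde f)^2\,d\mu$ to get the inequality.

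Part~(a) is where you diverge. The paper's proof is a two-line bilinear-form identity: it writes $\E^B(f,f) = \langle f,f\rangle_{L^2(B)} - \langle P_Bf, f\rangle_{L^2(B)}$ and observes that both terms match $\langle \tilde f,\tilde f\rangle_{L^2(M)} - \langle P\tilde f,\tilde f\rangle_{L^2(M)} = \E(\tilde f,\tilde f)$; the second matching uses precisely the identity $P_B f(x) = P\tilde f(x)$ for $x \in B$ that you yourself exploit in part~(b), together with $\tilde f \equiv 0$ off $B$. You instead expand $\E(\tilde f,\tilde f)$ via the integral formula \eqref{e-dirform}, split $M \times M$ into four regions, and recompute $\E^B(f,f)$ from scratch using conservativeness $\int_M p_1(x,y)\,\mu(dy)=1$; the bookkeeping is correct and has the virtue of exhibiting the boundary term $\int_B\int_{B^\complement} f(x)^2 p_1(x,y)\,dy\,dx$ explicitly, which the slicker proof leaves implicit. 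The paper's route is shorter and, incidentally, does not need conservativeness (it would work verbatim for a sub-Markov $p_1$), whereas your expansion via \eqref{e-dirform} does; in the present setting both hypotheses are available, so there is no loss. Since you already notice $P_B f = P\tilde f$ on $B$, you could have used it to shortcut (a) in the same way.
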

\begin{proof}
 For (a), observe that
 \begin{align*}
  \E^B(f,f) &= \langle f,f \rangle_{L^2(B)} - \langle P_B f,f\rangle_{L^2(B)} \\
 &= \langle \tilde f,\tilde f \rangle_{L^2(M)} - \langle P \tilde f,\tilde f \rangle_{L^2(M)} = \E(\tilde f, \tilde f).
 \end{align*}
For (b), we have
\begin{align*}
  \E^B(f,f) &= \langle f,f \rangle_{L^2(B)} - \langle P_B f,P_B f\rangle_{L^2(B)} \\
 &= \langle \tilde f,\tilde f \rangle_{L^2(M)} - \langle \one_B P \tilde f,\one_B P \tilde f \rangle_{L^2(M)} \\
 & \ge \langle \tilde f,\tilde f \rangle_{L^2(M)} - \langle P \tilde f, P \tilde f \rangle_{L^2(M)} = \E_*(\tilde f, \tilde f).
 \end{align*}
\end{proof}

We warn the reader of the following abuse of notation.
We may consider a function $f \in L^2(B)$ as a function in $L^2(M)$ using the extension given by \eqref{e-extend}.
Alternatively we may consider a function $f \in L^2(M)$ as a function in $L^2(B)$ by the restriction $f \vline_B$.

\chapter{Sobolev-type inequalities}\label{ch-sob}
J. Moser proved parabolic Harnack inequalities for second-order uniformly elliptic divergence form operators in $\R^d$ \cite{Mos64}.
This approach was successfully adapted by numerous authors.
The previous versions of Theorem \ref{t-main0} as given in \cite{Sal92,Del99,Stu96} used Moser's iterative method as
a crucial ingredient.
Along with Poincar\'{e} inequality and volume doubling, Moser's iteration relies on repeated applications of a Sobolev inequality.

We recall the difficulty arising due to Sobolev inequalities mentioned in the introduction.
The Sobolev inequalities in the previous works \cite{Sal92,Del97,Del99,Stu96} are of the form
\begin{equation} \label{e.sob}
 \norm{f}^2_{2 \delta/ (\delta-2)} \le \frac{C r^2}{V_\mu(x,r)^{2/\delta}} \left( \mathcal{E}(f,f) + r^{-2} \norm{f}_2^2 \right)
\end{equation}
for all `nice' functions $f$ supported in $B(x,r)$. However \eqref{e.sob} along with \eqref{e-2contr}
implies that  $L^2(B(x,r)) \subseteq L^{2 \delta/ (\delta-2)} (B(x,r))$ for all balls $B(x,r)$ which can happen only
 if the space is discrete. Hence for discrete time Markov chains on continuous spaces the Sobolev inequality \eqref{e.sob} fails to hold.
 In this chapter, we prove a weaker form of the above Sobolev inequality (see \eqref{e-Sob}) and study its properties.
 In the next two sections, we will use the Sobolev inequality \eqref{e-Sob} to run the Moser's iterative method and obtain
 elliptic Harnack inequality and Gaussian upper bounds.

 We adapt the approach of \cite{Sal92} to obtain a Sobolev inequality using \ref{doub-inf} and \ref{poin-inf}. The main result of this chapter is the following Sobolev inequality.
 \begin{theorem} \label{t-Sob}
Let $(M,d,\mu)$ be a quasi-$b$-geodesic metric measure space satisfying  \ref{doub-loc}, \ref{doub-inf} and Poincar\'{e} inequality at scale $h$ \ref{poin-mms}.
Suppose that a Markov operator $P$ has a kernel $p$ that is $(h,h')$-compatible with respect to $\mu$.
Let $P_B$ and $\E^B$ denote the corresponding Markov operator and Dirichlet form restricted to a ball $B \subset M$.
Then there exists $\delta > 2$ and $C_S>0$ such that for all  $r >0$, for all $x \in M$, and for all  $f \in L^2(B)$, we have
\begin{equation}
 \label{e-Sob} \norm{P_B f}_{2 \delta/(\delta-2)}^2 \le \frac{C_S r^2}{V(x,r)^{2/\delta}} \left( \E^B(f,f) + r^{-2} \norm{f}_2^2 \right)
\end{equation}
where $B=B(x,r)$.
 \end{theorem}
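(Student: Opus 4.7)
The plan is to follow the classical strategy of Saloff-Coste, deriving a Sobolev-type inequality from volume doubling and a Poincaré inequality, but with one structural twist: because the space is not discrete, the operator $P_B$ on the left-hand side provides the smoothing that is otherwise missing, and we will use it only at the very end via self-adjointness and a mean-value estimate.

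First, I would establish a pseudo-Poincaré inequality on balls: for every $g \in L^2(B)$,
\[
\int_B |g - g_B|^2 \, d\mu \le C r^2 \E^B(g,g) + \text{lower-order term}.
\]
To prove this, extend $g$ by zero to $\tilde g \in L^2(M)$, apply the Poincaré inequality \ref{poin-mms} to $\tilde g$ on the enlarged ball $B(x, \kappa r)$ with mean $\alpha = \tilde g_{B(x,\kappa r)}$, and use $\int_B |g - g_B|^2 \le \int_B |g - \alpha|^2 \le \int_{B(x,\kappa r)} |\tilde g - \alpha|^2$. The Dirichlet form dominates the length-of-gradient at scale $h$ via the compatibility lower bound $p_1(u,v) \ge c_1 \one_{B(u,h)}(v)/V(u,h)$, giving $\int_M |\nabla \tilde g|_h^2\, d\mu \le (2/c_1) \E(\tilde g, \tilde g) = (2/c_1) \E^B(g,g)$ by Lemma \ref{l-dircomp-ball}(a). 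The scale constraint $r \ge r_0$ in the Poincaré inequality is handled by Lemma \ref{l-pflex} and contributes the lower-order $r^{-2}\|g\|_2^2$ term.

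Second, I would upgrade this pseudo-Poincaré to a Sobolev-Poincaré inequality at exponent $q = 2\delta/(\delta - 2)$, where $\delta > 2$ is a doubling exponent from Lemma \ref{l-doub-prop} (taken large enough):
\[
\|g - g_B\|_{L^q(B)}^2 \le \frac{C r^2}{V(x,r)^{2/\delta}} \bigl(\E^B(g,g) + r^{-2}\|g\|_2^2 \bigr).
\]
This is the standard consequence of doubling plus pseudo-Poincaré, obtained via a Nash-type intermediate inequality and the Moser/Grigor'yan iteration machinery, with the reverse volume doubling (Lemma \ref{l-rvd}) and the chain lemma (Lemma \ref{l-chain}) providing the geometric inputs needed in the quasi-geodesic setting.

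Third, specialize to $g = P_B f$ for $f \in L^2(B)$. Because $P_B$ is self-adjoint on $L^2(B)$ with spectrum contained in $[-1,1]$, and since $\lambda^2(1-\lambda) \le 1-\lambda$ on $[-1,1]$, spectral calculus yields $\E^B(P_B f, P_B f) \le \E^B(f,f)$. Applying step two gives
\[
\|P_B f - (P_B f)_B\|_{L^q(B)}^2 \le \frac{C r^2}{V(x,r)^{2/\delta}} \bigl(\E^B(f,f) + r^{-2}\|f\|_2^2 \bigr).
\]
The mean is controlled by Cauchy-Schwarz: $|(P_B f)_B|^2 \le V(B)^{-1}\|P_B f\|_2^2 \le V(B)^{-1}\|f\|_2^2$, so
\[
\|(P_B f)_B\|_{L^q(B)}^2 = |(P_B f)_B|^2 V(B)^{2/q} \le V(B)^{2/q-1}\|f\|_2^2 = V(x,r)^{-2/\delta}\|f\|_2^2,
\]
where the identity $2/q - 1 = -2/\delta$ matches exactly the $r^{-2}\|f\|_2^2$ term in the target. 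Summing the two pieces via $\|P_B f\|_q^2 \le 2\|P_B f - (P_B f)_B\|_q^2 + 2\|(P_B f)_B\|_q^2$ yields \eqref{e-Sob}.

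The principal obstacle is step two: carrying out the Saloff-Coste promotion of pseudo-Poincaré to Sobolev-Poincaré in our quasi-geodesic (not length) setting, with the integrated gradient at scale $h$ rather than a pointwise one. One must verify that the Whitney-type chaining and dyadic-scale iteration arguments go through using only the chain lemma, the reverse volume doubling, and the transfer between $\E$ and $|\nabla \cdot|_h$ via the compatibility lower bound. A secondary bookkeeping issue is ensuring that the enlarged balls appearing in the Poincaré and Sobolev-Poincaré right-hand sides do not inflate beyond $B$; this is harmless because $P_B f$ is supported in $B$, so the Dirichlet forms on the enlarged balls reduce to $\E^B(P_B f, P_B f)$ via Lemma \ref{l-dircomp-ball}(a).
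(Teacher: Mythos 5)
Your proposal fails at step two, and the failure is precisely the one the paper flags in the discussion preceding Theorem \ref{t-Sob}. The inequality you assert there,
\[
\norm{g-g_B}_{L^q(B)}^2 \le \frac{C r^2}{V(x,r)^{2/\delta}}\left( \E^B(g,g) + r^{-2}\norm{g}_2^2\right),\qquad q=\tfrac{2\delta}{\delta-2}>2,
\]
is \emph{false} in a non-discrete ball. Indeed, the discrete-time Dirichlet form satisfies $\E^B(g,g)\le 2\norm{g}_2^2$ for every $g\in L^2(B)$ (this is \eqref{e-2contr}). Inserting this bound turns the right-hand side into a multiple of $\norm{g}_2^2$; since also $|g_B| V(B)^{1/q}\le V(B)^{1/q-1/2}\norm{g}_2$, the inequality would force $L^2(B)\subseteq L^q(B)$ for a fixed $q>2$, which can only happen when $B$ is a finite set. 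So no Nash/Moser iteration can produce that ``standard'' Sobolev--Poincar\'{e} inequality in this setting, regardless of the geometric input; the obstruction is the boundedness of the generator, not any chaining or doubling issue.

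Your step three is the right idea but applied in the wrong order: the operator $P_B$ must enter the argument \emph{before} the Sobolev-type exponent is raised, not afterwards. The paper keeps $P_B f$ (rather than a generic $g$) on the left-hand side from the very start: the pseudo-Poincar\'{e} inequality (Lemma \ref{l-pp}) yields a Nash inequality with $\norm{Pf}_2$ on the left (Proposition \ref{p-nash}, Corollary \ref{c-nash}), because then the quantity $\norm{(Pf)_s}_\infty$ can be controlled down to $s$ of order $1$ using the kernel upper bound in \eqref{e-compat}, and the minimizing $s$ is bounded below. That Nash inequality is iterated to give ultracontractivity $\norm{P_B^k}_{1\to\infty}\lesssim (1+r^2)^{\delta/2}(1+r^{-2})^{k-1}/(k^{\delta/2}V(x,r))$ (Lemma \ref{l-ultB}), and then Riesz--Thorin plus Marcinkiewicz interpolation (Proposition \ref{p-ult2sob}) recovers \eqref{e-Sob} with $P_B$ still on the left. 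If you wish to retain the flavour of your step three, the correct replacement for your step two is to prove the Nash inequality for $P_B f$ directly and then iterate; there is no way to detour through a Sobolev--Poincar\'{e} inequality for arbitrary $g\in L^2(B)$.

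Two smaller points worth noting. Your observation that $\E^B(P_Bf,P_Bf)\le\E^B(f,f)$ is correct (it is \eqref{e-ns12} in the paper, proved by functional calculus), and the mean-value bookkeeping in step three is correct and matches what one needs. The issue is solely the unprovable intermediate inequality.
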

\begin{remark}
 Since $P_B$ is a contraction, note that \eqref{e.sob} implies \eqref{e-Sob}.
 Since we rely on the weaker Sobolev inequality \eqref{e-Sob}, our methods give an unified approach to Gaussian bounds for graphs and continuous spaces.
 However we will encounter new difficulties due to  \eqref{e-Sob}.
\end{remark}
Let $s >0$ and $f \in L^1_{\operatorname{loc}}(M,\mu)$. We define $f_s$ as
\begin{equation}
 \label{e-def-fs} f_s(x)  := f_{B(x,s)} = \frac{1}{V(x,s)} \int_{B(x,s)} f(x) \,\mu(dx).
\end{equation}
\section{Pseudo-Poincar\'{e} and Nash inequalities}
As in \cite[Lemma 2.4]{Sal92}, we need a pseudo-Poincar\'{e} inequality.
\begin{lemma}[Pseudo-Poincar\'{e} inequality] \label{l-pp}
 Under the hypotheses of Theorem \ref{t-Sob}, there exists $C_{0} >0$ and $s_{0}>0$  such that
 \begin{equation}
  \label{e-pp} \norm{f-f_s}_2^2 \le C_{0} s^2 \E(f,f)
 \end{equation}
for all $f \in L^2(M)$ and for all $s \ge s_0$.
\end{lemma}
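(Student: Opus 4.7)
The plan is to derive \eqref{e-pp} in three steps: first pass from $\norm{f-f_s}_2^2$ to a double integral of $(f(x)-f(y))^2$ via Jensen's inequality; second use \ref{poin-mms} together with a covering argument to bound this by $s^2\int_M \abs{\nabla f}_h^2\,d\mu$; and finally convert this into $\E(f,f)$ using the lower bound on the compatible kernel. By Jensen's inequality applied to the ball average,
\[
\abs{f(x)-f_s(x)}^2 \le \frac{1}{V(x,s)}\int_{B(x,s)} (f(x)-f(y))^2\,\mu(dy),
\]
so integrating in $x$ reduces the problem to estimating the resulting double integral.

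For the second step, I would fix a maximal $(s/4)$-separated set $\{x_i\}\subset M$ (so the balls $B(x_i,s/8)$ are pairwise disjoint while $\{B(x_i,s/4)\}_i$ covers $M$) and assign to each $x\in M$ an index $i(x)$ with $x\in B(x_{i(x)},s/4)$. Setting $U_i=\Set{x}{i(x)=i}$ and $B_i=B(x_i,2s)$, one has $B(x,s)\subset B_i$ and $x\in B_i$ whenever $x\in U_i$. Splitting $(f(x)-f(y))^2 \le 2(f(x)-f_{B_i})^2 + 2(f(y)-f_{B_i})^2$, integrating in $y\in B(x,s)$ and in $x\in U_i$, and summing over $i$ (using Lemma \ref{l-vloc} to interchange $V(x,s)$ with $V(x_i,s)$ on $U_i$) reduces the problem to bounding $\sum_i \int_{B_i}(f-f_{B_i})^2\,d\mu$. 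Each summand is controlled via \ref{poin-mms} by $C_1(2s)^2 \int_{B(x_i,2C_2 s)} \abs{\nabla f}_h^2\,d\mu$, provided $s$ is chosen so that $2s\ge r_0$, which fixes $s_0$. By the disjointness of $\{B(x_i,s/8)\}$ together with \ref{doub-loc} and \ref{doub-inf}, the enlarged cover $\{B(x_i,2C_2 s)\}_i$ has multiplicity bounded uniformly in $s\ge s_0$, yielding a total bound of the form $Cs^2 \int_M \abs{\nabla f}_h^2\,d\mu$.

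For the third step, the lower bound $p(x,y)\ge c_1 V(x,h)^{-1}\one_{B(x,h)}(y)$ from Definition \ref{d-compat}(c) immediately gives
\[
\E(f,f) = \frac{1}{2}\int_M\int_M (f(x)-f(y))^2 p(x,y)\,\mu(dy)\,\mu(dx) \ge \frac{c_1}{2}\int_M \abs{\nabla f}_h^2\,d\mu,
\]
which combined with the second step yields \eqref{e-pp}. The main obstacle is the bookkeeping in the covering argument: one must simultaneously verify that the enlarged balls $\{B(x_i,2C_2 s)\}_i$ have multiplicity bounded uniformly in $s\ge s_0$ (using both \ref{doub-loc} and \ref{doub-inf} applied at scales above $s_0$) and arrange the intermediate ball $B_i$ to be large enough that $B(x,s)\subset B_i$ for every $x\in U_i$ while still being a ball of radius comparable to $s$ on which \ref{poin-mms} applies. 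The requirement $s\ge s_0$ enters precisely through the need for the Poincar\'e inequality at scale $h$ to apply to balls of radius comparable to $s$.
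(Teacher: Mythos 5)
Your proposal is correct and uses essentially the same strategy as the paper: cover $M$ by balls of scale $\sim s$, apply the Poincar\'e inequality \ref{poin-mms} on each ball, sum using a bounded-overlap estimate, and finally pass to $\E(f,f)$ via the lower bound in \eqref{e-compat}. The only real difference is a minor reordering of where Jensen's inequality enters: the paper keeps $\abs{f-f_s}^2$ and splits it as $2\abs{f-f_{3B}}^2+2\abs{f_s-f_{3B}}^2$, bounding the second term by Jensen and the estimate $\int_{2B}V(x,s)^{-1}d\mu\le C$, while you apply Jensen first to convert $\abs{f-f_s}^2$ into a double integral of $(f(x)-f(y))^2$ and then split around $f_{B_i}$. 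Both work. One small citation note: when you ``interchange $V(x,s)$ with $V(x_i,s)$ on $U_i$'' you should invoke Lemma~\ref{l-doub-prop} (or \eqref{e-vd2}) rather than Lemma~\ref{l-vloc}, since the latter alone yields a constant that depends on $s$, whereas the combination of \ref{doub-loc} and \ref{doub-inf} gives a constant uniform over $s\ge s_0$; you correctly flag this need later when discussing the multiplicity bound, so the idea is in place, only the reference is off.
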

\begin{proof}
 Let $(X,d,m)$ be a $2s$-net of $(M,d,\mu)$ as given in Definition \ref{d-net}. By Proposition \ref{p-net}(a), the collection of balls $J= \Set{ B(x,2s) }{x \in X }$ cover $M$.
 Therefore
 \begin{align}
  \nonumber \norm{f-f_s}_2^2 &\le \sum_{2B \in J} \int_{2B} \abs{f(x)-f_s(x)}^2 \mu(dx) \\
  &\le 2 \left( \sum_{2 B \in J} \int_{2B} \abs{f(x)-f_{3B}}^2 \mu(dx) + \int_{2B} \abs{f_s(x)-f_{3B}}^2 \mu(dx) \right). \label{e-pp1}
 \end{align}
For the first term, we use \ref{poin-mms}, to obtain $C_1,C_2,s_1 >0$ such that
\begin{equation}
\label{e-pp2} \int_{2B} \abs{f(x) -f_{3B}}^2 \mu(dx) \le \int_{3B} \abs{f(x)-f_{3B}}^2 \mu(dx)  \le  C_1 s^2 \int_{3C_2 B} \abs{\nabla f}^2_h(x) \mu(dx)
\end{equation}
for all $s \ge s_0$ and for all $f \in L^2(M)$.
For the second term in \eqref{e-pp1}, we use Jensen's inequality to obtain
\begin{align}
 \nonumber \int_{2B} \abs{f_s(x)-f_{3B}}^2 \mu(dx) &\le  \int_{2B} \frac{1}{V(x,s)} \int_{B(x,s)} \abs{f(y)-f_{3B}}^2 \mu(dy) \, \mu(dx)  \\
 \label{e-pp3} &\le  \left( \int_{2B} V(x,s)^{-1} \mu(dx) \right) \cdot \left( \int_{3B} \abs{f(y)-f_{3B}}^2 \mu(dy) \right)
\end{align}
for all $f \in L^2(M)$ and for all $2B \in J$. By  \eqref{e-vd1}, there exists $C_3>0$ such that
\begin{equation}
\label{e-pp4} \int_{2B} \frac{\mu(dx)}{V(x,s)} = \frac{1}{\mu(2B)}  \int_{2B} \frac{\mu(2B) \mu(dx)}{V(x,s)} \le \frac{1}{\mu(2B)} \int_{2B}  \frac{V(x,4s) \mu(dx)}{V(x,s)} \le C_3
\end{equation}
for all $s \ge s_0$ and for all $2B \in J$. By \eqref{e-pp1},\eqref{e-pp2},\eqref{e-pp3} and \eqref{e-pp4}, there exists $C_0>0$ such that
\begin{equation} \label{e-pp5}
  \norm{f-f_s}_2^2 \le C_1 (1 + C_3) s^2  \sum_{2B \in J}  \int_{3C_2 B} \abs{\nabla f}^2_h(x) \mu(dx) \le C_0 s^2 \E(f,f)
\end{equation}
for all $f \in L^2(M)$. The last inequality in \eqref{e-pp5} follows from Proposition \ref{p-net}(h), \eqref{e-dirform} along with \eqref{e-compat}.
\end{proof}
The following lemma is a consequence of doubling hypothesis.
\begin{lemma} \label{l-nd}
 Let $(M,d,\mu)$ be a  measure space satisfying  \ref{doub-loc} and \ref{doub-inf}.
 Then for all $b>0$, there exists $C_b>0$, $\delta>2$ such that
 \begin{equation}\label{e-nd}
\norm{f_s}_2^2 \le \frac{C_b}{V(x,r)}\left( \frac{r}{s}\right)^\delta \norm{f}_1^2
 \end{equation}

 for all $f \in L^1(M)$ is supported in $B=B(x,r)$ and for all $b \le s \le r$
\end{lemma}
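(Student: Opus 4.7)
The plan is to obtain the bound by interpolating $L^1$ and $L^\infty$ estimates for $f_s$, namely $\|f_s\|_2^2 \le \|f_s\|_1 \cdot \|f_s\|_\infty$, and to control each of the two factors using the large-scale doubling property encoded in Lemma \ref{l-doub-prop}. Throughout, I will take $\delta > 2$ with $\delta \ge \log_2 C_b$ where $C_b$ is the doubling constant from \eqref{e-vd}, enlarging $\delta$ if necessary; this is allowed since \eqref{e-vd1} is preserved under enlarging the exponent.

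For the $L^\infty$ estimate, I would first note that if $y \in \supp(f_s)$ then $B(y,s)$ meets $\supp(f) \subseteq B(x,r)$, so $d(x,y) \le r+s \le 2r$. This forces the inclusion $B(x,r) \subseteq B(y, 2r+s) \subseteq B(y, 3r)$. Applying \eqref{e-vd1} at the point $y$ with radii $s \le r$ (legal since $s \ge b$) gives
\[
V(x,r) \le V(y, 3r) \le C_b \left(\frac{3r}{s}\right)^\delta V(y,s),
\]
which is exactly a lower bound $V(y,s) \ge C_b^{-1} 3^{-\delta} (s/r)^\delta V(x,r)$. Combined with the trivial estimate $|f_s(y)| \le V(y,s)^{-1}\|f\|_1$, this yields a uniform bound $\|f_s\|_\infty \le C_b 3^\delta (r/s)^\delta V(x,r)^{-1} \|f\|_1$.

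For the $L^1$ estimate I would apply Fubini:
\[
\|f_s\|_1 \le \int_M |f(z)| \left(\int_{B(z,s)} \frac{\mu(dy)}{V(y,s)}\right)\,\mu(dz).
\]
The inner integral is bounded uniformly: for $y \in B(z,s)$, $B(z,s) \subseteq B(y,2s)$, so doubling at $y$ gives $V(z,s) \le V(y,2s) \le C_b V(y,s)$ (here again $s \ge b$), whence $\int_{B(z,s)} V(y,s)^{-1}\,\mu(dy) \le C_b$. Therefore $\|f_s\|_1 \le C_b \|f\|_1$.

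Combining the two estimates,
\[
\|f_s\|_2^2 \le \|f_s\|_\infty \|f_s\|_1 \le \frac{C_b^2 \cdot 3^\delta}{V(x,r)}\left(\frac{r}{s}\right)^\delta \|f\|_1^2,
\]
which is the desired inequality with constant $C_b^2 \cdot 3^\delta$. The only nontrivial ingredient is the application of the polynomial volume comparison from Lemma \ref{l-doub-prop}; there is no real obstacle here beyond keeping track of the radii to stay inside the regime $s \ge b$ where that lemma applies.
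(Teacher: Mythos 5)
Your proof is correct and follows essentially the same route as the paper: split $\|f_s\|_2^2 \le \|f_s\|_\infty\|f_s\|_1$ by H\"older, bound $\|f_s\|_\infty$ by $V(x,r)^{-1}(r/s)^\delta\|f\|_1$ via the volume comparison \eqref{e-vd1} applied to $V(y,3r)/V(y,s)$, and bound $\|f_s\|_1 \le C\|f\|_1$ by Fubini plus doubling of the inner integral $\int_{B(z,s)} V(y,s)^{-1}\,d\mu(y)$. The only cosmetic difference is that you derive the inner-integral bound directly via $V(z,s)\le V(y,2s)\le C_b V(y,s)$ rather than citing \eqref{e-vd2}; the substance is identical.
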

\begin{proof}
 By H\"{o}lder inequality, we have
 \begin{equation}
  \label{e-nd1} \norm{f_s}_2^2 \le \norm{f_s}_\infty \norm{f_s}_1.
 \end{equation}
Since $f$ is supported in $B(x,r)$ and $s \le r$ we have
\begin{equation*}
 \norm{f_s}_\infty \le \norm{f}_1  \sup_{y \in B(x,r+s)} \frac{1}{V(y,s)} \le \frac{\norm{f}_1}{V(x,r)}  \sup_{y \in B(x,r+s)} \frac{V(y,3r)}{V(y,s)}.
\end{equation*}
By \eqref{e-vd1}, there exists $C_1 >0$ and $\delta >2$ such that
\begin{equation}\label{e-nd2}
  \norm{f_s}_\infty \le \frac{C_1}{V(x,r)} \left( \frac{r}{s}\right)^\delta \norm{f}_1
\end{equation}
for all $b \le s \le r$ and for all $f \in L^1$ supported in $B(x,r)$.

Further there exists $C_2>0$ such that
\begin{align}
 \norm{f_s}_1 &= \int_{B(x,r+s)} \abs{f_s(y)} \mu(dy) \le \int_{B(x,r+s)}\frac{1}{V(y,s)}\int_{B(y,s)} \abs{f(z)} \, \mu(dz) \, \mu(dy) \nonumber \\
 & \le \int_{B(x,r+s)} \abs{f(z)}\int_{B(z,s)}\frac{1}{V(y,s)}\, \mu(dy)  \, \mu(dz) \nonumber\\
 \label{e-nd3} & \le C_2 \int_{B(x,r+s)} \abs{f(z)}\int_{B(z,s)}\frac{1}{V(z,s)}\, \mu(dy)  \, \mu(dz)  = C_2 \norm{f}_1
\end{align}
for all $b \le s \le r$ and for all $f \in L^1$ supported in $B(x,r)$.
The second line follows from Fubini's theorem and \eqref{e-nd3} above follows from \eqref{e-vd2}.
The desired conclusion \eqref{e-nd} follows from \eqref{e-nd1},\eqref{e-nd2} and \eqref{e-nd3}.
\end{proof}

Next, we show a Nash inequality using the  pseudo-Poincar\'{e} inequality and doubling hypotheses by  adapting the approach of \cite[Theorem 2.1]{Sal92}.
\begin{prop}
 \label{p-nash}
Let $(M,d,\mu)$ be a quasi-$b$-geodesic metric measure space satisfying  \ref{doub-loc}, \ref{doub-inf} and Poincar\'{e} inequality at scale $h$ \ref{poin-mms}.
Suppose that a Markov operator $P$ has a kernel $p$ that is $(h,h')$-compatible with respect to $\mu$. Let $\E$ denote the Dirichlet form corresponding to $P$.
Then there exists $\delta > 2$, $C_N>0$ such that
\begin{equation}
 \label{e-nash} \norm{P f}_{2}^{2 + (4/\delta)} \le \frac{C_N r^2}{V(x,r)^{2/\delta}} \left( \E(f,f) + r^{-2} \norm{f}_2^2 \right) \norm{f}_1^{4/\delta}
\end{equation}
 for all  $r>0$, for all $x \in M$, and for all  $f \in L^2(M)$ with $f$  supported in $B(x,r)$.

\end{prop}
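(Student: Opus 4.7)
The plan is to decompose $Pf$ by averaging at a scale $s$, control the two pieces by the pseudo-Poincar\'{e} inequality and Lemma~\ref{l-nd} respectively, and then optimize over $s$. Writing $Pf = P(f - f_s) + Pf_s$ and applying the fact that $P$ is an $L^2$-contraction (Lemma~\ref{l-mop}), the triangle inequality yields $\norm{Pf}_2 \le \norm{f - f_s}_2 + \norm{f_s}_2$. Using the pseudo-Poincar\'{e} inequality (Lemma~\ref{l-pp}) on the first term, valid for $s \ge s_0$, and Lemma~\ref{l-nd} on the second, valid for $b \le s \le r$, and then squaring, I obtain
\[
\norm{Pf}_2^2 \le C\!\left( s^2\, \E(f,f) + \frac{1}{V(x,r)}\Bigl(\frac{r}{s}\Bigr)^\delta \norm{f}_1^2\right)
\]
for every $s \in [\max(s_0,b), r]$, with $C$ depending on the data. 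Setting $E := \E(f,f) + r^{-2}\norm{f}_2^2$ and replacing $\E(f,f)$ by the larger $E$ on the right does not weaken the estimate.

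I then optimize in $s$. The unconstrained minimizer of $s \mapsto s^2 E + V(x,r)^{-1}(r/s)^\delta \norm{f}_1^2$ is $s^\ast \asymp \bigl(V(x,r)^{-1} r^\delta \norm{f}_1^2 / E\bigr)^{1/(\delta+2)}$. Cauchy--Schwarz for $f$ supported in $B(x,r)$ gives $\norm{f}_1^2 \le V(x,r) \norm{f}_2^2 \le V(x,r) r^2 E$, so $s^\ast \le r$, and the upper endpoint of the allowed interval is never binding. Whenever $s^\ast \ge \max(s_0,b)$, substituting $s = s^\ast$ and raising the resulting bound to the power $(\delta+2)/\delta$ yields precisely
\[
\norm{Pf}_2^{2+4/\delta} \le C_N\, r^2\, V(x,r)^{-2/\delta}\, E\, \norm{f}_1^{4/\delta}.
\]

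The hard part is the boundary regime, where $\rho := V(x,r)^{-1} r^\delta \norm{f}_1^2 / E$ is too small for $s^\ast$ to reach $\max(s_0,b)$, or where $r < \max(s_0,b)$ so that no valid $s$ even exists. Here I will invoke the $(h,h')$-compatibility of $p_1$: since $p_1(x,y) \le C_1/V(x,h')$, symmetry of $p_1$ and the Markov property yield $p_2(x,y) \le C_1/V(x,h')$ as well, whence
\[
\norm{Pf}_2^2 = \langle P^2 f, f\rangle \le C_1\, \norm{f}_1^2 \sup_{y \in B(x,r)} \frac{1}{V(y,h')}.
\]
Lemma~\ref{l-doub-prop} applied to $V(y,h')$ versus $V(x,r)$ for $y \in B(x,r)$ gives $V(y,h') \ge c\,\min\!\bigl(1,(h'/r)^\delta\bigr) V(x,r)$, hence $\norm{Pf}_2^2 \le C'\, \max(1, r^\delta / h'^\delta)\, V(x,r)^{-1} \norm{f}_1^2$. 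Raising to the power $1 + 2/\delta$ and using the case hypothesis $\rho \lesssim \max(s_0,b)^{\delta+2}$ (together with Cauchy--Schwarz, which gives $\rho \le r^{\delta+2}$ automatically when $r$ is small) produces the Nash inequality with a constant depending on $\max(s_0, b, h')$ and $\delta$. Piecing the two regimes together gives a single $C_N$ valid for all $r > 0$; the main obstacle is precisely this boundary analysis, since the optimization argument alone is insufficient when $\norm{f}_1$ is very small relative to $\sqrt{V(x,r) E}$.
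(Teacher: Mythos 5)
Your argument is correct, but it takes a genuinely different route from the paper's and pays for it with a more delicate boundary analysis. You decompose $Pf=P(f-f_s)+Pf_s$, apply pseudo-Poincar\'{e} to $f$ and contraction to control $\norm{Pf}_2$, and then optimize over $s$ with $E=\E(f,f)+r^{-2}\norm{f}_2^2$ in the denominator of the candidate minimizer $s^\ast$. The difficulty you correctly identify is that this $E$ has no a priori upper bound in terms of $\norm{f}_1$ (it can be as large as $\asymp\norm{f}_2^2$, and $\norm{f}_2^2/\norm{f}_1^2$ is unbounded for spiky $f$), so $s^\ast$ can be arbitrarily small for \emph{any} $r$, forcing a three-way case split: $r<\max(s_0,b)$, $r\geq\max(s_0,b)$ with $s^\ast<\max(s_0,b)$, and the optimization regime. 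You salvage the small-$s^\ast$ regimes with the $L^\infty$-smoothing of $P$ via $\norm{Pf}_2^2=\langle P^2f,f\rangle\leq(\sup p_2)\norm{f}_1^2$; this works, but the interaction between the two case hypotheses ($\rho\lesssim\max(s_0,b)^{\delta+2}$ when $r\geq\max(s_0,b)$, and $\rho\leq r^{\delta+2}$ by Cauchy--Schwarz otherwise) and the factor $\max(1,(r/h')^\delta)^{1+2/\delta}$ needs to be tracked with care --- it does close, but only because you switch between the two hypotheses in the two sub-regimes.

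The paper sidesteps this by applying pseudo-Poincar\'{e} to $Pf$ rather than to $f$, and by putting $E':=\E(Pf,Pf)+r^{-2}\norm{Pf}_2^2$ (not $E$) in the optimized quantity $s_1(f)$. The key gain is that $E'\leq(2+r_0^{-2})\norm{Pf}_2^2\leq(2+r_0^{-2})\norm{Pf}_\infty\norm{f}_1$, and $\norm{Pf}_\infty\lesssim V(x,r)^{-1}r^\delta\norm{f}_1$ by compatibility and doubling, so the optimizer satisfies $s_1(f)\geq c_1>0$ unconditionally for $r\geq r_0$. This reduces the boundary case to the single clean regime $r\leq r_0$, where the Nash inequality follows directly from $\norm{Pf}_2\lesssim V(x,r)^{-1/2}\norm{f}_1$ and $E\geq r^{-2}\norm{Pf}_2^2$. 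The paper then proves the Nash inequality with $\E(Pf,Pf)$ and $\norm{Pf}_2^2$ and passes to $\E(f,f)$ and $\norm{f}_2^2$ at the end using $\E(Pf,Pf)\leq\E(f,f)$ (via $\norm{P(I-P)^{1/2}f}_2\leq\norm{(I-P)^{1/2}f}_2$) and $\norm{Pf}_2\leq\norm{f}_2$. In short: both proofs work, but the paper's trick of optimizing with the \emph{smoothed} Dirichlet quantities eliminates the delicate part of your boundary analysis. One minor technical note: for $r<h'$ the lower bound $V(y,h')\gtrsim V(x,r)$ for $y\in B(x,r)$ is not quite a direct application of Lemma~\ref{l-doub-prop} (which requires $s\leq r$); you should instead use $V(x,r)\leq V(y,2h')\leq C_{h',2h'}V(y,h')$ via Lemma~\ref{l-vloc}.
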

\begin{proof} We start with an observation that \eqref{e-nash} follows directly for small values of $r$. Let $r_0 >0$ be an arbitrary constant.
If $r \le  r_0$, by \eqref{e-compat} and \eqref{e-volc}, there exists $C_1,C_2>0$ such that for all functions $f \in L^1(M)$ supported in $B(x,r)$, we have
\begin{align}
 \nonumber \norm{Pf}_\infty &\le \norm{f}_1 \sup_{y \in B(x,r+h')} \frac{C_1}{V(y,h')} \le \norm{f}_1 \sup_{y \in B(x,r_0+h')} \frac{C_1}{V(y,h')}\\
 &\le \frac{C_1}{V(x,r_0)} \norm{f}_1 \sup_{y \in B(x,r_0+h')} \frac{V(y,2r_0+h')}{V(y,h')} \le \frac{C_2}{V(x,r)} \norm{f}_1. \label{e-ns3}
\end{align}
By H\"{o}lder inequality along with \eqref{e-ns3} and \eqref{e-contr}, we have $C_3>0$ such that
\begin{equation} \label{e-ns4}
 \norm{Pf}_2 \le \norm{Pf}_\infty^{1/2} \norm{Pf}_1^{1/2} \le \frac{C_3}{V(x,r)^{1/2}} \norm{f}_1
\end{equation}
for all function $f \in L^2(M)$ supported in $B(x,r)$ with $r \le r_0$.
By \eqref{e-ns4} and \eqref{e-contr} and by the choice $C_N \ge C_3^{4/\delta}$, it suffices to show \eqref{e-nash} for the case $r > r_0$.

Note that
 \begin{equation}
  \label{e-ns1}\norm{P f}_{2} \le \norm{Pf- (Pf)_s}_{2} + \norm{(Pf)_s}_2.
 \end{equation}
We use pseudo-Poincar\'{e} inequality (Lemma \ref{l-pp}) to bound the first term and use the $(h,h')$-compatibility of $P$ along with doubling hypotheses to bound the second term.
To obtain \eqref{e-nash}, we minimize the bound on right hand side of \eqref{e-ns1} by varying $s$.

By Lemma \ref{e-pp}, there exists $C_0 \ge 1 $ and $r_0>0$ such that
\begin{equation}
  \norm{Pf- (Pf)_s}_{2} \le C_0 s \sqrt{\E(Pf,Pf)}\label{e-ns2}
\end{equation}
for all $f \in L^2(M)$ and for all $s \ge r_0$.

By \eqref{e-nd} and \eqref{e-contr}, there exists $C_4>0$ and $\delta>2$ such that
\begin{equation}
 \label{e-ns5} \norm{(Pf)_s}_2 \le \frac{C_{4}}{V(x,r)^{1/2}} \left(\frac{r}{s} \right)^{\delta/2} \norm{f}_1
\end{equation}
for all $f \in L^2(M)$ supported in $B(x,r)$ and for all $r_0 \le s \le r$.
Combining \eqref{e-ns1},\eqref{e-ns2}, \eqref{e-ns5}, we obtain
\begin{equation}
 \label{e-ns6} \norm{Pf}_2 \le C_0 s \left( \sqrt{\E(Pf,Pf)} + r^{-1} \norm{Pf}_2 \right) + \frac{C_{4}}{V(x,r)^{1/2}} \left(\frac{r}{s} \right)^{\delta/2} \norm{f}_1
\end{equation}
for all $f \in L^2(M)$ supported in $B(x,r)$ and for all $s \ge r_0$ and for all $r \ge r_0$.
In order to minimize the right side of \eqref{e-ns6}, the choice of $s$ (up to a constant factor) is
\begin{equation} \label{e-ns7}
 s_1(f) := \left( \frac{\norm{f}^2_1 r^{\delta}}{(\E(Pf,Pf) + r^{-2} \norm{Pf}_2^2)V(x,r) }  \right)^{1/(\delta+2)}.
\end{equation}
However, we want to choose $s \ge r_0$ in \eqref{e-ns6}. We will do that by showing that $s_1(f)$ is bounded below.
For all $r \ge r_0$, by \eqref{e-2contr} we have
\begin{equation}
 \E(Pf,Pf) + r^{-2} \norm{Pf}_2^2 \le (2 + r_0^{-2}) \norm{Pf}_2^2 \le (2 + r_0^{-2}) \norm{Pf}_\infty \norm{f}_1 \label{e-ns8}
\end{equation}
for all $f \in L^2(M)$. Since $f$ is supported in $B(x,r)$,  there exists $C_5, C_6 >0$
\begin{align}
 \nonumber \norm{Pf}_\infty &\le  C_5 \norm{f}_1 \sup_{y \in B(x,r+h')} \frac{1}{V(y,h')} \le \frac{C_5}{V(x,r)} \norm{f}_1  \sup_{y \in B(x,r+h')} \frac{V(y,2r+h')}{V(y,h')} \\
\label{e-ns9} & \le \frac{C_6}{V(x,r)} r^\delta \norm{f}_1
\end{align}
for all $f \in L^2$ supported in $B(x,r)$ with $r \ge r_0$. The first line above follows from \eqref{e-compat} and the second line follows from \eqref{e-vd1} and $r \ge r_0$.
By \eqref{e-ns8} and \eqref{e-ns9}, there exists $c_1>0$ such that
\begin{equation}
\label{e-ns10}  s_1(f)= \left( \frac{\norm{f}^2_1 r^{\delta}}{(\E(Pf,Pf) + r^{-1} \norm{Pf}_2)V(x,r) }  \right)^{1/(\delta+2)} \ge c_1
\end{equation}
for all $x \in M$,  for all $r \ge r_0$ and for all $f \in L^2(M)$ supported in $B(x,r)$. By plugging in $s=(r_0/c_1) s_1(f)$ in \eqref{e-ns6}, there exists $C_N \ge C_3^{4/\delta}$ such that
\begin{equation}
 \label{e-ns11} \norm{P f}_{2}^{2 + (4/\delta)} \le \frac{C_N r^2}{V(x,r)^{2/\delta}} \left( \E(Pf,Pf) + r^{-2} \norm{Pf}_2^2 \right) \norm{f}_1^{4/\delta}
\end{equation}
for all $x \in M$,  for all $r \ge r_0$ and for all $f \in L^2(M)$ supported in $B(x,r)$.
 By \eqref{e-contr}, we have
\begin{equation} \label{e-ns12}
\sqrt{\E(Pf,Pf)} = \norm{P(I-P)^{1/2}f}_2 \le \norm{(I-P)^{1/2}f}_2 = \sqrt{\E(f,f)}
\end{equation}
for all $f \in L^2(M)$.
By \eqref{e-ns10},\eqref{e-ns11} and \eqref{e-contr}, we obtain the desired Nash inequality \eqref{e-nash}.
\end{proof}
Before we proceed, we restate the above Nash inequality for functions defined on balls.
\begin{corollary}
 \label{c-nash}
 Let $(M,d,\mu)$ be a quasi-$b$-geodesic metric measure space satisfying  \ref{doub-loc}, \ref{doub-inf} and Poincar\'{e} inequality at scale $h$ \ref{poin-mms}.
Suppose that a Markov operator $P$ has a kernel $p$ that is $(h,h')$-compatible with respect to $\mu$. Let $P_B$ and $\E^B$ denote the corresponding Markov operator and Dirichlet form restricted to a ball $B \subset M$.
Then there exists $\delta > 2$, $C_N>0$ such that
\begin{equation}
 \label{e-nashB} \norm{P_B f}_{2}^{2 + (4/\delta)} \le \frac{C_N r^2}{V(x,r)^{2/\delta}} \left( \E^B(f,f) + r^{-2} \norm{f}_2^2 \right) \norm{f}_1^{4/\delta}
\end{equation}
 for all  $r>0$, for all $x \in M$, and for all  $f \in L^2(M)$ with $f$  supported in $B(x,r)$.
\end{corollary}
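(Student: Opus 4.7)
The plan is to reduce Corollary \ref{c-nash} directly to Proposition \ref{p-nash} by extending functions on $B$ to all of $M$ by zero. Let $f \in L^2(B)$ and let $\tilde{f} \in L^2(M)$ be its zero-extension as in \eqref{e-extend}. Since $\tilde f$ is supported in $B = B(x,r)$, Proposition \ref{p-nash} applies to $\tilde f$ and yields
\begin{equation*}
 \norm{P \tilde f}_2^{2+(4/\delta)} \le \frac{C_N r^2}{V(x,r)^{2/\delta}}\left(\E(\tilde f,\tilde f) + r^{-2}\norm{\tilde f}_2^2\right)\norm{\tilde f}_1^{4/\delta}.
\end{equation*}

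Next I would identify each term on the right with its $B$-version. Clearly $\norm{\tilde f}_p = \norm{f}_p$ for $p=1,2$, since $\tilde f$ vanishes off $B$. By Lemma \ref{l-dircomp-ball}(a), $\E(\tilde f,\tilde f) = \E^B(f,f)$. On the left-hand side, I would note that for $x \in B$,
\begin{equation*}
 P_B f(x) = \int_B f(y) p_1(x,y)\,\mu(dy) = \int_M \tilde f(y) p_1(x,y)\,\mu(dy) = P\tilde f(x),
\end{equation*}
so $P_B f$ equals the restriction of $P \tilde f$ to $B$. Therefore $\norm{P_B f}_{L^2(B)} \le \norm{P\tilde f}_{L^2(M)}$.

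Combining these observations gives
\begin{equation*}
 \norm{P_B f}_2^{2+(4/\delta)} \le \norm{P \tilde f}_2^{2+(4/\delta)} \le \frac{C_N r^2}{V(x,r)^{2/\delta}}\left(\E^B(f,f) + r^{-2}\norm{f}_2^2\right)\norm{f}_1^{4/\delta},
\end{equation*}
which is exactly \eqref{e-nashB}. The argument is essentially formal, and I do not anticipate any real obstacle: the extension-by-zero trick together with the already established identity $\E(\tilde f,\tilde f)=\E^B(f,f)$ and the pointwise identity $P_B f = (P\tilde f)|_B$ does all the work.
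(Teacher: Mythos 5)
Your proposal is correct and follows the same route as the paper: extend $f$ by zero to $\tilde f$, use $\norm{P_B f}_{L^2(B)} \le \norm{P\tilde f}_{L^2(M)}$, invoke Lemma~\ref{l-dircomp-ball}(a) to identify $\E(\tilde f,\tilde f)=\E^B(f,f)$, and apply Proposition~\ref{p-nash}.
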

\begin{proof}
 We define $\tilde f \in L^2(M)$ as in \eqref{e-extend}. Since $P \tilde f = P_B f$ on $B$, we have $\norm{P_B f}_2 \le \norm{P \tilde f}_2$.
 Combining this observation along with $\norm{f}_p = \norm{\tilde f}_p$, Lemma \ref{l-dircomp-ball}(a) and Proposition \eqref{p-nash} yields \eqref{e-nash}.
\end{proof}
\begin{remark}
 It is easy to prove Nash inequality \eqref{e-nashB} using Sobolev inequality \eqref{e-Sob} just by an application of  H\"{o}lder inequality
 \[
 \norm{P_Bf}_2 \le \norm{P_B f}_{2\delta/(\delta-2)}^{\delta/(\delta+2)} \norm{P_Bf}_1^{2/(\delta+2)} \le \norm{P_B f}_{2\delta/(\delta-2)}^{\delta/(\delta+2)} \norm{f}_1^{2/(\delta+2)}
 \]
 along with the fact that $P_B$ is a contraction on $L^1(B)$. However proving \eqref{e-Sob} using \eqref{e-nashB} is harder.
 There is a direct and elementary approach using slicing of functions developed in \cite{BCLS95}. Their approach was used by Delmotte in the setting of graphs \cite[Theorem 4.4]{Del97}
 to prove a Sobolev inequality.
 However those slicing techniques not so seem to apply directly for proving \eqref{e-Sob},
 since the (sub-Markov) operator $P_B$ does not commute with the slicing maps $f \mapsto (f-s)_+ \wedge t$.
 It is an interesting open problem to make this approach work for our Sobolev-type inequalities.
 \end{remark}
 \section{Ultracontractivity estimate on balls}
In light of the above remark, we adapt a different approach based on Hardy-Littlewood-Sobolev theory for discrete time Markov semigroups as developed in \cite[Theorems 5 and 6]{CS90}.
 Our approach is to obtain an upper bound for $\norm{P_B^k}_{1 \to \infty}$ using \eqref{e-nashB} which in turn is
used to prove the Sobolev inequality \eqref{e-Sob}.

\begin{lemma} \label{l-ultB}
  Let $(M,d,\mu)$ be a quasi-$b$-geodesic metric measure space satisfying  \ref{doub-loc}, \ref{doub-inf}.
Suppose that a Markov operator $P$ has a kernel $p$ that is $(h,h')$-compatible with respect to $\mu$. Let $P_B$ and $\E^B$ denote the corresponding Markov operator and Dirichlet form restricted to a ball $B \subset M$.
Further assume that the operators $P_B$ satisfy the Nash inequality \eqref{e-nashB} with constant $\delta>2$. There exists $C_u >0$ such that
\begin{equation} \label{e-ultB}
\norm{P_B^k}_{1 \to \infty} \le \frac{C_u (1+r^2)^{\delta/2} }{V(x,r)}  \frac{(1+r^{-2})^{k-1}}{k^{\delta/2}}
\end{equation}
for all $x \in M$, for all $r>0$ and for all $k \in \N^*$ where $B=B(x,r)$.
\end{lemma}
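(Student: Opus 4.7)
My plan is to first bound $u_k := \sup\{\norm{P_B^k f}_2^2 : f \in L^1(B),\ \norm{f}_1 \le 1\}$ via a discrete-time Nash differential inequality, and then promote this $L^1 \to L^2$ estimate to the claimed $L^1 \to L^\infty$ bound \eqref{e-ultB} using self-adjointness of $P_B$ on $L^2(B)$: since $(P_B^k)^* = P_B^k$,
\[
\norm{P_B^{2k}}_{1 \to \infty} = \norm{P_B^k (P_B^k)^*}_{1 \to \infty} \le \norm{P_B^k}_{1 \to 2}^2 = u_k,
\]
while odd exponents are reduced to the preceding even one by the monotonicity $\norm{P_B^{n+1}}_{1\to\infty}\le \norm{P_B^n}_{1\to\infty}$ (a consequence of $\norm{P_B}_{\infty\to\infty}\le 1$). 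The case $k = 1$ in \eqref{e-ultB} will be handled directly using \eqref{e-compat} together with the doubling estimate \eqref{e-vd2}.

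For the main recursion, fix $f$ with $\norm{f}_1 \le 1$, write $u_k := \norm{P_B^k f}_2^2$, and apply the Nash inequality \eqref{e-nashB} to $P_B^k f$. Since $P_B$ is a contraction on $L^1(B)$, $\norm{P_B^k f}_1 \le 1$, and with $A := C_N r^2 / V(x,r)^{2/\delta}$ this yields
\[
u_{k+1}^{1 + 2/\delta} \le A \bigl( \E^B(P_B^k f, P_B^k f) + r^{-2} u_k \bigr).
\]
The natural discrete-time $L^2$-energy identity is $u_k - u_{k+1} = \E^B_*(P_B^k f, P_B^k f)$, not $\E^B$; to exploit the above I need a comparison between the two Dirichlet forms on $B$. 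This is the main technical point of the proof, and it is precisely where property (d) of Definition \ref{d-compat} enters. Extending $g \in L^2(B)$ to $\tilde g \in L^2(M)$ via \eqref{e-extend}, Lemma \ref{l-dircomp-ball} gives $\E^B(g,g) = \E(\tilde g, \tilde g)$ and $\E^B_*(g,g) \ge \E_*(\tilde g, \tilde g)$; combining with Lemma \ref{l-comp-dir}(b) (which uses $p_2 \ge \alpha p_1$) produces a constant $C' \ge 1$, independent of $B$ and $k$, with $\E^B(g,g) \le C' \E^B_*(g,g)$ for every $g \in L^2(B)$. Hence
\[
u_{k+1}^{1+2/\delta} \le A C' (u_k - u_{k+1}) + A r^{-2} u_k.
\]

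To absorb the lower-order term $A r^{-2} u_k$, I substitute $w_k := \lambda^k u_k$ with $\lambda := (1 + r^{-2}/C')^{-1} \in (0,1]$; multiplying the recursion by $\lambda^{k+1}$ and using $\lambda^{-2(k+1)/\delta} \ge 1$ reduces it to the pure discrete Nash inequality
\[
w_{k+1}^{1+2/\delta} \le A C'\, (w_k - w_{k+1}).
\]
This is resolved by the standard argument of \cite[Theorems 5 and 6]{CS90}: setting $a_k := w_k^{-2/\delta}$, convexity of $y \mapsto y^{-2/\delta}$ gives $a_{k+1} - a_k \ge (2/\delta)\, w_k^{-2/\delta - 1}(w_k - w_{k+1})$, and a dichotomy on whether $w_{k+1} \gtrless w_k/2$ yields $w_k \le C_\delta (AC')^{\delta/2} k^{-\delta/2}$. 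Unwinding the substitution and using $(AC')^{\delta/2} = (C' C_N)^{\delta/2} r^\delta V(x,r)^{-1}$,
\[
u_k \le \frac{C_\delta'\, r^\delta\, (1 + r^{-2}/C')^k}{V(x,r)\, k^{\delta/2}}.
\]
Since $C' \ge 1$, the elementary inequalities $(1 + r^{-2}/C')^k \le (1+r^{-2})^k$, $r^\delta \le (1+r^2)^{\delta/2}$, and $(1+r^{-2})^{n/2} \le (1+r^{-2})^{n-1}$ for $n \ge 2$, together with the $L^1 \to L^\infty$ reduction and monotonicity argument of the first paragraph (plus the direct treatment of $k = 1$), deliver \eqref{e-ultB} after adjusting constants.
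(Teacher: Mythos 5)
Your proposal is correct and follows essentially the same route as the paper. Both arguments convert the Nash inequality for $\E^B$ into one for $\E^B_*$ via Lemmas \ref{l-dircomp-ball} and \ref{l-comp-dir}(b) (so that $\E^B_*(P_B^kf,P_B^kf) = \norm{P_B^kf}_2^2 - \norm{P_B^{k+1}f}_2^2$ telescopes), then introduce an exponential weight to absorb the $r^{-2}\norm{\cdot}_2^2$ term, solve the resulting pure discrete Nash recursion to get an $L^1\to L^2$ estimate, and finally use self-adjointness of $P_B$ to pass to $L^1\to L^\infty$. The only minor cosmetic differences: the paper applies its weight $(1+r^{-2})^{-(k-1)}$ directly to $\norm{P_B^kf}_2^2$ and keeps the constant from the Dirichlet-form comparison hidden inside a re-defined $C_N$, while you make the constant $C'$ explicit and weight by $(1+r^{-2}/C')^{-k}$; and the paper resolves the pure Nash recursion by a direct induction-by-contradiction (computing an explicit constant $C_2$), while you invoke the standard dichotomy argument of \cite{CS90}. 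Both are standard and deliver the same bound; your $(1+r^{-2}/C')^k$ and $r^\delta$ dominate the stated $(1+r^{-2})^{k-1}$ and $(1+r^2)^{\delta/2}$ after the parity reduction, so the constants work out as claimed.
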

\begin{remark}
 If two side Gaussian estimate $(GE)$ holds for $p_k$ and  if we choose $r \asymp \sqrt{k}$, then the upper bound \eqref{e-ultB} is sharp up to a constant factor.
\end{remark}
\begin{proof}[Proof of Lemma \ref{l-ultB}]
Let $x \in M$, $r>0$ and $B=B(x,r)$.
Our first step is an upper bound for $\norm{P_B^k}_{1 \to 2}$.
Let $f \in L^1(B)$ be an arbitrary function with $\norm{f}_1=1$.
The constants in this proof do not depend on the choice of $x \in M$, $k \in \N^*$, $r >0$ or $f \in L^1(B)$.

Then by H\"{o}lder inequality,
\begin{equation*}
\norm{P_Bf}_2^2 \le \norm{P_B f}_1 \norm{P_B f}_\infty \le  \norm{f}_1 \norm{P_B f}_\infty = \norm{P_Bf}_\infty.
\end{equation*}
By \eqref{e-ns3} and \eqref{e-ns9}, there exists $C_1>0$ such that
\begin{equation}
 \norm{P_Bf}_2^2 \le \norm{P_Bf}_\infty \le \frac{C_1 (1+r^2)^{\delta/2}}{V(x,r)}. \label{e-ut1}
\end{equation}
By \eqref{e-nashB}, along with Lemma \ref{l-dircomp-ball} and Lemma \ref{l-comp-dir}(b), there exists $C_N>0$
such that
\begin{equation}
\label{e-ut2} \norm{P_B g}_{2}^{2 + (4/\delta)} \le \frac{C_N r^2}{V(x,r)^{2/\delta}} \left( \E_*^B(g,g) + r^{-2} \norm{g}_2^2 \right) \norm{g}_1^{4/\delta}
\end{equation}
 for all  $r>0$, for all $x \in M$, and for all  $g \in L^2(B)$ where $B=B(x,r)$.
 Define
 \[
  v_k := (1+r^{-2})^{-(k-1)}\norm{P_B^kf}_2^2 \]
 for all $k \in \N^*$. Substituting $g=P_B^k f$ in \eqref{e-ut2} and using the fact that
 $\norm{P_B^kf}_1 \le \norm{f}_1 =1$ and $\E_*^B(P_B^k f, P^k_Bf) = \norm{P_B^k f}_2^2 - \norm{P_B^{k+1}f}_2^2$, we obtain
 the following difference inequality for $v_k$:
 \begin{equation}
  \label{e-ut3}
  v_{k+1}^{1+(2/\delta)} \le \frac{C_N (1+r^2)}{V(x,r)^{2/\delta}} (v_k - v_{k+1})
 \end{equation}
for all $k \in \N^*$. Next, we `solve' the difference inequality given by \eqref{e-ut3}. Define
\begin{equation} \label{e-ut4}
C_2 := \max\left(C_1, ((\delta C_N)/2)^{\delta/2} 2^{(1+(\delta/2))(\delta/2)} \right).
\end{equation}
We claim that
\begin{equation}
 \label{e-ut5} v_k \le C_2 \frac{(1+r^2)^{\delta/2}}{V(x,r)} k^{-\delta/2}
\end{equation}
for all $k \in \N^*$. We prove \eqref{e-ut5} by induction. The base case $k=1$ follows from \eqref{e-ut1} and \eqref{e-ut4}. For the inductive step, assume that
\eqref{e-ut5} holds for all $k=1,2,\ldots,n$ for some $n \in \N^*$.
We will show that \eqref{e-ut5} holds for $k=n+1$.
Assume to the contrary that
\begin{equation}
  \label{e-ut6} v_{n+1} > C_2 \frac{(1+r^2)^{\delta/2}}{V(x,r)} (n+1)^{-\delta/2}
\end{equation}
By \eqref{e-ut3}, \eqref{e-ut6} and the induction hypothesis, we obtain
\begin{align}
 v_{n+1}^{1+(2/\delta)} & <  \frac{C_N C_2 (1+r^2)^{1 + (\delta/2)}}{V(x,r)^{1 + (2/\delta)}}  \left(n^{-\delta/2} - (n+1)^{-\delta/2} \right) \nonumber \\
\nonumber &< \frac{C_N C_2 (1+r^2)^{1 + (\delta/2)}}{V(x,r)^{1 + (2/\delta)}}  \frac{\delta}{2} n^{-(1+(\delta/2))} \\
\nonumber & \le \frac{C_2 (1+r^2)^{1 + (\delta/2)}}{V(x,r)^{1 + (2/\delta)}} ((\delta C_N)/2) 2^{1+(\delta  /2)} (n+1)^{-(1+(\delta/2))} \\
\label{e-ut7} & \le  \left( C_2 \frac{(1+r^2)^{\delta/2}}{V(x,r)} (n+1)^{-\delta/2} \right)^{ 1 +(2/\delta)}.
\end{align}
The second line above follows from intermediate value theorem, the third line follows from $n \ge 1$ and the last line follows from \eqref{e-ut4}. The desired
contradiction follows from \eqref{e-ut6} and \eqref{e-ut7}. Using \eqref{e-ut3}, we obtain the estimate
\begin{equation}
\label{e-ut8}  \norm{P_B^k}_{1 \to 2}^2 \le \frac{C_2 (1+r^2)^{\delta/2} }{V(x,r)}  \frac{(1+r^{-2})^{k-1}}{k^{\delta/2}}
\end{equation}
for all $x \in M$ and all $r >0$ where $B=B(x,r)$.
Since $P_B$ is self-adjoint operator in $L^2(B)$, by duality we have the bound
\[
 \norm{P_B^k}_{1 \to \infty} \le \norm{P_B^{\lfloor (k/2) \rfloor}}_{1 \to 2} \norm{P_B^{\lceil (k/2) \rceil}}_{1 \to 2}.
\]
Using the above bound along with \eqref{e-ut8} yields \eqref{e-ultB} for $k \ge 2$. The case $k=1$ follows from \eqref{e-ut1}.
\end{proof}

We are  ready to prove the  Sobolev inequality \eqref{e-Sob} using the ultracontractivity estimate \eqref{e-ultB} above.

For an operator $T$, we define the operator $(I-T)^{1/2}$  as
\[
(I-T)^{1/2} = \sum_{k=0}^\infty a_k T^k
\]
 where $a_k$ is defined by the Taylor series
$(1-x)^\alpha = \sum_{k=0}^\infty a_k x^k$
 for $x \in (-1,1)$. By a classical estimate on coefficient of Taylor series, there exists $C_a>0$ such that
 \begin{equation}\label{e-taylor}
 \frac{C_a^{-1}} {(k+1)^{1/2}}  \le a_k \le \frac{C_a} {(k+1)^{1/2}}
 \end{equation}
  for all $k \in \N_{\ge 0}$.
  \section{Local Sobolev inequality}
  We use the ultracontractivity estimate \eqref{e-ultB} to obtain Sobolev inequality \eqref{e-Sob}.
  The proof uses Riesz-Thorin and Marcinkiewicz interpolation theorems which we briefly review in Appendix \ref{a-int}.
  \begin{prop}\label{p-ult2sob}
     Let $(M,d,\mu)$ be a quasi-$b$-geodesic metric measure space satisfying  \ref{doub-loc}, \ref{doub-inf}.
Suppose that a Markov operator $P$ has a kernel $p$ that is $(h,h')$-compatible with respect to $\mu$. Let $P_B$ and $\E^B$ denote the corresponding Markov operator and Dirichlet form restricted to a ball $B \subset M$.
Assume that there exists $C_u >0$ such that
\begin{equation} \label{e-sb0}
\norm{P_B^k}_{1 \to \infty} \le \frac{C_u (1+r^2)^{\delta/2} }{V(x,r)}  \frac{(1+r^{-2})^{k-1}}{k^{\delta/2}}
\end{equation}
for all $x \in M$, for all $r>0$ and for all $k \in \N^*$ where $B=B(x,r)$. Then we have the Sobolev inequality \eqref{e-Sob}.
  \end{prop}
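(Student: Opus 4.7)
The plan is to reduce the Sobolev inequality \eqref{e-Sob} to an $L^2 \to L^q$ bound on a discrete ``Riesz-type'' operator, then obtain that bound by combining the ultracontractivity hypothesis with a Marcinkiewicz-type interpolation in the spirit of Coulhon--Saloff-Coste \cite{CS90}. I would introduce $A_r = (1+r^{-2})I - P_B$ as a self-adjoint operator on $L^2(B)$. Since $\sigma(P_B) \subseteq [-1,1]$, functional calculus makes $A_r^{\pm 1/2}$ well-defined, and $\langle A_r f, f \rangle = \E^B(f,f) + r^{-2}\norm{f}_2^2$. Setting $g = A_r^{1/2} f$ and writing $P_B f = (P_B A_r^{-1/2})g$, the inequality \eqref{e-Sob}, with $q = 2\delta/(\delta-2)$, reduces to the operator-norm estimate
\[
  \norm{P_B A_r^{-1/2}}_{L^2(B) \to L^q(B)} \le \frac{C r}{V(x,r)^{1/\delta}}.
\]

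Next I would expand $A_r^{-1/2}$ via the Taylor series cited just before the statement: $A_r^{-1/2} = (1+r^{-2})^{-1/2}\sum_{k \ge 0} a_k (1+r^{-2})^{-k} P_B^k$ with $a_k \asymp (k+1)^{-1/2}$ by \eqref{e-taylor}. Set $T_k = (1+r^{-2})^{-k} P_B^{k+1}$. Then hypothesis \eqref{e-sb0} and the $L^2$-contractivity of $P_B$ furnish the two endpoint bounds
\[
  \norm{T_k}_{1 \to \infty} \le \frac{M}{(k+1)^{\delta/2}}, \qquad \norm{T_k}_{2 \to 2} \le 1,
\]
where $M = C(1+r^2)^{\delta/2}/V(x,r)$, and the operator in question equals $(1+r^{-2})^{-1/2}\sum_k a_k T_k$. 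The goal is to bound this from $L^2$ into $L^q$ with norm at most $C(1+r^2)^{1/2}/V(x,r)^{1/\delta}$, which together with the prefactor $(1+r^{-2})^{-1/2}$ recovers the desired $Cr/V(x,r)^{1/\delta}$.

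The heart of the argument is the interpolation step. For each threshold $\lambda>0$, I would split the sum at a cutoff $N=N(\lambda)$, controlling the tail $\sum_{k>N} a_k T_k$ in $L^2\to L^\infty$ using $\norm{T_k}_{2\to\infty}^2 \le \norm{T_k T_k}_{1\to\infty}$ (which by self-adjointness of $P_B$ reduces to another instance of \eqref{e-sb0}), and controlling the head $\sum_{k\le N} a_k T_k$ in $L^2\to L^2$ (where the partial sums of $a_k$ grow like $N^{1/2}$). Balancing the two pieces against $\lambda$ yields a weak-type $(2,q)$ bound for $P_B A_r^{-1/2}$; integrating the distribution function against $\lambda^{q-1}\,d\lambda$, and using the trivial strong bound $\norm{P_B A_r^{-1/2}}_{2\to 2} \le r$ coming from the spectral calculus to truncate at large $\lambda$, then upgrades the estimate to the desired strong $(2,q)$ bound with the correct constant.

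The main obstacle is precisely this interpolation step: a naive strategy of interpolating each $T_k$ by Riesz--Thorin and then summing produces a logarithmically divergent series, since Riesz--Thorin between the two endpoints gives $\norm{T_k}_{2\to q} \lesssim M^{1/\delta}(k+1)^{-1/2}$, and multiplying by $a_k \asymp (k+1)^{-1/2}$ yields $\sum (k+1)^{-1}=\infty$. The decomposition-plus-Marcinkiewicz approach above is designed precisely to absorb this logarithm, but it requires careful bookkeeping of the scale parameter $r$ and of the factor $(1+r^{-2})$ arising from the spectral shift, so that the final constant depends on $r$ only through the advertised $r/V(x,r)^{1/\delta}$. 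Unwinding the substitution $g = A_r^{1/2} f$ then produces \eqref{e-Sob}.
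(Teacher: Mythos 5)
Your decomposition is essentially the one in the paper: for $r>1$, reduce \eqref{e-Sob} to bounding $T_B = P_B\left((1+r^{-2})I - P_B\right)^{-1/2}$ from $L^2(B)$ into $L^{2\delta/(\delta-2)}(B)$, expand $T_B$ in the Taylor series with coefficients controlled by \eqref{e-taylor}, split at a $\lambda$-dependent cutoff $N$, and estimate the tail by ultracontractivity \eqref{e-sb0} and the head by contractivity. Your observation that termwise Riesz--Thorin plus summation gives a log-divergent series is correct and explains why the head/tail decomposition is needed.

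The gap is in the final upgrade from weak to strong type. You produce a single weak-type $(2,q)$ estimate $\lambda_g(\lambda) \le C\lambda^{-q}$ with $q = 2\delta/(\delta-2)$ and propose to integrate against $\lambda^{q-1}\,d\lambda$, using the strong $L^2\to L^2$ bound $\norm{P_B A_r^{-1/2}}_{2\to 2}\le r$ to ``truncate at large $\lambda$.'' But the weak estimate makes $\lambda^{q-1}\lambda_g(\lambda)$ behave like $\lambda^{-1}$, which is log-divergent at $\lambda\to\infty$, not at $\lambda\to 0$; and the $L^2$ bound gives $\lambda_g(\lambda)\le r^2\lambda^{-2}$, a strictly \emph{slower} decay than $\lambda^{-q}$ since $q>2$, so it helps only near $\lambda=0$ and cannot close the integral at infinity. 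Structurally, Marcinkiewicz interpolation needs two weak-type endpoints with distinct output exponents straddling the target, and a weak $(2,q)$ plus strong $(2,2)$ yields only strong $(2,q')$ with $q'<q$, never $q$ itself. The paper closes the argument by running your exact head/tail computation with $p$ free in $[1,\delta)$ rather than fixed at $2$: the tail $S_{B,N}$ is bounded in $L^p\to L^\infty$ (Riesz--Thorin applied to \eqref{e-sb0}) and the head $R_{B,N}$ in $L^p\to L^p$, giving weak-type $(p,q(p))$ with $p^{-1}=q(p)^{-1}+\delta^{-1}$ for the whole range; Theorem \ref{t-marcin} applied between two values $p_0<2<p_1$ then delivers the strong $L^2\to L^{2\delta/(\delta-2)}$ bound. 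The fix to your writeup is mechanical: replace your $L^2\to L^\infty$ and $L^2\to L^2$ estimates by their $L^p$ analogues. You should also handle $r\le 1$ separately, as the paper does by Riesz--Thorin directly on $P_B$, since the reduction through $A_r^{-1/2}$ is not useful in that regime.
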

 \begin{proof}
 As in the proof of Nash inequality \eqref{e-nash}, we start by considering the case $r \le 1$.
 By \eqref{e-ns3}, there exists $C_2>0$ such that
 \begin{equation}\label{e-sb1}
  \norm{P_B }_{1 \to \infty} \le \frac{C_1}{V(x,r)}
 \end{equation}
for all balls $B=B(x,r)$ with $r \le 1$. Since  $P_B$ is a contraction on all $L^p(B)$, we have
\begin{equation} \label{e-sb2}
     \norm{P_B}_{2(\delta-1)/(\delta-2) \to 2(\delta-1)/(\delta-2)} \le 1.
\end{equation}
Applying Riesz-Thorin interpolation between \eqref{e-sb1} and \eqref{e-sb2} yields
\begin{equation*}
\norm{P_B }_{2 \to 2 \delta/(\delta-2)} \le \left( \frac{C_1}{V(x,r)} \right)^{1/\delta}
\end{equation*}
for all  balls $B=B(x,r)$ with $r \le 1$. By choosing $C_S \ge C_1^{2/\delta}$, we have \eqref{e-Sob} for all balls $B(x,r)$ with $r \le 1$.

Next we consider the case $r >1$. Since
\[ \E^B(f,f) + r^{-2} \norm{f}_2^2 = \norm{ \left((1+r^{-2})I - P_B\right)^{1/2}f}^2, \]
it suffices to show that there exists  $C_2>0$ such that
\begin{equation}
 \label{e-sb3} \norm{P_B \left( I - (1+r^{-2})^{-1}P_B\right)^{-1/2}}_{2 \to 2 \delta/(\delta-2)} \le C_2 \frac{(1+r^2)^{1/2}}{V(x,r)^{1/\delta}}
\end{equation}
for all balls $B=B(x,r)$ with $r>1$. To see this, note that $C_S = \max(C_1^{2/\delta}, 2 C_2^2)$ satisfies \eqref{e-Sob}.
Define
\begin{equation}
 \label{e-sb4} E(B) := \frac{(1+r^2)}{\mu(B)}, \hspace{1cm} T_B:= P_B \left( I - (1+r^{-2})^{-1}P_B\right)^{-1/2}.
\end{equation}
Let $p \in[1,\delta)$ and $q \in [\delta/(\delta-1),\infty)$ satisfy
\begin{equation}
 \label{e-sb5} p^{-1} =q^{-1} + \delta^{-1}.
\end{equation}
For all $p \in[1,\delta)$ and $q \in [\delta/(\delta-1),\infty)$ satisfying \eqref{e-sb5}, we show that the operator $T_B$ is of weak-type $(p,q)$. An application of
Marcinkiewicz interpolation then yields \eqref{e-Sob}.
Recall that $T_B = \sum_{k=1}^\infty a_{k-1} (1+r^{-2})^{-(k-1)}P_B^k$. For $N \in \N^*$, we define operators
\[
R_{B,N} := \sum_{k=1}^N a_{k-1} (1+r^{-2})^{-(k-1)}P_B^k, \hspace{1cm} S_{B,N} := T_B - R_{B,N}.
\]
By \eqref{e-sb0} and Riesz-Thorin interpolation, we obtain
\begin{equation}
 \label{e-sb6} \norm{P_B^k}_{p \to \infty} \le C_u^{1/p} E(B)^{\delta/(2p)} \frac{(1+r^{-2})^{(k-1)/p}}{k^{\delta/(2p)}}
\end{equation}
for all balls $B$, for all $k \in \N^*$ and for all $1 \le p <\infty$.
For each $p \in [1,\delta)$, there exists $C_3>0$ such that
\begin{align}
\nonumber \norm{S_{B,N}}_{p \to \infty} &\le \sum_{k=N+1}^\infty a_{k-1} (1+r^{-2})^{-(k-1)} \norm{P_B^k}_{p \to \infty} \\
\nonumber &\le C_u^{1/p} E(B)^{\delta/(2p)} C_a \sum_{k=N+1}^\infty  k^{-1/2}k^{-\delta/(2p)}   \\
& \le C_3 E(B)^{\delta/(2p)} N^{-\delta/(2q)} \label{e-sb7}
\end{align}
 for all balls $B$, where $q$ is given by \eqref{e-sb5}. In \eqref{e-sb7} $C_3$ depends only on $p,q,\delta$ but not on $B=B(x,r)$.
In the second line above we use \eqref{e-sb6} and \eqref{e-taylor} and we used \eqref{e-sb5} and $p \in[1,\delta)$ in the last line.
By the same argument as above and increasing $C_3=C_3(p)$ if necessary, we may assume that
\begin{equation}
 \label{e-sb8} \norm{T_B}_{p \to \infty} \le C_3 E(B)^{\delta/(2p)}
\end{equation}
 for all balls $B$.

Let $g \in L^p(B)$ satisfy $\norm{g}_p=1$. For $\lambda>0$, let $N_0= N_0(\lambda,B)$ denote
the smallest positive integer such that $C_3 E(B)^{\delta/(2p)}N_0^{-\delta/(2q)} \le \lambda/2$.
By union bound, for each $p \in [1,\delta)$ and $q$ given by \eqref{e-sb5}, there exists $C_4,C_5>0$ such that
\begin{align}
 \nonumber \mu_B \Sett{x  \in B }{ \abs{T_B g(x)} > \lambda } &\le   \mu_B \Sett{x  \in B }{ \abs{R_{B,N_0} g(x)} > \lambda/2 } \\
 & \hspace{1cm}+ \mu_B \Sett{x  \in B }{ \abs{S_{B,N_0} g(x)} > \lambda/2 } \nonumber \\
 & \le  \mu_B \Sett{x  \in B}{ \abs{R_{B,N_0} g(x)} > \lambda/2 } \nonumber \\
 & \le (2/\lambda)^p \norm{R_{B,N_0} g}_p^p \nonumber \\
 & \le C_u^p (2/\lambda)^p \left( \sum_{k=1}^{N_0} k^{-1/2} \right)^p \le C_4 (2C_u)^p \lambda^{-p}N_0^{p/2} \nonumber \\
 & \le C_5 E(B)^{q/2} \lambda^{-q} \label{e-sb9}
\end{align}
for all balls $B=B(x,r)$.
In the second step above we used the definition of $N_0$. The third step follows from Chebyshev inequality, the fourth step follows from \eqref{e-taylor} and $\norm{P_B^k}_{p \to p} \le 1$. The last step \eqref{e-sb9} follows from
\eqref{e-taylor}, \eqref{e-sb5}, \eqref{e-sb8} and the definition of $N_0$.
By Marcinkiewicz interpolation theorem and the estimates given by \eqref{e-sb9}, there exists $C_6 >0$ such that
\[
 \norm{T_B}_{2 \to 2\delta/(\delta-2)} \le C_6  \sqrt{E(B)}
\]
for all balls $B=B(x,r)$. This is precisely \eqref{e-sb3} which we intended to prove.
\end{proof}
We record two important consequences of Proposition \ref{p-ult2sob} first of which is the proof of Theorem \ref{t-Sob}
\begin{proof}[Proof of Theorem \ref{t-Sob}]
 Theorem \ref{t-Sob} follows from Corollary \ref{c-nash}, Lemma \ref{l-ultB} and Proposition \ref{p-ult2sob}.
\end{proof}
The next corollary shows that Sobolev inequality is necessarily true under doubling hypothesis and
 Gaussian upper bounds \ref{gue}.
\begin{corollary} \label{c-nsob}
Let $(M,d,\mu)$ be a quasi-$b$-geodesic metric measure space satisfying  \ref{doub-loc}, \ref{doub-inf}.
Suppose that a Markov operator $P$  has a kernel $p$ that is $(h,h')$ compatible with respect to $\mu$.
Further assume that iterated kernel $p_k$ that satisfies \ref{gue}.
Let $P_B$ and $\E^B$ denote the corresponding Markov operator and Dirichlet form restricted to a ball $B \subset M$.
Then the Sobolev inequality \eqref{e-Sob} holds.
\end{corollary}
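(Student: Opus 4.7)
By Proposition \ref{p-ult2sob}, it suffices to verify the ultracontractivity estimate \eqref{e-sb0} for the killed operators $P_B^k$. The plan is to deduce \eqref{e-sb0} from the global Gaussian upper bound \ref{gue} (for $k\ge 2$) together with the $(h,h')$-compatibility estimate \eqref{e-compat} (for $k=1$ and for small balls), and then invoke Proposition \ref{p-ult2sob}.

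The starting point is the identification $\norm{P_B^k}_{1\to\infty} = \operatorname{ess\,sup}_{y_1,y_2\in B} p_k^B(y_1,y_2)$, which combined with the comparison $p_k^B\le p_k$ from \eqref{e-pbcomp} and \ref{gue} yields
\[
\norm{P_B^k}_{1\to\infty}\le \sup_{y_1\in B}\frac{C_1}{V(y_1,\sqrt{k})}\qquad(k\ge 2).
\]
I then split into regimes according to the relative size of $\sqrt{k}$ and $r$, using the doubling bounds of Lemma \ref{l-doub-prop}. In the regime $\sqrt{k}\le r$, estimate \eqref{e-vd2} applied to the pair $x,y_1\in B(x,r)$ gives
\[
\frac{1}{V(y_1,\sqrt{k})}\le \frac{C_b^2\, r^\delta}{V(x,r)\, k^{\delta/2}},
\]
which, since $(1+r^2)^{\delta/2}\ge r^\delta$ when $r\ge 1$, fits inside the right-hand side of \eqref{e-sb0}. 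In the complementary regime $\sqrt{k}\ge r$, Lemma \ref{l-vloc} together with global doubling yields $V(y_1,\sqrt{k})\ge c\, V(x,r)$, so $\norm{P_B^k}_{1\to\infty}\le C/V(x,r)$; the $(1+r^{-2})^{k-1}$ factor in the target, which grows at least like $\exp(c k/r^2)$ when $r\ge 1$, easily dominates $k^{\delta/2}(1+r^2)^{-\delta/2}$ in this range by the standard fact that exponentials beat polynomials.

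For small balls $r\le r_0$ (or for $k=1$ with arbitrary $r$), the Gaussian bound for $k\ge 2$ is not needed: the $(h,h')$-compatibility \eqref{e-compat}, exactly as in the computation \eqref{e-ns3}, gives $\norm{P_B}_{1\to\infty}\le C/V(x,r)$, and this is absorbed into the factor $(1+r^2)^{\delta/2}(1+r^{-2})^{k-1}/(V(x,r)k^{\delta/2})$ by monotonicity $\norm{P_B^k}_{1\to\infty}\le \norm{P_B}_{1\to\infty}$ (since $P_B$ is an $L^\infty$-contraction) and crude use of the fact that for $r\le 1$ the factor $(1+r^{-2})^{k-1}$ is already large. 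Putting these three regimes together yields \eqref{e-sb0} with a single constant $C_u$ depending only on the doubling, compatibility and Gaussian constants, after which Proposition \ref{p-ult2sob} produces \eqref{e-Sob}.

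The only mildly delicate point is the intermediate regime $k\asymp r^2$ with $r\ge 1$, where both sides of \eqref{e-sb0} are of order $1/V(x,r)$ and one must check that the constants line up; this is where the natural matching of scales between the heat kernel diagonal estimate $1/V(x,\sqrt{k})$ and the Sobolev target $(1+r^2)^{\delta/2}/(V(x,r)k^{\delta/2})$ makes the argument work, and no iteration is needed. I do not expect any serious obstacle beyond bookkeeping across the cases $r\lessgtr 1$ and $\sqrt{k}\lessgtr r$.
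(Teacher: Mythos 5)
Your proposal is correct and takes essentially the same approach as the paper: reduce to the ultracontractivity estimate \eqref{e-sb0} via Proposition \ref{p-ult2sob}, bound $\norm{P_B^k}_{1\to\infty}$ by $\sup_{y,z\in B}p_k(y,z)$ using \eqref{e-pbcomp} and \ref{gue}, and then compare $V(y,\sqrt{k})$ with $V(x,r)$ via the doubling estimates of Lemma \ref{l-doub-prop}. Your explicit case split (on $\sqrt{k}$ versus $r$ and on small radii/$k=1$) is simply the unrolled form of the single display \eqref{e-nsob2} in the paper, and your separate handling of $k=1$ via \eqref{e-compat} is a minor but welcome refinement since \ref{gue} is stated only for $n\ge 2$.
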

\begin{proof}
By Proposition \ref{p-ult2sob} it suffices to show the ultracontractivity estimate \eqref{e-sb0} on $\norm{P_B^k}_{1\to \infty}$.
By \ref{gue}, there exists $C_1>0$ such that
\begin{equation}
 \label{e-nsob1} \norm{P_B^k}_{1\to \infty} \le \sup_{y \in B, z \in B} p_k(y,z) \le \sup_{y \in B(x,r)} \frac{C_1}{V(y,\sqrt{k})}
\end{equation}
for all balls $B=B(x,r)$ and for all $k \in \N^*$.
By \eqref{e-vd1}, there exists $\delta>2$ and $C_2>0$ such that
\begin{equation} \label{e-nsob2}
 \sup_{y \in B(x,r)} \frac{1}{V(x,\sqrt{k})} \le \frac{1}{V(x,r)}  \sup_{y \in B(x,r)} \frac{V(x,2(r \vee  \sqrt{k}) ) }{V(y,\sqrt{k})} \le \frac{1}{V(x,r)} C_2 \left( \frac{2( r \vee \sqrt{k})}{\sqrt{k} }\right)^\delta
\end{equation}
for all balls $B(x,r)$ and for all $k \in \N^*$. The desired estimate \eqref{e-sb0} follows from \eqref{e-nsob1} and \eqref{e-nsob2}.
\end{proof}
\section{Sobolev inequality implies large scale doubling property}
Next, we show that Sobolev inequality implies \ref{doub-inf} under natural hypotheses. More precisely
\begin{prop}\label{p-sobvd}
Let $(M,d,\mu)$ be a metric measure space satisfying \ref{doub-loc}.
Let $P$ be $(h,h')$ compatible Markov operator in a metric measure space $(M,d,\mu)$ satisfying Sobolev inequality \eqref{e-Sob}.
Then $(M,d,\mu)$ satisfies the large scale doubling property \ref{doub-inf}.
\end{prop}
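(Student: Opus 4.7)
The plan is to apply the Sobolev inequality \eqref{e-Sob} to the indicator function $f = \one_{B(x, r)}$ on the ball $B = B(x, r)$ itself, then use $(h, h')$-compatibility to convert the Sobolev estimate into a volume comparison. By $(h, h')$-compatibility, whenever $y \in B(x, r - h')$ we have $B(y, h') \subseteq B(x, r) = B$, so $P_B f(y) = \int_{B(y, h')} p_1(y, z)\,\mu(dz) = 1$. Consequently,
\[
\norm{P_B f}_{2\delta/(\delta-2)}^{2\delta/(\delta-2)} \ge V(x, r - h'), \qquad \E^B(f, f) \le V(x, r) - V(x, r - h'),
\]
and $\norm{f}_2^2 = V(x, r)$. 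Substituting these into \eqref{e-Sob} and using the identity $(\delta - 2)/\delta + 2/\delta = 1$, after dividing through by $V(x, r)$ I obtain the key one-step inequality
\[
\left(\frac{V(x, r - h')}{V(x, r)}\right)^{\!(\delta - 2)/\delta} \le C_S\!\left(r^2\!\left(1 - \frac{V(x, r - h')}{V(x, r)}\right) + 1\right). \qquad (\dagger)
\]

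Writing $\alpha(x, r) = V(x, r - h')/V(x, r) \in (0, 1]$, the inequality $(\dagger)$ constrains how fast $\alpha$ can decay as $r$ varies. The next step is to extract from $(\dagger)$ a sharpened one-step estimate of the form $V(x, r)/V(x, r - h') \le 1 + K/r^2$, valid for $r \ge r_0$ with a universal constant $K$. Iterating this telescopically over the $\lfloor r/h' \rfloor$ shells of width $h'$ separating radius $r$ from radius $2r$, the product
\[
\prod_{k \in [r/h',\, 2r/h']} \!\!\left(1 + \frac{K}{(kh')^2}\right) \le \exp\!\left(\sum_{k \in [r/h',\, 2r/h']}\!\frac{K}{(kh')^2}\right)
\]
converges to a universal constant as $r \to \infty$, giving $V(x, 2r) \le C\,V(x, r)$ for $r \ge r_0$, which is precisely $(VD)_\infty$. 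To handle small scales, one falls back on $(VD)_{\mathrm{loc}}$.

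The main obstacle is extracting the sharpened estimate $V(x, r)/V(x, r - h') \le 1 + K/r^2$ from the bare inequality $(\dagger)$. A naive reading of $(\dagger)$ gives only polynomial-in-$r$ bounds on the ratio, which iterated would produce superpolynomial volume growth, not uniform doubling; this is because \eqref{e-Sob} is scale-invariant under $(r, V) \mapsto (\lambda r, \lambda^\delta V)$, and a single application of Sobolev therefore cannot by itself break this invariance. To produce the $r^{-2}$ decay, I expect to replace $f = \one_B$ by a Lipschitz cutoff $\psi$ of $B(x, r)$ inside $B(x, 2r)$ with Lipschitz constant $1/r$, for which the $(h, h')$-compatibility yields the much sharper Dirichlet energy bound $\E^B(\psi, \psi) \lesssim (h'/r)^2\,V(x, 2r)$. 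The interplay between the $r^2$ factor in \eqref{e-Sob} and the $(h'/r)^2$ bound from Lipschitz cutoffs is precisely what cancels to produce uniform doubling at large scales; this cancellation, which relies essentially on the compatibility scale $h'$, is the heart of the argument.
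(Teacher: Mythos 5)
Your inequality $(\dagger)$ is, as you essentially observe, vacuous: the left side $\alpha^{(\delta-2)/\delta}$ is at most $1$ while the right side is at least $C_S$, so whenever $C_S \ge 1$ (the typical case) no information about $\alpha$ survives. The proposed fix via a Lipschitz cutoff $\psi$ with Lipschitz constant $1/r$ does not repair this, and in fact points the wrong way. Tracing the estimate through: taking $\psi$ supported in $B(x,2r)$, equal to $1$ on $B(x,r)$, one has $P_B\psi \ge c\,\one_{B(x,r-h')}$, $\E^B(\psi,\psi) \lesssim (h'/r)^2 V(x,2r)$, and $\|\psi\|_2^2 \le V(x,2r)$; plugging into \eqref{e-Sob} on $B(x,2r)$, the $r^2$ in front exactly cancels the $r^{-2}$ coming from both the energy term and the $r^{-2}\|\psi\|_2^2$ term, leaving
\begin{equation*}
V(x,r-h')^{(\delta-2)/\delta} \lesssim V(x,2r)^{(\delta-2)/\delta},
\end{equation*}
i.e. only the trivial fact that the inner ball has no more measure than the outer one. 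In short, the cancellation you identify as ``the heart of the argument'' is real, but it cancels too perfectly: Sobolev applied to a single cutoff at a single scale cannot produce a volume \emph{lower} bound in terms of a smaller ball, which is what doubling actually requires. Your telescoping plan over shells of width $h'$ therefore has no one-step input of the form $V(x,r)/V(x,r-h') \le 1 + K/r^2$ to feed into it.

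The idea the paper uses, and which is missing from your proposal, is to decouple the test-function scale from the Sobolev-ball scale. One applies \eqref{e-Sob} on $B(x,r)$ to the tent function $f_s(y) = \max(s - d(x,y),0)$ with an \emph{independent} parameter $3h' \le s \le r$. Since $f_s$ is $1$-Lipschitz with support in $B(x,s)$, compatibility gives $\E^B(f_s,f_s) \lesssim (h')^2 V(x,s+h)$, $\|f_s\|_2^2 \le s^2 V(x,s)$, and $P_B f_s \gtrsim s\,\one_{B(x,s/2)}$; substituting yields, after simplification,
\begin{equation*}
V(x,s) \ge \Bigl(\frac{s^\delta V(x,r)}{C_2\, r^\delta}\Bigr)^{2/\delta} V(x,s/2)^{(\delta-2)/\delta}.
\end{equation*}
This is not of the telescoping form $1 + K/r^2$: it bounds $V(x,s)$ from \emph{below} by a product of a main term involving $V(x,r)$ and a sub-unit power of $V(x,s/2)$. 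One then iterates this in $s \mapsto s/2$ (not in shells of width $h'$); because the exponents $(\delta-2)^j/\delta^j$ form a geometric series summing to $1$, the residual factor $V(x,s/2^i)^{(\delta-2)^i/\delta^i}$ becomes negligible, and a separate preliminary estimate $V(x,r)/V(x,h') \le C_1 r^\delta$ (obtained by applying \eqref{e-Sob} to $f_{3h'}$) closes the iteration, giving $V(x,r)/V(x,s) \le C_4 (r/s)^\delta$ for all $3h' \le s \le r$, hence \ref{doub-inf}. Without this two-scale structure and the geometric iteration in $s$, the argument does not go through.
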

We need the following volume comparison lemma.
\begin{lemma} \label{l-vc}
 Let $(M,d,\mu)$ be a quasi-$b$-geodesic metric measure space satisfying \ref{doub-loc} and let $h'\ge b>0$.
 Then there exists $C_{0}>0$ such that
 \begin{equation}
  \label{e-vch} V(x,r+h') \le C_{0} V(x,r)
 \end{equation}
for all $x \in M$ and for all $r \ge 3h'$.
\end{lemma}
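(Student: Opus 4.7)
The plan is to control $V(x,r+h')$ by $V(x,r)$ via a covering of $B(x,r+h')$ by balls of radius comparable to $h'$ whose centers are forced to lie in $B(x,r)$, then invoke the local volume doubling property to conclude. The uniformity in $r$ will come from pairing this covering with a packing in $B(x,r)$.

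First, I would fix a maximal $h'/2$-separated subset $Z$ of $B(x,r-h'/2)$. By maximality, the balls $\{B(z,h'/4)\}_{z\in Z}$ are pairwise disjoint and contained in $B(x,r-h'/4)\subseteq B(x,r)$, while the balls $\{B(z,h'/2)\}_{z\in Z}$ cover $B(x,r-h'/2)$. In particular
\[
\sum_{z\in Z}V(z,h'/4)\le V(x,r).
\]
The goal of the next step is to enlarge the covering so that it captures $B(x,r+h')$ as well.

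Next, I would show that for some constant $A>0$ depending only on the quasi-geodesic structure of $(M,d)$, the balls $\{B(z,Ah')\}_{z\in Z}$ cover $B(x,r+h')$. For $y\in B(x,r-h'/2)$ this is clear from the maximality of $Z$. For $y\in B(x,r+h')\setminus B(x,r-h'/2)$ I would apply the Chain Lemma (Lemma \ref{l-chain}) with $b_1=h'\ge b$ to obtain an $h'$-chain $y=y_0,y_1,\ldots,y_m=x$ with $m\le\lceil C_1 d(x,y)/h'\rceil$. Since $i\mapsto d(x,y_i)$ changes by at most $h'$ between consecutive terms and takes the value $d(x,y)\ge r-h'/2\ge 5h'/2$ at $i=0$ and $0$ at $i=m$, a discrete intermediate value argument produces an index $j$ with $y_j\in B(x,r-h'/2)\setminus B(x,r-3h'/2)$. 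Combined with the arc-length bound on the chain from $y$ to $y_j$, together with $r\ge 3h'$, this yields $d(y,y_j)\le A'h'$ for a constant $A'$ depending only on the chain constant $C_1$. Finally, by maximality of $Z$ there is $z\in Z$ with $d(z,y_j)\le h'/2$, whence $d(y,z)\le(A'+1/2)h'=:Ah'$. I expect this second step to be the main obstacle: bounding $d(y,y_j)$ by a constant multiple of $h'$ uniformly in $r$ requires using the precise form of the Chain Lemma together with the hypothesis $r\ge 3h'$, so that the relevant portion of the chain is of length comparable to $h'$ rather than to $r$.

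Given the covering of the previous step, I would then apply the local doubling property via Lemma \ref{l-vloc} to find a constant $C_1=C_1(h',A)$ such that
\[
V(z,Ah')\le C_1\, V(z,h'/4)\quad\text{for every } z\in M.
\]
Summing over $z\in Z$ and using the inclusion/disjointness of the previous step gives
\[
V(x,r+h')\le\sum_{z\in Z}V(z,Ah')\le C_1\sum_{z\in Z}V(z,h'/4)\le C_1\,V(x,r),
\]
which is the desired inequality with $C_0=C_1$. The whole argument is purely geometric and depends only on \ref{doub-loc}, the quasi-$b$-geodesicity, and the relation $h'\ge b$; the role of the condition $r\ge 3h'$ is solely to guarantee that the intermediate chain point $y_j$ can be chosen at a bounded distance from $y$.
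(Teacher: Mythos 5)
Your proof takes essentially the same route as the paper: fix a maximal separated net whose small balls pack disjointly into $B(x,r)$, show that slightly enlarged balls around the same centers cover $B(x,r+h')$ by means of the Chain Lemma, and then compare the two scales via Lemma~\ref{l-vloc}. Cosmetic differences aside (the paper separates at scale $h'$ inside $B(x,r)$ and works with $b$-chains from $x$ to $z$; you separate at scale $h'/2$ inside $B(x,r-h'/2)$ and work with $h'$-chains from $y$ to $x$), the overall structure and all the volume inequalities coincide with equations \eqref{e-vch1}--\eqref{e-vch4} of the paper.

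There is, however, a soft spot in the covering step that you yourself flag as ``the main obstacle,'' and the justification you offer does not close it. You claim that the arc-length bound from the Chain Lemma, together with $r\ge 3h'$, gives $d(y,y_j)\le A'h'$. But the Chain Lemma controls only the \emph{total} arc-length of the chain from $y$ to $x$, namely $\le C\,d(x,y)$ where $C$ is the quasi-geodesicity constant. The prefix from $y$ to $y_j$ then has arc-length at most
\[
C\,d(x,y)-d(x,y_j)\le C(r+h')-(r-3h'/2)=(C-1)\,r+\bigl(C+\tfrac{3}{2}\bigr)h'.
\]
This is $O(h')$ only when $C=1$, i.e.\ when the space is genuinely $b$-geodesic rather than merely quasi-$b$-geodesic; for $C>1$ the prefix arc-length can grow linearly in $r$, and the hypothesis $r\ge3h'$ does not rescue the argument. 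Note that $d(y,y_j)$ might still be $O(h')$ even when the prefix arc-length is not (arc-length is only an upper bound for distance), but that would require a different argument from the one you give. The paper's own step \eqref{e-vch3} asserts the analogous covering property with the same terseness, so your write-up is faithful to the paper's reasoning; still, if you want a fully rigorous account you should replace the ``arc-length bound on the chain from $y$ to $y_j$'' with a direct argument bounding $d(y,y_j)$ itself, or explicitly restrict to $b$-geodesic spaces where $C=1$.
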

\begin{proof}
 Let $Y$ be a maximal $h'$-separated subset of $B(x,r)$ where $x \in M$ and $r \ge 3h'$.
 The collection of balls $ \Sett{B(y,h'/2)}{y \in Y}$ are disjoint and hence
 \begin{equation}\label{e-vch1}
  V(x,r) \ge \sum_{y \in Y \cap B(x,r-h')} V(y,h'/2).
 \end{equation}
However since $B(x,r) \subseteq \cup_{y \in Y} B(y,h')$ and $r \ge 3h'$, we have
\begin{equation}\label{e-vch2}
\emptyset \neq B(x,r-2h') \subseteq \cup_{y \in Y \cap B(x,r-h')} B(y,h'),
\end{equation}

By quasi-$b$-geodesicity and $b \le h'$, there exists $C_1>0$ such that for all  $z \in B(x,r+h')$, there exists a $b$-chain  $x_0,x_1,\ldots,x_m$  $b$-chain from $x$ to $z$ such that
\begin{equation} \label{e-vch3}
x_i \in B(x,r-2h') \mbox{ and } d(x_i ,z) \le C_1 h'.
\end{equation}
Combining \eqref{e-vch2} and \eqref{e-vch3}, we obtain
\begin{equation}\label{e-vch4}
B(x,r+h') \subseteq \cup_{y \in Y \cap B(x,r-h')} B(y,(C_1+1)h').
\end{equation}
Combining \eqref{e-vch4},Lemma \ref{l-vloc} and \eqref{e-vch1}, we obtain
\begin{align*}
  V(x,r+h') &\le \sum_{y \in Y \cap B(x,r-h')} V(y,(C_1+1)h') \\
  &\le C_{h'/2,(C_1+1)h'}  \sum_{y \in Y \cap B(x,r-h')} V(y,h'/2) \le C_{h'/2,(C_1+1)h'} V(x,r).
\end{align*}
\end{proof}

\begin{proof}[Proof of Proposition \ref{p-sobvd}].
We adapt the argument of \cite[Proposition 2.1]{CG98}.
However unlike in \cite[Proposition 2.1]{CG98}, we do not consider volumes of arbitrarily small balls.

Let $x \in M$ and $r \ge 3h'$ be arbitrary. For $s>0$, define the `tent function'
\[
f_s(y)= \max( s- d(x,y) , 0).
\]
By  $(h,h')$ compatibility of $P$, we have $P_{B(x,r)} f_{3h'} \ge h' \one_{B(x,h')}$.
Therefore by applying \eqref{e-Sob}, we have
\begin{equation*}
 (h')^2 V(x,h')^{(\delta-2)/\delta} \le \frac{C_S r^2}{V(x,r)^{2/\delta}} \left( (h')^2 V(x,4h') + r^{-2} (3h')^2 V(x,3h')\right)
\end{equation*}
for all $r \ge 3h'$ and for all $x \in M$.
Combined with Lemma \ref{l-vloc}, there exists $C_1>0$ such that
\begin{equation}\label{e-sv1}
 \frac{V(x,r)}{V(x,h')} \le C_1 r^\delta
\end{equation}
for all $r \ge 3h'$ and for all $x \in M$.

Let $3h' \le s \le r$. Then by  $(h,h')$ compatibility of $P$, we have $P_{B(x,r)} f_{s} \ge (s/6) \one_{B(x,s/2)}$.
Hence by Sobolev inequality \eqref{e-Sob}, \eqref{e-compat} and Lemma \ref{l-dircomp-ball}(a), we obtain
\begin{align*}
 (s/6)^2 V(x,s/2)^{(\delta -2)/\delta} &\le \frac{C_S r^2}{V(x,r)^{2/\delta}} \left((h')^2 V(x,s+h) + r^{-2} s^{2} V(x,s)\right) \\
\end{align*}

Combined with Lemma \ref{l-vc}, there exists $C_2>0$ such that
\begin{equation} \label{e-sv2}
 V(x,s) \ge  \left( \frac{s^\delta V(x,r) }{C_2 r^\delta}\right)^{2/\delta} V(x,s/2)^{(\delta -2)/\delta}
\end{equation}
for all $x \in M$ and for all $3h' \le s \le r$. We replace $s$ by $s/2$ in \eqref{e-sv2} and iterate to obtain
\begin{equation} \label{e-sv3}
 V(x,s) \ge 4^{- \sum_{j=0}^{i-1} j(\delta -2)^j/\delta^j }\left( \frac{s^\delta V(x,r) }{C_2 r^\delta}\right)^{(2/\delta)\sum_{j=0}^{i-1} (\delta -2)^j/\delta^j} V(x,s/2^i)^{(\delta -2)^i/\delta^i}
\end{equation}
for all $3h' \le s/2^{i-1} \le s \le r$. In particular if we choose $i=\lceil \log_2(s/3h') \rceil$, we have $(3h')/2 \le s/2^i \le 3h'$. Hence by \eqref{e-sv3} and \eqref{e-sv1},we have
\begin{equation} \label{e-sv4}
 V(x,s) \ge 4^{- \sum_{j=0}^{\infty} j(\delta -2)^j/\delta^j }\left( \frac{s^\delta V(x,r) }{C_2 r^\delta}\right)^{(2/\delta)\sum_{j=0}^{i-1} (\delta -2)^j/\delta^j} \left( \frac{V(x,r)}{C_1 r^\delta} \right)^{(\delta -2)^i/\delta^i}
\end{equation}
for all $x \in M$ and for all $3h' \le s \le r$, where $i=\lceil \log_2(s/3h') \rceil$. By \eqref{e-sv4}, there exists $C_3>0$ such that
\begin{equation}\label{e-sv5}
 \frac{V(x,r)}{V(x,s)} \le C_3 (r/s)^\delta s^{(\delta-2)^i/\delta^{i-1}}
\end{equation}
for all $x \in M$ and for all $3h' \le s \le r$, where $i=\lceil \log_2(s/3h') \rceil$.
Since the map $s \mapsto \exp \left(\delta((\delta-2)/\delta)^{\lceil \log_2(s/3h') \rceil} \ln s\right)$ is bounded in $[3h',\infty)$, by \eqref{e-sv5} there exists $C_4>0$ such that
\begin{equation*}
 \frac{V(x,r)}{V(x,s)} \le C_4\left(\frac{r}{s}\right)^\delta
\end{equation*}
for all $x \in M$ and for all $3h' \le s \le r$. The above equation clearly implies \ref{doub-inf}.
\end{proof}

\chapter{Elliptic Harnack inequality}\label{ch-ehi}
In this chapter, we prove elliptic Harnack inequality for non-negative harmonic functions.
As before, we consider a metric measure space $(M,d,\mu)$ and
a Markov operator $P$ that is $(h,h')$-compatible with $(M,d,\mu)$. Recall that the operator
$\Delta := I - P$ is the Laplacian corresponding to $P$.
\section{Harmonic functions}
\begin{definition}\label{d-harmonic}
 Let $P$ be a Markov operator on $(M,d,\mu)$.
 A function $f:U \to \R$ is $P$-harmonic in $B(x,r)$ if
 \[
  \Delta f(y) =  f(y) - Pf(y) = 0
 \]
for all $y \in B(x,r)$.

Similarly, we say  $f:M \to R$ is $P$-subharmonic (resp. $P$-superharmonic) in $B(x,r)$ if
\[
 \Delta f(y) \le 0  \mbox{ (resp. $\ge 0$)}
\]
for all $y \in B(x,r)$.

We say a function $f:M \to \R$ is $P$-harmonic (resp. subharmonic, superharmonic) if $\Delta f \equiv 0$ (resp. $\Delta f \le 0$, $\Delta f \ge 0$).
\end{definition}
\begin{remark}\label{r-harmonic}\leavevmode
\begin{enumerate}[(a)]
\item Consider a Markov operator $P$ that is $(h,h')$-compatible with $(M,d,\mu)$.
By \eqref{e-compat}, $Pf(y)$ depends only on $f$ in $B(y,h')$.
Therefore the property that $f:M \to \R$ is $P$-harmonic in $B(x,r)$ depends only on the values of $f$ in $B(x,r+h')$. Hence in this case it suffices to have
$B(x,r+h') \subseteq \operatorname{Domain}(f)$.
 \item We use the term harmonic instead of $P$-harmonic if the Markov operator
$P$ is clear from the context. Same holds for superharmonic or subharmonic
functions.
\end{enumerate}

\end{remark}
The main result of the chapter is the following elliptic Harnack inequality.
\begin{theorem}[Elliptic Harnack inequality]
 \label{t-ehi}
Let $(M,d,\mu)$ be a quasi-$b$-geodesic metric measure space satisfying  \ref{doub-loc}, \ref{doub-inf} and Poincar\'{e} inequality at scale $h$ \ref{poin-mms}.
Suppose that a Markov operator $P$ has a kernel $p$ that is $(h,h')$-compatible with respect to $\mu$ for some $h >b$.
Then there exists $c>0,r_0>0,C_E>0$ such that for all $x \in M$, for all
$r \ge r_0$ and for all
non-negative  functions $u:B(x,r) \to \R_{\ge 0}$ that are $P$-harmonic in  $B(x,r)$  the following Harnack inequality holds:
\begin{equation}
 \label{e-ehi} \sup_{x \in B(x,cr)} u \le C_E \inf_{x \in B(x,cr)} u.
\end{equation}
\end{theorem}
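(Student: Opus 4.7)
\medskip
\noindent\textbf{Proof proposal.} The plan is to follow the classical Moser iteration scheme, adapted to the discrete-time setting and to the weak Sobolev inequality \eqref{e-Sob} proved in Theorem~\ref{t-Sob}. The three ingredients to assemble are (i) a mean value inequality of the form $\sup_{B(x,\theta r)} u \le C \bigl(\tfrac{1}{V(x,r)}\int_{B(x,r)} u^p\,d\mu\bigr)^{1/p}$ for non-negative subharmonic $u$ and any $p>0$; (ii) a reverse mean value inequality for positive superharmonic $u$ valid for all sufficiently small $p>0$; (iii) a bridging result that controls $\bigl(\fint_{B} u^{p_0}\bigr)^{1/p_0}\bigl(\fint_{B} u^{-p_0}\bigr)^{1/p_0}$ for some fixed small $p_0>0$, coming from the Poincar\'e inequality applied to $\log u$ together with a John--Nirenberg argument.

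\medskip
\noindent\textbf{Mean value inequality.} For a non-negative function $u$ that is $P$-harmonic on $B(x,r)$ (the subharmonic case being analogous), I first establish a Caccioppoli-type estimate: for a cutoff $\eta$ supported in $B(x,r)$ which is Lipschitz at scale $h'$,
\[
\E\bigl(\eta^2 u^q,\eta^2 u^q\bigr) \le C_q \int u^{2q}\,|\nabla \eta|_{h'}^2\,d\mu,
\]
with $C_q$ growing polynomially in $q$. The proof uses that $P$ is self-adjoint, the symmetric form of $\E$ given in \eqref{e-dirform}, the algebraic identity $(a-b)(a^q - b^q) \ge c_q (a^{q/2}-b^{q/2})^2$, and crucially the lower bound $p_2 \ge \alpha p_1$ from condition (d) of Definition~\ref{d-compat}, via Lemma~\ref{l-comp-dir}(b), to pass between $\E$ and $\E_*$ when controlling products $\eta u^q \cdot P(\eta u^q)$. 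Combining this Caccioppoli bound with the Sobolev inequality \eqref{e-Sob} applied to $f = \eta u^q$ and noting that $P_B(\eta u^q) \ge c\,\eta u^q$ on a slightly smaller ball (again by $(h,h')$-compatibility and property (d)), one extracts the standard reverse H\"older bootstrap
\[
\bigl\| u \bigr\|_{L^{\sigma q}(B_{j+1})}^{q} \le \frac{C\,2^{j\kappa}}{(r_j-r_{j+1})^2\,V(x,r_j)^{2/\delta}}\,\bigl\| u \bigr\|_{L^{q}(B_j)}^{q},
\]
for shrinking balls $B_j = B(x,r_j)$ with $r_j \searrow \theta r$ and $\sigma = \delta/(\delta-2)$. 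Iterating gives the mean value inequality. The same argument applied to $u^{-1}$ (superharmonic when $u$ is harmonic and positive) yields the corresponding bound for negative exponents.

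\medskip
\noindent\textbf{Bridging via $\log u$.} For a positive harmonic $u$ on $B(x,r)$, testing $\Delta u = 0$ against $\eta^2/u$ and using the Poincar\'e inequality \ref{poin-mms} applied to $w = \log u$, I obtain the gradient estimate
\[
\int_{B(x,\theta r)} |\nabla w|_h^2\,d\mu \;\le\; \frac{C\,V(x,r)}{r^2}.
\]
Combined directly with \ref{poin-mms}, this shows $w$ has bounded mean oscillation on balls of radius $\theta r$. The John--Nirenberg inequality on doubling spaces (which is available since \ref{doub-loc} and \ref{doub-inf} hold, after passing through the net by Propositions~\ref{p-net} and~\ref{p-poin-netmms} to apply the discrete form if needed) then produces a fixed $p_0>0$ and constant $K$ with
\[
\Bigl(\fint_{B(x,\theta r)} u^{p_0}\,d\mu\Bigr)\Bigl(\fint_{B(x,\theta r)} u^{-p_0}\,d\mu\Bigr) \le K.
\]
Chaining this with the mean value inequalities at exponent $p_0$ and $-p_0$ delivers \eqref{e-ehi} on a ball of radius $c r \le \theta r$, after adjusting constants to absorb the scale $r \ge r_0$ constraint inherited from \ref{poin-mms}.

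\medskip
\noindent\textbf{Main obstacle.} The principal difficulty is the non-classical form of the Sobolev inequality \eqref{e-Sob}, whose left-hand side is $\|P_B f\|_{2\delta/(\delta-2)}$ rather than $\|f\|_{2\delta/(\delta-2)}$. Thus at each Moser step one controls an $L^{\sigma q}$ norm of $P_B(\eta u^q)$, not of $\eta u^q$ itself, and one must compare the two. Condition (d) of Definition~\ref{d-compat} (which forces a laziness-like property) is used precisely at this point: it guarantees that on a slightly smaller ball $P_B(\eta u^q)$ dominates a constant multiple of $\eta u^q$, so that the Moser iteration closes. Handling the resulting shrinkage of radii along the iteration is essentially bookkeeping, but the need to keep $r_j$ bounded below by some multiple of $h'$ (and bounded below by $r_0$ for the Poincar\'e step) is what forces the requirement $r \ge r_0$ in the statement. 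A secondary but routine issue is that, unlike in the Riemannian setting, a non-negative $P$-harmonic function need not be continuous; all suprema and infima are understood in an essential (or pointwise, for $r\ge 2$ convolutions) sense, and one uses Lemma~\ref{l-kernel} together with the lower bound in \eqref{e-compat} to pass back and forth between pointwise and integral control.
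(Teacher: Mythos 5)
Your overall strategy matches the paper's: Moser iteration built from the weak Sobolev inequality \eqref{e-Sob}, a mean value inequality for sub/super-harmonic powers of $u$, and a log-BMO/John--Nirenberg bridge. The paper realizes this via Lemmas \ref{l-elel}, \ref{l-2toinf}, \ref{l-it2}, \ref{l-qto2} (mean value inequalities), Lemma \ref{l-minplu} and Proposition \ref{p-negpos} (log estimate), and Propositions \ref{p-jn} and \ref{c-jn} (John--Nirenberg), all assembled in the proof of Theorem \ref{t-ehi}.

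There is, however, a real gap at the John--Nirenberg step, which you wave at with ``available since \ref{doub-loc} and \ref{doub-inf} hold, after passing through the net.'' The space only satisfies doubling at scales above some threshold, not globally, so the classical JN argument (whose Calder\'on--Zygmund stopping-time decomposition terminates at Lebesgue points via differentiation) does not directly apply. Passing to the $\epsilon$-net and applying discrete JN to the averaged function $\widetilde{\log u}(v)=\fint_{B(v,\epsilon)}\log u\,d\mu$ is a reasonable idea, but it does not come back for free: to convert exponential integrability of $\widetilde{\log u}$ into exponential integrability of $\log u$ itself, you must control $\bigl|\log u(y)-\widetilde{\log u}(v)\bigr|$ for $y\in B(v,\epsilon)$, which requires a uniform bound on $u(y)/u(z)$ at scale $\sim h'$. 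That is precisely a \emph{local Harnack inequality} for $u$, which in turn must be proved from $(h,h')$-compatibility before any JN argument can run. The paper isolates exactly this ingredient as Lemma \ref{l-compharm}, and rather than transferring to the net it develops a modified BMO seminorm at scale $h$ and a Calder\'on--Zygmund/John--Nirenberg theory (Lemma \ref{l-cz}, Propositions \ref{p-jn} and \ref{c-jn}) in which the local Harnack hypothesis \eqref{e-lochar} replaces the Lebesgue differentiation step that fails without small-scale doubling. Without this piece, your bridging step does not close. A minor but genuine slip elsewhere: when $u>0$ is $P$-harmonic, $u^{-1}$ is $P$-\emph{sub}harmonic (not superharmonic), by Jensen; this is Lemma \ref{l-sub}(b) in the paper, and it is what allows the $L^2\to L^\infty$ mean value inequality (Lemma \ref{l-2toinf}) to be applied to $u^{-q/2}$.
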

In \eqref{e-ehi}, the sup and inf must be understood as essential sup and essential inf with respect to $\mu$.

 We follow Moser's iteration method \cite{Mos61} to prove the elliptic Harnack inequality.
 Our approach is an adaptation of Delmotte's approach except that we have to rely on a
 weaker version of Sobolev inequality and a modified version of John-Nirenberg inequality.
 Moser's iteration relies on estimating the quantities
 \begin{equation} \label{e-phi}
 \phi(u,p,B') := \left( \frac{1}{\mu(B')} \int_{B'} \abs{u}^p \, d\mu \right)^{1/p}
 \end{equation}
for different balls $B' \subset B$ and for different values of $p \in \R \setminus \set{0}$.
By Jensen's inequality, $p \mapsto \phi(u,p,B')$ is non-decreasing function. The function $\phi$ satisfies
$\lim_{p \to -\infty} \phi(u,p,cB) = \inf_{cB} u$ and $\lim_{p \to +\infty} \phi(u,p,cB) = \sup_{cB} u$ \cite[Lemma 14.1.4]{Jos13}.
To obtain \eqref{e-ehi},
Moser's iterative method relies on establishing bounds of the form
$\phi(u,p_1,B')\le  C_{p_1,p_2} \phi(u,p_2,B'')$ for different values of $p_1,p_2 \in \R \setminus \set{0}$ satisfying $p_1 < p_2$.
Sobolev inequality and Poincar\'{e} inequality are crucial ingredients to run this iterative procedure.
For a function $f$ that is defined on a ball $B$, we denote the mean integral by
\[
 f_B= \dashint_B f \,d\mu = \frac{1}{\mu(B)} \int_B f \,d\mu.
\]

We start with a local version of the above elliptic Harnack inequality.
\begin{lemma}
 \label{l-compharm}
 Let $(M,d,\mu)$ be a quasi-$b$-geodesic space satisfying \ref{doub-loc} and let $P$ be a weakly $(h,h')$-compatible Markov operator with $(M,d,\mu)$ for some $h > b$.
  There exists $C >0$ and $r_0>0$ such that
  \begin{equation} \label{e-le0}
   u(y) \le C u(z)
  \end{equation}
for all $x \in M$, for all $r \ge r_0$, for all $y,z \in B(x,r/2)$ satisfying $d(y,z) \le h'$ and
for all non-negative functions $u:B(x,r+h') \to \R$  harmonic in $B(x,r)$.
\end{lemma}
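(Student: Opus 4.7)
The plan is to bound $u(y)$ from above using a single application of $P$ and $u(z)$ from below using $l$ applications of $P$, where $l$ is chosen large enough that the iterated kernel $p_l$ admits a uniform positive lower bound on a neighborhood large enough to contain $B(y,h')$. I would choose $r_0 := 2lh'$ so that iteration of the harmonicity identity $u = Pu$ is valid pointwise on $B(x,r/2)$ for the relevant number of steps (each intermediate point lies in $B(x,r)$, where $u$ is harmonic, and in $B(x,r+h')$, where $u$ is defined).

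First, using harmonicity and the upper estimate in \eqref{e-compat}, for $y \in B(x,r/2)$,
\begin{equation*}
u(y) \;=\; \int_M p_1(y,w)\,u(w)\,\mu(dw) \;\le\; \frac{C_1}{V(y,h')}\int_{B(y,h')} u(w)\,\mu(dw).
\end{equation*}
The key lower bound I need is the existence of $l \in \mathbb{N}^*$ and $c_l>0$ such that
\begin{equation*}
p_l(z,w) \;\ge\; \frac{c_l}{V(z,h)} \qquad \text{for all } z,w \in M \text{ with } d(z,w) \le 2h'.
\end{equation*}
This is obtained by the same chain construction used in the proof of Lemma \ref{l-wscompat}. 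Because $h>b$ and $(M,d)$ is quasi-$b$-geodesic, for any pair $(z,w)$ with $d(z,w) \le 2h'$ Lemma \ref{l-chain} produces a $b$-chain from $z$ to $w$ whose length is bounded by a constant depending only on $b$ and $h'$; padding this chain to a common large length $l$ (by adding steps that remain inside a small ball, which is possible via the lower estimate in \eqref{e-compat}) and then applying Chapman--Kolmogorov with integration over balls of radius $h_1 = (h-b)/2 > 0$ at each step yields the claimed bound.

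Given such $l$, and assuming $r \ge r_0 = 2lh'$ so that $u = P^l u$ holds pointwise on $B(x,r/2)$, for $z \in B(x,r/2)$ with $d(y,z) \le h'$ we have $B(y,h') \subseteq B(z,2h')$, hence
\begin{equation*}
u(z) \;=\; \int_M p_l(z,w)\,u(w)\,\mu(dw) \;\ge\; \frac{c_l}{V(z,h)}\int_{B(y,h')} u(w)\,\mu(dw).
\end{equation*}
Dividing the upper bound on $u(y)$ by the lower bound on $u(z)$ (the case $\int_{B(y,h')} u\,d\mu = 0$ being trivial since it forces $u(y) = 0$),
\begin{equation*}
\frac{u(y)}{u(z)} \;\le\; \frac{C_1}{c_l}\cdot \frac{V(z,h)}{V(y,h')}.
\end{equation*}
Since $d(y,z) \le h'$ and $h \le h'$, the inclusion $B(z,h) \subseteq B(y,2h')$ combined with Lemma \ref{l-vloc} yields $V(z,h) \le C\, V(y,h')$ for an absolute constant $C$, giving \eqref{e-le0}.

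The main obstacle is producing the uniform lower bound on $p_l$ over balls of radius $2h'$ (rather than $h'$, as explicitly stated in the proof of Lemma \ref{l-wscompat}); this requires only that $l$ be taken sufficiently large relative to $h'/b$ so that a $b$-chain of length $l$ can span the full distance $2h'$, and the condition $h>b$ is what guarantees the intermediate integration radius $h_1$ is strictly positive. A secondary technical point is justifying $u = P^l u$ pointwise rather than almost everywhere, but this is a routine induction using the representation of $P$ via the compactly supported kernel $p_1(y,\cdot)$ once $r \ge r_0$.
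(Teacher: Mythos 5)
Your proof is correct and follows essentially the same approach as the paper: both hinge on a uniform lower bound for an iterated kernel $p_l(z,\cdot)$ on the ball $B(z,2h')$ obtained from the chain lemma, then use harmonicity to write $u(z) = P^l u(z)$ and bound this below by an integral over $B(y,h')$. The only cosmetic difference is that the paper compares $p_l(z,w)$ directly to $c_2\,p_1(y,w)$ pointwise and integrates once, whereas you separately bound $u(y)$ above and $u(z)$ below and then take the ratio; the underlying estimate and the choice of $r_0$ are the same.
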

\begin{proof}
 There exists $c_1>0$ and $l \in \N^*$ such that
 \begin{equation} \label{e-le1}
 p_l(z,w) = p_l(w,z) \ge \frac{c_1 \one_{B(z,2h')}(w)}{V(w,h')}
 \end{equation}
 for all $y,w \in M$. The proof of \eqref{e-le1} is analogous to that of \eqref{e-ws0}.
 Therefore by \eqref{e-le1}, \ref{doub-loc} weak $(h,h')$-compatibility of $p_1$ and triangle inequality, there exists $c_2 >0$ such that
 \begin{equation}
  \label{e-le2}
  p_l(z,w) \ge \frac{c_1 \one_{B(z,2h')}(w)}{V(w,h')} \ge \frac{c_1 \one_{B(y,h')}(w)}{V(w,h')}  \ge c_2 p_1(w,y) = c_2 p_1(y,w)
 \end{equation}
for all $y,z,w \in M$ satisfying $d(y,z) \le h'$.

Choose $r_0$ large enough so that $r/2+ l h' \le r+h'$ for all $r \ge r_0$.
Note that for every harmonic function $u:B(x,r+h') \to \R$ in $B(x,r)$ with $r \ge r_0$  and for all $z \in B(x,r/2)$, we have
\begin{equation}\label{e-le3}
 u(z) =  P^l u(z) = \int_{B(z, l h')} p_l(z,w)  u(w) \mu(dw)
\end{equation}
By \eqref{e-le3} and \eqref{e-le2}, we obtain
\begin{equation} \label{e-le4}
 u(z)= \int_{B(z, l h')} p_l(z,w)  u(w) \mu(dw) \ge c_2 \int_{B(y,  h')} p_1(y,w)  u(w) \mu(dw)= c_2 u(y)
\end{equation}
for all non-negative harmonic functions $u$ in $B(x,r)$ for all $x \in M$, for all $z,y \in B(x,r/2)$ with $r \ge r_0$.
The choice $C=c_2^{-1}$ satisfies \eqref{e-le0}.
\end{proof}
\section{John-Nirenberg inequality}
Moser \cite{Mos61}, used John-Nirenberg inequality to obtain an estimate of the form $\phi(u,-q,B') \le C' \phi(u,q,B')$ for some $q,C'>0$.
An alternative approach is to use an abstract lemma of Bombieri and Guisti was later proposed by Moser  \cite[Section 2.2.3]{Sal02}.

John-Nirenberg inequality is an estimate on distribution of functions of bounded mean oscillation which were introduced in \cite{JN61}.
A locally integrable function $f:B \to \R$ define  is of bounded mean oscillation (BMO) if
\[
 \norm{f}_{BMO(B) } := \sup_{B' \subset B} \frac{1}{\mu(B')} \int_{B'} \abs{f - f_{B'}} \,d\mu < \infty.
\]
John-Nirenberg inequality  states that  functions of bounded mean oscillation have an exponentially decaying distribution function.

In \cite[Theorem 5.2]{ABMH11} a version of John-Nirenberg inequality is shown for spaces satisfying the doubling hypothesis \ref{doub-glob}.
However for us, the metric measure space $(M,d,\mu)$ only satisfies \ref{doub-loc} and \ref{doub-inf}.
Since we do not have doubling hypothesis on arbitrarily small balls, we introduce a modified version of BMO seminorm (BMO seminorm at scale $h$) defined as
\begin{equation}
 \norm{f}_{BMO(B(x_0,r_0)),h } = \sup_{\substack{ B(y,r) \subseteq B(x_0,r_0), r \ge h}} \frac{1}{V(y,r)} \int_{B(y,r)} \abs{f - f_{B(y,r)}} \,d\mu.
\end{equation}

Our proof is motivated by the presentation in \cite{ABMH11}. We start by recalling the Vitali covering lemma.
\begin{lemma}[Vitali covering lemma] \label{l-vit}
Let $\mathcal{F}$ be a family of balls with positive and uniformly bounded radii in a metric space $(M,d)$. Then there
exists a disjoint subfamily $\mathcal{G} \subseteq \mathcal{F}$ such that
\begin{equation*}
 \bigcup_{B \in \mathcal{F}} B \subseteq \bigcup_{B \in \mathcal G} 5 B.
\end{equation*}
In fact, every ball $B \in \mathcal F$ meets a ball $B' \in \mathcal G$ with radius at least
half that of $B$ and therefore satisfies $B \subseteq 5 B'$.
\end{lemma}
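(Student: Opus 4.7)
The plan is to run a greedy selection organized by scale. Let $R := \sup_{B \in \mathcal{F}} \operatorname{rad}(B)$, which is finite by hypothesis, and partition $\mathcal{F}$ into scale classes $\mathcal{F}_n := \{ B \in \mathcal{F} : R/2^{n+1} < \operatorname{rad}(B) \le R/2^n \}$ for $n \in \mathbb{N}_0$. I will then inductively pick disjoint subfamilies: take $\mathcal{G}_0$ to be any maximal pairwise-disjoint subfamily of $\mathcal{F}_0$ (existence via Zorn's lemma on the poset of pairwise-disjoint subfamilies of $\mathcal{F}_0$ ordered by inclusion), and given $\mathcal{G}_0, \ldots, \mathcal{G}_{n-1}$, take $\mathcal{G}_n$ to be a maximal pairwise-disjoint subfamily of $\bigl\{ B \in \mathcal{F}_n : B \cap B' = \emptyset \text{ for all } B' \in \mathcal{G}_0 \cup \cdots \cup \mathcal{G}_{n-1} \bigr\}$. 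Setting $\mathcal{G} := \bigcup_{n \ge 0} \mathcal{G}_n$ produces a pairwise-disjoint subfamily of $\mathcal{F}$ by construction.

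The central claim I need to verify is the strengthened conclusion: every $B \in \mathcal{F}$ meets some $B' \in \mathcal{G}$ with $\operatorname{rad}(B') \ge \operatorname{rad}(B)/2$. If $B \in \mathcal{F}_n$, then either $B \in \mathcal{G}_n$ (take $B' = B$), or else by maximality of $\mathcal{G}_n$ within the admissible family at step $n$, the ball $B$ must intersect some $B' \in \mathcal{G}_0 \cup \cdots \cup \mathcal{G}_n$; any such $B'$ satisfies $\operatorname{rad}(B') > R/2^{n+1} \ge \operatorname{rad}(B)/2$, as required.

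It remains to deduce $B \subseteq 5B'$ from the two properties $B \cap B' \ne \emptyset$ and $\operatorname{rad}(B') \ge \operatorname{rad}(B)/2$. Writing $B = B(x,r)$ and $B' = B(x',r')$ with $r' \ge r/2$, and picking any $z \in B \cap B'$, the triangle inequality yields $d(x,x') \le d(x,z) + d(z,x') \le r + r'$, whence for any $y \in B$,
\[
d(y,x') \le d(y,x) + d(x,x') \le 2r + r' \le 4r' + r' = 5r',
\]
so $y \in 5 B'$. This gives both the covering inclusion $\bigcup_{B \in \mathcal{F}} B \subseteq \bigcup_{B' \in \mathcal{G}} 5B'$ and the more precise ``half-radius'' statement.

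The main obstacle is essentially bookkeeping rather than analysis: one must ensure that the greedy selection at scale $n$ genuinely sees all balls in $\mathcal{F}_n$ that survived earlier stages, which is why the partition into dyadic scale classes (rather than an attempted one-shot Zorn argument on all of $\mathcal{F}$) is needed — a single maximal disjoint subfamily of $\mathcal{F}$ could leave a small ball uncovered by any larger ball in $\mathcal{G}$. The Zorn invocation at each scale can be avoided when $\mathcal{F}$ is countable by a straightforward transfinite/sequential enumeration.
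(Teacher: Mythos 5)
Your proof is correct and follows the standard construction by dyadic scale classes with greedy maximal disjoint selection at each scale; this is precisely the argument in Heinonen's \cite[Theorem 1.2]{Hei01}, to which the paper defers rather than supplying its own proof. The only Zorn input is the existence of maximal pairwise-disjoint subfamilies, which is exactly what the paper means by ``follows from an application of Zorn's lemma.''
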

The proof of Vitali covering lemma follows from an application of Zorn's lemma. We refer the reader to \cite[Theorem 1.2]{Hei01} for a proof of Lemma \ref{l-vit}.
A crucial ingredient in the proof of John-Nirenberg inequality is the following version of
Calder\'{o}n-Zygmund decomposition lemma.
Since we replaced \ref{doub-glob} by weaker assumptions  \ref{doub-loc} and \ref{doub-inf}, we need
some other method to control the behavior of a BMO function at small length scales.
This is why we assume a local Harnack inequality (by Lemma \ref{l-compharm} the local Harnack inequality holds for harmonic functions).
\begin{lemma}[Calder\'{o}n-Zygmund decomposition lemma] \label{l-cz}
Suppose $(M,d,\mu)$ be a metric measure space satisfying  \ref{doub-loc} and \ref{doub-inf}.
 Let $f$ be a non-negative locally integrable function on $B(x_0,11 r_0)$ for some $r_0 \ge r_1 \ge h >0$. Further we assume that there exists $C_1 \ge 1$ such that
 $f$ satisfies the local Harnack inequality
 \begin{equation} \label{e-cz00}
  f(y) \le C_1 f(z)
 \end{equation}
 for all $y,z \in B(x_0,r_0+h)$ satisfying $d(y,z) \le h$.
Further, assume that
\begin{equation}\label{e-cz01}
  \lambda_0 \ge \frac{1}{V(x_0,r)} \int_{B(x_0,11r_0)} f \, d\mu
\end{equation}
Then there exists countable (possibly finite) family of disjoint balls  $\mathcal{F}=\set{B_i}$ of disjoint balls centered in $B(x_0,r)$
and satisfying $5 B_i \subseteq B(x_0,11r_0)$ for all $B_i \in \mathcal F_0$ so that
\begin{enumerate}[(i)]
 \item $f(x) \le C_1 \lambda_0$ for all $x \in B(x_0,r_0) \setminus \left( \bigcup_{B_i \in \mathcal F} 5 B_i \right)$.
 \item $\lambda_0 < \dashint_{B_i} f \,d\mu \le C_2 \lambda_0$ for all $B_i \in \mathcal F_0$.
  \item $C_2^{-1} \lambda_0 < \dashint_{5 B_i} f \,d\mu \le \lambda_0$ for all $B_i \in \mathcal F_0$.
\end{enumerate}
The family of balls $\mathcal F_0$ satisfying the above conditions are called Calder\'{o}n-Zygmund balls at level $\lambda_0$.
Moreover if  $\lambda_0 \le \lambda_1 \le \ldots \le \lambda_N$, then the family Calder\'{o}n-Zygmund balls $\mathcal F_n$
corresponding to different levels $\lambda_n$ may be chosen in such a way that every $B_i(\lambda_{n+1}) \in \mathcal{F}_{n+1}$ is contained in
some $5 B_j(\lambda_n)$ where $B_j(\lambda_n) \in \mathcal F_n$.
\end{lemma}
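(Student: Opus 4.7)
The strategy is the classical Vitali-based Calder\'{o}n--Zygmund stopping-time construction, modified to accommodate the fact that our space has doubling only above the scale $h$ (via \ref{doub-inf}); the local Harnack hypothesis \eqref{e-cz00} compensates for the absence of small-scale doubling. For each $y \in B(x_0, r_0)$ define
\[
  r(y) = \sup \set{r : h \le r,\ B(y, r) \subseteq B(x_0, 11 r_0),\ \dashint_{B(y, r)} f > \lambda_0},
\]
whenever this set is nonempty, and let $E$ be the set of such $y$. The hypothesis on $\lambda_0$ forces $r(y) \le 2 r_0$: once $r \ge 2 r_0$, the ball $B(y,r)$ contains $B(x_0, r_0)$ while being contained in $B(x_0, 11 r_0)$, and a direct computation using the hypothesis pins $\dashint_{B(y,r)} f$ below $\lambda_0$. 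In particular $5 B(y, r(y)) \subseteq B(x_0, 11 r_0)$. Now apply Lemma \ref{l-vit} to $\set{B(y, r(y)) : y \in E}$ to extract a countable disjoint subfamily $\mathcal{F} = \set{B_i = B(y_i, r_i)}$ with the property that every ball in the original family is contained in $5 B_j$ for some $j$.

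Conditions (ii) and (iii) then follow from the construction together with doubling at scale $\ge h$. The lower bound $\dashint_{B_i} f > \lambda_0$ is built in. Since $5 r_i > r_i = r(y_i)$ and $5 B_i \subseteq B(x_0, 11 r_0)$, the maximality of $r(y_i)$ gives $\dashint_{5 B_i} f \le \lambda_0$. The complementary estimates $\dashint_{B_i} f \le C_2 \lambda_0$ and $\dashint_{5 B_i} f > C_2^{-1} \lambda_0$ reduce to the comparison $V(y_i, 5 r_i) \le C_D^3 V(y_i, r_i)$, which is \ref{doub-inf} applied legitimately because $r_i \ge h$, with $C_2 = C_D^3$. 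For condition (i), suppose $x \in B(x_0, r_0) \setminus \bigcup_i 5 B_i$. Then $x \notin E$: if $x$ were in $E$, the ball $B(x, r(x))$ would lie in the original Vitali family, forcing $x \in B(x, r(x)) \subseteq 5 B_j$ for some $j$. Hence $\dashint_{B(x, h)} f \le \lambda_0$; the local Harnack inequality $f(z) \ge C_1^{-1} f(x)$ for $z \in B(x, h)$ (valid since $B(x, h) \subseteq B(x_0, r_0 + h)$) then yields $f(x) \le C_1 \dashint_{B(x, h)} f \le C_1 \lambda_0$.

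The nested structure for an increasing sequence $\lambda_0 \le \lambda_1 \le \ldots$ is automatic from monotonicity: if $B = B(y, r^{(n+1)}(y)) \in \mathcal{F}_{n+1}$, then $\dashint_B f > \lambda_{n+1} \ge \lambda_n$, so $r^{(n+1)}(y) \le r^{(n)}(y)$ and $B \subseteq B(y, r^{(n)}(y))$, which lies in some $5 B_j(\lambda_n)$ by the Vitali property of $\mathcal{F}_n$. The main substantive point in the whole argument is condition (i): this is exactly where the classical Euclidean proof invokes Lebesgue differentiation at arbitrarily small radii, and our replacement is precisely the local Harnack inequality, which guarantees $f$ is essentially constant on balls of radius $h$. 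A minor technical annoyance is that the supremum defining $r(y)$ need not be attained, since $r \mapsto \dashint_{B(y, r)} f$ is a ratio of right-continuous functions of $r$; this is circumvented by taking $r(y)$ to be the infimum of radii beyond which the average permanently drops to $\lambda_0$ or below, at the cost of harmless constant inflation in (ii) and (iii).
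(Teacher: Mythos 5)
Your overall route---a stopping radius built from the averaged oscillation, Vitali covering, local Harnack in place of Lebesgue differentiation for (i), and monotonicity for the nested structure---matches the paper's strategy, but there is one substantive defect: the stopping radius $r(y)$ is a supremum that need not be attained, and the remedy you sketch does not fix it. You acknowledge the issue, but the fix you offer (``take $r(y)$ to be the infimum of radii beyond which the average permanently drops to $\lambda_0$ or below'') is a renaming of the same quantity. The set $\Sett{\rho}{\dashint_{B(y,r)} f \le \lambda_0 \text{ or } B(y,r)\not\subseteq B(x_0,11r_0) \text{ for all } r > \rho}$ is an up-set whose infimum coincides with the supremum you started from, and right-continuity of $r \mapsto V(y,r)$ and $r \mapsto \int_{B(y,r)} f$ gives you control only from the right, not at the left endpoint of the complementary interval. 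So $\dashint_{B(y,r(y))} f$ may well be $\le \lambda_0$, meaning condition (ii) can fail for $B_i = B(y_i, r(y_i))$, and the bound for (iii) and the nested-family construction inherit the same hole.

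The repair is not hard but you need to state it correctly: choose $r_i \in \left(\max(h,\, r(y_i)/5),\, r(y_i)\right)$ lying in the defining set (possible because $r(y_i)$ is the supremum; if $r(y_i)=h$ then $h$ itself lies in the set), set $B_i = B(y_i,r_i)$, and observe that (ii) holds by choice while $5r_i > r(y_i)$ still forces $\dashint_{5B_i} f \le \lambda_0$ by maximality, with $5B_i \subseteq B(x_0,11r_0)$ coming from $r_i < r(y_i) \le 2r_0$. The doubling step $V(y_i,5r_i)/V(y_i,r_i) \le C_h^3$ at scale $\ge h$ is unaffected, and the nested property survives because $r_i^{(n+1)}(y) < r^{(n+1)}(y) \le r^{(n)}(y)$, so $B_i(\lambda_{n+1}) \subseteq B(y, r^{(n)}(y))$, which Vitali at level $\lambda_n$ covers by $5B_j(\lambda_n)$. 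The paper sidesteps all of this by using a discrete stopping time: for each $x$ in the level set, fix one ball $B_x$ of radius $\ge h$ with average $>\lambda$ and take the least $n_x \in \N^*$ with $\dashint_{5^{n_x}B_x} f \le \lambda$---a minimum over a nonempty subset of $\N^*$, always attained, with the extra $5^\delta$ factor absorbed into $C_2$ and the nesting for $\lambda_N \ge \lambda_{N-1}$ following from $n_x(\lambda_N)\le n_x(\lambda_{N-1})$. That dyadic stopping time is the one genuinely different ingredient; the rest of your sketch (the bound $r(y)\le 2r_0$ from \eqref{e-cz01}, the local Harnack argument for (i), the three-doublings constant for (ii)--(iii)) is sound and in line with the paper.
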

\begin{proof}
We denote $B(x_0,r_0)$ as $B_0$.
Define a maximal function
\[
 M_{B_0} f(x)= M_{B(x_0,r_0)} f(x) = \sup_{\substack{ B(y,r) \subset B(x_0,r_0+h):\\ y \in B(x_0,r_0), r \ge h, B(y,r) \ni x}} \dashint_{B(y,r)} f \,d\mu
\]
for all $x \in B(x_0,r)$. We define
\[
 E_\lambda = \Sett{x \in B(x_0,r_0) }{ M_{B(x_0,r_0)} f(x) > \lambda }.
\]
First consider $\lambda_N$. By definition for every $x \in E_{\lambda_N}$,
there exists a ball $B_x = B(y_x,r_x)$ satisfying $y_x \in B_0$, $x \in B_x$, $B_x \subseteq B(x_0,r_0+h)$, $r_x \ge h$ and
\begin{equation} \label{e-cz1}
 \lambda_0 \le \lambda_1 \le \ldots \le \lambda_N < \dashint_{B_x} f \, d\mu.
\end{equation}
Let $k=k_x \in \N^*$ be such that $5^{k-1} r_x \le 2 r_0 \le 5^k r_x$.
Then $B_0 \subseteq 5^k B_x \subseteq 11 B_0$. Combining this with \eqref{e-cz01}, we have
\[
 \dashint_{5^k B_x} f \, d\mu \le \frac{1}{\mu(B_0)} \int_{11 B_0} f \, d\mu \le \lambda_0 \le \lambda_N.
\]
However since $\dashint_{B_x} f \, d\mu > \lambda_N$, there exist smallest $n_x \ge 1$  such that
\begin{equation}
 \label{e-cz2} \dashint_{5^{n_x} B_x} f \, d\mu \le \lambda_N
\end{equation}
and
\begin{equation}
 \label{e-cz3} \dashint_{5^{j} B_x} f \, d\mu > \lambda_N
\end{equation}
for all $j=0,1,\ldots,n_x-1$. The balls $ 5^{n_x-1} B_x$ forms a covering of $E_{\lambda_N}$. Therefore by Vitali covering lemma (Lemma \ref{l-vit}),
we pick a family $\mathcal F_N$  of pairwise disjoint balls $B_i= 5^{n_{x_i}-1}B_{x_i}$ satisfying $E_{\lambda_N} \subseteq \bigcup_{B \in \mathcal F_N} 5B$.
We now check the construction above satisfies the desired properties. By \eqref{e-cz2}, \eqref{e-cz3} and \eqref{e-vd1}, there exists $C_h>0, \delta>0$ such that
\begin{equation*}
 \lambda_N < \dashint_{5^{n_x -1}B_x} f \, d\mu \le C_h 5^\delta \dashint_{5^{n_x}B_x} f \, d \mu \le  C_h 5^\delta \lambda_N.
\end{equation*}
Choosing $C_2 = 5^\delta C_h$, we obtain properties (ii) and (iii) of Calder\'{o}n-Zygmund decomposition.

It remains to verify (i). If $x \in B_0 \setminus \left( \bigcup_{B_i \in \mathcal F} 5 B_i \right) \subseteq B_0 \setminus E_{\lambda_N}$, we have $M_{B_0}f(x) \le \lambda_N$.
Therefore by \eqref{e-cz00}, we have
\[
 \lambda_N \ge M_{B_0}f(x) \ge \dashint_{B(x,h)} f\,d\mu \ge C_1^{-1} f(x).
\]
This give property (i). We have now constructed the desired decomposition at level $\lambda_N$. Next we consider $\lambda_{N-1}$.

Since $E_{\lambda_N} \subseteq E_{\lambda_{N-1}}$, for every $x \in E_{\lambda_N}$, we may start with exactly the same ball satisfying \eqref{e-cz1} as before.
For every $x \in E_{\lambda_{N-1}} \setminus E_{\lambda_N}$, we choose a ball $B_x=B(y_x,r_x)$ satisfying $B_x \ni x$, $y_x \in B_0$, $r_x \ge h$, $B_x \subset B(x_0,r_0+h)$ and
\begin{equation}
 \label{e-cz4} \lambda_0 \le \ldots \le \lambda_{N-1} < \dashint_{B_x} f \, d\mu.
\end{equation}
As before for each ball $B_x$, we choose the smallest integer $m_x \ge 1$ such that
\begin{equation}
 \label{e-cz5} \dashint_{5^{m_x} B_x } f \, d \mu \le \lambda_{N-1}
\end{equation}
and
\begin{equation}
 \label{e-cz6} \dashint_{5^{j} B_x} f \, d\mu > \lambda_{N-1}
\end{equation}
for $j=0,1,\ldots,m_x -1$. Note that if $x \in E_{\lambda_N}$, then $n_x \le m_x$. As before, we apply Vitali's covering lemma
to the  balls $\Sett{5^{m_x-1}B_x}{ x \in E_{\lambda_{N-1}}}$ to obtain a pairwise disjoint family of balls $\mathcal F_{N-1}$ satisfying
(i)-(iii) with $\lambda_0$ replaced by $\lambda_{N-1}$.

Let $B_i(\lambda_N) \in \mathcal F_N$. Then $B_i(\lambda_N) = 5^{n_x -1}B_x$ for some $x \in E_{\lambda_N}$. Since $n_x \le m_x$, we have
$B_i(\lambda_N) \subset 5^{m_x -1}B_x$. By Vitali's covering lemma, there exists $B_j(\lambda_{N-1}) \in \mathcal F_{N-1}$ such that
$B_i(\lambda_N) \subseteq 5^{m_x -1}B_x \subseteq 5 B_j(\lambda_{N-1})$. We continue this procedure to get decomposition at all levels
$\lambda_0 \le \ldots \le \lambda_N$.
\end{proof}
\begin{remark}
 In the above proof, we use \eqref{e-cz00} to obtain property (i) of the Calder\'{o}n-Zygmund decomposition.
 Typically property (i) is proved using  Lebesgue differentiation theorem.
 However the proof of  Lebesgue differentiation theorem requires \ref{doub-glob}. (See \cite{ABMH11} and \cite[Theorem 1.8]{Hei01})
\end{remark}
Next, we prove the John-Nirenberg inequality for spaces satisfying \ref{doub-loc} and \ref{doub-inf}.
\begin{prop}[John-Nirenberg inequality] \label{p-jn}
 Let $(M,d,\mu)$ be a metric measure space satisfying  \ref{doub-loc} and \ref{doub-inf}.
 Let $f$ be a non-negative locally integrable function on $B(x_0,11 r_0)$ for some $r_0 \ge h >0$. Further we assume that there exists $C_1 \ge 1$ such that
 $f$ satisfies the local Harnack inequality
 \begin{equation} \label{e-lochar}
  f(y) \le C_1 f(z)
 \end{equation}
 for all $y,z \in B(x_0,r_0+h)$ satisfying $d(y,z) \le h$.
Then there exists $C_2 >0$ such that
\begin{equation}
 \label{e-jn} \mu \left( \Sett{ x\in B_0 }{ \abs{f-f_{B_0}}} > \lambda  \right) \le C_2 \mu(B_0) \exp(-\lambda / (C_2 \norm{f}_{\BMO(11B_0),h}))
\end{equation}
for all $\lambda >0$. The constant $C_2$ depends only on $C_1,h$ and
 constants associated with doubling hypotheses \ref{doub-loc} and \ref{doub-inf}.
\end{prop}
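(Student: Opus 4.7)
The plan is to adapt the classical Calderón--Zygmund iteration that underlies the John--Nirenberg inequality in the Euclidean setting, using the stopping-time decomposition of Lemma \ref{l-cz} as the substitute for Lebesgue differentiation that is unavailable because we only have \ref{doub-loc} and \ref{doub-inf}. Set $\alpha := \norm{f}_{\BMO(11B_0),h}$, and assume $\alpha>0$ (the case $\alpha=0$ forces $f$ to be constant at scale $h$ and then constant on $B_0$ by Harnack chaining, for which \eqref{e-jn} is trivial). Choose the smallest admissible threshold $\lambda_0 := V(B_0)^{-1}\int_{11B_0} f\,d\mu$, and define the arithmetic sequence $\lambda_n := \lambda_0 + n M \alpha$ where $M := 4 C_D'$ and $C_D'$ is the doubling constant from Lemma \ref{l-doub-prop} applied at scale $h$. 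I would apply Lemma \ref{l-cz} at each level $\lambda_n$ to produce the nested families $\mathcal F_0, \mathcal F_1, \ldots$ of Calderón--Zygmund balls, with the property that every $B\in\mathcal F_{n+1}$ sits inside $5B'$ for some $B'\in\mathcal F_n$.

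The heart of the argument is a geometric decay of $\sum_{B\in\mathcal F_n} V(B)$. Fix $B'\in\mathcal F_n$. Property (iii) of the decomposition at level $\lambda_n$ gives $f_{5B'}\le\lambda_n$, while property (ii) at level $\lambda_{n+1}$ gives $f_B>\lambda_{n+1}$ for each $B\in\mathcal F_{n+1}$ with $B\subseteq 5B'$. Since $5B'\subseteq 11B_0$ has radius at least $h$, the BMO bound applied inside $5B'$ produces
\[
(\lambda_{n+1}-\lambda_n)\!\!\sum_{\substack{B\in\mathcal F_{n+1}\\ B\subseteq 5B'}}\!\!V(B) \;\le\; \int_{5B'}\abs{f-f_{5B'}}\,d\mu \;\le\; \alpha\, V(5B') \;\le\; C_D'\alpha V(B').
\]
Summing over $B'\in\mathcal F_n$, the choice $M=4C_D'$ gives $\sum_{B\in\mathcal F_{n+1}}V(B)\le\tfrac14\sum_{B\in\mathcal F_n}V(B)$. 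Iterating with the initial estimate $\sum_{B\in\mathcal F_0}V(B)\le V(B_0)$, coming from property (ii) at level $\lambda_0$ and the choice of $\lambda_0$, yields $\sum_{B\in\mathcal F_n}V(B)\le 4^{-n} V(B_0)$. Combining this with property (i) of Lemma \ref{l-cz} and one more use of doubling, I would conclude
\[
\mu\{x\in B_0 : f(x)>C_1\lambda_n\} \;\le\; \sum_{B\in\mathcal F_n} V(5B) \;\le\; C_D' 4^{-n} V(B_0),
\]
and interpolating in $n$ gives $\mu\{f>t\}\le C V(B_0)\exp(-c(t-C_1\lambda_0)/\alpha)$ for every $t\ge C_1\lambda_0$.

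The final step is to convert this distributional bound on $f$ to the desired bound on $\abs{f-f_{B_0}}$. I would split $\{\abs{f-f_{B_0}}>\lambda\}=\{f>f_{B_0}+\lambda\}\cup\{f<f_{B_0}-\lambda\}$. The lower tail is empty as soon as $\lambda>f_{B_0}$ because $f\ge 0$, and for $\lambda\le f_{B_0}$ one can control it by a Chebyshev estimate using $\int_{B_0}\abs{f-f_{B_0}}\le\alpha V(B_0)$ together with the local Harnack, which rules out large measure sets where $f$ is much smaller than $f_{B_0}$. For the upper tail, since $\lambda_0\le C_D'(f_{B_0}+C_D'\alpha)$ by BMO and doubling, setting $t=f_{B_0}+\lambda$ produces the desired exponential decay in $\lambda/\alpha$ once all prefactors are absorbed into the constant $C_2$ allowed in the statement.

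The hard part will be precisely this last step: the constant $C_1$ from the local Harnack inevitably appears in property (i) of the Calderón--Zygmund decomposition, producing a gap $C_1\lambda_0-f_{B_0}$ of order $(C_1 C_D'-1)f_{B_0}$ that must be absorbed uniformly in $f_{B_0}/\alpha$ into a constant multiple of $\alpha$ in the exponent. Handling the regime where $f_{B_0}/\alpha$ is large will require a careful combination of the iterative Calderón--Zygmund bound with complementary Chebyshev and Harnack-chaining estimates to show that in this regime $f$ is already concentrated close to $f_{B_0}$; it is here that the dependence of $C_2$ on $C_1$, $h$ and the doubling constants is fully used.
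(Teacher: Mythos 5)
Your proposal follows the same overall template as the paper's proof (nested Calder\'on--Zygmund decompositions at an arithmetic sequence of levels, geometric decay of the total mass of the stopping balls), but it diverges at a crucial point: you apply the decomposition to $f$ itself, whereas the paper first replaces $f$ by $(f-f_{B_0})/\norm{f}_{\BMO(11B_0),h}$ so that the Calder\'on--Zygmund machinery is run directly on $|f-f_{B_0}|$ (normalized). The advantage of your choice is that $f$ remains non-negative and the local Harnack hypothesis \eqref{e-lochar} is verbatim the one needed in Lemma \ref{l-cz}; the disadvantage is that a distributional bound on $\{f>t\}$ is not the same as a distributional bound on $\{|f-f_{B_0}|>\lambda\}$, and that translation is where your proof stops being a proof.

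Two concrete gaps. First, the lower tail $\{f<f_{B_0}-\lambda\}$ is not touched by your Calder\'on--Zygmund decomposition at all: the stopping construction in Lemma \ref{l-cz} isolates the set where the maximal function of $f$ is large, giving no information whatsoever about where $f$ is small. A single Chebyshev estimate gives only $\mu\{f<f_{B_0}-\lambda\}\le \lambda^{-1}\alpha V(B_0)$, which is polynomial, not exponential, and the phrase ``together with the local Harnack, which rules out large measure sets where $f$ is much smaller than $f_{B_0}$'' is not an argument: the multiplicative local Harnack $f(y)\le C_1 f(z)$ bounds the ratio of values at nearby points but does not prevent $f$ from being persistently small on a set of positive measure. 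Second, on the upper tail you correctly observe the constant-gap problem: because $\lambda_0=V(p,r)^{-1}\int_{11B_0}f\,d\mu\ge f_{B_0}$ and Lemma \ref{l-cz}(i) only controls $f$ outside the stopping balls up to $C_1\lambda_n$, your distributional estimate begins at $t\ge C_1\lambda_0$, and $C_1\lambda_0-f_{B_0}$ can be as large as $(C_1-1)f_{B_0}$, which is not $O(\alpha)$ uniformly in $f$. You flag this in the final paragraph but do not resolve it; ``Harnack-chaining estimates show $f$ is concentrated close to $f_{B_0}$'' is precisely the conclusion one wants, not a step toward it. The paper sidesteps both difficulties by running the decomposition on $|f-f_{B_0}|$ after normalizing $f_{B_0}=0$ (so that both tails and the starting level are handled in one stroke); if you want to keep your ``apply CZ to $f$'' route, the missing step is to repeat the decomposition for a second function controlling the lower deviations, and to show that the $C_1\lambda_0$-versus-$f_{B_0}$ gap is absorbed, neither of which your sketch supplies.
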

\begin{proof}
 Let $B_0=B(x_0,r_0)$. Without loss of generality, we assume $f_{B_0}=0$ and $\norm{f}_{BMO(11B_0),h}=1$ . It suffices to consider $f$ such that $f_{B_0}=0$ and $\norm{f}_{BMO(11B_0),h}=1$ as we may replace
the function $f$ by $(f-f_{B_0})/\norm{f}_{BMO(11B_0),h}$.

By \eqref{e-vd1}, there exists $C_3 >0$ such that
\begin{align*}
 \frac{1}{\mu(B_0)} \int_{11 B_0} \abs{f-f_{B_0}} \, d \mu & \le C_3 \dashint_{11 B_0 } \abs{f - f_{11 B_0}} \, d\mu + C_3 \abs{f_{B_0} - f_{11 B_0}} \\
 & \le C_3 \norm{f}_{BMO(11B_0),h'} + C_3 \dashint_{B_0} \abs{f - f_{11 B_0}} \,d \mu \\
 &\le 2 C_3^2 \norm{f}_{BMO(11B_0),h} = 2 C_3^2.
\end{align*}
If $B_j$ is the Calder\'{o}n-Zygmund balls at level $C_1^{-1} \lambda$  where $\lambda \ge 2 C_1 C_3^2$, then by Lemma \ref{l-cz}
\begin{enumerate}
 \item[(i)] $\abs{f(x)} \le \lambda$ for all $x \in B_0 \setminus \cup_j 5B_j$.
 \item[(ii)]$C_1^{-1} \lambda < \dashint_{B_j} \abs{f} \, d\mu \le C_3 C_1^{-1} \lambda$ for all $j$.
  \item[(iii)]$C_1^{-1}C_3^{-1} \lambda < \dashint_{5 B_j} \abs{f} \, d\mu \le  C_1^{-1} \lambda$ for all $j$.
\end{enumerate}
By (i) and \eqref{e-vd1}, we have
\begin{equation}
 \label{e-jn1} \mu \left( \Sett{x \in B_0}{ \abs{f(x)} > \lambda} \right) \le \sum_j \mu( 5 B_j) \le C_3 \sum_{j} \mu(B_j)
\end{equation}

In order to estimate $\sum_{j} \mu(B_j)$, we consider Calder\'{o}n-Zygmund decomposition
at levels $ C_1^{-1} \lambda > C_1^{-1} \gamma \ge 2 C_3^2$ as in Lemma \ref{l-cz}.
We partition the family $\set{B_j(C_1^{-1} \lambda)}_j$ as follows:
First we collect those which are contained in $5 B_1(C_1^{-1} \gamma)$.
From the remaining balls we collect those balls which are contained in $5 B_2(C_1^{-1} \gamma)$ and so on. More precisely, we partition
the Calder\'{o}n-Zygmund balls at level $C_1^{-1}\lambda$ as
\[
 \set{B_j(C_1^{-1} \lambda)} = \bigcup_k \set{B_j(C_1^{-1} \lambda)}_{j \in J_k},
\]
where $J_k$'s are defined as
\begin{align*}
 J_1 &= \Sett{ j}{ B_j(C_1^{-1} \lambda) \subseteq 5 B_1(C_1^{-1}\gamma)} \\
  J_2 &= \Sett{ j }{ B_j(C_1^{-1} \lambda) \subseteq 5 B_2(C_1^{-1}\gamma), j \notin J_1} \\
    J_3 &= \Sett{ j }{ B_j(C_1^{-1} \lambda) \subseteq 5 B_3(C_1^{-1}\gamma), j \notin J_1 \cup J_2 }
\end{align*}
and so on. By (ii), we have
\begin{align}
 \nonumber \lambda \sum_j \mu(B_j(C_1^{-1} \lambda)) &\le  C_1 \sum_{j} \int_{B_j(C_1^{-1} \lambda)} \abs{f}\, d\mu \\
 & \le  C_1 \sum_k \sum_{j \in J_k} \int_{B_j(C_1^{-1} \lambda)} \abs{f}\,d\mu. \label{e-jn2}
\end{align}
In addition for each $k$, we have
\begin{align*}
  \sum_{j \in J_k} \int_{B_j(C_1^{-1} \lambda)} \abs{f}\,d\mu & \le \sum_{j \in J_k} \int_{B_j(C_1^{-1} \lambda)}\abs{ \left( \abs{f} + C_1^{-1} \gamma - \abs{f_{5 B_k(C_1^{-1} \gamma)} } \right)} \,d\mu \\
  & \le \sum_{j \in J_k} \int_{B_j(C_1^{-1} \lambda)} \abs{f - f_{5 B_k(C_1^{-1} \gamma)}} \, d\mu + C_1^{-1} \gamma \sum_{j \in J_k} \mu(B_j(C_1^{-1} \lambda)) \\
  & \le   \int_{5 B_k(C_1^{-1} \lambda)} \abs{f - f_{5 B_k(C_1^{-1} \gamma)}} \, d\mu + C_1^{-1} \gamma \sum_{j \in J_k} \mu(B_j(C_1^{-1} \lambda)) \\
    & \le  \mu(5 B_k(C_1^{-1} \lambda)) + C_1^{-1} \gamma \sum_{j \in J_k} \mu(B_j(C_1^{-1} \lambda)) \\
  & \le C_3 \mu( B_k(C_1^{-1} \gamma)) + C_1^{-1} \gamma \sum_{j \in J_k} \mu(B_j(C_1^{-1} \lambda)).
\end{align*}
The fourth line above follows from $\norm{f}_{\BMO(11B_0),h}=1$. We sum over $k$ and
use \eqref{e-jn2}
\begin{equation*}
\lambda \sum_j \mu(B_j(C_1^{-1} \lambda)) \le C_1 C_3 \sum_{k}  \mu( B_k(C_1^{-1} \gamma)) + \gamma  \sum_j \mu(B_j(C_1^{-1} \lambda))
\end{equation*}
for all $\lambda \ge \gamma \ge 2 C_1 C_3^2$. This implies
\begin{equation*}
(\lambda -\gamma) \sum_{j} \mu(B_j(C_1^{-1} \lambda)) \le C_1 C_3 \sum_k \mu(B_k(C_1^{-1} \gamma))
\end{equation*}
for all $\lambda \ge \gamma \ge 2 C_1 C_3^2$.

In particular if $\lambda \ge a:= 2 C_1 C_3^2$, we have
\begin{equation}
 \label{e-jn4} \sum_j \mu(B_j(C_1^{-1} (\lambda+a))) \le \frac{1}{2} \sum_{k}  \mu( B_k(C_1^{-1} \lambda)).
\end{equation}

Let $\lambda \ge a$ and let $N = \floor{\lambda/a}$. Then we apply
the Calder\'{o}n-Zygmund decomposition at levels $C_1^{-1} a < 2 C_1^{-1} a <\ldots < C_1^{-1} N a$.
By \eqref{e-jn1} and repeated application of \eqref{e-jn4}, we obtain
\begin{align}
 \nonumber \mu \left( \Sett{x \in B_0 }{ \abs{f(x)} > \lambda } \right) & \le \mu \left( \Sett{ x \in B_0}{ \abs{f(x)} > Na }\right)\\
 \nonumber & \le C_3 \sum_j \mu(B_j(C_1^{-1} N a)) \le C_3 2^{-N+1} \sum_{j} \mu(B_j(C_1^{-1}a)) \\
 & \le 2 C_3 2^{-N} \mu(11 B_0) \le 4 C_3^2 2^{-\lambda/a} \mu(B_0)\nonumber \\& \le 4 C_3^2 \exp( - (\lambda \ln 2 )/a) \mu(B_0) \label{e-jn5}
\end{align}
The case $\lambda <a$ follows easily since
\[
 \mu \left( \Sett{x \in B_0 }{ \abs{f(x)} > \lambda } \right) \le \mu(B_0) \le 4 C_3^2 \exp( - (\lambda \ln 2 )/a) \mu(B_0).
\]
The choice $C_2 = \max( 4 C_3^2 , a/\ln 2)$ satisfies \eqref{e-jn}.
\end{proof}
We have the following corollary.
\begin{corollary}
 \label{c-jn}
  Let $(M,d,\mu)$ be a metric measure space satisfying  \ref{doub-loc} and \ref{doub-inf}.
 Let $f$ be a non-negative locally integrable function on $B(x_0,11 r_0)$ for some $r_0 \ge h' >0$. Further we assume that there exists $C_1 \ge 1$ such that
 $f$ satisfies the local Harnack inequality
 \begin{equation} \label{e-localhar}
  f(y) \le C_1 f(z)
 \end{equation}
for all $y,z \in B(x_0,r_0+h')$ satisfying $d(y,z) \le h$.
Then there exists $c_0,C_0 >0$ such that
\begin{equation}
 \label{e-cjn}  \int_{B_0}  e^ {\left( c_0 f(y)/ \norm{f}_{\BMO(11 B_0), h'}  \right) } \, dy \int_{B_0}  e^{\left( -c_0 f(y)/ \norm{f}_{\BMO(11 B_0), h'}  \right)} \, dy \le C_0^2 \mu(B_0)^2
\end{equation}
where $B_0=B(x_0,r_0)$.
The constants $c_0,C_0$ depends only on $C_1,h'$ and
 constants associated with doubling hypotheses \ref{doub-loc} and \ref{doub-inf}.
\end{corollary}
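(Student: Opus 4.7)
The plan is to deduce this from the John--Nirenberg inequality (Proposition \ref{p-jn}) applied at scale $h'$ by way of the layer-cake representation of the exponential integral.

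First I would normalize: set $N := \norm{f}_{\BMO(11B_0),h'}$ and observe that the integrand $e^{\pm c_0 f/N}$ can be rewritten, after factoring out the constant $e^{\pm c_0 f_{B_0}/N}$, in terms of $f - f_{B_0}$. Since the two constant factors are reciprocals of each other, they cancel in the \emph{product} of the two integrals:
\[
\int_{B_0} e^{c_0 f/N}\,d\mu \cdot \int_{B_0} e^{-c_0 f/N}\,d\mu = \int_{B_0} e^{c_0 (f-f_{B_0})/N}\,d\mu \cdot \int_{B_0} e^{-c_0 (f-f_{B_0})/N}\,d\mu.
\]
Each factor is dominated by $\int_{B_0} e^{c_0 \abs{f-f_{B_0}}/N}\,d\mu$, so it suffices to establish a bound of the form $\int_{B_0} e^{c_0 \abs{f-f_{B_0}}/N}\,d\mu \le C_0\,\mu(B_0)$.

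Next I would apply the layer-cake principle to the non-negative function $e^{c_0 \abs{f-f_{B_0}}/N} - 1$:
\[
\int_{B_0} e^{c_0 \abs{f-f_{B_0}}/N}\,d\mu = \mu(B_0) + \frac{c_0}{N}\int_0^{\infty} e^{c_0 s/N}\, \mu\!\left(\left\{x\in B_0 : \abs{f(x)-f_{B_0}} > s\right\}\right) ds.
\]
Proposition \ref{p-jn} (applied with the scale $h'$ in place of $h$, which is legitimate since the hypotheses of Corollary \ref{c-jn} match those of the proposition at that scale) yields a constant $C_2>0$ depending only on $C_1$, $h'$, and the doubling constants such that
\[
\mu\!\left(\left\{x\in B_0 : \abs{f(x)-f_{B_0}} > s\right\}\right) \le C_2\, \mu(B_0)\, e^{-s/(C_2 N)}.
\]
Substituting this bound into the layer-cake integral and choosing $c_0 := 1/(2C_2)$ makes the exponent of the inner integral equal to $-s/(2C_2 N)$, which is integrable; the resulting computation gives $\int_{B_0} e^{c_0 \abs{f-f_{B_0}}/N}\,d\mu \le (1+C_2)\,\mu(B_0)$.

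Finally I would set $C_0 := 1+C_2$ and multiply the two resulting bounds to obtain \eqref{e-cjn}. There is no real obstacle here since all the work is contained in Proposition \ref{p-jn}; the only delicate point is the compatibility of the scale ($h'$ in the corollary versus the generic scale $h$ in the proposition), but the proof of Proposition \ref{p-jn} makes no essential use of the specific value of the scale, only that $r_0 \ge h'$ (which is part of the hypothesis of Corollary \ref{c-jn}) and that the local Harnack inequality \eqref{e-localhar} holds at that scale.
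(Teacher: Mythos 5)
Your proposal is correct and takes essentially the same approach as the paper: reduce to bounding $\int_{B_0} e^{\pm c_0(f-f_{B_0})/\norm{f}_{\BMO}}\,d\mu$ by a constant times $\mu(B_0)$ using the distribution estimate from Proposition \ref{p-jn} and a layer-cake argument, with the centering constants $e^{\pm c_0 f_{B_0}/\norm{f}_{\BMO}}$ cancelling in the product. The only cosmetic difference is that the paper sums over integer levels $k < (f-f_{B_0})/\norm{f}_{\BMO} \le k+1$ rather than integrating the distribution function continuously.
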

\begin{proof}
There exists $C_2,C_3>0$ such that
 \begin{align*}
\lefteqn{ \int_{B_0}  \exp {\left( c_0  (f(y) - f_{B_0})/ \norm{f}_{\BMO(11 B_0), h'}  \right) } \, dy } \\
  & \le \mu(B_0)+  \sum_{k= 0}^{\infty} \mu \left( \Sett{y \in B_0 }{ k < \frac{f(y) - f_{B_0}}{\norm{f}_{\BMO(11 B_0), h'}} \le k +1  }\right) e^{c_0(k+1)} \\
  & \le \mu(B_0)\left( 1 + C_2 \sum_{k=0}^\infty e^{c_0(k+1)} e^{-k/C_2} \right) \le C_0 \mu(B_0)
 \end{align*}
In the last line above, we fix $c_0=1/(2C_2)$ where $C_2$ is the constant from Proposition \ref{p-jn}. Replacing $f$ by $-f$ in the above inequality and
multiplying those two inequalities yields \eqref{e-cjn}.
\end{proof}

\section{Discrete Calculus}
Before we dive into computations, we introduce  simplifying notations and collect basic rules that mimics calculus rules in a discrete setting.
Let $f$ be a function on $\mathbb{N} \times M$ or on $M$.
Depending on context, we may abbreviate $f(k,x)$ to $f_k(x),f_k$ or even $f$.
\begin{itemize}
\item[1.] `Gradient'
\begin{equation} \label{e-gradxy}
\nabla_{xy} f := f(y) - f(x)
\end{equation}
and the `time derivative'
\begin{equation} \label{e-time-derivative}
\partial_k f(x) := f(k+1,x) - f(k,x).
\end{equation}
\item[2.] Differentiation of product
\begin{equation} \label{e-prs-1}
\nabla_{xy}(fg)=  ( \nabla_{xy}f) g(y) + (\nabla_{xy}g) f(x).
\end{equation}
\item[3.] Differentiation of square
\begin{equation} \label{e-prs-2}
\nabla_{xy}f^2 = 2 (\nabla_{xy}f)f(x)+ (\nabla_{xy}f)^2.
\end{equation}
\item[4.] The same formulas for the `time derivatives':
\begin{equation} \label{e-prt-1}
\partial_k(fg)= (\partial_k f)g_{k+1} + (\partial_k g)f_k
\end{equation}
and
\begin{equation} \label{e-prt-2}
\partial_k(f^2)= 2(\partial_k f)f_{k} + (\partial_k f)^2.
\end{equation}
\item[5.] Let $\Delta = I -P$ denote the Laplacian corresponding to a $\mu$-symmetric Markov operator $P$ with kernel $p_1$. Then
\[
\Delta f(x) :=(I-P)f(x) = \int_M p_1(x,y) \nabla_{yx}f \,dy.
\]
\item[6.] Integration by parts: If $f,g \in L^2(M,\mu)$, then
\begin{equation} \label{e-int-p}
\int_M \Delta f(x) g(x) dx = \frac{1}{2} \int_M \int_M (\nabla_{xy} f)  (\nabla_{xy} g) p_1(x,y) \,dy \,dx.
\end{equation}
\item[7.] Consider a $\mu$-symmetric Markov operator with kernel $p_1$. We define $\abs{\nabla f}$ corresponding to the Markov operator $P$ as
\begin{equation}\label{e-gradp}
\abs{\nabla_P f}^2(x) := \int_M (\nabla_{xy}f)^2 p_1(x,y) \,dy.
\end{equation}
\end{itemize}
We caution the reader to be aware of different uses of the symbol $\nabla$ in \eqref{e-gradh}, \eqref{e-gradxy} and \eqref{e-gradp} with slight change in subscript.
The subscript could be a positive real number, a pair of points or a Markov operator.
We hope the different notations of $\nabla$ would be clear from the context.
\section{Logarithm of a harmonic function}
If $u$ is a positive harmonic function, then we show that $\log u$ has bounded BMO seminorm.
This combined with John-Nirenberg inequality yields  $\phi(u,-q,c_1 B) \le C' \phi(u,q,c_1 B)$ for some $q,C'>0$ and $c_1 \in (0,1)$.
\begin{lemma}
 \label{l-minplu}
 Let $(M,d,\mu)$ be a quasi-$b$-geodesic metric measure space satisfying  \ref{doub-loc}, \ref{doub-inf} and Poincar\'{e} inequality at scale $h$ \ref{poin-mms}.
Suppose that a Markov operator $P$ has a kernel $p$ that is $(h,h')$-compatible with respect to $\mu$ for some $h>b$.
Let $u$ be a positive $P$-harmonic function on $B=B(x,r)$.
Let $\eta$ be a non-negative function on $B$ satisfying $\supp(\eta) \subset B(x,(r/2)-h')$.
There exists $C_0>0$ and $r_0 >2 h'$ satisfies
\begin{equation} \label{e-mp}
 \int_{B/2} \int_{B/2} \left( \ln \frac{u(y)}{u(z)} \right)^2 \eta(z)^2 p_1(y,z) \,dy \,dz \le C_0 \int_{B/2}\int_{B/2} \left(\nabla_{yz} \eta \right)^2  p_1(y,z) \,dy \,dz
\end{equation}
for all balls $B$, for all functions $u$, $\eta$ satisfying the above requirements.
\end{lemma}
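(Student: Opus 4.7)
The plan is to adapt the classical Moser trick: exploit the harmonicity of $u$ by testing it against the weight $v = \eta^2/u$ in the discrete Dirichlet form, then use the local Harnack inequality (Lemma \ref{l-compharm}) to absorb cross-terms, and finish with the elementary inequality $(\log a - \log b)^2 \le (a-b)^2/(ab)$. First I would fix $r_0$ to be at least as large as the constant provided by Lemma \ref{l-compharm} and require $r_0 > 2h'$ so that $B(x,(r/2)-h')$ is nonempty. Set $v = \eta^2/u$, which is well defined since $u > 0$ on $B$, and extend $v$ by $0$ outside $\supp(\eta)$. Because $\supp(v) \subset B(x,(r/2)-h') \subset B$ and $\Delta u \equiv 0$ on $B$, we have $\int_M v\, \Delta u\, d\mu = 0$. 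Applying the discrete integration by parts \eqref{e-int-p}, the product rule \eqref{e-prs-1} together with $\nabla_{yz}(u^{-1}) = -(\nabla_{yz}u)/(u(y)u(z))$ and $\nabla_{yz}(\eta^2) = (\eta(y)+\eta(z))(\nabla_{yz}\eta)$, yields the key identity
\[
L := \int_M\!\int_M \frac{\eta(z)^2(\nabla_{yz} u)^2}{u(y)u(z)}\, p_1(y,z)\, dy\, dz \;=\; \int_M\!\int_M \frac{(\eta(y)+\eta(z))(\nabla_{yz}\eta)(\nabla_{yz}u)}{u(z)}\, p_1(y,z)\, dy\, dz.
\]
Since the integrand is non-zero only when $p_1(y,z)>0$ and either $\eta(y)$ or $\eta(z)$ is non-zero, the support conditions force $y,z \in B(x,r/2)$, so both sides are genuinely integrals over $B/2 \times B/2$.

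Next I would split the right-hand side as $A+B$ with $A$ the term carrying $\eta(y)$ and $B$ the term carrying $\eta(z)$, and apply Cauchy--Schwarz to each by grouping
\[
\frac{|\nabla_{yz}u|\,\eta(\cdot)}{\sqrt{u(y)u(z)}} \;\cdot\; \sqrt{\frac{u(y)}{u(z)}}\,|\nabla_{yz}\eta|.
\]
The first factor squared and integrated gives $L$ after using the symmetry $p_1(y,z)=p_1(z,y)$ (to convert the $\eta(y)^2$-version arising in the estimate of $A$ into the $\eta(z)^2$-version). For the second factor, Lemma \ref{l-compharm} applies: since $y,z \in B(x,r/2)$ and $p_1(y,z) > 0$ forces $d(y,z)\le h'$, we have $u(y)/u(z) \le C$ with $C$ depending only on the ambient data. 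Setting $E := \int_{B/2}\int_{B/2}(\nabla_{yz}\eta)^2 p_1(y,z)\, dy\, dz$, the Cauchy--Schwarz estimates give $|A|, |B| \le \sqrt{C L E}$, whence $L \le 2\sqrt{CLE}$ and therefore $L \le 4CE$.

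Finally, the elementary identity $t + t^{-1} - 2 = 2(\cosh(\log t) - 1) \ge (\log t)^2$, applied with $t = u(y)/u(z)$, gives $(\log(u(y)/u(z)))^2 \le (u(y)-u(z))^2/(u(y)u(z))$ pointwise, so
\[
\int_{B/2}\!\int_{B/2}\Bigl(\log\tfrac{u(y)}{u(z)}\Bigr)^{\!2}\eta(z)^2\, p_1(y,z)\, dy\, dz \;\le\; L \;\le\; 4CE,
\]
which is \eqref{e-mp} with $C_0 = 4C$. The main technical hurdle is the cross-term absorption: the ratio $u(y)/u(z)$ cannot be handled by mere compatibility of the kernel, and harmonicity is used twice--once through the Dirichlet identity to produce the term $L$, and a second time through the local Harnack inequality to bound the ratio of $u$-values along short steps of the kernel.
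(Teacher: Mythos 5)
Your proof is correct and follows essentially the same route as the paper's: both test $\Delta u \equiv 0$ against the function $\eta^2/u$, integrate by parts via \eqref{e-int-p}, apply Cauchy--Schwarz, and use the local Harnack estimate of Lemma \ref{l-compharm} to control the short-step ratios $u(y)/u(z)$ (the paper converts to logarithms first via \eqref{e-mp4}--\eqref{e-mp5} and then applies H\"{o}lder, whereas you keep the difference-quotient form through Cauchy--Schwarz and convert at the end, but the underlying estimate is the same). A minor improvement on your side: you correctly note that $(\log t)^2 \le t + t^{-1} - 2$ holds unconditionally, so the paper's invocation of the local Harnack inequality in \eqref{e-mp4} is in fact unnecessary for that direction; Harnack is only genuinely needed for the comparison corresponding to \eqref{e-mp5}, which in your argument appears as the bound $u(y)/u(z) \le C$ inside Cauchy--Schwarz.
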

\begin{proof}
Define $\psi := \eta^2/u$.  By product rule \eqref{e-prs-1}
\begin{equation}\label{e-mp1}
 \nabla_{yz} \psi = \nabla_{yz}(1/u) \eta(z)^2 + (1/u(y)) \nabla_{yz}( u^2).
\end{equation}
Using integration by parts \eqref{e-int-p} along with $\supp(\eta) \subset B(x,(r/2)-h')$, we deduce
\begin{equation} \label{e-mp2}
 \int_{B/2} \int_{B/2}  p_1(y,z) (\nabla_{yz} \psi) (\nabla_{yz} u)   \, dy \,dz =0.
\end{equation}
Combining \eqref{e-mp1}, \eqref{e-mp2}, we have
\begin{align} \label{e-mp3}
\lefteqn{-\int_{B/2} \int_{B/2} p_1(y,z) (\nabla_{yz}u) \left( \nabla_{yz} \frac{1}{u} \right) \eta(z)^2 \,dy \,dz} \nonumber \\
&\le \int_{B/2} \int_{B/2} p_1(y,z) \abs{\nabla_{yz}u} \abs{\nabla_{yz} \eta^2} \frac{1}{u(y)} \,dy \,dz.
\end{align}
By Lemma \ref{l-compharm}, $u$ satisfies the local Harnack inequality on $B/2$ for large enough balls $B$. Hence there exists $c_1,C_1>0$ and $r_0 >2 h'$ such that
\begin{align}
 \label{e-mp4}- (\nabla_{yz}u) \left( \nabla_{yz} \frac{1}{u} \right) = \frac{ (u(y)-u(z))^2}{u(y)u(z)} &\ge c_1 \left( \ln \frac{u(y)}{u(z)} \right)^2 \\
 \label{e-mp5} \abs{\nabla_{yz}u}/ u(y) &\le C_1 \abs{ \ln \frac{u(y)}{u(z)}}
\end{align}
for all positive $P$-harmonic functions $u$ on $B=B(x,r)$, for all $y,z \in B/2$ with $d(y,z) \le h'$ and $r > r_0$.
Combining \eqref{e-mp3}, \eqref{e-mp4} and \eqref{e-mp5}, we obtain
\begin{align}
 \label{e-mp6}
\lefteqn{\int_{B/2} \int_{B/2} p_1(y,z) \left( \ln \frac{u(y)}{u(z)} \right)^2 \eta(z)^2  \, dy \, dz} \nonumber \\
&\le \frac{C_1}{c_1} \int_{B/2}\int_{B/2}  p_1(y,z)\abs{\nabla_{yz} \eta}(\eta(y)+\eta(z)) \abs{ \ln \frac{u(y)}{u(z)}} \,dy \,dz
\end{align}
Since $p_1(y,z)=p_1(z,y)$ for $\mu \times \mu$-almost every $(y,z) \in M\times M$, we have
\begin{align}
 \label{e-mp7}
 \nonumber \lefteqn{ \int_{B/2}\int_{B/2}  p_1(y,z)\abs{\nabla_{yz} \eta} \eta(y) \abs{ \ln \frac{u(y)}{u(z)}} \,dy \,dz }\\
 &= \int_{B/2}\int_{B/2}  p_1(y,z)\abs{\nabla_{yz} \eta} \eta(z) \abs{ \ln \frac{u(y)}{u(z)}} \,dy \,dz
\end{align}
By \eqref{e-mp6} and \eqref{e-mp7}
\begin{align}
  \label{e-mp8}
\lefteqn{\int_{B/2} \int_{B/2} p_1(y,z) \left( \ln \frac{u(y)}{u(z)} \right)^2 \eta(z)^2  \, dy \, dz}  \nonumber \\
&\le \frac{2C_1}{c_1} \int_{B/2}\int_{B/2}  p_1(y,z)\abs{\nabla_{yz} \eta}\eta(z) \abs{ \ln \frac{u(y)}{u(z)}} \,dy \,dz
\end{align}
By H\"{o}lder inequality
\begin{align} \label{e-mp9}
\nonumber \lefteqn{ \left(\int_{B/2} \int_{B/2} p_1(y,z) \abs{ \nabla_{yz} \eta } \eta(z) \abs{ \ln \frac{u(y)}{u(z)} } \,dy \,dz \right)^2 }\\
& \le &  \int_{B/2} \int_{B/2} p_1(y,z)  \abs{ \nabla_{yz} \eta}^2 \, dy \, dz  \cdot  \int_{B/2} \int_{B/2} p_1(y,z) \left( \ln \frac{u(x)}{u(y)} \right)^2 \eta(z)^2   \, dy \,dz.
\end{align}
Combining \eqref{e-mp8} and \eqref{e-mp9}, we obtain \eqref{e-mp} with $C_0 = 4 C_1^2/c_1^2$.
\end{proof}
In the next proposition, we show that logarithm of a harmonic function has bounded mean oscillation. Then using John-Nirenberg inequality
we prove a weak form of elliptic Harnack inequality.
\begin{prop}
 \label{p-negpos}
 Under the assumptions of Theorem \ref{t-ehi}, there exists $q >0$, $c_0 \in (0,1)$ and $C_0,r_0 >0$  such that
 \begin{equation}
  \label{e-negpos} \phi(u,-q,c_0 B) \le C_0 \phi(u,q,c_0 B)
 \end{equation}
 for all $P$-harmonic functions $u$  on $B=B(x,r)$ with $r \ge r_0$ and for all $x \in M$.
\end{prop}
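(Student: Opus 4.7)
The plan is to follow Moser's classical approach: first establish that $w:=\log u$ has uniformly bounded BMO seminorm at scale $h'$ on a smaller ball $11c_0B$, and then use Corollary \ref{c-jn} to convert this BMO bound into exponential integrability for $w$, which translates immediately into the desired comparison between $\phi(u,q,c_0B)$ and $\phi(u,-q,c_0B)$.

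For the BMO bound, fix $c_0\in(0,1/22)$ so that $11c_0 B\subset B/4$ with comfortable room for an $h'$-margin. For each sub-ball $B(y,s)\subseteq 11c_0B$ with $s\ge h$, choose a Lipschitz cutoff $\eta$ with $\eta\equiv 1$ on $\kappa B(y,s)$, $\supp(\eta)\subset B(y,2\kappa s)$, and Lipschitz constant of order $1/s$. Apply Lemma \ref{l-minplu} to $u$ on the enlarged ball $B(y,4\kappa s)$ (where $u$ is still $P$-harmonic provided $r\ge r_0$ is large enough). The right-hand side of that lemma is controlled by $(h'/s)^2\mu(B(y,2\kappa s))$ since $|\nabla_{y'z}\eta|\lesssim h'/s$ on the support of $p_1(y',z)$, and the left-hand side dominates a constant multiple of $\int_{\kappa B(y,s)}|\nabla w|_h^2\,d\mu$ via the lower bound in \eqref{e-compat}. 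Feeding this into the Poincar\'e inequality \ref{poin-mms} (whose threshold may be lowered to $h$ by Lemma \ref{l-pflex}) applied to $w$ on $B(y,s)$, followed by Cauchy--Schwarz, yields the uniform oscillation estimate $\dashint_{B(y,s)}|w-w_{B(y,s)}|\,d\mu\le C_1$ with $C_1$ universal. Taking the supremum gives $\norm{w}_{\BMO(11c_0B),h'}\le\norm{w}_{\BMO(11c_0B),h}\le C_1$.

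To invoke Corollary \ref{c-jn}, I need the local Harnack hypothesis. Lemma \ref{l-compharm} provides $|w(y_1)-w(y_2)|\le\log C_H$ whenever $d(y_1,y_2)\le h'$ and $y_1,y_2\in B/2$. Setting $\tilde w:=w-\inf_{11c_0B}w+1\ge 1$ on $11c_0B$ converts this bounded oscillation into the pointwise inequality $\tilde w(y_1)\le(1+\log C_H)\tilde w(y_2)$, which is exactly the local Harnack hypothesis of Corollary \ref{c-jn}, while preserving $\norm{\tilde w}_{\BMO(11c_0B),h'}=\norm{w}_{\BMO(11c_0B),h'}\le C_1$. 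Corollary \ref{c-jn} then supplies universal constants $c',C'>0$ with
\[
\int_{c_0B}e^{c'\tilde w/\norm{\tilde w}_{\BMO}}\,d\mu\cdot\int_{c_0B}e^{-c'\tilde w/\norm{\tilde w}_{\BMO}}\,d\mu\le (C')^2\mu(c_0B)^2,
\]
and the additive constant in $\tilde w-w$ cancels between the two factors, so the same inequality holds with $w$ in place of $\tilde w$. Setting $q':=c'/\norm{w}_{\BMO}\ge c'/C_1=:q>0$ and noting $e^{q'w}=u^{q'}$, this rewrites as $\int_{c_0B}u^{q'}\,d\mu\cdot\int_{c_0B}u^{-q'}\,d\mu\le (C')^2\mu(c_0B)^2$, equivalently $\phi(u,q',c_0B)\le(C')^{2/q'}\phi(u,-q',c_0B)$. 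Combined with the monotonicity of $p\mapsto\phi(u,p,c_0B)$ and with $q\le q'$, this yields the two-sided comparison $\phi(u,-q,c_0B)\le\phi(u,q,c_0B)\le(C')^{2/q}\phi(u,-q,c_0B)$, which in particular implies the proposition with $C_0=(C')^{2/q}$.

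The main technical obstacle I anticipate lies in the first paragraph: Lemma \ref{l-minplu} only delivers an estimate on the half of its input ball, so to control $|\nabla w|_h^2$ on a sub-ball $\kappa B(y,s)$ I must apply the lemma to an enlarged ball $B(y,Cs)\subset B$ where $u$ is still harmonic. This forces a careful choice of $c_0$ (so that all enlarged sub-balls remain inside $B$), of the Lipschitz constant of $\eta$ (to keep the right-hand side proportional to $(h'/s)^2\mu(B(y,s))$ rather than depending on the ambient ball $B$), and of $r\ge r_0$ large enough that all the $r_0$-thresholds of Lemmas \ref{l-minplu} and \ref{l-compharm} are simultaneously met. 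Sub-balls with $s$ too close to $h$ to satisfy the $r_0$ threshold of Lemma \ref{l-minplu} have bounded diameter relative to $h'$ and can be controlled directly from the oscillation bound supplied by Lemma \ref{l-compharm}.
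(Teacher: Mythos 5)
Your proof is correct and takes essentially the same route as the paper's: bound $\norm{\log u}_{\BMO}$ using the Poincar\'e inequality \ref{poin-mms} together with Lemma \ref{l-minplu}, then invoke the John--Nirenberg estimate (Corollary \ref{c-jn}) to convert BMO-boundedness into the comparison between $\phi(u,q,\cdot)$ and $\phi(u,-q,\cdot)$. Your explicit shift $\tilde w = w - \inf w + 1$ to meet the pointwise local-Harnack hypothesis of Corollary \ref{c-jn}, together with the observation that the additive constant cancels between the two exponential integrals, makes precise a step the paper's proof leaves implicit.
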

\begin{proof}
Let $c_1 \in (0,1)$ (its value will be determined later in the proof).
Let $B=B(x,r)$ and let $B_1=B(x_1,r_1) \subseteq c_1 B$ with $r_1 \ge h'$.
For any positive harmonic function $u$ on $B$, by \ref{poin-inf} there exists $C_1,C_2,C_3>1$ such that
\begin{align} \label{e-np1}
 \int_{B_1} \abs{\ln u(y) - (\ln u)_{B_1}}^2 \, dy &\le C_1 r_1^2 \int_{C_2 B_1} \abs{\nabla(\ln u)}^2_h(y) \,dy \nonumber \\
 & \le C_3 r_1^2 \int_{(C_2+1)B_1} \int_{(C_2+1)B_1} p_1(y,z) \left( \ln \frac{u(y)}{u(z)} \right)^2 \, dy \,dz
\end{align}
We used \ref{poin-mms} in the first line and \eqref{e-compat} and $r \ge h'$
We choose $c_1=1/(3 (C_2+2))$, so that $(C_2+1)B_1 \subseteq B/3$ for all $B_1 \subseteq c_1B$.
We define $\eta$ as
\[
 \eta(y) = \max\left( 1 , \min\left(0, \frac{ (C_2+2)r_1 - d(y,x_1)}{r_1} \right) \right).
\]
Note that for large enough $r$, we have $\supp \eta \subseteq (C_2+2)B_1 \subseteq (C_2+2)c_1 B \subseteq B(x, (r/2)-h')$.
Since  $\eta \equiv 1$ on $(C_2+1)B_1$, there exists $C_4,C_5 >0$
\begin{align} \label{e-np2}
\lefteqn{\int_{(C_2+1)B_1} \int_{(C_2+1)B_1} p_1(y,z) \left( \ln \frac{u(y)}{u(z)} \right)^2 \, dy \,dz}\nonumber\\ & \le
\int_{(C_2+2)B_1} \int_{(C_2+2)B_1} p_1(y,z) \left( \ln \frac{u(y)}{u(z)} \right)^2 \eta(z)^2 \, dy \,dz \nonumber \\
&\le C_4 \int_{B/2}\int_{B/2} p_1(y,z) \left(\nabla_{yz} \eta\right)^2 \, dy\,dz  \le C_5 r_1^{-2}  \mu(B_1)
\end{align}
In the last line above we used Lemma \ref{l-minplu}, \eqref{e-compat}, definition of $\eta$, triangle inequality and \eqref{e-vd1}.
By H\"{o}lder inequality
\begin{equation}\label{e-np2h}
 \left(  \int_{B_1} \abs{\ln u(y) - (\ln u)_{B_1}} \, dy \right)^2 \le \mu(B_1) \int_{B_1} \abs{\ln u(y) - (\ln u)_{B_1}}^2 \, dy
\end{equation}

Combining \eqref{e-np1}, \eqref{e-np2} and \eqref{e-np2h} we obtain
\begin{equation}
 \label{e-np3} \norm{ \ln u}_{\BMO(c_1B),h'} \le ( C_3 C_5)^{1/2}
\end{equation}
for all positive harmonic functions $u$ on $B=B(x,r)$ and for all $r$ sufficiently large.
By Lemma \ref{l-compharm}, \eqref{e-np3} and Corollary \ref{c-jn}, there exists $q>0, C_6>0$ such that
\[
 \phi(u,q,(c_1/11)B)^q \phi(u,-q,(c_1/11)B)^{-q} \le C_6^2
\]
for all sufficiently large balls $B$ and for all positive $P$-harmonic functions $u$ on $B$. This immediately yields \eqref{e-negpos}.
\end{proof}
\section{Mean value inequality for subharmonic functions}\label{s-mvi-el}
For the rest of the chapter, we will rely on \ref{doub-inf}, \ref{doub-loc} and the Sobolev inequality \eqref{e-Sob} to prove Theorem \ref{t-ehi}.
We obtain various inequalities on subharmonic functions. The following elementary property of subharmonic and superharmonic functions is useful.
\begin{lemma}\label{l-sub}
Let $P$ be a Markov operator.
 \begin{enumerate}[(a)]
  \item  If $f$ is a non-negative function that is $P$-subharmonic in $B(x,r)$, then $f^p$ is $P$-subharmonic in $B(x,r)$ for all $p \in [1,\infty)$.
  \item  If $f$ is a positive function that is $P$-superharmonic in $B(x,r)$, then $f^p$ is $P$-subharmonic in $B(x,r)$ for all $p <0$.
 \end{enumerate}
\begin{proof}
 If $y \in B(x,r)$, then by Jensen's inequality and the fact that $f$ is $P$-subharmonic in $B(x,r)$
 \[
  f^p(y) \le (P f(y))^p \le (P f^p)(y).
 \]
This proves (a). We again use Jensen's inequality, $f$ is $P$-superharmonic in $B(x,r)$ and $p<0$ to obtain
\[
 f^p(y) \le (Pf(y))^p \le (P f^p) (y)
\]
\end{proof}
\end{lemma}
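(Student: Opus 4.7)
The plan is to observe that both parts are immediate consequences of Jensen's inequality applied to the probability measure $\mathcal{P}(y,\cdot)$, combined with the monotonicity of the power function in the appropriate range. For any $y \in B(x,r)$, since $P$ is a Markov operator, $\mathcal{P}(y,\cdot)$ is a probability measure on $M$, so Jensen's inequality tells us that for any convex function $\phi$ defined on a domain containing the range of $f$,
\begin{equation*}
\phi(Pf(y)) = \phi\!\left(\int_M f(z)\,\mathcal{P}(y,dz)\right) \le \int_M \phi(f(z))\,\mathcal{P}(y,dz) = P(\phi\circ f)(y).
\end{equation*}

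For part (a), I take $\phi(t) = t^p$ with $p \ge 1$, which is convex on $[0,\infty)$. The $P$-subharmonicity of $f$ together with non-negativity gives $0 \le f(y) \le Pf(y)$, and since $\phi$ is non-decreasing on $[0,\infty)$ we obtain $f(y)^p \le (Pf(y))^p \le P(f^p)(y)$, which is precisely the statement that $f^p$ is $P$-subharmonic at $y$.

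For part (b), I take the same $\phi(t) = t^p$, now with $p<0$, which is still convex but now on $(0,\infty)$; this is where the strict positivity of $f$ is needed so that $f^p$ is well-defined and Jensen applies. The $P$-superharmonicity of $f$ gives $f(y) \ge Pf(y)$ (both positive), and since $\phi$ is \emph{decreasing} on $(0,\infty)$ for negative $p$, the inequality reverses to $f(y)^p \le (Pf(y))^p$, which combined with Jensen again yields $f(y)^p \le P(f^p)(y)$.

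There is no real obstacle here; the only subtlety worth flagging is making sure the domain on which $\phi$ is convex matches the range in which $f$ and $Pf$ actually take values (hence the non-negativity assumption in (a) and strict positivity assumption in (b)), and verifying that the direction of monotonicity of $\phi$ combines correctly with the sub/superharmonic inequality. Both checks are immediate from the sign of $p$.
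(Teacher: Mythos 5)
Your proof is correct and follows the same route as the paper: the first inequality comes from the sub/superharmonicity of $f$ together with the monotonicity of $t\mapsto t^p$, and the second comes from Jensen's inequality applied to the probability measure $\mathcal{P}(y,\cdot)$. You merely make explicit the monotonicity and convexity considerations that the paper leaves implicit.
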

Moser's iteration relies on repeated application of the following Lemma.
\begin{lemma} \label{l-elel}
Let $(M,d,\mu)$ be a quasi-$b$-geodesic metric measure space satisfying  \ref{doub-loc} and \ref{doub-inf}.
Suppose that a Markov operator $P$ has a kernel $p$ that is $(h,h')$-compatible with respect to $\mu$ for some $h >b$.
Further assume that $P$  satisfies the Sobolev inequality \eqref{e-Sob}.
There exists $C_0>0$ such that
\begin{equation} \label{e-elel}
\phi(u,2(1+2/\delta),B(x,(1-\sigma)r-h') \le C_0 \sigma^{-\delta/(\delta+2)} \phi(u,2,B(x,r+h'))
\end{equation}
for all $x \in M$, for all $r \ge 3h'$, for all $\sigma \in (0,1/2)$ and
for all functions $u$ that are non-negative and $P$-subharmonic on $B(x,r)$.
\end{lemma}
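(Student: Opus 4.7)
The plan is to carry out the first iteration step of Moser's scheme: apply the Sobolev inequality \eqref{e-Sob} to $u\eta$ for a suitable cutoff $\eta$, use the subharmonicity of $u$ to replace $P_B(u\eta)$ by $u$ on a slightly smaller ball, bound the Dirichlet energy $\mathcal{E}^B(u\eta,u\eta)$ by a Caccioppoli-type inequality, and finish with a short H\"older interpolation and the volume doubling property.

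Let $B = B(x,r)$ and $\eta(y) = \max\bigl(0, \min(1, (r-d(x,y))/(\sigma r))\bigr)$, which is Lipschitz with constant $(\sigma r)^{-1}$, equals $1$ on $B(x,(1-\sigma)r)$, and is supported in $B$. For $y \in B(x,(1-\sigma)r-h')$, the ball $B(y,h')$ supporting $p_1(y,\cdot)$ is contained in $B(x,(1-\sigma)r)$, where $\eta \equiv 1$; hence
\[
 P_B(u\eta)(y) \;=\; P(u\eta)(y) \;=\; Pu(y) \;\ge\; u(y),
\]
the last inequality by subharmonicity of $u$ on $B(x,r)$. It follows that $\norm{u}_{L^q(B(x,(1-\sigma)r-h'))} \le \norm{P_B(u\eta)}_{L^q(B)}$ for every $q \ge 1$; in particular for $q = 2\delta/(\delta-2)$. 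Since $u\eta$ vanishes outside $B$, Lemma \ref{l-dircomp-ball}(a) gives $\mathcal{E}^B(u\eta,u\eta) = \mathcal{E}(u\eta,u\eta)$.

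The main ingredient is the Caccioppoli bound. Because $u\eta^2$ is supported in $B(x,r)$ and $\Delta u \le 0$ there, integration by parts \eqref{e-int-p} yields $\mathcal{E}(u,u\eta^2) \le 0$. Symmetrizing the bilinear form and bounding the resulting mixed term $\int\!\!\int(u(y)^2-u(z)^2)(\eta(y)^2-\eta(z)^2)\,p_1\,dy\,dz$ by Young's inequality with a small parameter so that the principal term can be absorbed produces the standard estimate
\[
 \int\!\!\int (u(y)-u(z))^2 \eta(y)^2 p_1(y,z)\,dy\,dz \;\le\; 4 \int\!\!\int u(z)^2 (\eta(y)-\eta(z))^2 p_1(y,z)\,dy\,dz.
\]
Combined with the elementary inequality $\bigl((u\eta)(y)-(u\eta)(z)\bigr)^2 \le 2\eta(y)^2(u(y)-u(z))^2 + 2u(z)^2(\eta(y)-\eta(z))^2$, the bound $(\eta(y)-\eta(z))^2 \le (\sigma r)^{-2} d(y,z)^2$, and $d(y,z) \le h'$ on $\supp p_1$ together with $\int p_1(y,z)\,dy = 1$, this gives $\mathcal{E}^B(u\eta,u\eta) \le C h'^2 (\sigma r)^{-2} \norm{u}_{L^2(B(x,r+h'))}^2$. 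Plugging this into \eqref{e-Sob} and absorbing the $r^{-2}\norm{u\eta}_2^2$ term using $r \ge 3h'$ and $\sigma < 1/2$ produces
\[
 \norm{P_B(u\eta)}_{L^{2\delta/(\delta-2)}(B)} \;\le\; \frac{C\,\sigma^{-1}}{V(x,r)^{1/\delta}}\,\norm{u}_{L^2(B(x,r+h'))}.
\]

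Writing $B' = B(x,(1-\sigma)r-h')$ and $B^+ = B(x,r+h')$, H\"older interpolation between $L^2$ and $L^{2\delta/(\delta-2)}$ with weight $\theta = \delta/(\delta+2)$ gives
\[
 \norm{u}_{L^{2(1+2/\delta)}(B')} \;\le\; \norm{u}_{L^{2\delta/(\delta-2)}(B')}^{\delta/(\delta+2)} \norm{u}_{L^2(B')}^{2/(\delta+2)} \;\le\; \frac{C\,\sigma^{-\delta/(\delta+2)}}{V(x,r)^{1/(\delta+2)}}\,\norm{u}_{L^2(B^+)}.
\]
Converting both sides to the averages $\phi$ introduces the factors $\mu(B')^{-\delta/(2(\delta+2))}$ on the left and $\mu(B^+)^{1/2}$ on the right; the exponent identity $\tfrac{\delta}{2(\delta+2)} + \tfrac{1}{\delta+2} = \tfrac12$, together with the inequalities $(1-\sigma)r - h' \ge r/6$ and $r+h' \le 4r/3$ forced by $r \ge 3h'$ and $\sigma \in (0,1/2)$, imply via Lemma \ref{l-doub-prop} that the three volumes $\mu(B')$, $V(x,r)$, and $\mu(B^+)$ are mutually comparable up to a universal constant, and \eqref{e-elel} follows. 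The main obstacle is the Caccioppoli bound: in the discrete setting there is no pointwise Leibniz or chain rule, so the usual manipulation of $\mathcal{E}(u,u\eta^2)$ must be implemented directly on the symmetrized bilinear form, with the Young-inequality parameter tuned just right to make the principal term absorbable.
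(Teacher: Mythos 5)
Your proof is correct and follows essentially the same route as the paper's: cutoff function, Caccioppoli-type bound obtained from $-\int\psi^2u\,\Delta u\ge 0$, Sobolev inequality \eqref{e-Sob} applied to $P_B(\psi u)$ together with the observation that $P_B(\psi u)\ge u$ on the shrunken ball by subharmonicity, then H\"older interpolation between $L^2$ and $L^{2\delta/(\delta-2)}$ and a volume comparison via Lemma \ref{l-doub-prop}. The only cosmetic difference is in the Caccioppoli step: you symmetrize the bilinear form directly and absorb the mixed term $\int\!\!\int(u^2(y)-u^2(z))(\eta^2(y)-\eta^2(z))\,p_1$, whereas the paper applies the discrete product rule \eqref{e-prs-1} to $\nabla_{yz}(\psi^2 u)$ and only then uses the $\mu\times\mu$-symmetry of $p_1$, but both manipulations yield the same estimate $\int\!\!\int\psi^2(y)(\nabla_{yz}u)^2p_1\le 4\int\!\!\int u^2(z)(\nabla_{yz}\psi)^2p_1$.
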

\begin{proof}
 Define
 \begin{equation}\label{e-ee0}
  \psi(y) := \max \left( 0, \min\left(1,  \frac{r - d(x,y)}{\sigma r }\right)\right).
 \end{equation}
Note that $\psi \equiv 0$ in $B(x,r)^\complement$ and $\psi \equiv 1$ in $B(x,(1-\sigma)r)$.
Since $\Delta u \le 0$ in $B(x,r)$  and $u \ge 0$, we have
\begin{align}
 \nonumber 0 &\le - \int_{B(x,r)} \psi^2(y) u(y) \Delta u(y) \, dy \\
 \nonumber & =  -\frac{1}{2} \int_{B(x,r+h')} \int_{B(x,r+h')} p_1(y,z)  \left( \nabla_{yz}(\psi^2u) \right) (\nabla_{yz} u ) \, dy \, dz \\
 \nonumber &= - \frac{1}{2} \int_{B(x,r+h')} \int_{B(x,r+h')} p_1(y,z) \psi^2(y)  \left( \nabla_{yz} u\right)^2 \, dy \, dz   \\
 \label{e-ee1} & \hspace{5mm} -\frac{1}{2} \int_{B(x,r+h')} \int_{B(x,r+h')} p_1(y,z) u(z) \left( \nabla_{yz} \psi^2 \right) (\nabla_{yz} u ) \, dy \, dz.
\end{align}
The above steps follows from integration by parts \eqref{e-int-p} and product rule \eqref{e-prs-1}. We use the inequality $ab \le a^2/4 + b^2$ to obtain
\begin{align}
 \nonumber \abs{ u(z) \left( \nabla_{yz} \psi^2 \right) (\nabla_{yz} u )} & = \abs{ (\psi(y) +\psi(z)) u(z) ( \nabla_{yz} \psi)  (\nabla_{yz}  u)} \\
 \label{e-ee2} & \le \frac{1}{4} (\psi^2(y)+\psi^2(z)) \left( \nabla_{yz} u\right)^2 + 2 u^2(z) \left( \nabla_{yz} \psi \right)^2.
\end{align}
Since $p_1(y,z) =p_1(z,y)$ for $\mu\times\mu$-almost every $(y,z)$, we have
\begin{equation}
 \label{e-ee3} \int_{B_1}  \int_{B_1} p_1(y,z) \psi^2(y)  \left( \nabla_{yz} u\right)^2 \, dy \,dz =  \int_{B_1} \int_{B_1} p_1(y,z) \psi^2(z)  \left( \nabla_{yz} u\right)^2 \, dy \,dz
\end{equation}
where $B_1:= B(x,r+h')$.
Combining \eqref{e-ee1}, \eqref{e-ee2} and \eqref{e-ee3}
\begin{equation}\label{e-ee4}
 \int_{B_1} \int_{B_1} p_1(y,z) \psi^2(y)  \left( \nabla_{yz} u\right)^2 \, dy \,dz \le 4 \int_{B_1} \int_{B_1} p_1(y,z) u^2(z)  \left( \nabla_{yz} \psi \right)^2 \, dy \,dz.
\end{equation}
The inequality $ (a+b)^2 \le 2(a^2 +b^2)$ along with product rule \eqref{e-prs-1} implies
\begin{align}\label{e-ee5}
 \nonumber  \int_{B_1} \int_{B_1} p_1(y,z) \left( \nabla_{yz}(\psi u) \right)^2 \, dy \,dz   & \le 2  \int_{B_1} \int_{B_1} p_1(y,z) \psi^2(y) \left( \nabla_{yz}u \right)^2 \, dy \,dz\\
& \hspace{2mm} + 2 \int_{B_1} \int_{B_1} p_1(y,z) u^2(z) \left( \nabla_{yz} \psi \right)^2 \, dy \,dz.
\end{align}
Combining \eqref{e-ee4} and \eqref{e-ee5}, we obtain
\begin{equation} \label{e-ee6}
 \int_{B_1} \int_{B_1} p_1(y,z) \left( \nabla_{yz}(\psi u) \right)^2 \, dy \,dz \le 10 \int_{B_1} \int_{B_1} p_1(y,z) u^2(z) \left( \nabla_{yz}\psi  \right)^2 \, dy \,dz.
\end{equation}
By \eqref{e-ee0} and \eqref{e-compat}, there exists $C_1>0$ such that
\[
  \left( \nabla_{yz} \psi\right)^2 p_1(y,z) \le (h')^2 \sigma^{-2} r^{-2} p_1(y,z)
\]
for all $y \in M$ and for $\mu$-almost every $z \in M$. Combined with \eqref{e-ee6}, we have
\begin{equation} \label{e-ee7} 
  \int_{B_1} \int_{B_1} p_1(y,z) \left( \nabla_{yz}(\psi u) \right)^2 \, dy \,dz \le 10 (h')^2 \sigma^{-2} r^{-2}   \int_{B_1} u^2(z) \, dz .
\end{equation}
We define
\[
 u_1:=P_{B(x, (1-\sigma)r)} u, \hspace{1cm} u_2:= P_{B_1}(\psi u).
\]
Since $\psi \equiv 1$ in $B(x,(1-\sigma)r)$, by \eqref{e-compat} we have
\begin{equation}\label{e-ee8}
 u_2(y)=u_1(y)=P u (y) = u(y) - \Delta u(y) \ge u(y)
\end{equation}
for all $y \in B(x,(1-\sigma)r -h' )$.
By \eqref{e-ee8} along with H\"{o}lder inequality, we have
\begin{align}\label{e-ee9}
\nonumber \lefteqn{\int_{B(x,(1-\sigma)r-h')} u^{2(1+(2/\delta))} \, d\mu \le  \int_{B(x,(1-\sigma)r)} u_1^{2(1+(2/\delta))} \, d\mu} \\
& \le \nonumber \left( \int_{B(x,(1-\sigma)r)} u_1^2 \, d\mu \right)^{2/\delta} \left( \int_{B(x,(1-\sigma)r)} u_1^{(2\delta)/(\delta-2)} \, d\mu \right)^{(\delta-2)/\delta} \\
&\le  \left( \int_{B(x,(1-\sigma)r)} u^2 \, d\mu \right)^{2/\delta} \left( \int_{B(x,(1-\sigma)r)} u_2^{(2\delta)/(\delta-2)} \, d\mu \right)^{(\delta-2)/\delta}
\end{align}
In \eqref{e-ee9}, we used that $P_{B(x,(1-\sigma)r)}$ is a contraction in $L^2$ and that $u_2 \ge u_1$ in $B(x,(1-\sigma)r)$. By Sobolev inequality \eqref{e-Sob}, Lemma \ref{l-dircomp-ball}(a)
and integration by parts \eqref{e-int-p}
\begin{align}
 \label{e-ee10}
 \nonumber\lefteqn{\left( \int_{B_1} u_2^{(2 \delta)/(\delta-2)} \, d\mu \right)^{(\delta - 2)/\delta} } \\
 \nonumber &\le C_S \frac{(r+h')^2}{2 V(x,r+h')^{2/\delta}} \int_{B_1} \int_{B_1} p_1(y,z) \left( \nabla_{yz} (\psi u) \right)^2 \, dy\, dz  \\
 & \hspace{6mm}+ C_S \frac{1}{V(x,r+h')^{2/\delta}} \int_{B_1} (\psi u)^2 \, d\mu
\end{align}
By using \eqref{e-ee9},  \eqref{e-ee10},  \eqref{e-ee7}, $\psi \le 1 $, $r \ge 3 h'$ and \eqref{e-vd1}, there exists $C_2>0$ such that
\begin{equation*}
 \dashint_{B(x,(1-\sigma)r-h')}  u^{2 (1+(2/\delta))} \, d\mu \le {C_2 \sigma^{-2} }\left( \dashint_{B(x,r+h')} u^2 \, d\mu \right)^{1+(2/\delta)}.
\end{equation*}
This immediately yields \eqref{e-elel}.
\end{proof}
We modify the proof of the Lemma \ref{l-elel} to obtain a reverse Poincar\'{e} inequality for all $P$-harmonic functions (not necessarily non-negative).
The below reverse Poincar\'{e} inequality  and its proof is essentially same as \eqref{e-ee7}. 
\begin{lemma}[Reverse Poincar\'{e} inequality]\label{l-rp}
Let $(M,d,\mu)$ be a quasi-$b$-geodesic metric measure space satisfying  \ref{doub-loc} and \ref{doub-inf}.
Suppose that a Markov operator $P$ has a kernel $p$ that is weakly $(h,h')$-compatible with respect to $\mu$ for some $h >b$.
For all $\Omega > 1$, there exists $C = C(\Omega)$ such that for all $P$-harmonic functions $u$, for all $x \in M$ and for all $r >  3 h'/(\Omega -1)$
\begin{equation}\label{e-rpoin}
 \int_{B(x,r)} \abs{ \nabla_P u} ^2 \, d\mu \le C r^{-2} \int_{B(x,\Omega r)} u^2 \, d\mu .
\end{equation}
In particular, there exists $C_R =C(2)$ such that 
 such that for all $P$-harmonic functions $u$, for all $x \in M$ and for all $r >  3 h'$
\begin{equation}\label{e-rpoin1}
 \int_{B(x,r)} \abs{ \nabla_P u} ^2 \, d\mu \le C_R r^{-2} \int_{B(x,2 r)} u^2 \, d\mu.
\end{equation}
\end{lemma}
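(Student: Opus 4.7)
The plan is to mirror the Caccioppoli-type calculation that produced \eqref{e-ee7}, replacing the subharmonic test function argument by the genuine integration-by-parts identity available when $u$ is globally $P$-harmonic. The hypothesis $r > 3h'/(\Omega-1)$ is precisely what ensures there is a ``collar'' of width at least $2h'$ in which a Lipschitz cutoff can decay from $1$ to $0$ while its $h'$-fringe (forced by the support constraint $d(y,z)\le h'$ coming from $p_1$) still fits inside $B(x,\Omega r)$.

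First I would set $R_1 := r$ and $R_2 := \Omega r - h'$, so that $R_2 - R_1 = (\Omega-1)r - h' \ge \tfrac{2}{3}(\Omega-1)r$ by hypothesis, and define
\[
\psi(y) := \max\left\{0,\,\min\left\{1,\,\frac{R_2 - d(x,y)}{R_2 - R_1}\right\}\right\}.
\]
Then $\psi \equiv 1$ on $B(x,r)$, $\psi \equiv 0$ outside $B(x,\Omega r - h')$, and by the reverse triangle inequality $(\nabla_{yz}\psi)^2 \le (d(y,z)/(R_2 - R_1))^2$; this is bounded by $(3h'/(2(\Omega-1)r))^2$ whenever $p_1(y,z) > 0$ (since then $d(y,z)\le h'$). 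Moreover, if $(\nabla_{yz}\psi)^2 \neq 0$ and $p_1(y,z)>0$, then at least one of $y,z$ lies in $B(x,\Omega r - h')$ while $d(y,z)\le h'$, so both $y,z \in B(x,\Omega r)$.

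Next, since $\Delta u \equiv 0$ on $M$, the integration-by-parts identity \eqref{e-int-p} applied to the compactly supported function $\psi^2 u$ against $u$ gives
\[
0 \;=\; \int_M \psi^2 u\,\Delta u\, d\mu \;=\; \tfrac{1}{2}\int_M\!\int_M p_1(y,z)\bigl(\nabla_{yz}(\psi^2 u)\bigr)(\nabla_{yz}u)\,dy\,dz.
\]
Expanding via the product rule \eqref{e-prs-1} as $\nabla_{yz}(\psi^2 u) = \psi^2(y)(\nabla_{yz}u) + u(z)(\nabla_{yz}\psi^2)$, using $\nabla_{yz}\psi^2 = (\psi(y)+\psi(z))(\nabla_{yz}\psi)$, symmetrizing in $(y,z)$ via $p_1(y,z) = p_1(z,y)$, and applying Young's inequality (just as in the step leading from \eqref{e-ee1} through \eqref{e-ee4}) yields the Caccioppoli-type bound
\[
\int_M\!\int_M p_1(y,z)\,\psi^2(y)(\nabla_{yz}u)^2\,dy\,dz \;\le\; C \int_M\!\int_M p_1(y,z)\,u^2(z)\,(\nabla_{yz}\psi)^2\,dy\,dz.
\]

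Finally, since $\psi \equiv 1$ on $B(x,r)$, the left-hand side dominates $\int_{B(x,r)} |\nabla_P u|^2\,d\mu$ by \eqref{e-gradp}. For the right-hand side, the gradient bound on $\psi$ together with the support observation above give
\[
\int_M\!\int_M p_1(y,z)\,u^2(z)\,(\nabla_{yz}\psi)^2\,dy\,dz \;\le\; \Bigl(\tfrac{3h'}{2(\Omega-1)r}\Bigr)^{\!2}\!\int_{B(x,\Omega r)}\!\! u^2(z)\Bigl(\int_M p_1(y,z)\,dy\Bigr) d\mu(z),
\]
which equals $(3h'/(2(\Omega-1)r))^2 \int_{B(x,\Omega r)} u^2\,d\mu$ because $p_1$ is a $\mu$-symmetric Markov kernel. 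Setting $C(\Omega) := 9C(h')^2/(4(\Omega-1)^2)$ gives the stated inequality, and choosing $\Omega = 2$ delivers \eqref{e-rpoin1} with $C_R = C(2)$. I do not anticipate a serious obstacle: as the excerpt notes, this is essentially the same Caccioppoli argument used to derive \eqref{e-ee7}, the only novelty being the bookkeeping needed so that the $h'$-fringe of $\operatorname{supp}\psi$ stays within $B(x,\Omega r)$, which is exactly what the lower bound $r > 3h'/(\Omega-1)$ is engineered to guarantee.
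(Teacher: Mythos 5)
Your proof is correct and takes essentially the same route as the paper's: a Lipschitz cutoff supported in $B(x,\Omega r - h')$, the integration-by-parts identity applied to $\psi^2 u$ against $u$, the product rule plus Young's inequality plus the $\mu\times\mu$-symmetry of $p_1$ to get a Caccioppoli bound, and then the gradient estimate on $\psi$ together with stochasticity of $p_1$. The only cosmetic difference is that you take $\psi\equiv 1$ on $B(x,r)$ and work with the intermediate form $\int\!\!\int p_1\,\psi^2(y)(\nabla_{yz}u)^2$, whereas the paper takes $\psi\equiv 1$ on $B(x,r+h')$ and records the bound in terms of $\int\!\!\int p_1(\nabla_{yz}(\psi u))^2$; both choices lead directly to \eqref{e-rpoin}.
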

\begin{proof} We repeat the steps in the proof of Lemma \ref{l-elel}.
Define 
\begin{equation} \label{e-rp0}
 \psi(y) := \max \left( 0 , \min \left( 1 , \frac{\Omega r - h' - d(x,y)}{ (\Omega -1)r - 2 h'}\right) \right).
\end{equation}
Note that $\psi \equiv 0$ in $B(x,\Omega r - h')^\complement$ and $\psi \equiv 1$ in $B(x,r+h')$. Since $\Delta u = (I-P)u =0$,  for all $r > 3 h'/(\Omega -1)$ and for all $x \in M$  we have
\begin{align}
 \nonumber 0 &= - \int_{M} \psi^2(y) u(y) \Delta u(y) \, dy = - \int_{B(x,\Omega r -h')} \psi^2(y) u(y) \Delta u(y) \, dy\\
 \nonumber & =  -\frac{1}{2} \int_{B(x,\Omega r)} \int_{B(x,\Omega r)} p_1(y,z)  \left( \nabla_{yz}(\psi^2u) \right) (\nabla_{yz} u ) \, dy \, dz \\
 \nonumber &= - \frac{1}{2} \int_{B(x,\Omega r)} \int_{B(x,\Omega r)} p_1(y,z) \psi^2(y)  \left( \nabla_{yz} u\right)^2 \, dy \, dz   \\
 \label{e-rp1} & \hspace{5mm} -\frac{1}{2} \int_{B(x,\Omega r)} \int_{B(x,\Omega r)} p_1(y,z) u(z) \left( \nabla_{yz} \psi^2 \right) (\nabla_{yz} u ) \, dy \, dz.
\end{align}
The above steps follows from integration by parts \eqref{e-int-p} and product rule \eqref{e-prs-1}. We use the inequality $ab \le a^2/4 + b^2$ to obtain
\begin{align}
 \nonumber \abs{ u(z) \left( \nabla_{yz} \psi^2 \right) (\nabla_{yz} u )} & = \abs{ (\psi(y) +\psi(z)) u(z) ( \nabla_{yz} \psi)  (\nabla_{yz}  u)} \\
 \label{e-rp2} & \le \frac{1}{4} (\psi^2(y)+\psi^2(z)) \left( \nabla_{yz} u\right)^2 + 2 u^2(z) \left( \nabla_{yz} \psi \right)^2.
\end{align}
Since $p_1(y,z) =p_1(z,y)$ for $\mu\times\mu$-almost every $(y,z)$, we have
\begin{equation}
 \label{e-rp3} \int_{B_1}  \int_{B_1} p_1(y,z) \psi^2(y)  \left( \nabla_{yz} u\right)^2 \, dy \,dz =  \int_{B_1} \int_{B_1} p_1(y,z) \psi^2(z)  \left( \nabla_{yz} u\right)^2 \, dy \,dz
\end{equation}
where $B_1:= B(x,\Omega r)$.
Combining \eqref{e-rp1}, \eqref{e-rp2} and \eqref{e-rp3}
\begin{equation}\label{e-rp4}
 \int_{B_1} \int_{B_1} p_1(y,z) \psi^2(y)  \left( \nabla_{yz} u\right)^2 \, dy \,dz \le 4 \int_{B_1} \int_{B_1} p_1(y,z) u^2(z)  \left( \nabla_{yz} \psi \right)^2 \, dy \,dz.
\end{equation}
The inequality $ (a+b)^2 \le 2(a^2 +b^2)$ along with product rule \eqref{e-prs-1} implies
\begin{align}\label{e-rp5}
 \nonumber  \int_{B_1} \int_{B_1} p_1(y,z) \left( \nabla_{yz}(\psi u) \right)^2 \, dy \,dz   & \le 2  \int_{B_1} \int_{B_1} p_1(y,z) \psi^2(y) \left( \nabla_{yz}u \right)^2 \, dy \,dz\\
& \hspace{2mm} + 2 \int_{B_1} \int_{B_1} p_1(y,z) u^2(z) \left( \nabla_{yz} \psi \right)^2 \, dy \,dz.
\end{align}
Combining \eqref{e-rp4} and \eqref{e-rp5}, we obtain
\begin{equation} \label{e-rp6}
 \int_{B_1} \int_{B_1} p_1(y,z) \left( \nabla_{yz}(\psi u) \right)^2 \, dy \,dz \le 10 \int_{B_1} \int_{B_1} p_1(y,z) u^2(z) \left( \nabla_{yz}\psi  \right)^2 \, dy \,dz.
\end{equation}
By \eqref{e-rp0} and \eqref{e-compat}, there exists $C_1>0$ such that
\[
  \left( \nabla_{yz} \psi\right)^2 p_1(y,z) \le (3h')^2 (\Omega -1)^{-2} r^{-2} p_1(y,z)
\]
for all $y \in M$, for $\mu$-almost every $z \in M$ and for all $r > 3h'/(\Omega -1)$. Combined with \eqref{e-ee6}, we have
\begin{equation} \label{e-rp7} 
  \int_{B_1} \int_{B_1} p_1(y,z) \left( \nabla_{yz}(\psi u) \right)^2 \, dy \,dz \le  (3 h')^2 (\Omega -1)^{-2} r^{-2}  \int_{B_1} u^2(z) \, dz .
\end{equation}
for all $P$-harmonic functions $u$, for all $r > 3h'/(\Omega -1)$ and for all $x \in M$. Since $\psi \equiv 1$ in $B(x,r+h')$
the desired inequality \eqref{e-rpoin} follows from \eqref{e-rp7}.
\end{proof}

The next lemma is a $L^2$-mean value inequality for positive $P$-subharmonic functions.
\begin{lemma}
\label{l-2toinf}
Let $(M,d,\mu)$ be a quasi-$b$-geodesic metric measure space satisfying  \ref{doub-loc} and \ref{doub-inf}.
Suppose that a Markov operator $P$ has a kernel $p$ that is $(h,h')$-compatible with respect to $\mu$ for some $h >b$.
Further assume that $P$ satisfies the Sobolev inequality \eqref{e-Sob}.
There exists $C_1>0$ and $r_1>0$ such that
\begin{equation} \label{e-2toinf}
\phi(u,\infty,B(x,r/6)) \le C \phi(u,2,B(x,r+h'))
\end{equation}
for all $x \in M$, for all $r \ge r_1$ and
for all functions $u$ that are non-negative and $P$-subharmonic on $B(x,r)$.
\end{lemma}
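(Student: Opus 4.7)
The plan is to iterate Lemma \ref{l-elel} in the standard Moser fashion, using the fact that powers of non-negative $P$-subharmonic functions remain $P$-subharmonic (Lemma \ref{l-sub}(a)). Set $\theta := 1 + 2/\delta > 1$. For each $k \ge 0$, the function $v_k := u^{\theta^k}$ is non-negative and $P$-subharmonic on $B(x,r)$, so applying Lemma \ref{l-elel} to $v_k$ with some radius $\rho_k$ and parameter $\sigma_k \in (0,1/2)$ gives
\[
 \phi(u, 2\theta^{k+1}, B(x, (1-\sigma_k)\rho_k - h')) \le \left( C_0 \sigma_k^{-\delta/(\delta+2)} \right)^{1/\theta^k} \phi(u, 2\theta^k, B(x, \rho_k + h')),
\]
once we extract $\theta^k$-th roots from the inequality for $v_k$.

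To telescope, I would define radii $\rho_k := r/6 + (5r/6)\, 2^{-k}$, so $\rho_0 = r$ and $\rho_k \searrow r/6$. Choose $\sigma_k$ by $(1-\sigma_k)\rho_k - h' = \rho_{k+1} + h'$, i.e.\ $\sigma_k \rho_k = \rho_k - \rho_{k+1} + 2h' = (5r/12)\,2^{-k} + 2h'$. For $r \ge r_1$ with $r_1$ large enough (say $r_1 \ge 48 h'$), we have $\sigma_k \le C\, 2^{-k}$ and $\sigma_k \in (0,1/2)$, and the hypothesis $\rho_k \ge 3h'$ of Lemma \ref{l-elel} is satisfied since $\rho_k \ge r/6 \ge 8 h' \ge 3h'$. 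Iterating the above inequality from $j = 0$ to $k$ yields
\[
 \phi(u, 2\theta^{k+1}, B(x, \rho_{k+1} + h')) \le \prod_{j=0}^{k} \left( C_0 \sigma_j^{-\delta/(\delta+2)} \right)^{1/\theta^j} \phi(u, 2, B(x, r+h')).
\]
Since $\log \sigma_j^{-1} = O(j)$ and $\sum_j j/\theta^j < \infty$, the infinite product $K := \prod_{j \ge 0} (C_0 \sigma_j^{-\delta/(\delta+2)})^{1/\theta^j}$ is finite and bounded uniformly in $x$ and $r \ge r_1$.

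Finally, since $B(x, r/6) \subseteq B(x, \rho_{k+1} + h')$ and the volume ratio $\mu(B(x, \rho_{k+1} + h'))/\mu(B(x, r/6))$ is bounded by a constant $C_2$ depending only on the doubling constants (via \eqref{e-vd1} applied between the scales $r/6$ and $r+h' \le 7r$), we have for each $k$
\[
 \phi(u, 2\theta^{k+1}, B(x, r/6)) \le C_2^{1/(2\theta^{k+1})} \phi(u, 2\theta^{k+1}, B(x, \rho_{k+1} + h')) \le C_2\, K\, \phi(u, 2, B(x, r+h')).
\]
Letting $k \to \infty$ and using $\phi(u, p, B(x, r/6)) \to \phi(u, \infty, B(x, r/6))$ as $p \to \infty$ (which is a standard property of $L^p$ averages on the finite measure space $B(x, r/6)$) yields \eqref{e-2toinf} with $C_1 := C_2 K$. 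There is no genuine obstacle here beyond careful bookkeeping of the geometric series; the key input — the non-trivial reverse-H\"older-type estimate \eqref{e-elel} — has already been established in Lemma \ref{l-elel}.
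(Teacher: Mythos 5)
Your plan is to run the usual continuous Moser iteration infinitely many times, but this cannot work here, and the gap is hidden by a sign error. Lemma~\ref{l-elel} takes a function subharmonic on a ball of radius $\rho$ and produces an estimate on the ball of radius $(1-\sigma)\rho - h'$, starting from an average over the ball of radius $\rho + h'$. So to pass from the scale $\rho_k + h'$ to the scale $\rho_{k+1} + h'$ you must solve $(1-\sigma_k)\rho_k - h' = \rho_{k+1} + h'$, which gives $\sigma_k \rho_k = \rho_k - \rho_{k+1} - 2h'$, not $+2h'$ as you wrote. Each application of Lemma~\ref{l-elel} therefore costs a \emph{fixed} amount $2h'$ of radius (on top of the multiplicative shrinkage). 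With your geometric radii $\rho_k = r/6 + (5r/6)2^{-k}$ the increments $\rho_k - \rho_{k+1} = (5r/12)2^{-k}$ fall below $2h'$ once $k \gtrsim \log_2(r/h')$, at which point $\sigma_k < 0$ and the lemma no longer applies. More fundamentally, \emph{no} choice of nested radii permits infinitely many iterations: you would need $\sum_k (\rho_k - \rho_{k+1}) \ge \sum_k 2h' = \infty$, contradicting $\rho_0 - \lim \rho_k \le r$. So the $k \to \infty$ step at the end is never reached.

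This is precisely the point flagged in the introduction and at the start of Chapter~\ref{ch-sob}: the Sobolev inequality \eqref{e-Sob} available here is weaker than \eqref{e.sob} in a way that breaks the textbook iteration, and the $2h'$ gap between the two balls in Lemma~\ref{l-elel} is the parabolic/elliptic manifestation of it. The paper's proof iterates only $N := \lceil \log r \rceil$ times (choosing the multiplicative shrinkage factors $\asymp 3^{-i}$ so that the total radius lost, including the $N \cdot 2h'$ additive penalty, stays below $r/2$ for $r$ large). After $N$ steps the exponent $2p_N = 2(1+2/\delta)^N$ is only polynomially large in $r$, not infinite, so one cannot conclude by simply passing to $L^\infty$. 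Instead the paper uses the $(h,h')$-compatibility directly: since $u^{2p_N}$ is subharmonic, $\sup u^{2p_N} \le \sup P(u^{2p_N}) \le C r^\delta \dashint u^{2p_N}$ (see~\eqref{e-2i4}); the extra factor $r^\delta$ is killed upon taking the $(2p_N)$-th root, because $r^{\delta/(2p_N)} \to 1$ as $r \to \infty$ when $p_N \gtrsim r^{\log(1+2/\delta)}$. That two-part structure — a \emph{finite} iteration followed by a kernel-based $L^{2p_N} \to L^\infty$ step calibrated so that the exponent is just large enough — is the substance of the lemma, and your proposal has no substitute for it.
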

\begin{proof}
 Define a sequence of radii iteratively by $r(1)=r+h'$,
 \[
  r(i+1) = (r(i)-h')\left(1- \frac{1}{ 3^{i+1}}\right)- h'
 \]
for $i=1,2,\ldots, \lceil \log r \rceil$. By the above definition, there exists $r_0>0$ such that
\begin{equation} \label{e-2i1}
  r( \lceil \log r \rceil +2 ) - h'   \ge  r \left(1 -  \sum_{j=1}^i 3^{-(i+1)}  \right) - 4 h' ( \log r + 3 )  \ge r/2 \ge 3 h'
\end{equation}
for all $r \ge r_0$. We define the balls $B_i= B(x,r(i))$ for $i \in \N^*$ and exponents $p_i = (1 + 2/\delta)^i$ for $i \in \N_{\ge 0}$.
By Lemma \ref{l-sub} $u^{p_i}$ is $P$-subharmonic for all $i \in \N_{\ge 0}$.
By applying Lemma \ref{l-elel} to the function $u^{p_{i-1}}$ that is $P$-subharmonic in $B_i$, we obtain
\begin{equation}
\label{e-2i2} \phi(u,2p_{i},B_{i+1}) \le C_0^{1/p_{i-1}} 3^{-(i+1)/p_{i}} \phi(u, 2p_{i-1} , B_{i})
\end{equation}
for $i=1,2,\ldots,\lceil \log r \rceil$ and $r \ge r_0$. Combining   the estimates in \eqref{e-2i2}, there exists $C_2>0$ such that
\begin{equation}\label{e-2i3}
\phi(u,2p_{\lceil \log r \rceil},B_{\lceil \log r \rceil+1}) \le  C_2 \phi(u, 2, B(x,r+h'))
\end{equation}
for all $x \in M$, for all $r \ge r_0$ and
for all non-negative subharmonic $u$ in $B(x,r)$.
There exists $C_3,C_4>0$ such that
\begin{align} \label{e-2i4}
 \sup_{B(x,r/2)} u^{2 p_{\lceil \log r \rceil}} &\le   \sup_{B(x,r/2)} P (u^{2 p_{\lceil \log r \rceil}}) \nonumber \\
 & \le   \sup_{y \in B_{\lceil \log r \rceil + 1}} \frac{C_3}{V(y,h')} \int_{B_{\lceil \log r \rceil}+1} u^{2 p_{\lceil \log r \rceil}}  \, d\mu  \nonumber\\
 & \le  \frac{C_3 }{\mu(B_{\lceil \log r \rceil +1})} \left( \sup_{y \in B(x,r)} \frac{V(y,2r)}{V(y,h')} \right) \int_{B_{\lceil \log r \rceil+1}} u^{2 p_{\lceil \log r \rceil}} \, d\mu \nonumber \\
 & \le C_4 r^\delta \dashint_{B_{\lceil \log r \rceil+1}} u^{2 p_{\lceil \log r \rceil}} \, d\mu
\end{align}
The first line above follows from Lemma \ref{l-sub}, the second line follows from \eqref{e-compat} and \eqref{e-2i1}, the third line follows from \eqref{e-2i1}
and the last line from \eqref{e-vd1} and \eqref{e-2i1}.
Combining \eqref{e-2i3} and \eqref{e-2i4}, we obtain \eqref{e-2toinf}.
\end{proof}
The next lemma is analogous to Lemma \ref{l-elel} and will be used for an iteration procedure.
\begin{lemma} \label{l-it2}
Let $(M,d,\mu)$ be a quasi-$b$-geodesic metric measure space satisfying  \ref{doub-loc} and \ref{doub-inf}.
Suppose that a Markov operator $P$ has a kernel $p$ that is $(h,h')$-compatible with $(M,d,\mu)$ for some $h >b$.
Further assume that $P$ satisfies the Sobolev inequality \eqref{e-Sob}.
There exists $C_0>0,r_0>0$ such that
\begin{align}\label{e-deh}
\lefteqn{\int_{B(x,r/2)} \int_{B(x,r/2)} \psi(y)^2 \abs{\nabla_{yz}(u^p)} p_1(y,z) \,dy \, dz } \nonumber \\
& \le  C_0 \left( \frac{2p}{2p-1} \right)^2 \int_{B(x,r/2)} \int_{B(x,r/2)} u(y)^{2p} \abs{ \nabla_{yz} \psi } p_1(y,z) \,dy \,dz
\end{align}
for all $x \in M$, for all $r \ge r_0$, for all $p \in (0,1] \setminus \set{1/2}$, for all positive functions $u$ that are $P$-harmonic on $B(x,r)$
and for all $\psi \ge 0$ with $\supp(\psi) \subseteq B(x,r/2-h')$.
\end{lemma}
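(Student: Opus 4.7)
The inequality is a Caccioppoli (reverse Poincar\'e) estimate for $u^p$, where the expected reading of \eqref{e-deh} is that both $\nabla_{yz}(u^p)$ and $\nabla_{yz}\psi$ appear squared (this is the standard form that produces the $(2p/(2p-1))^2$ factor). The plan is to test the harmonic equation against $\psi^2 u^{2p-1}$, integrate by parts, and use the ellipticity inequality
\[
(a-b)(a^{2p-1}-b^{2p-1}) \ge \frac{|2p-1|}{p^2}\,(a^p-b^p)^2, \qquad a,b>0,\ p\in(0,1]\setminus\{1/2\},
\]
which follows from Cauchy--Schwarz applied to $\int_b^a t^{p-1}\,dt$ against the constant $1$. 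Since $u$ is positive and harmonic and $\psi^2 u^{2p-1}$ is supported in $B(x,r/2-h')\subset B(x,r)$, the identity $\Delta u\equiv 0$ on $B(x,r)$ together with the integration by parts formula \eqref{e-int-p} gives
\[
0 = \frac12\int\!\!\int_{B(x,r/2)\times B(x,r/2)} p_1(y,z)\,\nabla_{yz}(\psi^2 u^{2p-1})\,\nabla_{yz} u \,dy\,dz.
\]

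The next step is to use the product rule \eqref{e-prs-1} to split $\nabla_{yz}(\psi^2u^{2p-1})$ into the ``good'' term $\psi^2(y)\nabla_{yz}(u^{2p-1})$ and the ``cutoff'' term $u^{2p-1}(z)\nabla_{yz}(\psi^2)$. For the good term, the algebraic inequality above (with appropriate sign for $p<1/2$) yields the lower bound
\[
\frac{|2p-1|}{p^2}\int\!\!\int p_1(y,z)\,\psi^2(y)\,\bigl(\nabla_{yz}u^p\bigr)^2 \,dy\,dz.
\]
For the cutoff term, write $\nabla_{yz}(\psi^2)=(\psi(y)+\psi(z))\nabla_{yz}\psi$. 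The key conversion is to replace $u^{2p-1}(z)\nabla_{yz}u$ by a quantity involving $u^p(y)$ and $\nabla_{yz}u^p$. By the mean value theorem, $u(z)-u(y)=\tfrac{1}{p}\xi^{1-p}(u^p(z)-u^p(y))$ for some $\xi$ between $u(y)$ and $u(z)$; since $p_1(y,z)$ is supported on $\{d(y,z)\le h'\}$ and $y,z\in B(x,r/2)$, the local Harnack inequality of Lemma \ref{l-compharm} applies (for $r\ge r_0$) to give $\xi\asymp u(y)\asymp u(z)$. This produces
\[
\bigl|u^{2p-1}(z)\nabla_{yz}u\bigr| \le \frac{C}{p}\,u^p(y)\,\bigl|\nabla_{yz}u^p\bigr|.
\]

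Combining, the cross term is bounded by
\[
\frac{C}{p}\int\!\!\int p_1(y,z)\bigl(\psi(y)+\psi(z)\bigr)|\nabla_{yz}\psi|\,u^p(y)\,|\nabla_{yz}u^p|\,dy\,dz,
\]
and Cauchy--Schwarz in the double integral together with the symmetry $p_1(y,z)=p_1(z,y)$ (which makes the $\psi^2(z)$ contribution equal to the $\psi^2(y)$ one) gives an upper estimate of the form
\[
\frac{C}{p}\Bigl(\int\!\!\int p_1\psi^2(y)(\nabla_{yz}u^p)^2\Bigr)^{1/2}\Bigl(\int\!\!\int p_1(\nabla_{yz}\psi)^2 u^{2p}(y)\Bigr)^{1/2}.
\]
Equating the lower bound on the good term with this upper bound on the cross term, dividing by the square root of $\int\!\int p_1\psi^2(\nabla u^p)^2$, and squaring yields the desired estimate with constant $C_0\,(p/(2p-1))^2\le C_0\,(2p/(2p-1))^2$.

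The main obstacle is the degenerate case $p\to 1/2$ (reflected in the blow-up factor) and the related algebraic inequality, which must be proved carefully via Cauchy--Schwarz so that the constant is sharp in $|2p-1|$. The other subtle point is the conversion of $u^{2p-1}(z)\nabla_{yz}u$ into the $u^{2p}(y)$-weighted $|\nabla_{yz}u^p|$ quantity: this is where positivity of $u$ and the local Harnack comparison of Lemma \ref{l-compharm} are essential, and it is precisely this step that forces the hypothesis $r\ge r_0$ and $h>b$ (so that the local Harnack available to us applies on $B(x,r/2)$).
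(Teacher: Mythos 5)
Your proof plan is correct and follows essentially the same route as the paper's: test the harmonic equation against $\psi^2 u^{2p-1}$, split via the product rule, bound the "good" term from below with the Stroock--Varopoulos inequality $|2p-1|\,(\nabla_{yz}u^p)^2 \le p^2\,(\nabla_{yz}u)(\nabla_{yz}u^{2p-1})$, convert the cutoff term using the mean value theorem plus the local Harnack comparison (Lemma \ref{l-compharm}), and close with Cauchy--Schwarz and the $\mu\times\mu$-symmetry of $p_1$. You also correctly noted that the gradient factors in \eqref{e-deh} must be read as squared (as they are in the application \eqref{e-it3}), which is a typo in the stated lemma.
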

\begin{proof}
Let $\eta := u^{2p-1} \psi$, where $\psi \ge 0$ satisfies $\supp(\psi)\subseteq B(x,r/2-h')$ and $u >0$ is $P$-harmonic in $B(x,r)$. By  product rule \eqref{e-prs-1}
\[
\nabla_{yz} \eta= \left( \nabla_{yz} (u^{2p-1}) \right) \psi(y)^2 + u(z)^{2p-1} \left( \nabla_{yz} \psi^2 \right).
\]
By integration by parts  \eqref{e-int-p}, we obtain
\begin{align}
\label{e-deh1} \lefteqn{ \int_B \int_B p_1(y,z) ( \nabla_{yz} u) \left( \nabla_{yz} (u^{2p-1}) \right) \psi(y)^2 \,dy \,dz }\\
\nonumber & = -\int_B \int_B p_1(y,z) \left( \nabla_{yz}  u\right)u(z)^{2p-1}  \left( \nabla_{yz} (\psi^2)\right) \,dy \,dz
\end{align}
where $B:=B(x,r/2)$.
There exists $C_1>0$ such that
\begin{align} \label{e-deh2}
{\abs{2p -1}} \left( \nabla_{yz}(u^p) \right)^2 &\le {p^2} (\nabla_{yz} u)(\nabla_{yz}(u^{2p-1})) \\
\label{e-deh3}
 \abs{\nabla_{yz} u} u(z)^{p-1} &\le C_1 p^{-1} \abs{\nabla_{yz} (u^p)}.
\end{align}
for all $p \in (0,1]$, for all $y,z \in M$ with $d(y,z) \le h'$ and for all positive $u$.
The estimate   \eqref{e-deh2} is elementary and is a version of  Stroock-Varopoulos inequality. The proof of \eqref{e-deh2} is essentially contained in \cite[Lemma 2.4]{MOS13}.
The estimate \eqref{e-deh3} follows from mean value theorem and the local Harnack inequality given by Lemma \ref{l-compharm}.
Combining \eqref{e-deh1}, \eqref{e-deh2} and \eqref{e-deh3}, we have
\begin{align} \label{e-deh4}
\nonumber \lefteqn{ C_1^{-1}\frac{\abs{2p-1}}{p}\int_B \int_B p_1(y,z) \psi(y)^2 \abs{ \nabla_{yz}(u^p)}^2 \,dy \,dz } \\
& \le  \int_B \int_B p_1(y,z) u(z)^p \abs{\nabla_{yz} \psi } \abs{\psi(y)+\psi(z)} \abs{ \nabla_{yz}(u^p)} \,dy \,dz \nonumber \\
& \le \left( \int_B \int_B p_1(y,z) u(y)^{2p} \abs{ \nabla_{yz}\psi }^2 \,dy\,dz \right)^{1/2} \nonumber \\
& \hspace{4mm}\times  \left( \int_B \int_B p_1(y,z) 2(  \psi(y)^2+\psi(z)^2) \abs{ \nabla_{yz}(u^p)}^2 \, dy \,dz \right)^{1/2}.
\end{align}
We use  Cauchy-Schwarz inequality and $(a+b)^2 \le 2(a^2+b^2)$ in the last step.
By the $\mu\times\mu$-almost everywhere symmetry of $p_1$, we have
\begin{equation}\label{e-deh5}
 \int_B \int_B p_1(y,z)  \psi(z)^2 \abs{ \nabla_{yz}(u^p)}^2 \,dy \,dz  = \int_B \int_B p_1(y,z)  \psi(y)^2 \abs{ \nabla_{yz}(u^p)}^2 \,dy \,dz.
\end{equation}
Combining \eqref{e-deh4} and \eqref{e-deh5} yields \eqref{e-deh}.
\end{proof}
We do another iteration procedure between the exponents $q$ and $2$ using Lemma \ref{l-it2}.
\begin{lemma}\label{l-qto2}
 Let $(M,d,\mu)$ be a quasi-$b$-geodesic metric measure space satisfying  \ref{doub-loc} and \ref{doub-inf}.
Suppose that a Markov operator $P$ has a kernel $p$ that is $(h,h')$-compatible  to $(M,d,\mu)$ for some $h >b$.
Further assume that $P$ Sobolev inequality \eqref{e-Sob}.
For any fixed $q>0$, there exists $C_1>0,c_1 \in (0,1/2)$ and $r_1>0$ such that
\begin{equation} \label{e-qto2}
\phi(u,2,B(x,c_1r)) \le C_1 \phi(u,q,B(x,r/2))
\end{equation}
for all $x \in M$, for all $r \ge r_1$ and
for all functions $u$ that are non-negative and $P$-subharmonic on $B(x,r)$.
\end{lemma}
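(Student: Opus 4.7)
The strategy is a Moser-type iteration powered by coupling the Caccioppoli inequality of Lemma~\ref{l-it2} with the local Sobolev inequality \eqref{e-Sob}. The case $q \ge 2$ is immediate: by Jensen's inequality $\phi(u,2,B(x,c_1r)) \le \phi(u,q,B(x,c_1r))$, and \ref{doub-inf} (Lemma~\ref{l-doub-prop}) converts $\phi(u,q,B(x,c_1r))$ to $\phi(u,q,B(x,r/2))$ up to a multiplicative constant depending on $c_1$. Thus I focus on $q \in (0,2)$ and work with a positive $P$-harmonic $u$ on $B(x,r)$ (the regime where Lemma~\ref{l-it2} and the local Harnack inequality of Lemma~\ref{l-compharm} are directly available; one passes from the subharmonic hypothesis to this regime by a standard $u + \varepsilon$ regularization).

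The one-step estimate I aim to prove reads: for $p \in (0,1]\setminus\{1/2\}$, for $r/4 \le r' + h' \le r'' \le r/2$ and for all $r$ sufficiently large,
\begin{equation*}
\phi(u,2p\theta,B(x,r'))^{2p} \le C\left(\frac{(p/|2p-1|)^{2}}{(r''-r')^{2}}+\frac{1}{r^{2}}\right)\left(\frac{V(x,r'')}{V(x,r)}\right)^{2/\delta} r^{2} \phi(u,2p,B(x,r''))^{2p},
\end{equation*}
with $\theta = \delta/(\delta-2) > 1$. To derive it I choose a cutoff $\psi$ of the type \eqref{e-ee0} with $\psi \equiv 1$ on $B(x,r')$ and $\supp\psi \subseteq B(x,r'')$, apply \eqref{e-Sob} on $B = B(x,r/2)$ to $f = \psi u^{p}$, expand $\nabla_{yz}(\psi u^{p})$ via the product rule, and absorb the $\psi^{2}(y)|\nabla_{yz}(u^{p})|^{2}$ contribution using Lemma~\ref{l-it2}; the cross term is dominated by $(h')^{2}\|\nabla\psi\|_{\infty}^{2}\int u^{2p}$ since $p_{1}$ is supported at distance $\le h'$. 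To return from $P_{B}(\psi u^{p})$ on the left of \eqref{e-Sob} to $u^{p}$ itself, I invoke Lemma~\ref{l-compharm}: on each $h'$-ball inside $B(x,r/2)$ the positive $P$-harmonic $u$ varies by a bounded factor, so $u^{p}(y) \le C^{p}\,P u^{p}(y) = C^{p}\,P_{B}(\psi u^{p})(y)$ whenever $y \in B(x,r')$ (which is guaranteed by the choice $r'' \ge r' + h'$).

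With the one-step estimate in hand, I iterate with exponents $p_{i} = (q/2)\theta^{i}$ and radii $r_{i} = (r/2)(1 - \sum_{j=1}^{i} 2^{-j-2})$, so that $r_{i} - r_{i+1} \asymp 2^{-i}r$ and $r_{i} \ge r/4$ for all $i$. After $N = \lceil \log(2/q)/\log\theta \rceil$ steps the exponent exceeds $2$, and Jensen's inequality (monotonicity of $p \mapsto \phi(u,p,\cdot)$) then yields $\phi(u,2,B(x,r/4)) \le C \phi(u,q,B(x,r/2))$, so one may take $c_{1} = 1/4$. The telescoped product of one-step constants converges because $\log C_{i} \asymp i/p_{i}$ and $\sum i\theta^{-i}$ is finite; the excluded value $p = 1/2$ is avoided by perturbing the initial exponent by an arbitrarily small amount, as only finitely many iteration steps are involved.

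The main obstacle is the mismatch between the weak Sobolev inequality \eqref{e-Sob}, which only controls $\norm{P_{B}f}_{2\theta}$, and the fact that for $p \in (0,1)$ the function $u^{p}$ is $P$-\emph{super}harmonic, so the trick of Lemma~\ref{l-elel} (where $P_{B}u \ge u$ for subharmonic $u$ was used to replace $u$ by $P_{B}u$) has the wrong sign here. The pointwise comparison $u^{p} \le C^{p}\,P_{B}(\psi u^{p})$ afforded by the local Harnack inequality of Lemma~\ref{l-compharm} is precisely what repairs this, and is the technical heart of the argument; all remaining work is careful bookkeeping of constants through the iteration so that the final constant $C_{1}$ depends only on $q$ and the structural data $(b,h,h',\delta,C_{S},\ldots)$.
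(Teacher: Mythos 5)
Your proposal is correct and follows the same route as the paper: split at $q=2$ via Jensen's inequality, run a finite Moser iteration with exponents $q\theta^{i}/2$ that couples Lemma~\ref{l-it2} with the weak Sobolev inequality \eqref{e-Sob}, repair the ``wrong sign'' for $p<1$ (where $u^{p}$ is superharmonic rather than subharmonic) via the pointwise comparison $u^{p}\lesssim P_{B}(\psi u^{p})$ afforded by the local Harnack estimate of Lemma~\ref{l-compharm}, and perturb $q$ slightly to avoid the excluded exponent $1/2$. The only departure is bookkeeping: the paper uses radii $s_{i}=2s_{i-1}+2h'$ growing geometrically from $s_{0}=4^{-k}r$ and applies Sobolev on the shrinking balls $B_{i}=B(x,s_{k-i})$ (forcing $c_{1}=4^{-k}$, which depends on $q$), whereas you shrink radii from $r/2$ toward $r/4$ and apply Sobolev on the fixed ball $B(x,r/2)$, yielding the uniform $c_{1}=1/4$; both choices telescope to the same estimate.
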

\begin{proof}
 If $q \ge 2$, then \eqref{e-qto2} follows from Jensen's inequality. Hence it suffices to consider $q \in (0,2)$.

 Define $\theta:=\delta/(\delta-2)$.
  We slightly decrease $q$ if necessary so that $q \theta^k  \neq 1/2$ for all $i \in \N$.
Define $k \in \N^*$ as the integer that satisfies $q \theta^{k-1} < 2\le q \theta^k$.
Define $c_1:=4^{-k}$ and  iteratively define
\[
 s_{i}:=2 s_{i-1} + 2h'
\]
for $i=1,\ldots,k$, where  $s_0:=c_1 r$. Fix $r_0>0$ such that
$s_k \le r/2-h'$ for all $r \ge r_0$ where $k$ and $s_k$ are defined as above.

Define $q_i:= q \theta^i/2$, $B_i=B(x,s_{k-i})$ for $i=0,1,\ldots,k$.
Define the functions
\[
 \psi_i(y) = \max \left( 0, \min\left(1, \frac{2s_{k-i-1}+h' - d(x,y)}{s_{k-i-1}} \right) \right)
\]
for $i=0,1,\ldots,k-1$. Note that $\psi_i \equiv 1$ in $B(x,s_{k-i-1}+h')$ and $\psi \equiv 0$ in $B(x,s_{k-i}-h')^\complement$.

By Sobolev inequality \eqref{e-Sob} there exists $C_2>0$ such that
\begin{align} \label{e-it1}
\nonumber \left( \int_{B_i} \left( P_{B_{i}} ( \psi_i u^{q_i})(y) \right) ^{2 \theta} \,dy \right)^{1/\theta}
 & \le \frac{C_2 s_{k-i}^2}{\mu( B_{i})^{2/\delta}} \int_{B_{i}} \int_{B_{i}} p_1(y,z) \abs{ \nabla_{yz} (\psi_i u^{q_i})}^2 \,dy \,dz \\
& \hspace{3mm}+ \frac{C_2}{\mu( B_{i})^{2/\delta}} \int_{B_{i}}  \psi_i(y)^2 u(y)^{2q_i} \,dy
\end{align}
for all $i=0,1,\ldots,k-1$. By \eqref{e-compat} and Lemma \ref{l-compharm} there exists $C_3>0$ such that
\begin{equation*}
 P_{B_{i}} ( \psi u^{q_i})(y) = \int_{B(y,h')} u^{q_i}(z) p_1(y,z) \,dz \ge C_3^{-q_i}u^{q_i}(y)
\end{equation*}
for all $y \in B_{i+1}$.
Therefore
\begin{equation}
 \label{e-it2} \left( \int_{B_{i+1}}   u(y)^{2q_{i+1}}  \,dy \right)^{1/\theta}   \le   \left(C_3^{q_i} \int_{B_i} \left( P_{B_{i+1}} ( \psi u^{q_i})(y) \right) ^{2 \theta} \,dy \right)^{1/\theta}
\end{equation}
for $x \in B_{i+1}$.
There exists $C_4,C_5,C_6>0$ such that
\begin{align}\label{e-it3}
\nonumber \lefteqn{\int_{B_{i}} \int_{B_{i}} p_1(y,z) \abs{ \nabla_{yz} (\psi u^{q_i}) }^2 \,dy \,dz} \\
 \nonumber& \le  2 \int_{B_{i}} \int_{B_{i}} p_1(y,z) \psi(y)^2 \abs{ \nabla_{yz} ( u^{q_i})}^2 \,dy \,dz   + 2 \int_{B_{i}} \int_{B_{i}}p_1(y,z) \abs{ \nabla_{yz} \psi}^2 u(z)^{2 q_i} \, dy \,dz \\
 \nonumber& \le  C_4 \left[ \left( \frac{2q_i}{2q_i-1} \right)^2 +1  \right] \int_{B_{i}} \int_{B_{i}} p_1(y,z) \abs{ \nabla_{yz} \psi_i }^2 u(y)^{2 q_i} \,dy \,dz  \\
\nonumber & \le \frac{C_5}{s_{k-i-1}^2} \left[ \left( \frac{2q_i}{2q_i-1} \right)^2 +1  \right] \int_{B_{i}}  u(z)^{2 q_i} \,dz \\
& \le \frac{C_6}{s_{k-i-1}^2} \int_{B_{i}}  u(z)^{2 q_i} \,dz.
\end{align}
In the first step above, we used product rule \eqref{e-prs-1} and the inequality $(a+b)^2 \le 2(a^2+b^2)$.
In the second step  we use Lemma \ref{l-it2} and in the third step we use \eqref{e-compat}.
In the last step, we simply bound $2q_i/ \abs{2q_i-1}$ by $\max_{ 0 \le i \le k} 2p_i/ \abs{2p_i-1}< \infty$.

Combining \eqref{e-it1}, \eqref{e-it2}, \eqref{e-it3} along with $s_{k-i}/s_{k-i-1} \le 4^k$  yields
\begin{equation*}
 \left( \int_{B_{i+1}}   u(y)^{2q_{i+1}}   \,dy \right)^{1/\theta}   \le   \frac{C_7}{ \mu( B_{i})^{2/\delta} }  \int_{B_{i}}  u(y)^{2q_i} \,dy
\end{equation*}
for some $C_7>0$. Combined with $r \ge r_0$ and \eqref{e-vd1}, we deduce
\begin{equation}\label{e-it4}
 \phi(u,2q_{i+1},B_{i+1}) \le C_8  \phi(u,2q_{i},B_{i})
\end{equation}
for $i=0,1,\ldots,k-1$, for all $x \in M$, for all $r \ge r_0$ and for all $P$-harmonic $u>0$.
The estimates \eqref{e-it4} along with Jensen's inequality implies \eqref{e-qto2} with $C_1= C_8^{k}$ and $c_1=4^{-k}$.
\end{proof}
We are now ready to prove elliptic Harnack inequality.
\begin{proof}[Proof of Theorem \ref{t-ehi}]
It suffices to consider the case $u>0$ because we can replace $u \ge 0$ by $u+\epsilon$ and let $\epsilon \downarrow 0$.

Note that we have Sobolev inequality \eqref{e-Sob} by Theorem \ref{t-Sob}.
 There exists $r_0 >0$ $C_i>0,c_i \in (0,1)$ for $1 \le i \le 5$ such that for all $x \in M$ and for all $r \ge r_0$ and for all positive functions $u$ that are $P$-harmonic on $B:=B(x,r)$
 \begin{align*}
  \phi(u,\infty,c_1 B) &\le C_1 \phi(u, c_2, B) \\
& \le  C_2 \phi(u, q , c_3 B) \\
& \le C_3 \phi(u,-q,c_4,B) \\
 &\le C_4 \phi(u, -\infty , c_5 B).
 \end{align*}
The first line above follows from Lemma \ref{l-2toinf}, the second line above follows from Lemma \ref{l-qto2} and the third line follows from Proposition \ref{p-negpos}.
The last line follows from applying Lemma \ref{l-2toinf} to the function $u^{-q/2}$ which is subharmonic by Lemma \ref{l-sub}(b).
Choosing $c= \min(c_1,c_5)$ yields the elliptic Harnack inequality.
\end{proof}
The constant $c \in (0,1)$ in \eqref{e-ehi} is flexible.
More precisely, we can slightly improve the conclusion of Theorem \ref{t-ehi} for $b$-geodesic spaces by an easy chaining.
\begin{corollary}[Elliptic Harnack inequality]
 \label{c-ehi}
Let $(M,d,\mu)$ be a  $b$-geodesic space satisfying  \ref{doub-loc}, \ref{doub-inf} and Poincar\'{e} inequality \ref{poin-mms} at scale $h$.
Suppose that a Markov operator $P$ has a kernel $p$ that is $(h,h')$-compatible with $(M,d,\mu)$ for some $h >b$.
Then for all $c \in (0,1)$, there exists $r_{0}>0,C_{E}>0$ such that for all $x \in M$, for all
$r \ge r_0$ and for all
non-negative  functions $u:B(x,r) \to \R_{\ge 0}$ that are $P$-harmonic in  $B(x,r)$  the following Harnack inequality holds:
\begin{equation}
 \label{e-cehi} \sup_{x \in B(x,cr)} u \le C_E \inf_{x \in B(x,cr)} u.
\end{equation}
\end{corollary}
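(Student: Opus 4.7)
The plan is to bootstrap the elliptic Harnack inequality of Theorem~\ref{t-ehi}, whose constant $c_0 \in (0,1)$ is fixed and possibly small, to an arbitrary prescribed $c \in (0,1)$ by a chaining argument. I would connect any two points of $B(x,cr)$ by a chain of uniformly bounded length (depending only on $c$), arrange that every two consecutive vertices of the chain lie in a common Harnack ball $B(w, c_0\rho)$ whose outer ball $B(w,\rho)$ remains inside $B(x,r)$, and then telescope the resulting one-step Harnack bounds.

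Concretely, set $c' := (1+c)/2$, $\rho := (1-c')r = (1-c)r/2$ and $b_1 := c_0 \rho$, and take $r$ large enough that $\rho \ge r_0$ (so Theorem~\ref{t-ehi} applies on balls of radius $\rho$) and $b_1 \ge b$ (so Lemma~\ref{l-chain} is applicable at scale $b_1$). Since $(M,d)$ is $b$-geodesic, $d = d_b$, so for every $\delta > 0$ and every pair $p,q \in M$ there is a $b$-chain from $p$ to $q$ of total length at most $d(p,q) + \delta$. Given $y,z \in B(x,cr)$, I would pick such a chain from $y$ to $x$ with $\delta := (c'-c)r$, whose total length is then at most $cr + (c'-c)r = c'r$. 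Lemma~\ref{l-chain} shortens this to a $b_1$-chain from $y$ to $x$ with at most $\lceil 3\, d(y,x)/b_1\rceil \le \lceil 6c/(c_0(1-c))\rceil$ edges, and because the new edges are triangle-inequality shortcuts of concatenations of old edges, the total length only decreases, so it remains at most $c'r$; consequently every vertex $x_k$ of the shortened chain satisfies $d(x,x_k) \le c'r$. Concatenating with an analogous chain from $x$ to $z$, I obtain a $b_1$-chain $y = w_0, w_1, \ldots, w_N = z$ with $N \le N_0(c) := 2\lceil 6c/(c_0(1-c))\rceil$ and $w_i \in B(x, c'r)$ for every $i$. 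For each $i$ the ball $B(w_{i-1}, \rho)$ is contained in $B(x,r)$, so $u$ is $P$-harmonic there; since $\rho \ge r_0$, Theorem~\ref{t-ehi} gives
\[
\sup_{B(w_{i-1}, c_0\rho)} u \;\le\; C_E \inf_{B(w_{i-1}, c_0\rho)} u,
\]
and because $d(w_{i-1}, w_i) \le b_1 = c_0 \rho$, both $w_{i-1}$ and $w_i$ lie in $B(w_{i-1}, c_0\rho)$. Telescoping the Harnack estimate along the chain yields $\sup_{B(x,cr)} u \le C_E^{N_0(c)} \inf_{B(x,cr)} u$, which is \eqref{e-cehi} with $C_E$ replaced by $C_E^{N_0(c)}$ and $r_0$ replaced by the larger $c$-dependent threshold used above.

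The main obstacle to confront is keeping the entire chain inside $B(x, c'r)$, so that every Harnack ball fits inside $B(x,r)$: a $b_1$-chain produced directly by Lemma~\ref{l-chain} in a mere \emph{quasi}-$b$-geodesic space need not enjoy this containment, and it is precisely the $b$-geodesic hypothesis $d = d_b$ that allows me to start from a $b$-chain of total length arbitrarily close to $d(y,x)$, which the shortening in Lemma~\ref{l-chain} does not lengthen. A secondary technical point to verify is that consecutive Harnack balls $B(w_{i-1}, c_0\rho)$ and $B(w_i, c_0\rho)$ overlap on a set of positive $\mu$-measure, so that essential suprema and essential infima compose cleanly along the chain; this follows from \ref{doub-loc} together with the geometric overlap of the two balls (they share, for example, any point roughly halfway between $w_{i-1}$ and $w_i$ along a suitable $b$-chain, which can be placed inside a ball of definite radius inside the intersection).
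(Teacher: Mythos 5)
Your proof is correct and fills in exactly the sketch the paper gives: the paper states the corollary "is a consequence of Theorem \ref{t-ehi} applied repeatedly to a sequence of points in an approximate geodesic" and leaves the proof to the reader, which is precisely your chaining argument with the Harnack radius $\rho = (1-c)r/2$ and a $c_0\rho$-chain built from a near-geodesic $b$-chain via Lemma~\ref{l-chain}. You also correctly identify where the $b$-geodesic hypothesis (as opposed to mere quasi-$b$-geodesicity) is used, namely to keep the chain and all Harnack balls inside $B(x,r)$; and you flag the only real technical wrinkle, that the suprema and infima in Theorem~\ref{t-ehi} are essential, so the telescoping must pass through overlaps of positive $\mu$-measure — this is easily arranged by taking the chain scale to be $c_0\rho/2$ rather than $c_0\rho$ (so consecutive Harnack balls contain a common ball of radius $c_0\rho/2$, of positive measure since $\mu$ has full support), and then using the elementary fact that $\operatorname{ess\,inf}_A u \le \operatorname{ess\,sup}_B u$ whenever $\mu(A\cap B)>0$.
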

The above corollary is a consequence of Theorem \ref{t-ehi} applied repeatedly to a sequence of points in an approximate geodesic.
We do not use the above corollary. The proof of Corollary \ref{c-ehi} is left to the reader.
\section{Applications of elliptic Harnack inequality}
We present two immediate and well-known applications of elliptic Harnack inequality.
\begin{prop}[Liouville property]\label{p-liov}
 Assume that $(M,d,\mu)$ is a quasi-$b$-geodesic metric measure space satisfying  \ref{doub-loc}, \ref{doub-inf} and Poincar\'{e} inequality \ref{poin-mms} at scale $h$ .
Suppose that a Markov operator $P$ has a kernel $p$ that is $(h,h')$-compatible with $(M,d,\mu)$ for some $h >b$.
Then all non-negative $P$-harmonic functions are constant (strong Liouville property). Therefore all bounded harmonic functions are constant (weak Liouville property).
\end{prop}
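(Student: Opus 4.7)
The plan is to apply the elliptic Harnack inequality (Theorem \ref{t-ehi}) on larger and larger balls to force the oscillation of a non-negative harmonic function to vanish. First, given a non-negative $P$-harmonic function $u:M\to\R_{\ge 0}$, set $m:=\operatorname{ess\,inf}_M u\ge 0$ and let $v:=u-m$. Since $\Delta$ annihilates constants, $v$ is also $P$-harmonic, $v\ge 0$ almost everywhere, and $\operatorname{ess\,inf}_M v=0$.

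Next, fix an arbitrary basepoint $x_0\in M$ and choose any $R\ge r_0$. The function $v$ is non-negative and $P$-harmonic on $B(x_0,R)$, so Theorem \ref{t-ehi} gives
\[
\operatorname{ess\,sup}_{B(x_0,cR)} v \;\le\; C_E \operatorname{ess\,inf}_{B(x_0,cR)} v.
\]
The key step is to show that $\operatorname{ess\,inf}_{B(x_0,cR)} v\to 0$ as $R\to\infty$. For this, observe that for every $\epsilon>0$ the set $\{v<\epsilon\}$ has positive $\mu$-measure (by definition of essential infimum and the assumption that $\operatorname{ess\,inf}_M v=0$). Since $M=\bigcup_{R>0} B(x_0,cR)$ and $\mu$ is $\sigma$-finite on balls, the set $\{v<\epsilon\}\cap B(x_0,cR)$ has positive measure for all sufficiently large $R$, which forces $\operatorname{ess\,inf}_{B(x_0,cR)} v<\epsilon$. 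As $\epsilon>0$ was arbitrary, letting $R\to\infty$ gives $\operatorname{ess\,sup}_{B(x_0,cR)} v=0$ for every sufficiently large $R$. Since $x_0$ is arbitrary, $v\equiv 0$ almost everywhere on $M$ and hence $u\equiv m$ is constant almost everywhere. This proves the strong Liouville property.

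For the weak Liouville property, suppose $u$ is a bounded $P$-harmonic function. Then $w:=u+\|u\|_\infty$ is non-negative and $P$-harmonic (again because constants are harmonic), so the strong Liouville property just proved implies that $w$ is constant, and therefore so is $u$.

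The argument is essentially routine once the elliptic Harnack inequality is in hand; the only mild technical point is to handle the essential sup/inf interpretation of Theorem \ref{t-ehi}, which is taken care of by the $\sigma$-finiteness of $\mu$ together with the fact that balls exhaust $M$.
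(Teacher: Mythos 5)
Your proposal is correct and follows the same route as the paper's proof: subtract off the infimum to obtain a non-negative harmonic function $v$ with $\inf v = 0$, apply the elliptic Harnack inequality of Theorem \ref{t-ehi} on balls of growing radius, and conclude that $v$ vanishes identically. The only addition you make is to spell out explicitly why $\operatorname{ess\,inf}_{B(x_0,cR)}v\to 0$ as $R\to\infty$ using the positive-measure set $\{v<\epsilon\}$ and the exhaustion of $M$ by balls; the paper treats this as immediate. One small phrasing nit: the step ``letting $R\to\infty$ gives $\operatorname{ess\,sup}_{B(x_0,cR)}v=0$ for every sufficiently large $R$'' is better stated by fixing $R_0\ge r_0$, noting $\operatorname{ess\,sup}_{B(x_0,cR_0)}v\le \operatorname{ess\,sup}_{B(x_0,cR)}v\le C_E\operatorname{ess\,inf}_{B(x_0,cR)}v$ for $R\ge R_0$, and sending $R\to\infty$ to conclude the left side vanishes for each fixed $R_0$; but the intended argument is clear and the conclusion is correct.
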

\begin{proof}
 Let $u$ be a non-negative harmonic function. Then $v=u - \inf u$ is a non-negative harmonic function with $\inf v=0$.
 By elliptic Harnack inequality, there exists $c \in (0,1)$ and $C >1$ such that $\sup_{B(x,cr)} v \le C \inf_{B(x,cr)} v$ for all large enough $r$.
 Letting $r \to \infty$, we have $\sup_{M} v \le 0$ which implies $v \equiv 0$. This proves strong Liouville property.
 The weak Liouville property follows from the observation that for any bounded harmonic function $h$, the function $h - \inf h$ is a non-negative harmonic function.
\end{proof}
The following H\"{o}lder regularity-type estimate is a direct consequence of elliptic Harnack inequality. Our argument is an adaptation of Moser's argument \cite[Section 5]{Mos61} which uses an oscillation inequality.
\begin{prop}\label{p-holder}
 There exists $c \in (0,1)$, $\alpha >0$ , $C >0$ and $r_1> 0$ such that
 \begin{equation} \label{e-holder}
 \abs{u(y)-u(z)} \le C \left( \frac{\max(d(y,z),1)}{r}\right)^\alpha \sup_{B(x,r)} u
 \end{equation}
 for all $y,z \in B(x,cr)$,
for all $x \in M$, for all $r \ge r_1$ and
 for all non-negative functions $u:M \to \R$ that is $P$-harmonic on $B(x,r)$ with $B(x,r) \neq M$.
\end{prop}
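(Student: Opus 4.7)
The plan is to follow J.~Moser's classical oscillation-decay argument: I will use the elliptic Harnack inequality of Theorem \ref{t-ehi} to produce a geometric decay of the oscillation of $u$ over shrinking concentric balls, then iterate this decay down to the scale $\max(d(y,z),1)$. Define $\operatorname{osc}(u,A) := \sup_A u - \inf_A u$. Let $c_0 \in (0,1)$, $C_E > 1$ and $r_0>0$ be the constants from Theorem \ref{t-ehi}, and set
\[
\theta := \frac{C_E-1}{C_E+1} \in (0,1), \qquad \alpha := \frac{\log(1/\theta)}{\log(1/c_0)} > 0.
\]

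The key ingredient is the following \emph{oscillation lemma}: if $u \ge 0$ is $P$-harmonic on $B(x,r)$, if $y_0 \in M$ and $s \ge r_0$ are such that $B(y_0,s) \subseteq B(x,r)$ and $\sup_{B(y_0,s)} u < \infty$, then
\[
\operatorname{osc}(u, B(y_0, c_0 s)) \le \theta \,\operatorname{osc}(u, B(y_0,s)).
\]
To prove this, let $M := \sup_{B(y_0,s)} u$ and $m := \inf_{B(y_0,s)} u$. Both $u-m$ and $M-u$ are non-negative on $B(y_0,s)$ and $P$-harmonic there (constants are $P$-harmonic). Writing $M_2 := \sup_{B(y_0,c_0 s)} u$ and $m_2 := \inf_{B(y_0,c_0 s)} u$, Theorem \ref{t-ehi} applied separately to $u-m$ and to $M-u$ gives $M_2 - m \le C_E (m_2 - m)$ and $M - m_2 \le C_E(M - M_2)$; adding these and rearranging yields $(C_E+1)(M_2-m_2) \le (C_E-1)(M-m)$, which is the claimed inequality.

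Now fix a small constant $\tilde c \in (0,1)$ (say $\tilde c = 1/4$) and set $s_0 := (1-\tilde c) r$, so that $B(y, s_0) \subseteq B(x,r)$ for every $y \in B(x, \tilde c r)$. Assume $r \ge r_1$ large enough that $s_0 \ge r_0$. (If $\sup_{B(x,r)} u = \infty$ the conclusion is vacuous, so I assume it is finite, which makes $\sup_{B(y,s_0)}u$ finite and allows the oscillation lemma to be iterated.) Given $y,z \in B(x,\tilde c r)$, let $k$ be the largest non-negative integer with $c_0^k s_0 \ge \max(d(y,z), r_0)$; for $r$ sufficiently large this $k$ is well-defined and $\ge 0$. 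Iterating the oscillation lemma $k$ times from $B(y, s_0)$ down to $B(y, c_0^k s_0)$, and using $u \ge 0$ so that $\operatorname{osc}(u, B(y, s_0)) \le \sup_{B(x,r)}u$, yields
\[
|u(y) - u(z)| \le \operatorname{osc}(u, B(y, c_0^k s_0)) \le \theta^k \sup_{B(x,r)} u,
\]
where I used $z \in B(y, d(y,z)) \subseteq B(y, c_0^k s_0)$. The maximality of $k$ gives $c_0^{k+1} s_0 < \max(d(y,z), r_0)$, hence $\theta^k = (c_0^k)^\alpha \le c_0^{-\alpha}(\max(d(y,z), r_0)/s_0)^\alpha$. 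Since $s_0 \asymp r$ and $\max(d(y,z), r_0) \le \max(r_0,1)\max(d(y,z),1)$, this yields the desired estimate with $c = \tilde c$ and a constant $C$ depending on $c_0, C_E, r_0, \tilde c$.

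There is no serious obstacle: the non-trivial input is Theorem \ref{t-ehi}, and the rest is Moser's bookkeeping. The only mildly delicate point is the discretization scale appearing as ``$\max(d(y,z),1)$'': the iteration cannot descend below the minimum radius $r_0$ on which EHI is available, so the oscillation estimate saturates at scale $\sim r_0$, which is absorbed into the constant $C$ since $r_0$ and $1$ are both fixed. The condition $B(x,r) \neq M$ is used merely to ensure that the suprema and infima entering the oscillation argument are meaningful (otherwise, by Proposition \ref{p-liov}, $u$ would be constant and the conclusion trivial).
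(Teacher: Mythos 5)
Your argument follows the same Moser oscillation-decay route as the paper: the elliptic Harnack inequality of Theorem \ref{t-ehi} gives a geometric decay of the oscillation of $u$ between concentric balls, and you iterate that decay across scales. The only cosmetic difference is the direction of iteration --- you shrink from radius $(1-\tilde c)r$ down to roughly $\max(d(y,z),r_0)$, while the paper grows from $s_1=\max(r_0,d(y,z)+h')$ up to the first scale that exits $B(x,r)$.

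There is, however, a genuine (if small) gap at the final step, $|u(y)-u(z)|\le\operatorname{osc}(u,B(y,c_0^ks_0))$. As the paper warns immediately after \eqref{e-ehi}, the sup and inf in Theorem \ref{t-ehi} are \emph{essential} sup and inf with respect to $\mu$, so the oscillation you have controlled by iterating the lemma is the essential one, and that does not automatically dominate the pointwise difference at the two specified points $y,z$. The paper closes this gap by one further appeal to $P$-harmonicity: $u(y)=Pu(y)$ and $u(z)=Pu(z)$ are $\mu$-averages of $u$ over $B(y,h')$ and $B(z,h')$, hence lie between the essential inf and the essential sup of $u$ on $B(y,h')\cup B(z,h')$. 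For this to reduce to the essential oscillation on your smallest ball one needs $B(y,h')\cup B(z,h')\subseteq B(y,c_0^ks_0)$, i.e.\ $c_0^ks_0\ge d(y,z)+h'$; your choice of $k$ only guarantees $c_0^ks_0\ge\max(d(y,z),r_0)$, which can be strictly smaller. This is precisely why the paper takes $s_1=\max(r_0,d(y,z)+h')$ as its base scale. The repair in your setup is cheap: choose $k$ to be the largest integer with $c_0^ks_0\ge\max(d(y,z)+h',r_0)$ and insert the mean-value step; the final constant shifts by a bounded factor only, and the rest of your estimate goes through unchanged.
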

\begin{proof}
Let $c,r_0,C_E$ be constants from from Theorem \ref{t-ehi}.
We optionally decrease the $c$  so that $c \le 1/4$.
Let $B=B(x,r)$ be an arbitrary ball with $r \ge r_0$, $B(x,r) \neq M$ and $y,z \in B(x,cr)$.
Define a sequence of balls by
\[
 s_i := c^{-1+i} r_1, \hspace{1cm} B_i:=B(y,s_i)
\]
for $i \in \N^*$, where $s_1:= \max( r_0, d(y,z)+h')$. Note that $B(y,h') \cup B(z,h') \subseteq B_1$.
Choose $r_1:=2 \max(h',r_0)$ so that $B_1 \subseteq B(x,r)$ for all $r \ge r_1$ and for all $y,z \in B(x,cr)$.
Let $r \ge r_1$ and let $k:=\max  \Sett{ i \in \N^*}{ B_1 \subset B }$.
Since $B(x,r) \neq M$ there exists $C_1>0$ such that
\begin{equation}\label{e-ho1}
 k \le C_1 \log(r/s_1) +1.
\end{equation}
Denote by $M_i:=\sup_{B_i} u$ and $m_i:= \inf_{B_i}u$ for $i=1,\ldots,k$,  where $u$ is an arbitrary non-negative function $u:M \to \R$ that is $P$-harmonic on $B(x,r)$.
By elliptic Harnack inequality of Theorem \ref{t-ehi}, we have
\begin{align}
 \label{e-ho2} M_i - m_{i-1} = \sup_{B_{i-1}} (M_i-u) &\le C_E \inf_{B_{i-1}} (M_i-u) = C_E(M_i-M_{i-1}), \\
 \label{e-ho3}  M_{i-1} - m_{i} = \sup_{B_{i-1}} (u-m_i) &\le C_E \inf_{B_{i-1}} (u-m_i) = C_E(m_{i-1}-m_{i})
\end{align}
for $i=2,3,\ldots,k$. By adding \eqref{e-ho2} and \eqref{e-ho3}, we obtain
\begin{equation}\label{e-ho4}
 M_{i-1}-m_{i-1} \le \frac{C_E-1}{C_E+1} (M_i-m_i)
\end{equation}
for $i=2,3,\ldots,k$.
Combining \eqref{e-ho4} along with \eqref{e-ho1}, we obtain
\begin{equation}
 \label{e-ho5} M_1-m_1 \le \left( \frac{C_E-1}{C_E+1}\right)^{k-1} (M_k-m_k) \le \left( \frac{C_E-1}{C_E+1}\right)^{C_1 \log(r/s_1)} \sup_{B(x,r)}u.
\end{equation}
Since $u$ is $P$-harmonic in $B(x,r)$, we have
\[
 \abs{u(y)-u(z)} =  \abs{P(y)-P(z)} \le \sup_{B(y,h')\cup B(z,h')} u -\inf_{B(y,h')\cup B(z,h')} u \le M_1 -m_1.
\]
The above inequality along with \eqref{e-ho5} implies \eqref{e-holder}.
\end{proof}
Note that above result does not give H\"{o}lder continuity for harmonic functions which is in contrast to \cite[Section 5]{Mos61}.
However we will see that Proposition \ref{p-holder} is useful.  In particular, we use Proposition \ref{p-holder} to prove Gaussian lower bounds in
Chapter \ref{ch-glb}.

\chapter{Gaussian upper bounds}\label{ch-gub}
The goal of this chapter is to prove the following Gaussian upper bounds using Sobolev inequality.
The results of this chapter rely only on \ref{doub-loc}, \ref{doub-inf} and the Sobolev inequality \eqref{e-Sob}.
We do not assume the Poincar\'{e} inequality \ref{poin-mms} to show Gaussian upper bounds.
More precisely, we show
\begin{prop} \label{p-gue}
 Let $(M,d,\mu)$ be a quasi-$b$-geodesic metric measure space satisfying \ref{doub-loc} and \ref{doub-inf}.
Suppose that a Markov operator $P$ has a kernel $p$ that is $(h,h')$-compatible with $(M,d,\mu)$ for some $h >b$.
Further assume that $P$ satisfies the Sobolev inequality \eqref{e-Sob}. There exists $C >0$ such that
\begin{equation} \label{e-gue}
 p_n(x,y) \le \frac{C}{V(x,\sqrt{n})} \exp\left( \frac{-d(x,y)^2}{C n}\right)
\end{equation}
for all $x \in M$ and for all $n \in \N_{\ge 2}$.
\end{prop}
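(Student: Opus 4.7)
The plan is to first establish an on-diagonal upper bound $p_{2n}(x,x) \le C/V(x,\sqrt{n})$ via a parabolic mean value inequality derived by Moser iteration based on the Sobolev inequality \eqref{e-Sob}, and then upgrade this to the full Gaussian estimate \eqref{e-gue} by Davies' exponential perturbation method.

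For Step 1, I would adapt the elliptic Moser iteration of Lemmas \ref{l-elel} and \ref{l-2toinf} to the caloric setting. For a nonnegative space-time function $u=u(k,z)$ satisfying $\partial_k u + \Delta u \le 0$, I would test \eqref{e-Sob} against $\eta^2 u^{2p-1}$ for a spatial cut-off $\eta$ and use the discrete product rules \eqref{e-prt-1}--\eqref{e-prt-2} together with integration by parts to obtain a Caccioppoli-type estimate controlling both $\sup_k \int \eta^2 u^{2p}\,d\mu$ and a time-summed energy by the $L^{2p}$ norm of $u$ on a slightly enlarged space-time cylinder. The Sobolev inequality \eqref{e-Sob} then upgrades this to an $L^{2p\delta/(\delta-2)}$ bound on $P_B(\eta u^p)$; iterating over shrinking parabolic cylinders with exponents $2p_j = 2(1+2/\delta)^j$ and finally passing from high $L^p$ to $L^\infty$ as in the transition from \eqref{e-2i3} to \eqref{e-2i4} would yield a mean value inequality of the form
\[
\sup_{Q_-} u \le C \left( \frac{1}{\mu(B)\,\abs{I}} \sum_{k \in I} \int_B u(k,z)^2 \,\mu(dz)\right)^{1/2}
\]
for nested parabolic cylinders $Q_- \subset I \times B$.

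For Step 2, I apply this mean value inequality to the caloric function $(k,z) \mapsto p_k(x,z)$ on a cylinder of the form $\nint{n}{2n} \times B(x,\sqrt{n})$. By Lemma \ref{l-mker}, $\int p_k(x,z)^2\,\mu(dz) = p_{2k}(x,x)$, which is non-increasing in $k$ by \eqref{e-noninc}, so the space-time $L^2$ average on $\nint{n}{2n}\times B(x,\sqrt{n})$ is bounded by $p_{2n}(x,x)/V(x,\sqrt{n})$. Evaluating the supremum at the space-time point $(2n,x)$ gives $p_{2n}(x,x)^2 \le C^2 p_{2n}(x,x)/V(x,\sqrt{n})$, hence $p_{2n}(x,x) \le C^2/V(x,\sqrt{n})$. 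For Step 3, I fix $y \in M$, $\alpha > 0$, and let $\rho$ be a bounded $1$-Lipschitz truncation of $z \mapsto d(z,y)$. Consider the twisted operator $P^\alpha$ with kernel $e^{\alpha(\rho(z) - \rho(x'))} p_1(x',z)$ with respect to $\mu$. By \eqref{e-compat}, $p_1(x',z)$ vanishes for $d(x',z) > h'$, so the perturbation factor is pointwise bounded by $e^{\alpha h'}$ on the support of $p_1$, and a direct Cauchy--Schwarz computation yields $\norm{P^\alpha}_{L^2 \to L^2} \le e^{C_0 \alpha^2 (h')^2}$ for some $C_0 > 0$. Since the kernel of $(P^\alpha)^n$ is $e^{\alpha(\rho(z) - \rho(x'))} p_n(x',z)$, combining this operator norm bound with the on-diagonal bound of Step 2 (converted to $L^2\to L^\infty$ via \eqref{e-kcauchy}) gives
\[
e^{\alpha(\rho(y)-\rho(x))} p_{2n}(x,y) \le \frac{C}{V(x,\sqrt{n})}\, e^{2 C_0 \alpha^2 n (h')^2}.
\]
Choosing $\rho$ so that $\rho(y) - \rho(x) = d(x,y)$ and optimizing $\alpha \asymp d(x,y)/(n (h')^2)$ yields \eqref{e-gue}; the regime $d(x,y) \gg n h'$ is trivial since $p_n(x,y) = 0$ there by \eqref{e-compat}.

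The main obstacle is Step 1: carrying out Moser's iteration in discrete time with the weak Sobolev inequality \eqref{e-Sob}, in which the operator $P_B$ appears on the left. Unlike in continuous time, differentiating $\norm{u(k,\cdot)^p}_2^2$ in time couples consecutive time slices $u(k,\cdot)$ and $u(k+1,\cdot)$ rather than producing a clean energy $\E(u^p,u^p)$, so one must juggle cut-offs in both space and time and exploit condition (d) of Definition \ref{d-compat} (comparison with a lazy walk, cf.\ Example \ref{x-lazy}) to preserve positivity at each step. Furthermore, the weak form \eqref{e-Sob} accumulates an extra factor of $P_B$ per iteration whose effect must be absorbed carefully, whereas the classical form \eqref{e.sob} used in \cite{Sal92,Del99,Stu96} feeds back directly into the next iteration.
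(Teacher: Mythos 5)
Your outline correctly identifies the architecture (on-diagonal bound via parabolic Moser iteration, then upgrade to Gaussian), but the proposal as stated would fail at Step 1, and this is precisely the technical heart of the paper.

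The mean value inequality you posit,
\[
\sup_{Q_-} u \le C \left( \frac{1}{\mu(B)\,\abs{I}} \sum_{k \in I} \int_B u(k,z)^2 \,\mu(dz)\right)^{1/2},
\]
cannot be deduced from the weak Sobolev inequality \eqref{e-Sob}. You flag the difficulty (an extra $P_B$ accumulates per iteration), but you still write down the classical ($P$-free) conclusion, and this is exactly the thing \eqref{e-Sob} does not give you: after $N\approx \log\sqrt{n}$ iterations one is left with an unremovable $P^{O(\log\sqrt{n})}$ acting on $u$ on the left-hand side, plus an $\inf_k$ over the time index because the discrete-time Caccioppoli argument only controls the time-sum, not the pointwise supremum in $k$ for free. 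The paper's Lemma \ref{l-mvi} accordingly proves only
\[
\inf_{ k \in \nint{0}{n} }\sup_{y  \in B(x,\sqrt{n}/2)} P^{2 \lceil \log \sqrt{n} \rceil +2} u_k (y) \le \frac{C_1}{V(x,\sqrt{n})} \sup_{k \in \nint{0}{n}} \int_{B(x,\sqrt{n}+h')} u_k \, d\mu,
\]
and then \emph{derives the on-diagonal bound from this weaker statement} by applying it not to $(k,z)\mapsto p_k(x,z)$ directly but to the lazy kernel $h_k(x,\cdot)=P_L^k p_2(x,\cdot)$, and exploiting condition (d) of Definition \ref{d-compat} plus the monotonicity $p_{2j}(x,x)\downarrow$ of Lemma \ref{l-mker} to show that $P^{j}h_k(x,x)\ge \alpha^{-1}p_{2n}(x,x)$ uniformly in $k\in\nint{0}{n}$ and $j\le 2\lceil\log\sqrt{n}\rceil+2$. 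This is the step that makes the $\inf_k$ and the logarithmic power of $P$ harmless, and it is absent from your proposal; your Step 2 implicitly assumes the strong MVI holds.

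On Step 3, two points. First, the claimed $L^2\to L^2$ estimate $\norm{P^\alpha}_{2\to2}\le e^{C_0\alpha^2(h')^2}$ is not "a direct Cauchy--Schwarz computation." Cauchy--Schwarz on the support of $p_1$ gives only $e^{\alpha h'}$, which grows linearly in $\alpha$ at small $\alpha$ and is useless for Gaussian optimization. The quadratic dependence requires a second-order cancellation analogous to the inequality $(e^t-1)^2\le 4(e^{Bt^2}-1)$ used in the paper's Lemma \ref{l-weightfn}; you cannot get it for free. Second, whether you use Davies' twisting or the discrete integral maximum principle (which the paper uses, following \cite{CGZ05}), the discrete-time $(\partial_k u)^2$ error term forces you to work with a lazy walk — which the paper handles by proving the Gaussian bound first for $h_{2n}$ (Lemma \ref{l-hkbd}) and then \emph{transferring it back} to $p_n$. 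That transfer, occupying the last section of the chapter, is a genuinely non-trivial comparison argument based on the polynomial identities of Lemma \ref{l-pol} and Stirling's approximation, showing $h_{2n}(x,y)\ge c\,p_{n+1}(x,y)$. Your proposal has no analogue of this step, and without it the lazy walk estimate does not yield anything about $p_n$ itself.
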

The first step is to obtain the following on-diagonal upper bound.
\begin{prop} \label{p-diag}
 Let $(M,d,\mu)$ be a quasi-$b$-geodesic metric measure space satisfying \ref{doub-loc} and \ref{doub-inf}.
Suppose that a Markov operator $P$ has a kernel $p$ that is $(h,h')$-compatible with $(M,d,\mu)$ for some $h >b$.
Further assume that $P$ satisfies the Sobolev inequality \eqref{e-Sob}. There exists $C_0 >0$ such that
\begin{equation} \label{e-diag}
 p_n(x,x) \le \frac{C}{V(x,\sqrt{n})}
\end{equation}
for all $x \in M$ and for all $n \in \N_{\ge 2}$.
\end{prop}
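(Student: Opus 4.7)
The plan is to apply the ultracontractivity estimate of Lemma \ref{l-ultB} to the natural ball $B = B(x,\sqrt{n})$ with $k = n$. The factor $(1+1/n)^{n-1} \le e$ and the ratio $(1+n)^{\delta/2}/n^{\delta/2}$ being bounded yield
\[
\norm{P_B^n}_{1\to\infty} \le \frac{C}{V(x,\sqrt{n})},
\]
which gives the on-diagonal bound $p_n^B(x,x) \le C/V(x,\sqrt{n})$ for the \emph{killed} kernel on $B$. The remaining (and main) task is to transfer this bound to the full kernel $p_n(x,x)$.

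To do so, we run a global Nash iteration of the same flavour as the proof of Lemma \ref{l-ultB}, but applied to the one-parameter family $u_k := P^k p_1(x,\cdot) = p_{k+1}(x,\cdot)$. By \eqref{e-compat} and iterated triangle inequality, $u_k$ is supported in $B(x,(k+1)h')$ and satisfies $\norm{u_k}_1 = 1$. Setting $a_k := \norm{u_k}_2^2$, Lemma \ref{l-kernel}(c) together with the $\mu$-symmetry of $p$ identifies $a_k = p_{2k+2}(x,x)$, and we have $a_k - a_{k+1} = \E_*(u_k,u_k)$. Invoking the Dirichlet form comparison $\E \le C\E_*$ from Lemma \ref{l-comp-dir}(b) (which requires precisely the $(h,h')$-compatibility hypothesis) together with the global Nash inequality of Proposition \ref{p-nash} applied to $u_k$ with support ball $B(x,(k+1)h')$, one obtains a discrete difference inequality
\[
a_{k+1}^{1+2/\delta} \le \frac{C(1+(k+1)^2 h'^2)}{V(x,(k+1)h')^{2/\delta}}(a_k - a_{k+1}) + \frac{C}{V(x,(k+1)h')^{2/\delta}}\, a_k,
\]
which has the same form as \eqref{e-ut3} in the proof of Lemma \ref{l-ultB}.

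Solving the inequality by the inductive ansatz/bootstrap technique of \eqref{e-ut3}--\eqref{e-ut7}, and using the reverse volume doubling of Lemma \ref{l-rvd} to relate the natural iteration volume $V(x,(k+1)h')$ to the target volume $V(x,\sqrt{k+1})$, one concludes $a_k \le C'/V(x,\sqrt{k+1})$, i.e.\ the desired on-diagonal bound along even times. The extension to arbitrary $n \ge 2$ is then automatic: the monotonicity in Lemma \ref{l-mker} reduces odd indices to even ones, and the base case $n=2$ is immediate from \eqref{e-compat} and Lemma \ref{l-vloc} via $p_2(x,x) = \int p_1(x,y)^2\,\mu(dy) \le \norm{p_1(x,\cdot)}_\infty \le C/V(x,h')$.

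The main obstacle is that the support radius $(k+1)h'$ is of order $k$ rather than $\sqrt{k}$, so the naive Nash iteration produces a bound of order $k^{(\delta-\gamma)/2}/V(x,\sqrt{k+1})$, where $\delta \ge \gamma$ are the upper and lower doubling exponents; this is sharp only in the Ahlfors-regular case $\delta=\gamma$. To obtain the optimal bound in the general case, one must either incorporate a Davies-type exponential weight $e^{\lambda d(x,\cdot)}$ with $\lambda \asymp 1/\sqrt{k}$ into the iteration---which localizes the estimate to balls of radius $\sqrt{k}$ at the cost of a harmless commutator factor $1 + O(\lambda h') + O(\lambda^2 h'^2)$---or perform the bootstrap of Lemma \ref{l-ultB} carefully enough that the $V(x,(k+1)h')$ factor is replaced by the interpolated $V(x,\sqrt{k+1})$. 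Either device is the technical heart of the argument.
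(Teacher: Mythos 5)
Your proposal correctly diagnoses the central obstacle --- the natural support radius $(k+1)h'$ of $p_{k+1}(x,\cdot)$ is of order $k$, not $\sqrt{k}$, so a naive global Nash iteration loses a polynomial factor whenever the volume is not Ahlfors-regular --- but it stops precisely at the point where the work begins. You name two possible repairs (a Davies-type exponential weight, or a ``careful bootstrap'') and then write that either one ``is the technical heart of the argument'' without carrying either through. That is an honest sketch, not a proof, and the missing piece is not small: the Davies device in discrete time is roughly as involved as the route the paper actually takes, and the ``careful bootstrap'' you allude to has no algebraic realization --- there is no purely combinatorial manipulation that replaces $V(x,(k+1)h')$ by $V(x,\sqrt{k+1})$ inside the iteration, because the gap between those two volumes is precisely the geometric content that has to be paid for with a localization mechanism (cutoffs, exponential weights, or killing).

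The paper's actual approach is structurally different from what you describe. It does not run any iteration on the global kernel $p_k(x,\cdot)$; instead it proves a full parabolic $L^1$-to-$L^\infty$ mean value inequality (Lemma~\ref{l-mvi}) for $P_L$-subcaloric functions on cylinders $\nint{0}{n}\times B(x,\sqrt n)$, where $P_L=(I+P)/2$ is the lazy operator. That inequality is obtained by Moser iteration in the correctly localized ball of radius $\sqrt{n}$, using the Caccioppoli inequality (Lemma~\ref{l-caccio}) and repeated applications of the Sobolev inequality \eqref{e-Sob} with space-time cutoffs. Proposition~\ref{p-diag} then follows by applying Lemma~\ref{l-mvi} to the auxiliary $P_L$-caloric family $h_k(x,y)=P_L^k p_2(x,\cdot)(y)$, using $\int_M h_k(x,y)\,dy=1$ and the compatibility condition (d) ($p_{k+1}\ge\alpha p_k$, via Lemma~\ref{l-con-d}) to convert the resulting mean-value bound into a bound on $p_{2n}(x,x)$. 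The laziness is essential: it is what makes the discrete Caccioppoli inequality close. Your proposal invokes neither the lazy operator nor any parabolic cutoff; these are exactly the mechanisms that achieve the localization you identify as missing.

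Two smaller points. First, your odd-to-even reduction cites Lemma~\ref{l-mker}, which only addresses the monotonicity of $n\mapsto p_{2n}(x,x)$ along even times; it says nothing about odd diagonals. The paper instead uses $p_{k}(x,x)\le\alpha^{-1}p_{2\lceil k/2\rceil}(x,x)$, which comes from compatibility condition~(d) through Lemma~\ref{l-con-d}. (The inequality $p_{2m+1}(x,x)\le p_{2m}(x,x)$ can in fact be derived by Cauchy--Schwarz, but that is not what Lemma~\ref{l-mker} states.) Second, your opening observation --- that Lemma~\ref{l-ultB} with $B=B(x,\sqrt n)$ and $k=n$ gives $p_n^B(x,x)\le C/V(x,\sqrt n)$ for the \emph{killed} kernel --- is correct but a dead end for this proposition: $p_n^B\le p_n$ is an inequality in the wrong direction, and there is no elementary way to bound $p_n(x,x)$ in terms of $p_n^B(x,x)$.
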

 A crucial ingredient in the proof of Proposition \ref{p-diag} is a $L^1$ to $L^\infty$ mean value inequality for the solutions of a  heat equation.
 We again rely on Moser's iterative method and the calculations are similar but more involved than those encountered in Section \ref{s-mvi-el} for harmonic functions.
 The lazy walk defined in Example \ref{x-lazy} will play an important role in this chapter. Recall that for a Markov operator $P$, the corresponding
 `lazy' versions of Markov operator and Laplacian are given by
 \begin{equation} \label{e-lazy}
  P_L = (I+P)/2 \mbox{ and } \Delta_L = \Delta/2= (I-P)/2.
 \end{equation}
 For $a,b \in \N$, we denote the integer intervals by
 \[
  \nint{a}{b} := \Sett{i \in \N}{ a \le i \le b}.
 \]
The following definition is analogous to  Definition \ref{d-harmonic}. Caloric functions are solutions to heat equation.
\begin{definition}\label{d-caloric}
 Let $P$ be a Markov operator  on $(M,d,\mu)$ and let $a,b \in \N$.
 A function $u:\N \times M \to \R$ is $P$-caloric (respectively $P_L$-caloric) in $\nint{a}{b} \times B(x,r)$ if
 \[
 \partial_k u(y) + \Delta u_k(y) =0 \hspace{1cm}(\mbox{respectively }  \partial_k u(y) + \Delta_L u_k(y)=0)
 \]
for all $k \in \nint{a}{b}$ and for all $y \in B(x,r)$.

Similarly, we say a function $u:\N \times R \to R$ is $P$-subcaloric (resp. $P$-supercaloric) in $\nint{a}{b} \times B(x,r)$ if
\[
 \partial_k u(y) + \Delta u_k(y) \le 0 \hspace{1cm}(\mbox{respectively } \ge 0)
\]
for all $k \in \nint{a}{b}$ and for all $y \in B(x,r)$. Analogously, we define $P_L$-subcaloric and $P_L$-supercaloric functions simply by replacing $\Delta$ with $\Delta_L$ in the equation above.
\end{definition}
\begin{remark}\label{r-caloric}\leavevmode
 \begin{enumerate}[(a)]
  \item We can restate the above definitions using $\partial_k u + \Delta u_k = u_{k+1} - P u_k$ and $\partial_k u + \Delta_L u_k = u_{k+1} - P_L u_k$.
  \item Consider a Markov operator $P$ that is $(h,h')$-compatible with $(M,d,\mu)$.
  Similar to Remark \ref{r-harmonic}(a), the property that $u:\N \times M \to \R$ is $P$-caloric (or $P_L$-caloric) in $\nint{a}{b} \times B(x,r)$ depends only
  on the value of $u$ in $\nint{a}{b+1} \times B(x,r+h')$. Therefore it suffices if the function $u$ has a domain that satisfies $\nint{a}{b+1} \times B(x,r+h') \subseteq \operatorname{Domain}(u)$.
 \end{enumerate}
\end{remark}

Although our eventual goal is to prove parabolic Harnack inequality for $P$-caloric functions, the Moser's iteration procedure
is applied to  $P_L$-caloric functions. The laziness is introduced to handle certain technical difficulties that arise due to discreteness of time.
Another method to avoid these technical difficulties that arise due to discreteness of time
is to carry out Moser's iteration method for solutions of the continuous time heat equation $\frac{\partial u}{\partial t} + \Delta u=0$
(See \cite[Section 2]{Del99} for this method on graphs).

In continuous time case the product rule of differentiation implies $\frac{\partial(u^2)}{\partial t}= 2 u \frac{\partial u}{\partial u}$; however for discrete time the analogous formula is
$\partial_k(u^2) = 2 u_k \partial u_k + \left( \partial_k u \right)^2$. The `error term'  $ \left( \partial_k u \right)^2$ due to discreteness of time  is a source of difficulty  in the proofs
of Caccioppoli inequality and an integral maximum principle for $P$-caloric and $P$-subcaloric functions.
However as we shall see, this `error term' can be handled using a Cauchy-Schwarz inequality for $P_L$-caloric and $P_L$-subcaloric functions (See Remark \ref{r-error}).
As a result, we will primarily be concerned with $P_L$-caloric and $P_L$-subcaloric functions for now. The assumption (d) in Definition \ref{d-compat} will allow to compare the random
walks driven by $P$ and $P_L$.

As mentioned in the beginning of Chapter \ref{ch-sob}, we rely on a version of Sobolev inequality that is weaker than the ones assumed in previous works.
This causes new difficulties for Moser's iteration method which relied on a Sobolev inequality.
The difficulty is even more significant in the parabolic case compared to that of the elliptic case in Chapter \ref{ch-ehi}.
This is because the difference between the strong \eqref{e.sob} and weak \eqref{e-Sob} formulations of Sobolev inequalities is not significant for harmonic functions.
To see why this might be true, note that if a function $u$ is $P$-harmonic in $B=B(x,r)$ then $P_B u = u$ in $B(x,r-h)$ and therefore the weaker formulation \eqref{e-Sob} yields an estimate that is close to that of \eqref{e.sob}.
However the same cannot be said about $P$-caloric functions.

The following lemma and its proof is analogous to that of Lemma \ref{l-sub}.
\begin{lemma}\label{l-subcal}
Let $P$ be a Markov operator.
 Assume that the function $u: \N \times M \to \R_{\ge 0}$   is $P$-subcaloric
 in $\nint{a}{b} \times B(x,r)$ for some $x \in M$, $r >0$  and $a,b \in \N$.
Then $u^p$ is a $P$-subcaloric  in $\nint{a}{b} \times B(x,r)$ for all $p \ge 1$.
\end{lemma}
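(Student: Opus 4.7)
The plan is to mimic the proof of Lemma \ref{l-sub} almost verbatim, replacing the elliptic inequality $f \le Pf$ with the parabolic inequality $u_{k+1} \le Pu_k$.

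First I recall that by Remark \ref{r-caloric}(a), the subcaloric hypothesis on $u$ is equivalent to the pointwise inequality $u_{k+1}(y) \le Pu_k(y)$ for every $k \in \nint{a}{b}$ and every $y \in B(x,r)$. I want to establish the analogous inequality $u_{k+1}^p(y) \le P(u_k^p)(y)$ for every such $k,y$, since this is precisely $\partial_k(u^p)(y) + \Delta(u^p)_k(y) \le 0$, i.e.\ the $P$-subcaloric condition for $u^p$.

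The argument has two steps, both taken pointwise at a fixed $y \in B(x,r)$ and fixed $k \in \nint{a}{b}$. First, since $u \ge 0$ and $t \mapsto t^p$ is monotone non-decreasing on $[0,\infty)$ for $p \ge 1$, the subcaloric hypothesis gives
\[
u_{k+1}(y)^p \le \bigl(Pu_k(y)\bigr)^p.
\]
Second, because $\mathcal{P}(y,\cdot)$ is a probability measure on $M$ and $t \mapsto t^p$ is convex on $[0,\infty)$ for $p \ge 1$, Jensen's inequality applied to the non-negative function $u_k$ yields
\[
\bigl(Pu_k(y)\bigr)^p = \left(\int_M u_k(z)\,\mathcal{P}(y,dz)\right)^{p} \le \int_M u_k(z)^p\,\mathcal{P}(y,dz) = P(u_k^p)(y).
\]
Chaining these two inequalities gives $u_{k+1}^p(y) \le P(u_k^p)(y)$, which is the desired conclusion.

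There is no real obstacle here; this is a direct transcription of the elliptic argument, and neither laziness, doubling, nor any compatibility assumption on $P$ is needed beyond the fact that $P$ is a (positivity-preserving) Markov operator so that $\mathcal{P}(y,\cdot)$ is a probability measure and Jensen's inequality applies.
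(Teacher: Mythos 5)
Your proof is correct and follows essentially the same route as the paper's: rewrite the subcaloric condition as $u_{k+1} \le Pu_k$, raise to the $p$-th power using monotonicity (valid since $u \ge 0$, $p \ge 1$), and then apply Jensen's inequality to get $(Pu_k)^p \le P(u_k^p)$. You state the monotonicity and Jensen steps slightly more explicitly than the paper does, but the argument is identical.
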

\begin{proof}
Note that
\[
u_{k+1}^p(y) =(\partial_k u + u_k)^p(y) \le  (-\Delta u_k + u_k)^p = \left( P u_k(y) \right)^p \le  P (u_k^p) (y) \]
for all $(k,y) \in \nint{a}{b} \times B(x,r)$. The first inequality above follows
from the fact that $u\ge0$ is $P$-subcaloric
 in $\nint{a}{b} \times B(x,r)$ and the second follows from Jensen's inequality.
\end{proof}
For a function $f: \N \times M \to \R$ and a Markov operator $P$ on $M$, we denote the function $Pf:\N \times M \to \R$
\[
 P f (k,x) :=  (P f (k,\cdot) ) (x) = (Pf_k)(x)
\]
for all $k \in \N$ and for all $x \in M$. We require the following property of subcaloric functions.
\begin{lemma}\label{l-solres}
Let $(M,d,\mu)$ be a metric measure space and let $P$ be a Markov operator that is $(h,h')$-compatible  to $(M,d,\mu)$.
If $u:\N \times M \to \R$ is  $P_L$-subcaloric in $\nint{a}{b} \times B(x,r)$, then $P u$ is  $P_L$-subcaloric  in $\nint{a}{b} \times B(x,r-h')$)
for all $x \in M$ and for all $r > h'$.
\end{lemma}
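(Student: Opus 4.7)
The plan is to unpack the definition of $P_L$-subcaloric as a pointwise inequality, apply the positivity-preserving operator $P$ to both sides, and use that $P$ and $P_L$ commute.

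First I would restate the hypothesis in its most useful form: $u$ is $P_L$-subcaloric in $\nint{a}{b}\times B(x,r)$ exactly when
\[
u_{k+1}(z) \le P_L u_k(z) \qquad \text{for every } (k,z)\in \nint{a}{b}\times B(x,r),
\]
by Remark \ref{r-caloric}(a) applied to $P_L$. I want to deduce $(Pu)_{k+1}(y) \le P_L (Pu)_k(y)$ for every $(k,y)\in \nint{a}{b}\times B(x,r-h')$.

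Fix such a pair $(k,y)$. By the $(h,h')$-compatibility of $P$ (condition (c) of Definition \ref{d-compat}), the kernel $p_1(y,\cdot)$ is supported in $B(y,h')$, and since $y\in B(x,r-h')$ the triangle inequality gives $B(y,h')\subseteq B(x,r)$. Therefore the pointwise subcaloric inequality $u_{k+1}(z)\le P_L u_k(z)$ holds for $\mu$-almost every $z$ in the support of $p_1(y,\cdot)$. Integrating against $p_1(y,\cdot)\,\mu(dz)$ — which is legitimate because $P$ is positivity preserving — yields
\[
(Pu)_{k+1}(y) \;=\; P(u_{k+1})(y) \;\le\; P(P_L u_k)(y).
\]
The final step is to observe that $P_L = (I+P)/2$ commutes with $P$, so $P(P_L u_k) = P_L(P u_k) = P_L(Pu)_k$, giving the desired inequality $(Pu)_{k+1}(y) \le P_L(Pu)_k(y)$.

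There is no real obstacle here; the only thing to be careful about is the domain bookkeeping, namely that shrinking the ball from radius $r$ to $r-h'$ is exactly what is needed so that the kernel of $P$ at $y$ sees only points where the subcaloric inequality is known to hold. This is the reason for the hypothesis $r>h'$. Since the argument uses only the linearity and positivity of $P$, the compatibility radius $h'$, and the algebraic identity $PP_L=P_L P$, it is essentially a one-line verification once set up correctly.
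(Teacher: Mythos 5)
Your proof is correct and follows essentially the same route as the paper's: express the $P_L$-subcaloric condition as the pointwise inequality $u_{k+1}\le P_L u_k$ on $B(x,r)$, apply the positivity-preserving operator $P$ using that its kernel at $y\in B(x,r-h')$ is supported in $B(y,h')\subseteq B(x,r)$, and commute $P$ with $P_L$. The paper merely packages the same steps in a single display by first commuting and then applying positivity to $P(u_{k+1}-P_L u_k)$; your version is a slightly more expanded account of the same argument, with the domain bookkeeping made explicit.
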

\begin{proof}
If $(k,y) \in \nint{a}{b} \times B(x,r-h')$ and $u:\N \times M \to \R$ is  $P_L$-subcaloric in $\nint{a}{b} \times B(x,r)$ , then
\[
[(Pu)_{k+1} - P_L (Pu)_k](y)= P \left( u_{k+1} - P_L u_k \right)(y) \le 0.
\]
In the above equality, we used that $P$ and $P_L$ commute. The inequality follows from \eqref{e-compat} and the fact that any Markov operator is positivity preserving.
\end{proof}
\section{Mean value inequality for subcaloric functions}
We will prove the following mean value inequality in a weak form. The inequality bounds from above a weak version of $L^\infty$ norm on a space-time cylinder by a weak version of $L^1$ norm.
Our version of the mean value inequality in Lemma \ref{l-mvi} is weaker than the one known for graphs \cite[Theorem 4.1]{CG98} mainly because we rely on a weaker Sobolev-type inequality \eqref{e-Sob}.
Although the mean value inequality is weaker, we will obtain on-diagonal upper bounds using Lemma \ref{l-mvi}. Using an integral maximum principle argument, we will obtain Gaussian upper bounds in Chapter \ref{ch-gub}.
\begin{lemma} \label{l-mvi}
 Under the assumptions of Proposition \ref{p-diag}, there exists constants $C_1>0,n_1>0$ such that
 \begin{equation} \label{e-mvi}
  \inf_{ k \in \nint{0}{n} }\sup_{y  \in B(x,\sqrt{n}/2)} P^{2 \lceil \log \sqrt{n} \rceil +2} u_k (y) \le \frac{C_1}{V(x,\sqrt{n})} \sup_{k \in \nint{0}{n}} \int_{B(x,\sqrt{n}+h')} u_k \, d\mu
 \end{equation}
for all $n \in \N^*$ satisfying $n > n_1$, for all $x \in M$,  for all non-negative functions $u:\N \times M \to \R$ that is $P_L$-subcaloric in $\nint{0}{n}\times B(x,\sqrt{n})$.
\end{lemma}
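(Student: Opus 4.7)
The plan is to carry out a discrete-time Moser iteration adapted to the weak Sobolev inequality \eqref{e-Sob}, where each application of Sobolev introduces an additional $P_B$ smoothing on the left side. The extra factor $P^{2\lceil \log\sqrt{n}\rceil+2}$ on the left of \eqref{e-mvi} will appear naturally as the composition of the $P_B$ smoothings accumulated across the iterations, of which there are $\approx \log\sqrt{n}$.

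First, I will set up a decreasing family of space-time cylinders $Q_i = \nint{a_i}{b_i}\times B(x,r_i)$ where $r_i$ decreases geometrically from $\sqrt{n}+h'$ to $\sqrt{n}/2$, time intervals $\nint{a_i}{b_i}$ shrink in a comparable fashion within $\nint{0}{n}$, and exponents $p_i=(1+2/\delta)^i$ with $p_0=1$. A companion spatial cutoff $\eta_i$ supported in $B(x,r_i)$ and equal to $1$ on $B(x,r_{i+1})$ will be used. Note that because $u\ge 0$ is $P_L$-subcaloric, the analog of Lemma \ref{l-subcal} for $P_L$ shows $u^{p_i}$ is also $P_L$-subcaloric on $\nint{0}{n}\times B(x,\sqrt{n})$ for every $p_i\ge 1$.

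Next I will establish a Caccioppoli-type inequality: testing the $P_L$-subcaloric inequality against $\eta_i^2 u^{p_i}$ and summing in time yields
\[
\sup_{k\in\nint{a_{i+1}}{b_{i+1}}} \int \eta_i^2 u_k^{2p_i}\,d\mu + \sum_{k} \mathcal{E}^{B_i}\bigl(\eta_i u_k^{p_i},\eta_i u_k^{p_i}\bigr)\;\le\; \frac{C}{(r_i-r_{i+1})^2}\sum_k\int_{B_i} u_k^{2p_i}\,d\mu.
\]
The discrete-time ``error term'' coming from $\partial_k(u^2)=2u_k\partial_k u+(\partial_k u)^2$ will be absorbed using the laziness of $P_L$ (the key point being that for a lazy walk $(P_L u-u)^2$ is dominated by $u^2-(P_L u)^2$, so the quadratic error bounds itself into the Dirichlet form). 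Combining this with the Sobolev inequality \eqref{e-Sob} applied to $\eta_i u_k^{p_i}$ gives, after an interpolation between the time-sup of the $L^2$ norm and the space-$L^{2\delta/(\delta-2)}$ norm of $P_{B_i}(\eta_i u_k^{p_i})$, the bootstrap estimate
\[
\sup_{k\in\nint{a_{i+1}}{b_{i+1}}} \int \bigl(P_{B_i}(\eta_i u_k^{p_i})\bigr)^{2(1+2/\delta)}\,d\mu \;\le\; \left(\frac{C_i\,r_i^2}{V(x,r_i)^{2/\delta}(r_i-r_{i+1})^2}\right)^{1+2/\delta}\!\left(\sup_k\int_{B_i} u_k^{2p_i}\,d\mu\right)^{1+2/\delta}.
\]
Since $P_{B_i}f\le Pf$ pointwise for non-negative $f$, and since $P$ commutes with raising to the power $p_{i+1}/p_i$ through Jensen's inequality on the smoothed quantity, one step of the iteration thus converts a bound on an $L^{2p_i}$-type quantity into a bound on an $L^{2p_{i+1}}$-type quantity on the smaller cylinder, at the cost of one additional application of $P$ on the outside.

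Iterating this estimate $J:=\lceil\log\sqrt{n}\rceil$ times (so $2p_J\gtrsim n^{\log(1+2/\delta)}\gg \mu(B(x,\sqrt{n}))$, which makes the $L^{2p_J}$ norm comparable up to a constant to a pointwise supremum after renormalizing by volume via \eqref{e-vd1}), and carefully bookkeeping that the geometric series $\sum r_i^{-2}(r_i-r_{i+1})^{-2}$ sums up to $O(n^{-1})$-type factors while the product of constants telescopes to a bounded quantity because $\sum_i (1+2/\delta)^{-i}<\infty$, will yield a bound of the form
\[
\sup_{y\in B(x,\sqrt{n}/2)} \bigl(P^{2J+2} u_{k_\ast}\bigr)(y)^{2p_J} \;\le\; \frac{C}{V(x,\sqrt{n})^{2p_J}} \Bigl(\sup_k \int_{B(x,\sqrt{n}+h')} u_k\,d\mu\Bigr)^{2p_J}
\]
for a suitably chosen $k_\ast\in\nint{0}{n}$ (whose existence requires the $\inf_k$ on the left-hand side, selected via pigeonhole during the time shrinkage step). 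The exponent $2J+2 = 2\lceil\log\sqrt{n}\rceil+2$ counts two $P$'s per Sobolev/Caccioppoli cycle (one from the $P_{B_i}$ smoothing, one from a further $P$ comparison needed to pass from $P_{B_i}$-smoothed $L^{2p_{i+1}}$ data back to pointwise values via $(h,h')$-compatibility). Taking the $2p_J$-th root gives \eqref{e-mvi}.

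The main obstacle is the careful bookkeeping of the $P_B$'s accumulating in the iteration and the fact that the Sobolev inequality \eqref{e-Sob} is too weak to run a clean Moser-De Giorgi scheme: we cannot directly iterate because each step leaves behind a $P_B$ instead of the bare function. The trick, and the reason the statement involves $\inf_k$ of $P^{2\lceil\log\sqrt n\rceil+2} u_k$ rather than $\sup_k u_k$, is that this accumulated smoothing is exactly what the weak Sobolev inequality forces; proving it is optimal modulo constants amounts to verifying that one cannot merge the $P_B$ compositions with the power-raising operation any more efficiently. A second subtle point is the lazy-walk absorption of the discrete-time error term $(\partial_k u)^2$: this is where the hypothesis $h>b$ (via Lemma \ref{l-con-d}) is essential, as it allows us to compare $P$ with $P_L$ uniformly and avoid any loss in the Caccioppoli inequality.
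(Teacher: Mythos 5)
Your high-level picture is right: the $P$'s accumulate because each application of the Sobolev inequality \eqref{e-Sob} outputs $P_B f$ rather than $f$, and the total count $2\lceil\log\sqrt n\rceil+2$ comes from $\approx \log\sqrt n$ iterations with two $P$-applications per effective round. The Caccioppoli step, the role of laziness in killing the discrete-time error term $(\partial_k u)^2$, and the use of Jensen's inequality to move the power past $P$ (as in \eqref{e-mvi3} and \eqref{e-pel}) all match the paper.

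However there is a genuine gap. Your iteration uses exponents $p_i=(1+2/\delta)^i$ starting at $p_0=1$, i.e.\ a single-stage Moser scheme going $L^{2}\to L^{2(1+2/\delta)}\to\cdots\to L^{\infty}$. But the right-hand side of \eqref{e-mvi} is an \emph{$L^1$} quantity, $\sup_k\int_{B(x,\sqrt n+h')}u_k\,d\mu$, and a Caccioppoli inequality of type \eqref{e-caccio} necessarily produces $\int u^2$, not $\int u$, on its right side. Nothing in your argument converts the $L^2$ data that the iteration needs into the $L^1$ data you eventually want to appear. The paper handles this with a \emph{two-stage} iteration: Stage~1 keeps the exponent fixed at $2$ and interleaves a H\"older interpolation $\tphi(\cdot,2,\cdot)\le C\,\tphi(\cdot,1,\cdot)^\alpha\,\tphi(\cdot,2\theta,\cdot)^\beta$ together with a uniform $L^1$ bound (\eqref{e-mv4}, \eqref{e-mv5}) to obtain a genuine $L^1\to L^2$ mean value inequality \eqref{e-mv9}; only then does Stage~2 run the exponent-raising iteration $L^2\to L^\infty$ as you describe. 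Your proposal needs either that interpolation trick or the standard bootstrap ($\|u\|_2^2\le\|u\|_\infty\|u\|_1$ on nested cylinders) to close the gap.

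Two smaller points. The claim that the $\inf_k$ ``requires a pigeonhole during the time shrinkage step'' is not accurate: the Moser iteration produces a $\sup$ over a \emph{nonempty} final space-time cylinder whose time range is a subinterval of $\nint{0}{n}$, and the $\inf_k$ is then trivially bounded by that $\sup$; no pigeonhole selection is involved. Also, ``$2p_J\gg\mu(B(x,\sqrt n))$'' is dimensionally incoherent — what is actually needed is that $\theta^N$ grows like a power of $n$ so that the factor $n^{\delta/(4\theta^N)}$ (which appears after normalizing by volume via \eqref{e-vd1}) stays bounded, as in \eqref{e-mvi6}.
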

The proof of Lemma \ref{l-mvi} relies on Moser's iteration procedure.
Couhlon and Grigor'yan \cite[Section 4]{CG98} obtained a similar (stronger) mean value inequality in the graph setting using an iteration procedure.
However they relied on a Faber-Krahn inequality that is equivalent to the Sobolev inequality \eqref{e.sob} and therefore does not hold for
discrete time Markov chains on continuous spaces.

In this section, we carry out Moser's iteration procedure for subcaloric functions relying on the weaker\footnote{`weaker' compared to Sobolev inequalities in \cite{Sal92,Sal95,Stu96,Del97,Del99,HS01}.}
Sobolev inequality \eqref{e-Sob}.
To prove the elementary iterative step of iteration, we need the following discrete Caccioppoli inequality. The proof is an adaptation  \cite[Proposition 4.5]{CG98}.
The next two Lemmas together may be regarded as the parabolic version of Lemma \ref{l-elel}.
\begin{lemma}[Caccioppoli inequality] \label{l-caccio}
Under the assumptions on Proposition \ref{p-diag}, we have
\begin{equation} \label{e-caccio}
\int_M \partial_k (u^2) \psi^2 \, d \mu + \frac{1}{8} \E(u_k \psi, u_k \psi) \le \frac{17}{8} \int_M \int_M \abs{\nabla_{yz} \psi}^2 u_k^2(y) p_1(y,z) \, dy \,dz
\end{equation}
for all $x \in M$, for all $r>0$, for all non-negative functions $\psi:M \to \R_{\ge 0}$ satisfying $\supp(\psi) \subseteq B(x,r)$, for all $a,b \in \N$, for all $k \in \nint{a}{b}$ and
for all non-negative functions $u : \N \times M \to \R_{\ge 0}$ such that  $u$ is $P_L$-subcaloric  in $ \nint{a}{b} \times B(x,r)$.
\end{lemma}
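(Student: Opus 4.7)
The plan is a discrete-time analogue of Moser's Caccioppoli computation, with the laziness of $P_L$ used to manage the discretization error that is absent in the continuous-time argument. I would proceed in three stages.

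First, multiply the pointwise subcaloric inequality $u_{k+1} - P_L u_k \le 0$ by the non-negative test function $2 u_k \psi^2$ and integrate over $M$. Since $\supp \psi \subseteq B(x,r)$ and $P$ is $(h,h')$-compatible, both sides only involve values of $u_k$ on $B(x,r+h')$, where the subcaloric inequality applies. Combining the algebraic identity $\partial_k(u^2) = 2 u_k \partial_k u + (\partial_k u)^2$ with the relation $-2 u_k \Delta_L u_k = u_k (I-P) u_k$ then rearranges to
\[
\int_M \partial_k(u^2)\psi^2\,d\mu + \E(u_k, u_k \psi^2) \;\le\; \int_M (\partial_k u)^2 \psi^2\,d\mu.
\]

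Second, I would decompose $\E(u_k, u_k \psi^2)$ via integration by parts \eqref{e-int-p}, the discrete product rule $\nabla_{yz}(u_k\psi^2) = \psi^2(z)\nabla_{yz} u_k + u_k(y)(\psi(y) + \psi(z))\nabla_{yz}\psi$, and symmetrization in $(y,z)$ using the symmetry of $p_1$. This isolates the ``good'' quadratic form
\[
A := \tfrac14 \iint (\psi^2(y) + \psi^2(z))(\nabla_{yz} u_k)^2 p_1(y,z)\,dy\,dz \;\ge\; 0
\]
plus a cross term linear in $\nabla u_k \cdot \nabla \psi$. An identical expansion of $\E(u_k \psi, u_k \psi)$ produces the same $A$, a gradient-of-$\psi$ contribution equal to $G/2$ where $G := \iint u_k^2(y)(\nabla_{yz}\psi)^2 p_1\,dy\,dz$ is the right-hand side of \eqref{e-caccio} up to the constant $17/8$, and a second cross term. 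Adding $\tfrac18 \E(u_k\psi, u_k\psi)$ to both sides of the displayed inequality and applying Young's inequality $|ab| \le \epsilon a^2 + (4\epsilon)^{-1} b^2$ to the two cross terms, with parameters tuned so that the net coefficient of $A$ becomes non-positive, absorbs the Dirichlet energy on both sides and leaves only a controlled multiple of $G$.

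The principal obstacle is the third stage, namely controlling the discretization error $\int (\partial_k u)^2 \psi^2\,d\mu$; this term vanishes in the continuous-time Moser argument and is the sole reason the hypothesis is stated for the lazy operator $P_L$ rather than $P$. Writing $\partial_k u = -\Delta_L u_k - v$ with $v := P_L u_k - u_{k+1} \ge 0$, the inequality $(\partial_k u)^2 \le 2 (\Delta_L u_k)^2 + 2 v^2$ splits the error in two. Jensen's inequality applied to the kernel $p_1(y,\cdot)$ yields the pointwise estimate $(\Delta_L u_k)^2(y) \le \tfrac14 \int (\nabla_{yz} u_k)^2 p_1(y,z)\,dz$, which, after multiplication by $\psi^2$ and integration, is comparable to $A$ and can be absorbed. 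The factor $\tfrac12$ built into $\Delta_L = (I-P)/2$ (the laziness) supplies the numerical margin for this absorption, whereas for plain $P$-subcaloric functions the analogous bound would produce a coefficient of $A$ too large to be controlled by $-\E(u_k, u_k\psi^2)$. The residual $\int v^2 \psi^2$ is the technically most delicate piece: it is handled by a further Cauchy--Schwarz/Jensen argument exploiting $v \le P_L u_k$ together with the non-negativity of $u_{k+1}$, and reduces to a combination of $A$ and $G$ that is again absorbable. Optimizing all Young parameters and collecting terms produces the stated coefficients $1/8$ and $17/8$.
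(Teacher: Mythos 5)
The first two stages of your proposal are sound, though they take a detour that the paper avoids. Multiplying $u_{k+1}-P_Lu_k\le 0$ by $2u_k\psi^2$ and integrating does give
\[
\int_M\psi^2\,\partial_k(u^2)\,d\mu+\mathcal E(u_k,u_k\psi^2)\le\int_M\psi^2(\partial_k u)^2\,d\mu ,
\]
and the expansion of $\mathcal E$ via \eqref{e-int-p} and the product rule, together with symmetrization, is correct. The paper, by contrast, squares the subcaloric inequality to get the pointwise bound
$\partial_k(u^2)\le -u_k\Delta u_k+\tfrac14(\Delta u_k)^2$, which after integrating against $\psi^2$ yields the same left side but with $\tfrac14\int\psi^2(\Delta u_k)^2$ on the right instead of $\int\psi^2(\partial_k u)^2$. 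That is the sharper starting point.

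The genuine gap is in your stage 3. You write $\partial_k u=-\Delta_Lu_k-v$ and split $(\partial_k u)^2\le 2(\Delta_Lu_k)^2+2v^2$; the $(\Delta_Lu_k)^2$ piece is indeed absorbable by Jensen. But your claim that the residual $\int\psi^2 v^2$ ``reduces to a combination of $A$ and $G$'' does not hold. The bound $v\le P_Lu_k$ only gives $v^2\le (P_Lu_k)^2$, and
$\int\psi^2(P_Lu_k)^2\lesssim\int\psi^2u_k^2$
is an $L^2$-quantity with no relation to gradient terms, so it cannot be absorbed into $A$ or $G$. What actually works is the strict refinement
\[
v^2\;\le\;v\,P_Lu_k\;=\;(P_Lu_k)^2-u_{k+1}P_Lu_k\;\le\;(P_Lu_k)^2-u_{k+1}^2 ,
\]
where the last step uses $P_Lu_k\ge u_{k+1}\ge 0$. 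But note that $(P_Lu_k)^2-u_{k+1}^2 = -\partial_k(u^2)-u_k\Delta u_k+\tfrac14(\Delta u_k)^2$; this is exactly \eqref{e-dc1}, the paper's squared inequality. In other words, filling the gap forces you to rediscover the paper's key algebraic observation anyway, after which your bound on $\int\psi^2 v^2$ involves $\int\psi^2\partial_k(u^2)$ and the Dirichlet form itself — not merely $A$ and $G$ — and another round of bookkeeping (and an optimization over the $\delta$ in $(\partial_ku)^2\le(1+\delta)(\Delta_Lu_k)^2+(1+1/\delta)v^2$, with $\delta\downarrow 0$) is needed to recover the coefficient $\tfrac14$. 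So, as written, stage 3 is incorrect; with the correct handling of $v^2$ you would circle back to the paper's starting point via a considerably longer path. The clean way is to square $u_{k+1}\le P_Lu_k$ directly from the outset, which avoids the term $(\partial_ku)^2$ and the auxiliary variable $v$ entirely.
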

\begin{proof}
Fix $x \in M$, $r >h'$ and define $B:=B(x,r+h')$. Let $u : \N \times M \to \R_{\ge 0}$ be such that  $u$ is $P_L$-subcaloric  in $ \nint{a}{b} \times B(x,r)$.
We start with the elementary inequality
\begin{equation}\label{e-dc1}
 \partial_k(u^2)(y) \le - u_k(y) \Delta u_k (y) +\frac{1}{4} \left( \Delta u_k(y) \right)^2
\end{equation}
for all $(k,y) \in \nint{a}{b} \times B(x,r)$, as we now show.
Since $u$ is $P_L$-subcaloric  in $ \nint{a}{b} \times B(x,r)$, we have $u_{k+1}(y) \le P_L u_k(y)$
for all $(k,y) \in \nint{a}{b} \times B(x,r)$.
Combined with the fact that $u$ is non-negative, we have $u^2_{k+1}(y) \le \left( P_L u_k(y) \right)^2$ for all $(k,y) \in \nint{a}{b} \times B(x,r)$ which can be rearranged into \eqref{e-dc1}.

Let $(k,y) \in \nint{a}{b} \times B(x,r)$. Recall that $B=B(x,r+h')$. Using \eqref{e-dc1}, integration by parts \eqref{e-int-p} and $\supp(\psi) \subseteq B(x,r)$, we have
 \begin{align}
  \int_B \psi^2 \partial_k (u^2)  \, d\mu &\le -\frac{1}{2} \int_B \int_B \left(\nabla_{yz} u_k\right) \left(\nabla_{yz}(u_k \psi^2)\right) p_1(y,z) \, dy \, dz \nonumber \\
  &\hspace{4mm} + \frac{1}{4} \int_B (\Delta u_k(y))^2 \psi^2(y) \,dy. \label{e-dc2}
 \end{align}
The second term in \eqref{e-dc2} can be handled using Cauchy-Schwarz inequality as
\begin{align}
 ( \Delta u_k(y) )^2  &=  \left( -\int_M (\nabla_{yz}u_k) p_1(y,z)\,dz \right)^2 \nonumber \\
&\le \left( \int_M  p_1(y,z) \,dz \right)\left( \int_M (\nabla_{yz}u_k)^2 p_1(y,z) \,dz \right) \nonumber \\
& =  \int_M (\nabla_{yz}u_k)^2 p_1(y,z) \,dz. \label{e-dc3}
\end{align}
For the first term in \eqref{e-dc2}, we use product rule \eqref{e-prs-1}
\begin{equation} \label{e-dc4}
 \nabla_{yz}(u_k \psi^2) = u_k(z) \nabla_{yz} \psi^2  + \psi^2(y) \nabla_{yz} u_k.
\end{equation}
Combining \eqref{e-dc2}, \eqref{e-dc3} and \eqref{e-dc4}, we have
\begin{align}
\nonumber \lefteqn{\int_B \psi^2(y)  \partial_k (u^2) (y) \,dy + \frac{1}{4} \int_B \int_B \left( \nabla_{yz} u_k \right)^2 \psi^2(y) p_1(y,z) \,dy \,dz} \\
 \label{e-dc5} & \le  -\frac{1}{2} \int_B \int_B u_k(z) \left( \nabla_{yz} \psi^2 \right) \left( \nabla_{yz} u_k \right) p_1(y,z) \, dy \,dz.
\end{align}
The right side of \eqref{e-dc5} can be bounded using $ t_1 t_2 \le t_1^2/8 +2 t_2^2$ as
\begin{align}
\nonumber \abs{-u_k(z) \left( \nabla_{yz} \psi^2 \right) \left( \nabla_{yz} u_k \right)} & \le  u_k(z) \psi(y) \abs{\left( \nabla_{yz} \psi \right) \left( \nabla_{yz} u_k \right)} \\
\nonumber  & \hspace{4mm} + u_k(z) \psi(z)\abs{\left( \nabla_{yz} \psi \right) \left( \nabla_{yz} u_k \right)} \\
\nonumber  & \le  \frac{1}{8} ( \psi^2(y) + \psi^2(z) ) \left( \nabla_{yz} u_k \right)^2 \\
\label{e-dc6}  & \hspace{4mm} + 4 u_k^2(z) \abs{\nabla_{yz} \psi}^2.
\end{align}
Using $p_1(y,z)=p_1(z,y)$ for $\mu\times\mu$-almost every $(y,z)$, we obtain
\begin{equation}
 \label{e-dc7} \int_B \int_B \psi^2(y) \left(\nabla_{yz}u_k \right)^2p_1(y,z)  \,dy \,dz =  \int_B \int_B \psi^2(z) \left(\nabla_{yz}u_k\right)^2p_1(y,z)  \, dy \,dz.
\end{equation}
Combining \eqref{e-dc5},\eqref{e-dc6} and \eqref{e-dc7}, we deduce
\begin{align}
\nonumber \lefteqn{\int_B\psi^2(y) \partial_k(u^2) (y) \, dy + \frac{1}{8} \int_B \int_B \left( \nabla_{yz} u_k \right)^2 \psi^2(y) p_1(y,z) \, dy \,dz} \\
 \label{e-dc8} & \le  2 \int_B \int_B u_k^2(z) \left( \nabla_{yz} \psi \right)^2 p_1(y,z) \, dy \,dz.
\end{align}
Since  $\supp(\psi) \subseteq B(x_0,r-h')$, using  integration by parts \eqref{e-int-p} we have
\begin{equation}
 \label{e-dc9} \mathcal{E}(\psi u_k, \psi u_k) = \frac{1}{2} \int_B \int_B \abs{ \nabla_{yz} (u_k \psi) }^2  p_1(y,z) \, dy \,dz.
\end{equation}
Using product rule \eqref{e-prs-1} and the inequality $(t_1+t_2)^2 \le 2(t_1^2 + t_2^2)$, we obtain
\begin{align}
\nonumber \abs{ \nabla_{yz} (u_k \psi) }^2 &= \abs{ \psi(y) (\nabla_{yz} u_k ) + u_k(z)  (\nabla_{yz} \psi)}^2 \\
\label{e-dc10} & \le  2 \left(\psi^2(y) (\nabla_{yz} u_k )^2 + u_k^2(z)  (\nabla_{yz} \psi)^2 \right).
\end{align}
Combining \eqref{e-dc8}, \eqref{e-dc9}, \eqref{e-dc10} and  $\mu\times\mu$-almost everywhere symmetry of $p_1$ yields
\eqref{e-caccio}.
\end{proof}
\begin{remark}\label{r-error}
Recall the product rule of differentiation
$\partial_k(u^2) = 2 u_k \partial u_k + \left( \partial_k u \right)^2$  gives rise to the `error term'  $ \left( \partial_k u \right)^2$ which occurs due to discreteness of time.
This error term occurs in \eqref{e-dc2} and is controlled using Cauchy-Schwarz inequality in \eqref{e-dc3}. However the estimate given by \eqref{e-dc3} is sufficient to prove Caccioppoli inequality
only in the presence of some laziness. A similar difficulty arises in the proof of discrete integral maximum principle and is the reason behind considering the operator $P_L$ as opposed to $P$ in this section.
\end{remark}

Next, we prove the elementary iterative step of  Moser's iteration in parabolic setting. The proof relies on Caccioppoli inequality \eqref{e-caccio} and
Sobolev inequality \eqref{e-Sob}.
Let $\mu_c$ denote the counting measure on $\N$ and let $(M,d,\mu)$ be a metric measure space. We denote the product measure  on $\N \times M$ by $\tilde \mu:=\mu_c \times \mu$.
Similar to \eqref{e-phi}, we define
\begin{equation}\label{e-tphi}
 \tphi(u,p,Q):= \left( \frac{1}{\tmu(Q)} \int_{Q} u^p \,d\tmu \right)^{1/p}
\end{equation}
for all $p>0$, for all $Q \subset \N \times M$ and for all functions $u:\N \times M \to \R_{\ge 0}$.
\begin{lemma} \label{l-pel}
 Under the assumptions of Proposition \ref{p-diag}, for all $K_1 \ge 1$, there exists $C_1>0,r_1>0$ (depending on $K_1$) such that
 \begin{align}
 \lefteqn{\hspace{-0.8cm} \tphi(Pu,2 + (4/\delta), \nint{ \lceil (1 -\sigma^2) a_0 + \sigma^2 a_1 \rceil}{a_1} \times B(x,(1-\sigma)r-h') ) } \nonumber \\
 &\le C_1 \sigma^{-1} \tphi(u, 2, \nint{a_0}{a_1} \times B(x,r+h')) \label{e-pel}
 \end{align}
for all $\sigma \in (0,1/2)$, for all $x \in M$, for all $r \ge r_1$, for all $a_0,a_1 \in \N$ satisfying $K_1^{-1} r^2 \le a_2-a_1 \le K_1 r^2$ and for all non-negative functions
$u: \N \times M \to \R_{\ge 0}$ such that $u$ is $P_L$-subcaloric in $\nint{a_0}{a_1} \times B(x,r)$.
\end{lemma}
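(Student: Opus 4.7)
The plan is to carry out one step of parabolic Moser iteration using the Caccioppoli inequality (Lemma \ref{l-caccio}), the Sobolev inequality \eqref{e-Sob}, and H\"older interpolation between $L^2$ and $L^{2\delta/(\delta-2)}$ on each time slice. The final exponent $2+4/\delta$ is exactly the interpolation exponent between these two.

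First, I introduce a space cutoff $\psi:M\to[0,1]$ with $\psi\equiv 1$ on $B(x,(1-\sigma)r)$ and $\supp\psi\subseteq B(x,r)$, built as an appropriate tent function so that $|\nabla_{yz}\psi|\le h'/(\sigma r)$ whenever $p_1(y,z)>0$; and a time cutoff $\eta:\N\to[0,1]$, non-decreasing, with $\eta_{k}=0$ for $k\le a_0$, $\eta_k=1$ for $k\ge a:=\lceil(1-\sigma^2)a_0+\sigma^2 a_1\rceil$, and $\eta_{k+1}^2-\eta_k^2\le C/(\sigma^2(a_1-a_0))$. Multiplying the Caccioppoli inequality \eqref{e-caccio} by $\eta_{k+1}^2$, summing from $k=a_0$ to $a_1-1$, and performing summation by parts in the time-derivative term (using $\eta_{a_0+1}=0$ and $\eta_{a_1}=1$) gives the twin estimates
\[
\sup_{k\in\nint{a}{a_1}}\int_M u_k^2\psi^2\,d\mu+\sum_{k=a_0}^{a_1-1}\eta_{k+1}^2\,\E(\psi u_k,\psi u_k)\le \frac{C_2}{\sigma^2 r^2}\sum_{k=a_0}^{a_1}\int_{B(x,r+h')}u_k^2\,d\mu,
\]
using $|\nabla_{yz}\psi|^2\le (h'/(\sigma r))^2$, the upper bound of \eqref{e-compat}, and $a_1-a_0\asymp r^2$.

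Second, for each fixed $k$, I apply the Sobolev inequality \eqref{e-Sob} to the function $\psi u_k\in L^2(B)$ with $B=B(x,r+h')$. Since $\supp(\psi u_k)\subseteq B(x,r)$, I have $P_B(\psi u_k)=P(\psi u_k)$ on $B$; moreover on $B(x,(1-\sigma)r-h')$ the $(h,h')$-compatibility together with $\psi\equiv 1$ on $B(x,(1-\sigma)r)$ gives $Pu_k=P(\psi u_k)=P_B(\psi u_k)$. Combined with Lemma~\ref{l-dircomp-ball}(a) (so that $\E^B(\psi u_k,\psi u_k)=\E(\psi u_k,\psi u_k)$), inequality \eqref{e-Sob} reads
\[
\|Pu_k\|_{2\delta/(\delta-2),\,B(x,(1-\sigma)r-h')}^{2}\le\frac{C_S(r+h')^2}{V(x,r+h')^{2/\delta}}\Bigl(\E(\psi u_k,\psi u_k)+(r+h')^{-2}\|\psi u_k\|_2^2\Bigr).
\]

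Third, H\"older's inequality on each time slice with the weights $\theta=\delta/(\delta+2)$ yields
\[
\int_{B(x,(1-\sigma)r-h')}|Pu_k|^{2+4/\delta}\,d\mu\le \|Pu_k\|_{2\delta/(\delta-2)}^{2}\,\|Pu_k\|_{2}^{4/\delta}\le \|P_B(\psi u_k)\|_{2\delta/(\delta-2)}^{2}\,\|\psi u_k\|_{2}^{4/\delta}
\]
(using $\|Pu_k\|_2\le\|\psi u_k\|_2$ on the relevant ball, via \eqref{e-contr}). Summing over $k\in\nint{a}{a_1}$, pulling out $\sup_k\|\psi u_k\|_2^{4/\delta}$, and inserting the Sobolev bound together with the twin estimates from step one gives
\[
\sum_{k=a}^{a_1}\int_{B(x,(1-\sigma)r-h')}|Pu_k|^{2+4/\delta}\,d\mu\le \frac{C_3(r+h')^2}{V(x,r+h')^{2/\delta}}\Bigl(\frac{1}{\sigma^2 r^2}+\frac{1}{(r+h')^2}\Bigr)\,I^{1+2/\delta},
\]
where $I:=\sum_{k=a_0}^{a_1}\int_{B(x,r+h')}u_k^2\,d\mu$.

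Finally, using $K_1^{-1}r^2\le a_1-a_0\le K_1 r^2$, the doubling property to replace $V(x,r+h')$ by $V(x,r)$ (via Lemma~\ref{l-vloc} and $r\ge r_1$ so $h'$ is absorbed), and rewriting both sides as $\tphi$-averages with volumes $\tmu(Q')\asymp \sigma^2 r^2\,V(x,r)$ (small cylinder) and $\tmu(Q)\asymp r^2\,V(x,r)$ (big cylinder), produces $\tphi(Pu,2+4/\delta,Q')^{2+4/\delta}\le C_1^{2+4/\delta}\sigma^{-(2+4/\delta)}\tphi(u,2,Q)^{2+4/\delta}$, which is \eqref{e-pel}. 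The main obstacle is the careful bookkeeping of the summation by parts with the time cutoff $\eta_k$—in particular ensuring that the $(\partial_k u)^2$ error term from discreteness (cf.\ Remark~\ref{r-error}), which Lemma~\ref{l-caccio} already absorbed thanks to the laziness of $P_L$, does not reappear—and tracking the passage between the balls $B(x,r)$, $B(x,r+h')$, and $B(x,(1-\sigma)r-h')$ so that $Pu_k$, $P(\psi u_k)$, and $P_B(\psi u_k)$ can be identified on the cylinder where the $L^{2+4/\delta}$ norm is evaluated.
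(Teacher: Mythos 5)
Your overall strategy is the same as the paper's: multiply the Caccioppoli inequality by a discrete-time cutoff (via the product rule), sum to get the twin estimates (a sup bound on $\int\psi^2u_k^2$ and a bound on $\sum_k\E(\psi u_k,\psi u_k)$), apply the Sobolev inequality \eqref{e-Sob} to $\psi u_k$ while identifying $Pu_k=P_B(\psi u_k)$ on $B(x,(1-\sigma)r-h')$, and then interpolate by H\"older between $L^2$ and $L^{2\delta/(\delta-2)}$ to reach the exponent $2+4/\delta$. This is exactly the route taken in the text.

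However, the final normalization step contains two compounding bookkeeping errors. First, the displayed estimate at the end of step three,
\[
\sum_{k}\int_{B(x,(1-\sigma)r-h')}|Pu_k|^{2+4/\delta}\,d\mu\le \frac{C_3(r+h')^2}{V(x,r+h')^{2/\delta}}\Bigl(\frac{1}{\sigma^2 r^2}+\frac{1}{(r+h')^2}\Bigr)\,I^{1+2/\delta},
\]
has only a $\sigma^{-2}$ prefactor; inserting $(\sup_k\|\psi u_k\|_2^2)^{2/\delta}\le(C\sigma^{-2}r^{-2}I)^{2/\delta}$ should produce $(\sigma^{-2}r^{-2})^{1+2/\delta}$ as in \eqref{e-pel10}, not a bare $\sigma^{-2}$. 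Second, and more substantively, the claim $\tmu(Q')\asymp\sigma^2 r^2\,V(x,r)$ is false: the time interval of the small cylinder is $\nint{a_\sigma}{a_1}$ with $a_1-a_\sigma\approx(1-\sigma^2)(a_1-a_0)$, which for $\sigma<1/2$ is comparable to $a_1-a_0\asymp r^2$, \emph{not} $\sigma^2 r^2$; and the spatial radius $(1-\sigma)r-h'\ge r/4$ for $r$ large, so $\tmu(Q')\asymp r^2 V(x,r)$. If you divide your displayed bound by $\sigma^2 r^2 V(x,r)$ as you propose, you end up with $\tphi(Pu,p,Q')^{p}\lesssim\sigma^{-4}r^{4/\delta}\,\tphi(u,2,Q)^{p}$ ($p=2+4/\delta$), carrying a spurious $r^{4/\delta}$ and the wrong $\sigma$ power. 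Once you correct both points — restore the $(\sigma^2 r^2)^{-2/\delta}$ factor and divide by $\tmu(Q')\asymp r^2 V(x,r)$ — the volume and $r$ factors cancel and one obtains exactly $\tphi(Pu,p,Q')^p\le C\sigma^{-p}\tphi(u,2,Q)^p$, i.e.\ the claimed $C_1\sigma^{-1}$ bound.
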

\begin{proof}
Let $x \in M$, $\sigma \in (0,1/2)$ and let $r>r_1\ge 4h'$, where $r_1$ will be determined later.
Let $u$ be a non-negative function that  is $P_L$-subcaloric in $\nint{a_0}{a_1} \times B(x,r)$.

We start by defining appropriate cut-off functions in space and time.
 Define $B:=B(x,r+h')$ and $\psi:M \to \R_{\ge 0}$ as
 \[
  \psi_\sigma(y):= \max \left( 0, \min\left(1, \frac{r - d(x,y)}{\sigma r}\right)\right).
 \]
Note that $\supp(\psi_\sigma) \subseteq B(x,r)$ and $\psi \equiv 1$ on $B(x,(1-\sigma) r)$.
Define $a_\sigma:=\lceil (1 -\sigma^2) a_0 + \sigma^2 a_1 \rceil$ and $\chi:\N \to \R$ as
\[
\chi_\sigma(k) = \begin{cases}
1 &\text{if $k \ge a_\sigma$}\\
0 &\text{if $k \le a_0$}\\
\frac{k-a_0}{a_\sigma-a_0} &\text{otherwise}.
\end{cases}
\]
Since $u$ is non-negative and $P_L$-subcaloric in $\nint{a_0}{a_1}\times B(x,r)$, by Caccioppoli inequality (Lemma \ref{l-caccio}) and
product rule \eqref{e-prt-1}, we obtain
\begin{align}
 \nonumber\lefteqn{\int_B \left( \partial_k(\chi_\sigma u)^2 \right) \psi_\sigma^2  \,d\mu + \frac{\chi_\sigma^2(k+1)}{8} \mathcal{E}^B ( \psi_\sigma u_k, \psi_\sigma u_k)} \\
\label{e-pel1} & \le  \frac{17}{8} \chi_\sigma^2(k+1) \int_B \int_B  \abs{\nabla_{yz} \psi_\sigma}^2 u_k^2(y) p_1(y,z) \,dy \,dz + \partial_k \chi_\sigma^2 \int_B u_k^2 \psi_\sigma^2 \,d\mu
\end{align}
for all $k \in [a,b)$.
Since $p_1$ is $(h,h')$-compatible with $(M,d,\mu)$, we have
\begin{equation} \label{e-pel2}
\abs{\nabla_{yz} \psi}^2 p_1(y,z)  \le \frac{(h')^2}{(\sigma r)^2}p_1(y,z).
\end{equation}
We use product rule \eqref{e-prt-1}, triangle inequality, $\chi_\sigma \le 1$ and $a_\sigma-a_0 \ge \sigma^2(a_1 -a_0) \ge \sigma^2 K_1^{-1} r^2$ to deduce
\begin{equation} \label{e-pel3}
\abs{ \partial_k \chi_\sigma^2 } \le (\chi_\sigma (k+1) + \chi_\sigma(k))\abs{\partial_k \chi_\sigma} \le 2 \abs{\partial_k \chi_\sigma}\le \frac{2}{(a_\sigma-a_0)} \le \frac{2K_1}{ \sigma^2 r^2}
\end{equation}
Combining \eqref{e-pel1}, \eqref{e-pel2} and \eqref{e-pel3}, there exists $C_2>0$ such that
\begin{equation}
\label{e-pel4} \int_B \psi_\sigma^2 \left( \partial_k(\chi_\sigma u)^2 \right) \,d\mu + \frac{\chi_\sigma^2(k+1)}{8} \mathcal{E}^B ( \psi_\sigma u_k, \psi_\sigma u_k) \le  \frac{C_2}{\sigma^2 r^2} \int_B   u_k^2 \,d\mu
\end{equation}
for all $k \in \nint{a_0}{a_1}$. In \eqref{e-pel4}, $C_2$ depends only on $K_1$ and $h'$.

Adding  \eqref{e-pel4}, from $k=a_0$ to $k \in \nint{a_0}{a_1}$, yields
\begin{align}
 \label{e-pel5} \sup_{k \in [a_\sigma,a_1]} \int_B \psi_\sigma^2  u_k^2 \,d\mu & \le  \frac{C_2}{\sigma^2 r^2} \sum_{k=a_0}^{a_1} \int_B   u_k^2 \,d\mu \\
 \label{e-pel6} \sum_{k=a_\sigma}^{a_1} \mathcal{E}(\psi_\sigma u_k, \psi_\sigma u_k) & \le \frac{8 C_2}{\sigma^2 r^2} \sum_{k=a_0}^{a_1} \int_B   u_k^2\,d\mu.
\end{align}
Define $w_k := P_B(\psi_\sigma u_k)$. Since $\psi \equiv 1$ on $B(x,(1-\sigma) r)$, by \eqref{e-compat} $w_k = P_B (\psi_\sigma u_k) = P u_k$ on $B(x,(1-\sigma)r - h')$.
Combined with H\"{o}lder inequality, we have
 \begin{equation} \label{e-pel7}
 \int_{B(x,(1-\sigma)r-h')} \left( P u_k \right)^{2(1+2/\delta)} \, d\mu \le \left( \int_B w_k^2 \,d\mu  \right)^{2/\delta} \left( \int_B w_k^{2\delta/(\delta-2)} \, d\mu\right)^{(\delta -2 )/\delta}.
\end{equation}
Since $P_B$ is a contraction in $L^2(B)$, we have
\begin{equation}
 \label{e-pel8}   \int_B w_k^2 \, d\mu    \le   \int_B \psi_\sigma^2 u_k^2 \,d\mu.
\end{equation}
By  Sobolev inequality \eqref{e-Sob}, Lemma \ref{l-dircomp-ball}(a) and \eqref{e-compat}, we obtain
\begin{equation}\label{e-pel9}
\left( \int_{B} w_k^{2 \delta/(\delta-2)} \,d\mu \right)^{(\delta-2)/\delta}
\le  \frac{C_S r^2}{V(x_0,r)^{2/\delta}}  \left( \mathcal{E} (\psi_\sigma u_k , \psi_\sigma u_k)  + r^{-2} \int_B \psi_\sigma^2 u_k^2\, d\mu \right)
\end{equation}
 By \eqref{e-pel5}, \eqref{e-pel6},\eqref{e-pel7},\eqref{e-pel8}, \eqref{e-pel9} and $a_1 - a_0 \le K_1 r^2$, there exists $C_3 >0$ such that
\begin{equation}\label{e-pel10}
\sum_{k=a_\sigma}^{a_1} \int_{ B(x,(1-\sigma)r-h')}\left(P u_k \right)^{2(1+2/\delta)} \,d\mu \le  \frac{C_4  r^2}{V(x,r)^{2/\delta}} \left( (r \sigma)^{-2} \sum_{k=a_0}^{a_1} \int_B u_k^2 \,d\mu\right)^{1+2/\delta}.
\end{equation}
We choose $r_1 \ge 4h'$ so that
$a_\sigma \le a_{1/2} \le (a_0+a_1)/2$ for all $a_0,a_1 \in \N$ so that $a_1-a_0 \ge K_1^{-1} r_1^2$.
Since $r \ge 4h'$ and $\sigma < 1/2$, we have $(1-\sigma)r -h' \ge (r/2)-h' \ge r/4$. Hence by \eqref{e-vd1}, $K_1^{-1}r^2 \le a_1 - a_0 \le K_1 r^2$ along with \eqref{e-pel10}, we have \eqref{e-pel}.
\end{proof}
\begin{proof}[Proof of Lemma \ref{l-mvi}]
 We carry out Moser's iteration in two stages. In the first stage of the iteration, we obtain a $L^1$ to $L^2$ mean value inequality and in the second stage we show a $L^2$ to $L^\infty$
 mean value inequality. Combining the two stages yields the desired $L^1$ to $L^\infty$ mean value inequality. The proof relies on
 repeated application of the elementary iterative step given by Lemma \ref{l-pel}.

 Let $r_1(0):=\sqrt{n}+h'$, $a_1(0):=0$, $N:= \lceil \log \sqrt{n} \rceil$ and $\theta:=1+(2/\delta)$.
 For the first stage of iteration, we iteratively define the quantities
\begin{align*}
 r_1(i+1) &:= (r_1(i) -h')\left(1- \frac{4^{-1}}{ 3^{N+1-i}} \right) -h'\\
 a_1(i+1) &:= \left\lceil\left( 1 - \frac{4^{-2} }{9^{N_r+1-i} } \right) a_1(i) +  \frac{4^{-2}}{9^{N_r+1-i} } n \right\rceil
\end{align*}
 for $i=0,1,\ldots,N$. We define a non-increasing sequence of space-time cylinders
 \[
  Q_i(i)= \nint{a_1(i)}{n} \times B(x,r_i), \hspace{5mm}\mbox{for $i=0,1,\ldots,N+1$}.
 \]
The following estimates are straightforward from definitions of $r_1$ and $a_1$: There exists $n_0>0$ such that for all $n \ge n_0$, we have
\begin{align}
 \nonumber r_1(N+1) &\ge \sqrt{n} \left(1 - 4^{-1} \sum_{j=1}^{N+1} 3^{-j} \right) -2 ( \log \sqrt{ n} +3 + h') \\
 &\ge (7/8) \sqrt{n} -2 ( \log \sqrt{ n} +3 + h') \ge (6/7) \sqrt{n},\label{e-mv1} \\
 n- a_1(N+1) & \ge n \left(1 - 4^{-2} \sum_{j=1}^{N+1} 9^{-j} \right) - 2 (N+1) \nonumber \\
 \label{e-mv2} & \ge  (31/32)n - 2 ( \log \sqrt{n} +2) \ge (15/16)n.
\end{align}

Let $u:\N \times M \to \R_{\ge 0}$ be an arbitrary non-negative function that is $P_L$-subcaloric in $\nint{0}{n}\times B(x,\sqrt{n})$ where $n \ge n_1$.
By Lemma \ref{l-solres} $P^i u$ is $P_L$-subcaloric in $\nint{0}{n}\times B(x,\sqrt{n} - ih')$
and therefore $P_L$-subcaloric in $\nint{a_1(i)}{n}\times B(x,r_1(i)-h')$ for all $i=0,1,\ldots,N+1$.
Hence by applying Lemma \ref{l-pel} for the function $P^i u$ which is  $P_L$-subcaloric on $\nint{a_1(i)}{n}\times B(x,r_1(i)-h')$ with $\sigma=4^{-1} 3^{-(N+1-i)}$,  we have $C_2>0$ such that
\begin{equation} \label{e-mv3}
 \tphi(P^{i+1} u, 2 \theta, Q_{i+1}) \le C_2 3^{N+1-i} \tphi(P^{i} u, 2 , Q_{i})
\end{equation}
for all $i=0,1,\ldots,N$. We may choose $K_1=8$ in the application of Lemma \ref{l-pel} above due to \eqref{e-mv1} and \eqref{e-mv2}.

By H\"{o}lder inequality along with \eqref{e-mv1}, \eqref{e-mv2} and \eqref{e-vd1}, there exists $C_3>0$ such that
\begin{equation}\label{e-mv4}
 \tphi(P^{i+1} u , 2 ,Q_1(i+1))\le C_3 \tphi(P^{i+1} u , 1,Q_1(i+1))^\alpha \tphi(P^{i+1} u , 2 \theta ,Q_1(i+1))^\beta
\end{equation}
for all $i=1,2,\ldots,N$, where $\alpha=1-\beta=2/(\delta+4)$. By \eqref{e-compat}, $u \ge 0$, \eqref{e-mv1},\eqref{e-mv2} and  \eqref{e-vd1}, there exists $C_4 >0$ such that
\begin{equation}\label{e-mv5}
 \tphi(P^{i} u , 1,Q_1(i)) \le C_4 \tphi( u , 1,Q_1(0))
\end{equation}
for all $i=0,1,\ldots,N+1$. Combining \eqref{e-mv3}, \eqref{e-mv4}, \eqref{e-mv5}, there exists $C_5>0$ such that
\begin{equation}
 \label{e-mv6}  \tphi(P^{i+1} u , 2 ,Q_1(i+1))\le C_5 3^{\beta(N+1-i)} \tphi( u , 1,Q_1(0))^\alpha \tphi(P^{i} u , 2  ,Q_1(i))^\beta
\end{equation}
for $i=1,\ldots,N$. By iterating \eqref{e-mv6}, we obtain
\begin{equation}\label{e-mv7}
 \tphi(P^{N+1} u , 2 ,Q_1(N+1))\le C_5^{\sum_{i=0}^\infty \beta^i} 3^{ \sum_{i=1}^ \infty i \beta^i} \tphi( u , 1,Q_1(0))^{(1 - \beta^N)}\tphi( P u , 2,Q_1(1))^{ \beta^N}.
\end{equation}
Since $u \ge 0$, by H\"{o}lder inequality, \eqref{e-compat} and \eqref{e-vd1}, there exists $C_6,C_7>0$ such that
\begin{align}
\nonumber  \int_{B(x,r_1(1))} (P u_i)^2 \, d\mu &\le   \left( \sup_{B(x,r_1(1))} Pu_i \right)\int_{B(x,r_1(1))} P u_i \,d\mu \\
 \nonumber &\le \left( \int_{B(x,\sqrt{n}+h')} u_i \, d\mu \right)^2 \sup_{y \in B(x,\sqrt{n})} \frac{C_6}{V(y,h')} \\
  &\le \frac{C_7 n^{\delta/2}}{V(x,\sqrt{n})} \left( \sup_{i \in \nint{0}{n}} \int_{B(x,\sqrt{n}+h')} u_i \, d\mu \right)^2 \label{e-mv8}
\end{align}
for all $i \in \nint{0}{n}$. Combining \eqref{e-mv7}, \eqref{e-mv8} along with \eqref{e-vd1} yields
\begin{equation}\label{e-mv9}
 \tphi(P^{N+1} u , 2 ,Q_1(N+1))\le \frac{C_8}{V(x,\sqrt{n})} \sup_{k \in \nint{0}{n}} \int_{B(x,\sqrt{n}+h')} u_k \, d\mu
\end{equation}
for some $C_8 >0$. The inequality \eqref{e-mv9} is a $L^1$ to $L^2$ mean value inequality and this concludes the first part of iteration.

For the second part, we define $v=P^{N+1}u$, $a_2(0)=a_1(N+1)$ and $r_2(0)= r_1(N+1)$.
As before, we iteratively define
\begin{align*}
 r_2(i+1) &:= (r_2(i) -h')\left(1- \frac{4^{-1}}{ 3^{i+1}} \right) -h',\\
 a_2(i+1) &:= \left\lceil\left( 1 - \frac{4^{-2} }{9^{i+1} } \right) a_2(i) +  \frac{4^{-2}}{9^{i+1} } n \right\rceil
\end{align*}
for $i=1,2,\ldots,N+1$.  As before, define a non-increasing sequence of space-time cylinders
by $Q_2(i):= \nint{a_2(i)}{n} \times B(x, r_2(i))$ for $i=0,1,\ldots,n$. Note that
$Q_2(0)=Q_1(N+1)$.

Similar to \eqref{e-mv1} and \eqref{e-mv2}, there exists $n_1 \ge n_0$ such that for
all $n \ge n_1$,
\begin{align}
\label{e-mvi1} r_2(i) \ge r_2(N+1) &\ge \sqrt{n}/2 \\
\label{e-mvi2} n-a_2(i) \ge n - a_2(N+1) & \ge n/2
\end{align}
for all $i=0,1,\ldots,N+1$.
By Jensen's inequality, we have
\begin{equation}\label{e-mvi3}
(P^{i+1} v)^{\theta^{i+1}} \le \left( P \left[(P^i v)^{\theta^{i}} \right] \right)^\theta
\end{equation}
for all $i \in \N$.
By Lemma \ref{l-solres} and Lemma \ref{l-subcal}, the function $(P^i v)^{\theta^i}$ is $P_L$-subcaloric in $\nint{a_2(i)}{n} \times B(x,r_2(i)-h)$ for all $i=0,1,\ldots,N+1$.
Therefore by Lemma \ref{l-pel} for the function  $(P^i v)^{\theta^i}$ and \eqref{e-mvi3}, there exists $C_9>0$ such that
\begin{equation}\label{e-mvi4}
 \tphi( P^{i+1}v  , 2\theta^{i+1}, Q_2(i+1)) \le C_9^{\theta^{-i}} 3^{(i+1)\theta^{-i}} \tphi(P^{i} v, 2 \theta^i ,Q_2(i))
 \end{equation}
for $i=0,1,\ldots,N-1$. Iterating the inequalities \eqref{e-mvi4}, there exists $C_{10}>0$ such that
\begin{equation}\label{e-mvi5}
 \tphi( P^{N}v  , 2\theta^{N}, Q_2(N))\le C_{10} \tphi( v, 2  ,Q_2(0))=C_{10} \tphi( v, 2  ,Q_1(N+1)).
\end{equation}
There exists $C_{11},C_{12},C_{13}>0$ such that, for all $k \in N$
\begin{align} \label{e-mvi6}
 \sup_{y \in B(x,r_2(N+1))} P^{N+1}v_k (y)  &\le C_{11} \dashint_{B(y,h')} P^N v_k \, d\mu \nonumber \\
 &\le C_{11} \left(\dashint_{B(y,h')} (P^N v_k)^{2 \theta^N} \, d\mu \right)^{1/(2\theta^N)} \nonumber \\
 & \le C_{12} n^{\delta/(4 \theta^N)} \left(\dashint_{B(x,r_2(N))} (P^N v_k)^{2 \theta^N} \, d\mu \right)^{1/(2\theta^N)} \nonumber \\
 & \le C_{13} \left(\dashint_{B(x,r_2(N))} (P^N v_k)^{2 \theta^N} \, d\mu \right)^{1/(2\theta^N)}
\end{align}
The first line above follows from \eqref{e-compat}, the second line follows from Jensen's inequality, the third line follows from  \eqref{e-vd1}
and the last line follows from the fact that $n \mapsto n^{\delta/(4 \theta^{\log n})}$ is bounded in $[2,\infty)$.
By \eqref{e-mvi5}, \eqref{e-mvi6} and $v=P^{N+1}u$, we have a $L^2$ to $L^\infty$ mean value inequality
\begin{equation}\label{e-mvi7}
 \inf_{k \in \nint{0}{n}} \sup_{B(x,r_2(N+1))} P^{2N+2}u \le  C_{10} C_{13}  \tphi( P^{N+1}u, 2  ,Q_1(N+1)).
\end{equation}
Combining \eqref{e-mv9} and \eqref{e-mvi7}, we have the desired inequality \eqref{e-mvi}.
\end{proof}
\section{On-diagonal upper bound}
The following lemma provides a useful example of $P_L$-caloric function.
\begin{lemma}\label{l-solheat}
Let $(M,d,\mu)$ be a metric measure space. Let $P$ be Markov operator equipped with  kernel $(p_k)_{k \in \N}$ that is
$(h,h')$-compatible with $(M,d,\mu)$.
 Define for all $k \in \N$, the function $h_k: M \times M \to \R$ by
 \begin{equation}\label{e-defh}
  h_k(x,y):=\left( P_L^k p_2(x,\cdot) \right)(y) = 2^{-n} \sum_{i=0}^n \binom{n}{i} p_{i+2}(x,y)
 \end{equation}
where $P_L=(I+P)/2$ as before.
Then for all $x \in M$, the function
\[
 (k,y) \mapsto h_k(x,y)
\]
is $P_L$-caloric in $\N \times M$.
\end{lemma}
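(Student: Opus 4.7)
The plan is essentially to unwind definitions and invoke the semigroup property of $P_L$. By Definition \ref{d-caloric} in the $P_L$ version, the claim that $(k,y) \mapsto h_k(x,y)$ is $P_L$-caloric on $\N \times M$ is equivalent to the pointwise identity $h_{k+1}(x,y) = P_L[h_k(x,\cdot)](y)$ for all $k \in \N$ and all $y \in M$, where the operator $P_L$ on the right acts on the second variable with $x$ held fixed.

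First I would verify that all objects involved are well-defined in a suitable function space. Since $P$ is $(h,h')$-compatible with $(M,d,\mu)$, Lemma \ref{l-kernel} yields that $p_2(x,\cdot)$ is a bona fide function on $M$ with $p_2(x,\cdot) \in L^1(M,\mu) \cap L^\infty(M,\mu)$; in particular $p_2(x,\cdot)$ lies in the domain of every iterate $P_L^k$, since $P_L = (I+P)/2$ is a contraction on every $L^p(M)$ by Lemma \ref{l-mop}. Consequently, $h_k(x,\cdot) = P_L^k p_2(x,\cdot)$ is unambiguously defined as an element of $L^1 \cap L^\infty$ for each $k \in \N$, and applying $P_L$ to this function makes sense pointwise.

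Once this is in place, the $P_L$-caloric identity is immediate from the semigroup property:
\[
h_{k+1}(x,y) \;=\; P_L^{k+1} p_2(x,\cdot)(y) \;=\; P_L\bigl(P_L^k p_2(x,\cdot)\bigr)(y) \;=\; P_L\bigl[h_k(x,\cdot)\bigr](y).
\]
Finally, I would justify the explicit formula in \eqref{e-defh}: since $I$ and $P$ commute, the binomial theorem gives $P_L^k = 2^{-k}(I+P)^k = 2^{-k}\sum_{i=0}^k \binom{k}{i} P^i$. Applying both sides to $p_2(x,\cdot)$ at the point $y$ and using Lemma \ref{l-kernel}(c), which says $P^i[p_2(x,\cdot)](y) = p_{i+2}(x,y)$, yields
\[
h_k(x,y) \;=\; 2^{-k}\sum_{i=0}^k \binom{k}{i} p_{i+2}(x,y),
\]
matching the second displayed expression in \eqref{e-defh} (modulo the minor notational slip between $k$ and $n$ in the statement). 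There is no real obstacle here; the only point requiring a moment's care is the function-space justification that allows the operator manipulations $P_L^{k+1} = P_L \circ P_L^k$ to be read pointwise in the second variable, and this is handled by the $L^1 \cap L^\infty$ membership of $p_2(x,\cdot)$.
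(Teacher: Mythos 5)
Your proof is correct and follows essentially the same route as the paper's: both reduce $P_L$-caloricity to the semigroup identity $h_{k+1}(x,\cdot) = P_L h_k(x,\cdot)$ and justify the binomial expansion via $(I+P)^k$ together with Lemma~\ref{l-kernel}(c). Your added remarks on $L^1\cap L^\infty$ well-definedness are a harmless elaboration of what the paper leaves implicit.
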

\begin{proof}
The second equality in \eqref{e-defh} is a consequence of binomial theorem and Lemma \ref{l-kernel}(c).
Note that
 \[
  P_L(h_k(x,\cdot))(y)= P_L(P_L^kp_2(x,\cdot))(y)= P_L^{k+1}\left( p_2(x,\cdot)\right) (y)=h_{k+1}(x,y).
 \]
Therefore $(k,y)\mapsto h_k(x,y)$ is $P_L$-caloric in $\N \times M$ for all $x \in M$.
\end{proof}
We are ready to prove Proposition \ref{p-diag} using the mean value inequality \eqref{e-mvi}.
\begin{proof}[Proof of Proposition \ref{p-diag}]
 Let $h_k(x,y)$ be defined as \eqref{e-defh}. Choose $n_1 \in \N$ such that
 \begin{equation}\label{e-du1}
  2 \lceil \log \sqrt{n} \rceil +4 \le n
 \end{equation}
 for all $n \ge n_1$.
By Lemma \ref{l-mvi}, Lemma \ref{l-solheat} and $\int_M h_k(x,y)\,dy=1$, there exists $n_2 \ge n_1$ and $C_1>0$ such that the $P_L$-caloric function $(k,y)\mapsto h_k(x,y)$ satisfies
the mean value inequality
\begin{equation}\label{e-du2}
 \inf_{k \in \nint{0}{n} } P^{ 2 \lceil \log \sqrt{n} \rceil +2} h_k(x,x)  \le \inf_{k \in \nint{0}{n} } \sup_{y \in B(x,\sqrt{n}/2)} P^{ 2 \lceil \log \sqrt{n} \rceil +2} h_k(x,y) \le \frac{C_1}{V(x,\sqrt{n})}
\end{equation}
for all $x \in M$ and for all $n \in \N$ satisfying $n \ge n_2$.

By \eqref{e-da}, we have $p_2(x,\cdot)- \alpha p_1(x,\cdot) \ge 0$ $\mu$-almost everywhere for each $x \in M$.
By \eqref{e-pcomp} of Lemma \ref{l-con-d} and Lemma \ref{l-mker}, we have
\begin{equation}\label{e-du4}
 p_{k}(x,x) \le \alpha^{-1} p_{2 \lceil k/2 \rceil} (x,x) \le \alpha^{-1} p_{2 \lceil k/2 \rceil} (x,x) \ge \alpha^{-1} p_{2n}(x,x)
\end{equation}
for all $x \in M$ and for all $2 \le k \le 2n$.
By \eqref{e-du4} and \eqref{e-du1},
\begin{equation}\label{e-du5}
  P^{ 2 \lceil \log \sqrt{n} \rceil +2} h_k(x,x) \ge \alpha^{-1} p_{2n}(x,x)
\end{equation}
for all $x \in M$, for all $k \in \nint{0}{n}$ and for all $n \ge n_2$.
Combining \eqref{e-du5}, \eqref{e-pcomp}, \eqref{e-defh} and \eqref{e-du2}, there exists $C_2>0$ such that
\begin{equation}\label{e-du6}
 p_{n}(x,x) \le \frac{C_2}{V(x,\sqrt{n})}
\end{equation}
for all $n \ge 2n_2$. Since $P$ is a contraction in $L^\infty$ by \eqref{e-compat}, Lemma \ref{l-kernel}(c) and \eqref{e-vd1}, there exists $C_3,C_4>0$ and $\delta > 2$ such that
\begin{equation}\label{e-du7}
 p_{n}(x,x) \le \frac{C_3}{V(x,h')} \le \frac{C_4 n^{\delta/2}}{V(x,\sqrt{n})}
\end{equation}
for all $x \in M$ and for all $ n \in \N$ with $n \ge 2$.
Combining \eqref{e-du6} and \eqref{e-du7} gives the diagonal bound \eqref{e-diag}.
\end{proof}
\section{Discrete integral maximum principle}\label{s-dimp}
We use Discrete integral maximum principle and diagonal upper bound to obtain Gaussian upper bounds.
This approach is detailed in \cite{CGZ05} for graphs. A crucial assumption in \cite{CGZ05} is the laziness assumption
for the corresponding Markov chain $(X_n)_{n \in N}$ given by $\inf_{x \in M} \PP_x(X_1=x) >0$.
As explained in \cite[Section 3]{CGZ05} this laziness assumption is not too restrictive for graphs because under natural conditions the iterated operator $P^2$ corresponds to a lazy Markov chain.
However this fails to be true for continuous spaces.

Since the laziness assumption is unavoidable for discrete integral maximum principle, we consider the Markov operator $P_L=(I+P)/2$ instead of $P$.
Using discrete integral maximum principle corresponding to $P_L$ and diagonal estimate on $p_k$, we obtain off-diagonal estimates on $h_k$ defined in \eqref{e-defh}.
We rely on careful comparison between off-diagonal estimates of $h_k$  and the Markov kernel $p_k$.
The comparison arguments are new but elementary and involves Stirling's approximation. Our comparison arguments rely crucially on the compatibility assumption \eqref{e-da}.
Similar comparison arguments for off-diagonal estimates was carried out in \cite[Section 3.2]{Del99} to compare Markov chains on graphs with its corresponding continuous time version.

The main technical tool to prove Gaussian upper bounds is the following discrete integral maximum principle.
The statement below and its proof is adapted from \cite[Proposition 2.1]{CGZ05}.
\begin{prop}[Discrete integral maximum principle]\label{p-imp}
Suppose that $P$ is a Markov operator  that is $(h,h')$-compatible with a metric measure space $(M,d,\mu)$.
Let $f$ be a strictly positive continuous function on $\nint{0}{n} \times M$ such that,
\begin{equation}
\label{e-condn-f} \partial_k f(x) + \frac{ \abs{\nabla_P f_{k+1}}^2}{4 f_{k+1}}(x) \le 0.
\end{equation}
for all
$x \in M$ and $k \in \nint{0}{n-1}$ where $\abs{\nabla_P f}$ is as defined in \eqref{e-gradp}.
Let $u:\N \times M$  bounded function   that is $P_L$-caloric on $\nint{0}{n-1} \times M$ satisfying $\supp(u_0)\subset B(w,R)$ for some $w \in M, R \in (0,\infty)$.
Then the function
\[
k \mapsto J_k = J_k(u):= \int_M u_k^2 f_k \,d\mu
\]
is non-increasing in $\nint{0}{n}$.
\end{prop}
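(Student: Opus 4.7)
The plan is to establish the pointwise inequality $\partial_k J := J_{k+1} - J_k \le 0$ for every $k \in \nint{0}{n-1}$, which gives monotonicity immediately. The support hypothesis $\supp(u_0) \subset B(w, R)$ together with the $(h,h')$-compatibility of $P$ (and hence of $P_L$) propagates to $\supp(u_k) \subset B(w, R + kh')$, so all integrals are absolutely convergent and Fubini may be applied without comment. The overall scheme follows the discrete integral maximum principle of \cite[Proposition~2.1]{CGZ05}, with careful bookkeeping for the role of laziness.

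First I would decompose
\[
J_{k+1} - J_k \;=\; \int_M (u_{k+1}^2 - u_k^2)\, f_{k+1}\, d\mu \;+\; \int_M u_k^2\, \partial_k f\, d\mu,
\]
apply the discrete product rule $u_{k+1}^2 - u_k^2 = 2 u_k \partial_k u + (\partial_k u)^2$, and use $P_L$-caloricity to rewrite $\partial_k u = -\tfrac12 \Delta u_k$. The term $-\int u_k \Delta u_k f_{k+1}\, d\mu$ unfolds via the integration-by-parts identity \eqref{e-int-p} and the product rule \eqref{e-prs-1}; symmetrizing in $x \leftrightarrow y$ using $p_1(x,y) = p_1(y,x)$ produces a negative kinetic piece $-\tfrac14 \iint p_1(x,y)(\nabla_{xy} u_k)^2 [f_{k+1}(x) + f_{k+1}(y)]\, dx\, dy$ and a signed cross piece $-\tfrac14 \iint p_1(x,y) (u_k(x) + u_k(y))(\nabla_{xy} u_k)(\nabla_{xy} f_{k+1})\, dx\, dy$. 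The discrete-time error $\int (\partial_k u)^2 f_{k+1}\, d\mu$ is controlled by Cauchy--Schwarz against the probability kernel $p_1(x, \cdot)$: $(\partial_k u(x))^2 = \tfrac14 (Pu_k - u_k)^2(x) \le \tfrac14 |\nabla_P u_k|^2(x)$, which after symmetrization contributes at most $+\tfrac18 \iint p_1(x,y)(\nabla_{xy} u_k)^2 [f_{k+1}(x) + f_{k+1}(y)]\, dx\, dy$. The precise factor $1/4$, arising because $\partial_k u = \tfrac12 (P - I) u_k$ rather than $(P - I) u_k$, is the whole role of laziness: it ensures the Cauchy--Schwarz loss absorbs only half of the kinetic piece.

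Feeding the hypothesis $\partial_k f + |\nabla_P f_{k+1}|^2 / (4 f_{k+1}) \le 0$ into $\int u_k^2 \partial_k f\, d\mu$, expanding $|\nabla_P f_{k+1}|^2$ as a double integral, and symmetrizing gives
\[
\int u_k^2\, \partial_k f\, d\mu \;\le\; -\tfrac18 \iint p_1(x,y) \Bigl[ \tfrac{u_k^2(x)}{f_{k+1}(x)} + \tfrac{u_k^2(y)}{f_{k+1}(y)} \Bigr] (\nabla_{xy} f_{k+1})^2\, dx\, dy.
\]
Summing all contributions yields $J_{k+1} - J_k \le -\tfrac18 \iint p_1(x,y)\, \mathcal{Q}(x,y)\, dx\, dy$ where, with $a = u_k(x),\, b = u_k(y),\, A = f_{k+1}(x),\, B = f_{k+1}(y)$,
\[
\mathcal{Q} \;=\; (b-a)^2 (A+B) + 2(a+b)(b-a)(B-A) + \Bigl( \tfrac{a^2}{A} + \tfrac{b^2}{B} \Bigr) (B-A)^2.
\]

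The main obstacle, and essentially the only genuinely nontrivial step, will be to prove the pointwise inequality $\mathcal{Q} \ge 0$. My plan is to uncover the algebraic identity
\[
AB(A+B)\, \mathcal{Q} \;=\; 4 AB \, (bB - aA)^2 \;+\; (B - A)^2 (aB - bA)^2,
\]
verifiable by direct expansion starting from $[(b-a)(A+B) + (a+b)(B-A)]^2 = 4(bB - aA)^2$ and the identity $(A+B)(a^2/A + b^2/B) - (a+b)^2 = (aB - bA)^2/(AB)$. This exhibits $AB(A+B)\mathcal{Q}$ as a weighted sum of two squares, so $\mathcal{Q} \ge 0$ pointwise and hence $\partial_k J \le 0$. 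It is precisely this sum-of-squares decomposition that makes the hypothesized form of the differential inequality on $f$ the sharp choice to cancel the quadratic errors produced by the discreteness of time.
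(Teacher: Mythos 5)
Your proof is correct, and the overall scheme matches the paper's (which follows \cite[Proposition~2.1]{CGZ05}): decompose $\partial_k J$, rewrite $\partial_k u = -\tfrac12 \Delta u_k$ from $P_L$-caloricity, integrate by parts, control the discrete-time error $\int (\partial_k u)^2 f_{k+1}\,d\mu$ by Cauchy--Schwarz against $p_1(x,\cdot)$ using the factor $\tfrac14$ that laziness provides, and close with the hypothesis on $f$. The one genuine difference is the closing algebra. The paper keeps the integration-by-parts output asymmetric in $x$, completes the square in the $y$-variable for each fixed $x$ via the one-line identity $-\tfrac14 a^2 - \tfrac12 ab = -\tfrac14 (a+b)^2 + \tfrac14 b^2$, integrates $\tfrac14 b^2$ over $y$ to produce $\tfrac14\,u_k^2\,|\nabla_P f_{k+1}|^2/f_{k+1}$, and only then invokes the hypothesis. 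You instead feed the hypothesis in immediately, fully symmetrize in $(x,y)$, and conclude with the pointwise sum-of-squares identity
\[
AB(A+B)\,\mathcal{Q} \;=\; 4AB\,(bB - aA)^2 \;+\; (B-A)^2\,(aB - bA)^2 .
\]
In fact $\mathcal{Q}(x,y)$ is exactly $(a_{xy}+b_{xy})^2 + (a_{yx}+b_{yx})^2$ where $a_{xy}+b_{xy}$ is the paper's completed square, so the two estimates are literally identical after symmetrization. The paper's version is marginally shorter; yours costs a little extra algebra to verify the SOS decomposition but gives the clean pointwise statement $\partial_k J \le -\tfrac18 \iint p_1\,\mathcal{Q}\,d\mu\,d\mu$ with $\mathcal{Q} \ge 0$, and makes the cancellation mechanism between the two terms of the hypothesis on $f$ fully explicit. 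Both routes are valid.
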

\begin{proof}
Since $\supp(u_0) \subseteq B(w,R)$, by \eqref{e-compat} $\supp(u_k) \subseteq B(w,R+kh')$.
Therefore by continuity of $f_k$ and boundedness of $u$ all the integrals $J_k$ are finite.
By product rule \eqref{e-prt-1}, \eqref{e-prt-2} and $\partial_k u = -\Delta u_k /2$, we have for all $k \in \nint{0}{n-1}$
\begin{align}
\nonumber \partial_k J(u) &=\int_M \partial_k(u^2f) \, d\mu \\
 &= 2 \int_M u_k \partial_k u f_{k+1} \,d\mu + \int_M \left( \partial_k u\right)^2 f_{k+1}\,d\mu + \int_M u_k^2\partial_k f\, d\mu \nonumber \\
  &= -\int_M u_k f_{k+1} \Delta u_k \,d\mu + \int_M \left( \partial_k u\right)^2 f_{k+1}\,d\mu + \int_M u_k^2\partial_k f\, d\mu.
\label{e-imp1}
\end{align}
Using integration by parts \eqref{e-int-p} and product rule \eqref{e-prs-1}, the first term in \eqref{e-imp1} is
\begin{align}
\label{e-imp2} \nonumber \lefteqn{-\int_M u_k f_{k+1} \Delta u_k \,d\mu} \\
\nonumber &=  - \frac{1}{2} \int_M \int_M (\nabla_{xy} u_k) \nabla_{xy}(u_k f_{k+1}) p_1(x,y)\, dy \,dx \\
\nonumber &=  -\frac{1}{2}  \int_M \int_M  \left[(\nabla_{xy} u_k)^2  f_{k+1}(x) + (\nabla_{xy} u_k) u_k(y) (\nabla_{xy}f_{k+1}) \right] p_1(x,y) \,dy \,dx \\
 &=  -\frac{1}{2}  \int_M \int_M  \left[(\nabla_{xy} u_k)^2  f_{k+1}(x) + (\nabla_{xy} u_k) u_k(x) (\nabla_{xy}f_{k+1}) \right] p_1(x,y) \,dy \,dx
\end{align}
In order to get the last equation we switch $x$ and $y$ and use the fact that $p_1(x,y)=p_1(y,x)$ for $\mu \times \mu$-almost every $(x,y)$.
To handle the second term in \eqref{e-imp1}, we use $\partial_k u = -\Delta u_k/2$ \eqref{e-dc3} to obtain
\begin{equation}
\label{e-imp3} \int_M (\partial_k u)^2 f_{k+1} \,d\mu \le  \frac{1}{4} \int_M \int_M (\nabla_{xy}u_k)^2 f_{k+1}(x) p_1(x,y) \,dy \,dx
\end{equation}
for all $k \in \nint{0}{n-1}$.
Substituting \eqref{e-imp2} and \eqref{e-imp3} in \eqref{e-imp1}, we deduce
\begin{align*}
\partial_k J(u) &\le -\frac{1}{4} \int_M \int_M (\nabla_{xy} u_k)^2 f_{k+1}(x) p_1(x,y) \,dy \,dx + \int_M u_k^2(x) \partial_k f(x) \,dx\\
& - \frac{1}{2}  \int_M \int_M (\nabla_{xy} u_k) u_k(x) (\nabla_{xy}f_{k+1})p_1(x,y) \,dy \,dx \\
& =  - \frac{1}{4} \int_M \int_M \left( \nabla_{xy} u_k \sqrt{f_{k+1}(x)} + \frac{u_k(x)}{  \sqrt{f_{k+1}(x)}} \nabla_{xy}f_{k+1} \right)^2 p_1(x,y) \,dy \,dx \\
&  \hspace{4mm}+ \int_M  u_k^2(x)\left( \frac{\abs{\nabla_P f_{k+1}}^2(x)}{4 f_{k+1}(x)}  + \partial_k f(x) \right) \,dx.
\end{align*}
The given condition \eqref{e-condn-f} ensures that $\partial_k J \le 0$, that is $J_{k+1} \le J_k$ for all $k \in \nint{0}{n-1}$.
\end{proof}
The following lemma essentially follow from \cite[Proposition 2.5]{CGZ05}. We repeat the proof for completeness.
Lemma \ref{l-weightfn} provides a weight function $f$ that will be used in the application of discrete integral maximum principle.
\begin{lemma}\label{l-weightfn}
 Let $(M,d,\mu)$ be a  metric measure space  and
let $P$ be a Markov operator  that is $(h,h')$-compatible with $(M,d,\mu)$.
Let $\sigma: M \to \mathbb{R}$ be a 1-Lipschitz function such that $\inf \sigma \ge h'$. There exists a positive number $D_1$ such that
for all $D \ge D_1$, the weight function
\begin{equation}
\label{defn-f} f_k(x)=f_k^D(x):= \exp \left( - \frac{\sigma^2(x) }{D(n+1-k)} \right)
\end{equation}
satisfies \begin{equation*}
 \partial_k f(x) + \frac{ \abs{\nabla_P f_{k+1}}^2}{4 f_{k+1}}(x) \le 0.
\end{equation*} for all $x\in M$, for all $n \in \N^*$ and $k \in \nint{0}{n-1}$.
\end{lemma}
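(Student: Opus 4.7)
The plan is to reduce the pointwise inequality to a purely scalar inequality in one real variable and then select $D$ so that this scalar inequality holds. Set $t_k := D(n+1-k)$ and $u_k(x) := \sigma^2(x)/t_k$, so that $f_k(x) = e^{-u_k(x)}$ and the required inequality, after dividing through by $f_{k+1}(x)>0$, becomes
\[
\int_M \bigl( e^{u_{k+1}(x)-u_{k+1}(y)} - 1 \bigr)^2\, p_1(x,y)\, dy \;\le\; 4\bigl( e^{u_{k+1}(x)-u_k(x)} - 1 \bigr).
\]
For the right-hand side a direct computation gives $u_{k+1}(x)-u_k(x) = D\sigma^2(x)/(t_k t_{k+1})$, and since $t_k = t_{k+1}+D \le 2t_{k+1}$ for $k \le n-1$, this quantity is bounded below by $D\sigma^2(x)/(2 t_{k+1}^2)$.

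Next I would bound the left-hand integrand. By the compatibility condition (c) in Definition \ref{d-compat}, $p_1(x,\cdot)$ is supported in $B(x,h')$ (up to a $\mu$-null set) and integrates to $1$, so the integral is at most $\sup_{y\in B(x,h')} (e^{v(x,y)}-1)^2$ where $v(x,y) := u_{k+1}(x)-u_{k+1}(y) = (\sigma^2(x)-\sigma^2(y))/t_{k+1}$. Since $\sigma$ is $1$-Lipschitz and $\inf \sigma \ge h'$, for $y \in B(x,h')$ we have $|\sigma(x)-\sigma(y)| \le h'$ and hence
\[
|v(x,y)| \;\le\; \frac{h'\bigl(2\sigma(x)+h'\bigr)}{t_{k+1}} \;\le\; \frac{3h'\sigma(x)}{t_{k+1}}.
\]
Because $(e^v-1)^2$ is maximized over $|v|\le A$ at $v = A$, this gives $\int(e^v-1)^2 p_1\,dy \le (e^{A(x)}-1)^2$ with $A(x):=3h'\sigma(x)/t_{k+1}$.

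Thus it suffices to establish the one-variable inequality
\[
(e^{3\mu}-1)^2 \;\le\; 4\bigl(e^{c\mu^2}-1\bigr) \qquad \text{for every } \mu \ge 0,
\]
with $\mu := h'\sigma(x)/t_{k+1}$ and $c := D/(2h'^2)$; note that all $x$, $k$, $n$ dependence has disappeared. I would verify this by splitting into the regimes $\mu$ small and $\mu$ large. For $\mu$ small, Taylor expansion gives $(e^{3\mu}-1)^2 \sim 9\mu^2$ and $4(e^{c\mu^2}-1) \sim 4c\mu^2$, so one needs $c > 9/4$. For $\mu$ large, using $(e^{3\mu}-1)^2 \le 9\mu^2 e^{6\mu}$ and $4(e^{c\mu^2}-1) \ge 2e^{c\mu^2}$ once $c\mu^2 \ge 1$, reduces the inequality to $\log(9\mu^2/2) + 6\mu \le c\mu^2$, which is uniformly satisfied for $c$ large enough. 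A short compactness argument on the bounded transitional interval fills the remaining gap, yielding a threshold $D_1$ depending only on $h'$ such that the inequality holds for all $D \ge D_1$.

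The main obstacle is not any single estimate but the fact that the naive gradient bound $\int (e^v-1)^2 p_1\,dy \lesssim (e^{A} -1)^2$ blows up exponentially in $A \sim \sigma/t_{k+1}$, which would spoil any purely linear comparison with $\partial_k f$. The key observation that makes everything work is that the time-difference $f_k - f_{k+1} = f_{k+1}(e^{\beta_k}-1)$ is also exponentially large, with $\beta_k \sim \sigma^2/t_{k+1}^2$ — so the $\sigma^2$ in the exponent beats the $\sigma$ coming from the gradient term. This is why the reduction to $(e^{3\mu}-1)^2 \le 4(e^{c\mu^2}-1)$ is the correct framing; once phrased this way, the elementary verification is straightforward.
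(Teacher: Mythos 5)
Your proof is correct and follows essentially the same route as the paper: bound $\lvert\sigma^2(x)-\sigma^2(y)\rvert \le 3h'\sigma(x)$ via Lipschitzness, bound the time-difference from below using $t_k \le 2t_{k+1}$, and reduce everything to the scalar inequality $(e^{3\mu}-1)^2 \le 4(e^{c\mu^2}-1)$. The paper packages this last step as the elementary fact that there is a constant $B>0$ with $(e^t-1)^2 \le 4(e^{Bt^2}-1)$ for all $t>0$, whereas you verify it by hand via small-$\mu$/large-$\mu$ asymptotics plus compactness — same substance, slightly more detail.
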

\begin{proof}
Note that
\begin{align}
-\partial_k f(x)&= \left( \exp \left(\frac{\sigma^2(x)}{D(n+1-k)(n-k)} \right) -1\right) f_{k+1}(x) \nonumber \\
\label{e-cdf1}& \ge  \left( \exp\left( \frac{\sigma^2(x)}{2 D(n-k)^2} \right)-1\right) f_{k+1}(x)
\end{align}
and
\begin{align*}
\abs{\nabla_P f_{k+1}(x)}^2 &= \int_M p_1(x,y) \left( \exp \left( - \frac{\sigma^2(y)}{D(n-k)} \right) - \exp \left( - \frac{-\sigma^2(x)}{ D(n-k)} \right) \right)^2  \,dy \\
& =  f_{k+1}^2(x) \int_M p_1(x,y) \left( \exp \left(  \frac{\sigma^2(x)-\sigma^2(y)}{D(n-k)} \right) - 1 \right) ^2  \,dy
\end{align*}
for all $k \in \nint{0}{n-1}$.
By the Lipschitz condition and the hypothesis $\sigma(x) \ge 1$, we have
\[
\abs{\sigma^2(x) - \sigma^2(y)} = \abs{\sigma(x) -\sigma(y)} \abs{\sigma(x) + \sigma(y)}  \le 2 h' \sigma(x) + (h')^2 \le 3h' \sigma(x)
\]
for all $x,y \in M$ such that $d(x,y) \le h'$. Next we use the following elementary inequality: if $\abs{t} \le s$, then
\[
\abs{e^t -1 } \le e^s -1.
\]
Combining together the previous lines and \eqref{e-compat}, we obtain
\begin{equation}
\label{e-cdf2} \abs{ \nabla_P f_{k+1}(x) }^2 \le f_{k+1}^2(x) \left( \exp \left( \frac{3h' \sigma(x)}{D(n-k)} \right) - 1 \right)^2 .
\end{equation}
Next let us use another elementary fact:  there exists $B>0$ such that, for all $t >0$,
\[
(e^t-1)^2\le 4(e^{Bt^2} - 1).
\]
Setting $t=3h'\sigma(x)/(D(n-k))$, we obtain that
\[
\frac{1}{4} \left( \exp \left( \frac{3h'\sigma(x)}{D(n-k)} \right) -1 \right)^2  \le \exp \left( \frac{B (3h')^2 \sigma^2(x)}{ D^2 (n-k)^2} \right) -1.
\]
Hence, if $D \ge D_1: = 2 B \left[3h'\right]^2$, then the right hand side of the above inequality is bounded from above by
\[
\exp \left( \frac{\sigma^2(x)}{2 D(n-k)^2}\right) -1 .
\]
Combining with \eqref{e-cdf1} and \eqref{e-cdf2}, we obtain
\begin{align*}
\frac{ \abs{\nabla f_{k+1}(x)}^2}{ 4 f_{k+1}(x)} &\le \frac{f_{k+1}(x)}{4} \left( \exp\left( \frac{3h'\sigma(x)}{D(n-k)} \right) -1 \right)^2 \\
& \le f_{k+1}(x) \left( \exp \left( \frac{ \sigma^2(x) }{2 D (n-k)^2} \right) - 1 \right) \le - \partial_k f(x)
\end{align*}
for all $x \in M$ and for all $k \in \nint{0}{n-1}$.
\end{proof}
Next, we need the following estimate on $h_k$ defined in \eqref{e-defh}. The proof uses the diagonal estimate in Proposition \ref{p-diag}.
\begin{lemma} \label{l-hdiag}
Under the assumptions of Proposition \ref{p-gue},
there exists $C_0 > 0$ such that
\begin{equation} \label{e-diagh}
\int_{M} h_n^2(x,y) dy \le \frac{C_0}{V(x,\sqrt{n+2})}
\end{equation}
 for all $n \in \mathbb{N}$ and for all $x \in M$  where $h$ is as defined in \eqref{e-defh}.
\end{lemma}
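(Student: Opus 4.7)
The plan is to reduce the $L^2$-norm of $h_n(x,\cdot)$ to a single diagonal value of the heat kernel weighted by a binomial distribution, and then invoke Proposition \ref{p-diag}. First, by expanding the binomial $(I+P)^n$ as in \eqref{e-defh}, squaring, integrating, and using the identity $\langle p_j(x,\cdot), p_k(x,\cdot)\rangle = p_{j+k}(x,x)$ (valid pointwise since $p_{j+k}$ is a genuine function for $j+k \ge 2$) together with the Vandermonde identity $\sum_{i+j=k} \binom{n}{i}\binom{n}{j} = \binom{2n}{k}$, I would obtain the clean identity
\[
\int_M h_n^2(x,y) \, \mu(dy) = 4^{-n} \sum_{k=0}^{2n} \binom{2n}{k} p_{k+4}(x,x).
\]
Alternatively, one may observe that $h_n$ is the (symmetric) kernel of the self-adjoint operator $P^2 P_L^n$, so that the left-hand side equals the diagonal of the kernel of $(P^2 P_L^n)^2 = P^4 P_L^{2n}$ at $(x,x)$, giving the same expansion via Lemma \ref{l-kernel}(c).

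Next I would apply Proposition \ref{p-diag}: since $k+4 \ge 2$, there is a constant $C_1 > 0$ such that $p_{k+4}(x,x) \le C_1/V(x,\sqrt{k+4})$ for every $k \ge 0$ and every $x \in M$. This reduces the task to showing
\[
4^{-n} \sum_{k=0}^{2n} \binom{2n}{k} \frac{V(x,\sqrt{n+2})}{V(x,\sqrt{k+4})} \le C_2
\]
uniformly in $x \in M$ and $n \in \N$. I would split the sum according to whether $k \ge n-2$ or $k < n-2$. For $k \ge n-2$ monotonicity gives $V(x,\sqrt{n+2}) \le V(x,\sqrt{k+4})$, so the ratio is at most $1$. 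For $k < n-2$, I would invoke the polynomial volume upper bound \eqref{e-vd1} of Lemma \ref{l-doub-prop} (which relies on \ref{doub-loc} and \ref{doub-inf}, applied with threshold $b=2$ since $\sqrt{k+4}\ge 2$) to deduce
\[
\frac{V(x,\sqrt{n+2})}{V(x,\sqrt{k+4})} \le C_3 \left( \frac{n+2}{k+4}\right)^{\delta/2}
\]
for some $\delta > 0$ independent of $n$, $k$ and $x$.

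It remains to establish uniform boundedness of
\[
E_n := 4^{-n} \sum_{k=0}^{2n} \binom{2n}{k} \max\!\left(1, \left(\frac{n+2}{k+4}\right)^{\delta/2}\right),
\]
which is the expectation $\EE[f(X)]$ for $X\sim\mathrm{Binomial}(2n,1/2)$ and a suitable $f$. I would split at $k = n/2$: the contribution from $k \ge n/2$ is bounded deterministically by $((n+2)/(n/2+4))^{\delta/2}\le 2^{\delta/2}$, while the contribution from $k < n/2$ is at most $((n+2)/4)^{\delta/2}\,\PP(X < n/2)$, and Hoeffding's inequality yields $\PP(X<n/2) \le e^{-c n}$ for an absolute $c>0$, so the polynomial factor $(n+2)^{\delta/2}$ is absorbed and the contribution stays bounded as $n \to \infty$. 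This yields $E_n \le C_4$ for every $n \in \N$, which combined with the previous two displays gives \eqref{e-diagh}.

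The main obstacle is the last paragraph: reconciling the polynomial ``loss'' in volume $(n/(k+4))^{\delta/2}$ coming from the doubling comparison with the binomial weights for small $k$. This is the only place where one needs more than the plain diagonal estimate, and it is handled by the sharp exponential concentration of $\mathrm{Binomial}(2n,1/2)$ around its mean $n$, which easily dominates any polynomial prefactor.
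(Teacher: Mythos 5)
Your proof is correct, and it parallels the paper's argument through the first two thirds: the identity
\[
\int_M h_n^2(x,y)\,dy = 4^{-n}\sum_{k=0}^{2n}\binom{2n}{k}\,p_{k+4}(x,x)
\]
is obtained via Lemma \ref{l-kernel}(c) plus Vandermonde (your operator-theoretic phrasing via $(P^2P_L^n)^*(P^2P_L^n)=P^4P_L^{2n}$ is a nice shortcut to the same thing), and then Proposition \ref{p-diag} plus the doubling estimate \eqref{e-vd1} reduce the problem to bounding a weighted binomial sum. The paper normalizes against $V(x,\sqrt{2n+4})$ rather than $V(x,\sqrt{n+2})$, but one application of doubling identifies the two up to a constant, so this is cosmetic. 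Where you diverge is the final estimate of
\[
4^{-n}\sum_{k=0}^{2n}\binom{2n}{k}\left(\frac{n+2}{k+4}\right)^{\delta/2}.
\]
You split at $k\approx n/2$ and invoke Hoeffding's inequality: the only place the polynomial factor $(n+2)^{\delta/2}$ can blow up is $k\lesssim n/2$, and there $\mathrm{Binomial}(2n,1/2)$ mass decays like $e^{-cn}$, which dominates any polynomial. The paper instead proceeds combinatorially: it rounds $\delta/2$ up to an integer $\kappa=\lceil\delta/2\rceil$ and uses the pointwise bound
\[
\binom{2n}{i}\left(\frac{2n+4}{i+4}\right)^{\kappa} \le C(\kappa)\,\binom{2n+\kappa}{i+\kappa},
\]
so that the weighted sum collapses to a plain binomial sum $\sum_i\binom{2n+\kappa}{i+\kappa}2^{-(2n+\kappa)}\le 1$. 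Both routes are legitimate and of comparable length. Your concentration argument is perhaps more transparent about \emph{why} the bound holds (the binomial distribution is tightly concentrated near $n$, where the volume ratio is $O(1)$), whereas the paper's index-shift trick is more elementary in the sense of requiring no probabilistic input, only Pascal-type identities, and it treats all $k$ uniformly without a case split.
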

\begin{proof}
By \eqref{e-defh} of  Lemma \ref{l-solheat}, Lemma \ref{l-kernel}(c) and Vandermonde's convolution formula, we have
\begin{align}\label{e-hd1}
 \int_M (h_n(x,y))^2 \, dy & = 4^{-n} \int_M \left( \sum_{i=0}^n \binom{n}{i} p_{i+2}(x,y) \right)^2 \,dy \nonumber \\
 &= 4^{-n}  \sum_{i=0}^{2n} \binom{2n}{i} p_{i+4}(x,x)
\end{align}
for all $x \in M$. By Proposition \ref{p-diag}, there exists $C_1>0$ such that
\[
 p_k(x,x) \le \frac{C_1}{V(x,\sqrt{k})}
\]
for all $k \ge 2$ and for all $x \in M$. Combined with \eqref{e-hd1} and \eqref{e-vd1}, we obtain $C_2>0, \delta>2$ such that
\begin{align}
  \int_M (h_n(x,y))^2 \, dy &\le 4^{-n}  \sum_{i=0}^{2n} \binom{2n}{i}  \frac{C_1}{V(x,\sqrt{i+4})} \nonumber \\
  &\le \frac{C_2}{V(x,\sqrt{2n+4})} 4^{-n}  \sum_{i=0}^{2n} \binom{2n}{i} \left( \frac{2n+4}{i+4}\right)^{\delta/2} \label{e-hd2}
\end{align}
for all $n \in \N$ and all $x \in M$.
By the above inequality, we have
\begin{align}
 \nonumber 4^{-n}  \sum_{i=0}^{2n} \binom{2n}{i} \left( \frac{2n+4}{i+4}\right)^{\delta/2} &\le   4^{-n}  \sum_{i=0}^{2n} \binom{2n}{i} \left( \frac{2n+4}{i+4}\right)^{\kappa} \\
 & \le 4^{2\kappa} \kappa! \sum_{i=0}^{2n} \binom{2n + \kappa}{i+\kappa}  2^{-(2n +\kappa)} \le 4^{2\kappa} \kappa! \label{e-hd3}
\end{align}
where $\kappa := \lceil \delta/2 \rceil \in \N^*$.
Combining \eqref{e-hd2}, \eqref{e-hd3} along with \eqref{e-vd1} implies \eqref{e-diagh}.
\end{proof}
Our next result involves repeated application of the discrete integral maximum principle.
\begin{lemma}\label{l-ED}
  Let $(M,d,\mu)$ be a quasi-$b$-geodesic metric measure space satisfying \ref{doub-loc} and \ref{doub-inf}.
Suppose that a Markov operator $P$ has a kernel $p$ that is $(h,h')$-compatible with $(M,d,\mu)$ for some $h >b$.
Further assume that $P$ satisfies the Sobolev inequality \eqref{e-Sob}. Define
\begin{equation}\label{e-defEd}
 E_D(k,x):= \int_M h_k^2(x,z) \exp \left(\frac{d_1^2(x,z)}{Dk}\right) \,dz
\end{equation}
for all $k \in \N^*$ and $x \in M$,
where  $d_1(x,z):= \max(d(x,z),h')$ and $h_k$ is defined by \eqref{e-defh}.
There exists $C,D>0$ such that
\begin{equation}\label{e-ED}
 E_D(k,x) \le \frac{C}{V(x,\sqrt{k})}
\end{equation}
for all $x \in M$ and for all $k \in \N^*$.
\end{lemma}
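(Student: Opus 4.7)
The plan is to apply the discrete integral maximum principle (Proposition \ref{p-imp}) to the $P_L$-caloric function $(k,z) \mapsto h_k(x,z)$ provided by Lemma \ref{l-solheat}, using the weight family from Lemma \ref{l-weightfn} in combination with the on-diagonal bound of Lemma \ref{l-hdiag} and a Davies--Gaffney-type perturbation argument to extract the positive quadratic exponential weight appearing in \eqref{e-ED}.

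First, I would apply Proposition \ref{p-imp} on the time interval $\nint{0}{n}$ to the time-shifted function $u_j(z) := h_{j+n}(x,z)$, which is $P_L$-caloric by Lemma \ref{l-solheat} and has initial data $u_0 = h_n(x, \cdot)$ supported in the compact ball $B(x,(n+2)h')$. With the weight $f_j(z) := \exp\bigl(-\sigma^2(z)/(D'(n+1-j))\bigr)$ from Lemma \ref{l-weightfn} (where $\sigma(z) := d_1(x,z)$), the monotonicity $J_n \le J_0$ combined with Lemma \ref{l-hdiag} yields
\[
\int_M h_{2n}^2(x,z)\, \exp\!\bigl(-\sigma^2(z)/D'\bigr)\, dz \;\le\; \int_M h_n^2(x,z)\, dz \;\le\; \frac{C_0}{V(x,\sqrt{n+2})}.
\]
A rescaling of $\sigma$ (equivalently, replacing $D'$ by $D'n$, which is permitted by Lemma \ref{l-weightfn} for $n$ large enough) gives the scale-matched bound $\int h_{2n}^2 e^{-\sigma^2/(D'n)}\, dz \le C_0/V(x,\sqrt{n+2})$.

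Second, I would derive a Davies-type linear off-diagonal tail estimate by applying Proposition \ref{p-imp} with a weight of the form $g_j(z) := \exp\bigl(2\alpha d_1(x,z) - C_2\alpha^2 j\bigr)$ for small $\alpha > 0$. A Taylor expansion of $(g_j(w)-g_j(z))^2$ in powers of $\alpha(d_1(w) - d_1(z))$, mirroring the proof of Lemma \ref{l-weightfn} but with a linear rather than quadratic $\sigma$, shows that this weight satisfies \eqref{e-condn-f} for $\alpha$ small enough and $C_2$ large enough (depending on $h'$). Applying this to $u_j = h_{j+n}$ on $\nint{0}{n}$ and using the estimate from step one together with a Cauchy--Schwarz bootstrap between $\int h_n^2\, dz$ and $\int h_n^2 e^{4\alpha d_1}\, dz$ to control the initial integral $\int h_n^2 e^{2\alpha d_1}\, dz$, then applying Chebyshev's inequality and optimizing in $\alpha$, produces
\[
\int_{\{d_1(x,z) > r\}} h_{2n}^2(x,z)\, dz \;\le\; \frac{C_3}{V(x,\sqrt{n})}\, \exp\!\left(-\frac{r^2}{C_4\, n}\right)
\]
for $r$ in a suitable range.

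Finally, the desired bound \eqref{e-ED} is obtained by a dyadic decomposition of the integral defining $E_D(2n,x)$. On the near region $\{d_1 \le A\sqrt{n}\}$ the contribution is bounded by $e^{A^2/D} C_0/V(x,\sqrt{n})$ using Lemma \ref{l-hdiag} directly. On each far annulus $\{A\sqrt{n}\cdot 2^j < d_1 \le A\sqrt{n}\cdot 2^{j+1}\}$, the positive weight is at most $\exp(A^2 4^{j+1}/D)$ while the tail estimate from step two yields $\exp(-A^2 4^j/C_4)$; choosing $D > 4C_4$ makes the resulting geometric series converge uniformly in $n$, and renaming $2n$ as $k$ delivers \eqref{e-ED} for all sufficiently large $k$, with the small values of $k$ handled directly from the pointwise bound $\|h_k\|_\infty \le C/V(x,\sqrt{k})$ coming from Proposition \ref{p-diag}. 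The main obstacle will be the second step: the naive Davies argument anchored at time $0$ introduces an unwanted factor $V(x,2)/V(x,\sqrt{n}) \le C n^{\delta/2}$ coming from $p_2(x,\cdot)$, and eliminating this polynomial factor requires carefully anchoring the Davies IMP at the intermediate time $n$ and feeding in both the on-diagonal bound from Lemma \ref{l-hdiag} and the negative-weighted estimate derived in step one.
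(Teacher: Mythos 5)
Your outline diverges from the paper at the crucial second step, and as written it does not close. Let me point at the gap precisely.

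In step one you take $\sigma(z) = d_1(x,z)$, which is the \emph{full} distance to $x$; the resulting weight $f_n(z) = e^{-\sigma^2(z)/D'}$ is $\le 1$ everywhere, so the monotonicity $J_n\le J_0$ yields only
$\int h_{2n}^2 e^{-d_1^2/(D'n)}\,dz \le \int h_n^2\,dz$, which is strictly weaker than what Lemma \ref{l-hdiag} already gives with no work. This step produces a negatively weighted integral and contributes nothing to the bound on $E_D$, which involves a \emph{positive} Gaussian weight.

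The real obstruction appears in step two. To apply Proposition \ref{p-imp} with a Davies weight $g_j(z) = e^{2\alpha d_1(x,z) - C_2\alpha^2 j}$ you must control the initial quantity $J_0 = \int h_n^2(x,z) e^{2\alpha d_1(x,z)}\,dz$, which is itself a positively exponentially weighted integral of the very type the lemma is trying to bound. The Cauchy--Schwarz device you propose, $\int h_n^2 e^{2\alpha d_1} \le (\int h_n^2)^{1/2}(\int h_n^2 e^{4\alpha d_1})^{1/2}$, is circular: it trades the problem for the strictly harder quantity $\int h_n^2 e^{4\alpha d_1}$. The only non-circular bound available is the compact-support estimate $J_0 \le e^{2\alpha(n+2)h'}\int h_n^2$, but running the Chebyshev/optimize-in-$\alpha$ argument with this input gives an optimal $\alpha$ of order $(R-(n+2)h')/n$, which is \emph{negative} in the interesting regime $R\sim\sqrt{n}$, so no Gaussian tail emerges. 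The obstacle you flag at the end (a polynomial factor $n^{\delta/2}$) is a red herring; the actual problem is that a global linear exponential weight makes the initial data uncontrollable.

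The paper avoids this entirely by choosing the weight not as a function of $d_1(x,\cdot)$ but of the \emph{truncated, reflected} distance $\sigma_R(z) = \max(R - d(x,z),0) + h'$. Because $\sigma_R$ is bounded, the weight $e^{-\sigma_R^2/(D_1(n+1-k))}$ lies in $(0,1]$, so the initial integral $J_0$ is trivially bounded by $\int h_k^2\,dz \le C_1/V(x,\sqrt{k})$ from Lemma \ref{l-hdiag} --- no exponentially-weighted input is needed anywhere. The payoff is a single iterative inequality \eqref{e-ed1} relating $I(R,n)$ to $I(r,k)$ plus a small error, and then the Gaussian tail is assembled by iterating along carefully chosen sequences $R_j = R/2 + R/(j+1)$ and $k_j = \lceil k/2^{j-1}\rceil$ until one reaches $j_0$ with $I(R_{j_0},k_{j_0}) = 0$ (guaranteed by compact support). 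The geometric sum over $j$ then yields $I(R,k) \lesssim V(x,\sqrt{k})^{-1} e^{-R^2/(C_3 k)}$. Your step three (annular decomposition) is correct and matches the paper, but it relies on this tail estimate, which your steps one and two cannot deliver.
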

\begin{proof}
Let $x \in M$ be an arbitrary point.
The constants below do not depend on the choice of $x$.
Define
\[
 I(R,k)=I(R,k,x):= \int_{B(x,R)^\complement} h_k^2(x,z) \,dz
\]
for $R>0$ and $k \in \N$. We start by estimating $I(R,k)$ using iteration. The iterative step
is contained in the following estimate: There exists $D_1>0$ such that
\begin{equation}\label{e-ed1}
 I(R,n) \le \exp((h')^2/D_1)\left( I(r,k)+ \exp \left(-\frac{(R-r)^2}{2D_1(n-k)} \right)\int_M h_k^2(x,z) \,dz \right)
\end{equation}
for all $R,r$ satisfying $R>r>0$, for all $n \in \N^*$ and for all $k \in \nint{0}{n-1}$.

To prove \eqref{e-ed1}, we define
\[
 \sigma_R(z) := \max(R-d(x,z),0) +h'.
\]
Note that $\sigma_R$ is $1$-Lipschitz with $\inf \sigma_R \ge h'$.
Define
\[
 f_k(z):= \exp \left( - \frac{\sigma_R^2(z) }{D_1(n+1-k)} \right)
\]
for all $z \in M$ and all $k \in \nint{0}{n}$, where $D_1$ is the constant from Lemma \ref{l-weightfn}.
Since $f_k \ge \exp(-(h')^2/D_1)$ in $B(x,R)^\complement$, we have
\begin{equation}\label{e-ed2}
 I(R,n) = \int_{B(x,R)^\complement} h_n^2(x,z) \, dz \le \exp((h')^2/D_1) \int_M h_n^2(x,z) f_n(z) \, dz.
 \end{equation}
By Lemma \ref{l-weightfn} and Proposition \ref{p-imp}, we have
\begin{equation}\label{e-ed3}
 \int_M h_n^2(x,z) f_n(z) \, dz \le \int_M h_k^2(x,z) f_k(z) \, dz
\end{equation}
for all $k \in \nint{0}{n}$.
Since $\sigma_R \ge R-r$ in $B(x,r)$ and $f_k \le 1$, we have
\begin{align}\label{e-ed4}
\int_M h_k^2(x,z) f_k(z) \,dz
&=\int_{B(z,r)^\complement} h_k^2(x,z) f_k(z)\, dz + \int_{B(x,r)} h_k^2(x,z) f_k(z) \,dz   \nonumber\\
& \le   I(r,k) +  \exp \left( - \frac{(R-r)^2}{D_1(n+1-k)} \right) \int_{B(x,r)} h_k^2(x,z)\, dz \nonumber\\
& \le I(r,k)+ \exp \left( - \frac{(R-r)^2}{2D_1(n-k)} \right) \int_{M} h_k^2(x,z) \,dz
\end{align}
for all $k \in \nint{0}{n-1}$ and for all $ R>r>0$.
Combining \eqref{e-ed2}, \eqref{e-ed3} and \eqref{e-ed4}, we obtain \eqref{e-ed1}.
Now by Lemma \ref{l-hdiag} and \eqref{e-ed1}, there exists $C_1>1$ such that
\begin{equation}\label{e-ed5}
 I(R,n) \le \exp((h')^2/D_1)\left( I(r,k)+ \exp \left(-\frac{(R-r)^2}{2D_1(n-k)} \right) \frac{C_1}{V(x,\sqrt{k})}\right)
\end{equation}
for all $n \in \N^*$, for all $k \in \nint{0}{n-1}$ and for all $ R>r>0$.

Next, we show that there exists $C_2,C_3>0$ such that
\begin{equation}\label{e-ed6}
 I(R,k) \le \frac{C_2}{V(x,\sqrt{k})} \exp\left( - \frac{R^2}{C_3 k} \right)
\end{equation}
for all $ R > 10h'$ and for all $ k \in \N^*$.
By \eqref{e-compat} and \eqref{e-defh}, we have $I(R,k)=0$ if $R > (k+2)h'$.
Hence it suffices to consider the case $(k+2) h' \ge R$.

Given any finite decreasing sequence $\{ R_j \}_{j=1}^{j_0}$ of real numbers and any finite strictly  decreasing sequence
$\{ k_j \}_{j=1}^{j_0}$ such that $R_1= R, k_1=k$ and $I(R_{j_0},k_{j_0})=0$, we can iterate \eqref{e-ed2} and obtain
\begin{equation}
\label{e-ed7} I(R,k) \le \sum_{j=1}^{j_0-1} \frac{ C_1 \exp(j(h')^2/D_1)}{V(x,\sqrt{k_{j+1}})} \exp \left( - \frac{ (R_j- R_{j+1} )^2 }{2 D_1 (k_j - k_{j+1})} \right).
\end{equation}
Let $R > 10h'$ and define
\[
R_j:= R/2 + R/(j+1),   t_j := k/2^{j-1}, k_j:= \lceil t_j \rceil
\]
so that $R_1=R$ and $k_1=k$.
Let $j_0 = \min \Sett{j}{ R_j > h'(k_j+2)}$ (note that $j_0 > 1$ since $(k+2) h' \ge R$). By construction one has $I(R_{j_0},k_{j_0})=0$.
Also, for all $j < j_0$ we have $k_j > R_j-1> R/2 - 1$. Since $R>10h'$, we have
\begin{equation*}
t_j- t_{j+1}=t_j/2 \ge (k_j - 1)/2 \ge \frac{1}{2} \left( \frac{R_j}{h'}  - 3\right) \ge \frac{1}{2} \left( \frac{R}{2h'}  - 3\right) >1
\end{equation*}
which means $k_j > k_{j+1}$ for all $j \in \nint{1}{j_0-1}$.
Therefore
\begin{equation}\label{e-ed8}
 k_j - k_{j+1} \le k/2^{j-1}- k/2^{j} + 1= k/2^{j} + 1 \le k/2^{j-1}
\end{equation}
for all $j \in \nint{1}{j_0-1}$.
Using \eqref{e-ed8} and the identity
\[
(R_j - R_{j+1})^2  = \frac{R^2}{(j+1)^2(j+2)^2},
\]
we obtain
\begin{equation}\label{e-ed9}
\frac{(R_j-R_{j+1})^2}{2D_1 (k_j-k_{j+1})} \ge \frac{R^2}{C_3 k} (j+1),
\end{equation}
where
\[
C_3 := \max_{j\ge 1} \frac{D_1 (j+1)^3(j+1)^2}{2^{j-2}} \in (0,\infty)
\]
Therefore by \eqref{e-ed7} and \eqref{e-ed9}, we have
\begin{equation}\label{e-ed10}
I(R,k) \le \sum_{j=1}^{j_0-1} \frac{C_1}{V(x,\sqrt{t_{j+1}})} \exp \left( \frac{j(h')^2}{D_1}- \frac{R^2}{C_3 k}(j+1) \right)
\end{equation}
By \eqref{e-ed8} and \eqref{e-vd1}, there exists $C_4>1$ such that
\[
\frac{V(x,\sqrt{t_j})}{V(x,\sqrt{t_{j+1}})} \le C_4
\]
for all $j \in \nint{1}{j_0-1}$.
Therefore
\[
\frac{V(x,\sqrt{t_1})}{V(x,\sqrt{t_{j+1}})} = \frac{V(x,\sqrt{t_1})}{V(x,\sqrt{t_{2}})} \frac{V(x,\sqrt{t_2})}{V(x,\sqrt{t_{3}})} \ldots \frac{V(x,\sqrt{t_j})}{V(x,\sqrt{t_{j+1}})} \le C_4^j
\]
Thus setting $L:= \log(C_1 C_4)$, we obtain
\[
\frac{C_1}{V(x,\sqrt{t_{j+1}})} \le \frac{1}{V(x,\sqrt{k})} \exp(jL),
\]
for all $j \in \nint{1}{j_0-1}$.
Therefore by \eqref{e-ed10}, we have
\begin{equation} \label{e-ed11}
I(R,k) \le \frac{1}{V(x,\sqrt{k})} \exp\left( -\frac{R^2}{C_3 k} \right) \sum_{j=1}^{j_0-1} \exp \left( -j \left( \frac{R^2}{C_3 k} - L - \frac{(h')^2}{D_1}\right)\right).
\end{equation}
for all $R >10h'$ and for all $k \in \N^*$ satisfying $ R \le (k+2)h'$.
We consider two cases.\\
Case 1: Let
\[
\frac{R^2}{C_3 k} - L - \frac{(h')^2}{D_1} \ge \log 2
\]
In this case, by \eqref{e-ed11} we have
\[
I(R,k) \le \frac{1}{ V(x, \sqrt{k} )} \exp \left( -\frac{R^2}{C_3 k} \right) \sum_{j=1}^{j_0 -1} 2^{-j} \le  \frac{1}{ V(x, \sqrt{k} )} \exp \left( - \frac{R^2}{C_3 k} \right)
\]
Case 2: Let
\[
\frac{R^2}{C_3 k} - L - \frac{(h')^2}{D_1} <\log 2
\]
In this case we estimate $I(R,k)$ differently as
\begin{align*}
I(R,k) &\le \int_M h_k^2(x,z) \,dz \le  \frac{C_1}{V(x,\sqrt{k})} \\
&\le \frac{2 C_1}{V(x,\sqrt{k})} \exp \left( L+\frac{(h')^2}{D_1} -  \frac{R^2}{C_3 k} \right) = \frac{C_6 }{ V(x,\sqrt{k})} \exp \left(- \frac{R^2}{C_3 k} \right).
\end{align*}
Combining the two cases we have \eqref{e-ed6}.

Finally, we are ready to prove \eqref{e-ED}. Define for $j \in \mathbb{N}$,
\[
\mathcal{A}_j^R :=
\begin{cases}
\{ z \in M: d_1(x,z) \le R \} , & j=0 \\
\{ z \in M: 2^{j-1}R <d_1(x,z) \le 2^j R \}, & j \ge 1,
\end{cases}
\]
and
\begin{equation}\label{e-ed12}
 E_D(k,x) = \sum_{j=0}^\infty \int_{ \mathcal{A}^R_j} h_k^2(z) \exp \left( \frac{d_1^2(x,z)}{Dk} \right) \,dz.
\end{equation}
For all $D >0$ and for all $R \ge h'$ the first term admits the estimate
\begin{equation}
\label{e-ed13} \int_{ \mathcal{A}_0^R} h_k^2(x,z) \exp \left( \frac{d_1^2(x,z)}{Dk} \right)\, dz \le  \frac{C_1}{V(x,\sqrt{k})} \exp \left( \frac{R^2}{Dk} \right).
\end{equation}
Now for the remaining terms we have
\begin{equation} \label{e-ed14}
 \int_{\mathcal{A}_j^R} h_k^2(x,z) \exp \left( \frac{d_1^2(x,z)}{Dk} \right)\,dz \le \exp \left( \frac{4^j R^2}{Dk} \right) I(2^{j-1}R,k)
\end{equation}
for all $R > 10 h'$ and $j \in \N^*$. By \eqref{e-ed6}
\[
I(2^{j-1}R,k) \le \frac{C_2}{V(x,\sqrt{k})} \exp \left(- \frac{4^{j-1} R^2}{C_3 k} \right) .
\]
Combining with \eqref{e-ed14}
\begin{align}
\nonumber \int_{\mathcal{A}_j^R} h_k^2(x,z) \exp \left( \frac{d_1^2(x,z)}{Dk} \right) dz & \le  \exp \left( \frac{4^j R^2}{Dk} \right) \frac{C_2}{V(x,\sqrt{k})} \exp \left(-\frac{4^{j-1} R^2}{C_3 k} \right) \\
\label{e-ed16} & \le  \frac{C_2}{V(x,\sqrt{k})}\exp \left(- \frac{4^{j-1} R^2}{D k} \right)
\end{align}
for all $j \in \N^*$,
provided $D \ge 5 C_3$ and $R >10 h'$.
Define
\begin{equation}
\label{e-ed17} D := \max \left( 5 C_3 , \frac{ (11 h')^2 }{\log 2} \right).
\end{equation}
Then by \eqref{e-ed12}, \eqref{e-ed13} and \eqref{e-ed16} we obtain, for all $R > 10h'$
\begin{equation}
\label{e-ed18} E_{D} (k,x)  \le  \frac{C_1}{V(x, \sqrt{k})} \exp \left( \frac{ R^2}{D k }\right) + \frac{C_2} { V(x, \sqrt{k})} \sum_{j=1}^\infty \exp \left( - \frac{4^{j-1}R^2}{D k }\right).
\end{equation}
Given $k \in \mathbb{N}^*$ choose $R$ so that $R^2/(D k) = \log 2$ which by \eqref{e-ed17} satisfies $R > 10h'$.
Therefore by \eqref{e-ed18}, we conclude
\[
E_{D} (k,x) \le \frac{2 C_1}{V(x,\sqrt{k})} + \frac{C_2}{V(x,\sqrt{k})} \sum_{j=1}^\infty 2^{-4^{j-1}} \le \frac{2C_1+ C_2}{V(x,\sqrt{k})}
\]
which is the desired estimate \eqref{e-ED}.
\end{proof}
We use Lemma \ref{l-ED} to prove a Gaussian upper bound for $h_k$.
\begin{lemma}\label{l-hkbd} Under the assumptions of Proposition \ref{p-gue},
there exists positive reals $C_0,D_0$ such that
 \begin{equation}\label{e-hk}
  h_{2k}(x,y) \le \frac{C_0}{  V(x,\sqrt{k}) } \exp \left( - \frac{d^2(x,y)}{D_0 k }\right)
 \end{equation}
for all $x,y \in M$ and for all $k \in \N^*$.
\end{lemma}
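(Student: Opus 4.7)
The plan is to bootstrap the weighted $L^2$ estimate of Lemma \ref{l-ED} into a pointwise off-diagonal upper bound for $h_{2k}$ by going through an auxiliary ``fatter'' kernel and then comparing. Concretely, I would introduce
\[
h^{(4)}_{2k}(x,y) := (P_L^{2k}\, p_4(x,\cdot))(y),
\]
prove a Gaussian upper bound for $h^{(4)}_{2k}$, and then dominate $h_{2k}$ by $h^{(4)}_{2k}$.

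First I would establish the reproducing-kernel identity
\[
\int_M h_k(x,z)\, h_k(y,z)\, \mu(dz) = h^{(4)}_{2k}(x,y).
\]
This comes from the self-adjointness of $P_L^{k}$ applied twice: writing $\int h_k(x,z)h_k(y,z)\,dz = \langle P_L^k p_2(x,\cdot), P_L^k p_2(y,\cdot)\rangle = \langle p_2(x,\cdot), P_L^{2k}p_2(y,\cdot)\rangle$ and then using $P^2 p_2(y,\cdot) = p_4(y,\cdot)$ together with the fact that $P^2$ commutes with $P_L^{2k}$, so that $\int p_2(x,w) h_{2k}(y,w)\,dw = (P_L^{2k}p_4(x,\cdot))(y)$.

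Next, for $d(x,y) \ge 2h'$ I would apply Cauchy--Schwarz after splitting the integration domain into $A = \{z : d(x,z) \le d(x,y)/2\}$ and $A^{c}$. On $A$, the triangle inequality gives $d(y,z) \ge d(x,y)/2$, hence $e^{-d_1^2(y,z)/Dk} \le e^{-d(x,y)^2/(4Dk)}$, and Lemma \ref{l-ED} (with base point $y$) yields
\[
\int_{A} h_k(y,z)^2\, dz \le e^{-d(x,y)^2/(4Dk)} \int_M h_k(y,z)^2 e^{d_1^2(y,z)/Dk} dz \le \frac{C\, e^{-d(x,y)^2/(4Dk)}}{V(y,\sqrt{k})},
\]
while on $A^c$ the symmetric estimate with base point $x$ applies. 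Combining the two pieces via Cauchy--Schwarz produces
\[
h^{(4)}_{2k}(x,y) \le \frac{C_1}{\sqrt{V(x,\sqrt{k})V(y,\sqrt{k})}}\exp\!\left(-\frac{d(x,y)^2}{D_1 k}\right).
\]
A standard application of \eqref{e-vd2} bounds $\sqrt{V(x,\sqrt{k})/V(y,\sqrt{k})}$ by $C(1 + d(x,y)/\sqrt{k})^{\delta/2}$, and this polynomial factor is absorbed into a Gaussian with a slightly worse constant, yielding
\[
h^{(4)}_{2k}(x,y) \le \frac{C_2}{V(x,\sqrt{k})}\exp\!\left(-\frac{d(x,y)^2}{D_2\, k}\right).
\]
For $d(x,y) < 2h'$, the Gaussian factor is $O(1)$ and the same bound follows by Cauchy--Schwarz without splitting, directly from Lemma \ref{l-ED} at the base point $x$ (combined with volume doubling).

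Finally, I would deduce the bound for $h_{2k}$ itself from the comparison $p_4 \ge \alpha^2 p_2$, obtained by iterating \eqref{e-pcomp} twice. Since $P_L^{2k}$ is positivity-preserving, applying it to the nonnegative function $p_4(x,\cdot) - \alpha^2 p_2(x,\cdot)$ gives
\[
h^{(4)}_{2k}(x,y) \ge \alpha^2\, h_{2k}(x,y),
\]
so $h_{2k}(x,y) \le \alpha^{-2} h^{(4)}_{2k}(x,y)$ and the desired estimate follows with $C_0 = C_2/\alpha^2$ and $D_0 = D_2$. The main obstacle is the Davies-type argument in the middle step: extracting genuine off-diagonal Gaussian decay from the single weighted integral bound of Lemma \ref{l-ED} requires the region-splitting trick, since a naive Cauchy--Schwarz applied to $\int h_k(x,z)h_k(y,z)\,dz$ gives only an on-diagonal bound. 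The asymmetry between $V(x,\sqrt{k})$ and $V(y,\sqrt{k})$ produced by this argument is handled by the polynomial volume comparison of Lemma \ref{l-doub-prop}, and the reduction from $h^{(4)}_{2k}$ to $h_{2k}$ is the only place where the laziness assumption \eqref{e-da} enters in a non-trivial way.
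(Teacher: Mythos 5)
Your proposal is correct, and it rests on the same pillars as the paper's proof: the identity $\int_M h_k(x,z)\,h_k(y,z)\,\mu(dz) = 2^{-2k}\sum_i\binom{2k}{i}p_{i+4}(x,y)$, the weighted $L^2$ bound of Lemma~\ref{l-ED}, a Cauchy--Schwarz step, the laziness comparison via \eqref{e-pcomp}, and volume doubling to symmetrize $V(x,\sqrt{k})$ and $V(y,\sqrt{k})$. The one genuine difference is the mechanism you use to extract the Gaussian factor from Lemma~\ref{l-ED}. You partition the integration domain into $A=\{z:d(x,z)\le d(x,y)/2\}$ and its complement, apply Cauchy--Schwarz on each piece, and use the triangle inequality once per piece to force $d_1(x,z)$ or $d_1(y,z)$ to be large. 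The paper instead inserts the weight $1 = e^{d_1(x,z)^2/2Dk}\,e^{d_1(y,z)^2/2Dk}\,e^{-(d_1(x,z)^2+d_1(y,z)^2)/2Dk}$ into the integral, invokes the pointwise triangle-type bound $d_1(x,y)^2 \le 2\bigl(d_1(x,z)^2 + d_1(y,z)^2\bigr)$ to pull out the Gaussian factor $e^{-d_1(x,y)^2/4Dk}$ uniformly in $z$, and then applies a single Cauchy--Schwarz against the weighted measure. The paper's version avoids the case split on $d(x,y)\gtrless 2h'$ and keeps everything in one display; yours is a slightly more familiar Davies-style domain-splitting argument that requires you to separately handle the near-diagonal case and makes the volume-mismatch issue appear twice. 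Both work and give the same exponent up to constants. One small organizational difference: you introduce the auxiliary kernel $h^{(4)}_{2k} = P_L^{2k}p_4(x,\cdot)$ and do the laziness comparison $p_4 \ge \alpha^2 p_2$ once at the kernel level, whereas the paper applies $p_{i+2}\le \alpha^{-2}p_{i+4}$ term-by-term inside the binomial sum; these are interchangeable.
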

\begin{proof}
 By triangle inequality and the inequality $(a+b)^2 \le 2(a^2+b^2)$, we have
 \begin{equation}\label{e-hk1}
  d_1(x,y)^2 \le 2(d_1(x,z)^2 + d_1(y,z)^2)
 \end{equation}
for all $x,y,z \in M$, where $d_1(x,y):= \max(d(x,y),h')$ as before.
By \eqref{e-pcomp}, \eqref{e-hk1} and Cauchy-Schwarz inequality we have
\begin{align}\label{e-hk2}
\nonumber {h_{2k}(x,y)}
&= \sum_{i=0}^{2k} \binom{2k}{i} \left( \frac{1}{2} \right)^{2k} p_{i+2} (x,y) \\
\nonumber & \le  \alpha^{-2}\sum_{i=0}^{2k} \binom{2k}{i} \left( \frac{1}{2} \right)^{2k} p_{i+4} (x,y)
=  \alpha^{-2} \int_M h_k(x,z) h_k(y,z) \,dz\\
\nonumber & \le  \alpha^{-2} \int_M h_k(x,z) h_k(z,y) e^{d_1(x,z)^2/2Dk}  e^{d_1(z,y)^2/2Dk} e^{-d_1(x,y)^2/4Dk} \,dz\\
\nonumber & \le  \alpha^{-2} \sqrt{E_{D} (k,x) E_{D} (k,y)} e^{-d_1^2(x,y)/4Dk} \\
& \le  \alpha^{-2}\sqrt{E_{D} (k,x) E_{D} (k,y)} e^{-d(x,y)^2/4Dk}
\end{align}
for all $x,y \in M$, for all $k \in \N^*$  and for all $D>0$, where $\alpha>0$ is from \eqref{e-da}.
The equality in the second line above follows from a calculation analogous to \eqref{e-hd1}.

The bound \eqref{e-hk2} and Lemma \ref{l-ED} implies that there exists $C_1,D_1>0$ such that
 \begin{equation}\label{e-hk3}
  h_{2k}(x,y) \le \frac{C_1}{ \left( V(x,\sqrt{k}) V(y,\sqrt{k}) \right)^{1/2}} \exp \left( - \frac{d^2(x,y)}{D_1 k }\right)
 \end{equation}
for all $x,y \in M$ and for all $k \in \N^*$. However by \eqref{e-vd1}, there exists $C_2,C_3,C_4>0,\delta>0$ such that
\begin{align}\label{e-hk4}
 \frac{V(x,\sqrt{k})}{V(y,\sqrt{k})} &\le  \frac{V(y,\sqrt{k}+d(x,y))}{V(y,\sqrt{k})}  \le C_2 \left( 1+ \frac{d(x,y)}{\sqrt{k}} \right)^{\delta}  \nonumber \\
 &\le C_3 \left( 1+ \frac{d^2(x,y)}{k} \right)^{\delta/2} \le C_4 \exp \left(  \frac{d^2(x,y)}{2 D_1 k }\right)
\end{align}
for all $x,y \in M$. Combining \eqref{e-hk3} and \eqref{e-hk4} yields the desired Gaussian upper bound \eqref{e-hk}.
\end{proof}

\section{Comparison with lazy random walks}
We want to convert the Gaussian bounds on $h_k$ given by Lemma \ref{l-hkbd} to Gaussian bounds on $p_k$.
To accomplish this we need the following elementary polynomial identities.
\begin{lemma}\label{l-pol}
For all $\beta >0$ and for all $n \in \N^*$, we have the following polynomial identities
 \begin{align}
\label{e-pol1} z^n &= \sum_{ k \in \nint{1}{n}, k \text{ odd} } \binom{n}{k}\beta^{n-k} (z- \beta)^{k-1} z\\
\nonumber &\hspace{4mm} +  \sum_{ k \in \nint{1}{ n-1}, k \text{ odd} } \binom{n-1}{k} \beta^{n-1-k} (z- \beta)^{k-1} (z^2 - 2 \beta z), \\
\label{e-pol2}\left( \frac{1+z}{2}\right)^n &= \frac{1}{2^n} + \sum_{ k \in \nint{1}{n}, k \text{ odd} } \binom{n}{k} \left( \frac{1+\beta}{2} \right)^{n-k} \left( \frac{1}{2} \right)^k (z - \beta)^{k-1}z \\
\nonumber &\hspace{4mm}+ \sum_{k \in \nint{1}{n-1}, k \text{ odd}} s_{n,k} \left( \frac{1+\beta}{2} \right)^{n-1-k} \left( \frac{1}{2} \right)^{k+1} (z-\beta)^{k-1} (z^2 - 2 \beta z)
\end{align}
where $(z -\beta)^0 =1$ and
\[
s_{n,k} = (1+ \beta)^{-(n-1-k)}  \sum_{i=k+1}^n \binom{n}{i}  \binom{i-1}{k}  \beta^{i-1-k} \ge \binom{n-1}{k}.
\]
\end{lemma}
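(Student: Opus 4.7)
The plan is to prove both identities by direct coefficient matching, using only the binomial theorem and the two elementary algebraic relations
\begin{equation*}
(z-\beta)^{k-1}z = (z-\beta)^k + \beta(z-\beta)^{k-1}, \qquad (z-\beta)^{k-1}(z^2 - 2\beta z) = (z-\beta)^{k+1} - \beta^2(z-\beta)^{k-1},
\end{equation*}
which let one convert freely between the power basis $\{(z-\beta)^k\}_{k \ge 0}$ and the odd-indexed ``building blocks'' $\{(z-\beta)^{k-1}z,\,(z-\beta)^{k-1}(z^2 - 2\beta z)\}$ appearing on the right-hand sides.

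For \eqref{e-pol1} I substitute these two relations into the RHS, collect by powers of $(z-\beta)$, and compare with the standard expansion $z^n = (\beta + (z-\beta))^n = \sum_{k=0}^n \binom{n}{k}\beta^{n-k}(z-\beta)^k$. Odd-power coefficients match trivially. For an even power $(z-\beta)^m$ with $2 \le m \le n-2$, the coefficient receives three contributions: from the first sum at $k = m+1$ (yielding $\binom{n}{m+1}\beta^{n-m}$), and from the second sum at $k = m-1$ and $k = m+1$ (yielding $\binom{n-1}{m-1}\beta^{n-m}$ and $-\binom{n-1}{m+1}\beta^{n-m}$ respectively). Two applications of Pascal's rule give $\binom{n}{m+1} + \binom{n-1}{m-1} - \binom{n-1}{m+1} = \binom{n-1}{m} + \binom{n-1}{m-1} = \binom{n}{m}$, matching the target $\binom{n}{m}\beta^{n-m}$. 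The boundary cases $m \in \{0,\,n-1,\,n\}$ are verified separately by direct inspection.

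For \eqref{e-pol2} the cleanest route is to reduce to \eqref{e-pol1}: starting from $\left(\tfrac{1+z}{2}\right)^n = 2^{-n}\sum_{j=0}^n \binom{n}{j}z^j$, apply \eqref{e-pol1} to each $z^j$ with $j \ge 1$ and interchange summations. The coefficient of $(z-\beta)^{k-1}z$ for odd $k$ becomes $2^{-n}\sum_{j=k}^n \binom{n}{j}\binom{j}{k}\beta^{j-k}$, which by the Vandermonde-type identity $\binom{n}{j}\binom{j}{k} = \binom{n}{k}\binom{n-k}{j-k}$ and the binomial theorem simplifies to $\binom{n}{k}(1+\beta)^{n-k}/2^n$, matching the first sum in \eqref{e-pol2}. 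The coefficient of $(z-\beta)^{k-1}(z^2-2\beta z)$ becomes $2^{-n}\sum_{j=k+1}^n \binom{n}{j}\binom{j-1}{k}\beta^{j-1-k}$, which by the definition of $s_{n,k}$ equals $s_{n,k}(1+\beta)^{n-1-k}/2^n$.

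The inequality $s_{n,k} \ge \binom{n-1}{k}$ is a termwise estimate: from $\binom{n-1}{j-1}\binom{j-1}{k} = \binom{n-1}{k}\binom{n-1-k}{j-1-k}$ one obtains $\binom{n}{j}\binom{j-1}{k} = \tfrac{n}{j}\binom{n-1}{k}\binom{n-1-k}{j-1-k} \ge \binom{n-1}{k}\binom{n-1-k}{j-1-k}$ since $n \ge j$, and summing over $j$ with the binomial theorem yields $s_{n,k}(1+\beta)^{n-1-k} \ge \binom{n-1}{k}(1+\beta)^{n-1-k}$. No step poses a serious obstacle; the only place requiring care is the bookkeeping in the three-term Pascal cancellation for \eqref{e-pol1}, after which \eqref{e-pol2} follows by elementary binomial and Vandermonde manipulations.
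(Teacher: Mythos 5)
Your argument is correct. For the first identity you take a genuinely different route from the paper: you invert the two building blocks via $(z-\beta)^{k-1}z=(z-\beta)^k+\beta(z-\beta)^{k-1}$ and $(z-\beta)^{k-1}(z^2-2\beta z)=(z-\beta)^{k+1}-\beta^2(z-\beta)^{k-1}$, then match coefficients in the $(z-\beta)$-basis against $z^n=\sum_k\binom{n}{k}\beta^{n-k}(z-\beta)^k$, closing the even-index case with a two-step Pascal cancellation. The paper instead starts from the closed-form factorization
\[
z^n \;=\; z\cdot\frac{z^n-(2\beta-z)^n}{2(z-\beta)} \;+\; (z^2-2\beta z)\cdot\frac{z^{n-1}-(2\beta-z)^{n-1}}{2(z-\beta)},
\]
which, upon writing $z=\beta+(z-\beta)$ and $2\beta-z=\beta-(z-\beta)$ and expanding binomially, kills the even powers by parity and produces \eqref{e-pol1} in one stroke, with no coefficient bookkeeping and no boundary cases. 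The paper's route is slicker and essentially self-verifying; yours is more pedestrian but fully elementary and arguably easier to audit term by term, provided you are careful at $m\in\{0,n-1,n\}$ (which, as you note, can also be absorbed into the general Pascal computation once out-of-range binomials are set to zero). Your derivations of \eqref{e-pol2} (binomial expansion of $(1+z)^n$, substitute \eqref{e-pol1} into each $z^j$, interchange sums, Vandermonde/trinomial revision) and of $s_{n,k}\ge\binom{n-1}{k}$ (factor out $\binom{n-1}{k}$, bound $n/i\ge 1$, resum via the binomial theorem) coincide with the paper's.
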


\begin{proof}
Note that
\begin{equation}\label{e-pl1}
z^n= z \left( \frac{z^n - (2 \beta - z)^n }{2(z-\beta)} \right) + (z^2 - 2\beta z) \left( \frac{z^{n-1} - (2 \beta - z)^{n-1} }{2(z-\beta)} \right)
\end{equation}
for all $z \neq \beta$.
To obtain \eqref{e-pol1}, we expand $z^n, z^{n-1}, (2 \beta - z)^n ,  (2 \beta - z)^{n-1}$ in \eqref{e-pl1} using binomial expansion and the substitution
\[
z= \beta+ (z - \beta) \mbox{ and } 2 \beta -z = \beta - (z - \beta).
\]

To show \eqref{e-pol2}, we use binomial expansion on $(1+z)^n$ and then use \eqref{e-pol1} to obtain
\begin{align}\label{e-pl2}
(1+z)^n &=  1+ \sum_{i=1}^n \binom{n}{i} z^i\nonumber \\
&=1+  \sum_{i=1}^n  \sum_{k \in \nint{1}{i}, k \text{ odd} } \binom{n}{i} \binom{i}{k} \beta^{i-k} (z- \beta)^{k-1} z\nonumber\\
& \hspace{4mm} +  \sum_{i=1}^n \sum_{ k \in \nint{1}{i-1}, k \text{ odd} } \binom{n}{i} \binom{i-1}{k} \beta^{i-1-k} (z- \beta)^{k-1} (z^2 - 2 \beta z).
\end{align}
The coefficient of $(z- \beta)^{k-1}z$ in \eqref{e-pl2} is
\[
\sum_{i=k}^n  \binom{n}{i}\binom{i}{k} \beta^{i-k} = \binom{n}{k} \sum_{i=k}^n   \binom{n-k}{i-k} \beta^{i-k}
=  \binom{n}{k}  ( 1+ \beta)^{n-k}.
\]
Similarly,  the coefficient of $(z- \beta)^{k-1}(z^2 - 2 \beta z)$ in \eqref{e-pl2} is
\begin{align*}
\sum_{i=k+1}^n \binom{n}{i}  \binom{i-1}{k}  \beta^{i-1-k} &= \binom{n-1}{k} \sum_{i=k+1}^n \frac{n}{i} \binom{n-1-k}{i-1-k} \beta^{i-1-k} \\
&\ge  \binom{n-1}{k} \sum_{i=k+1}^n \binom{n-1-k}{i-1-k} \beta^{i-1-k} \\
&= \binom{n-1}{k} (1+ \beta)^{n-1-k}.
\end{align*}
This gives \eqref{e-pol2} with $s_{n,k}   \ge \binom{n-1}{k} $.
\end{proof}
We are now prepared to prove Gaussian upper bounds for $p_k$.
\begin{proof}[Proof of Proposition \ref{p-gue}]
 By Lemma \ref{l-con-d} there exists $\beta >0$ such that
 $u_k,v_k : M \times M \to \R$ satisfy
 \begin{align}
u_{k}(x,y) &:= \left[(P - \beta I)^{k}  p_2(x,\cdot) \right](y) \ge 0, \label{e-gu1} \\
v_{k}(x,y) &:= \left[(P - \beta I)^{k} (p_3(x,.) - 2 \beta p_2(x,\cdot)) \right](y) \ge 0 \label{e-gu2}
\end{align}
for all $x,y \in M$ and for all even non-negative integers $k$. For instance $\beta=\alpha/2$ where $\alpha$ is given by \eqref{e-da} would satisfy the above requirements.

Using Lemma \ref{l-kernel}(c) and \eqref{e-pol1} of Lemma \ref{l-pol}, we have
\begin{align}
\label{e-gu3} p_{n+1}(x,y) &= \left[P^n p_1(x,\cdot) \right](y) \nonumber  \\
& =\sum_{ k \in \nint{1}{n}, k \text{ odd} } \binom{n}{k} \beta^{n-k} u_{k-1}(x,y)\nonumber \\
& \hspace{4mm}+  \sum_{ k \in \nint{1}{n-1}, k \text{ odd} } \binom{n-1}{k} \beta^{n-1-k} v_{k-1}(x,y)
\end{align}
for all $n \in \N^*$ and for all $x,y \in M$. By \eqref{e-gu1}, \eqref{e-gu2}, Lemma \ref{l-con-d} and Lemma \ref{l-pol}, we have
\begin{align}\label{e-gu4}
\nonumber h_{2n}(x,y) &= \left[\left((I+P)/2 \right)^{2n} p_2(x,\cdot) \right](y) \\
 & \ge \alpha  \sum_{ k \in \nint{1}{2n}, k \text{ odd} } \binom{ 2n}{k} \left( \frac{1+\beta}{2} \right)^{2n-k} \left( \frac{1}{2} \right)^k u_{k-1}(x,y) \nonumber \\
 & \hspace{4mm} + \alpha \sum_{1 \le k \le 2n-1, k \text{ odd}} s_{2n,k} \left( \frac{1+\beta}{2} \right)^{2n-1-k} \left( \frac{1}{2} \right)^{k+1} v_{k-1}(x,y)
\end{align}
for all $x,y \in M$.
Define the ratio of coefficients in \eqref{e-gu3} and \eqref{e-gu4} as
\begin{equation}\label{e-gu5}
a_{k,n} = \frac{ \binom{ 2n}{k} \left( \frac{1+\beta}{2} \right)^{2n-k} \left( \frac{1}{2} \right)^k}{ \binom{n}{k} \beta^{n-k} } \mbox{ and }
b_{l,n} = \frac{  \binom{2n-1}{l} \left( \frac{1+\beta}{2} \right)^{2n-1-l} \left( \frac{1}{2} \right)^{l+1} } { \binom{n-1}{l} \beta^{n-1-l} }
\end{equation}
for each $k \in \nint{1}{n}$ and for each $l \in \nint{1}{l-1}$. If $k \in \nint{1}{n-1}$, then
\[
\frac{a_{k+1,n}}{a_{k,n}}= \frac{ \beta}{1 + \beta} \frac{2n-k}{n-k}.
\]
Therefore $a_{k+1,n} \ge a_{k,n}$ if and only if $k \ge n (1 - \beta)$. Thus $a_{k,n}$ reaches minimum for  $k = \lceil n( 1 -\beta) \rceil$.
By Stirling's approximation there exists constant $C_1> 0$ such that for all $r \in \mathbb{N}^*$,
\[
C_1^{-1} r^{r+(1/2)} e^{-r} \le r! \le C_1 r^{r+(1/2)} e^{-r} .
\]
We use the Stirling's approximation to estimate $a_{k,n}$ at $k = n ( 1 - \beta) + \epsilon$ where $ \epsilon = \lceil n( 1 -\beta) \rceil-n ( 1 - \beta) \in [0,1)$.
There exists $c_1>0$ such that
\begin{align*}
\min_{k \in \nint{1}{n}}a_{k,n} & \ge  a_{\lceil n( 1 -\beta) \rceil,n} \nonumber \\
\nonumber & \ge
\frac{C_1^{-4}  (2n)^{2n+(1/2)}  e^{-2n} (\beta n - \epsilon )^{\beta n + (1/2) - \epsilon}
e^{-\beta n + \epsilon} \left( 1+\beta \right)^{(1+\beta)n - \epsilon} }{ 2^{2n} n^{n+(1/2)} e^{-n} (n(1+\beta) - \epsilon)^{n(1+\beta) +(1/2)- \epsilon}e^{-n(1+\beta)+\epsilon} \beta^{\beta n- \epsilon} }\\
& \ge  c_1
\end{align*}
for all $n \in \N^*$ satisfying $n \ge 2/\beta$.
Therefore there exists $c_2>0$ such that
\begin{equation} \label{e-gu6}
a_{k,n} \ge c_2
\end{equation}
for all $n \in \mathbb{N}^*$ and for all $ k \in \nint{1}{k}$.
Similarly,
\begin{equation} \label{e-gu7}
b_{l,n} = \frac{1}{2} a_{l+1,n} \ge \frac{1}{2} c_2
\end{equation}
for all  $n \in \mathbb{N}^*$ and for all $l \in \nint{1}{n-1}$.
Combining \eqref{e-gu1}, \eqref{e-gu2}, \eqref{e-gu3}, \eqref{e-gu4}, \eqref{e-gu5}, \eqref{e-gu6} and\eqref{e-gu7}, there exists $c_3 >0$ such that
\begin{equation}\label{e-gu8}
h_{2n}(x,y) \ge c_3 p_{n+1}(x,y)
\end{equation}
for all $n \in \N^*$, and for all $x,y \in M$. Combining \eqref{e-gu8} along with Lemma \ref{l-hkbd} yields the Gaussian upper bound \eqref{e-gue}.
\end{proof}
We have shown the following equivalence
\begin{theorem} \label{t-main2}
 Let $(M,d,\mu)$ be a quasi-$b$-geodesic metric measure space satisfying \ref{doub-loc}.
Suppose that a Markov operator $P$ has a kernel $p$ that is $(h,h')$-compatible with $(M,d,\mu)$ for some $h >b$. Then the following are equivalent:
\begin{enumerate}[(i)]
 \item Sobolev inequality \eqref{e-Sob}.
 \item Large scale volume doubling property \ref{doub-inf} and Gaussian upper bounds \ref{gue}.
\end{enumerate}
\end{theorem}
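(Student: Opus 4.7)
The proof of Theorem \ref{t-main2} will proceed by showing the two implications separately, and the pleasing feature is that all the hard work has already been done in the preceding sections; the theorem is essentially a packaging statement that collects Proposition \ref{p-sobvd}, Proposition \ref{p-gue}, and Corollary \ref{c-nsob} into a single equivalence.

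For the direction (i) $\Rightarrow$ (ii), I would split into two pieces. The volume doubling consequence \ref{doub-inf} is obtained directly from Proposition \ref{p-sobvd}: the Sobolev inequality \eqref{e-Sob} together with \ref{doub-loc} and the $(h,h')$-compatibility of $P$ forces the large scale doubling property. The Gaussian upper bound \ref{gue} then follows from Proposition \ref{p-gue}, whose hypotheses are exactly \ref{doub-loc}, \ref{doub-inf} (just established), $(h,h')$-compatibility with $h>b$, and the Sobolev inequality. Note that the output of Proposition \ref{p-gue} is precisely the pointwise bound
\[
p_n(x,y) \le \frac{C}{V(x,\sqrt{n})} \exp\!\left(-\frac{d(x,y)^2}{Cn}\right)
\]
for all $n \ge 2$, which is \ref{gue} verbatim.

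For the converse (ii) $\Rightarrow$ (i), I would apply Corollary \ref{c-nsob} directly: under \ref{doub-loc}, \ref{doub-inf}, $(h,h')$-compatibility, and the assumed Gaussian upper bound \ref{gue} on the iterated kernel $p_k$, the Sobolev inequality \eqref{e-Sob} holds. Recall that the route there was to use \ref{gue} and \ref{doub-inf} to obtain the ultracontractivity estimate $\norm{P_B^k}_{1\to\infty} \le C_u (1+r^2)^{\delta/2} V(x,r)^{-1} (1+r^{-2})^{k-1}/k^{\delta/2}$ on balls $B=B(x,r)$, and then invoke Proposition \ref{p-ult2sob} to pass from ultracontractivity to the Sobolev inequality on balls.

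The proof is therefore a short assembly: cite Proposition \ref{p-sobvd} and Proposition \ref{p-gue} for the forward implication, and Corollary \ref{c-nsob} for the reverse implication. There is no genuine obstacle remaining since the technical heavy lifting (the two-stage Moser iteration, the ultracontractivity-to-Sobolev passage via Riesz--Thorin and Marcinkiewicz interpolation, the discrete integral maximum principle, and the comparison between the lazy kernel $h_k$ and $p_k$ via the binomial identities of Lemma \ref{l-pol}) has been carried out in Chapters \ref{ch-sob} and \ref{ch-gub}. The only minor bookkeeping point is to verify that the hypothesis $h>b$ assumed in the theorem matches the hypotheses required by each of the three cited results, which it does.
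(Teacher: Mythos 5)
Your proposal is correct and follows exactly the paper's own proof: Proposition~\ref{p-sobvd} and Proposition~\ref{p-gue} for (i)~$\Rightarrow$~(ii), and Corollary~\ref{c-nsob} for (ii)~$\Rightarrow$~(i). The additional recap of the machinery behind each cited result is accurate but not needed for the assembly.
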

\begin{proof}
By Corollary \ref{c-nsob}, (ii) implies (i).

Next, we assume the Sobolev inequality \eqref{e-Sob}.
 By Proposition \ref{p-sobvd} we have \ref{doub-inf}. In addition, by Proposition \ref{p-gue} we have \ref{gue}.
 This proves (i) implies (ii).
\end{proof}

\chapter{Gaussian lower bounds}\label{ch-glb}
In this chapter, we use elliptic Harnack inequality and Gaussian upper bounds to establish Gaussian lower bounds.
The proofs in this chapter is adapted from \cite{HS01}.
 In \cite{HS01}, Hebisch and Saloff-Coste provide an alternate approach to prove parabolic Harnack inequality using
 elliptic Harnack inequality and Gaussian upper bounds.
 This method avoids  relying on the full strength of Moser's iteration method in parabolic setting.

 Although \cite{HS01} concerns diffusions on strictly local Dirichlet spaces, we will see that
 their methods can be extended to discrete time Markov chains on quasi-geodesic spaces.
 This extension was alluded to in \cite{HS01}  where the authors say ``This route to the parabolic Harnack inequality seems especially valuable
 in the setting of analysis on graphs which is not covered by the present strictly local Dirichlet space framework.
In fact, the results above originated from our desire to overcome some of the difficulties that appear in the case of graphs. This will be developed elsewhere.''

The main result of this chapter is the following Gaussian lower bound.
\begin{prop}\label{p-gle}
 Let $(M,d,\mu)$ be a quasi-$b$-geodesic metric measure space satisfying  \ref{doub-loc}, \ref{doub-inf}, $\operatorname{diam}(M)= \infty$ and Poincar\'{e} inequality at scale $h$ \ref{poin-mms}.
Suppose that a Markov operator $P$ has a kernel $p$ that is $(h,h')$-compatible with respect to $\mu$ for some $h >b$. Then the corresponding kernel $p_k$ satisfies Gaussian lower bounds \ref{gle}.
\end{prop}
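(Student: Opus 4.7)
The proof follows the strategy of Hebisch and Saloff-Coste \cite{HS01}, suitably adapted to the discrete-time setting. The plan is to establish, in order, a diagonal lower bound $p_{2n}(x,x)\gtrsim V(x,\sqrt n)^{-1}$, a near-diagonal lower bound $p_n(x,y)\gtrsim V(x,\sqrt n)^{-1}$ valid for $d(x,y)\le\epsilon\sqrt n$, and finally the full Gaussian lower bound by a chaining argument. Throughout, I will use the Gaussian upper bound \ref{gue} from Proposition~\ref{p-gue} (applicable since the Sobolev inequality of Theorem~\ref{t-Sob} holds under the present hypotheses) together with the elliptic Harnack inequality of Theorem~\ref{t-ehi} and the Hölder estimate of Proposition~\ref{p-holder}.

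For the diagonal bound, I combine conservativity $\int_M p_n(x,y)\,d\mu(y)=1$ with $(GUE)$: a dyadic decomposition of $B(x,R\sqrt n)^\complement$ together with \eqref{e-vd1} produces $R_0$ such that $\int_{B(x,R_0\sqrt n)}p_n(x,y)\,d\mu(y)\ge 1/2$ for all $n\ge 2$ and $x\in M$. Applying Cauchy--Schwarz to $p_{2n}(x,x)=\|p_n(x,\cdot)\|_2^2$ then yields $p_{2n}(x,x)\ge c_1 V(x,\sqrt n)^{-1}$, and Lemma~\ref{l-con-d} (specifically \eqref{e-pcomp}) transfers this to odd $n$ as well.

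The near-diagonal lower bound is the principal obstacle: since $y\mapsto p_n(x,y)$ is caloric rather than harmonic, neither Theorem~\ref{t-ehi} nor Proposition~\ref{p-holder} applies directly. The idea is to replace $p_n$ by the killed resolvent
\[
G^B_\lambda(x,y)=\sum_{k\ge 0}\lambda^{-k}p_k^B(x,y),\qquad B=B(x,R_0\sqrt n),\ \lambda>1,
\]
which is $(\lambda I-P_B)$-harmonic in $B\setminus\{x\}$ and so satisfies elliptic Harnack and Hölder regularity for the perturbed generator $I-\lambda^{-1}P$. Choosing $\lambda=1+\tfrac1n$ forces the weights $\lambda^{-k}$ to localize the sum near $k\asymp n$, and with the help of $(GUE)$ and the diagonal estimate one transfers the Hölder oscillation bound for $G^B_\lambda$ back to $p_n(x,\cdot)$. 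Combining the resulting Hölder modulus $|p_n(x,y)-p_n(x,x)|\le C(d(x,y)/\sqrt n)^\alpha V(x,\sqrt n)^{-1}$ with the diagonal bound gives $p_n(x,y)\ge c_2 V(x,\sqrt n)^{-1}$ for all $y$ with $d(x,y)\le\epsilon\sqrt n$ and $\epsilon$ small. The discrete-time error terms that arise in the resolvent manipulation must be controlled by the $P_L$-comparison afforded by condition (d) of Definition~\ref{d-compat}, in the same spirit as in Chapter~\ref{ch-gub}; this is where I expect the main technical work.

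For the chaining step, given $x,y$ with $d(x,y)\le c_3 n$, set $m=1\vee\lceil 4d(x,y)^2/(\epsilon^2 n)\rceil$ and $k=\lfloor n/m\rfloor$, so that $d(x,y)/m\le\tfrac\epsilon2\sqrt k$. Choosing $c_3$ small ensures $k$ is large enough for the chain lemma (Lemma~\ref{l-chain}) to apply at scale $\sqrt k>b$ and to produce $x=x_0,x_1,\ldots,x_m=y$ with $d(x_i,x_{i+1})\le\tfrac\epsilon2\sqrt k$. An iterated Chapman--Kolmogorov estimate
\[
p_n(x,y)\ge\int_{B(x_1,r)}\cdots\int_{B(x_{m-1},r)}\prod_{i=0}^{m-1}p_k(z_i,z_{i+1})\prod_{i=1}^{m-1}d\mu(z_i),
\]
with $z_0=x$, $z_m=y$, $r=\tfrac\epsilon4\sqrt k$, applies the near-diagonal bound at each step; the remaining volume factors are controlled using the reverse doubling inequality \eqref{e-rvd} (which uses $\operatorname{diam}(M)=\infty$) together with the equivalence $V(x_i,\sqrt k)\asymp V(x,\sqrt n)(k/n)^{\delta/2}$ from Lemma~\ref{l-doub-prop}. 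Collecting terms gives $p_n(x,y)\ge c_0^m V(x,\sqrt n)^{-1}$, and since $m\asymp 1\vee d(x,y)^2/n$, writing $c_0^m=\exp(-m|\log c_0|)$ produces the Gaussian factor $\exp(-d(x,y)^2/(Cn))$ and completes the proof of \ref{gle}.
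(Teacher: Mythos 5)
Your diagonal lower bound and final chaining argument match the paper's (Lemma \ref{l-dlb} and the proof of Proposition \ref{p-gle} respectively), but the near-diagonal step takes a genuinely different route, and there is a gap in it.

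The paper's path to the near-diagonal bound runs through three pieces you do not mention: (i) a spectral-gap estimate on balls, Proposition \ref{p-spec}, giving $\norm{P_{B(x,r)}}_{2\to 2}\le 1-a/r^2$; (ii) the consequence that the killed Green's function $G^B(y,\cdot)=\sum_{i\ge 1}p_i^B(y,\cdot)$ converges and that $\Delta_{P_B}$ is invertible with $\Delta_{P_B}^{-1}=\sum_j P_B^j$; and (iii) the representation $p_k^B(y,z)=\partial_k p^B(y,z)+\int_B G^B(z,w)\,\partial_k p^B(y,w)\,dw$ of equation \eqref{e-cl3}, together with the technical estimates of Lemma \ref{l-tech} on $\partial_k p^B$ and $\sum_{j>\theta k}p_j^B$, the latter again via Proposition \ref{p-spec}. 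The crucial point that makes this work is that $z\mapsto G^B(z,w)$ is \emph{genuinely} $P$-harmonic on a ball away from the pole $w$ and the boundary $\partial B$, so Proposition \ref{p-holder} applies verbatim. You also need the Dirichlet-kernel diagonal lower bound $p_n^{B(x,A\sqrt n)}(x,x)\gtrsim V(x,\sqrt n)^{-1}$ (Lemma \ref{l-balldlb}, obtained from the strong Markov property and (GUE)), since the oscillation estimate in Lemma \ref{l-crl} is for $p^B_k$, not $p_k$.

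Your replacement of $G^B$ by the resolvent $G^B_\lambda=\sum_{k\ge 0}\lambda^{-k}p_k^B$ with $\lambda=1+1/n$ is an attractive idea because the weights $\lambda^{-k}$ make the series converge without any spectral gap; indeed it would sidestep Proposition \ref{p-spec} entirely. But the resulting function satisfies $P_B G^B_\lambda(x,\cdot)=\lambda G^B_\lambda(x,\cdot)$ away from $x$, i.e.\ $\Delta_{P_B}G^B_\lambda(x,\cdot)=(1-\lambda)G^B_\lambda(x,\cdot)<0$: it is $\lambda^{-1}P$-harmonic for a \emph{sub-Markov} operator, not $P$-harmonic. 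Theorem \ref{t-ehi} and Proposition \ref{p-holder} are proved only for $P$-harmonic functions; Moser iteration for the killed operator $\lambda^{-1}P$ is not established anywhere in the paper, and it is not automatic (the Harnack inequality for sub-Markov generators needs the additional input that the killing rate $1-\lambda^{-1}\asymp 1/n$ is negligible on the length scale $\sqrt n$ under consideration, and the constants must be checked to be uniform as $\lambda\downarrow 1$). This is the real obstruction, not the discrete-time product-rule error terms you flag. In addition, the analogue of \eqref{e-cl3} for the resolvent carries an extra term $\lambda^{-1}(\lambda-1)\int_B G^B_\lambda(z,w)p_k^B(y,w)\,dw$ that you would need to estimate; it is of the right order, but you do not write it down. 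As it stands, the step ``apply elliptic Harnack and H\"older regularity for the perturbed generator'' is an assertion without a proof and is where the argument breaks down. To complete your route you would either have to re-run Chapters \ref{ch-sob}\textendash\ref{ch-ehi} for the operator $\lambda^{-1}P$ with $\lambda$ near $1$ (tracking constants), or revert to the $\lambda=1$ Green's function, which is what forces Proposition \ref{p-spec} into the proof.
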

Note that under the assumptions of Proposition \ref{p-gle}, we have Gaussian upper bounds \ref{gue}.
This is a direct consequence of Theorem \ref{t-Sob} and   Proposition \ref{p-gue}.

We focus on the case $\operatorname{diam}(M)=\infty$ just for simplicity.
In fact, we expect these methods to work when $\operatorname{diam}(M) < \infty$.
However when the space has finite diameter, it is important to find optimal constants (or close to optimal) for various functional inequalities.
To compute these optimal constants, one has to exploit the specific structure of the Markov chain under consideration. We plan to address the finite diameter case in a sequel.

 \section{On-diagonal lower bounds}
The first step is to obtain lower bounds on $p_k(x,x)$. It is well-known that Gaussian upper bounds implies a matching diagonal lower bounds.
We repeat the proof for convenience.
\begin{lemma} \label{l-dlb}
Under the assumption of Proposition \ref{p-gle}, there exists $c_0>0$ such that
\[
p_n(x,x) \ge  \frac{c_0}{V(x,\sqrt{n})}
 \]
 for all $x \in M$ and for all $n \in \N$ satisfying $n \ge 2$.
\end{lemma}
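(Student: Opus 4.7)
The plan is the classical Cauchy--Schwarz argument: control the heat kernel mass outside a ball of radius $A\sqrt{n}$ using the Gaussian upper bound \ref{gue} (which holds under the present hypotheses by Theorem \ref{t-Sob} combined with Proposition \ref{p-gue}), then convert the resulting lower bound on the mass inside the ball into an on-diagonal lower bound via the semigroup identity $\int_M p_n(x,y)^2\,\mu(dy) = p_{2n}(x,x)$.

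First I would fix $x\in M$, $n\ge 2$, and $r = A\sqrt{n}$ with $A\ge 1$ to be chosen. Using stochastic completeness and \ref{gue}, write
\[
\int_{B(x,r)^\complement} p_n(x,y)\,\mu(dy) \le \sum_{k\ge 0} \int_{B(x,2^{k+1}r)\setminus B(x,2^k r)} \frac{C_1}{V(x,\sqrt{n})} \exp\!\left(-\frac{4^k r^2}{C_2 n}\right)\mu(dy).
\]
Since $r\ge \sqrt{n}$, Lemma \ref{l-doub-prop} yields $V(x,2^{k+1}r)/V(x,\sqrt{n}) \le C(2^{k+1}A)^\delta$, so the right-hand side is bounded by $C\sum_{k\ge 0} (2^{k+1}A)^\delta \exp(-4^k A^2/C_2)$. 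This geometric-Gaussian sum is $\le 1/2$ provided $A=A_0$ is chosen large enough. Hence, again by stochastic completeness,
\[
\int_{B(x,A_0\sqrt{n})} p_n(x,y)\,\mu(dy) \ge \frac{1}{2}.
\]

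Next, by Cauchy--Schwarz, Lemma \ref{l-kernel}(c) and symmetry of $p_n$,
\[
\frac14 \le \left(\int_{B(x,A_0\sqrt{n})} p_n(x,y)\,\mu(dy)\right)^2 \le V(x,A_0\sqrt{n}) \int_M p_n(x,y)^2\,\mu(dy) = V(x,A_0\sqrt{n})\, p_{2n}(x,x).
\]
By \ref{doub-loc} and Lemma \ref{l-doub-prop}, $V(x,A_0\sqrt{n}) \le C_{A_0} V(x,\sqrt{2n})$, so
\[
p_{2n}(x,x) \ge \frac{1}{4 C_{A_0} V(x,\sqrt{2n})}
\]
for every $n\ge 2$. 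This settles the even indices. For odd $n = 2m+1$ with $m\ge 1$, I invoke \eqref{e-pcomp} of Lemma \ref{l-con-d} (which applies because $P$ is $(h,h')$-compatible with $h>b$), giving $p_{2m+1}(x,x)\ge \alpha\, p_{2m}(x,x)$ when $2m\ge 2$; combined with the even case and another application of volume doubling to replace $V(x,\sqrt{2m})$ by $V(x,\sqrt{2m+1})$, this yields the desired on-diagonal bound for all $n\ge 2$.

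The only mildly delicate point is the tail summability, and the dependence of the constants on $A_0$ is absorbed by volume doubling. I do not foresee a serious obstacle; the argument is essentially a direct application of the doubling property together with \ref{gue} and the compatibility condition (d) of Definition \ref{d-compat}.
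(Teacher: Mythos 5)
Your argument is the same Cauchy--Schwarz tail-estimate argument as the paper's, just with a shifted bookkeeping: the paper bounds the tail of $p_{n/2}$ and deduces $p_n(x,x)\ge c/V(x,\sqrt n)$ directly for even $n\ge 2$, whereas you bound the tail of $p_n$ and produce the estimate for $p_{2n}(x,x)$. The drawback of your version is that it only yields the bound for $p_m(x,x)$ when $m=2n$ with $n\ge 2$ (so $m\in\{4,6,8,\ldots\}$), and the odd cases come via $p_{2m+1}(x,x)\ge\alpha\,p_{2m}(x,x)$, which settles $m\in\{5,7,9,\ldots\}$ only. The claim that the odd step works for ``$m\ge 1$'' is circular at $m=1$, because it needs the bound for $p_2(x,x)$, which you have not established; so as written, $n=2$ and $n=3$ are not covered.

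The gap is easy to close, but it should be made explicit. For example, use the $(h,h')$-compatibility \eqref{e-compat} directly: $p_2(x,x)=\int_M p_1(x,z)^2\,\mu(dz)\ge c_1^2 V(x,h)^{-1}$, and $V(x,h)\le C\,V(x,\sqrt 2)$ by Lemma~\ref{l-vloc}; then $p_3(x,x)\ge\alpha\,p_2(x,x)$ by \eqref{e-pcomp}. Alternatively, invoke the monotonicity $n\mapsto p_{2n}(x,x)$ from Lemma~\ref{l-mker} together with local doubling to reduce $p_2$ to $p_4$. Either patch brings your proof in line with the paper's, which avoids the issue by applying the Cauchy--Schwarz split at half-time: writing $p_n(x,x)=\int p_{n/2}^2$ lets one treat all even $n\ge 2$ (including $n=2$, since $p_1$ has compact support and so trivially satisfies a tail estimate).
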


\begin{proof}
By Lemma \ref{l-con-d} it suffices to prove the inequality for even $n$,
since there exists $\alpha>0$ such that \[p_{2k+1}(x,x) \ge \alpha p_{2k}(x,x)\] for all $x \in M$ and for all $k \in \N^*$.

Let $n \in \mathbb{N}^*$ be even.
By Cauchy-Schwarz inequality and Lemma \ref{l-kernel}(c), we have
\begin{align}
\nonumber p_n(x,x) &=  \int_M p_{n/2}^2(x,y)\,dy \ge \int_{B(x, \sqrt{T})}  p_{n/2}^2(x,y) \,dy  \\
\nonumber & \ge  \frac{1}{V(x,\sqrt{T})} \left( \int_{B(x, \sqrt{T})}  p_{n/2}(x,y) dy \right) ^2 \\
\label{e-dlb1} & =  \frac{1}{V(x,\sqrt{T})} \left( 1 - \int_{B(x, \sqrt{T})^\complement}  p_{n/2}(x,y) dy \right) ^2 
\end{align}
for all $T>0$ and for all $n \in 2\N^*$.

By Theorem \ref{t-Sob} and   Proposition \ref{p-gue} we have \ref{gue}.
By \ref{gue}, there exists $C_1,C_2>0$
\begin{equation}
\label{e-dlb2} p_{k}(x,y) \le \frac{C_1}{V(x,\sqrt{k})} \exp \left( - \frac{d^2(x,y)}{C_2 k} \right)
\end{equation}
for all $x,y \in M$ and $ k \in \mathbb{N}^*$.
There exists $C_3>1$ such that for all $A>\max(1,(8 C_2 \delta)^2)$, we have
\begin{align}\label{e-dlb3}
\nonumber \int_{B(x, \sqrt{Ak} )^\complement} p_{k}(x,y) \,dy &= \sum_{i=1}^ \infty \int_{ 2^{i-1} \sqrt{Ak} < d(x,y) \le 2^i \sqrt{Ak} } p_k(x,y) \,dy \\
\nonumber & \le  C_1 \sum_{i=1}^ \infty \frac{ V(x, 2^i \sqrt{Ak})} { V(x,\sqrt{k})} \exp \left( - \frac{4^iT}{ 4 C_2 k} \right) \\
\nonumber & \le  C_3 \sum_{i=1}^ \infty  \exp \left( \delta \log \left( {2^i \sqrt{A}} \right)- \frac{4^i A}{ 4 C_2 } \right) \\
\nonumber & \le  C_3 \sum_{i=1}^ \infty   \exp \left( \delta {2^i \sqrt{A}}-  \frac{4^i A}{ 4 C_2 }  \right)  \\
& \le   C_3 \sum_{i=1}^ \infty  \exp \left( - \frac{4^i A}{ 8 C_2 } \right)  \le   C_4  \exp \left(  - \frac{A}{2C_2} \right)
\end{align}
for all $k \in \N^*$ and for all $x \in M$.
We used  \eqref{e-dlb2} in the second line above and \eqref{e-vd1} in the third line.
By \eqref{e-dlb3}, there exists $A_1>1$ such that
\begin{equation}\label{e-dlb4}
\int_{B(x, \sqrt{A_1 k} )^\complement} p_{k}(x,y) \, dy <  1/2
\end{equation}
for all $k \in \N^*$ and for all $x \in M$.
We choose $T= A_1(n/2)$ in \eqref{e-dlb1} and use \eqref{e-dlb4} and \eqref{e-vd1}, to obtain
\[
 p_n(x,x) \ge \frac{1}{2 V(x, (A_1 n/2)^{1/2})} \ge \frac{c_1}{V(x,\sqrt{n})}
\]
for all $n \in 2\N^*$ and for all $x \in M$.
\end{proof}
The following lemma is a discrete time analog of \cite[Lemma 3.7]{HS01}, where we transfer the
on-diagonal lower bound given by Lemma \ref{l-dlb} to on-diagonal lower bound for the `Dirichlet kernel' $p^B_k$ on a  ball $B$ defined in \eqref{e-defpB}.
\begin{lemma} \label{l-balldlb}
Under the assumptions of Proposition \ref{p-gle},
there exists $c > 0$ and $A>\max(1,h')$ such that
\[
p^{B(x,r)}_n(x,x) \ge \frac{c}{V(x,\sqrt{n})}
\]
 for all $x \in M$, for all $n \in \mathbb{N}^*$ with $n \ge 2$ and for all $r \ge A \sqrt{n}$
\end{lemma}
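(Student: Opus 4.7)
The plan is to decompose the free kernel $p_n(x,x)$ into the killed kernel $p_n^{B(x,r)}(x,x)$ plus an ``error'' accounting for trajectories that have exited $B=B(x,r)$ before time $n$, and then to show that this error is small compared to the on--diagonal lower bound furnished by Lemma~\ref{l-dlb} as soon as $r/\sqrt{n}$ is large enough. Let $\zeta=\min\{k:X_k\notin B\}$. By the strong Markov property and the definitions \eqref{e-defpB}, at the level of densities one has
\begin{equation}\label{e-balldlb-decomp}
p_n^B(x,x)=p_n(x,x)-\sum_{k=1}^{n}\mathbb{E}_x\!\left[\mathbf{1}_{\{\zeta=k\}}\,p_{n-k}(X_k,x)\right].
\end{equation}
By Lemma~\ref{l-dlb} there exists $c_0>0$ with $p_n(x,x)\ge c_0/V(x,\sqrt{n})$ for $n\ge 2$. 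Under the hypotheses of Proposition~\ref{p-gle}, Theorem~\ref{t-Sob} and Proposition~\ref{p-gue} (equivalently, Proposition~\ref{p-gue}) furnish the Gaussian upper bound \ref{gue}, which I will use to control each summand.

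First I would eliminate the degenerate range. Since $p_1(\cdot,\cdot)$ is $(h,h')$--compatible, the support of $p_m(y,\cdot)$ lies in $B(y,mh')$; hence $p_{n-k}(X_k,x)=0$ unless $(n-k)h'\ge d(X_k,x)>r$. Consequently, only indices $k$ with $n-k\ge r/h'$ contribute to \eqref{e-balldlb-decomp}, and choosing $A\ge h'\max(1,b)$ guarantees $\sqrt{n-k}\ge b$ whenever the summand is nonzero and $r\ge A\sqrt{n}$, which in turn allows us to apply Lemma~\ref{l-doub-prop}. For such $k$, on the event $\{\zeta=k\}$ triangle inequality gives $r<d(X_k,x)\le r+h'$. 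The symmetric form of \ref{gue} (applied after using $p_{n-k}(X_k,x)=p_{n-k}(x,X_k)$) yields
\[
p_{n-k}(X_k,x)\le \frac{C_1}{V(x,\sqrt{n-k})}\exp\!\left(-\frac{r^2}{C_2(n-k)}\right),
\]
and the volume comparison \eqref{e-vd1} gives $V(x,\sqrt{n})/V(x,\sqrt{n-k})\le C_3(n/(n-k))^{\delta/2}$. Substituting $j=n-k\ge 1$ and $\lambda=r^2/n\ge A^2$, and writing $u=n/j\ge 1$, produces the uniform estimate
\[
p_{n-k}(X_k,x)\le \frac{C_4}{V(x,\sqrt{n})}\,u^{\delta/2}\exp\!\left(-\tfrac{\lambda u}{C_2}\right).
\]

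Now I would choose $A$ to make the error negligible. For $\lambda\ge \delta C_2$ the map $u\mapsto u^{\delta/2}\exp(-\lambda u/C_2)$ is decreasing on $[1,\infty)$, hence bounded by $\exp(-\lambda/C_2)$. Summing \eqref{e-balldlb-decomp} against $\sum_k\mathbb{P}_x(\zeta=k)\le 1$ yields
\[
\sum_{k=1}^n\mathbb{E}_x\!\left[\mathbf{1}_{\{\zeta=k\}}\,p_{n-k}(X_k,x)\right]\le \frac{C_4\,e^{-A^2/C_2}}{V(x,\sqrt{n})}.
\]
Fixing $A$ large enough (depending only on $c_0,C_2,C_4,\delta,b,h'$) so that $C_4 e^{-A^2/C_2}\le c_0/2$ and combining with Lemma~\ref{l-dlb} gives
\[
p_n^{B(x,r)}(x,x)\ge \frac{c_0}{V(x,\sqrt{n})}-\frac{c_0}{2V(x,\sqrt{n})}=\frac{c_0/2}{V(x,\sqrt{n})},
\]
which is the desired conclusion with $c=c_0/2$.

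The one mildly delicate point—and the main thing to get right—is the uniform volume comparison between $V(X_k,\sqrt{n-k})$ and $V(x,\sqrt{n})$ on the annular support of the exit distribution; it is here that the geometric hypotheses (large scale doubling \ref{doub-inf} and quasi-$b$-geodesicity, which enters only to ensure the doubling regime $\sqrt{n-k}\ge b$) are used. Symmetrizing the Gaussian upper bound via $p_{n-k}(X_k,x)=p_{n-k}(x,X_k)$ sidesteps any need to control $V(X_k,\sqrt{n-k})$ directly, which keeps the estimate clean and is the step I would be most careful with when writing out the details.
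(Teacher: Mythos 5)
Your proof is correct and follows essentially the same route as the paper: decompose the killed kernel via the strong Markov property as the free kernel minus an exit correction, apply Lemma~\ref{l-dlb} for the on-diagonal lower bound, estimate the exit term uniformly using \ref{gue} and \eqref{e-vd1} (bounding $(n/l)^{\delta/2}\exp(-A^2n/(C_2l))$), and choose $A$ large so the correction is at most half the diagonal bound. The only cosmetic differences are that the paper passes directly to $\sup_l\sup_{y\notin B}$ rather than summing against $\mathbb{P}_x(\zeta=k)$, and your side remark about the threshold $\sqrt{n-k}\ge b$ is both slightly misstated (you bound $n-k$, not $\sqrt{n-k}$) and unnecessary, since compatibility forces $n-k\ge 2$ on the support of the exit term so \eqref{e-vd1} already applies.
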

\begin{proof}
We abbreviate $B(x,r)$ by $B$. We denote the exit time from ball $B$ by
\[
 \tau:= \min \Sett{k}{X_k \notin B}
\]
where $(X_k)_{k \in \N}$ is the Markov chain driven by the kernel $p_k$.

By strong Markov property, the Dirichlet kernel $p_n^B$ can be expressed in terms of $p_k$ as
\begin{equation} \label{e-blb1}
p^B_n(x,x) = p_n(x,x)  - \EE_x \left[ p_{n-\tau}(X_\tau,x) \one_{\nint{1}{n-1}}(\tau)\right]
\end{equation}
for all $n \ge 2$ and for all $x \in M$,
where $\EE_x$ denotes that $X_0=x$.
 If we choose $A>h'$, by \eqref{e-compat},we can rewrite \eqref{e-blb1} as
\begin{equation} \label{e-blb2}
p^B_n(x,x)= p_n(x,x)  - \EE_x \left[ p_{n-\tau}(X_\tau,x) \one_{\nint{2}{n-2}}(\tau)\right]
\end{equation}
for all $n \ge 2$ and for all $x \in M$ with $B=B(x,r)$ satisfying $r > h'$.
For the first term in \eqref{e-blb2}, by Lemma \ref{l-dlb}, there exists $c_1>0$ such that
\begin{equation}\label{e-blb2a}
p_n(x,x) \ge \frac{c_1}{V(x,\sqrt{n})}
\end{equation}
for all $x \in M$ and for all $n\ge 2$.

We use Gaussian upper bound \ref{gue} to estimate the second term in \eqref{e-blb2}. There exists $C_1,C_2,C_3,C_4,\delta>0$ and such that
\begin{align}\label{e-blb3}
\nonumber\EE_x \left[ p^B_{n-\tau}(X_\tau,y) \one_{\nint{1}{n-1}}(\tau)\right] & \le  \sup_{l \in \nint{2}{n-2}} \sup_{y \notin B(x,r)} \frac{C_1}{V(x,\sqrt{l})} e^{- d(x,y)^2/(C_2 l) } \\
& \le  \sup_{l \in \nint{2}{n-2}}\frac{C_1}{V(x,\sqrt{l})} e^{- (A^2 n)/(C_2 l) } \nonumber \\
\nonumber & \le \frac{C_3}{V(x,\sqrt{n})} \sup_{l \in \nint{2}{n-2}} (n/l)^{\delta/2} e^{- (A^2 n)/(C_2 l) } \nonumber \\
 & \le  \frac{C_4}{A^\delta V(x,\sqrt{n})}
\end{align}
for all $x \in M$, for all $n \ge 2$, for all $A > h'$ and for all $B=B(x,r)$ with $r \ge A \sqrt{n} > h'$.
In the first line above we used \eqref{e-dlb2}, in the second line we used $d(x,y) \ge r \ge A \sqrt{n}$ and in the third line we used \eqref{e-vd1}.

Clearly we can choose $A>h'$ large enough such that $C_4/A^\delta < c_1/2$.
Therefore by \eqref{e-blb2},\eqref{e-blb2a} and \eqref{e-blb3}, we obtain the desired bound.
\end{proof}
 \section{Spectrum of the Dirichlet Laplacian on balls}
Our next result is a bound on the spectrum of $P_B$ or alternatively on the Dirichlet Laplacian $\Delta_{P_B}$.
The following Proposition is a discrete time analog of \cite[Theorem 2.5]{HS01}.
However unlike \cite{HS01}, we cannot apply the stronger Sobolev inequality \eqref{e.sob}.
\begin{prop}\label{p-spec}
 Let $(M,d,\mu)$ be a quasi-$b$-geodesic metric measure space satisfying  \ref{doub-loc}, \ref{doub-inf} and Poincar\'{e} inequality at scale $h$ \ref{poin-mms}.
Suppose that a Markov operator $P$ has a kernel $p$ that is $(h,h')$-compatible with respect to $\mu$ for some $h >b$. Then there exists positive reals $a,\epsilon_0$ such that
 \begin{equation}\label{e-spec}
 \norm{P_{B(x,r)}}_{2 \to 2} := \sup_{f \in L^2(B(x,r)), \norm{f}_2=1} \norm{P_{B(x,r)}f}_2 \le 1- \frac{a}{r^2}
 \end{equation}
 for all $x \in M$ and for all $r \in \R$ satisfying $r \ge h'$ and $r \le \epsilon_0\operatorname{diam}(M)$.
\end{prop}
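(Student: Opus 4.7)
The plan is to establish the spectral estimate in two steps: a Poincaré-based lower bound on $\mathcal{E}^B$, followed by a conversion to a lower bound on $\mathcal{E}^B_*$ using the compatibility assumption~(d) of Definition~\ref{d-compat}, which in turn translates directly to a bound on $\norm{P_B}_{2\to 2}$ because $\mathcal{E}^B_*(f,f)=\norm{f}_2^2-\norm{P_Bf}_2^2$.

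Step~1 (spectral gap for $\mathcal{E}^B$). Let $f\in L^2(B)$ with $B=B(x,r)$, and let $\tilde f$ denote its extension by zero to $M$. By Lemma~\ref{l-pflex} we may take the lower threshold of the Poincaré inequality \ref{poin-mms} at scale $h$ to be $h'$, say as $P_h(h',C_P,\kappa)$. By Lemma~\ref{l-rvd}, we fix $\Omega>\kappa$ large enough so that $V(x,\Omega r)\ge 2V(x,r)$ whenever $h'\le r\le \operatorname{diam}(M)/\Omega$. Apply the Poincaré inequality to $\tilde f$ on the ball $\Omega B=B(x,\Omega r)$:
\[
\int_{\Omega B}|\tilde f-\tilde f_{\Omega B}|^2\,d\mu \le C_P(\Omega r)^2\int_{\kappa\Omega B}|\nabla\tilde f|_h^2\,d\mu .
\]
Since $\tilde f$ is supported in $B$, expansion of the left side followed by Cauchy-Schwarz yields
\[
\int_{\Omega B}|\tilde f-\tilde f_{\Omega B}|^2\,d\mu = \norm{f}_2^2-\frac{1}{V(x,\Omega r)}\left(\int_B f\,d\mu\right)^{\!2} \ge \left(1-\frac{V(x,r)}{V(x,\Omega r)}\right)\norm{f}_2^2 \ge \tfrac{1}{2}\norm{f}_2^2 .
\]
The lower bound in the $(h,h')$-compatibility of $p_1$ gives $\mathcal{E}(\tilde f,\tilde f)\ge (c_1/2)\int|\nabla\tilde f|_h^2\,d\mu$, while Lemma~\ref{l-dircomp-ball}(a) provides $\mathcal{E}(\tilde f,\tilde f)=\mathcal{E}^B(f,f)$. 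Combining these inequalities yields a constant $c=c(c_1,C_P,\Omega)>0$ such that
\[
\mathcal{E}^B(f,f) \ge \frac{c}{r^2}\,\norm{f}_2^2 \qquad\text{for all } f\in L^2(B),
\]
whenever $h'\le r\le \operatorname{diam}(M)/\Omega$.

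Step~2 (from $\mathcal{E}^B$ to the operator norm of $P_B$). By Lemma~\ref{l-comp-dir}(b) combined with Lemma~\ref{l-dircomp-ball}, we have
\[
\mathcal{E}^B(f,f) = \mathcal{E}(\tilde f,\tilde f) \le C\,\mathcal{E}_*(\tilde f,\tilde f) \le C\,\mathcal{E}^B_*(f,f)
\]
for all $f\in L^2(B)$, where the constant $C$ depends on the compatibility constant $\alpha$ in~\eqref{e-da}. Combined with Step~1, this gives $\mathcal{E}^B_*(f,f)\ge (a'/r^2)\norm{f}_2^2$ for some $a'>0$. Since $\mathcal{E}^B_*(f,f)=\norm{f}_2^2-\norm{P_Bf}_2^2$, this yields $\norm{P_Bf}_2^2\le(1-a'/r^2)\norm{f}_2^2$, and the elementary inequality $\sqrt{1-t}\le 1-t/2$ for $t\in[0,1]$ gives the desired conclusion with $a=a'/2$ and $\epsilon_0=1/\Omega$.

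The main obstacle, and the role of assumption~(d). The genuinely delicate step is converting a spectral gap for $\mathcal{E}^B$ into a norm bound on $P_B$. Without the compatibility assumption~$p_2\ge\alpha p_1$, the Poincaré argument controls only the top of the spectrum of $P_B$, not its bottom: a bipartite-like walk could have $\lambda_{\min}(P_B)$ close to $-1$, forcing $\norm{P_B}_{2\to 2}$ near $1$ even when $\mathcal{E}^B(f,f)$ admits a strong lower bound. Condition~(d), through the equivalence $\mathcal{E}\asymp\mathcal{E}_*$ of Lemma~\ref{l-comp-dir}(b), is precisely what suppresses such quasi-bipartite behavior and permits the transition from a Poincaré-type lower bound on $\mathcal{E}^B$ to the required operator-norm bound on $P_B$.
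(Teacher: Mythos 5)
Your proof is correct, and it takes a genuinely different route from the paper's. The paper obtains the spectral gap by invoking the Nash inequality established in Chapter~5 (which itself already encodes Poincar\'{e} and doubling): it reduces to $f\ge 0$, applies Nash to $\E_*$ with the iterate $Pf$, uses $\norm{Pf}_1=\norm{f}_1$ together with reverse volume doubling at a dilated ball $B(x,Kr)$ to extract a positive gap, and then passes to the operator norm through the self-adjointness characterization $\norm{P_B}_{2\to 2}=\sup_f |\langle f,P_Bf\rangle|/\norm{f}_2^2$, proving the upper and lower bounds on $\langle f,P_Bf\rangle$ separately (the lower bound via $\E(f,f)+\E(|f|,|f|)\le 2\norm{f}_2^2$). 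You instead derive the ``Faber--Krahn type'' lower bound $\E^B(f,f)\ge (c/r^2)\norm{f}_2^2$ directly from the Poincar\'{e} inequality applied to the zero-extension on a dilated ball, with the reverse volume doubling of Lemma~\ref{l-rvd} guaranteeing $V(x,\Omega r)\ge 2V(x,r)$ so that the mean term is absorbed; and you avoid the $\langle f,P_Bf\rangle$ bookkeeping entirely by passing through $\E^B_*(f,f)=\norm{f}_2^2-\norm{P_Bf}_2^2$, which bounds the operator norm in one stroke. Both proofs use Lemma~\ref{l-comp-dir}(b), i.e.\ assumption~(d) of Definition~\ref{d-compat}, in the essential role you identify: without $\E\le C\E_*$ the Poincar\'{e} estimate controls only the top of $\spec(P_B)$, not the bottom, and the bipartite obstruction would remain. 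Overall your argument is more elementary and arguably more transparent, since it does not route through the machinery of Chapter~5's Nash/ultracontractivity development; the paper's route has the (minor) structural advantage of staying within the Nash-inequality framework it has already set up.
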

\begin{proof}
We abbreviate the ball $B(x,r)$ by $B$.
Note that $P_B$ is a contraction in $L^2(B)$, that is $\norm{P_B}_{2\to 2} \le 1$.
Since $P_B$ is a bounded, self-adjoint operator in $L^2(B)$, by \cite[Proposition 2.13]{Con90} we have
\begin{equation}\label{e-spc1}
  \norm{P_{B}}_{2 \to 2} = \sup_{f \in L^2(B), f \not\equiv 0} \frac{ \abs{\langle f, P_B f \rangle_{B}} }{\norm{f}_2^2}
\end{equation}
where $\langle \cdot,\cdot \rangle_B$ denotes the inner product in $L^2(B)$.
Therefore it suffices to show that there exists positive reals $a,\epsilon_0$ such that
\begin{equation} \label{e-spc2}
 -\left(1- \frac{a}{r^2} \right) \le \frac{\langle f, P_B f \rangle_B }{\norm{f}_2^2} \le 1- \frac{a}{r^2}
\end{equation}
for all $f \in L^2(B)$ and for all $B=B(x,r)$ with $r \ge h'$ and $r \le \epsilon_0 \operatorname{diam}(M)$.

We prove \eqref{e-spc1} in two steps. We start with the proof of upper bound in \eqref{e-spc2}.
With slight abuse of notation, we consider $L^2(B) \subseteq L^2(M)$ using the map given by \eqref{e-extend}.
By this identification, a function $f \in L^2(M)$ with $\supp(f) \subseteq B$ can be considered to be in $L^2(B)$.

By Lemma \ref{l-dircomp-ball}(a), we can rewrite the upper bound in \eqref{e-spc2} as
\begin{equation}\label{e-spc3}
 \frac{\E(f,f) }{\norm{f}_2^2}=  \frac{\E^B(f,f) }{\norm{f}_2^2}= \frac{\norm{f}_2^2  - \langle f, P_B f \rangle_B }{\norm{f}_2^2} \ge \frac{a}{r^2}.
\end{equation}
Since $\E(\abs{f},\abs{f}) \le \E (f,f)$, in order to show \eqref{e-spc3} it suffices to consider the case $f \ge 0$.

By \eqref{e-ns11} and \eqref{e-ns12} of Proposition \ref{p-nash} along with Lemma \ref{l-comp-dir}(b), there exists
 $C_N>0$ such that
 \begin{equation}
 \label{e-spc4} \norm{P f}_{2}^{2 + (4/\delta)} \le \frac{C_N r^2}{V(x,r)^{2/\delta}} \left( \E_*(f,f) + r^{-2} \norm{Pf}_2^2 \right) \norm{f}_1^{4/\delta}
\end{equation}
for all $x \in M$, for all $r > 0$ and for all functions $f \in L^2(M)$ supported in $B(x,r)$.
By \eqref{e-spc4}, we have
 \begin{equation}
 \label{e-spc5} \norm{P f}_{2}^{2} \left(  \frac{\norm{Pf}_2^{4/\delta}}{\norm{f}_1^{4/\delta}} - \frac{C_N }{V(x,Kr)^{2/\delta}} \right) \le \frac{C_N (Kr)^2}{V(x,Kr)^{2/\delta}} \left( \norm{f}_2^2 - \norm{Pf}_2^2\right)
\end{equation}
for all $x \in M$, for all $r > 0$, for all $K>1$ and for all functions $f \in L^2(M)$ supported in $B(x,r)$.
If $f \ge 0$ , we have \[ \norm{Pf}_1= \langle Pf , \one \rangle = \langle f ,P \one \rangle=\langle f , \one \rangle=\norm{f}_1. \]
Hence by H\"{o}lder inequality, \eqref{e-compat} and \eqref{e-vd1}, there exists $C_1>0$ such that
\begin{equation}\label{e-spc5b}
 \norm{f}_1 = \norm{Pf}_1 \le  \left( V(x,r+h')\right)^{1/2} \norm{P f}_2 \le C_1 V(x,r)^{1/2} \norm{Pf}_2
\end{equation}
for all $f \ge 0$ with $f \in L^2(M)$ and $\supp(f) \subseteq B(x,r)$ and $r \ge h'$.
Combining \eqref{e-spc2} and \eqref{e-spc5b}, we have
\begin{equation}
 \label{e-spc6} \norm{P f}_{2}^{2} \left( C_1^{-4/\delta} - \frac{C_N V(x,r)^{2/\delta} }{V(x,Kr)^{2/\delta}} \right) \le C_N (Kr)^2 \left( \norm{f}_2^2 - \norm{Pf}_2^2\right)
\end{equation}
for all $K>1$, for all $r \ge h'$, for all $x \in M$ and for all $f \in L^2(M)$ with $\supp(f) \subseteq B(x,r)$ and $f \ge 0$.
By Lemma \ref{l-rvd}, there exists $K>1$ such that
\begin{equation} \label{e-spc7}
  \frac{C_N V(x,r)^{2/\delta} }{V(x,Kr)^{2/\delta}} < \frac{1}{2} C_1^{-4/\delta}
\end{equation}
for all $x \in M$, for all $r \ge h'$ and all $r \le \operatorname{diam}(M) / K$.
Combining \eqref{e-spc6} and \eqref{e-spc7}, there exists $\epsilon_0=K^{-1}>0$ ,$C_2>0$ such that
\begin{equation}\label{e-spc8}
 \norm{Pf}_2^2 \le C_2 r^{2}  \left( \norm{f}_2^2 - \norm{Pf}_2^2\right)
\end{equation}
 for all $x \in M$, for all $f \in L^2(M)$ with $\supp(f) \subseteq B(x,r)$ and $f \ge 0$, where $r$ satisfies $r \ge h'$ and $r \le \epsilon_0 \operatorname{diam}(M)$.
By Lemma \ref{l-comp-dir}(a) and \eqref{e-spc8}, there exists $a>0$ such that
\begin{equation}\label{e-spc9}
 \frac{\E(f,f)}{\norm{f}_2^2} \ge \frac{\E(\abs{f},\abs{f})}{\norm{f}_2^2} \ge  \frac{\E_*(\abs{f},\abs{f})}{2 \norm{\abs{f}}_2^2}
 =\frac{1}{2}\left( 1 -  \frac{\norm{\left(P\abs{f}\right)}_2^2}{\norm{f}_2^2}\right)  \ge \frac{1}{2(1+C_2r^2)} \ge \frac{a}{r^2}
\end{equation}
 for all $x \in M$, for all $f \in L^2(M)$ with $\supp(f) \subseteq B(x,r)$, where $r$ satisfies $r \ge h'$ and $r \le \epsilon_0 \operatorname{diam}(M)$.
Therefore by \eqref{e-spc3} and \eqref{e-spc9}, there exists $\epsilon_0>0$ and $a>0$ such that
\begin{equation} \label{e-spc10}
 \frac{\langle f, P_B f \rangle_B }{\norm{f}_2^2} \le 1- \frac{a}{r^2}
\end{equation}
for all $f \in L^2(B)$ and for all $B=B(x,r)$ with $r \ge h'$ and $r \le \epsilon_0 \operatorname{diam}(M)$.
By integration by parts \eqref{e-int-p} and symmetry of $p_1$ we have
\begin{align}\label{e-spc11}
\nonumber{\mathcal{E}(f, f)+ \mathcal{E}(\abs{f}, \abs{f}) }
&= \frac{1}{2} \int_M \int_M p_1(x,y)  \left[ (\nabla_{xy}f)^2 + ( \nabla_{xy} \abs{f} ) ^2 \right] \,dy \,dx \\
& \le  \int_M \int_M p_1(x,y) ( f(x)^2 + f(y)^2) \, dy \, dx = 2 \norm{f}_2^2
\end{align}
for all $f \in L^2(M)$.
Combining \eqref{e-spc9} and \eqref{e-spc11}, there exists $a,\epsilon_0>0$ such that
\begin{equation}\label{e-spc12}
  \frac{\E(f,f)}{\norm{f}_2^2} \le 2 - \frac{\E(\abs{f},\abs{f})}{\norm{f}_2^2} \le 2 - \frac{a}{r^2}
\end{equation}
 for all $x \in M$, for all $f \in L^2(M)$ with $\supp(f) \subseteq B(x,r)$, where $r$ satisfies $r \ge h'$ and $r \le \epsilon_0 \operatorname{diam}(M)$.
Therefore by \eqref{e-spc12} and Lemma \ref{l-dircomp-ball}(a), there exists $\epsilon_0>0$ and $a>0$ such that
\begin{equation} \label{e-spc13}
 \frac{\langle f, P_B f \rangle_B }{\norm{f}_2^2} \ge - \left(1- \frac{a}{r^2}\right)
\end{equation}
for all $f \in L^2(B)$ and for all $B=B(x,r)$ with $r \ge h'$ and $r \le \epsilon_0 \operatorname{diam}(M)$.
Combining \eqref{e-spc10} and \eqref{e-spc13} yields \eqref{e-spc2}, which along with \eqref{e-spc1} implies \eqref{e-spec}.
\end{proof}
\begin{remark}\label{r-fin}\leavevmode
\begin{enumerate}[(a)]
 \item A simple consequence of Proposition \ref{p-spec} is that there exists $a,\epsilon_0 >0$ such that
\[
 \spec(P_B) \subseteq \left[ - \left(1 - a r^{-2} \right), 1 - ar^{-2} \right], \hspace{4mm}  \spec(\Delta_{P_B}) \subseteq \left[ a r^{-2}, 2 - ar^{-2} \right]
\]
 for all $x \in M$ and for all $r$ satisfying $r \ge h'$ and $r \le \epsilon_0\operatorname{diam}(M)$.
\item If $\operatorname{diam}(M)= \infty$, then for all balls $B=B(x,r)$ with $r \in (0,\infty)$, we have  \[\norm{P_B}_{2 \to 2} < 1.\] The case $r \ge h'$ is clear from Proposition \ref{p-spec}. The case $r < h'$ follows
from $\norm{P_B}_{2 \to 2} \le \norm{P_{B(x,h')}}_{2 \to 2}$.
\item Note that if $\operatorname{diam}(M) < \infty$, then the conclusion Proposition \ref{p-spec} is vacuously true as one can choose $\epsilon_0 = h'/(2 \operatorname{diam}(M))$.
However if $h' \ll \operatorname{diam}(M)$ and if we have good control of the constants in various functional inequalities,
we  can prove useful estimates which in turn yields applications to estimates on mixing times. We will extend the techniques developed here to finite diameter spaces elsewhere.
\item  Note that the condition $r \le \epsilon_0 \operatorname{diam}(M)$ is necessary. Too see this consider the case when $\operatorname{diam}(M) < \infty$ and $B(x,r)=M$.
It is clear that \eqref{e-spec} fails to be true because $P_{B(x,r)} \one = \one$.
\end{enumerate}
 \end{remark}
 \section{Near diagonal lower bound}
As in \cite[Proposition 3.5]{HS01}, the following near diagonal estimate is an important step
in obtaining Gaussian lower bounds.
\begin{prop}[Near diagonal lower bound]\label{p-ndlb}
Under the same assumptions as in Proposition \ref{p-gle}, there exists  positive reals $\epsilon_1, c_1$  such that $p_k$ satisfies the lower bound
\begin{equation}
\label{e-ndlb} \inf_{y \in B(x, \epsilon_1 \sqrt{k})} p_k(x,y) \ge \frac{c_1} { V(x,\sqrt{k})}
\end{equation}
for all $x \in M$ and for all $k \in \N^*$ satisfying $k \ge 2$.
\end{prop}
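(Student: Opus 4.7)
The plan is to derive the near-diagonal bound from the on-diagonal ball estimate of Lemma \ref{l-balldlb} by combining the spectral gap of Proposition \ref{p-spec} with the elliptic H\"{o}lder regularity of Proposition \ref{p-holder}, in the spirit of the strategy of Hebisch--Saloff-Coste \cite{HS01} adapted to discrete time. Fix $x \in M$ and $k \ge 2$, set $r = A \sqrt{k}$ with $A$ as in Lemma \ref{l-balldlb}, and write $B = B(x,r)$. Since $p_k(x,y) \ge p_k^B(x,y)$, it suffices to produce $\epsilon_1, c_1 > 0$ with $p_k^B(x,y) \ge c_1 / V(x,\sqrt{k})$ for $d(x,y) \le \epsilon_1 \sqrt{k}$. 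Lemma \ref{l-balldlb} gives the on-diagonal input $p_k^B(x,x) \ge c_0/V(x,\sqrt{k})$, so the task reduces to bounding the oscillation of $y \mapsto p_k^B(x,y)$ near $x$.

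The heart of the argument is to convert an elliptic regularity estimate into a statement about the caloric function $y \mapsto p_k^B(x,y)$. To this end, introduce a resolvent/Green-type function
\[
v(y) := \sum_{j = 0}^{\infty} \lambda^j p_{k+j}^B(x,y),
\]
with $\lambda$ chosen slightly below $1$ (for instance $\lambda = 1 - a/(2A^2 k)$, with $a$ from Proposition \ref{p-spec}). Proposition \ref{p-spec} ensures $\|P_B\|_{2\to2} \le 1 - a/r^2$, so the series converges in $L^2(B)$ and satisfies $(I - \lambda P_B)v = p_k^B(x,\cdot)$; in particular, away from $x$, $v$ is a $\lambda P_B$-harmonic function. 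Moreover, the compatibility estimate \eqref{e-pcomp} of Lemma \ref{l-con-d} gives $p_{k+j}^B \ge \alpha^j p_k^B$, so up to replacing $\lambda$ with $\lambda \alpha$ one sees that the sum over $j \lesssim k$ is comparable to $k \cdot p_k^B(x,\cdot)$, which allows oscillation of $p_k^B(x,\cdot)$ to be read off from oscillation of $v$.

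To estimate the oscillation of $v$ near $x$ we apply Proposition \ref{p-holder} (elliptic H\"{o}lder regularity) on a ball $B(x, 2 \epsilon_1 \sqrt{k})$ after smoothing out the singularity at $x$ by replacing $v$ with $P_B^{m_0} v$ for a fixed $m_0$, so the resulting function is genuinely harmonic through $x$ at scales $\ge h'$. The ceiling $\sup_{B(x, 2\epsilon_1 \sqrt{k})} v$ needed to feed into Proposition \ref{p-holder} is controlled from above by the Gaussian upper bound $(GUE)$ of Proposition \ref{p-gue} (which is available under our hypotheses via Theorem \ref{t-Sob}), yielding
\[
\sup_{y,z \in B(x,\epsilon_1\sqrt{k})} |v(y) - v(z)| \le C \epsilon_1^{\alpha} \cdot \frac{k}{V(x,\sqrt{k})}.
\]
Translating this back through the comparability $k \cdot p_k^B(x,\cdot) \asymp v$ gives $|p_k^B(x,y) - p_k^B(x,x)| \le C' \epsilon_1^\alpha / V(x,\sqrt{k})$; choosing $\epsilon_1$ small enough so that $C'\epsilon_1^\alpha < c_0/2$ completes the proof.

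The hard part will be making the Green-function substitution rigorous in a way that is uniform in $k$. The two tensions are: $\lambda$ must be close enough to $1$ to average the time dependence of the $p_{k+j}^B$'s (so $v \asymp k \cdot p_k^B(x,\cdot)$), but the resolvent $(I - \lambda P_B)^{-1}$ must remain bounded with a $k$-independent quantitative control; Proposition \ref{p-spec} is precisely what provides this uniformity at scale $r = A\sqrt{k}$. The smoothing step $v \mapsto P_B^{m_0} v$ and the comparison between $v$ and $p_k^B(x,\cdot)$ near $x$ will both need the compatibility assumption \eqref{e-da} in an essential way, which is consistent with the role of laziness throughout this chapter.
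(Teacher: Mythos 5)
Your proposal correctly identifies the global strategy (on-diagonal input from Lemma \ref{l-balldlb} plus a continuity estimate for $y \mapsto p_k^B(x,y)$ obtained by pushing the elliptic H\"older regularity of Proposition \ref{p-holder} through a Green-type operator), and it correctly flags Proposition \ref{p-spec} and the laziness condition \eqref{e-da} as essential ingredients. But the central construction has a genuine gap.

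You define $v(y) = \sum_{j\ge0}\lambda^j p_{k+j}^B(x,y)$ and assert that since $(I - \lambda P_B)v = p_k^B(x,\cdot)$, the function $v$ is $\lambda P_B$-harmonic away from $x$. This is false: $p_k^B(x,\cdot)$ is \emph{not} supported near $x$ — it is spread over essentially all of $B(x,\sqrt{k})$ — so $(I - \lambda P_B)v$ does not vanish outside any small neighbourhood of $x$, and $v$ is not even approximately harmonic there. Proposition \ref{p-holder} therefore cannot be applied to $v$ (or to $P_B^{m_0}v$). The paper avoids precisely this pitfall by using the \emph{genuine} Green function $G^B(z,\cdot)=\sum_{i\ge1}p^B_i(z,\cdot)$, which satisfies $(I-P_B)G^B(z,\cdot)=p^B_1(z,\cdot)$ with right-hand side supported in $B(z,h')$, so that $G^B(z,\cdot)$ is $P$-harmonic at scales $\gtrsim h'$ away from $z$ — and crucially it applies $\Delta_{P_B}^{-1} = I + G^B$ not to $p_k^B(x,\cdot)$ but to the \emph{time derivative} $\partial_k p^B(x,\cdot)$, which is pointwise small (of order $1/(kV(x,\sqrt{k}))$ by the spectral-gap estimate of Lemma \ref{l-tech}(b)). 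The resulting identity $p_k^B(x,y) = \partial_k p^B(x,y) + \int_B G^B(y,w)\,\partial_k p^B(x,w)\,dw$ then cleanly separates a small additive term, a small near-pole contribution, and a far-field contribution to which H\"older regularity of $w\mapsto G^B(\cdot,w)$ applies. Your comparability claim $v\asymp k\,p_k^B(x,\cdot)$ is also unsubstantiated: the lower bound $p_{k+j}^B \ge \alpha^j p_k^B$ from \eqref{e-pcomp} gives only $v\gtrsim p_k^B/(1-\lambda\alpha)$, and with $\alpha<1$ fixed this does not grow like $k$, so the geometric weights do not distribute mass over $\sim k$ time steps in the way your normalization requires. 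To repair the argument you would need to replace $v$ by the true Green function applied to the small source $\partial_k p^B(x,\cdot)$, which is essentially the paper's Lemma \ref{l-crl}.
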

From the above near diagonal lower bound, we will see that the  Gaussian lower bound follows by a well-established `chaining argument'.

The idea behind the proof of Proposition \ref{p-ndlb} is to convert the elliptic H\"{o}lder-like regularity estimate (Proposition \ref{p-holder}) into a
parabolic H\"{o}lder-like regularity estimate for the function $(k,y) \mapsto p^B_k(x,y)$ as follows:
\begin{lemma} \label{l-crl}
Under the assumptions of Proposition \ref{p-gle},
for all $\sigma > 0$ and all $A \ge 1$, there exists three positive reals $C_{\sigma,A}$, $\epsilon_0 \le A$ and $N_0 \ge 2$ such that
\begin{equation}\label{e-crl}
\abs{p^B_k(x,y)- p^B_k(x,x)} \le \left[ \sigma+ C_{\sigma,A} \left( \frac{d(x,y) \vee 1}{ \sqrt{k}} \right)^\alpha \right] \frac{1}{ V(x,\sqrt{k})}
\end{equation}
for all $x \in M$, $k \in \mathbb{N}^*$ with $k \ge N_0$ and for all $y \in  B(x,\epsilon_0 \sqrt{k})$,
where $B=B(x,A\sqrt{k})$ and $\alpha$ is the exponent in \eqref{e-holder}.
\end{lemma}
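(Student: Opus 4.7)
The strategy is to bootstrap the elliptic Hölder estimate of Proposition~\ref{p-holder} to a parabolic-type regularity statement by means of a time-averaging construction. Fix $\sigma>0$ and $A\ge 1$, and let $B=B(x,A\sqrt{k})$. The plan is to introduce an intermediate ball $B'=B(x,A'\sqrt{k})$ with $0<A'<A$ and an integer $N\ge 1$ (both to be chosen at the end in terms of $\sigma$ and $A$), and exhibit a $P$-harmonic function $H$ on $B'$ that approximates $p^B_k(x,\cdot)$ to within $\sigma/V(x,\sqrt{k})$ uniformly on a still smaller ball. Applying Proposition~\ref{p-holder} to $H$ then yields~\eqref{e-crl} upon setting $\epsilon_0:=c\,A'$, where $c\in(0,1)$ is the constant from~\eqref{e-holder}.

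For the construction, define $\bar{u}(y):=N^{-1}\sum_{j=0}^{N-1}p^B_{k+j}(x,y)$. The semigroup identity gives $(I-P_B)\bar{u}=N^{-1}(p^B_k(x,\cdot)-p^B_{k+N}(x,\cdot))=:f$, which by the Gaussian upper bound~\ref{gue} (available under our hypotheses via Theorem~\ref{t-Sob} and Proposition~\ref{p-gue}) satisfies $\|f\|_\infty\le C/(N\,V(x,\sqrt{k}))$. On the intermediate ball $B'$ introduce $E:=G_{B'}f=(I-P_{B'})^{-1}f$ and $H:=\bar{u}-E$; then $H$ is $P$-harmonic in the interior of $B'$ (away from a collar of width $h'$). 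The probabilistic representation $E(y)=\mathbb{E}_y\bigl[\sum_{j=0}^{\tau_{B'}-1}f(Y_j)\bigr]$, combined with the bound $\mathbb{E}_y[\tau_{B'}]\le C_A k$ derived from the spectral gap in Proposition~\ref{p-spec} and the ultracontractive estimate of Lemma~\ref{l-ultB}, yields $\|E\|_\infty\le C_A k/(N\,V(x,\sqrt{k}))$. Since $\|\bar{u}\|_\infty\le C/V(x,\sqrt{k})$ by~\ref{gue}, one concludes $\|H\|_\infty\le C(1+C_A k/N)/V(x,\sqrt{k})$.

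Applying Proposition~\ref{p-holder} to $H+\|H\|_\infty$ (translated to ensure non-negativity) on $B'$ gives, for $y\in B(x,cA'\sqrt{k})$,
\begin{equation*}
|H(y)-H(x)|\le C'_H\!\left(\frac{d(x,y)\vee 1}{A'\sqrt{k}}\right)^{\!\alpha}\!\frac{1+C_A k/N}{V(x,\sqrt{k})}.
\end{equation*}
The triangle inequality then gives
\begin{equation*}
|p^B_k(x,y)-p^B_k(x,x)|\le 2\|p^B_k(x,\cdot)-\bar{u}\|_\infty + 2\|E\|_\infty + |H(y)-H(x)|.
\end{equation*}
The remaining key ingredient is a time-continuity estimate of the form
\begin{equation*}
\|p^B_k(x,\cdot)-\bar{u}\|_\infty\le \frac{C_A N}{k\,V(x,\sqrt{k})},\qquad 1\le N\le k/2, \tag{$\ast$}
\end{equation*}
which one establishes by writing $p^B_{k-2}-p^B_k=(I-P_B^2)P_B^{k-2}p^B_2(x,\cdot)$, splitting $(I-P_B^2)P_B^{k-2}=P_B^{\lfloor k/4\rfloor}\!\cdot (I-P_B^2)P_B^{k/2}\!\cdot P_B^{\lfloor k/4\rfloor}$, applying the elementary spectral bound $|(1-\lambda^2)\lambda^{k/2}|\le C/k$ on $[-1,1]$ to control the middle factor in $L^2\to L^2$, and using the $L^1\to L^2$ and $L^2\to L^\infty$ bounds from Lemma~\ref{l-ultB} on the outer factors; the parity gap between even and odd time steps is then closed using the comparison $p^B_{k+1}\ge\alpha\,p^B_k$ provided by Lemma~\ref{l-con-d}. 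Telescoping $(\ast)$ over the averaging window gives the needed uniform bound. Balancing the $A'$-dependent error $C_A k/N$ against the time-averaging error $N/k$ (for instance by choosing $N\asymp\sqrt{k^2 A'}$, equivalently $N/k\asymp C_A k/N\asymp(A')^{1/2}$) and then picking $A'$ small enough depending on $\sigma$ and $A$ makes the constant part at most $\sigma/V(x,\sqrt{k})$, yielding~\eqref{e-crl} with $C_{\sigma,A}$ absorbing the $(A')^{-\alpha}$ factor.

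The principal obstacle is the time-continuity estimate~$(\ast)$, the discrete analog of $\|\partial_t p^B_t\|_\infty\lesssim 1/(tV)$. In continuous time it is immediate from analytic-semigroup bounds, but in the discrete setting the spectrum of $P_B$ may cluster near $-1$, so the naive bound $\|(I-P_B)P_B^{k-1}\|_{2\to 2}\le C/k$ fails; this forces the detour through $(I-P_B^2)$ and explains the essential role of the compatibility assumption~\eqref{e-da} at this step. The delicate point is to pass from the clean $L^2$ spectral bound on $(I-P_B^2)P_B^{k-2}$ to the pointwise $L^\infty$ bound actually required, which is accomplished via the ultracontractive splitting described above.
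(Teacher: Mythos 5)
Your proposal is a genuinely different route to Lemma~\ref{l-crl} than the one in the paper. The paper's proof uses the algebraic identity $p^B_k(y,z)=\partial_k p^B(y,z)+\int_B G^B(z,w)\,\partial_k p^B(y,w)\,dw$ (coming from $\Delta_{P_B}^{-1}\Delta_{P_B}=I$ and $\Delta_{P_B}p_k^B=-\partial_k p^B$), splits the integral into near and far regions, and applies Proposition~\ref{p-holder} to the Green's function $w\mapsto G^B(\cdot,w)$, which is harmonic away from the pole. You instead time-average $p^B_k$, peel off a genuinely harmonic piece $H$ on an intermediate ball via the correction $E=G_{B'}f$, and apply Proposition~\ref{p-holder} directly to $H$. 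Both strategies are classical ways of bootstrapping elliptic regularity to the time-dependent problem, and yours is conceptually cleaner in the sense that the Hölder estimate is applied to a single harmonic function rather than to a family of Green's functions. What the paper's approach buys is that the requisite estimates (the time-derivative bound and the Green's-function bound) are applied exactly where needed and cleanly localized to the near/far split, whereas your approach needs uniform $L^\infty$ bounds on $E$ and a balancing of two free parameters $N,A'$.

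However, there is a concrete gap in your sketch of the time-continuity estimate $(\ast)$. The spectral bound $\sup_{\lambda\in[-1,1]}|(1-\lambda^2)\lambda^{k/2}|\le C/k$ correctly controls even-step differences $\|p^B_k-p^B_{k+2}\|_\infty$, since $(1-\lambda^2)$ vanishes at both $\lambda=\pm 1$. But your claim that the single-step bound $\|p^B_{k}-p^B_{k+1}\|_\infty\lesssim 1/(kV)$ — which $(\ast)$ requires, since $\bar u$ averages over both parities — can be recovered ``using the comparison $p^B_{k+1}\ge\alpha\,p^B_k$'' does not work as stated. That one-sided pointwise inequality combined with the even-step bound only gives $p^B_{k+1}-p^B_k\le (\alpha^{-1}-1)p^B_k+O(1/(kV))$, and the first term is of order $1/V$, not $1/(kV)$. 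The correct mechanism is what the paper uses in Lemma~\ref{l-tech}(b): write $\partial_k p^B(y,z)=\langle(I-P_B)^{1/2}p^B_{k_1+k_2}(y,\cdot),(I-P_B)^{1/2}p^B_{k_3+k_4}(z,\cdot)\rangle_B$, apply Cauchy--Schwarz to pass to Dirichlet forms, then replace $\mathcal E^B$ by $\mathcal E^B_*$ via Lemma~\ref{l-comp-dir}(b) — this is the point where assumption~\eqref{e-da} genuinely enters — and finally use the spectral decay of $(I-P_B^2)^{1/2}P_B^{k_1}$. So your instinct that~\eqref{e-da} is essential is right, but it enters through the Dirichlet form comparison, not through a direct parity repair on the kernels. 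If you replace your sketch of~$(\ast)$ by a citation of Lemma~\ref{l-tech}(b) (or replicate its proof), the remaining steps of your argument go through, though you should also be slightly more careful with the claimed exit-time bound $\mathbb{E}_y[\tau_{B'}]\lesssim k$: it needs both the spectral gap of Proposition~\ref{p-spec} and the on-diagonal bounds (via Lemma~\ref{l-ultB} and Lemma~\ref{l-tech}(c)) to separate the contributions of small and large time steps, as the paper does in the estimate~\eqref{e-cl7}.
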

The proof of Lemma \ref{l-crl} is long and involves many technical estimates.
We will need some upper bounds on $p_k^B(y,z)$ and its `time derivative'
\[
 \partial_k p^B(y,z) := p_{k+1}(y,z) - p_k(y,z)
\]
for all $y,z \in B$.
\begin{lemma}\label{l-tech}
Under the assumptions of Proposition \ref{p-gle}, the following estimates hold:
\begin{enumerate}[(i)]
\item There exists $C_1, D_1 >0 $ such that
\begin{equation} \label{e-th1}
 p^{B(x,A\sqrt{k})}_j(y,z) \le \frac{C_1}{V(y, \sqrt{j})} \exp \left( - \frac{d(y,z)^2}{D_1 j} \right)
\end{equation}
for all $x \in M$, for all $k \in \N^*$, for all $j \ge 2$, for all $A \ge 1$ and for all $y,z \in B(x,A\sqrt{k})$.
\item There exists $C_2, \delta > 0$ such that
\begin{equation} \label{e-th2}
 \abs{ \partial_k p^{B(x,A\sqrt{k})}(y,z)} \le \frac{C_2 A^\delta}{k V(x, \sqrt{k})}
\end{equation}
for all $x \in M$, for all $k \in \N_{\ge 2}$, for all $A \ge 1$ and for all $y,z \in B(x,A\sqrt{k})$.
\item For all $A > 1 \vee h'$, there exists $\epsilon, a_1 >0 $, such that for all $\theta \in (0,1)$, there exists $C_\theta$ such that,
\begin{equation} \label{e-th3}
 p^{B(x,A\sqrt{k})}_j(y,z) \le \frac{C_\theta A^\delta}{V(x, \sqrt{k})} \left( 1 - \frac{a_1}{A^2 k}\right)^j
\end{equation}
for all $x \in M$, for all $k \in \N^*$, for all $j \in \N$ satisfying $j \ge \max(2,\theta k)$ and for all $y,z \in B(x,A\sqrt{k})$.
\end{enumerate}
\end{lemma}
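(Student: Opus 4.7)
All three parts exploit the Gaussian upper bound for the free kernel (Proposition \ref{p-gue}), the spectral estimate on $P_B$ from Proposition \ref{p-spec}, volume doubling, and the self-adjointness of $P_B$ on $L^2(B)$. Part (i) is the easiest: the monotonicity $p_j^B \le p_j$ from \eqref{e-pbcomp} together with \ref{gue} immediately gives $p_j^B(y,z) \le (C/V(y,\sqrt{j}))\exp(-d(y,z)^2/(Dj))$. Its diagonal consequence $p_{2m}^B(y,y) \le C/V(y,\sqrt{m})$ will be used repeatedly in (ii) and (iii).

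For part (ii), my plan is to route through $P_B^2$, whose spectrum lies in $[0,(1-a/r^2)^2]\subset [0,1]$ by Proposition \ref{p-spec}. On $[0,1]$ the function $\nu \mapsto |(\nu - 1)\nu^m|$ has maximum of order $1/m$, so $\|(P_B^2 - I)(P_B^2)^m\|_{2\to 2}\le C/m$. Since $P_B$ commutes with $P_B^2 - I$, writing $k = a + 2m + b$ and using self-adjointness gives
\[
p_{k+2}^B(y,z) - p_k^B(y,z) = \bigl\langle P_B^a(y,\cdot),\,(P_B^2 - I)(P_B^2)^m P_B^b(z,\cdot)\bigr\rangle_{L^2(B)}.
\]
By Cauchy--Schwarz, $\|P_B^a(y,\cdot)\|_2^2 = p_{2a}^B(y,y)$ (bounded via part (i)), the $1/m$ operator-norm estimate, and volume comparison from \eqref{e-vd2} (using $y,z \in B(x,A\sqrt{k})$), the choice $a \asymp b \asymp m \asymp k/4$ yields $|p_{k+2}^B(y,z) - p_k^B(y,z)| \le C A^\delta/(k V(x,\sqrt{k}))$. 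To pass from this two-step difference to the one-step bound of the lemma, I would rely on the laziness-type inequality $p_{k+1}^B \ge \alpha p_k^B$, which follows from condition (d) of Definition \ref{d-compat} together with the positivity preservation of $P_B^k$ (adapting Lemma \ref{l-con-d} to the Dirichlet setting). Combined with the telescoping $p_{k+2}^B - p_k^B = (p_{k+2}^B - p_{k+1}^B) + (p_{k+1}^B - p_k^B)$ and the sign information provided by the kernel inequalities, one extracts the one-step estimate.

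For part (iii), I would use the $L^2\to L^2$ spectral gap from Proposition \ref{p-spec}: $\|P_B\|_{2\to 2} \le 1 - a/(A^2 k)$ for $B=B(x,A\sqrt{k})$ (the constraint $r\le \epsilon_0 \operatorname{diam}(M)$ being vacuous under the standing hypothesis $\operatorname{diam}(M)=\infty$ of Proposition \ref{p-gle}). Setting $m := \lfloor \theta k/4\rfloor$, so that $j - 2m \ge j/2$ whenever $j \ge \theta k$, the symmetry of $p_m^B$ gives
\[
p_j^B(y,z) = \bigl\langle p_m^B(y,\cdot),\,P_B^{j-2m}\,p_m^B(z,\cdot)\bigr\rangle \le \|P_B\|_{2\to 2}^{j-2m}\sqrt{p_{2m}^B(y,y)\,p_{2m}^B(z,z)}.
\]
Part (i) together with \eqref{e-vd2} yields $p_{2m}^B(y,y)^{1/2} p_{2m}^B(z,z)^{1/2} \le C_\theta A^\delta/V(x,\sqrt{k})$, while the elementary inequality $(1-s)^{1/2}\le 1 - s/3$ on $[0,1]$ gives $\|P_B\|_{2\to 2}^{j-2m} \le (1 - a/(A^2 k))^{j/2}\le (1 - a_1/(A^2 k))^j$ with $a_1 = a/3$. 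The principal obstacle is the parity issue in (ii): clean spectral estimates are available only for $P_B^2$, and extracting the one-step difference forces one to invoke Definition \ref{d-compat}(d); the remainder is bookkeeping with Chapman--Kolmogorov, volume doubling, and Cauchy--Schwarz.
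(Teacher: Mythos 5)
Your treatment of (i) and (iii) matches the paper in substance: (i) is just $p_j^B\le p_j$ plus \ref{gue}, and (iii) uses the spectral gap of Proposition~\ref{p-spec} together with the on-diagonal bound from (i) in essentially the way the paper does. Part (ii), however, has a genuine gap.

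The gap is in the passage from the two-step estimate $\bigl|p_{k+2}^B(y,z)-p_k^B(y,z)\bigr|\lesssim A^\delta/(kV(x,\sqrt k))$ to the one-step bound on $\partial_k p^B$. The proposed tool, a laziness-type inequality $p_{k+1}^B\ge\alpha p_k^B$, is neither justified nor would it suffice even if true. It is not justified because the base case would require $p_2^B\ge\alpha p_1^B$, and for $y,z$ near $\partial B$ the middle point of a two-step path from $y$ to $z$ may lie largely outside $B$, so $p_2^B(y,z)$ can be considerably smaller than $p_2(y,z)\ge\alpha p_1(y,z)$; condition (d) of Definition~\ref{d-compat} is a statement about the free kernel on $M$, not the Dirichlet kernel, and the positivity-preservation argument of Lemma~\ref{l-con-d} only propagates such an inequality from the level $k=1$ base case, which is exactly what is missing here. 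More decisively, a kernel-comparison inequality of the form $c\,p_k^B\le p_{k+1}^B\le c^{-1}p_k^B$ is a multiplicative statement of size $p_k^B\sim 1/V(x,\sqrt k)$; combined with a two-step difference of size $1/(kV(x,\sqrt k))$ it produces, after telescoping, only $\bigl|\partial_k p^B\bigr|\lesssim p_k^B + \text{(two-step error)}\sim 1/V(x,\sqrt k)$, which is off by the entire factor $1/k$ that part (ii) is designed to capture. There is no cancellation to be extracted this way.

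The paper avoids the parity issue at the level of quadratic forms rather than kernels. Writing $k=k_1+k_2+k_3+k_4$ with $k_1,k_3$ even and $k_i\gtrsim k$, one has
\[
\bigl|\partial_k p^B(y,z)\bigr|
=\bigl|\bigl\langle (I-P_B)^{1/2}p^B_{k_1+k_2}(y,\cdot),\,(I-P_B)^{1/2}p^B_{k_3+k_4}(z,\cdot)\bigr\rangle_B\bigr|
\le \E^B\!\bigl(p^B_{k_1+k_2}(y,\cdot)\bigr)^{1/2}\,\E^B\!\bigl(p^B_{k_3+k_4}(z,\cdot)\bigr)^{1/2},
\]
and the crucial step is the comparison $\E^B(f,f)\le C\,\E^B_*(f,f)$, obtained by chaining Lemma~\ref{l-dircomp-ball} with Lemma~\ref{l-comp-dir}(b) (the latter uses condition (d), but on the ambient space, where it is available). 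Since $\E^B_*$ involves $I-P_B^2$, whose spectrum lies in $[0,1]$, one can now write $\E^B_*(p^B_{k_1+k_2}(y,\cdot))=\norm{(I-P_B^2)^{1/2}P_B^{k_1}p^B_{k_2}(y,\cdot)}_2^2$ and, since $k_1$ is even, bound $\norm{(I-P_B^2)^{1/2}P_B^{k_1}}_{2\to2}$ by $\sup_{\lambda\in[0,1]}(1-\lambda)^{1/2}\lambda^{k_1/2}\lesssim k^{-1/2}$; the remaining factor $\norm{p^B_{k_2}(y,\cdot)}_2^2=p^B_{2k_2}(y,y)$ is handled by part (i) and volume doubling. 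This gives the one-step bound directly, without any kernel comparison on the ball. You should replace the two-step-plus-laziness route in (ii) with this quadratic-form argument.
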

\begin{proof}
 The first inequality \eqref{e-th1} follows from Proposition \ref{p-gue} and the inequality $p^{B(x,A\sqrt{k})}_j \le p_j$ for all $j \ge 2$.

 For $k \ge 20$, we decompose $k=k_1+k_2+k_3+k_4$ such that $k_1,k_3 \in 2 \N^*$, $k_i \in \N^*$ and $k_i \ge k/5$ for $i=1,2,3,4$.
Note that, we require $k_1,k_3$ to be even. We abbreviate $B(x,A\sqrt{k})$ by $B$.
By Cauchy-Schwarz inequality and Lemma \ref{l-dircomp-ball} and Lemma \ref{l-comp-dir}(b) there exists $C_4>0$ such that
\begin{align}
\label{e-th4}{\abs{ \partial_k p^B(y,z) } }
\nonumber &= \abs{ \langle (I-P_B) p^B_{k_1+ k_2}(y,\cdot),p^B_{k_3+ k_4}(z,\cdot)  \rangle_B} \\
\nonumber &= \abs{ \langle (I-P_B)^{1/2} p^B_{k_1+ k_2}(y,\cdot), (I-P_B)^{1/2} p^B_{k_3+ k_4}(z,\cdot)  \rangle_B} \\
\nonumber &\le \left[ \mathcal{E}^B (p_{k_1+k_2}^B(y,\cdot), p_{k_1+k_2}^B(y,\cdot)),\mathcal{E}^B (p_{k_3+k_4}^B(z,\cdot),p_{k_3+k_4}^B(z,\cdot))\right]^{1/2} \\
 &\le C_4 \left[ \mathcal{E}^B_* (p_{k_1+k_2}^B(y,\cdot), p_{k_1+k_2}^B(y,\cdot)),\mathcal{E}^B_* (p_{k_3+k_4}^B(z,\cdot),p_{k_3+k_4}^B(z,\cdot))\right]^{1/2}.
\end{align}
Since $k_1$ is even and $k_1 \ge k/5$, by spectral decomposition and Proposition \ref{p-gue} there exists $C_5,C_6,\delta>0$ such that
\begin{align}\label{e-th5}
\nonumber  \mathcal{E}^B_* (p_{k_1+k_2}^B(y,\cdot), p_{k_1+k_2}^B(y,\cdot)) &=\norm{(I- P_B^2)^{1/2} P_B^{k_1} p_{k_2}^B(y,.) }_2^2 \\
\nonumber &\le \norm{(I- P_B^2)^{1/2} P_B^{k_1}}_{2 \to 2}^2 \norm{p_{k_2}(y,.) }_2^2  \\
\nonumber & \le \left( \sup_{ \lambda \in [0,1] } (1 - \lambda)^{1/2} \lambda^{k/10} \right)^2  \sup_{y \in B}p_{2k_2}(y,y) \\
\nonumber & \le  C_5 k^{-1} \sup_{y \in B(x,A\sqrt{k})} \frac{1}{V(y,\sqrt{k})}  \\
\nonumber &\le \frac{C_5}{k V(x,\sqrt{k})} \sup_{y \in B(x,A\sqrt{k})} \frac{V(y,(A+1)\sqrt{k})}{V(y,\sqrt{k})}\\
& \le \frac{C_6}{k V(x,\sqrt{k})}
\end{align}
for all $k \ge 20$, for all $x \in M$, for all $A >1$ and for all $y \in B=B(x,A\sqrt{k})$.
In the last line above we used \eqref{e-vd1}. By \eqref{e-th5} and \eqref{e-th4}, we obtain
the  desired bound \eqref{e-th2} for $k \ge 20$.

If $2 \le k \le 20$  , we  use \eqref{e-th1} and triangle inequality $\abs{\partial_k p^B} \le p^B_{k+1}+ p^B_k$
 to obtain \eqref{e-th2}.

For the proof of \eqref{e-th3}, we use Proposition \ref{p-spec}.
As before we denote $B(x, A\sqrt{k})$ by $B$.

We first consider the case where $j \in \N^*$ is even.
By Proposition \ref{p-spec}, for each $A \ge (1 \vee h')$, there exists $a>0,\epsilon>0$ such that
\begin{align}
\nonumber \sup_{y,z \in B}p^B_j(y,z) &= \sup_{x \in B} \norm{p^B_{j/2}(y,\cdot)}_2^2=\norm{P_B^{j/2}}_{2 \to \infty} ^2 \\
\nonumber &\le \norm{P_B^{(j/2) - j_1}}_{2 \to 2} ^2 \norm{P_B^{j_1}}_{2 \to \infty} ^2 \\
\label{e-th6} & \le  \left( 1 - \frac{a}{A^2 k} \right)^{j - 2j_1} \sup_{y \in B} p_{2j_1} (y,y)
\end{align}
for all $x \in M$, for all $1 \le j_1 \le (j/2)$, for all $k \in \N^*$.
We choose $j_1 := \lceil \theta k/4 \rceil$  in \eqref{e-th6} and use \eqref{e-vd1} to obtain positive reals $\delta >0$ and $C_7= C_7(\theta)$
\begin{equation}\label{e-th7}
  \sup_{y \in B(x,A\sqrt{k})} p_{2j_1} (y,y) \le \frac{C_{7} A^\delta}{ V(x,\sqrt{k})}
\end{equation}
for all $x \in M$, for all $\theta \in (0,1)$, for all $A \ge 1$ and for all $k \in \N^*$ where $j_1 = \lceil (\theta k/4) \rceil$.
For all $\theta \in (0,1)$, there exists $C_8 = C_8(\theta)>0$ such that
\begin{equation} \label{e-th8}
 \left( 1 - \frac{a}{A^2 k} \right)^{ - 2\lceil (\theta k/4)\rceil}  \le (1-a)^{-2} \left( 1 - \frac{a}{ k} \right)^{ -  (\theta k/2)} \le C_8
\end{equation}
for all $k \in \N^*$, for all $A \ge 1 \vee h'$.
Combining \eqref{e-th6}, \eqref{e-th7} and \eqref{e-th8}, we obtain the bound \eqref{e-th3} for all even $j \ge 2 N^*$.

For all odd $j \in N^*$ satisfying $j \ge 3$, we use the even case and the bound $\sup_{y, z \in B} p_j^B(y,z) \le \sup_{y,z \in B} p_{j-1}^B(y,z)$ to obtain \eqref{e-th3}.
\end{proof}
\begin{remark}
The constants $C_1,C_1,C_2,C_\theta$ and $a_1$ in Lemma \ref{l-tech} do not depend on $A$, $x$ and $k$.
\end{remark}

\begin{proof}[Proof of Lemma \ref{l-crl}]
One of the consequences of Proposition \ref{p-spec} as noted in Remark \ref{r-fin}(b) is that $\norm{P_{B(x,r)}}_{2 \to 2} <1$ for all $x \in M$ and for all $r \in (0,\infty)$.
Therefore $\Delta_{P_B} : L^2(B) \to L^2(B)$ is invertible with inverse
\begin{equation} \label{e-cl1}
 \Delta_{P_B}^{-1} = (I - P_B)^{-1} = \sum_{j=0}^\infty P_B^j.
\end{equation}
Further the inverse $\Delta_{P_B}^{-1}$ is bounded with $\norm{\Delta_{P_B}}_{2 \to 2} \le \left(1 - \norm{P_B}_{2 \to 2} \right)^{-1}$. Motivated by this remark, we define
`Green's function on a ball'
\begin{equation}\label{e-cl2}
 G^B(y,\cdot):= \sum_{i=1}^\infty p^B_i(y,\cdot)
\end{equation}
for all balls $B$ with $\norm{P_B}_{2 \to 2} <1$ and for all $y \in B$.
By \eqref{e-cl1} and \eqref{e-cl2}
\begin{align}\label{e-cl3}
 p_k^B(y,z) &= \left[ \Delta_{P_B}^{-1} \Delta_{P_B} p_k^B(y,\cdot)\right] (z) = \left[ \Delta_{P_B}^{-1} \partial_k p^B(y,\cdot)\right] (z) \nonumber \\
 &= \partial_k p^B(y,z) + \int_B G^B(z,w)\partial_k p^B(y,w) \, dw
\end{align}
for all $x \in M$, for all $A \ge 1 \vee h'$,
for all $y,z \in B= B(x,A\sqrt{k})$ and for all $k \ge 2$.
By \eqref{e-cl3} and triangle inequality, we obtain
\begin{align} \label{e-cl4}
\nonumber \abs{p_k^B(x,y) - p_k^B(x,x)} &\le \abs{ \partial_k p^B(x,y)} + \abs{\partial_k p^B(x,x)} \\
 &\hspace{4mm}+ \int_B \abs{G^B(x,w) - G^B(y,w)} \abs{\partial_k p^B(x,w)} \,dw
\end{align}
for all $x \in M$, for all $A \ge 1$, and for all $y \in B=B(x,A\sqrt{k})$.
We write the right side in \eqref{e-cl4} by splitting it into four parts as
\begin{align*}
 K&= \abs{ \partial_k p^B(x,y)} + \abs{\partial_k p^B(x,x)} \\ I_1 + I_2 + J &= \int_B \abs{G^B(x,w) - G^B(y,w)} \abs{\partial_k p^B(x,w)} \,dw.
\end{align*}
where $I_1,I_2,J$ are terms corresponding to the integration over the sets
\[
 W_1 = \Sett{w \in B}{d(x,w) \le \eta \sqrt{k}}, \hspace{2mm} W_2 = \Sett{w \in B}{d(y,w) \le \eta \sqrt{k}}
\]
for $I_1,I_2$ and
\[
 W=  \Sett{w \in B}{d(x,w) > \eta \sqrt{k} \mbox{ and }d(y,w) > \eta \sqrt{k} }
\]
for $J$, where $\eta >0$ will be chosen later.

As before, we will abbreviate $B(x,A \sqrt{k})$ by $B$.
By Lemma \ref{l-tech}(b), there exists $C_2 >0$, $\delta> 0$ such that
\begin{equation}\label{e-cl5}
 K \le 2 \sup_{y,z \in B} \abs{\partial_k p^B(y,z)} \le \frac{2 C_2 A^\delta}{k V(x,\sqrt{k})} \le \frac{\tau}{V(x,\sqrt{k})}
\end{equation}
for all $\tau >0$, for all $ A \ge 1$, for all $x \in M$ and for all $k \in \N^*$ satisfying $k \ge 2$ and $k \ge (2 C_2 A^\delta) / \tau$.

Next, we bound $I_1$ and $I_2$. We treat $I_2$ in detail but the same estimate applies to $I_1$.
By  Lemma \ref{l-tech}(b), we have
\begin{align}\label{e-cl6}
\nonumber I_2  &\le  \left( \sup_{z_1,z_2 \in B} \partial_k p^B(z_1,z_2) \right) \int_{W_2} (G^B(x,w) + G^B(y,w)) \, dw \\
& \le  \frac{ C_2 A^\delta}{k V(x,\sqrt{k})}\int_{W_2} (G^B(x,w) + G^B(y,w)) \, dw
\end{align}
for all $k \ge 2$.
By Lemma \ref{l-tech}(c), there exists $C_\theta>0$, $a_1>0$ such that
\begin{align}\label{e-cl7}
\nonumber \int_{W_2} G^B(z,w) \, dw &\le \int_{W_2} \sum_{j=1}^{\lfloor \theta k \rfloor} p_j^B(z,w) \, dw + \int_{W_2} \sum_{j=\lfloor \theta k \rfloor+1}^\infty p_j^B(z,w)\,dw \\
\nonumber & \le  \theta k +  \sum_{j=\lfloor \theta k \rfloor+1}^\infty \int_{W_2} p_j^B(z,w)\,dw \\
& \le \theta k + \frac{C_\theta A^\delta}{V(x,\sqrt{k})} \left(1 - \frac{a_1}{A^2 k}\right)^{\theta k} \frac{A^2 k}{a_1} \mu(W_2) \nonumber \\
& \le  \theta k + \frac{C_\theta  A^{\delta+2} k }{a_1 V(x,\sqrt{k})} V(y,\eta\sqrt{k})
\end{align}
for all $x \in M$, for all $A \ge 1\vee h'$, for all $\theta \in (0,1)$, for all $k \in \N^*$ with $k \ge 2/\theta$ and for all $y,z \in B=B(x,A\sqrt{k})$.
For all $y \in B(x,A\sqrt{k})$, by Lemma \ref{l-rvd} and \eqref{e-vd1} there exists $C_3>1,\gamma>0$ such that
\begin{equation}\label{e-cl8}
 \frac{ V(y,\eta \sqrt{k})}{V(x,\sqrt{k})} \le \frac{V(y,\eta \sqrt{k})}{V(y,\sqrt{k})} \frac{V(x,2A\sqrt{k})}{V(x,\sqrt{k})}  \le C_3 A^\delta \eta^\gamma
\end{equation}
for all $x \in M$, for all $A \ge 1$, for all $y \in B(x,A\sqrt{k})$, for all $\eta \in (0,1)$ and for all $k \in \N^*$ with $k \ge (b/\eta)^2$.

For all $\tau > 0$, we choose $\theta \in (0,1)$ and $\eta \in (0,1)$ such that
\begin{equation}\label{e-cl8a}
 \theta \le \frac{\tau}{4 C_2 A^\delta}, \hspace{3mm} \frac{2 C_\theta C_3 A^{2\delta +2}  \eta^\gamma}{a_1} \le \frac{\tau}{4 C_2 A^\delta}.
\end{equation}
Given the above choice of $\theta, \eta$, for all $\tau > 0$, for all $A \ge 1 \vee h'$, by \eqref{e-cl6}, \eqref{e-cl7}, \eqref{e-cl8} there exists $N_1 \ge 2$ such that
\begin{equation} \label{e-cl9}
 \max(I_1,I_2) \le \frac{\tau}{V(x,\sqrt{k})}
\end{equation}
for all $x \in M$ and for all $k \in \N^*$ with $k \ge N_1$.
By \eqref{e-cl5} and \eqref{e-cl9}, for all $\sigma >0,A \ge 1 \vee h'$ there exists $N_2 \ge 2$ and $\eta \in (0,1)$ such that
\begin{equation}\label{e-cl10}
 K+I_1+I_2 \le \frac{\sigma}{V(x,\sqrt{k})}
\end{equation}
for all $x \in M$ and for all $k \in \N^*$ with $k \ge N_2$.

It remains to handle $J$. For the rest of the proof, we fix the choice of $\eta \in (0,1)$ from \eqref{e-cl8a}. Since $p_1^B(x,\cdot)$ is only defined up to $\mu$-almost everywhere, so is $G^B(x,\cdot)$.
However since $p_j^B(x,\cdot)$ is a genuine function for all $j \ge 2$, by \eqref{e-compat} we can redefine $G^B$ in \eqref{e-cl2} as
\begin{equation}\label{e-cl11}
 G^B(y,z) = \sum_{j =2}^\infty p^B_j(y,z)
\end{equation}
for all $y,z \in B$ with $d(y,z) > h'$. In other words $G^B(y,\cdot)$ can be defined as a genuine function in $B \setminus B(y,h')$ with $G^B(y,z)= G^B(z,y)$ for all $y,z \in B$ with
$d(y,z) > h'$. Further the function
\[z \mapsto G^B(w,z)=G^B(z,w)\]
is $P$-harmonic in $B(y,d(y,z)-3h')$, whenever $y \in B$, $B(y,d(x,w)- 2h') \subseteq B$ and $d(y,w) > 3h'$.
Therefore for all $x \in M$, for all $A \ge 1$ and for all $k \in \N^*$ with $k \ge (6 h'/\eta)^2$, for all $w \in B(x, A\sqrt{k}) \setminus B(x,\eta \sqrt{k})$,
the function $z \mapsto G^{B(x,A\sqrt{k})}(z,w)$ is $P$-harmonic in $B(x,\eta\sqrt{k}/2)$. By the H\"{o}lder-type regularity estimate for harmonic functions (Proposition \ref{p-holder}), there exists
$C_4 >0,N_3\ge 2 \vee (6h'/\eta)^2, \alpha>0,\epsilon_0 \in (0,\eta/2)$ such that
\begin{equation}\label{e-cl12}
 \abs{G^B(x,w)-G^B(y,w)} \le C_4 \left( \frac{d(x,y) \vee 1}{\eta \sqrt{k}} \right)^\alpha \sup_{z \in B(x,\eta \sqrt{k}/2)} G^B(z,w)
\end{equation}
for all $x \in M$, for all $y \in B(x,\epsilon_0 \sqrt{k})$, for all $A \ge 1$, $w \in B(x, A\sqrt{k}) \setminus B(x,\eta \sqrt{k})$  and for all $k \in \N^*$ with $k \ge  N_3$.

Following \eqref{e-cl12}, we need to estimate $\sup_{w \in B \setminus B(x,\eta \sqrt{k}), z \in B(x,\eta \sqrt{k}/2)} G^B(z,w)$. For all $z,w \in B$ such that $d(z,w)>h'$, we have
\[ G^B(z,w)  =  \sum_{j=2}^k p^B_j(z, w) + \sum_{j=k+1}^\infty p^B_m(z, w).\]
For the first term, by Lemma \ref{l-tech}(a) and \eqref{e-vd1} there exists $C_1,D_1, C_5,C_6 >1$ and $\delta >0$ such that
\begin{align}\label{e-cl13}
 \nonumber \sum_{j=2}^k p^B_j(z, w) &\le \sum_{j=2}^k \frac{C_1}{V(z,\sqrt{j})} \exp\left( - \frac{d(y,z)^2}{D_1 j} \right) \frac{V(z,2\sqrt{k})}{V(x,\sqrt{k})} \\
 &\le \frac{C_5}{ V(x,\sqrt{k})} \sum_{j=2}^k \left( \frac{k}{j}\right)^{\delta/2} \exp\left( - \frac{\eta^2 k }{4 D_1 j} \right) \nonumber \\
 &\le \frac{C_6 k}{V(x,\sqrt{k})}
\end{align}
for all $z \in B(x,\sqrt{k})$, for all $w \in B$ such that $d(z,w)> \eta \sqrt{k}/2 \ge h'$. To obtain \eqref{e-cl13} above, we used that the function $t \mapsto t^{\delta/2}\exp(- \eta^2 t/(4D_1))$ is bounded in $(0,\infty)$.

Next, we bound $p_j$ for large values of $j$. By Lemma \ref{l-tech}(c) there exists $C_7>0$ such that
\begin{equation}\label{e-cl14}
  \sum_{j=k+1}^\infty p^B(z,w) \le C_7 \frac{A^\delta}{V(x,\sqrt{k})} \sum_{j=k+1}^\infty \left( 1- \frac{a_1}{A^2 k}\right)^j \le \frac{C_7 A^{\delta+2}k}{a_1 V(x,\sqrt{k})}
\end{equation}
for all $k \in \N^*$,  $A \ge 1$, for all $x \in M$   and for all $z,w \in B=B(x,A\sqrt{k})$.

Combining \eqref{e-cl12}, \eqref{e-cl13}, \eqref{e-cl14} along with Lemma \ref{l-tech}(b) and \eqref{e-vd1}, for each $A \ge 1$ and any choice of $\eta \in (0,1)$,
there exists $C_{8} \ge 1$, $N_4 \ge 2$, $\epsilon_0 \in (0,1)$ (depending on $A,\eta$) and $\alpha>0$ such that
\begin{equation}\label{e-cl15}
 \abs{G^B(x,w)-G^B(y,w)} \le C_8  \left( \frac{d(x,y) \vee 1}{ \sqrt{k}} \right)^\alpha  \frac{1}{V(x,\sqrt{k})}
\end{equation}
for all $x \in M$, for all $y \in B(x,\epsilon_0 \sqrt{k})$ and for all $k \in \N^*$ satisfying $k \ge N_4$, where $B=B(x,A\sqrt{k})$ and $\alpha$ is as in \eqref{e-holder}.
Combining \eqref{e-cl10} and \eqref{e-cl15}, we obtain the desired estimate  \eqref{e-crl}.
\end{proof}
Now, we are ready to prove the near diagonal lower bound using Lemma \ref{l-crl} and Lemma \ref{l-balldlb}.
\begin{proof}[Proof of Proposition \ref{p-ndlb}]
 By Lemma \ref{l-balldlb}, there exists $A \ge 1 \vee h'$ and $c >0$ such that
 \begin{equation}\label{e-nb1}
  p_k^{B(x,A\sqrt{k})} (x,x) \ge \frac{c}{ V(x,\sqrt{k})}
 \end{equation}
for all $x \in M$ and for all $k \in \N^*$ with $k \ge2$.
By Lemma \ref{l-crl}, there exists $C_1>1, N_1 \ge 2$, $\epsilon \in (0,1) ,\alpha>0$ such that
\begin{equation}\label{e-nb2}
 \abs{p^B_k(x,y)- p^B_k(x,x)} \le \left[ \frac{c}{3} + C_1 \left( \frac{d(x,y) \vee 1}{ \sqrt{k}} \right)^\alpha \right] \frac{1}{ V(x,\sqrt{k})}
\end{equation}
for all $x \in M$, for all $k \in \N^*$ with $k \ge N_0$, for all $y \in B(x,\epsilon \sqrt{k})$ where $B=B(x,A\sqrt{k})$.
Next, we choose $\epsilon_1 \in (0,\epsilon)$ and $N_1 \ge N_0$ such that for all $k \ge N_1$, we have
\[
C_1 \left( \frac{ \epsilon_1 \sqrt{k} \vee 1}{ \sqrt{k}} \right)^\alpha \le C_1 \max(\epsilon^\alpha , N_0^{-\alpha/2}) \le \frac{c}{3}.
\]
By the above choice of $\epsilon_1,N_1$ along with \eqref{e-nb1},\eqref{e-nb2} and the triangle inequality, we have
\begin{equation*}
 \inf_{y \in B(x,\epsilon_1 \sqrt{k})} p_k^{B(x,A\sqrt{k})} (x,y) \ge \frac{c}{3 V(x,\sqrt{k})}
\end{equation*}
for all $x \in M$ and for all $k \in \N^*$ with $k \ge N_1$.
Since $p_k^B \le p_k$, the above equation yields the desired near diagonal lower bound \eqref{e-ndlb} for all $k\ge N_1$.

If $k \in \nint{2}{N_1}$, then we reduce $\epsilon$ if necessary so that $\epsilon \le h/\sqrt{N_1}$. Hence $d(x,y) \le \epsilon \sqrt{k}$ and $k \le N_1$ implies $d(x,y) \le h$.
Therefore by \eqref{e-pcomp} of Lemma \ref{l-con-d} and \eqref{e-compat}, we obtain \eqref{e-ndlb} for all $k \in \nint{2}{N_1}$.
\end{proof}
\section{Off-diagonal lower bounds}
The near diagonal lower bound of Proposition \ref{p-ndlb} can be easily upgraded to full Gaussian lower bounds \ref{gle} by a well-known chaining argument
(See \cite[Theorem 5.1]{HS93}, \cite[Theorem 3.8]{Del99}).
For general quasi-geodesic spaces, we rely on the chain lemma (Lemma \ref{l-chain}). We now prove the main result of this chapter, i.e. Gaussian lower bound.
\begin{proof}[Proof of Proposition \ref{p-gle}]
 By Lemma \ref{l-chain} there exists $C_1>1$ such that for all $b_1 \ge b$ and for all $x,y \in M$, there exists a $b_1$-chain $x=x_0,x_1,\ldots,x_m=y$  with
 \begin{equation}\label{e-gl1}
  m \le \left\lceil\frac{C_1 d(x,y)}{b_1} \right\rceil.
 \end{equation}
By Proposition \ref{p-ndlb}, there exists $\epsilon>0,c_1>0$ such that
\begin{equation}\label{e-gl2}
 \inf_{y \in B(x,\epsilon \sqrt{k})} p_k(x,y) \ge \frac{c_1}{V(x,\sqrt{k})}
\end{equation}
for all $x \in M$ and for all $k \ge 2$.
If
\begin{equation}\label{e-gl3}
 s:= \frac{C_1 \epsilon^2 k}{C_2 d(x,y)} \ge b,
\end{equation}
then there exists a $s$-chain $x=x_0,x_1,\ldots,x_m=y$ between $x$ and $y$ with
\begin{equation}\label{e-gl4}
 m:=\left\lceil\frac{C_2 d(x,y)^2 }{\epsilon^2 k} \right\rceil.
\end{equation}
However \eqref{e-gl3} holds whenever $d(x,y) \le c_3k$ and $c_3 \le C_1 \epsilon^2/C_2 b$.
If $C_2 \ge 1$ and $d(x,y) \ge \epsilon \sqrt{k}$, we have
\begin{equation}\label{e-gl5}
 m:=\left\lceil\frac{C_2 d(x,y)^2 }{\epsilon^2 k} \right\rceil \le \frac{2 C_2 d(x,y)^2 }{\epsilon^2 k} .
\end{equation}
If $d(x,y) \le c_3 k$ and $c_3 \le \epsilon/\sqrt{(2C_2)}$, we have
\begin{equation}\label{e-gl6}
 \frac{k}{m} \ge \frac{\epsilon^2 k^2}{C_2 d(x,y)^2} \ge \frac{\epsilon^2}{C_2 c_3^2} \ge 2.
\end{equation}
We fix $c_3= \min \left( \epsilon/\sqrt{(2C_2)}, C_1 \epsilon^2/C_2 b \right)$, so that
\eqref{e-gl3},\eqref{e-gl4} and \eqref{e-gl6} are satisfied. We will fix $C_2 \ge 1$ later.

We will require
\begin{equation}\label{e-gl7}
 d(x_i,x_{i+1}) \le s= \frac{C_1 \epsilon^2 k}{C_2 d(x,y)}  \le\frac{\epsilon}{3} \sqrt{\frac{k}{2m}}  \le \frac{\epsilon}{3} \sqrt{\left\lfloor \frac{k}{m} \right\rfloor}
\end{equation}
for all $i=0,1,\ldots,m-1$ and for all $k \ge m$.
We fix $C_2:=36 C_1^2 \ge 1$, so that by \eqref{e-gl5} we deduce
\begin{equation} \label{e-gl8}
 s=\frac{C_1 \epsilon^2 k}{C_2 d(x,y)}  \le \frac{\epsilon}{3} \left( \frac{\epsilon^2 k^2}{4 C_2 d(x,y)^2} \right)^{1/2} \le \frac{\epsilon}{3} \sqrt{\frac{k}{2m}} \le \frac{\epsilon}{3} \sqrt{\left\lfloor \frac{k}{m} \right\rfloor}
\end{equation}
for all $x,y \in M$ and $k \in \N^*$ such that $d(x,y) \ge \epsilon \sqrt{k}$ and $k/m \ge 2$, where $s,m$ is as defined in \eqref{e-gl3} and \eqref{e-gl4}.
Define $k_0,\ldots,k_{m-1}$ such that
\[
 k_i := \left\lfloor \frac{k}{m} \right\rfloor \mbox{ or } \left\lfloor \frac{k}{m} \right\rfloor+1
\]
satisfying $\sum_{i=0}^{m-1} k_i=k$.
Consider the $s$-chain $x=x_0,\ldots,x_m=y$ between $x$ and $y$ where $s,m$ are given by \eqref{e-gl3},\eqref{e-gl4}.
By \eqref{e-gl8} and definition of $k_i$, for all $w_i \in B(x_i, (\epsilon/3)\sqrt{\lfloor k/m\rfloor})$, for $i=0,1,\ldots,m-1$ we have
\[
 d(w_i,w_{i+1}) \le \epsilon\sqrt{\lfloor k/m\rfloor}  \le \epsilon\sqrt{k_i}.
\]
Therefore by \eqref{e-gl2}, \eqref{e-gl6} and \eqref{e-vd2}, there exists $c_4,c_5 \in (0,1)$ such that for all for $i=0,1,\ldots,m-1$, $w_i \in B(x_i, (\epsilon/3)\sqrt{\lfloor k/m\rfloor})$, we have
\begin{equation}\label{e-gl9}
 p_{k_i}(w_i,w_{i+1}) \ge \frac{c_1}{V(w_i,\sqrt{k_i})} \ge  \frac{c_4}{V(w_i,\sqrt{\lfloor k/m\rfloor})} \ge \frac{c_5}{V(x_i,\sqrt{\lfloor k/m\rfloor})}
\end{equation}
for all $x,y \in M$, $k \ge 2$ satisfying $d(x,y) \ge \epsilon \sqrt{k}$ and $d(x,y) \le c_3k$.

Define $B_i=B(x_i,(\epsilon/3) \sqrt{\lfloor k/m \rfloor})$.
By Chapman-Kolmogorov equation and \eqref{e-gl9}, for all $x,y \in M$, $k \ge 2$ satisfying $d(x,y) \ge \epsilon \sqrt{k}$ and $d(x,y) \le c_3k$, we obtain
\begin{align}\label{e-gl10}
\lefteqn{p_k(x,y)}\nonumber \\
\nonumber &= \int_{M} \ldots \int_{M} p(x_0,w_1)p(w_1,w_2)\ldots p(w_{m-2},w_{m-1}) p(w_{m-1},y) \, dw_1 \ldots \,dw_{m-1}\\
& \ge \int_{B_{m-1}} \ldots \int_{B_1} p(x_0,w_1)p(w_1,w_2)\ldots p(w_{m-2},w_{m-1}) p(w_{m-1},y) \, dw_1 \ldots \,dw_{m-1} \nonumber\\
& \ge \frac{c_5^{m-1}}{V(x,\sqrt{k})} \prod_{i=1}^{m-1} \frac{V(x_i,(\epsilon/3) \sqrt{\lfloor k/m \rfloor})}{V(x_i,\sqrt{\lfloor k/m \rfloor})}
\end{align}
By \eqref{e-vd1}, \eqref{e-gl5}, \eqref{e-gl6} and \eqref{e-gl10}, there exists $c_6,c_7\in(0,1)$ such that
\begin{align} \label{e-gl11}
 p_k(x,y) &\ge \frac{c_6^m}{V(x,\sqrt{k})} \ge \exp \left( \frac{2 C_2 d(x,y)^2 \log c_6}{\epsilon^2 k} \right) \frac{1}{V(x,\sqrt{k})} \nonumber \\
 &\ge  \frac{1}{V(x,\sqrt{k})} \exp\left( -\frac{d(x,y)^2}{c_7 k}\right)
\end{align}
for all $x,y \in M$, $k \ge 2$ satisfying $d(x,y) \ge \epsilon \sqrt{k}$ and $d(x,y) \le c_3k$.
This yields \ref{gle} for the case $d(x,y) \ge \epsilon \sqrt{k}$.

The case $d(x,y) \le \epsilon \sqrt{k}$ follows from \eqref{e-gl2}. This completes the proof of \ref{gle}.
\end{proof}

\chapter{Parabolic Harnack inequality} \label{ch-phi}
In this chapter, we use the two sided Gaussian estimates on the heat kernel to prove parabolic Harnack inequality.
Moreover, we show the necessity of Poincar\'{e} inequality and large scale volume doubling using parabolic Harnack inequality.

Based on ideas of Nash \cite{Nas58}, Fabes and Stroock \cite{FS86} gave a proof of parabolic Harnack inequality using Gaussian bounds on the heat kernel for uniformly
elliptic operators on $\R^n$. This idea of using Gaussian estimates on the heat kernel to prove parabolic Harnack inequality was extended in various settings \cite{SS91,Sal90,Del99,BGK12}.
Delmotte \cite{Del99} introduced a discrete version  of balayage formula to prove parabolic Harnack inequality on graphs.
We use a direct adaptation of Delmotte's method to prove parabolic Harnack inequality.

Recall that we defined caloric function as solutions to the discrete time heat equation $\partial_k u +\Delta u_k = 0$ in Definition \ref{d-caloric}.
We introduce the parabolic Harnack inequality for non-negative caloric functions.
\begin{definition}\label{d-phi}
 Let $(M,d,\mu)$ be a metric measure space and let $P$ be a Markov operator on  $(M,d,\mu)$.
  Let $0<\zeta<1$ and $0 < \theta_1 < \theta_2 < \theta_3 < \theta_4$.
 We that a $\mu$-symmetric Markov operator $P$ (or equivalently its heat kernel $p_k$) on  $(M,d,\mu)$ satisfies the discrete-time parabolic Harnack inequality
  \[
   \hypertarget{phi}{H(\zeta,\theta_1,\theta_2,\theta_3,\theta_4)}
  \]
 if there exists positive reals $C,R$ such that for all $x \in M,r \in \R,a \in \N$ with $r>R$ and every non-negative $P$-caloric function $u: \N \times M \to \R_{\ge 0}$ on
 \[
  Q= \nint{a}{a+ \lfloor \theta_4r^2 \rfloor } \times B(x,r) ,
 \]
we have
\[
 \sup_{Q_\ominus} u \le C \inf_{Q_\oplus} u,
\]
where
\begin{align*}
 Q_\ominus &:= \nint{a+\lceil \theta_1 r^2 \rceil}{a+\lfloor \theta_2 r^2 \rfloor} \times B(x,\zeta r), \\
 Q_\oplus &:= \nint{a+ \lceil \theta_3 r^2 \rceil}{a+\lfloor \theta_4 r^2 \rfloor} \times B(x,\zeta r).
 \end{align*}
 \end{definition}
\begin{remark}\label{r-phi}\leavevmode
\begin{enumerate}[(i)]
 \item The exact values of the constants $\zeta \in (0,1)$ and $\theta_1,\theta_2,\theta_2,\theta_4$ are unimportant.
 For example, for graphs and length spaces if the parabolic Harnack inequality is satisfied for one set
 of constants, then it is satisfied for every other set of constants. The argument in  \cite[Proposition 5.2(iv)]{BGK12} can be adapted for graphs and length spaces in the above discrete-time setting.
 \item It suffices to consider the case $a=0$ in the definition above by simply by shifting the function in the time component.
 \item Analogous to Remark \ref{r-caloric}(b), if $P$ is $(h,h')$-compatible with $(M,d,\mu)$ we may only require the function $u$ to be defined on a smaller domain.
\end{enumerate}
\end{remark}
\section{Gaussian estimates implies parabolic Harnack inequality}
In this section, we prove the following parabolic Harnack inequality using two sided Gaussian bounds.
\begin{prop}\label{p-phi}
 Let $(M,d,\mu)$ be a quasi-$b$-geodesic metric measure space satisfying \ref{doub-loc}.
 Suppose that a Markov operator $P$ has a kernel $p_k$ that is weakly $(h,h')$-compatible
 with respect to $\mu$ for some $h>b$. Moreover, suppose that $p_k$ satisfies two sided Gaussian estimate $(GE)$. Then there exists $\eta \in (0,1)$ such that
 $P$ satisfies the parabolic Harnack inequality \hyperlink{phi}{$H(\eta/2,\eta^2/2,\eta^2,2\eta^2,4\eta^2)$}.
\end{prop}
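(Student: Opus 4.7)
The plan is to adapt the strategy of Fabes--Stroock, as carried out for graphs by Delmotte, which derives the parabolic Harnack inequality directly from two-sided Gaussian estimates via a discrete balayage formula and a near-diagonal Dirichlet heat kernel lower bound. Fix $x_0 \in M$, $r$ large, and a non-negative function $u$ that is $P$-caloric on the cylinder $Q = \nint{0}{\lfloor 4\eta^2 r^2 \rfloor} \times B(x_0, r)$. The goal is to show that $u(n_1, x_1) \le C\, u(n_2, x_2)$ for every $(n_1, x_1) \in Q_\ominus$ and $(n_2, x_2) \in Q_\oplus$, with $\eta$ small but universal.

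The first step is to establish a near-diagonal lower bound for the Dirichlet heat kernel $p_k^{B(x_0, r)}$ at times $k \asymp r^2$ with both endpoints in the concentric ball $B(x_0, \eta r)$. The strong Markov property gives $p_k(y, z) = p_k^B(y, z) + \EE_y\bigl[p_{k - \tau_B}(X_{\tau_B}, z)\,\one_{\tau_B \le k}\bigr]$ where $\tau_B$ is the first exit time from $B = B(x_0, r)$. Since $d(X_{\tau_B}, z) \ge (1 - \eta) r$ for $z \in B(x_0, \eta r)$, applying \ref{gue} to the correction term and \ref{gle} to $p_k(y, z)$ shows that, for $\eta$ small enough, the correction is at most half of $p_k(y, z)$, giving
\[
p_k^B(y, z) \ge \frac{c_1}{V(y, \sqrt{k})}, \quad y, z \in B(x_0, \eta r),\ k \asymp r^2.
\]

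The second step is the lower propagation bound. For non-negative $P$-caloric $u$ in $Q$, a discrete balayage argument applied to the Markov chain killed on exiting $B$ yields $u(n_2, y) \ge \int_B p_{n_2 - n_1}^B(y, w)\, u(n_1, w)\, d\mu(w)$ for all $y \in B$; combined with the Dirichlet lower bound this gives
\[
u(n_2, x_2) \ge \frac{c_2}{V(x_0, r)} \int_{B(x_0, \eta r)} u(n_1, z)\, d\mu(z).
\]
The third step is the sup-to-average bound: dominating $u(n_1, x_1)$ by the same averaged integral of $u(n_1, \cdot)$. Since $u$ is in particular non-negative and (using Lemma~\ref{l-con-d} to relate $P$ and $P_L$) $P_L$-subcaloric, the mean value inequality of Lemma~\ref{l-mvi} applied to a sub-cylinder with apex at $(n_1, x_1)$ gives $u(n_1, x_1) \le C_3\, \tphi(u, 1, \nint{n_1 - O(r^2)}{n_1} \times B(x_1, r'))$ for suitable $r'$, up to a loss of $P^{O(\log r)}$. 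Applying the caloric identity $P^j u(k, \cdot) = u(k + j, \cdot)$ to absorb that logarithmic composition and then using \ref{gue} to convert a supremum in time into a single time slice, followed by \eqref{e-vd1} to compare volumes, yields
\[
u(n_1, x_1) \le \frac{C_4}{V(x_0, r)} \int_{B(x_0, \eta r)} u(n_1, z)\, d\mu(z),
\]
which combines with Step~2 to give $u(n_1, x_1) \le (C_4/c_2)\, u(n_2, x_2)$, i.e. \hyperlink{phi}{$H(\eta/2, \eta^2/2, \eta^2, 2\eta^2, 4\eta^2)$}.

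The main obstacle is Step~3: in our setting the mean value inequality from Chapter~\ref{ch-gub} is weaker than its continuous counterpart, carrying an extra factor $P^{2\lceil \log \sqrt{n}\rceil+2}$ because of the weak Sobolev inequality \eqref{e-Sob}. One must verify that this logarithmic smoothing does not break the pointwise comparison; this is handled by absorbing $P^j$ into the time variable of the caloric function and using the full off-diagonal \ref{gue} together with \ref{gle} to keep the averaged mass pinned to $B(x_0, \eta r)$. A careful bookkeeping of constants, arranging $\eta$ small enough so that in particular $\eta r \le c_3 \eta^2 r^2$ (i.e. the Gaussian lower bound applies on all time-space scales that appear), then yields the specific parameters $(\eta/2, \eta^2/2, \eta^2, 2\eta^2, 4\eta^2)$.
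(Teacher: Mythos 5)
Your Steps 1 and 2 match the paper's route: the near-diagonal Dirichlet heat kernel lower bound via strong Markov property (Lemma \ref{l-ndlb}) and the representation via a discrete balayage applied to the killed chain (Lemma \ref{l-balay}). But your Step 3 diverges in a way that does not close.

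You try to prove the sup-to-average bound by importing the mean value inequality of Lemma \ref{l-mvi} from Chapter \ref{ch-gub}. This route has three problems. First, a non-negative $P$-caloric function $u$ is \emph{not} $P_L$-subcaloric in general: $u_{k+1}=Pu_k \le P_Lu_k=(u_k+Pu_k)/2$ is equivalent to $u_{k+1}\le u_k$, which a caloric function need not satisfy; so Lemma \ref{l-mvi} simply does not apply to $u$. Second, Lemma \ref{l-mvi} (and Lemma \ref{l-con-d}, which you invoke) requires $(h,h')$-compatibility — in particular condition (d) of Definition \ref{d-compat} — whereas Proposition \ref{p-phi} only hypothesizes \emph{weak} $(h,h')$-compatibility, so the needed structure is not available. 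Third, the conclusion of Lemma \ref{l-mvi} is an $\inf_k\sup_y$ of the $P^{O(\log)}$-smoothed function bounded by a $\sup_k$ of the $L^1$ mass; it does not give a pointwise bound at the particular time slice $n_1$ you need, and your suggestion to "convert a supremum in time into a single time slice using \ref{gue}" is not backed by an actual mechanism.

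The insight your proposal misses is that the balayage decomposition of Lemma \ref{l-balay} is an \emph{exact identity}, not just the one-sided inequality you use in Step 2. Writing $u(k,y)$, for any $(k,y)$ in the cylinder, as the fixed non-negative linear combination $\int p^B_{k}(y,z)u(0,z)\,dz + \sum_l \int p^B_{k-l}(y,w)v(l,w)\,dw$ with $u(0,\cdot)\ge 0$ and $v\ge 0$, the sup over $Q_\ominus$ is controlled by bounding the kernels from above via \ref{gue} (which needs no compatibility beyond what is assumed), and the inf over $Q_\oplus$ by bounding the same kernels from below via Lemma \ref{l-ndlb}. Since both bounds factor through the same source terms, dividing gives the Harnack constant directly — no mean value inequality required.
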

First we start by verifying that Gaussian lower bound implies large scale volume doubling property.
\begin{lemma}\label{l-gevd}
 Let $(M,d,\mu)$ be a quasi-$b$-geodesic metric measure space satisfying \ref{doub-loc}.
 Suppose that a Markov operator $P$ has a kernel $p_k$ that  satisfies \ref{gle}.
 Then $(M,d,\mu)$ satisfies \ref{doub-inf}.
\end{lemma}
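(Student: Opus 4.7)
The plan is to integrate the Gaussian lower bound for $p_n(x,\cdot)$ over the ball $B(x,2r)$ and compare against $\int_M p_n(x,y)\,\mu(dy) = 1$, which holds because $P$ is a Markov operator (equivalently, the Markov chain is stochastically complete, as assumed in Chapter~\ref{ch-markov}). I will choose the time parameter $n$ of order $r^2$ with $\sqrt{n} \le r$, so that $V(x,\sqrt{n}) \le V(x,r)$ and \ref{gle} provides a pointwise lower bound of the form $p_n(x,y) \ge c/V(x,r)$ uniformly for $y \in B(x,2r)$. Integrating this bound against the total mass constraint will then directly yield $V(x,2r) \le CV(x,r)$.

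Concretely, set $n := \lfloor r^2 \rfloor$. For $r \ge \sqrt{2}$ this ensures $n \ge 2$ and $n \ge r^2/2$. For any $y \in B(x,2r)$ we have $d(x,y) \le 2r$; the constraint $d(x,y) \le c_3 n$ appearing in \ref{gle} is then satisfied as soon as $2r \le c_3 r^2/2$, i.e.\ $r \ge 4/c_3$. For such $r$, \ref{gle} yields
\[
p_n(x,y) \ge \frac{c_1}{V(x,\sqrt{n})}\exp\!\left(-\frac{(2r)^2}{c_2 n}\right) \ge \frac{c_1 e^{-8/c_2}}{V(x,r)}
\]
for every $y \in B(x,2r)$, where the last step uses both $\sqrt{n} \le r$ and $n \ge r^2/2$.

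Integrating this pointwise inequality over $B(x,2r)$ and invoking stochastic completeness gives
\[
1 = \int_M p_n(x,y)\,\mu(dy) \ge \int_{B(x,2r)} p_n(x,y)\,\mu(dy) \ge \frac{c_1 e^{-8/c_2}\, V(x,2r)}{V(x,r)},
\]
so that $V(x,2r) \le C_{r_0}\,V(x,r)$ with $C_{r_0} := c_1^{-1} e^{8/c_2}$ for every $x \in M$ and every $r \ge r_0 := \max(\sqrt{2},\, 4/c_3)$, which is exactly \ref{doub-inf}. There is no substantive obstacle here; the argument requires only one careful bookkeeping step, namely to pick $n$ so that simultaneously $n \ge 2$, $2r \le c_3 n$ (to legitimately apply \ref{gle} at the furthest points of $B(x,2r)$), and $\sqrt{n} \le r$ (so that the resulting lower bound is expressed in terms of $V(x,r)$ rather than $V(x,\sqrt{n})$). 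The choice $n = \lfloor r^2 \rfloor$ and the threshold $r_0 = \max(\sqrt{2}, 4/c_3)$ accommodate all three requirements simultaneously. Notice that the quasi-geodesicity hypothesis and \ref{doub-loc} are not actually used in this step; they enter the broader theory only through the proof of \ref{gle} itself.
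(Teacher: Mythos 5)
Your proof is correct and takes essentially the same approach as the paper: integrate the Gaussian lower bound over a ball of radius comparable to $\sqrt{n}$ and use $\int_M p_n(x,\cdot)\,d\mu = 1$ to bound the resulting volume ratio. The paper integrates over $B(x,4\sqrt{n})$ to get $V(x,4\sqrt{n}) \le C\,V(x,\sqrt{n})$ and then converts to $r$ and $2r$, while you set $n=\lfloor r^2\rfloor$ and integrate over $B(x,2r)$ directly; the bookkeeping differs slightly but the idea is identical, and your observation that \ref{doub-loc} and quasi-geodesicity are not actually used in this step matches the paper's proof.
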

\begin{proof}
By \ref{gle} there exists $c_1,c_2,c_3>0$ such that
\begin{equation*}
 p_n(x,y) \ge \frac{c_1}{V(x,\sqrt{n})} \exp \left( - d(x,y)^2/c_2n \right)
\end{equation*}
for all $x,y \in M$ satisfying $d(x,y)\le c_3 n$ and for all $n \in \mathbb{N}^*$.
Therefore there exists $N_1\ge1$ such that $4 \sqrt{n} \le c_3 n$ for all $n \ge N_1$.
By the Gaussian lower bound above
\begin{equation*}
 1= \int_M p_n(x,y)\,dy \ge \int_{B(x,4 \sqrt{n})} p_n(x,y) \,dy \ge \frac{V(x,4 \sqrt{n}) }{V(x,\sqrt{n})}c_1  \exp( -4/c_2)
\end{equation*}
for all $x \in M$ and for all $n \ge N_1$.
Therefore there exists $R := N_1^2$ such that for all $x \in M$ and for all $r \ge R$, we have
\[
V(x,r) \ge V(x,\lfloor r \rfloor) \ge  c_1 \exp(-4/c_2) V(x,4 \lfloor r \rfloor) \ge   c_1 \exp(-4/c_2) V(x,2r).
\]
\end{proof}
We show the following near diagonal lower bounds as a consequence of two sided Gaussian bound \hyperlink{ge}{$(GE)$}.
\begin{lemma}\label{l-ndlb}
 Under the assumptions of Proposition \ref{p-phi}, there exists $c_1>0$, $\eta \in (0,1)$ and $R_0>0$ such that for all $x \in M$, for all $r \ge R_0$, for all
 $y ,z \in B(x,\eta r)$, for all $k \in \N^*$ satisfying $ (\eta r)^2 \le k \le (2 \eta r)^2$, we have
 \begin{equation}\label{e-nlb}
  p_k^{B(x,r)} (y,z) \ge \frac{c_1}{V(x,\sqrt{k})}.
 \end{equation}
\end{lemma}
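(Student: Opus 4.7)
The plan is to compare the Dirichlet kernel $p_k^B(y,z)$, with $B=B(x,r)$, to the full kernel $p_k(y,z)$ via a first-exit decomposition, and to show that the correction term coming from exit is small provided $\eta$ is chosen sufficiently small. Concretely, denoting by $\tau = \min\{j : X_j \notin B\}$ the exit time, the strong Markov property (used earlier in \eqref{e-blb1}) gives
\begin{equation*}
  p_k(y,z) = p_k^B(y,z) + \mathbb{E}_y\bigl[p_{k-\tau}(X_\tau,z)\,\mathbf{1}_{\nint{1}{k-1}}(\tau)\bigr].
\end{equation*}
For $y,z \in B(x,\eta r)$ and $r$ large enough (so that $(1-\eta)r > h'$), the $(h,h')$-compatibility of $p_1$ rules out the cases $\tau=1$ and $\tau=k-1$, so the correction term reduces to an expectation over $\tau \in \nint{2}{k-2}$, where $p_{k-\tau}$ is a bona fide function, and on the event $\{\tau \in \nint{2}{k-2}\}$ one has $r \le d(x,X_\tau) \le r+h'$.

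The first step is to obtain the target lower bound for the unrestricted kernel. For $y,z \in B(x,\eta r)$ and $(\eta r)^2 \le k \le 4(\eta r)^2$, we have $d(y,z) \le 2\eta r \le 2\sqrt{k}$, so $d(y,z)^2/k \le 4$, and (after choosing $r$ large enough that $2\sqrt{k} \le c_3 k$) the Gaussian lower bound \ref{gle} yields $p_k(y,z) \ge c_1' e^{-4/c_2}/V(y,\sqrt{k})$. An application of \eqref{e-vd2} gives $V(y,\sqrt{k}) \le C\, V(x,\sqrt{k})$ because $d(x,y) \le \eta r \le \sqrt{k}$, so $p_k(y,z) \ge 2\kappa/V(x,\sqrt{k})$ for some absolute constant $\kappa>0$.

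The second (and main) step is to bound the exit term by $\kappa/V(x,\sqrt{k})$ by choosing $\eta$ small. For $w$ in the annulus $r \le d(x,w) \le r+h'$ and $j \in \nint{2}{k-2}$, the Gaussian upper bound \ref{gue} gives
\begin{equation*}
 p_{k-j}(w,z) \le \frac{C_1}{V(w,\sqrt{k-j})}\exp\!\left(-\frac{d(w,z)^2}{C_2(k-j)}\right),
\end{equation*}
with $d(w,z) \ge (1-\eta)r$. Since $V(x,\sqrt{k}) \le V(w,\sqrt{k}+r+h') \le V(w,3r)$ for $r$ large, volume doubling \eqref{e-vd1} yields $1/V(w,\sqrt{k-j}) \le C_3 (r/\sqrt{k-j})^\delta / V(x,\sqrt{k})$. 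Setting $t := r^2/(k-j) \ge 1/(4\eta^2)$ converts the estimate into
\begin{equation*}
 p_{k-j}(w,z) \le \frac{C_4}{V(x,\sqrt{k})}\, t^{\delta/2} \exp\!\left(-\frac{(1-\eta)^2 t}{C_2}\right).
\end{equation*}
The supremum of $t^{\delta/2}\exp(-(1-\eta)^2 t/C_2)$ over $t \ge 1/(4\eta^2)$ tends to $0$ as $\eta \downarrow 0$, so by choosing $\eta$ small and then $R_0$ large, we force this supremum (and hence the whole expectation, since the probability is at most $1$) to be $\le \kappa/V(x,\sqrt{k})$.

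The main obstacle is precisely the last calculation: controlling the exit contribution requires that the exponential Gaussian decay, governed by the ratio $r^2/(k-j)$, dominate both a possibly large polynomial factor $(r/\sqrt{k-j})^\delta$ arising from volume doubling and the loss in the volume comparison $V(w,\sqrt{k-j}) \ll V(x,\sqrt{k})$. The key observation making this work is that $k \le 4(\eta r)^2$ forces $r^2/(k-j) \ge 1/(4\eta^2)$ uniformly in $j$ and $r$, so that a single choice of $\eta$ (independent of $r$) kills the polynomial factor. Combining the two steps yields $p_k^B(y,z) \ge \kappa/V(x,\sqrt{k})$, which is \eqref{e-nlb}.
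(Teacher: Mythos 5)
Your proposal is correct and follows essentially the same route as the paper's proof: both use the strong Markov first-exit decomposition, rule out $\tau\in\{1,k-1\}$ via $(h,h')$-compatibility, lower-bound $p_k(y,z)$ by \ref{gle} (with constant uniform in $\eta$), and upper-bound the exit term by \ref{gue} together with \eqref{e-vd1}, exploiting that $k\le(2\eta r)^2$ forces the ratio $r^2/(k-j)$ to be at least $1/(4\eta^2)$ so a small $\eta$ makes the Gaussian factor dominate the volume-doubling polynomial. The paper packages the same observation by extracting an explicit factor $\eta^{2\delta}$ while you take a supremum over $t\ge 1/(4\eta^2)$, but these are the same estimate in different notation.
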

\begin{proof}
We abbreviate $B(x,r)$ by $B$. We denote the exit time from ball $B$ by
\[
 \tau:= \min \Sett{n}{X_n \notin B}
\]
where $(X_n)_{n \in \N}$ is the Markov chain on $M$ corresponding to the kernel $p_k$.

By strong Markov property and $\mu$-symmetry, the Dirichlet kernel $p_k^B$ can be expressed in terms of $p_k$ as
\begin{equation} \label{e-nl1}
p^B_k(y,z)= p_k(y,z)  - \EE_y \left[ p_{k-\tau}(z,X_\tau) \one_{\nint{1}{k-1}}(\tau)\right]
\end{equation}
for all $n \ge 2$ and for all $x \in M$,
where $\EE_y$ denotes that the Markov chain starts at $X_0=y$.
We choose $R_0 > (1-\eta)^{-1} h'$, so that  by \eqref{e-compat}
\[
 \EE_y \left[ p_{k-\tau}(z,X_\tau) \one_{\nint{1}{k-1}}(\tau)\right]  =  \EE_y \left[ p_{k-\tau}(z,X_\tau) \one_{\nint{2}{k-2}}(\tau)\right]
\]
for all $y,z \in B(x,\eta r)$, for all $k \ge 2$, for all $x \in M$ and for all $r \in \R$ with $r \ge R_0$.
Combining this with \eqref{e-nl1} and $X_\tau \notin B$, we have
\begin{equation}\label{e-nl2}
 p^B_k(y,z) \ge p_k(y,z)  - \sup_{l \in \nint{2}{k}} \sup_{w \notin B(x,r)} p_{l}(z,w)
\end{equation}
for all $y,z \in B(x,\eta r)$, for all $k \ge 2$, for all $x \in M$ and for all $r \in \R$ with $r \ge (1-\eta)^{-1} h'$.

 Note that by Lemma \ref{l-gevd} we have \ref{doub-inf}.
Therefore by \ref{gle}, \eqref{e-vd1} and $k \ge (\eta r)^2$, there exists $c_2,c_3 >0$ and $R_1>0$ such that
\begin{equation}\label{e-nl3}
 p_k(y,z) \ge \frac{c_2}{V(y,\sqrt{k})} \exp \left(- \frac{ (2 \eta r)^2}{c_2 (\eta r)^2} \right) \ge \frac{c_3}{V(x,\sqrt{k})}
\end{equation}
for all $x \in M$, for all $r \ge R_1$, for all $\eta \in (0,1)$, for all $y,z \in B(x,\eta r)$ and for all $k \in \N^*$ satisfying $(\eta r)^2 \le k$.

For the second term in \eqref{e-nl2} by \ref{gue}, there exists $C_1>0$ such that
\[
 p_l(z,w) \le \frac{C_1}{V(z,\sqrt{l})} \exp \left( -\frac{d(z,w)^2 }{C_1 l} \right) \le \frac{C_1}{V(z,\sqrt{l})} \exp \left( -\frac{ (1- \eta)^2 r^2 }{C_1 l} \right)
\]
for all $l \in \N^*$ with $l \ge 2$, for all $x \in M$, for all $r >0$, for all $\eta \in (0,1)$, for all $z \in B(x,\eta r)$ and for all $w \notin B(x,r)$.
Combined this with \eqref{e-vd1} and $k \le (2 \eta r)^2$, there exists $C_2,C_3,C_4,\delta> 0$ such that for all $\eta \in (0,1/2)$, for all $x \in M$, for all $z \in B(x,\eta r)$, for all
$k \in \N^*$ satisfying $(\eta r)^2 \le k \le (2 \eta r)^2$, for all $l \in \nint{2}{k}$ and for all $w \notin B(x,r)$, we have
\begin{align}\label{e-nl4}
\nonumber p_l(z,w) &\le  \frac{C_2}{V(z,\sqrt{k})} \left( \frac{k }{l} \right)^{\delta} \exp \left( -\frac{ (1- \eta)^2 r^2 }{C_1 l} \right) \\
& \le \frac{C_3 \eta^{2\delta} }{V(x,\sqrt{k})} \left( \frac{ r^2 }{l} \right)^{\delta} \exp \left( -\frac{  r^2 }{4 C_1 l} \right) \nonumber \\
& < \frac{C_4 \eta^{2 \delta}}{V(x,\sqrt{k})}.
\end{align}
The second line above follows from $\eta < 1/2$ and \eqref{e-vd1} and the last line follows from the fact the function $t \mapsto t^\delta \exp(-t/4C_1)$ is bounded in $(0,\infty)$.
Combining \eqref{e-nl2}, \eqref{e-nl3} and \eqref{e-nl4}, there exists $c_1>0$ and $R_0 >0$ such that $p_k^B$ satisfies \eqref{e-nlb}.
\end{proof}
The following lemma provides  a discrete time version of Balayage decomposition for the heat equation.
\begin{lemma}
 \label{l-balay}
 Let $(M,d,\mu)$ be a quasi-$b$-geodesic metric measure space satisfying \ref{doub-loc}.
 Suppose that a Markov operator $P$ has a kernel $p_k$ that is weakly $(h,h')$-compatible
 with respect to $\mu$ for some $h>b$.
  Then for all $x \in M$, for all $r >h'$, for all $r_1$ such that $0<r_1 < r_1 +h' < r$, for all $a,b \in \N$,
  for all non-negative function $u: N \times M \to \R_{\ge 0}$ that is $P$-caloric in $\nint{a}{b} \times B(x,r)$,  there exists a non-negative function $v:N \times M \to \R$ (depending on $u$)
  such that $\supp(v) \subseteq \nint{a+1}{b} \times\left( B(x,r_1+h') \setminus B(x,r_1) \right)$ and for all $y \in B(x,r_1)$ and for all $k \in \nint{a}{b+1}$, we have
 \begin{equation}\label{e-balay}
 u(k,y) = \int_{B(x,r_1+h')} p_{k-a}^B (y,z) u(a,z) \,dz + \sum_{l=a+1}^{k-1} \int_{B(x,r_1+h')} p_{k-l}^B(y,w) v(l,w) \, dw,
 \end{equation}
 where $B=B(x,r)$.
\end{lemma}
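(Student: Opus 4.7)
The strategy is Delmotte's discrete balayage construction \cite{Del99}, adapted to our setting where $P$ has jump range $h'$ and the Dirichlet kernels live on $B := B(x,r)$. Set, for $y \in B$ and $k \in \nint{a}{b+1}$,
\[
\tilde u(k,y) := \int_{B(x,r_1+h')} p^B_{k-a}(y,z)\, u(a,z)\,dz + \sum_{l=a+1}^{k-1} \int_{B(x,r_1+h')} p^B_{k-l}(y,w)\, v(l,w)\,dw,
\]
with the convention $p^B_0(y,\cdot)\,d\mu=\delta_y$, and define recursively
\[
v(k,w) := \bigl[u(k,w) - \tilde u(k,w)\bigr]\,\one_{B(x,r_1+h') \setminus B(x,r_1)}(w), \qquad k \in \nint{a+1}{b}.
\]
This is well posed because $\tilde u(k,\cdot)$ depends only on $v(l,\cdot)$ for $l \le k-1$, and the support requirement on $v$ is built in. The conclusion \eqref{e-balay} is precisely the statement that $\tilde u(k,y) = u(k,y)$ for $y \in B(x,r_1)$.

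I then prove two statements simultaneously by induction on $k \in \nint{a}{b+1}$: \emph{(i)} $\tilde u(k,y) = u(k,y)$ for $y \in B(x,r_1)$, and \emph{(ii)} $\tilde u(k,y) \le u(k,y)$ for all $y \in B$. Both are immediate at $k=a$ since $\tilde u(a,\cdot) = u(a,\cdot)\one_{B(x,r_1+h')}$, and (ii) restricted to the annulus gives $v(k,\cdot) \ge 0$. The workhorse for the inductive step is the Chapman--Kolmogorov-type identity
\[
p^B_{k+1}(y,z) = \int_B p_1(y,w)\, p^B_k(w,z)\,dw, \qquad y,z \in B,
\]
which follows from \eqref{e-defpB} and the $\mu$-a.e.\ symmetry of $p_1$ and $p^B_k$. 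A direct calculation using this identity gives $\tilde u(k+1,z) = P_B\bigl[\tilde u(k,\cdot)+v(k,\cdot)\bigr](z)$ for $z \in B$.

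For (i) at time $k+1$, fix $y \in B(x,r_1)$ and apply the caloric equation $u(k+1,y)=Pu(k,\cdot)(y)$. Since $p_1(y,\cdot)$ is supported in $B(y,h') \subseteq B(x,r_1+h') \subset B$, split the integral as $\int_{B(x,r_1)} + \int_{B(x,r_1+h')\setminus B(x,r_1)}$. On $B(x,r_1)$ apply (i) to $u(k,\cdot)$, then convert each $\int_{B(x,r_1)} p_1(y,z)p^B_{k-l}(z,\cdot)\,dz$ to $\int_B p_1(y,z)p^B_{k-l}(z,\cdot)\,dz$ minus an annulus correction. The former becomes $p^B_{k+1-l}(y,\cdot)$ by the identity, while the corrections combine with the residual annulus integral from the initial split to yield, via the definition of $v(k,\cdot)$, precisely the new $l=k$ term of \eqref{e-balay}. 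For (ii) at time $k+1$, combine the displayed expression for $\tilde u(k+1,z)$ with $u(k+1,z)=\int_M p_1(z,y)u(k,y)\,dy$ (caloric for $z \in B$); using that $\tilde u(k,\cdot)+v(k,\cdot)$ equals $u(k,\cdot)$ on $B(x,r_1+h')$ (by (i) and the definition of $v(k,\cdot)$) and equals $\tilde u(k,\cdot) \le u(k,\cdot)$ on $B \setminus B(x,r_1+h')$ by the inductive (ii), one obtains
\[
u(k+1,z) - \tilde u(k+1,z) = \int_{B \setminus B(x,r_1+h')} p_1(z,y)[u(k,y)-\tilde u(k,y)]\,dy + \int_{M \setminus B} p_1(z,y) u(k,y)\,dy \ge 0.
\]

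The main subtlety is the non-negativity of $v$: it is not apparent from the formula \eqref{e-balay} itself and is recovered only by propagating the global comparison (ii) alongside (i). Two features make (ii) work: $P$ dominates $P_B$ on non-negative data, and the jump range $h'$ is strictly less than $r-r_1$, so the chain cannot leapfrog over the annulus $B(x,r_1+h')\setminus B(x,r_1)$ in a single step.
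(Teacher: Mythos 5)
Your proof is correct and follows the same underlying strategy as the paper: Delmotte's discrete balayage, in which $v$ is produced recursively as the defect between $u$ and the partial balayage expression on the annulus $B(x,r_1+h')\setminus B(x,r_1)$, and the identity \eqref{e-balay} is then verified to hold on $B(x,r_1)$. The paper organizes this as a telescoping cascade of corrections $v_1, v_2, \ldots$, each $v_{i+1}$ obtained from $v_i$ by subtracting off the $P_B$-propagation of $v_i(a+i,\cdot)$ (supported on the annulus), whereas you work directly with a single function $\tilde u$ and a forward induction in $k$. Unwinding the paper's recursion gives exactly your $\tilde u$, so the constructions coincide.

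Where your write-up is somewhat more careful is the non-negativity of $v$. The paper asserts $v_i \ge 0$ on $\nint{a+i}{b+1}\times B(x,r_1)$ by "maximum principle," but since $v_i(a+i,\cdot)$ vanishes on $B(x,r_1)$, what is actually needed to extract a non-negative $v$ is non-negativity of the defect on the \emph{annulus}, which the paper's stated region does not cover. Your global comparison (ii), $\tilde u(k,\cdot) \le u(k,\cdot)$ on all of $B$, is precisely the supplement that closes this, and your proof of it is correct: you split $u - \tilde u$ into an outside-of-$B$ contribution (which is $\ge 0$ because $u \ge 0$) and a $B\setminus B_1$ contribution (which is $\ge 0$ by the inductive (ii)), using that the defect vanishes identically on $B_1$ by construction and by (i). Your two structural observations — that $P$ dominates $P_B$ on non-negative data, and that the jump range $h'$ prevents a single step from escaping $B_1$ when started in $B(x,r_1)$ — are exactly the points that the paper uses implicitly. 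So: same approach, with your treatment of the positivity more explicit and complete.
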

\begin{proof}
Denote by $B_1= B(x,r_1+h')$ and $B=B(x,r)$.
Define
\[
v_1(k,y)= u(k,y) - \int_{B(x,r_1 +h')} p_{k-a}^B (y,z) u(a,z) \,dz
\]
for all $(k,y) \in \nint{a+1}{b+1} \times B(x,r_1+h')$. Note that
\[(k,y) \mapsto \int_{B_1 } p_{k-a}^B (y,z) u(a,z) \,dz\] is $P$-caloric in $\nint{a+1}{b} \times B(x,r_1)$.
Since $u \ge 0$, by \eqref{e-compat} we have $v_1(a+1,y) =0$ for all $y \in B(x,r_1)$ and
by maximum principle $v_1 \ge 0$ in $\nint{a+1}{b+1} \times B(x,r_1)$.

Next, we construct $v: \N \times M \to \R$ iteratively. We assume that $\supp(v) \subseteq \nint{a+1}{b} \times\left( B(x,r_1+h') \setminus B(x,r_1) \right)$.
Define $v(a+1,y)=v_1(a+1,y)$ for all $y \in B(x,r_1+h') \setminus B(x,r)$.

Since $v_1$ is a difference of two $P$-caloric functions, we have $v_1$ is $P$-caloric in $ \nint{a+1}{b}\times B(x,r_1)$.
We repeat this construction iteratively by defining
\begin{equation} \label{e-blg1}
 v_{i+1}(k,y)= v_i(k,y) - \int_{B(x,r_1+h')} p_{k-a-i}^B(y,z) v_i(a+i,z) \,dz
\end{equation}
for all $(k,y) \in \nint{a+i+1}{b+1}\times B(x,r_1+h')$
and
\[
 v(a+i+1,w)= v_{i+1}(a+i+1,w)
\]
for all $w \in B(x,r_1+h') \setminus B(x,r_1)$ and $i =0 ,1,\ldots,b-a-1$.
By the same argument as above, $v_i$ is non-negative and caloric in $ \nint{a+i}{b} \times B(x,r_1)$ for all $i=0,1,\ldots,b-a+1$.
Further
\begin{equation}
 \label{e-blg2}u_i(a+i,z)= 0
\end{equation}
for all $z$ in $B(x,r_1)$ and $i=1,2,\ldots,b-a$.
Combining \eqref{e-blg1},\eqref{e-blg2} and gives \eqref{e-balay}.
\end{proof}
We are now ready to prove the parabolic Harnack inequality.
\begin{proof}[Proof of Proposition \ref{p-phi}]
Let $\eta \in (0,1)$ be as given by Lemma \ref{l-ndlb}.
 Note that for all $r > 12 h'/\eta$, we have
$\eta r -h'  > 2 \eta r/3 > \eta /2$. Moreover for all $r > 12 h'/\eta$, for all $y \in B(x,\eta r/2)$ and for all $z \in B(x,\eta r) \setminus B(x,\eta r -h')$ we have $d(y,z) > 2 h'$.
Let  $R_1 := 1+ \max(R_0, 12 h'/\eta, 10/\eta)$ where $R_0$ is the constant from Lemma \ref{l-ndlb}. By the above remarks,
\eqref{e-compat} and Lemma \ref{l-balay},  for all $x \in M$, for all $r \ge R_1$,
for all non-negative function $u$ that is $P$-caloric in $\nint{0}{\lfloor 4 \eta^2 r^2 \rfloor }\times B$ where $B=B(x,r)$, there exists a non-negative function $v$
supported in $B(x,\eta r) \setminus B(x,\eta r - h')$ such that
 \begin{equation}\label{e-ph1}
 u(k,y) = \int_{B(x,\eta r)} p_{k}^B (y,z) u(a,z) \,dz + \sum_{l=1}^{k-2} \int_{B(x,\eta r)} p_{k-l}^B(y,w) v(l,w) \, dw
 \end{equation}
for all $(k,y) \in \nint{1}{\lfloor 4 \eta^2 r^2 \rfloor +1} \times B(x,\eta r/2)$.

For some fixed  $x \in M$ and $r > R_1$, we define
\begin{equation}\label{e-ph1a}
 Q_\ominus := \nint{\lceil \eta^2 r^2/2 \rceil}{\lfloor \eta^2 r^2 \rfloor } \times B(x,\eta r/2), Q_\oplus := \nint{\lceil 2 \eta^2 r^2 \rceil}{\lfloor 4 \eta^2 r^2\rfloor} \times B(x,\eta r/2)
\end{equation}
and $Q:= \nint{0}{\lfloor 4 \eta^2 r^2\rfloor} \times B(x,\eta r)$.

By Lemma \ref{l-gevd} we have \ref{doub-inf}.
Therefore by Lemma \ref{l-ndlb} and \eqref{e-vd1} there exists $c_1,c_2>0$ such that for all $x \in M$, for all $r \ge R_1$, for all $y \in B(x,\eta r/2)$, for all $z \in B(x,\eta r)$, we have
\begin{equation}
 \label{e-ph2} \inf_{(k,y) \in Q_\oplus} p_k^B(y,z) \ge \inf_{ k \in \nint{\lceil 2 \eta^2 r^2 \rceil}{\lfloor 4 \eta^2 r^2\rfloor}}  \frac{c_1}{V(x,\sqrt{k})} \ge \frac{c_1}{V(x,2 \eta r)} \ge \frac{c_2}{V(x,\eta r)} .
\end{equation}
Similarly by Lemma \ref{l-ndlb} for all $x \in M$, for all $r \ge R_1$, for all $y \in B(x,\eta r/2)$,
for all $z \in B(x,\eta r) \setminus B(x,\eta r- h')$, for all $l \in \nint{1}{\lfloor\eta^2 r^2\rfloor-2}$ we have
\begin{equation}
 \label{e-ph3} \inf_{(k,y) \in Q_\oplus} p_{k-l}^B(y,z) \ge \inf_{ k \in \nint{\lceil 2 \eta^2 r^2 \rceil}{\lfloor 4 \eta^2 r^2\rfloor}}  \frac{c_1}{V(x,\sqrt{(k-l)})} \ge \frac{c_2}{V(x,\eta r)} .
\end{equation}

For upper bounds in $Q_\oplus$ we simply use \ref{gue} as follows. By \ref{gue} and \eqref{e-vd2},
there exists $C_1,C_2>0$ such that for all $x \in M$, for all $r \ge R_1$, for all $y \in B(x,\eta r/2)$, for all $z \in B(x,\eta r)$ we have
\begin{equation}
 \label{e-ph4} \sup_{(k,y) \in Q_\ominus} p_k^B(y,z) \le  \sup_{(k,y) \in Q_\ominus} p_k (y,z) \le\sup_{(k,y) \in Q_\ominus}  \frac{C_1}{V(y,\sqrt{k})} \le \frac{C_2}{V(x,\eta r)}.
\end{equation}
Similarly by  \ref{gue} and \eqref{e-vd1},
there exists $C_3,C_4,C_5,\delta>0$ such that for all $x \in M$, for all $r \ge R_1$, for all $y \in B(x,\eta r/2)$,
for all $z \in B(x,\eta r) \setminus B(x,\eta r- h')$, for all $(k,y) \in Q_\ominus$ and for all $ l \in \nint{1}{k-2}$ we have
\begin{align}
 \label{e-ph5}  p_{k-l}^B(y,z) & \le   p_{k-l} (y,z)\nonumber \le \frac{C_3}{V(y,\sqrt{(k-l)})} \exp\left(- \frac{d(y,z)^2}{C_3 (k-l)} \right) \\
 & \le \frac{C_4}{V(y,\eta r)} \left( \frac{\eta^2 r^2}{(k-l)}\right)^{\delta/2}\exp\left(- \frac{\eta^2 r^2 }{36 C_3 (k-l)} \right) \nonumber \\
 & \le \frac{C_5}{V(x,\eta r)}.
\end{align}
The last line follows from the fact that the function $t \mapsto t^{\delta/2} \exp(- t/(36 C_3))$ is bounded in $(0,\infty)$ along with \eqref{e-vd2}.

Combining the inequalities \eqref{e-ph2},\eqref{e-ph3},\eqref{e-ph4} and \eqref{e-ph5} along with the balayage formula \eqref{e-ph1}
for all $x \in M$, for all $r \ge R_1$,
for all non-negative function $u$ that is $P$-caloric in $\nint{0}{\lfloor 4 \eta^2 r^2 \rfloor }\times B(x,r)$, we have
\begin{equation*}
 \sup_{(k,y) \in Q_\ominus} u(k,y) \le c_2^{-1} \max(C_2,C_5) \inf_{(k,y) \in Q_\oplus} u(k,y)
\end{equation*}
where $Q_\ominus, Q_\oplus$ are as defined in \eqref{e-ph1a}. Note that by Remark \ref{r-phi}(ii), we have the desired Harnack inequality.
\end{proof}
\section{Necessity of Poincar\'{e} inequality and large scale volume doubling}
In the previous sections, we have obtain two-sided Gaussian bounds on the heat kernel and parabolic Harnack inequality assuming
large scale volume doubling and a Poincar\'{e} inequality. Now we show that large scale volume doubling and Poincar\'{e} inequality are necessary to have
 two-sided Gaussian bounds on the heat kernel and parabolic Harnack inequality.
The was first proved by Saloff-Coste in \cite[Theorem 3.1]{Sal92} using an argument due to Kusuoka and Stroock \cite{KS87}.
Delmotte \cite{Del99} followed the same strategy in discrete-time setting for random walk on graphs.
The following is an adaptation of the argument in \cite{Sal92,Del99}.
\begin{prop}\label{p-phi-vd}
 Let $(M,d,\mu)$ be a quasi-$b$-geodesic metric measure space satisfying \ref{doub-loc}.
 Suppose that a Markov operator $P$ has a kernel $p_k$ that is weakly $(h,h')$-compatible
 with respect to $\mu$ for some $h>b$ and there exists $\eta \in (0,1)$ such that $P$ satisfies the parabolic Harnack inequality \hyperlink{phi}{$H(\eta/2, \eta^2/2,\eta^2,2\eta^2,4\eta^2)$}.
 Then $(M,d,\mu)$ satisfies \ref{doub-inf} and \hyperlink{poin-mms}{$(P)_{h'}$}.
\end{prop}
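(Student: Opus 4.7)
The plan is to derive both properties from parabolic Harnack inequality by first extracting two-sided pointwise estimates on the heat kernel and then deploying them via distinct mechanisms for the two conclusions.

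The first step is to apply \hyperlink{phi}{$H(\eta/2,\eta^2/2,\eta^2,2\eta^2,4\eta^2)$} to the fundamental solution itself. For each fixed $x\in M$, the map $(k,y)\mapsto p_k(x,y)$ is non-negative and $P$-caloric on $\nint{1}{\lfloor 4\eta^2 r^2\rfloor}\times B(x_0,r)$ for any $x_0\in M$. Taking $x_0=x$ and applying PHI, we obtain, for $r$ large enough,
\[
p_{k^-}(x,x)\le C\,p_{k^+}(x,y),\qquad y\in B(x,\eta r/2),
\]
with $k^-\asymp \eta^2 r^2$ and $k^+\asymp 4\eta^2 r^2$. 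Integrating $\int_M p_{k^+}(x,y)\,d\mu(y)=1$ over $B(x,\eta r/2)$ yields the diagonal upper bound $p_{k^-}(x,x)\le C/V(x,\eta r/2)$, and a matching lower bound of the form $p_n(x,x)\ge c/V(x,\sqrt n)$ is obtained by combining the Cauchy-Schwarz identity $p_{2n}(x,x)\ge V(x,r)^{-1}\bigl(\int_{B(x,r)}p_n(x,y)\,d\mu\bigr)^2$ with PHI used to transport mass near the diagonal (cf.\ Lemma \ref{l-dlb}). These combine to give two-sided near-diagonal heat kernel estimates.

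From the lower bound $p_n(x,y)\ge c/V(x,\sqrt n)$ valid for $y$ in a ball of radius $c\sqrt n$ around $x$, volume doubling \ref{doub-inf} follows as in Lemma \ref{l-gevd}: one has $1\ge \int_{B(x,c\sqrt n)}p_n(x,y)\,d\mu(y)\ge c\,V(x,c\sqrt n)/V(x,\sqrt n)$, and iterating at scales $\sqrt n$ and $2\sqrt n$ gives $V(x,2r)\le C_D V(x,r)$ for all $r\ge r_0$.

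For the Poincar\'e inequality we adapt the Kusuoka-Stroock strategy as in \cite{Sal92, Del99}. Fix a ball $B=B(x_0,r)$ with $r\ge r_0$ and $f\in L^2(M)$, and set $k=\lfloor r^2\rfloor$. Decompose
\[
f-f_B=(f-P^k f)+\bigl(P^k f-(P^k f)_B\bigr)+\bigl((P^k f)_B-f_B\bigr).
\]
The first term is handled by spectral calculus: since $P$ is self-adjoint with spectrum in $[-1,1]$, one has $(1-\lambda^k)^2\le k(1-\lambda)$, which gives the energy estimate $\|f-P^k f\|_2^2\le 2k\,\mathcal{E}(f,f)$. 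For the second term, the near-diagonal heat kernel estimates from step one, together with a Hölder-type parabolic regularity for the caloric function $(j,y)\mapsto P^j f(y)$ (obtained by adapting the oscillation argument of Proposition \ref{p-holder} to the parabolic cylinders of PHI), force $\osc_B P^k f$ to be controlled by a small multiple of $\|f\|_{L^2}/V(x_0,r)^{1/2}$, with the precise form allowing absorption. The third term is bounded by the second via Cauchy-Schwarz. Assembling the three contributions yields $\int_B|f-f_B|^2\,d\mu\le Cr^2\,\mathcal{E}(f,f)$, and the $(h,h')$-compatibility condition \eqref{e-compat} lets us replace $\mathcal{E}(f,f)$ by $\int_{\kappa B}|\nabla f|_{h'}^2\,d\mu$, up to enlarging the ball by a constant factor, giving exactly \hyperlink{poin-mms}{$(P)_{h'}$}.

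The main obstacle is the third paragraph's treatment of the oscillation term. Since PHI only applies to non-negative caloric functions and $P^k f$ can change sign, we must either split $f$ into positive and negative parts (which requires careful bookkeeping of the normalizations) or truncate $f$ to a slightly enlarged ball so that $P^k f$ is essentially supported there; in either case, one has to ensure that the contributions from outside $\kappa B$ are controlled by the localized energy on $\kappa B$, exploiting the finite-range propagation of the random walk encoded in \eqref{e-compat}. A secondary, more routine, difficulty is establishing the parabolic Hölder regularity: rather than re-deriving it from scratch one could first pass from PHI to two-sided Gaussian bounds via Proposition \ref{p-phi}'s approach run in reverse together with the heat kernel estimates above, but this circularity must be avoided by extracting only the Hölder regularity and not the full Gaussian bounds.
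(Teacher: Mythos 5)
Your proposal deviates from the paper's proof in both halves, and in each case there is a genuine gap.

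For the doubling property, you plan to extract a near-diagonal lower bound $p_n(x,y)\gtrsim 1/V(x,\sqrt n)$ for $y\in B(x,c\sqrt n)$ with $c<1$, and then ``apply Lemma~\ref{l-gevd}.'' But Lemma~\ref{l-gevd} crucially uses the Gaussian \emph{tail} of \ref{gle}, which lets one integrate over balls of radius $4\sqrt n$ (larger than $\sqrt n$) while retaining a constant lower bound; this is what produces $V(x,4\sqrt n)\lesssim V(x,\sqrt n)$, a genuine doubling-type inequality. A pure near-diagonal bound only gives $V(x,c\sqrt n)\lesssim V(x,\sqrt n)$ with $c<1$, which is trivially true and useless. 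Upgrading the near-diagonal bound to a full Gaussian lower bound by chaining requires doubling already (see the proof of Proposition~\ref{p-gle}), so this route is circular. The paper sidesteps the issue entirely: it derives an on-diagonal lower bound $p_{2k}^B(x,x)\gtrsim 1/V(x,\eta^{-1}\sqrt k)$, an on-diagonal upper bound $p_k(x,x)\lesssim 1/V(x,\sqrt{k/2})$, and an iterative in-time inequality $p_{2k}(x,x)\le C_H^l\, p_{2^{l+1}k}(x,x)$ obtained by applying \hyperlink{phi}{PHI} to $(l,y)\mapsto p_{l+2}(x,y)$ at $y=x$; choosing $l$ with $2^{l/2}>4\eta^{-1}$ and combining produces $V(x,4\eta^{-1}\sqrt k)\le C\,V(x,\eta^{-1}\sqrt k)$.

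For the Poincar\'e inequality, your decomposition $f-f_B=(f-P^kf)+\bigl(P^kf-(P^kf)_B\bigr)+\bigl((P^kf)_B-f_B\bigr)$ and the plan to control $\|P^kf-(P^kf)_B\|_{L^2(B)}$ by a parabolic H\"older estimate are genuinely different from the paper's argument and, as you yourself flag, have serious obstructions. The sign problem you identify is real but is not the only issue: a H\"older oscillation bound gives an $L^\infty$-type control of the form $\osc_B P^kf \lesssim r^{-\alpha}\|f\|_\infty$ (or, after using ultracontractivity, $\lesssim \|f\|_2/V(x_0,r)^{1/2}$ times a H\"older factor), and squaring and multiplying by $\mu(B)$ yields a bound by a multiple of $\|f\|_2^2$, not by $r^2\mathcal E(f,f)$. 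That quantity cannot be absorbed into $\int_B|f-f_B|^2$ because they are not comparable in general, so the middle term remains uncontrolled by the Dirichlet form. The paper avoids all of this. Its argument introduces the \emph{conservative} operator $Q_B f(y)=P_Bf(y)+(1-\int_B p^B(y,z)\,dz)f(y)$, proves a near-diagonal lower bound $p^B_{4k}(y,z)\ge c_1/V(x,\sqrt k)$ for $y,z\in B(x,\sqrt k/2)$ by applying PHI twice to the Dirichlet kernel, and then uses the spectral identity
\[
\int_B Q_B^{4k}\bigl[f-Q_B^{4k}f(y)\bigr]^2(y)\,dy=\|f\|_{L^2(B)}^2-\|Q_B^{4k}f\|_{L^2(B)}^2=\sum_{l=0}^{4k-1}\bigl(\|Q_B^lf\|^2_{L^2(B)}-\|Q_B^{l+1}f\|^2_{L^2(B)}\bigr),
\]
each summand being bounded by $\|f\|_2^2-\|Q_Bf\|_2^2\le\int_B\int_B(f(y)-f(z))^2p_B(y,z)\,dy\,dz$. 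Combined with the Dirichlet kernel lower bound, this yields the Poincar\'e inequality with no H\"older regularity, no sign splitting, and no absorption; the key technical trick you would need to replace your second and third paragraphs is precisely this telescoping identity for the conservative Markov operator restricted to the ball.
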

\begin{proof}
 Let $x \in M$ and $r >0$. Define $u=u_{x,r}$ as
 \[
u(l,y) = \begin{cases} 1 &\mbox{if } l \in \nint{0}{ \lfloor \eta^2 r^2 \rfloor -  1 }\\
\one_{B(x,r)}(y) &\mbox{if }  l = \lfloor \eta^2 r^2 \rfloor\\
\int_{B(x,r_2(k))} p^{B(x,r)}_{l-\lfloor \eta^2 r^2 \rfloor}(y,w) \,dw& \mbox{if } l > \lfloor \eta^2 r^2 \rfloor. \end{cases}
\]
Note that $u$ is non-negative and $u$-caloric in $\N \times B(x,r)$. For $k \in \N^*$, we choose $r$ such that $\eta^2 r^2= k$.
By applying \hyperlink{phi}{$H(\eta/2, \eta^2/2,\eta^2,2\eta^2,4\eta^2)$} to the function $u$, there exists $C_H, N_1 >1$ such that
\begin{equation}\label{e-nes1}
 1 = u(k,x) \le C_H u(2k,x) = C_H \int_{B(x,\sqrt{k}/\eta)} p_k^{B(x,\sqrt{k}/\eta)}(x,z) \, dz
\end{equation}
for all $x \in M$, for all $k \in \N^*$ with $k \ge N_1$. Squaring \eqref{e-nes1} and applying Cauchy-Schwarz inequality we obtain
\begin{align*}
  1  &\le C_H^2 \left( \int_{B(x,\sqrt{k}/\eta)} p_k^{B(x,\sqrt{k}/\eta)}(x,z) \, dz \right)^2 \nonumber \\
 &\le C_H^2 V(x,\sqrt{k}/\eta) \int_{B(x,\sqrt{k}/\eta)}\left( p_k^{B(x,\sqrt{k}/\eta)}(x,z)\right)^2  \, dz \nonumber \\
 &=  C_H^2 V(x,\sqrt{k}/\eta)  p_{2k}^{B(x,\sqrt{k}/\eta)}(x,x)
\end{align*}
for all $x \in M$ and for all $k \in \N^*$ satisfying $k \ge N_1$.
Therefore
\begin{equation}\label{e-nes2}
 p_{2k}^{B(x,\sqrt{k}/\eta)}(x,x) \ge   C_H^{-2} \frac{1}{V(x,\sqrt{k}/\eta) }
\end{equation}
for all $x \in M$ and for all $k \in \N^*$ satisfying $k \ge N_1$.

Next we apply  \hyperlink{phi}{$H(\eta/2, \eta^2/2,\eta^2,2\eta^2,4\eta^2)$} to the non-negative, $P$-caloric function $(l,y)\mapsto p_{l+2}(x,y)$
on $\nint{0,\lfloor 4 \eta^2 r^2 \rfloor} \times B(x,r)$ where $r$ is chosen such that $\eta^2 r^2 = k-2 \ge k/2$. Then there exists $N_2 \ge \max(4, N_1)$ such that
for all $k \ge N_1$, we have
\begin{equation} \label{e-nes3}
 p_k(x,x) \le C_H p_{2k}(x,y)
\end{equation}
for all $x \in M$, for all $k \ge N_2$ and for all $y \in B(x, \sqrt{k}/2)$.
Integrating \eqref{e-nes3} over $y \in B(x, \sqrt{(k/2)})$, we obtain
\begin{equation}\label{e-nes4}
  p_k(x,x) \le C_H \frac{1}{V(x,\sqrt{(k/2)})}
\end{equation}
for all $x \in M$, for all $k \ge N_2$. Iterating \eqref{e-nes3} with $y=x$, we obtain
\begin{equation}\label{e-nes5}
 p_{2k}(x,x) \le C_H^l p_{2^{l+1} k}(x,x)
\end{equation}
for all $x \in M$, for all $k \ge N_2$ and for all $l \in \N$.
Combining \eqref{e-nes2}, \eqref{e-nes4}, \eqref{e-nes5} along with $p^B_{2k} \le p_{2k}$, we obtain
\begin{equation*}
 \frac{1}{V(x,\eta^{-1} \sqrt{k})} \le \frac{C_H^{l+3}}{V(x, 2^{l/2}\sqrt{k})}
\end{equation*}
for all $l \in \N$, for all $x \in M$ and for all $k \in \N$ satisfying $k \ge N_2$.
Next we choose $l$ such that $2^{l/2} > 4 \eta^{-1}$ so that there exists $C_1>1$ such that
\begin{equation*}
{V(x,4\eta^{-1} \sqrt{k})} \le C_1{V(x,\eta^{-1} \sqrt{k})}
\end{equation*}
 for all $x \in M$ and for all $k \in \N$ satisfying $k \ge N_2$.
 Therefore there exists $R_1>0$ such that for all $r >R_1$ and for all $x \in M$ we have
 \[
  V(x,r) \ge V(x, \eta^{-1} \sqrt{ \lfloor \eta^2 r^2 \rfloor}) \ge C_1^{-1}V(x, 4 \eta^{-1} \sqrt{ \lfloor \eta^2 r^2 \rfloor}) \ge C_1^{-1} V(x,2r).
 \]
This completes the proof of \ref{doub-inf}.

It remains to prove the Poincar\'{e} inequality $(P)_{h'}$.
We start by showing a near diagonal lower bound for the `Dirichlet kernel' $p^{B(x,\eta^{-1}\sqrt k)}$.

By \hyperlink{phi}{$H(\eta/2,\eta^2/2,\eta^2,2\eta^2,4\eta^2)$} applied to the function
$(l,y) \mapsto p^{B(x,\eta^{-1}\sqrt{k})}_{k+l}(x,y)$ that is $P$-caloric on $\nint{0}{4k}\times B(x,\eta^{-1}\sqrt{k})$, we have
\begin{equation}\label{e-nes6}
 p_{2k}^{B(x,\eta^{-1}\sqrt{k})} (x,x) \le C_H \inf_{y \in B(x,\sqrt{k}/2)} p_{3k}^{B(x,\eta^{-1}\sqrt{k})}(x,y)
\end{equation}
for all $k \ge N_2$ and for all $x \in M$.
Similarly by \hyperlink{phi}{$H(\eta/2,\eta^2/2,\eta^2,2\eta^2,4\eta^2)$} applied to the function
$(l,z) \mapsto p^{B(x,\eta^{-1}\sqrt{k})}_{2k+l}(z,y)$ that is $P$-caloric on $\nint{0}{4k}\times B(x,\eta^{-1}\sqrt{k})$, we have
\begin{equation}\label{e-nes7}
 p_{3k}^{B(x,\eta^{-1}\sqrt{k})} (x,y) \le C_H \inf_{z \in B(x,\sqrt{k}/2)} p_{4k}^{B(x,\eta^{-1}\sqrt{k})}(z,y)
\end{equation}
for all $k \ge N_2$, for all $x \in M$ and  for all $y \in B(x,\sqrt{k}/2)$.
Combining \eqref{e-nes2}, \eqref{e-nes6}, \eqref{e-nes7} and \eqref{e-vd1} there exists $c_1>0$ such that
for all $x \in M$ and for all $k \in \N^*$ satisfying $k \ge N_2$, we have
\begin{equation}\label{e-nes8}
 \inf_{y,z \in B(x,\sqrt{k}/2)} p^{B(x,\eta^{-1}\sqrt{k})}_{4k}(y,z) \ge \frac{c_1}{V(x,\sqrt{k})}.
\end{equation}
For a ball $B=B(x,\eta^{-1} \sqrt{k})$, we define a Markov operator
\[
 Q_B f (y) := P_B f(y) + \left(1 - \int_B p^B_1(y,z) \, dz \right) f(y)
\]
for all $y \in B$ and for all functions $f$ on $B$. Note that unlike $P_B$, the operator $Q_B$ is conservative, that is
\[
 Q_B \one_B = \one_B.
\]
For the rest of the this proof we abbreviate $B(x,\eta^{-1}\sqrt k)$ by $B$. By \eqref{e-nes8} for all $B=B(x,\eta^{-1}\sqrt{k})$ satisfying $k \ge N_2$ and for all $y \in B(x,\sqrt{k}/2)$, we have
\begin{align}\label{e-nes9}
 \nonumber Q_B^{4k} \left[ f- Q_B^{4k}f(y)\right](y) & \ge P_B^{4k} \left[ f- Q_B^{4k}f(y)\right](y) \\
 & \ge \int_{B(x,\sqrt{k}/2)}\left(f(z) -Q_B^{4k}f(y) \right)^2 p_B^{4k}(y,z) \, dz \nonumber \\
&\ge \frac{c_1}{V(x,\sqrt{k})}\int_{B(x,\sqrt{k}/2)}\left(f(z) -Q_B^{4k}f(y) \right)^2  \, dz \nonumber \\
& \ge \frac{c_1}{V(x,\sqrt{k})}\int_{B(x,\sqrt{k}/2)}\left(f(z) -f_{B(x,\sqrt{k}/2)} \right)^2  \, dz.
\end{align}
The first line above follows from $Q_B g \ge P_B g$ for all $g \ge 0$, the third line above follows from \eqref{e-nes8} and the last line above follows from the fact that
mean minimizes square error \eqref{e-mean}. By \eqref{e-nes9} along with \eqref{e-vd1} there exists $c_2>0$ such that for all $x \in M$ and for all $k \in \N^*$, we have
\begin{align}
 \label{e-nes10} \nonumber \int_{B} Q_B^{4k} \left[ f - Q_B^{4k}f(y) \right](y) \, dy &\ge \int_{B(x,\sqrt{k}/2)} Q_B^{4k} \left[ f - Q_B^{4k}f(y) \right](y) \, dy \\
 &\ge c_2 \int_{B(x,\sqrt{k}/2)}\left(f(z) -f_{B(x,\sqrt{k}/2)} \right)^2  \, dz
\end{align}
where $B=B(x,\eta^{-1}\sqrt{k})$.

By linearity of the operator $Q_B^{4k}$, we have
\begin{equation*}
Q_B^{4k} \left[ f - Q_B^{4k}f(y) \right]^2(y) = \left(Q_B^{4k} f^2 \right) (y) -  \left( Q_B^{4k}f(y)\right)^2
 \end{equation*}
Therefore by the symmetry of the operator $Q_B$ and $Q_B \one_B = \one_B$, we have
\begin{align}\label{e-nes11}
 \int_B Q_B^{4k}\left[ f - Q_B^{4k}f(y) \right]^2(y) \, dy&= \langle \one_B , Q_B^{4k} f^2 \rangle_{L^2(B)} - \norm{Q_B^{4k}f}_{L^2(B)}^2 \nonumber \\
 &= \langle Q_B^{4k}\one_B ,  f^2 \rangle_{L^2(B)} - \norm{Q_B^{4k}f}_{L^2(B)}^2 \nonumber \\
 &=  \norm{f}_{L^2(B)}^2  -  \norm{Q_B^{4k}f}_{L^2(B)}^2 \nonumber \\
 &= \sum_{l=0}^{4k-1} \left( \norm{Q_B^l f}_{L^2(B)}^2- \norm{Q_B^{l+1} f}_{L^2(B)}^2 \right).
\end{align}
The identity $\norm{f}_{L^2(B)}^2 - \norm{Q_B f}_{L^2(B)}^{2} = \norm{ (I-Q_B^2)^{1/2}}_{L^2(B)}^2$ along with the fact that $Q_B$ is a contraction in $L^2$ yields
\begin{equation}
 \norm{Q_B^l  f}_{L^2(B)}^2 - \norm{Q_B^{l+1}  f}_{L^2(B)}^2 =\norm{Q_B^l(I-Q_B^2)^{1/2} f}_{L^2(B)}^2 \le  \norm{ f}_{L^2(B)}^2 - \norm{Q_B  f}_{L^2(B)}^2 \label{e-nes12}
\end{equation}
for all $l \in \N$. Combining \eqref{e-nes11} and \eqref{e-nes12}, we obtain
\begin{equation}
\label{e-nes13} \int_B Q_B^{4k} \left[ f - Q_B^{4k} f(y) \right]^2(y)  \,dy \le 4 k \left( \norm{f}_{L^2(B)}^2 - \norm{Q_B f}_{L^2(B)}^2 \right) .
\end{equation}
Using the inequality $a^2 -b^2 \le 2a(a-b)$, we have
\begin{align}\label{e-nes14}
\nonumber  \norm{f}_{L^2(B)}^2 - \norm{Q_B f}_{L^2(B)}^2
\nonumber &=\int_B (f(y))^2 - (Q_Bf(y))^2 \,dy \\
\nonumber & \le 2 \int_B f(y) \left( f(y) - Q_B f(y)\right) \,dy \\
\nonumber &= \int_B \int_B \left(f(y)- f(z) \right)^2 p_B(y,z) \,dy \,dz \\
&\hspace{4mm}- 2 \int_B \left( 1 - \int_B p^B(y,z) \, dz \right) (f(y))^2\,dy \nonumber\\
& \le  \int_B \int_B \left(f(y)-f(z) \right)^2 p_B(y,z) \,dy \,dz.
\end{align}
Combining \eqref{e-nes10}, \eqref{e-nes13} and \eqref{e-nes14}, for all $x \in M$, for all $k \in \N^*$ with $k \ge N_2$ and for all $f \in L^2(M)$, we have
\begin{equation*}
 \int_{B(x,\sqrt{k}/2)}\left(f(z) -f_{B(x,\sqrt{k}/2)} \right)^2  \, dz \le 4 c_2^{-1} k \int_B\int_B (f(y) - f(z))^2 p(y,z) \, dy \,dz
\end{equation*}
where $B=B(x,\eta^{-1}\sqrt{k})$.
Therefore there exists $R>0$,$C_1,C_2>0$ such that for all $x \in M$, for all $r > R$ and for all $f \in L^2(M)$, we have
\begin{align}
\nonumber \label{e-nes15}\int_{B(x,r)} \left(f(z) -f_{B(x,r)} \right)^2  \, dz & \le  \int_{B(x,r)} \left(f(z) -f_{B(x, \sqrt{ \lceil 4r^2\rceil}/2)} \right)^2\\
\nonumber &\le \int_{B(x, \sqrt{ \lceil 4r^2\rceil}/2)}   \left(f(z) -f_{B(x, \sqrt{ \lceil 4r^2\rceil}/2)} \right)^2  \, dz \\
 & \le  C_1 R^2 \int_{B(x,C_2r)} \int_{B(x,C_2r)} \left(f(y)-f(z)\right)^2 p(y,z) \, dy \,dz.
\end{align}
By \eqref{e-compat} and \eqref{e-nes15}, we have the desired Poincar\'{e} inequality $(P)_{h'}$.
\end{proof}
We now have all the ingredients to prove our main result in a slightly weaker form.
\begin{prop} \label{p-main1}
 Let $(M,d,\mu)$ be a quasi-$b$-geodesic metric measure space satisfying \ref{doub-loc} and $\operatorname{diam}(M)=+\infty$.
Suppose that a Markov operator $P$ has a kernel $p$ that is $(h,h')$-compatible with $(M,d,\mu)$ with either $h=h'>b$ or $h' > h \ge 5b$ . Then the following are equivalent:
\begin{enumerate}[(i)]
 \item Parabolic Harnack inequality: there exists $\eta \in (0,1)$ such that $P$ satisfies \hyperlink{phi}{$H(\eta/2,\eta^2/2,\eta^2,2\eta^2,4\eta^2)$}.
 \item Gaussian bounds on the heat kernel: the heat kernel $p_k$ satisfies \hyperlink{ge}{$(GE)$}.
 \item The conjunction of large scale volume doubling property \ref{doub-inf} and Poincar\'{e} inequality \ref{poin-mms}.
\end{enumerate}
\end{prop}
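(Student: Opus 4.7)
The plan is to establish the cycle (iii) $\Rightarrow$ (ii) $\Rightarrow$ (i) $\Rightarrow$ (iii), since essentially all the ingredients have been assembled in the earlier chapters. The implication (ii) $\Rightarrow$ (i) is immediate from Proposition \ref{p-phi}, which produces $H(\eta/2,\eta^2/2,\eta^2,2\eta^2,4\eta^2)$ directly from two-sided Gaussian bounds under exactly the compatibility and geometric assumptions in force here.

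For (iii) $\Rightarrow$ (ii), I would first invoke Theorem \ref{t-Sob} to convert the conjunction of \ref{doub-loc}, \ref{doub-inf} and \hyperlink{poin-mms}{$(P)_h$} into the Sobolev inequality \eqref{e-Sob} for every ball (this is where the compatibility of $P$ and the hypothesis $h > b$ are used). Feeding this Sobolev inequality together with \ref{doub-inf} into Proposition \ref{p-gue} yields the Gaussian upper bound \ref{gue}. The lower bound \ref{gle} then follows from Proposition \ref{p-gle}, which is exactly the statement that $(VD)_{\operatorname{loc}}$, $(VD)_\infty$, \hyperlink{poin-mms}{$(P)_h$}, compatibility, and $\operatorname{diam}(M) = \infty$ produce a matching Gaussian lower bound (here is where the hypothesis $\operatorname{diam}(M) = +\infty$ enters through Proposition \ref{p-spec} and the near-diagonal bound of Proposition \ref{p-ndlb}).

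For (i) $\Rightarrow$ (iii), Proposition \ref{p-phi-vd} applies directly and yields \ref{doub-inf} together with \hyperlink{poin-mms}{$(P)_{h'}$}. If $h = h'$ there is nothing more to show. Otherwise, we are in the case $h' > h \ge 5b$, and the remaining task is to upgrade $(P)_{h'}$ at scale $h'$ to $(P)_h$ at the smaller scale $h$. This is where I would use the robustness machinery already developed: applying Proposition \ref{p-robustmms} to the identity quasi-isometry of the quasi-$b$-geodesic space $(M,d,\mu)$ (which satisfies \ref{doub-loc}) gives $(P)_{h_1} \iff (P)_{h_2}$ whenever $h_1, h_2 \ge 5b$, and by hypothesis both $h$ and $h'$ satisfy this. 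This closes the cycle.

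The main obstacle, and the reason for the funny dichotomy $h = h' > b$ or $h' > h \ge 5b$ in the hypotheses, lies precisely in this last step: the version of the Poincar\'{e} inequality that is naturally produced by parabolic Harnack (Proposition \ref{p-phi-vd}) is at scale $h'$, whereas the version consumed by the Sobolev inequality machinery in Theorem \ref{t-Sob} is at scale $h$. The robustness result in Proposition \ref{p-robustmms} is what bridges the two scales, but it is only available when both scales sit above $5b$, whence the threshold $h \ge 5b$ in the second clause. In the degenerate case $h = h'$ there is no scale mismatch and no such restriction is needed. Apart from this bookkeeping, all three implications reduce to direct citations of the results proved in Chapters \ref{ch-sob}--\ref{ch-phi}.
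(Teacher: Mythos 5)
Your proof is correct and follows the same cyclic scheme (iii)$\Rightarrow$(ii)$\Rightarrow$(i)$\Rightarrow$(iii) with the same key lemmas that the paper uses. The only divergence is in the final bookkeeping step of (i)$\Rightarrow$(iii): the paper's proof cites Corollary \ref{c-allscales} to pass from $(P)_{h'}$ to $(P)_{h}$, but that corollary is stated only for length spaces, whereas the ambient space here is merely quasi-$b$-geodesic. Your citation of Proposition \ref{p-robustmms} applied to the identity quasi-isometry is the correct invocation in this generality, and it is exactly what explains the hypothesis $h' > h \ge 5b$ in the second clause of the dichotomy. In short: same approach, but your handling of the scale-change is actually more precise than the paper's stated citation.
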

\begin{proof}
 The implication ``(iii) implies (ii)'' follows from Theorem \ref{t-Sob}, Proposition \ref{p-gue} and Proposition \ref{p-gle}.
 (ii) implies (i) follows from  Proposition \ref{p-phi}.
 (i) implies (iii) follows from Proposition \ref{p-phi-vd} and Corollary \ref{c-allscales}.
\end{proof}
Next, we answer the question raised in Remark \ref{r-pscale}.
\begin{prop} \label{p-pscale}
  Let $(M,d,\mu)$ be a quasi-$b$-geodesic metric measure space satisfying \ref{doub-loc}, \ref{doub-inf}, \hyperlink{poin-mms}{$(P)_{h'}$} for some $h'>b$ and $\operatorname{diam}(M)=+\infty$.
  Then $(M,d,\mu)$ satisfies \ref{poin-mms} for all $h > b$.
\end{prop}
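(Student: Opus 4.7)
The plan is first to reduce to the case $b < h < h'$, since $(P)_h$ for $h \ge h'$ is immediate from Lemma~\ref{l-largesc}. For $b < h < h'$, I will establish a \emph{gradient comparison}
\begin{equation*}
\int_B \abs{\nabla f}_{h'}^2 \, d\mu \le C \int_{\kappa B} \abs{\nabla f}_h^2 \, d\mu
\end{equation*}
for $B = B(x_0, r)$ with $r$ sufficiently large and constants $C, \kappa$ depending on $h$, $h'$, $b$ and the doubling constants. Combined with $(P)_{h'}$, this immediately yields $(P)_h$, with the range of admissible radii recovered via Lemma~\ref{l-pflex}.

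To prove the gradient comparison I would use iterated ball walks at scale $h$. Define the non-symmetric Markov kernel $q_h(x, y) := \one_{B(x,h)}(y)/V(x,h)$, so that $\abs{\nabla f}_h^2(x) = \int (f(y) - f(x))^2 q_h(x, y) \, d\mu(y)$. Since $h > b$, Lemma~\ref{l-chain} produces $h$-chains of length at most $m := \lceil C_1 h'/h \rceil + 1$ between any two points within distance $h'$; running the integral estimate of \eqref{e-ws0} from the proof of Lemma~\ref{l-wscompat} then yields a uniform lower bound $q_h^m(x, y) \ge c_1/V(x, h')$ for $y \in B(x, h')$. Plugging this into the definition of $\abs{\nabla f}_{h'}^2(x)$, expanding $q_h^m$ as an iterated integral over chains $x = z_0, z_1, \ldots, z_m = y$, applying Cauchy-Schwarz in the form $(f(y) - f(x))^2 \le m \sum_{i=0}^{m-1} (f(z_{i+1}) - f(z_i))^2$, and using Chapman-Kolmogorov together with $\int q_h(z, \cdot) \, d\mu = 1$ to integrate out all intermediate variables except $z_i$ and $z_{i+1}$, gives
\begin{equation*}
\abs{\nabla f}_{h'}^2(x) \le \frac{m}{c_1} \sum_{i=0}^{m-1} \int_M q_h^i(x, z) \abs{\nabla f}_h^2(z) \, d\mu(z).
\end{equation*}
Integrating over $x \in B$ and invoking $\int_M q_h^i(x, z) \, d\mu(x) \le C_0^i$ (an inductive consequence of \ref{doub-loc}, since $\int_M q_h(\cdot, z) \, d\mu \le C_0$) together with the support bound $\supp q_h^i(\cdot, z) \subseteq B(z, mh)$ produces the gradient comparison with $\kappa B = B(x_0, r + mh)$; for large enough $r$ this $\kappa$ is essentially a fixed constant, and Lemma~\ref{l-pflex} absorbs the remaining cases.

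The main obstacle will be verifying the iterated kernel lower bound $q_h^m(x, y) \ge c_1/V(x, h')$ for $y \in B(x, h')$: this requires selecting a chain whose consecutive steps sit strictly inside open balls of radius $h$ (not merely within distance $\le h$), then applying Lemma~\ref{l-vloc} to compare volumes of intermediate balls to $V(x, h')$, and carefully tracking how $c_1$ and $C_0^m$ deteriorate as $h \downarrow b$. Both constants blow up at $h = b$ but remain finite for every $h > b$, consistent with the sharpness of the threshold noted in Example~\ref{x-bl}.
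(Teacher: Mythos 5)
Your proposal is correct, and it takes a genuinely different route from the paper. The paper runs the Poincar\'{e} inequality at scale $h'$ through the entire characterization machinery: it constructs a $\mu$-symmetric ball-walk kernel at scale $h$ (symmetrizing with a weight $Q$ comparable to $1$ by \ref{doub-loc}), observes that an iterate $p_l$ is $(h', lh)$-compatible via Lemma~\ref{l-wscompat}, deduces Gaussian bounds from Theorem~\ref{t-main1}/Proposition~\ref{p-main1}, passes to the parabolic Harnack inequality via Proposition~\ref{p-phi}, and finally uses Proposition~\ref{p-phi-vd} to extract the Poincar\'{e} inequality at the smaller scale $h$. Your approach short-circuits all of this with a purely local, elementary gradient-comparison inequality
\[
\int_B \abs{\nabla f}_{h'}^2 \, d\mu \le C \int_{\kappa B} \abs{\nabla f}_h^2 \, d\mu,
\]
proved by expanding $q_h^m$ over $b$-chains fuzzed by $(h-b)/2$ exactly as in \eqref{e-ws0}, applying Cauchy--Schwarz along the chain, integrating out the intermediate variables by conservativity, and using the uniform bound $\int_M q_h^i(x,z)\,d\mu(x)\le C_0^i$ together with the support constraint $\supp q_h^i(\cdot, z)\subseteq B(z,ih)$. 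This is clean and entirely self-contained: it uses only \ref{doub-loc}, the chain lemma, and the pseudo-Poincar\'{e}/Jensen toolkit, and it makes the dependence of the constant on $m \approx h'/b$ explicit, which is useful for understanding how the estimate degenerates as $h\downarrow b$. What you lose relative to the paper's proof is essentially nothing in this context: the paper's detour is needed nowhere else for this proposition and appears to have been chosen for economy of exposition rather than necessity, since the equivalences it invokes are already available at that point in the text. One small point worth making explicit when writing the argument out: to get $q_h^m(x,y)\ge c_1/V(x,h')$ with a \emph{fixed} $m$, pad shorter $b$-chains by repeating the terminal point; this works because $\int_{B(y,(h-b)/2)} q_h(y,w)\,d\mu(w)$ is bounded below by a constant depending only on \ref{doub-loc}.
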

\begin{proof}
By Lemma \ref{l-largesc} it suffices to consider the case $ b < h <h'$.
 Consider the Markov chain with density
 \[
  p(x,y) = \frac{\one_{B(x,h)}(y)}{Q(x) Q(y) \sqrt{V(x,h)V(y,h)}}
 \]
 that is symmetric with respect to the measure $\mu'(dx)= Q(x) \mu(dx)$, where
 \[
  Q(x)=  \int_{M} \frac{\one_{B(x,h)}(y)}{\sqrt{V(x,h)V(y,h)}} \, \mu(dy).
 \]
By \ref{doub-loc}, there exists $C_1>0$ such that
\begin{equation} \label{e-ps1}
 C_1^{-1} \le  Q(x) \le C_1
\end{equation}
for all $x \in M$. Therefore the space $(M,d,\mu')$ satisfies \ref{doub-loc}, \ref{doub-inf}, \hyperlink{poin-mms}{$(P)_{h'}$} for some $h'>b$.
Moreover by \eqref{e-ps1}, $p$ is weakly $(h,h)$-compatible with $(M,d,\mu')$.
By the same  argument as Lemma \ref{l-wscompat}, there exists $l \in N^*$ such that $p_l$ is $(h',lh)$ compatible with  $(M,d,\mu')$. Therefore by Proposition \ref{p-main1} and Lemma \ref{l-gausscompare}
the kernel $p_k$ satisfies \hyperlink{ge}{$(GE)$}.  The Poincar\'{e} inequality \ref{poin-mms} for $(M,d,\mu')$ then follows from Propositions \ref{p-phi} and \ref{p-phi-vd}.
An easy comparison argument using \eqref{e-ps1} gives \ref{poin-mms} for $(M,d,\mu)$.
\end{proof}
The following is the main result of our work.
\begin{theorem}\label{t-main1}
  Let $(M,d,\mu)$ be a quasi-$b$-geodesic metric measure space satisfying \ref{doub-loc} and $\operatorname{diam}(M)=+\infty$.
Suppose that a Markov operator $P$ has a kernel $p$ that is $(h,h')$-compatible 
with $(M,d,\mu)$, where $h' \ge h>b$. Then the following are equivalent:
\begin{enumerate}[(i)]
 \item Parabolic Harnack inequality: there exists $\eta \in (0,1)$ such that $P$ satisfies \hyperlink{phi}{$H(\eta/2,\eta^2/2,\eta^2,2\eta^2,4\eta^2)$}.
 \item Gaussian bounds on the heat kernel: the heat kernel $p_k$ satisfies \hyperlink{ge}{$(GE)$}.
 \item The conjunction of large scale volume doubling property \ref{doub-inf} and Poincar\'{e} inequality \ref{poin-mms}.
\end{enumerate}
\end{theorem}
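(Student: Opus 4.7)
The plan is to assemble the three implications from pieces already established in the monograph, the central observation being that the hypothesis ``$h'\ge h>b$'' of Theorem \ref{t-main1} is exactly what each of the underlying propositions requires. In particular, Proposition \ref{p-main1} already handles the restricted regimes $h=h'>b$ and $h'>h\ge 5b$, and the only additional ingredient needed to remove the restriction $h\ge 5b$ is the scale-changing result for Poincar\'e inequalities on quasi-geodesic spaces, namely Proposition \ref{p-pscale}.

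For the implication $(iii)\Rightarrow(ii)$, I would chain together three results that each require only $h>b$: Theorem \ref{t-Sob} delivers the Sobolev inequality \eqref{e-Sob} from $(VD)_\infty$ and $(P)_h$; Proposition \ref{p-gue} then produces the Gaussian upper bound \ref{gue}; and Proposition \ref{p-gle} produces the matching lower bound \ref{gle}, yielding \hyperlink{ge}{$(GE)$}. For $(ii)\Rightarrow(i)$, since any $(h,h')$-compatible kernel is in particular weakly $(h,h')$-compatible, Proposition \ref{p-phi} applies directly and gives the parabolic Harnack inequality \hyperlink{phi}{$H(\eta/2,\eta^2/2,\eta^2,2\eta^2,4\eta^2)$} for some $\eta\in(0,1)$.

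The remaining implication $(i)\Rightarrow(iii)$ is the one that previously forced the awkward condition $h\ge 5b$ in Proposition \ref{p-main1}, because Proposition \ref{p-phi-vd} only outputs a Poincar\'e inequality at the \emph{upper} scale $h'$. My plan is to apply Proposition \ref{p-phi-vd} first to extract $(VD)_\infty$ and \hyperlink{poin-mms}{$(P)_{h'}$}; then, using that $(M,d,\mu)$ is quasi-$b$-geodesic with $\operatorname{diam}(M)=\infty$ and now also satisfies \ref{doub-loc}, \ref{doub-inf}, and $(P)_{h'}$ for some $h'>b$, invoke Proposition \ref{p-pscale} to upgrade $(P)_{h'}$ to $(P)_{\tilde h}$ for \emph{every} $\tilde h>b$. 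In particular this gives $(P)_h$ at the scale $h$ appearing in the hypothesis, completing (iii).

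There is essentially no new technical obstacle at this stage; the proof is a bookkeeping assembly once Proposition \ref{p-pscale} is available. The only point deserving attention is that Proposition \ref{p-pscale} itself is proved by a bootstrap that uses Proposition \ref{p-main1} together with Lemmas \ref{l-wscompat} and \ref{l-gausscompare} to obtain two-sided Gaussian estimates for a modified (symmetric) ball-walk kernel, from which the Poincar\'e inequality at the new scale is read off via Propositions \ref{p-phi} and \ref{p-phi-vd}. Provided one trusts this chain, the proof of Theorem \ref{t-main1} reduces to three line citations as above; the substantive content lies entirely in the propositions that have already been established in earlier chapters.
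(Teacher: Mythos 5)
Your proposal is correct and follows the same route as the paper's own (very terse) proof, which combines Propositions \ref{p-main1}, \ref{p-phi-vd}, and \ref{p-pscale}. Your unpacking of Proposition \ref{p-main1} into Theorem \ref{t-Sob}, Propositions \ref{p-gue}, \ref{p-gle}, and \ref{p-phi} is apt, since the hypotheses of Proposition \ref{p-main1} ($h=h'$ or $h\ge 5b$) are not assumed in Theorem \ref{t-main1} and were imposed there only because the (i)$\Rightarrow$(iii) direction relied on Corollary \ref{c-allscales}, precisely the gap that Proposition \ref{p-pscale} closes as you describe.
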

\begin{proof}
Combining Propositions \ref{p-main1}, \ref{p-phi-vd} and \ref{p-pscale} yields the desired result.
\end{proof}
As announced in the introduction, we will show  Theorem \ref{t-main0} and Theorem \ref{t-Del} are covered by our results.
 Theorem \ref{t-Del} is clearly a special case of Theorem \ref{t-main1}. So it remains to verify Theorem \ref{t-main0}.
\begin{proof}[Proof of Theorem \ref{t-main0}]
We need only to check the implication (c) implies (b) as the other implications follow as in Theorem \ref{t-main1}.
Although $p_1$ is only weakly $(h,h')$-compatible to $(M,d,\mu)$, by Lemma \ref{l-wscompat}, Theorem \ref{t-main1} and  Lemma \ref{l-gausscompare}, we have that $p_k$ satisfies  \hyperlink{ge}{$(GE)$}.
 \end{proof}

\chapter{Applications} \label{ch-apply}
Perhaps the most important application of the characterization of parabolic Harnack inequality and Gaussian bounds on the heat kernel is the stability
under quasi-isometries.
\begin{theorem}
  Let $(M_i,d_i,\mu_i)$  be a quasi-$b_i$-geodesic metric measure spaces satisfying \ref{doub-loc} and $\operatorname{diam}(M_i)=+\infty$, for $i=1,2$. Moreover we assume that
  $(M_1,d_1,\mu_1$ and $(M_2,d_2,\mu_2)$ are quasi-isometric metric measure spaces.
Suppose that a Markov operator $P_i$ has a kernel  that is $(h_i,h_i')$-compatible with $(M_i,d_i,\mu_i)$ with  $h_i' \ge h_i > b_i$ for $i=1,2$. Then
\begin{enumerate}[(i)]
 \item The kernel corresponding to $P_1$ satisfies \hyperlink{ge}{$(GE)$} if and only if the kernel corresponding to $P_2$ satisfies \hyperlink{ge}{$(GE)$}.
 \item The operator $P_1$ satisfies the Harnack inequality \hyperlink{phi}{$H(\eta/2,\eta^2/2,\eta^2,2\eta^2,4\eta^2)$} for some $\eta \in (0,1)$ if and only if
 $P_2$ satisfies \hyperlink{phi}{$H(\zeta/2,\zeta^2/2,\zeta^2,2\zeta^2,4\zeta^2)$} for some $\zeta \in (0,1)$.
\end{enumerate}
\end{theorem}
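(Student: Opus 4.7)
The plan is to use Theorem \ref{t-main1} to translate both \hyperlink{ge}{$(GE)$} and the parabolic Harnack inequality into purely geometric conditions on $(M_i,d_i,\mu_i)$, namely the conjunction of \ref{doub-inf} and \hyperlink{poin-mms}{$(P)_{h}$}, and then transfer these geometric conditions across the quasi-isometry using the robustness results already established in Chapters \ref{ch-mg} and \ref{ch-pi}. This reduces the proof to essentially a bookkeeping exercise about matching scales, with no new analytic input.

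First, assume the kernel of $P_1$ satisfies \hyperlink{ge}{$(GE)$}. By Theorem \ref{t-main1}, $(M_1,d_1,\mu_1)$ satisfies \ref{doub-inf} and $(P)_{h_1}$ for some $h_1 > b_1$. By Proposition \ref{p-qivd}, the large scale doubling property \ref{doub-inf} is preserved under the quasi-isometry, so $(M_2,d_2,\mu_2)$ also satisfies \ref{doub-inf}. For the Poincar\'{e} inequality, Proposition \ref{p-pscale} allows us to upgrade $(P)_{h_1}$ into $(P)_{h}$ for every $h > b_1$, and in particular for some $h \ge 5 b_1$. Proposition \ref{p-robustmms} then transfers this to $(M_2,d_2,\mu_2)$, giving $(P)_{h'}$ for every $h' \ge 5 b_2$. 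Applying Proposition \ref{p-pscale} once more on $(M_2,d_2,\mu_2)$ (using the \ref{doub-inf} we just established on $M_2$), we obtain $(P)_{h_2}$ at the exact scale $h_2$ of the compatibility of $P_2$. A final invocation of Theorem \ref{t-main1} in the reverse direction shows that the kernel of $P_2$ satisfies \hyperlink{ge}{$(GE)$}. By symmetry (the quasi-isometry has a quasi-inverse, cf.~Definition \ref{d-qi-mms}), the implication also runs from $P_2$ to $P_1$, which proves (i).

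For (ii), the same argument goes through verbatim since Theorem \ref{t-main1} gives the equivalence between the parabolic Harnack inequality \hyperlink{phi}{$H(\eta/2,\eta^2/2,\eta^2,2\eta^2,4\eta^2)$} (for some $\eta \in (0,1)$) and the conjunction of \ref{doub-inf} and \hyperlink{poin-mms}{$(P)_{h}$}. Starting from $P_1$ satisfying the Harnack inequality for some $\eta$, we produce \ref{doub-inf} and $(P)_{h_2}$ on $M_2$ by the transfer procedure of the previous paragraph, and then Theorem \ref{t-main1} yields a (possibly different) constant $\zeta \in (0,1)$ for which $P_2$ satisfies \hyperlink{phi}{$H(\zeta/2,\zeta^2/2,\zeta^2,2\zeta^2,4\zeta^2)$}.

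There is no genuinely hard step: the entire argument is assembled from results already available, and the only mild subtlety is the scale mismatch between the $h_i$ arising from compatibility and the $5 b_i$ threshold needed in Proposition \ref{p-robustmms}. This is precisely the issue that Proposition \ref{p-pscale} was designed to resolve, by decoupling the scale of the Poincar\'{e} inequality from the scale of the Markov kernel once \ref{doub-inf} is available. The fact that $\eta$ in (ii) need not equal $\zeta$ is intrinsic: quasi-isometries distort distances, so the numerical shape of the space-time cylinders cannot be preserved exactly, but the qualitative form of parabolic Harnack inequality is robust.
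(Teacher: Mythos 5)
Your proposal is correct and follows essentially the same route as the paper: Theorem \ref{t-main1} reduces both (GE) and the parabolic Harnack inequality to the conjunction of \ref{doub-inf} and a Poincar\'e inequality, which are then transferred across the quasi-isometry via Proposition \ref{p-qivd} and Proposition \ref{p-robustmms}, with Proposition \ref{p-pscale} used to reconcile the compatibility scales $h_i$ with the $5b_i$ thresholds. Your write-up is simply more explicit about the scale bookkeeping than the paper's one-line proof, and it is fine to omit Lemma \ref{l-largesc}, which the paper lists but which is subsumed by Proposition \ref{p-pscale} once \ref{doub-inf} is in hand.
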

\begin{proof}
 The is a direct consequence of Theorem \ref{t-main1} along with stability of \ref{doub-inf} given by Proposition \ref{p-qivd}, stability of \ref{poin-mms} given by
 Proposition \ref{p-robustmms}, Proposition \ref{p-pscale} and Lemma \ref{l-largesc}.
\end{proof}
As mentioned in the introduction, it is a long standing open problem to prove such a stability result for elliptic Harnack inequality. 
A partial result in this direction is obtained by Bass. In \cite{Bas13}, Bass proves  stability of elliptic Harnack inequality  for weighted graphs under bounded perturbation of the conductances. However the weighted graphs were assumed to 
be transient and they satisfy certain regularity hypotheses. 
In \cite{Bar05}, Barlow introduced the \emph{dumbbell condition} that is stable under bounded perturbation of weights of a weighted graph and asks
if the dumbbell condition is equivalent to elliptic Harnack inequality. 

Recall that we proved an elliptic H\"{o}lder regularity estimate for $P$-harmonic functions
in Proposition \ref{p-holder} and we used the regularity in the proof of Gaussian lower bounds (Lemma \ref{l-crl}).
There is an analogous parabolic  H\"{o}lder regularity estimate which follows from parabolic Harnack inequality.
The proof is similar, for example the proof given in \cite[Theorem 5.4.7]{Sal02} can be adapted for the present setting.
Such parabolic H\"{o}lder continuity estimates were first obtained by Nash \cite{Nas58}.
\begin{prop}
  Let $(M,d,\mu)$ be a quasi-$b$-geodesic metric measure space satisfying \ref{doub-loc} and $\operatorname{diam}(M)=+\infty$.
Suppose that a Markov operator $P$ has a kernel $p$ that is weakly $(h,h')$-compatible with $(M,d,\mu)$ and satisfies parabolic Harnack inequality
\hyperlink{phi}{$H(\eta/2,\eta^2/2,\eta^2,2\eta^2,4\eta^2)$} for some $\eta \in (0,1)$. Then there exists $C>0$, $R>0$ and $\alpha>0$ such that for all $x \in M$,
for all $r > R$ and for any non-negative function $u:\N \times M \to \R$ that is
$P$-caloric in $\nint{0}{\lfloor 4 \eta^2 r^2 \rfloor} \times B(x,r) = Q$, we have the regularity estimate
\begin{align*}
\sup_{(k_1,x_1), (k_2,x_2) \in  \nint{\lceil 2 \eta^2 r^2\rceil}{\lfloor 4 \eta^2 r^2 \rfloor} \times B(x,r)} \frac{ \abs{ u(k_1,x_1) - u(k_2,x_2)}} {\left( \max(1, \abs{k_1-k_2} + d(x_1,x_2)\right)^\alpha}
& \le \frac{C} {r^{\alpha}} \sup_{Q} u.
\end{align*}
\end{prop}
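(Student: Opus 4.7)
The plan is to run Moser's classical oscillation argument for parabolic H\"older regularity, adapted to the discrete-time setting and using only the parabolic Harnack inequality \hyperlink{phi}{$H(\eta/2,\eta^2/2,\eta^2,2\eta^2,4\eta^2)$} as the sole analytic input. The proof has two stages: a one-step oscillation-decay lemma and a geometric iteration on nested parabolic cylinders, followed by a short-scale cutoff that produces the $\max(1,\cdot)$ in the denominator.

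First I would prove the oscillation-decay lemma. For a space-time point $(s_0,y_0)$ and a radius $\rho > R$ (where $R$ is the PHI threshold), consider the cylinder $\tilde Q := \nint{s_0}{s_0+\lfloor 4\eta^2\rho^2\rfloor} \times B(y_0,\rho)$ with the corresponding sub-cylinders $\tilde Q_\ominus,\tilde Q_\oplus$ as in Definition~\ref{d-phi}. Let $u \ge 0$ be $P$-caloric on $\tilde Q$ and set $M:=\sup_{\tilde Q} u$, $m:=\inf_{\tilde Q} u$. The two non-negative $P$-caloric functions $M-u$ and $u-m$ satisfy PHI, yielding
\begin{align*}
M - \inf_{\tilde Q_\ominus} u &\le C_H\bigl(M - \sup_{\tilde Q_\oplus} u\bigr),\\
\sup_{\tilde Q_\ominus} u - m &\le C_H\bigl(\inf_{\tilde Q_\oplus} u - m\bigr).
\end{align*}
Adding these and rearranging produces the oscillation decay
\[
\operatorname{osc}_{\tilde Q_\oplus} u \le \theta\,\operatorname{osc}_{\tilde Q} u, \qquad \theta := 1 - C_H^{-1} \in (0,1).
\]

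Next I would iterate. Given two target points $(k_1,x_1),(k_2,x_2)$ in the inner cylinder, set $\tau := \max(1,|k_1-k_2|+d(x_1,x_2))$, and pick one of them as a base-point $(s_0,y_0)$. Choose a fixed integer scaling factor $K > 1$ (depending only on $\eta$) large enough that, up to the rounding in the integer time endpoints, $\tilde Q(s_0,y_0,\rho/K)$ embeds as a "$\oplus$"-subcylinder of some translated $\tilde Q(s,y_0,\rho)$; then build the geometric sequence of nested cylinders $\tilde Q^{(i)}$ of radius $\rho_i = r K^{-i}$ around $(s_0,y_0)$. Iterating the oscillation lemma $i$ times gives $\operatorname{osc}_{\tilde Q^{(i)}} u \le \theta^i \operatorname{osc}_{\tilde Q^{(0)}} u \le \theta^i \sup_Q u$. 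Choosing $i=i(\tau)$ maximal with $\rho_i \ge \tau$, both target points fall in $\tilde Q^{(i)}$, hence $|u(k_1,x_1)-u(k_2,x_2)| \le \theta^i \sup_Q u$. Writing $\theta = K^{-\alpha}$ with $\alpha := \log(1/\theta)/\log K > 0$ and $K^{-i} \asymp \tau/r$ gives the stated estimate when $\tau \ge R$. When $\tau < R$ the iteration terminates after at most $\lceil \log_K(r/R)\rceil$ steps; the residual factor $(R/r)^{\alpha}/( \tau/r)^\alpha \le R^{\alpha}$ is bounded and absorbed into $C$, which is the role of the $\max(1,\cdot)$ cutoff in the denominator.

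The main obstacle will be the geometric bookkeeping in the iteration. Because the time intervals appearing in PHI have the rigid form $\nint{\lceil c_1 \rho^2 \rceil}{\lfloor c_2 \rho^2 \rfloor}$ with integer endpoints, the nesting of cylinders under the dilation $\rho \mapsto \rho/K$ is not strictly self-similar; one must choose $K$ large enough (as a function of $\eta$) for the rounding error to be negligible relative to $\eta^2 \rho_i^2$, so that $\tilde Q^{(i+1)}$ is genuinely contained in $(\tilde Q^{(i)})_\oplus$. A secondary point is that the regularity is claimed for $x_1,x_2$ ranging over all of $B(x,r)$, so the initial cylinder $\tilde Q^{(0)}$ centered near a boundary point must still fit inside the cylinder $Q$ where $u$ is caloric; this is handled by a standard covering/chaining argument joining any point in $B(x,r)$ to a well-interior point by a chain of overlapping cylinders, exactly as in the elliptic chaining used in Corollary~\ref{c-ehi} and Proposition~\ref{p-holder}. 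No new analytic input beyond PHI is needed; the argument is the discrete-time transcription of \cite[Thm.~5.4.7]{Sal02} and of Delmotte's parabolic regularity on graphs.
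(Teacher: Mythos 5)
The paper does not actually supply a proof of this proposition: it notes only that ``the proof given in [Sal02, Theorem~5.4.7] can be adapted for the present setting.'' Your proposal is precisely that adaptation, namely Moser's classical oscillation argument driven solely by PHI, and it is the intended route. The one-step decay calculation is correct: applying \hyperlink{phi}{$H(\cdot)$} to the two nonnegative caloric functions $M-u$ and $u-m$ and adding gives $(M-m)+\operatorname{osc}_{\tilde Q_\ominus}u \le C_H\bigl((M-m)-\operatorname{osc}_{\tilde Q_\oplus}u\bigr)$, so $\operatorname{osc}_{\tilde Q_\oplus}u\le (1-C_H^{-1})\operatorname{osc}_{\tilde Q}u$. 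Your observation that a fixed scaling $K$ depending on $\eta$ suffices to nest $\tilde Q^{(i+1)}\subseteq(\tilde Q^{(i)})_\oplus$ (after absorbing integer-rounding into a larger $R$) is right, and the short-scale cutoff you describe (iteration stops at scale $\max(\tau,R)$; the residual $R^\alpha$ is absorbed into $C$) is exactly the role of $\max(1,\cdot)$ in the denominator. One small point to make explicit: with $\tau := \max(1,|k_1-k_2|+d(x_1,x_2))$ and $\rho_i\ge\tau$, the second target point is spatially inside $B(y_0,\rho_i)$, and for the time window $\lfloor 4\eta^2\rho_i^2\rfloor$ to cover $|k_1-k_2|$ you need $\rho_i\gtrsim 1/\eta$ when $\tau$ is small; imposing $\rho_i\ge\max(\tau,C_\eta)$ only costs a constant (since $\tau\ge 1$) and is absorbed into $C$.

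The boundary issue you flag is real, but your remedy (chaining) does not fix it as stated. The proposition's supremum is over $x_1,x_2\in B(x,r)$, yet for $x_1$ within distance $R$ of $\partial B(x,r)$ there is no admissible parabolic cylinder (radius $>R$) centered near $x_1$ inside $Q$, and chaining through overlapping cylinders reaches only $B(x,cr)$ for fixed $c<1$, with constants blowing up as $c\to 1$. Compare the elliptic analogue Proposition~\ref{p-holder}, which correctly states the estimate on $B(x,cr)$ with $c<1$. This is most plausibly a typo in the paper's statement rather than a defect in your approach; a correct version would replace $B(x,r)$ by $B(x,\zeta r)$ for a fixed $\zeta\in(0,1)$, and then your proof goes through verbatim.
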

Note that we do not obtain continuity, because we do not have H\'{o}lder continuity estimate at arbitrarily small distances.
Another application of elliptic Harnack inequality is Liouville property for harmonic functions that was
shown in Proposition \ref{p-liov}.

Next, we turn attention to application of two sides Gaussian estimates \hyperlink{ge}{$(GE)$}.
Of course, the estimates given by \hyperlink{ge}{$(GE)$} has enough information to determine whether or not the the random walk is transient.
The estimate given by \cite[Proposition 4.3]{Del99} can be easily generalized to metric measure spaces in which case we obtain
\begin{prop} \label{p-trans}
 Let $(M,d,\mu)$ be a quasi-$b$-geodesic metric measure space satisfying \ref{doub-loc} and $\operatorname{diam}(M)=+\infty$. Consider a $\mu$-symmetric Markov operator $P$ that is
 $(h,h')$-compatible with $(M,d,\mu)$ for some $h >b$ and
 whose kernel $p_k$ satisfies \hyperlink{ge}{$(GE)$}. Then the random walk corresponding to $P$ is transient if and only if
 \begin{equation} \label{e-crit}
\sum_{n=1}^\infty \frac{n}{V(x,n)} < +\infty
 \end{equation}
for some $x \in M$.
\end{prop}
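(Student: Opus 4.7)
The plan is to reduce transience to the finiteness of the Green function $G(x,x)=\sum_{n=0}^\infty p_n(x,x)$, then use the two sided Gaussian estimates together with volume doubling to compare this sum with the series in \eqref{e-crit}.

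First I would establish the standard dichotomy: the Markov chain corresponding to $P$ is transient if and only if $G(x,x)<\infty$ for some (equivalently, every) $x\in M$. Because $P$ is $\mu$-symmetric, this is proved exactly as in the discrete case via the identity
\[
G(x,x)=\mathbb{E}_x\!\left[\sum_{n=0}^\infty \one_{\{X_n=x\}}\right],
\]
interpreted in the appropriate local-time/occupation sense, combined with the strong Markov property. Irreducibility across $M$ is provided by the lower bound in \hyperlink{ge}{$(GE)$}: for every pair $x,y\in M$ and every $n$ large enough with $d(x,y)\le c_3 n$ we have $p_n(x,y)>0$, so that any non-trivial invariant set has full $\mu$-measure. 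Since we work on a quasi-$b$-geodesic space with $\operatorname{diam}(M)=+\infty$, the chain cannot be absorbed in a bounded region, so $G(x,x)<\infty$ for one $x$ implies the same for all $x$, and is equivalent to transience.

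Next I would apply the Gaussian estimates of Definition \ref{d-ge}. By \ref{gue} and \ref{gle} (taking $y=x$ so that the constraint $d(x,y)\le c_3 n$ is trivially satisfied), there exist constants $c,C>0$ such that
\[
\frac{c}{V(x,\sqrt{n})}\le p_n(x,x)\le \frac{C}{V(x,\sqrt{n})}
\]
for all $n\in\mathbb{N}^*$ with $n\ge 2$. Summing over $n$ yields
\[
G(x,x)<\infty \;\Longleftrightarrow\; \sum_{n=2}^\infty \frac{1}{V(x,\sqrt{n})}<\infty.
\]

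Finally, I would show that this last series is comparable to the series in \eqref{e-crit} by a dyadic grouping. For $n$ with $4^k\le n<4^{k+1}$ we have $2^k\le \sqrt{n}<2^{k+1}$, so by \ref{doub-loc} and Lemma \ref{l-doub-prop} there exists $C_1>0$ such that
\[
C_1^{-1} V(x,2^k)\le V(x,\sqrt{n})\le C_1 V(x,2^k).
\]
Therefore
\[
\sum_{n=2}^\infty \frac{1}{V(x,\sqrt{n})} \asymp \sum_{k=0}^\infty \frac{4^{k+1}-4^k}{V(x,2^k)} \asymp \sum_{k=0}^\infty \frac{4^k}{V(x,2^k)}.
\]
Applying the same dyadic grouping to $\sum_{n\ge 1} n/V(x,n)$, with $n\in[2^k,2^{k+1})$, gives
\[
\sum_{n=1}^\infty \frac{n}{V(x,n)} \asymp \sum_{k=0}^\infty \frac{2^k\cdot 2^k}{V(x,2^k)}=\sum_{k=0}^\infty \frac{4^k}{V(x,2^k)},
\]
so the two series converge or diverge together, which completes the proof. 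The only genuinely delicate step is the first one, establishing the transience/Green-function equivalence in a non-discrete state space; the rest is a calculus exercise using volume doubling.
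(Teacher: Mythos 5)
Your plan is sound and reaches the right criterion via the right route (Green function comparison with the two-sided Gaussian bound, then a dyadic rearrangement), which is exactly the approach the paper gestures at by citing Delmotte. The dyadic comparison of $\sum_n 1/V(x,\sqrt n)$ with $\sum_n n/V(x,n)$ is correct. Two points need repair before this is a complete proof.

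First, and you flag this yourself as the delicate step, the identity $G(x,x)=\mathbb{E}_x\bigl[\sum_{n\ge 0}\one_{\{X_n=x\}}\bigr]$ is simply false outside of discrete state spaces: for an $(h,h')$-compatible walk on, say, a manifold, $\one_{\{X_n=x\}}=0$ almost surely for every $n\ge 1$, so the right side equals $1$ while the left side is the strictly larger quantity $\sum_n p_n(x,x)$. There is also no generically defined ``local time at a point'' to appeal to for such chains. The correct substitute is to work with occupation time of a small ball. Fix $B=B(x,h')$. The walk is transient (in the sense of a $\mu$-irreducible chain, and the GLE lower bound does give $\mu$-irreducibility as you note) if and only if
\[
\sum_{n\ge 0}\mathcal{P}^n(x,B)=\sum_{n\ge 0}\int_B p_n(x,y)\,\mu(dy)<\infty.
\]
By $(GE)$ and volume doubling, for $n\ge \max(2,h'/c_3)$ and $y\in B(x,h')$ one has $p_n(x,y)\asymp 1/V(x,\sqrt n)$ with constants depending only on $h'$, hence
\[
\sum_{n\ge 2}\mathcal{P}^n(x,B)\asymp \mu(B)\sum_{n\ge 2}\frac{1}{V(x,\sqrt n)},
\]
and the finitely many small $n$ contribute nothing to convergence. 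This cleanly replaces your first step and feeds directly into your dyadic calculation. The fact that the criterion holds for one $x$ if and only if for all $x$ is then the usual zero-one dichotomy for $\mu$-irreducible chains, not a consequence of $\operatorname{diam}(M)=\infty$ as written.

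Second, you invoke Lemma \ref{l-doub-prop} in the dyadic step, but that lemma requires \ref{doub-inf}, which is not among the stated hypotheses of the proposition. You should first deduce \ref{doub-inf} from $(GLE)$ and \ref{doub-loc} via Lemma \ref{l-gevd}; only then is the comparison $V(x,\sqrt n)\asymp V(x,2^k)$ for $4^k\le n<4^{k+1}$ legitimate. This is a small but genuine omission.
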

It is easy to see that the convergence of the series in \eqref{e-crit} does not depend on the choice of $x \in M$. Unless the space is discrete, we do not have a `Green's function' as the Green operator
$\Delta^{-1} = \sum_{i=0}^\infty P^i$ does not have a kernel as there is `delta mass' singularity at the starting point. However, we may consider the  off-diagonal part of the Green operator given by the
``Green's function''
 $G(x,y) = \sum_{i=1}^\infty p_i(x,y)$. The estimate given by \cite[Proposition 4.3]{Del99} can be again generalized as follows.
 \begin{prop}
Under the assumptions of Proposition \ref{p-trans}, there exists $C>0$ such that
 \begin{equation} \label{e-gf}
C \sum_{n= \lceil d(x,y) \rceil }^\infty \frac{n}{V(x,n)}  \le G(x,y) := \sum_{i=1}^\infty p_i(x,y) \le C \sum_{n= \lceil d(x,y) \rceil }^\infty \frac{n}{V(x,n)}
 \end{equation}
for some $x \in M$ and for all $y \in M$ with $d(x,y) > h'$.
\end{prop}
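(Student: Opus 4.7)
The proof rests on a change of variable $i \leftrightarrow n \approx \sqrt{i}$ combined with dyadic estimation of the Gaussian factors. Write $d = d(x,y)$ throughout. Note that \hyperlink{ge}{$(GE)$} implies \ref{doub-inf} (Lemma \ref{l-gevd}), so all estimates of \eqref{e-vd1} and \eqref{e-vd2} are available.

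The first step is the auxiliary comparison
\[
\sum_{i=\lceil N^2 \rceil}^{\infty} \frac{1}{V(x,\sqrt{i})} \asymp \sum_{n=\lceil N \rceil}^{\infty} \frac{n}{V(x,n)}, \qquad N \ge 1,
\]
with constants depending only on the doubling constants. To see this, group indices $i$ with $\sqrt{i} \in [n, n+1)$: there are $\asymp n$ such $i$, and for each of them $V(x,\sqrt{i})$ is comparable to $V(x,n)$ by \eqref{e-vd1}. Thus it suffices to show that $G(x,y) \asymp \sum_{i \ge d^2}\frac{1}{V(x,\sqrt{i})}$.

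For the upper bound I would apply \ref{gue} and split $G(x,y) = S_1 + S_2$ along $i < \lceil d^2 \rceil$ and $i \ge \lceil d^2 \rceil$. The second piece satisfies
\[
S_2 \le C_1 \sum_{i \ge d^2} \frac{1}{V(x,\sqrt{i})} \asymp \sum_{n \ge \lceil d \rceil} \frac{n}{V(x,n)}
\]
by the auxiliary comparison. For $S_1$, group dyadically as $i \in [2^k, 2^{k+1})$ with $2^k \le d^2$: each block contributes at most
\[
\frac{C_1 \cdot 2^k}{V(x, 2^{k/2})} \exp\!\left(-\frac{d^2}{2 C_2 2^k}\right).
\]
Using \eqref{e-vd1} to write $V(x,d)/V(x,2^{k/2}) \le C_D (d/2^{k/2})^{\delta}$, the bound becomes $\frac{C}{V(x,d)}(d^2)^{\delta/2} 2^{k(1-\delta/2)} \exp(-d^2/(2C_2 2^k))$. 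Setting $s = d^2/2^k \ge 1$, this is $\frac{C\, d^2}{V(x,d)} \, s^{(\delta-2)/2} e^{-s/(2C_2)}$; summing over dyadic blocks gives a geometric tail in $s$, hence $S_1 \lesssim d^2/V(x,d)$. Finally, $(VD)_\infty$ gives $\sum_{n=\lceil d\rceil}^{2\lceil d\rceil} n/V(x,n) \gtrsim d^2/V(x,d)$, so $S_1$ is absorbed into the same right-hand side.

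For the lower bound I would use \ref{gle} in the regime where both $d \le c_3 i$ and the exponential factor is bounded below by $e^{-1}$; the second condition requires $i \ge d^2/c_2$, and for $d$ large enough this automatically implies the first. Hence
\[
G(x,y) \ge \sum_{i \ge \lceil d^2/c_2 \rceil} \frac{c_1 e^{-1}}{V(x,\sqrt{i})} \asymp \sum_{n \ge \lceil d/\sqrt{c_2} \rceil} \frac{n}{V(x,n)},
\]
and another application of $(VD)_{\infty}$ shows that switching the lower limit from $\lceil d/\sqrt{c_2} \rceil$ to $\lceil d \rceil$ changes the sum by at most a constant factor (the missing tail, if any, is dominated by $d^2/V(x,d)$, which is in turn $\lesssim \sum_{n \ge d} n/V(x,n)$). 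For the finitely many values of $d$ in $(h', d_0]$ where the combined condition fails, \ref{gle} at a single well-chosen time $i_0 \asymp d_0^2$ combined with \eqref{e-vd1} yields the bound directly.

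The main technical obstacle is the dyadic bookkeeping in the short-time piece $S_1$ of the upper bound, where the polynomial growth of $V(x,d)/V(x,\sqrt{i})$ coming from \ref{doub-inf} must be balanced against Gaussian decay; the choice of exponent $\delta$ from \eqref{e-vd1} is harmless because the exponential always dominates any polynomial. Beyond this, the argument is a clean dyadic comparison, exactly parallel in spirit to the graph case treated by Delmotte.
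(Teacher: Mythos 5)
The paper states this proposition without proof, deferring to Delmotte's \cite[Proposition 4.3]{Del99}, so there is no proof in the text to compare against; I can only assess your argument on its own terms. Your overall route — the change of variable $i\leftrightarrow n^2$, the split of $G$ at $i\approx d^2$, the dyadic estimation of the short-time piece $S_1$ balancing polynomial doubling growth against Gaussian decay, and the use of $(GLE)$ in the regime $i\gtrsim d^2/c_2$ for the lower bound — is sound, and it is the natural generalization of Delmotte's graph argument. The auxiliary comparison $\sum_{i\geq N^2}1/V(x,\sqrt i)\asymp\sum_{n\geq N}n/V(x,n)$ and the bound $S_1\lesssim d^2/V(x,d)\lesssim\sum_{n=\lceil d\rceil}^{2\lceil d\rceil}n/V(x,n)$ are both correct; note also that $p_1(x,y)=0$ since $d>h'$, so you may start $S_1$ at $i=2$ where $(GUE)$ is available.

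There is, however, a genuine gap in your treatment of the bounded case $h'<d\le d_0$. First, ``finitely many values of $d$'' is incorrect: on a continuous state space, $d=d(x,y)$ ranges over a continuum as $y$ varies; the point should be that $\lceil d\rceil$ takes finitely many values (and the constants must be uniform over all such $y$). Second, and more substantively, $(GLE)$ at a single time $i_0$ only gives $G(x,y)\geq p_{i_0}(x,y)\gtrsim 1/V(x,\sqrt{i_0})$, which cannot by itself dominate the possibly unbounded tail $\sum_{n\geq\lceil\sqrt{i_0}\rceil}n/V(x,n)$ appearing on the left side of \eqref{e-gf}. The correct fix is to run the same tail argument you already used for large $d$: fix $i_0:=\max(2,\lceil d_0^2/c_2\rceil,\lceil d_0/c_3\rceil)$ so that for every $y$ with $d\le d_0$ and every $i\geq i_0$ both conditions of $(GLE)$ hold with the exponential bounded below; then
\[
G(x,y)\ \geq\ \sum_{i\geq i_0} p_i(x,y)\ \gtrsim\ \sum_{i\geq i_0}\frac{1}{V(x,\sqrt i)}\ \asymp\ \sum_{n\geq\lceil\sqrt{i_0}\rceil}\frac{n}{V(x,n)},
\]
and the missing head $\sum_{n=\lceil d\rceil}^{\lceil\sqrt{i_0}\rceil-1}n/V(x,n)$ is at most $\lceil\sqrt{i_0}\rceil^2/V(x,1)\lesssim i_0/V(x,\sqrt{i_0})\lesssim\sum_{n\geq\lceil\sqrt{i_0}\rceil}n/V(x,n)$ by $(VD)_{\mathrm{loc}}$ (Lemma \ref{l-vloc}) and the block estimate $\sum_{n=M}^{2M}n/V(x,n)\gtrsim M^2/V(x,M)$. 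So the gap is repairable, but the single-time bound as written does not close the argument.
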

As noted in \cite[Theorem 9.1]{HS93}, the Gaussian estimate is sufficient to prove law of iterated logarithm in a weak form.
The proof in \cite{HS93} can be generalized for metric measure spaces.
\begin{prop}
 Under the assumptions of Proposition \ref{p-trans}, there exist $C>0$ such that for all starting points $X_0 \in M$
 \[
  C^{-1} \le \limsup \frac{d(X_0,X_n)}{ \left( n \log \log n\right)^{1/2}}\ \le C
 \]
almost surely, where $(X_k)_{k \in N}$ is the Markov chain corresponding to $P$.
\end{prop}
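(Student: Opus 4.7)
The plan is to prove the two inequalities separately, in both cases combining the two-sided Gaussian estimate \hyperlink{ge}{$(GE)$} with the large scale doubling \ref{doub-inf} and a Borel--Cantelli argument, exactly in the spirit of \cite[Theorem 9.1]{HS93} but carried out on the metric measure space $(M,d,\mu)$.

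\textbf{Upper bound.} First I would establish a Gaussian tail estimate for $\PP_x(d(x,X_n) \ge r)$. Integrating the upper bound in \hyperlink{ge}{$(GE)$} over the dyadic annular decomposition $\{2^{j-1}r < d(x,y) \le 2^j r\}$ and applying \eqref{e-vd1} to handle the volume ratios (this is precisely the computation carried out in \eqref{e-dlb3}), one obtains constants $C_1,c_1>0$ such that
\[
\PP_x\bigl(d(x,X_n) \ge r\bigr) \;\le\; C_1 \exp\!\bigl(-c_1 r^2/n\bigr)
\]
for all $x \in M$, $n \ge 2$ and $r \ge \sqrt{n}$. Then I would upgrade this to the maximal inequality $\PP_x(\sup_{m\le n} d(x,X_m) \ge r) \le C_2 \exp(-c_2 r^2/n)$ by a standard first-passage argument: writing $\tau_r = \min\{m : d(x,X_m) \ge r\}$, decomposing on the value of $\tau_r$, and using the strong Markov property together with the pointwise Gaussian tail bound above applied to the residual $n-\tau_r$ steps. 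Specialising to $r_n = \lambda \sqrt{n\log\log n}$ along the dyadic subsequence $n_k = 2^k$ gives $\PP_x(\sup_{m\le n_{k+1}} d(x,X_m) \ge r_{n_k}) \le C_2 (\log n_k)^{-c_2\lambda^2/8}$. Picking $\lambda$ large enough that this is summable in $k$ and applying Borel--Cantelli yields $\limsup_n d(x,X_n)/\sqrt{n \log \log n} \le C$ almost surely.

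\textbf{Lower bound.} For the matching lower bound I would work along an exponentially growing subsequence $n_k = 2^{2^k}$, so that $n_{k+1}-n_k \sim n_{k+1}$ and $n_k = o(n_{k+1})$. By the Markov property and the lower bound in \hyperlink{ge}{$(GE)$} together with reverse volume doubling (Lemma \ref{l-rvd}) applied around $X_{n_k}$,
\[
\PP_x\bigl(d(X_{n_k},X_{n_{k+1}}) \ge \lambda\sqrt{n_{k+1}\log\log n_{k+1}} \bigm| \mathcal{F}_{n_k}\bigr) \;\ge\; c_3 (\log n_{k+1})^{-C_3\lambda^2},
\]
uniformly in $X_{n_k}$, provided $\lambda\sqrt{(\log\log n_{k+1})/n_{k+1}} \le c_3$ (which holds for all large $k$). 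Choosing $\lambda$ small so that $C_3\lambda^2 < 1$ makes the series $\sum_k (\log n_{k+1})^{-C_3\lambda^2}$ diverge, and the conditional Borel--Cantelli lemma gives $d(X_{n_k},X_{n_{k+1}}) \ge \lambda\sqrt{n_{k+1}\log\log n_{k+1}}$ infinitely often a.s. Combining this with the already established upper bound $d(x,X_{n_k}) \le 2C\sqrt{n_k\log\log n_k}$ for all large $k$, together with the triangle inequality and $n_k/n_{k+1} \to 0$, gives $\limsup_n d(x,X_n)/\sqrt{n\log\log n} \ge \lambda/2$ almost surely.

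\textbf{Main obstacle.} The two delicate points I anticipate having to spell out are (i) the passage from the pointwise Gaussian tail estimate to the maximal inequality in discrete time, where one cannot rely on path continuity as in the diffusion setting of \cite{HS93} and instead must use the strong Markov property combined with the uniformity of the Gaussian upper bound; and (ii) verifying that the conditional probability in the lower bound really is uniformly bounded below by $c_3 (\log n_{k+1})^{-C_3\lambda^2}$ regardless of the position of $X_{n_k}$—this uses the uniform-in-base-point character of \hyperlink{ge}{$(GE)$} and Lemma \ref{l-rvd} to rule out degeneracy of the volume of annuli around $X_{n_k}$, and requires the condition $d(X_{n_k},y)\le c_3 n_{k+1}$ imposed by \ref{gle} to be checked for the integration range.
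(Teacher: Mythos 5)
The paper does not give a proof here; it cites \cite[Theorem 9.1]{HS93} and states that the argument there generalizes to the present metric measure setting. Your strategy — a Gaussian tail estimate plus a first-passage maximal inequality and dyadic Borel--Cantelli for the upper bound, then the Markov property plus \hyperlink{gle}{$(GLE)$}, Lemma~\ref{l-rvd} and conditional Borel--Cantelli for the lower bound — is indeed the standard route going back to that reference, and the upper bound half of your sketch is sound.

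However, the lower bound has a genuine error in the choice of subsequence. With $n_k = 2^{2^k}$ one has $\log n_{k+1} = 2^{k+1}\log 2$, so the conditional probability you derive is
\[
\PP\bigl(d(X_{n_k},X_{n_{k+1}}) \ge \lambda\sqrt{n_{k+1}\log\log n_{k+1}} \mid \mathcal F_{n_k}\bigr) \;\ge\; c_3\,(\log n_{k+1})^{-C_3\lambda^2} \;=\; c_3\,(2^{k+1}\log 2)^{-C_3\lambda^2},
\]
which is a \emph{geometric} sequence in $k$ with ratio $2^{-C_3\lambda^2}<1$, hence summable for every $\lambda>0$. Choosing $\lambda$ small so that $C_3\lambda^2<1$ does not help: the divergence you claim never occurs. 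The remedy is to use an exponential subsequence $n_k=\theta^k$ with $\theta>1$ large, not a doubly exponential one. Then $\log n_{k+1} \sim (k+1)\log\theta$, the probabilities behave like $k^{-C_3\lambda^2/(1-\theta^{-1})}$, and the series genuinely diverges once $\lambda$ is small enough. Your stated reason for wanting $n_k = o(n_{k+1})$ — to make $d(x,X_{n_k})$ negligible relative to $\lambda\sqrt{n_{k+1}\log\log n_{k+1}}$ via the already-proved upper bound — does not actually require $n_k/n_{k+1}\to 0$; a fixed small ratio $1/\theta$ suffices, since then $d(x,X_{n_k})/\sqrt{n_{k+1}\log\log n_{k+1}} \le 2C/\sqrt{\theta}$, which can be forced below $\lambda/2$ by taking $\theta$ large after $\lambda$ is fixed. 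The rest of your outline (the annulus estimate using \hyperlink{gle}{$(GLE)$}, the use of Lemma~\ref{l-rvd} to get a uniform mass lower bound on the annulus, the verification of the range condition $d\le c_3 n$, and the conditional Borel--Cantelli) is correct and matches the standard argument once the subsequence is fixed.
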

We refer the reader to \cite[Section 9]{HS93} for other probabilistic applications in similar spirit.

We sketch a possible application to mixing times of Markov chains that will be developed elsewhere.
If the space has finite diameter the techniques developed here can be used to prove upper and lower bounds on \emph{mixing times}.
In this case $\mu$ is a finite measure on $M$ and can be normalized if necessary to be the stationary probability measure.
Roughly speaking, in this case for $(h,h)$-compatible Markov operator on a space with diameter $D$, it takes $(D/h)^2$ steps of the Markov chain to get close to the stationary distribution $\mu$.
The Poincar\'{e} inequality and Gaussian upper bounds can be used to obtain upper bounds on mixing time as outlined in \cite[Lemma 2.1 and Remark 1 after Lemma 2.2]{DS94}.
For lower bounds on the mixing time one would need Gaussian lower bounds. We plan to address these questions in a sequel
and obtain results complementary to those in \cite{LM10}. We refer the reader to \cite{DLM11,DLM12} for other recent works in this direction.

\section{Harmonic functions with polynomial volume growth}
In \cite{CM97}, Colding and Minicozzi proved that the space of harmonic functions with polynomial volume growth with fixed rate on a manifold satisfying volume doubling and Poincar\'{e} inequality is finite dimensional.
As a corollary, they prove a conjecture of S. T. Yau on manifolds that asserts the above property for Riemannian manifolds with non-negative Ricci curvature.
A recent surprising application of this result is an alternate proof of Gromov's theorem on groups of polynomial volume growth due to Kleiner \cite{Kle08}. This new proof avoids the solution to Hilbert's fifth problem
(Montgomery-Zippin-Yamabe structure theory). To precisely state a theorem we need the following definition.
\begin{definition} \label{d-poly}
For a metric measure space $(M,d,\mu)$ and a $\mu$-symmetric Markov operator $P$ on $M$, we define the space of $P$-harmonic functions  with growth rate $d$ as the vector space $\mathcal{H}_d(M,P)$ consisting
of all $P$-harmonic functions $u$ such that there exists $C >0, p \in M$ (depending on $u$) such that
$\abs{u(x)} \le C (1 + d(x,p)^\gamma)$ for all $x \in M$.
\end{definition}
We have the following theorem that extends the result of Colding and Minicozzi to random walks on metric measure spaces.
\begin{theorem}\label{t-cm}
Let $(M,d,\mu)$ be a quasi-geodesic metric measure spaces satisfying  $\operatorname{diam}(M)=+\infty$, volume doubling hypotheses \ref{doub-loc}, \ref{doub-inf} and Poincar\'{e} inequality \ref{poin-mms}.
Let $P$ be a Markov operator that is $(h,h')$-compatible with $(M,d,\mu)$.
Then the space of $P$-harmonic functions $\mathcal{H}_d(M,P)$ with  a fixed growth rate $d$ is finite dimensional for any $d \ge 0$.
\end{theorem}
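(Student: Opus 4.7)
The proof follows the approach of Colding and Minicozzi \cite{CM97}, who established a quantitative dimension bound in an abstract setting requiring only three ingredients: (i) large scale volume doubling, (ii) a pointwise mean value inequality for harmonic functions, and (iii) a Caccioppoli-type reverse Poincar\'{e} estimate. All three are available in the present discrete-time, non-local setting. Ingredient (i) is assumed. For (ii), Jensen's inequality applied to $P$ shows that $|u|$ is non-negative and $P$-subharmonic whenever $u$ is $P$-harmonic, so Lemma \ref{l-2toinf} combined with \eqref{e-vd1} and \eqref{e-vd2} gives
\[
u(y)^2 \le \frac{C_{\mathrm{MV}}}{V(x_0, R)} \int_{B(x_0, R)} u^2 \, d\mu, \qquad y \in B(x_0, R/2),
\]
for $R$ sufficiently large compared with $h'$. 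Ingredient (iii) is Lemma \ref{l-rp}. Finally, the elliptic Harnack inequality (Theorem \ref{t-ehi}) together with $\operatorname{diam}(M) = +\infty$ forces any $P$-harmonic function vanishing on an open ball to vanish identically, so the bilinear forms $\langle u, v\rangle_R := \int_{B(x_0, R)} u v \, d\mu$ are positive definite on $\mathcal{H}_d(M, P)$.

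Fix $x_0 \in M$. Suppose $K \subseteq \mathcal{H}_d(M, P)$ is a subspace of finite dimension $n$, and for each $R > R_0 = R_0(h')$ choose a basis $u_1^R, \ldots, u_n^R$ of $K$ that is orthonormal with respect to $\langle \cdot, \cdot\rangle_R$. The standard reproducing-kernel identity
\[
\sum_{i=1}^n u_i^R(y)^2 \;=\; \sup_{v \in K \setminus \{0\}} \frac{v(y)^2}{\norm{v}_R^2},
\]
combined with (ii), produces the pointwise estimate $\sum_i u_i^R(y)^2 \le C_1 V(x_0, R)^{-1}$ for $y \in B(x_0, R/2)$. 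Integrating over $B(x_0, R/2)$ and invoking large scale volume doubling yields
\[
\sum_{i=1}^n \norm{u_i^R}_{R/2}^2 \;\le\; C_2,
\]
uniformly in $n$ and $R$. Equivalently, the symmetric operator on $K$ induced by the quadratic form $Q_{R/2}(v) := \norm{v}_{R/2}^2$ has trace at most $C_2$ when computed in any $Q_R$-orthonormal basis.

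To conclude, one must convert this trace bound into a bound on $n$, and here polynomial growth enters through the Colding--Minicozzi pigeonhole/determinant argument. Polynomial growth of degree $\le d$ together with \eqref{e-vd1} gives $\norm{v}_R^2 \le A_v\, R^{2d + \delta}$ for each $v \in K$ and $R$ large, so $\det(Q_R)$ grows at most polynomially in $R$. Along the geometric sequence $R_k = 2^k R_0$, the product of successive ratios $\det(Q_{R_{k+1}})/\det(Q_{R_k})$ is polynomially bounded, so by AM--GM there exists $k$ for which all generalized eigenvalues of $(Q_{R_k/2}, Q_{R_k})$ are bounded below by a constant $c = c(d, \mathrm{doubling})$. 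This furnishes the matching lower bound $\sum_i \norm{u_i^{R_k}}_{R_k/2}^2 \ge c\, n$, and combining with the earlier upper bound gives $n \le C_2/c =: N(d)$. Since this bounds the dimension of every finite-dimensional subspace of $\mathcal{H}_d(M, P)$, it bounds $\dim \mathcal{H}_d(M, P)$ itself.

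The principal difficulty is the pigeonhole/determinant step: polynomial growth holds for each function in $K$ with constants depending on the function, so producing a single radius $R_k$ at which the quadratic forms $Q_{R_k/2}$ and $Q_{R_k}$ are comparable on \emph{all} of $K$ uses the finite-dimensionality of $K$ in an essential way, through the compactness of the unit sphere in $K$. A secondary, purely technical issue in our discrete-time, non-local setting is that Lemmas \ref{l-2toinf} and \ref{l-rp} require $h'$-buffers around the ambient ball; these are harmless once $R \gg h'$ and are absorbed into the constants, leaving the final bound $N(d)$ uniform over all subspaces $K \subseteq \mathcal{H}_d(M, P)$.
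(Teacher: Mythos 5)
Your proof is correct but takes a genuinely different route from the paper's. The paper follows the Colding--Minicozzi covering argument directly: Proposition \ref{p-4.16} selects a scale $\Omega^m$ and an orthonormal family $v_1,\ldots,v_l$ with controlled growth from $B(p,\Omega^m)$ to $B(p,\Omega^{m+1})$; the reverse Poincar\'{e} inequality (Lemma \ref{l-rp}) converts that into gradient control; and Proposition \ref{p-on} (a net/discretization count of averages using only Poincar\'{e} and doubling) caps $l$. Your argument is instead the trace method of P.~Li \cite{Li97}, which Delmotte \cite{Del98} adapted to graphs (as the paper itself remarks just after stating Theorem \ref{t-cm}): the $L^2$-to-$L^\infty$ mean value inequality for subharmonic functions (Lemma \ref{l-2toinf}) applied to $|u|$, together with the reproducing-kernel identity, gives a uniform upper bound on $\mathrm{tr}\bigl(Q_R^{-1}Q_{R/2}\bigr)$, while polynomial growth, the telescoping determinant identity along $R_k=2^kR_0$, and AM--GM furnish a scale at which this trace is $\ge cn$ with $c=c(d,\delta)$. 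This dispenses entirely with the reverse Poincar\'{e} inequality---you list Lemma \ref{l-rp} as an ingredient but never invoke it---and does not need Propositions \ref{p-on} or \ref{p-4.16}; conversely, the paper never uses Lemma \ref{l-2toinf}. Both routes are valid; yours is arguably cleaner once the mean value inequality is available, whereas the paper's works more directly from Poincar\'{e} and doubling without passing through an $L^\infty$ estimate.

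A few small inaccuracies are worth flagging. First, the elliptic Harnack inequality together with $\operatorname{diam}(M)=\infty$ does not obviously give unique continuation for $P$-harmonic functions in this non-local discrete-time setting; fortunately you do not need it. For a fixed finite-dimensional $K\subset\mathcal{H}_d(M,P)$, each nonzero $u\in K$ has positive $L^2$ mass on some ball, and finite-dimensionality (compactness of the unit sphere in $K$) yields a single threshold $R_0(K)$ beyond which $Q_R$ is positive definite on $K$; since $R_0$ drops out of the final bound $N(d)$, taking $R_0=R_0(K)$ rather than $R_0(h')$ costs nothing. Second, AM--GM does not give that \emph{all} generalized eigenvalues of $(Q_{R_k/2},Q_{R_k})$ are bounded below by $c$---a single eigenvalue can be as small as $c^n$ with the geometric mean still $\ge c$. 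What you in fact use, and what is true, is that the arithmetic mean dominates the geometric mean, so $\mathrm{tr}\bigl(Q_{R_k}^{-1}Q_{R_k/2}\bigr)\ge cn$. Finally, Lemma \ref{l-2toinf} controls an essential supremum, while the reproducing-kernel identity requires a pointwise evaluation of $\sum_i u_i^R(y)^2$; this gap closes because $u(y)=Pu(y)$ is an integral average of $u$ over $B(y,h')$, so the pointwise value is controlled by the essential supremum on a slightly larger ball.
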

The proof of Colding and Minicozzi's theorem in \cite{CM97} relies on three ingredients: volume doubling hypotheses \ref{doub-glob}, a Poincar\'{e} inequality \ref{e-opoin} and a reverse Poincar\'{e} inequality for harmonic functions.
We have all the three ingredients as we showed the reverse Poincar\'{e} inequality in Lemma \ref{l-rp}.
A caveat is that we have to rely on weaker versions of all the three ingredients but nevertheless we will see that Theorem \ref{t-cm} can be proved using the techniques introduced of \cite{CM97}.
T. Delmotte adapted an alternate approach due to P. Li \cite{Li97} to prove a similar statement for random walks on graphs satisfying doubling and Poincar\'{e} inequality \cite{Del98}.

The next proposition below is  a slightly weaker version of \cite[Proposition 2.5]{CM97}. 
\begin{prop}\label{p-on}
Let $(M,d,\mu)$ be a quasi-geodesic metric measure spaces satisfying  $\operatorname{diam}(M)=+\infty$, volume doubling hypotheses \ref{doub-loc}, \ref{doub-inf} and Poincar\'{e} inequality \ref{poin-mms} 
and let $P$ be a Markov operator that is $(h,h')$-compatible with $(M,d,\mu)$.
There exists $\epsilon \in (0,1)$ such that for all $p \in M$, for all $k \ge 1$ satisfying $r \ge k/\epsilon$ and for all functions $f_1,f_2,\ldots,f_n \in L^\infty_{\operatorname{loc}}(M)$ satisfying 
\begin{equation}
 \label{e-cnd2} \int_{B(p,r)} f_i^2 \, d\mu = V(p,r)
\end{equation}
for all $i=1,2,\ldots,n$;
 \begin{equation}
 \label{e-cnd1} \abs{\int_{B(p,r)} f_i f_j \, d\mu} < \frac{ V(p,r)}{2}
\end{equation}
for all $1 \le i < j \le n$; and
\begin{equation} \label{e-cnd3} 
       \int_{B(p,2r)} \left( f_i^2 + (2r)^2 \abs{\nabla_P f_i}^2 \right)\,d\mu \le k^2 V(p,r)
      \end{equation}
for all $i=1,2,\ldots,n$, we have $n \le \mathcal{N}$, where $\mathcal{N}$ depends  on $k$ but does not depend on $r \ge k/\epsilon$ or $p \in M$.
\end{prop}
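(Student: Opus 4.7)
The plan is to adapt the strategy of Colding and Minicozzi~\cite{CM97}: approximate each $f_i$ by a step-function $\tilde f_i$ that lives in a finite-dimensional space (whose dimension depends only on $k$ and the doubling/Poincar\'e constants), show that the near-orthogonality structure of (\ref{e-cnd2})--(\ref{e-cnd1}) is inherited by the $\tilde f_i$, and then bound $n$ via a spherical packing argument for near-orthogonal vectors. First I would fix a small absolute constant $\epsilon_0 > 0$ and set the covering scale $s := \epsilon_0/k$; with $\epsilon := \epsilon_0/r_0$, where $r_0$ is the threshold at which \ref{poin-mms} is available via Lemma \ref{l-pflex}, the hypothesis $r \ge k/\epsilon$ guarantees $sr \ge r_0$. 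I would then take a maximal $sr$-separated subset $\{x_j\}_{j=1}^N$ of $B(p,r)$; the disjointness of $\{B(x_j, sr/2)\}$ inside $B(p, 2r)$ combined with the lower bound $V(x_j, sr/2) \ge c\, s^{\delta_0} V(p,2r)$ coming from \eqref{e-vd2} yields $N \le C(\epsilon_0)\, k^{\delta_0}$, with $\delta_0$ depending only on the doubling constants. Finally I partition $B(p,r)$ into disjoint cells $\{A_j\}_{j=1}^N$ with $A_j \subseteq B(x_j, sr)$, set $a_{i,j} := (f_i)_{B(x_j, sr)}$, and define $\tilde f_i := \sum_{j=1}^N a_{i,j}\, \chi_{A_j}$.

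Applying \ref{poin-mms} ball by ball together with the finite-overlap property of $\{B(x_j, \kappa sr)\}$ and the energy bound (\ref{e-cnd3}) gives
\begin{equation*}
\int_{B(p,r)} (f_i - \tilde f_i)^2 \, d\mu \;\le\; C (sr)^2 \int_{B(p,2r)} |\nabla_P f_i|^2 \, d\mu \;\le\; C' \epsilon_0^2 \, V(p,r),
\end{equation*}
independently of $k$. Expanding $\int f_i f_j = \int \tilde f_i \tilde f_j + (\text{cross terms})$, applying Cauchy--Schwarz to each cross term, and using (\ref{e-cnd2})--(\ref{e-cnd1}), for $\epsilon_0$ sufficiently small I would obtain $\|\tilde f_i\|_{L^2}^2 \in [\tfrac{7}{8}, \tfrac{9}{8}]\,V(p,r)$ and $|\langle \tilde f_i, \tilde f_j\rangle_{L^2}| \le \tfrac{5}{8}\,V(p,r)$ for $i \ne j$.

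The $\tilde f_i$ lie in the $N$-dimensional space of step functions on the partition $\{A_j\}$. After normalizing to unit vectors $\hat f_i := \tilde f_i / \|\tilde f_i\|_{L^2}$, they form $n$ unit vectors in $\mathbb{R}^N$ with $|\langle \hat f_i, \hat f_j\rangle| \le \theta$ for some fixed $\theta < 1$. The $2n$ antipodal points $\pm \hat f_i \in S^{N-1}$ are then pairwise separated by angle at least $\arccos \theta > 0$, so a standard spherical packing argument yields $n \le C_\theta^N$. Combining with $N \le C(\epsilon_0)\, k^{\delta_0}$ produces the desired bound $n \le \mathcal{N}$, where $\mathcal{N}$ depends on $k$ but not on $r$ or $p$. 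The main obstacle is the mismatch between the scaling of the $L^2$ norm in (\ref{e-cnd2}) and of the Dirichlet energy in (\ref{e-cnd3}): the latter carries an extra factor $k^2$, so the approximation error scales like $(sk)^2$ in relative terms, forcing the covering scale to satisfy $s \propto 1/k$ and hence $N$ to grow polynomially in $k$. The hypothesis $r \ge k/\epsilon$ is tuned precisely so that the Poincar\'e inequality remains applicable at this shrunken scale $sr$.
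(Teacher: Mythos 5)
Your proposal is correct and follows the same Colding--Minicozzi blueprint that the paper uses: choose a covering scale $\sim \epsilon r/k$ so that the Poincar\'e-plus-doubling approximation error is small compared to $V(p,r)$, take a maximal separated net of $B(p,r)$ whose cardinality $N$ is polynomial in $k$ (by the doubling comparison \eqref{e-vd2}), build a partition of $B(p,r)$ subordinate to the net, and replace each $f_i$ by a step function living in the $N$-dimensional space spanned by the indicator functions of the cells. Your Poincar\'e bookkeeping is right: the extra $k^2$ on the right of \eqref{e-cnd3} forces the covering scale to shrink like $1/k$, and the hypothesis $r\ge k/\epsilon$ is exactly what keeps that scale above the Poincar\'e threshold.

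The one place where you depart from the paper is the terminal counting step. The paper does not normalize; instead it \emph{also discretizes the range}: it rounds each cell-average $\mathcal{A}_{i,j}$ to the nearest point of a fixed finite grid $\Lambda$ (whose size is controlled via Cauchy--Schwarz and \eqref{e-cnd3}), obtains a map $\mathcal{M}:\{f_i\}\to\Lambda^{P}$, and shows that the near-orthogonality \eqref{e-cnd1}--\eqref{e-cnd2} together with the small $L^2$-approximation error forces $\mathcal{M}$ to be injective; this gives $n\le|\Lambda|^{\nu}$ directly. You keep the exact averages, normalize $\tilde f_i$ to unit vectors $\hat f_i\in \mathbb{R}^N$, verify that the pairwise inner products stay bounded away from $\pm 1$, and invoke a spherical cap-packing bound to conclude $n\le C_\theta^N$. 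Both routes produce $\mathcal{N}$ exponential in a quantity polynomial in $k$ and independent of $r$ and $p$; the paper's version avoids normalization (and hence avoids having to control $\|\tilde f_i\|_2$ from below), while yours avoids introducing the auxiliary grid $\Lambda$ and is arguably the more geometric of the two. Either way the same hypotheses are used in the same places, so the arguments are essentially interchangeable. One small point to make sure of: the disjointness/overlap bounds you use require the net scale to be at least the quasi-geodesicity constant $b$, which is again guaranteed by $r\ge k/\epsilon$ for $\epsilon$ small enough.
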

\begin{proof}
 By Lemma \ref{l-doub-prop} there exists $C_D>0$ such that
 \begin{equation}\label{e-on1}
 V(x,2r_1 ) \le V(x,r_1) 
 \end{equation}
for all $r_1 \ge 1$ and for all $x \in M$. Moreover if we set $\delta := \log_2 C_D$, we have
\begin{equation}\label{e-on2}
 \frac{V(x,r_2)}{V(x,r_1)} \le C_D \left( \frac{r_2}{r_1}\right)^\delta
\end{equation}
for all $x \in M$ and for all $1 \le r_1 \le r_2$. By Lemma \ref{l-pflex} and \eqref{e-compat}, there exists constants $C_A > 0$ and 
$A \ge 1$ such that for all $x \in M$, for all $r_1 \ge 1$, for all functions $f \in L^\infty_{\operatorname{loc}}(M)$, we have
\begin{equation}\label{e-on3}
 \int_{B(x,r_1)} \abs{f -f_{B(x,r_1)}}^2 \,d\mu  \le C_A r_1^2 \int_{B(x,Ar_1)} \abs{\nabla_P f}^2 \, d\mu.
\end{equation}
Let $p \in M$, $r >0$, $k \ge 1$ and $k \le \epsilon r$. Define
\begin{equation}\label{e-on4}
 r_0:= \frac{\epsilon r}{k} \ge 1,
\end{equation}
where  $\epsilon \in (0,1)$ will be determined later. 

Let $x_1,x_2,\ldots,x_\nu$ be a $2r_0$-net of $B(p,r)$. We set
\begin{equation}\label{e-on5}
 \epsilon:= \min \left( \frac{1}{2 A}, \frac{1}{20 C_A^{1/2} C_D^{1/2} (4A+1)^{\delta/2}} \right).
\end{equation}
Since $\epsilon \le 1/2$ and  $r > r_0 \ge 1$, by \eqref{e-on1}, \eqref{e-on2}, we have
\begin{equation}\label{e-on6}
 \frac{1}{C_D} \le \frac{V(x_j,r)}{V(x_j,2r)} \le  \frac{V(x_j,r)}{V(p,r)} \le \frac{V(x_j,r_0)}{V(p,r)}  \frac{V(x_j,r)}{V(x_j,r_0)} \le C_D \left( \frac{k}{\epsilon}\right)^\delta \frac{V(x_j,r_0)}{V(p,r)}
\end{equation}
for all $j=1,2,\ldots,\nu$ and for all $r \ge 1$.
Since $r_0 \le r$ by Proposition \ref{p-net}(a), \eqref{e-on6} and \eqref{e-on1}, we have
\begin{equation} \label{e-on7}
  \sum_{j=1}^\nu V(x_j,r_0) \le V(p,r+r_0) \le V(p,2r) \le C_D V(p,r).
\end{equation}
By \eqref{e-on6} and \eqref{e-on7}, we have
\begin{equation}
 \label{e-on8} \nu \le C_D^3 (k/\epsilon)^\delta.
\end{equation}

Next we bound the overlap of the balls $\left( B(x_j,2 Ar_0) \right)_{1 \le j \le \nu}$. Define $\eta(y)$ as the cardinality of the set $\Sett{j}{ y \in B(x_j,2 A r_0)}$.
If $y \in \cap_{m=1}^{\eta(y)} B(x_{j_m}, 2 A r_0)$, then $B(y, (2 A+1) r_0)$ contains the disjoint balls $B(x_{j_m},r_0)$ and hence
\begin{equation}
 \label{e-on9} \sum_{m=1}^{\eta(y)} V(x_{j_m},r_0) \le V(y,(2A+1) r_0) 
\end{equation}
However by \eqref{e-on1}, \eqref{e-on2}, for all $y \in M$, for all $j_m$ such that $y \in B(x_{j_m},2Ar_0)$,  we have
\begin{equation}
 \label{e-no1} V(y,(2A+1)r_0) \le V(x_{j_m}, (4 A +1) r_0) \le C_D (4 A +1)^{\delta} V(x_{j_m},r_0).
\end{equation}
By \eqref{e-on9} and \eqref{e-no1}, we have 
\begin{equation} \label{e-no2}
\bar{C}:= \sup_{y \in M} \eta(y) \le C_D (4 A +1)^\delta.
\end{equation}
By Proposition \ref{p-net} the balls $B(x_j,2r_0)$ covers $B(p,r)$. We now partition $B(p,r)$ into $\nu$ disjoint subsets $S_1,S_2,\ldots,S_\nu$, where 
$B(x_j,r_0) \cap B(p,r) \subset S_j \subset B(x_j,2r_0)$.
Let $P= \Sett{x_j}{ 1 \le j \le \nu}$ denote the finite set of points in $B(p,r)$. For any function $f \in L^\infty_{\operatorname{loc}}(M)$, we set
\begin{equation}
 \label{e-no3} \mathcal{A}_{i,j} := \dashint_{B(x_j,2 r_0)} f_i \, d\mu = \frac{1}{V(x_j,2r_0)} \int_{B(x_j,2r_0)} f_i \,d\mu.
\end{equation}
By Cauchy-Schwarz inequality, \eqref{e-on6}, \eqref{e-cnd3}, we have
\begin{align}\label{e-no4}
\abs{ \mathcal{A}_{i,j}}^2 & \le \dashint_{B(x_j,2 r_0)} f_i^2 \, d\mu \le \frac{1}{V(x_j,r_0)} \int_{B(x_j,2 r_0)} f_i^2 \, d\mu \nonumber\\
&\le C_D^2 (k/\epsilon)^\delta \frac{1}{V(p,r)} \int_{B(x_j,2 r_0)} f_i^2 \, d\mu \le  C_D^2 (k/\epsilon)^\delta \frac{1}{V(p,r)} \int_{B(p,2 r)} f_i^2 \, d\mu  \nonumber \\
&\le C_D^2 k^2 (k/\epsilon)^\delta 
\end{align}
for all $i=1,\ldots,n$ and for all $j=1,\ldots,\nu$.

Let $\Lambda:= \Sett{\frac{s}{10}}{ s \in \Z, \abs{s} \le 10 C_D k (k/\epsilon)^{\delta/2}}$. Next, we define a map $f_i \mapsto \mathcal{M}(f_i)$, 
where $\mathcal{M}(f_i): P \to \Lambda$ is a function from a finite set $P$ to
another finite set $\Lambda$. With a slight abuse of notation, we intepret the function $\mathcal{M}(f_i)$ as a piecewise constant function on $B(p,r)$ that takes the value $\mathcal{M}(f_i)(x_j)$ on $S_j$, where $j=1,\ldots,\nu$.
For all $i=1,\dots,n$ and for all $j=1,\ldots,\nu$, we define $\mathcal{M}(f_i)(x_j) \in \Lambda$ as any closest point of $\Lambda$ to $\mathcal{A}_{i,j}$. 
By definition of $\Lambda$ and \eqref{e-no4}, for all $i,j$ we have
\begin{equation}
 \label{e-no5} \abs{\mathcal{A}_{i,j} - \mathcal{M}(f_i)(x_j)}^2 \le \frac{1}{400}.
\end{equation}
Combining the Poincar\'{e} inequality \eqref{e-on3}, \eqref{e-no5} and $S_j \subset B(x_i, 2 A r_0)$, we obtain
\begin{align*} 
 \nonumber \lefteqn{\int_{S_j} \abs{f_i - \mathcal{M}(f_i)(x_j)}^2 \,d\mu}\\
 &\le 2 \int_{B(x_j,2r_0)} \abs{f_i - \mathcal{A}_{i,j}}^2 \,d\mu + 2 \int_{S_j} \abs{\mathcal{A}_{i,j} - \mathcal{M}(f_i)(x_j)}^2 \,d\mu \nonumber \\
 & \le 8 r_0^2 C_A \int_{B(x_j, 2 A r_0)} \abs{\nabla_P f_i}^2 \, d\mu + \frac{\mu(S_j)}{200} 
\end{align*}
for all $i=1,\ldots,n$ and for all $j=1,\ldots,\nu$. Hence by \eqref{e-no2}, \eqref{e-cnd3} and \eqref{e-on5}, we have
\begin{align} \label{e-no6}
\nonumber\int_{B(p,r)} \abs{f_i-\mathcal{M}(f_i)}^2 \,d\mu &\le 8 r_0^2 C_A \bar{C}\int_{B(p,2r)} \abs{\nabla_P f_i}^2 \, d\mu + \sum_{j=1}^\nu \frac{\mu(S_j)}{200} \\
 &\le 2 r_0^2 C_A \bar{C} k^2 r^{-2} V(p,r) + \frac{V(p,r)}{200} \le \frac{V(p,r)}{100}.
\end{align}
By the triangle inequality along with \eqref{e-no6}, we obtain
\begin{align} \label{e-no7}
\lefteqn{ \left( \int_{B(p,r)} \abs{f_i - f_j}^2 \,d\mu \right)^{1/2} - \left( \int_{B(p,r)}  \abs{\mathcal{M}(f_i)- \mathcal{M}(f_j)}^2 \,d\mu \right)^{1/2} }\\
\nonumber&\le  \left( \int_{B(p,r)} \abs{f_i - \mathcal{M}(f_i)}^2 \,d\mu \right)^{1/2} +  \left( \int_{B(p,r)} \abs{f_j - \mathcal{M}(f_j)}^2 \,d\mu \right)^{1/2} \le\frac{\sqrt{V(p,r)}}{5}
\end{align}
for all $i \neq j$. By \eqref{e-cnd1} and \eqref{e-cnd2}, we have for $i\neq j$
\begin{equation}
 \label{e-no8} \left( \int_{B(p,r)} \abs{f_i - f_j}^2 \,d\mu \right)^{1/2} > \sqrt{V(p,r)}.
\end{equation}
Combining \eqref{e-no7} and \eqref{e-no8}, for all $i \neq j$ we obtain
\begin{equation*}
 \left( \int_{B(p,r)}  \abs{\mathcal{M}(f_i)- \mathcal{M}(f_j)}^2 \,d\mu \right)^{1/2} >0.
\end{equation*}
Hence the map $\mathcal{M}$ is injective. Therefore by \eqref{e-on8}
\begin{equation*}
 n \le  \abs{\Lambda}^\abs{P} =\abs{\Lambda}^\nu \le \mathcal{N}:= \left( 20 C_D k (k/\epsilon)^{\delta/2} + 1 \right)^{C_D^3 (k/\epsilon)^\delta}.
\end{equation*}
Note that the value of $\mathcal{N}$ does not depend on the value of $p \in M$ or $r$ but only on $k$ and the constants associated with doubling properties and Poincar\'{e} inequality.
\end{proof}
Next, we recall the a result due to Colding and Minicozzi  \cite[Proposition 4.16]{CM97}. We omit the proof as it is identical to that of \cite[Proposition 4.16]{CM97}.
\begin{prop} \label{p-4.16}
Consider a metric measure space $(M,d,\mu)$ satisfying the hypotheses \ref{doub-loc}, \ref{doub-inf} and $\operatorname{diam}(M)=+\infty$. Let $P$ be a Markov operator that is $(h,h')$-compatible with $(M,d,\mu)$ for some $0 < h \le h'$.
 Suppose that $u_1,u_2,\ldots,u_{2k} \in \mathcal{H}_d(M,P)$ are linearly independent.
 There exists $\delta> 0$, $p \in M$ such that for all $d >0$, $\Omega > 1$ and $m_0>0$, there exists $m \ge m_0$, $l \ge \frac{k}{2} \Omega^{-4 d - \delta}$, and functions $v_1,\ldots,v_l$ in the linear span of $u_i$ such that
 \begin{equation} \label{e-tc1}
  2 \Omega^{4d+2\delta} V(p,\Omega^m) = 2 \Omega^{4d + 2 \delta} \int_{B(p,\Omega^m)} v_i^2 \, d\mu  \ge \int_{B(p,\Omega^{m+1})} v_i^2 \, d\mu
 \end{equation}
and
\begin{equation}\label{e-tc2}
\int_{B(p,\Omega^m)} v_i v_j \, d\mu = \delta_{i,j} V(p,\Omega^m).
\end{equation}
\end{prop}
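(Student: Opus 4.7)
The proof will follow the blueprint of Colding--Minicozzi \cite{CM97}. Fix a base point $p \in M$ (possible since $\operatorname{diam}(M) = +\infty$) and let $V = \mathrm{span}(u_1,\ldots,u_{2k})$, a $2k$-dimensional subspace of $\mathcal{H}_d(M,P)$. After absorbing translations into the constants, we may assume the polynomial growth bound $|u_i(x)| \le C(1+d(x,p)^d)$ holds at this single base point. For each positive integer $m$, define the positive-semidefinite quadratic form $Q_m$ on $V$ by $Q_m(u) = \int_{B(p,\Omega^m)} u^2\,d\mu$ (with associated bilinear form also denoted $Q_m$).

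First I would show that $Q_m$ is positive definite on $V$ for all $m$ larger than some threshold $m_1$. If $u = \sum c_i u_i$ satisfied $Q_m(u)=0$ for arbitrarily large $m$, then $u$ would vanish $\mu$-a.e.\ on $\bigcup_m B(p,\Omega^m) = M$ (the last equality holds because every point of $M$ lies at finite distance from $p$). Linear independence of the $u_i$ modulo $\mu$-a.e. equivalence then forces all $c_i$ to vanish, a contradiction. Since $(Q_m)$ is monotone increasing and $\dim V < \infty$, the rank stabilizes, so $Q_m$ is positive definite for all $m \ge m_1$.

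Next I would combine Hadamard's inequality with polynomial growth to obtain an upper bound on $\det Q_m$. Indeed $\det Q_m \le \prod_i Q_m(u_i,u_i)$, and using $|u_i(x)| \le C(1+d(x,p)^d)$ together with the polynomial volume bound $V(p,\Omega^m) \le C_V \Omega^{\delta m}$ from Lemma~\ref{l-doub-prop} (with $\delta = \log_2 C_D$), one gets $Q_m(u_i,u_i) \le C' \Omega^{(2d+\delta)m}$, so $\log_\Omega \det Q_m \le 2k(2d+\delta)m + C''$. The heart of the proof is now a telescoping pigeonhole argument. Let $1 \le \lambda_1(m) \le \cdots \le \lambda_{2k}(m)$ be the generalized eigenvalues of $Q_{m+1}$ relative to $Q_m$ (all at least $1$ since $Q_{m+1} \ge Q_m$); then $\prod_i \lambda_i(m) = \det Q_{m+1}/\det Q_m$. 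Telescoping from $m_0 \vee m_1$ to some $M$ and using the upper bound gives
\begin{equation*}
\sum_{m=m_0 \vee m_1}^{M-1} \sum_{i=1}^{2k} \log_\Omega \lambda_i(m) \le 2k(2d+\delta)(M - m_0) + C'''.
\end{equation*}
Averaging over $m$ and taking $M - m_0$ sufficiently large yields an $m \ge m_0$ with $\sum_i \log_\Omega \lambda_i(m) \le 2k(2d+\delta)(1+\varepsilon)$. Applying Markov's inequality, the number of eigenvalues exceeding $2\Omega^{4d+2\delta}$ is at most $\frac{2k(2d+\delta)(1+\varepsilon)}{4d+2\delta+\log_\Omega 2} < k$ for $\varepsilon$ small, so at least $k$ eigenvalues $\lambda_i(m)$ are $\le 2\Omega^{4d+2\delta}$. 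In particular $l := \#\{i : \lambda_i(m) \le 2\Omega^{4d+2\delta}\} \ge k \ge \frac{k}{2}\Omega^{-4d-\delta}$.

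To conclude, let $e_1,\ldots,e_l \in V$ be the $Q_m$-orthonormal generalized eigenvectors corresponding to these $l$ small eigenvalues, and set $v_i = \sqrt{V(p,\Omega^m)}\, e_i$. Then $\int_{B(p,\Omega^m)} v_i v_j\,d\mu = V(p,\Omega^m)\,\delta_{ij}$, and $\int_{B(p,\Omega^{m+1})} v_i^2\,d\mu = \lambda_i(m)\,V(p,\Omega^m) \le 2\Omega^{4d+2\delta}V(p,\Omega^m)$, which is exactly \eqref{e-tc1} and \eqref{e-tc2}. The delicate step is the one that requires the most care: ensuring positive definiteness of $Q_m$ uses the proper (measure-theoretic) interpretation of linear independence together with quasi-geodesicity of $M$ to guarantee $\bigcup_m B(p,\Omega^m) = M$. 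Once $\delta$ is identified with the volume doubling exponent, the remainder is bookkeeping with Hadamard's inequality and the pigeonhole/Markov averaging argument above.
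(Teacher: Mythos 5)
Your proof is correct, and it reconstructs the Colding--Minicozzi argument that the paper itself declines to reproduce (the paper states that the proof ``is identical to that of [CM97, Proposition 4.16]'' and omits it). The ingredients you use are exactly the right ones for this setting: $\delta$ is the volume doubling exponent from Lemma~\ref{l-doub-prop}/equation \eqref{e-on2} (as the paper remarks after the statement), positive definiteness of $Q_m$ for large $m$ comes from nestedness of kernels plus finite-dimensionality plus exhaustion of $M$ by the balls $B(p,\Omega^m)$ (using $\operatorname{diam}(M)=+\infty$), the Hadamard/polynomial-growth bound controls $\det Q_m$ up to level $m$, and the telescoping--Markov pigeonhole yields a level $m \ge m_0$ at which at least $k$ generalized eigenvalues $\lambda_i(m) = Q_{m+1}(e_i,e_i)/Q_m(e_i,e_i)$ are bounded by $2\Omega^{4d+2\delta}$. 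Note that your counting actually delivers $l \ge k$, which is stronger than the $l \ge \tfrac{k}{2}\Omega^{-4d-\delta}$ required by the statement (the weaker form is how CM state it and is all that is needed in the application, Theorem~\ref{t-cm}). One minor bookkeeping point: when you apply Markov's inequality you should use that the $\lambda_i(m)$ are integers of count $N$ with $N < k$, hence $N \le k-1$ and $l = 2k-N \ge k+1$; you implicitly used $N < k \Rightarrow l \ge k$, which is fine but the sharper integer reading is cleaner. The final normalization $v_i := \sqrt{V(p,\Omega^m)}\,e_i$ with $e_i$ the $Q_m$-orthonormal generalized eigenvectors for the small eigenvalues gives both \eqref{e-tc1} and \eqref{e-tc2} exactly as you describe.
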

In Proposition \ref{p-4.16}, we may choose $\delta$ as the constant in \eqref{e-on2}.
We are now ready to prove Theorem \ref{t-cm}.
\begin{proof}[Proof of Theorem \ref{t-cm}]
Fix $\Omega > \max(4, 3h')$, $d >0$ and $p \in M$.
Let $C_R$ be as given by Lemma \ref{l-rp} and set $k^2= (8 C_R+2) \Omega^{4 d +2\delta}$. Let  $\epsilon \in (0,1)$ be given by Proposition \ref{p-on}. We choose $m_0 \in \N^*$ such that $\Omega^{m_0} \ge k/\epsilon$.
Let $\operatorname{dim} \mathcal{H}_d(M,P) \ge \mathcal{N}_0 := 4 \Omega^{4d +2 \delta} \mathcal{N}$ where $\mathcal{N}$ is given by Proposition \ref{p-on} where $k$ is as defined above.

Suppose that $u_1,u_2,\ldots,u_{\mathcal{N}_0} \in \mathcal{H}_d(M,P)$ be linearly independent. 
Then  by Proposition \ref{p-4.16} and reverse Poincar\'{e} inequality (Lemma \ref{l-rp}) there exists $C_R>0$ 
and $m > m_0$ such  that for all $f \in L^\infty_{\operatorname{loc}}(M, \mu)$, we have
harmonic functions $v_1,v_2,\ldots,v_l$ satisfying 
\begin{equation}\label{e-cm1}
 l \ge \frac{1}{4} \mathcal{N}_0 \Omega^{-4d-2\delta} = \mathcal{N},
\end{equation}
\begin{equation}\label{e-cm2}
 \int_{B(p,\Omega^m)} v_iv_j \, d\mu = V(p,\Omega^m) \delta_{i,j},
\end{equation}
\begin{equation}\label{e-cm3}
 \int_{B(p,\Omega^{m+1})} v_i^2 \, d\mu \le 2 C_R \Omega^{4d + 2\delta} V(p,\Omega^m),
\end{equation}
and
\begin{equation}\label{e-cm4}
 \int_{B(p,2 \Omega^m)} \abs{\nabla_P v_i}^2 \, d\mu \le C_R \Omega^{-2m} \int_{B(p,4\Omega^m)}  v_i^2 \,d\mu \le  2 \Omega^{4d + 2\delta- 2m} V(p,\Omega^m).
\end{equation}
Note that \eqref{e-cm2}, \eqref{e-cm3}, \eqref{e-cm4} and $\Omega >4$ implies that $v_1,v_2,\ldots,v_l$ satisfy \eqref{e-cnd2}, \eqref{e-cnd2}
\begin{equation} \label{e-cm5}
  \int_{B(p,2 \Omega^m)} v_i^2 + (2\Omega^m)^2 \abs{\nabla_P v_i}^2 \, d\mu \le  (8 C_R + 2) \Omega^{4d +2\delta} V(p,r)
\end{equation}
for all $i =1,\ldots,l$. 
Note that \eqref{e-cm1}, \eqref{e-cm2}, \eqref{e-cm5} along with Proposition \ref{p-on} implies the desired contradiction. Therefore $\operatorname{dim} \mathcal{H}_d(M,P) < \mathcal{N}_0 < \infty$.
\end{proof}
\begin{remark}
 Similar to \cite{Kle08}, we can replace the volume doubling hypotheses \ref{doub-loc}, \ref{doub-inf} of Theorem \ref{t-cm} by a weakly polynomial growth assumption on the volume growth.
\end{remark}

\section{Directions for future work}
We end with a few directions for future work.
One of the features of our work is that it provides an unified approach to Gaussian estimates for discrete time Markov chains on both discrete and continuous spaces.
Recently, there has been considerable interest in analysis and probability on fractals and fractal-like manifolds and graphs.
For many natural family of fractals the heat kernel satisfies sub-Gaussian estimates of the form
\[
  p_t(x,y) \asymp \frac{C_1} {V(x, t^{1/\beta})} \exp \left( -C_2  \left( \frac{d(x,y)^\beta}{t} \right)^{1/(\beta-1)} \right)
\]
for all $t >0$ and for all $x,y \in M$ and $\beta > 1$ is a parameter (See \cite[Theorem 1.5(e)]{BP88} for an early example).
Here $\asymp$ means that both inequalities $\le$ and $\ge$ hold with different values of constants $C_1,C_2$.
Similar to the characterizations of Gaussian estimates
in \cite{Gri91,Sal92,Stu96,Del99,HK00} there exists various characterizations for sub-Gaussian estimates both in the setting of diffusions on local Dirichlet spaces \cite{BBK06} and for discrete time Markov chains on graphs
\cite{BB04,BCK05,GT01,GT02}. As in the case of Gaussian estimates, it is desirable to obtain characterizations of sub-Gaussian estimates that are stable under quasi-isometries.
This was achieved using a condition called cutoff-Sobolev inequality first introduced by
Barlow and Bass \cite{BB04} (See also \cite{BBK06}). Our work naturally raises an analogous question for sub-Gaussian estimates on Markov chains.
\begin{problem}\nonumber
Characterize sub-Gaussian estimates for discrete time Markov chains on quasi-geodesic  metric measure spaces using geometric conditions that are stable with respect to quasi-isometries.
\end{problem}
Another direction for future work is to clarify the applications to mixing times in the finite diameter case as mentioned in Remark \ref{r-fin}(b).

As mentioned in the introduction, we state the problem concerning the stability of the elliptic Harnack inequality.
\begin{problem}\nonumber
Is elliptic Harnack inequality stable under quasi-isometries?
If so, characterize the elliptic-Harnack inequality by geometric properties that are stable under quasi-isometries.
\end{problem}
We refer the reader to \cite{Bas13, Bar05} for partial progress and conjectures aimed at solving the above problem.
\appendix
\chapter{Interpolation Theorems}\label{a-int}
In this appendix, we state Riesz-Thorin and Marcinkiewicz interpolations theorems and refer the reader to the literature for a proof.

Let $T: (X, \norm{\cdot}_X) \to (Y,\norm{\cdot}_Y)$ be a linear operator between  normed linear spaces. We denote the \emph{operator norm} by
\[
 \norm{T}_{X \to Y} = \sup_{x  \in X, x \neq \mathbf{0}} \frac{ \norm{T x}_Y} {\norm{x}_X} = \sup_{x  \in X, \norm{x} = 1} \norm{T x}_Y.
\]
If $\norm{T}_{X \to Y}<\infty$, we say the operator $T$ is \emph{bounded}. It is well known that $T$ is bounded if and only if $T$ is continuous.
We abbreviate $ \norm{T}_{L^p \to L^q}$ as $\norm{T}_{p \to q}$.
\begin{theorem}[Riesz-Thorin interpolation theorem] \label{t-rt}
 Assume that $(X, \Sigma, \mu)$ is a $\sigma$-finite measure space. Suppose $1 \le p_0, p_1 \le \infty$, $1 \le q_0,q_1 \le \infty$.
 Let $T: L^{p_0} + L^{p_1}\to   L^{q_0} + L^{q_1}$, be a linear operator such that $T:L^{p_0} \to   L^{q_0}$ and $L^{p_1} \to   L^{q_1}$
 are bounded. Then
 \[
  \norm{T}_{p_\theta \to q_\theta} \le \norm{T}_{p_0 \to q_0}^{1-\theta} \norm{T}_{p_1 \to q_1}^\theta
 \]
for all $\theta \in (0,1)$ where $1/p_\theta := (1-\theta)/p_0 + \theta/p_1$ and
$1/q_\theta := (1-\theta)/q_0 + \theta/q_1$.
\end{theorem}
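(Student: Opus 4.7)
The plan is to prove Theorem \ref{t-rt} by the classical complex interpolation method of Thorin, via the Hadamard three-lines lemma. First I would reduce to bounding the bilinear pairing: it suffices to show that for every simple function $f$ on $X$ with $\norm{f}_{p_\theta} = 1$ and every simple function $g$ on $X$ with $\norm{g}_{q_\theta'} = 1$ (where $q_\theta'$ is the exponent conjugate to $q_\theta$) one has
\[
\abs{\langle Tf , g \rangle} \le \norm{T}_{p_0 \to q_0}^{1-\theta} \norm{T}_{p_1 \to q_1}^\theta,
\]
since simple functions are dense in $L^{p_\theta}$ (when $p_\theta < \infty$) and the dual of $L^{q_\theta}$ is $L^{q_\theta'}$ (with the usual modification when $q_\theta = \infty$, which I would handle separately by reversing the roles, using that $T$ is bounded).

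Next I would build an analytic family interpolating the exponents. Writing $f = \sum_k a_k e^{i \alpha_k} \mathbf{1}_{A_k}$ and $g = \sum_j b_j e^{i\beta_j} \mathbf{1}_{B_j}$ with $a_k, b_j > 0$, set
\[
\frac{1}{p(z)} = \frac{1-z}{p_0} + \frac{z}{p_1}, \qquad \frac{1}{q'(z)} = \frac{1-z}{q_0'} + \frac{z}{q_1'},
\]
and define
\[
f_z := \sum_k a_k^{\,p_\theta / p(z)} e^{i\alpha_k} \mathbf{1}_{A_k}, \qquad g_z := \sum_j b_j^{\,q_\theta' / q'(z)} e^{i\beta_j} \mathbf{1}_{B_j}.
\]
Then the function $F(z) := \langle T f_z , g_z \rangle$ is well defined, continuous and bounded on the closed strip $S = \{z : 0 \le \operatorname{Re} z \le 1\}$, and analytic in the open strip: the finiteness of the sums makes all integrals converge and the entries depend analytically on $z$. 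Moreover $F(\theta) = \langle Tf , g \rangle$ by construction.

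The key step is then to estimate $F$ on the boundary of $S$. For $z = it$ on the left edge one checks by direct computation that $\norm{f_{it}}_{p_0} = \norm{f}_{p_\theta}^{p_\theta/p_0} = 1$ and $\norm{g_{it}}_{q_0'} = 1$, so
\[
\abs{F(it)} \le \norm{T f_{it}}_{q_0} \norm{g_{it}}_{q_0'} \le \norm{T}_{p_0 \to q_0}
\]
by H\"older and the hypothesis; the analogous estimate $\abs{F(1+it)} \le \norm{T}_{p_1 \to q_1}$ holds on the right edge. The Hadamard three-lines lemma applied to $F$ on $S$ then gives
\[
\abs{F(\theta)} \le \norm{T}_{p_0 \to q_0}^{1-\theta} \norm{T}_{p_1 \to q_1}^\theta,
\]
which is the required bilinear estimate. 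The main obstacle, and the place where care is needed, is the endpoint cases $p_i = \infty$ or $q_i = \infty$: here the exponent map $z \mapsto 1/p(z)$ can equal $0$, so one must interpret the powers $a_k^{p_\theta/p(z)}$ correctly (they reduce to $a_k^0 = 1$ on the relevant edge) and dualize appropriately; once this bookkeeping is done the three-lines argument goes through unchanged, and a final density argument lifts the estimate from simple functions to all of $L^{p_\theta}$.
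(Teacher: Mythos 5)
The paper does not actually prove this statement: it defers to the literature, writing ``We refer the reader to \cite{Ste56} for a proof of Stein's interpolation theorem which in turn implies Theorem \ref{t-rt}.'' Your proposal, by contrast, gives a self-contained argument, and it is the classical Thorin proof: reduce to a bilinear estimate by duality, embed $f$ and $g$ in analytic families $f_z, g_z$ of simple functions so that $\norm{f_{it}}_{p_0} = \norm{f_{1+it}}_{p_1} = 1$ (and likewise for $g_z$ with the conjugate exponents), verify the boundary bounds $\abs{F(it)} \le \norm{T}_{p_0 \to q_0}$ and $\abs{F(1+it)} \le \norm{T}_{p_1 \to q_1}$ by H\"older, and conclude by Hadamard three lines. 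This is correct; the boundary norm computation $\norm{f_{it}}_{p_0}^{p_0} = \sum_k a_k^{p_\theta} \mu(A_k) = 1$ works exactly as you indicate, and the finitely-supported simple-function structure keeps $F$ bounded and analytic on the strip without integrability worries.

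The route you take is more elementary than what the paper gestures at: Stein's interpolation theorem \cite{Ste56} generalizes to analytic \emph{families} of operators $z \mapsto T_z$ rather than a fixed $T$, and Riesz--Thorin is the constant-family special case. The underlying tool is the same (complex analysis on a strip and the three-lines lemma), so in that sense you have supplied the proof the cited reference would reduce to. Two bookkeeping points you flag but should make fully explicit if writing this out in detail: the argument as stated needs complex scalars (the three-lines method gives constant $1$; over the reals one picks up a factor of $2$ unless one complexifies), and the degenerate cases $p_0 = p_1$ (so $p_\theta = p_0$ and no interpolation in that slot is needed, hence $f_z = f$ can be taken constant) and $q_\theta = \infty$ (test against $L^1$ and use that $L^1 \hookrightarrow (L^\infty)^*$ is norming) must be split off as you say. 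With those spelled out the proof is complete.
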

We refer the reader to  \cite{Ste56} for a proof of  Stein's interpolation theorem  which in turn implies Theorem \ref{t-rt}.

Consider a $\sigma$-finite measure space $(X,\Sigma,\mu)$. The distribution function of $f$ is defined by
\[
 \lambda_f(t) = \mu \Sett{x \in X}{ \abs{f(x)} > t}
\]

We denote \emph{weak $L^p$ space} by $L^{p,w}$
For a measurable function $f$ and $1 \le p <\infty$, we define its $L^{p,w}$ norm by
\[
 \norm{f}_{p,w}= \left( \sup_{t >0} t^p \lambda_f(t) \right)^{1/p}.
\]
We say a measurable function $f \in L^{p,w}$ if  $\norm{f}_{p,w}<\infty$.
Note that $L^{p,w}$ is not a true norm, since $\norm{\cdot}_{p,w}$ does not satisfy triangle inequality.
If $f \in L^p$, then $\norm{f}_p \le \norm{f}_{p,w}$. Therefore $L^p \subset L^{p,w}$. It is easy to check that
$L^p \neq L^{p,w}$ in general.
\begin{theorem}[Marcinkiewicz interpolation theorem] \label{t-marcin}
 Let $1 \le p_0 \le q_0 <\infty$, $1 \le p_1 \le q_1 <\infty$  with $q_0 \neq q_1$.
 Let $T$ be a linear operator from $L^p_1 + L^p_2$ to the space of measurable functions. If $T$ satisfies
 \[
  \norm{T f}_{q_i,w} \le B_i \norm{f}_{p_i} \mbox{for all $f \in L^{p_i}$}, \hspace{1cm} i=0,1
 \]
then
\[
 \norm{T}_{ p_\theta \to q_\theta } \le C_{p_0,p_1,q_0,q_1,\theta} B_0^{1-\theta} B_1^\theta
\]
for all $\theta \in (0,1)$,
where $1/p_\theta := (1-\theta)/p_0 + \theta/p_1$,
$1/q_\theta := (1-\theta)/q_0 + \theta/q_1$ and  $ C_{p_0,p_1,q_0,q_1,\theta} < \infty$ depends only on
$p_0,p_1,q_0,q_1,\theta$.
\end{theorem}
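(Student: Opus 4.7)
The plan is to carry out the classical layer-cake/truncation argument due to Marcinkiewicz, adapted to the present formulation where the endpoints are of weak type and $q_i \ge p_i$. Fix $\theta \in (0,1)$, write $p = p_\theta$ and $q = q_\theta$, and let $f \in L^p$. The starting observation is that in order to bound $\|Tf\|_q$ we can use the layer-cake identity
\begin{equation*}
\|Tf\|_q^q \;=\; q \int_0^\infty s^{q-1} \lambda_{Tf}(s)\, ds,
\end{equation*}
so it suffices to produce a good pointwise bound on $\lambda_{Tf}(s)$ in terms of $f$. The key device is a level-dependent truncation: for each $s>0$, pick a threshold $\alpha(s)>0$ (to be optimized) and split
\begin{equation*}
f \;=\; f_0^{(s)} + f_1^{(s)}, \qquad f_0^{(s)} := f \,\mathbf{1}_{\{|f|>\alpha(s)\}}, \quad f_1^{(s)} := f \,\mathbf{1}_{\{|f|\le\alpha(s)\}}.
\end{equation*}
Since $p_0 \le p \le p_1$, one checks that $f_0^{(s)} \in L^{p_0}$ and $f_1^{(s)} \in L^{p_1}$, so $Tf_0^{(s)}$ and $Tf_1^{(s)}$ are separately well-defined and sublinearity gives $\lambda_{Tf}(s) \le \lambda_{Tf_0^{(s)}}(s/2) + \lambda_{Tf_1^{(s)}}(s/2)$.

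Next I apply the weak-type endpoint bounds: $\lambda_{Tf_i^{(s)}}(s/2) \le (2B_i/s)^{q_i}\|f_i^{(s)}\|_{p_i}^{q_i}$ for $i=0,1$, and rewrite the $L^{p_i}$ norms of the truncations via the distribution function $\lambda_f$ of $f$ itself, using
\begin{equation*}
\|f_0^{(s)}\|_{p_0}^{p_0} = p_0\!\int_{\alpha(s)}^\infty u^{p_0-1}\lambda_f(u)\,du + \alpha(s)^{p_0}\lambda_f(\alpha(s)),
\end{equation*}
and the analogous formula for $f_1^{(s)}$ on $[0,\alpha(s)]$. Plugging these back into the layer-cake formula for $\|Tf\|_q^q$ produces a double integral in $(s,u)$ which, after Fubini and the choice
\begin{equation*}
\alpha(s) \;=\; A\, s^{q/p} \qquad\text{with } A^{q_0-q_1}\propto (B_0/B_1)^{\text{something}},
\end{equation*}
collapses to $C_{p_0,p_1,q_0,q_1,\theta}\,B_0^{(1-\theta)q}B_1^{\theta q}\|f\|_p^q$. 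The exponent $q/p$ is forced by the requirement that both halves of the split contribute balanced powers of $s$, and the exponent on $A$ comes from matching $B_0^{1-\theta}B_1^\theta$ as predicted by complex interpolation.

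The only real obstacle is bookkeeping: one must verify that $q_0\ne q_1$ (which is given) makes the $s$-integral converge at both $0$ and $\infty$, and that the constant $C_{p_0,p_1,q_0,q_1,\theta}$ blows up only through factors like $(q_0-q_1)^{-1}$ and the reciprocals of $p-p_0$, $p_1-p$. The cases where some $p_i$ or $q_i$ equals $\infty$ (excluded here by the hypothesis $q_i<\infty$, but possibly allowed for $p_i$) require minor modifications in the truncation. For a complete exposition I would refer to Stein--Weiss or Grafakos, \emph{Classical Fourier Analysis}, where this computation is carried out in full; only cosmetic changes are needed to match the formulation stated above.
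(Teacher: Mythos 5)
The paper does not give its own proof of this theorem; it simply states it and refers to \cite[Theorem 2.58]{AF03}, so there is no in-paper argument to compare against. Your sketch takes the classical Marcinkiewicz route (level-dependent truncation plus layer-cake), which is precisely the argument carried out in Stein--Weiss and Grafakos, so it is consistent with the reference the paper cites; for a linear operator the ``sublinearity'' splitting you invoke is just the triangle inequality, which is fine. One point you gloss over more than is safe: in the genuinely off-diagonal case $q_i\neq p_i$ (allowed here, and the generality the authors actually want since $p_i\le q_i$), the weak-type bound yields $\lambda_{Tf_i^{(s)}}(s/2)\le (2B_i/s)^{q_i}\norm{f_i^{(s)}}_{p_i}^{q_i}$, i.e.\ a $q_i/p_i$-th power of the $L^{p_i}$-norm, so after substituting the distribution-function formula and the power threshold $\alpha(s)=A\,s^{q/p}$ the $(s,u)$-integral does \emph{not} collapse by Fubini alone as it does in the diagonal case; one needs an extra Hardy-type (or Minkowski integral) inequality, and this is exactly where the standing hypothesis $p_i\le q_i$ enters. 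Since you explicitly defer to Stein--Weiss/Grafakos where this step is done in full, the proposal is an acceptable blind outline, but a self-contained write-up would need to make that step explicit rather than file it under ``bookkeeping.''
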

We refer the reader to \cite[Theorem 2.58]{AF03} for a proof of Theorem \ref{t-marcin}.
\chapter{Examples} \label{a-example}
Here we collect various examples discussed earlier and supplement them with more examples, comments and pictures.
 \begin{example}[Euclidean space with radial weights] \label{x-nongauss}
 We expand upon the ball walk described in Example \ref{x-ballwalk} for specific metric measure spaces.
 If a weighted Riemannian manifold satisfies two-sided
 Gaussian estimates for its canonical diffusion, one might na\"{\i}vely expect the same to hold for the ball walk. However this is not true in general because the measure $\mu'$ of Example \ref{x-ballwalk}
 is not necessarily comparable to $\mu$. The measure $\mu'$ might fail to satisfy either \ref{doub-inf} or \ref{poin-inf}. Recall the example $(M,d,\mu)= (\R^n,\norm{\cdot}_2, \mu_\alpha)$  from Example \ref{x-grisal}, where
 $\mu_\alpha = (1 + \abs{x})^{\alpha/2} \, dx$.

 Note that if $\alpha >0$ there is a drift away from the origin and if $\alpha <0$ there is a drift towards the origin.
 If $\mu=\mu_\alpha$ one can verify that $\mu'=\mu'_\alpha$ is comparable to $\mu_{2\alpha}$.  Therefore the ballwalk accentuates the drift towards or away from the origin (See Figure \ref{f-rad}).
\begin{figure}[h]
  \centering
  \includegraphics[scale=0.25]{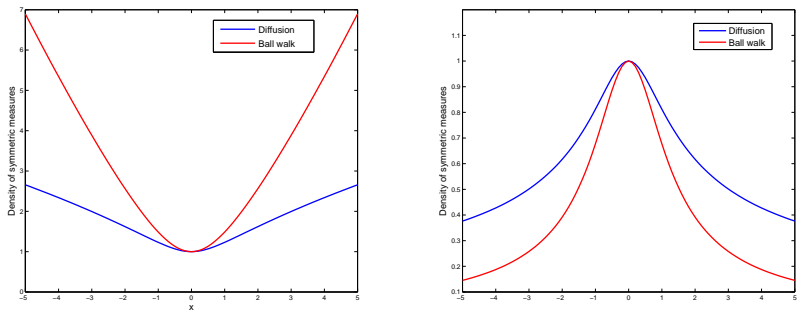}
  \caption{Density of $\mu_\alpha$ and $\mu'_\alpha$ for the case $\alpha = 0.6$ (left) and $\alpha = -0.6$ (right) in $\R$. Here  $\mu'_\alpha$ is normalized to have density $1$ at origin.}
  \label{f-rad}
\end{figure}
 In light of the above observation along with Table \ref{table:example}, Theorem \ref{t-main0} and Proposition \ref{p-rmtfae}, if $n \ge 2$ and $ \alpha \in (-n, -n/2]$, then the canonical diffusion on $(\R^n,\norm{\cdot}_2, \mu_\alpha)$ satisfies Gaussian estimates
 but the ball walk fails to satisfy Gaussian estimates because $(\R^n,\norm{\cdot}_2, \mu'_\alpha)$ does not satisfy \ref{doub-inf}. 
 In the case $n=1$ and $\alpha \in [1/2, 1)$,  the canonical diffusion on $ (\R, \norm{\cdot}_2, \mu_\alpha)$
 satisfies Gaussian estimates
 but the ball walk fails to satisfy Gaussian estimates because $(\R,\norm{\cdot}_2, \mu'_\alpha)$ does not satisfy \ref{poin-inf}.
 
 Even when both diffusion and ball walk satisfy Gaussian estimates for the transition kernels with respect to the invariant measures, the long term behavior might be different. For example, 
 if $n \ge 3$ and if $ 2- n <\alpha \le (2-n)/2$ then both diffusion and ball walk on $(\R^n,d,\mu_\alpha)$ satisfy Gaussian estimates for the transition kernels with respect to the invariant measures.
 However in this case the ball walk is recurrent but the diffusion is transient.
 
 For $n \in \N^*$, the the ball walk on $(\R^n,d,\mu_\alpha)$ is  positive recurrent chain  $\alpha < -n/2$, null recurrent for $-n/2 \le \alpha \le (2-n)/2$ and transient if $\alpha  >(2-n)/2$  (See \cite[Proposition 10.1.1]{MT09}
 and Proposition \ref{p-trans}).
  However the canonical diffusion on $(\R^n,d,\mu_\alpha)$ is  positive recurrent chain  $\alpha < -n$, null recurrent for $-n \le \alpha \le 2-n$ and transient if $\alpha  >2-n$.
\end{example}

\begin{example}[Complexes] \label{x-complex}
 Consider the Euclidean 2-complex  in $\R^3$ formed by the hyperplanes $H_{i,n} = \Sett{(x_1,x_2,x_3)}{ x_i= n}$
 where $i=1,2,3$ and $n \in \Z$. The metric is described by the intrinsic metric and the measure is the two dimensional surface measure. Dirichlet forms on such Riemannian complexes have
 been studied in \cite{PS08}. This example satisfies both Poincar\'{e} inequality \ref{poin-mms} for all $h>0$ and Volume doubling \ref{doub-inf}. The geometry of the balls depend on the center (See Figure \ref{f-complex}).

 The above example can also be viewed as a \emph{Cayley complex} \cite[p. 77]{Hat02} corresponding to the presentation
 \[ \Z^3=\left\langle a_1,a_2,a_3 \hspace{2mm}\vline \hspace{2mm}a_1a_2a_1^{-1} a_2^{-1}, a_2a_3a_2^{-1} a_3^{-1}, a_3a_1a_3^{-1} a_1^{-1}\right\rangle. \]
 More generally, consider a finitely generated and finitely presented group $G= \langle S \hspace{2mm} \vline \hspace{2mm} R \rangle$.
Note that the  $1$-skeleton of the Cayley complex  is the Cayley graph of $\langle S \hspace{2mm} \vline \hspace{2mm} R \rangle$ and  the $2$-cells (faces) are in bijection with $G \times R$.
We equipp each $2$-cell with the the usual Euclidean metric on the regular $n$-gon with edges of length 1 and we endow the space with the measure obtained by equiping each two-cell with its Lebesgue measure.
It is easy to verify that the Cayley complex is quasi-isometric to the Cayley graph of $G$ with the quasi-isometry given by the natural embedding of the Cayley graph in the Cayley complex.
By the stability of \ref{doub-inf} and \ref{poin-mms} under quasi-isometries, we have that ball walk on Cayley complexes of nilpotent groups (such groups are finitely generated and finitely presented)
satisfy two sided Gaussian bounds.
\begin{figure}[h]
  \centering
  \includegraphics[scale=0.5]{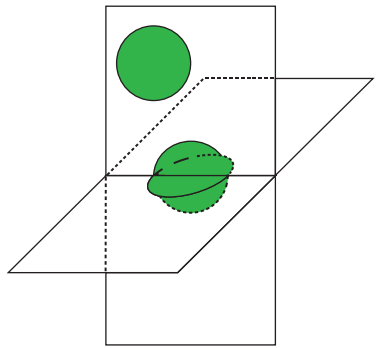}
  \caption{Two balls (in intrinsic metric) with same radius but different centers in the Cayley complex of $\Z^3$.}
  \label{f-complex}
\end{figure}

In  Figure \ref{f-grid} we consider the $1$-complex $\Sett{(x,y) \in \R^2}{ x \in \Z \mbox{ or } y \in \Z}$ equipped with intrinsic metric. Note that the geometry of the balls
depend both on the location of the center and the radius.
\begin{figure}[ht]
  \centering
  \includegraphics[width=3in]{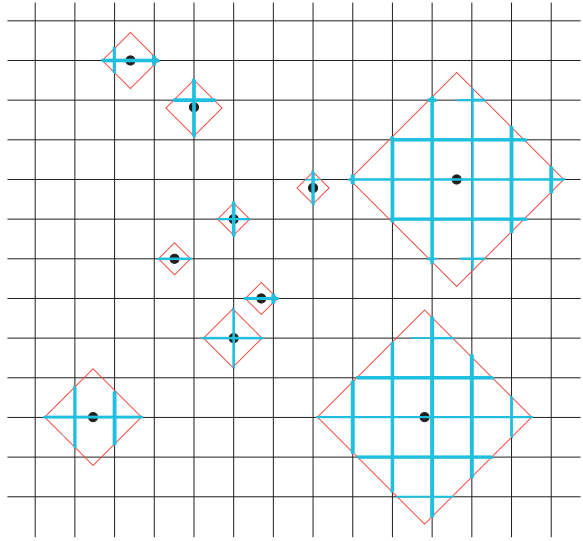}
  \caption{The blue lines shows the balls (in intrinsic metric) of various sizes while the red squares correspond to the $L^1$ balls.}
  \label{f-grid}
\end{figure}
\end{example}
Next, we consider an example from \cite[Example 3.14]{GS05}.
\begin{example}[Model manifolds/Surfaces of revolution]
 Given a  smooth function $\psi:(0,\infty) \to (0,\infty)$, denote by $M_\psi$ a \emph{model manifold}.
 Here by model manifold, we mean $\R^N$ equipped with the Riemannian metric in polar coordinates $(r,\theta) \in (0, +\infty) \times \mathbb{S}^{N-1}$ by
 \[
  \operatorname{d}s^2= \operatorname{d} r^2 + \psi(r)^2\operatorname{d}\theta^2,
 \]
where $\operatorname{d}\theta^2$ is the standard metric on $\mathbb{S}^{N-1}$ and $\psi$ is a smooth positive function on $(0,+\infty)$. The necessary and
sufficent conditions under which $\operatorname{d}s^2$ can be smoothly extended to a metric on the entire space $\R^N$ is given by
\[
 \psi(0) =0, \hspace{3mm} \psi'(0)=1, \hspace{3mm}\mbox{and } \psi''(0)=0
\]
(see \cite[equation (4.12)]{GS05}). Therefore we may choose $\psi(r)= r^\alpha$ where $\alpha \in \R$ for all $r \ge 1$ and extend it smoothly satisfying the above conditions.

It is known that that $M_\psi$ with $\psi(r)= r^\alpha$ for $r \ge 1$ satisfies parabolic Harnack inequality if and only if $-1/(N-1) < \alpha \le 1$. The Riemannian measure (in polar coordinates) is given by
$\operatorname{d}\mu = \psi(r)^{N-1}\, \operatorname{d}r \,\operatorname{d}\theta$.
One can check that the  reversible measure $\mu'$ for the ball walk satisfies
$d\mu' = V(x,1) \, d\mu \approx \psi(r)^{2(N-1)}\, dr \,d\theta$ for the case $\alpha < 0$ and $d\mu' = V(x,1) \, d\mu \approx d\mu$ for the case $\alpha  \ge 0$ (see \cite[p. 856]{GS05}).
Therefore the ball walk on the model manifold $M_\psi$ with $\psi(r)= r^\alpha$ for $r \ge 1$ satisfies the parabolic Harnack inequality and two-sided Gaussian estimates
if and only if $ -1/2(N-1) < \alpha  \le 1$. Using Lemma \ref{l-rvd}, it is easy to verify  that $M_\psi$ with $\psi(r)= r^\alpha$ for $r \ge 1$ equipped with the measure $\mu'$ fails to satisfy \ref{doub-inf}
if $\alpha \le  -1/2(N-1)$. Therefore for the case $-1/(N-1) <\alpha \le  -1/2(N-1)$, the model manifold $M_\psi$ defined above satisfies two sided Gaussian estimates for diffusion but fails to satisfy
two sided Gaussian estimates for the ball walk.

These model manifolds can also be considered as  surfaces of revolution formed by the graph of the function $\psi$ (see \cite[Section 5.1]{GS02}). 
Similar to Example \ref{x-nongauss}, the ball walk and diffusions may exhibit different behaviors in terms of null recurrence, positive recurrence and transience depending on $\alpha$ and $N$.
\end{example}
\begin{example}[Bodies of revolution]
 Another related class of examples given in \cite[Section 5.2]{GS02} are bodies of revolution. Let $f:[0,\infty) \to [0,\infty)$ be a concave function with $f(0)=0$. 
 Then the body of revolution in $\R^{n}$ (with $n>1$) defined by 
 \[
  M:= \Sett{(u,t) \in \R^n}{ u \in \R^{n-1}, t \ge 0, \norm{u}_2 \le f(t)}
 \]
where $\norm{\cdot}_2$ above denotes the Euclidean norm in $\R^{n-1}$ (See figure \ref{f-revolve}).
\begin{figure}[ht]
  \centering
  \includegraphics[width=3in]{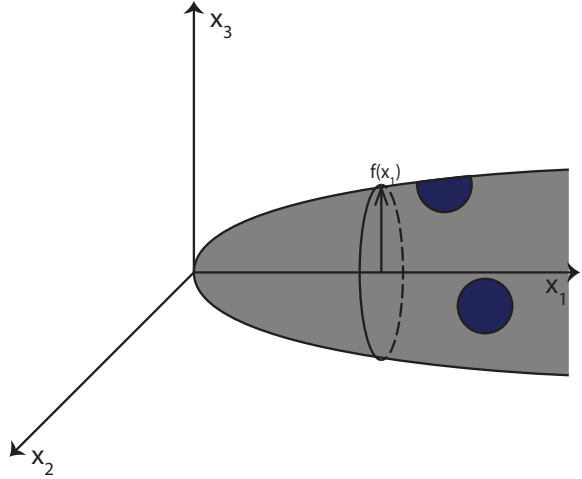}
  \caption{The body of revolution corresponding to $f$ in $\R^3$. This figure shows two balls of the same radius.}
  \label{f-revolve}
\end{figure}
Note that since $f$ is concave, $M$ is a convex subset of $\R^n$.
By the results of \cite{LY86}, we have that $M$ satisfies two-sided Gaussian estimates for the heat kernel corresponding to the Neumann Laplacian. 
Hence by Theorem \ref{t-grisal} the Neumann Laplacian satisfies Poincar\'{e} inequality in $M$ and satisfies volume doubling.
 If we set $f(x)=x^\alpha$ for some $\alpha \in (0,1)$ and by Proposition \ref{p-rmtfae} and Theorem \ref{t-main0} the ball walk on $M$ satisfies two-sided Gaussian bounds. 
 Hence by Proposition \ref{p-trans} the corresponding ball walk on $M \subset \R^n$ with $f(x)=x^\alpha$ is transient if and only if $\alpha(n-1) >1$.
\end{example}
\backmatter
\bibliographystyle{amsalpha}

\printindex

\end{document}